\DeclareMathOperator{\im}{im}
\DeclareMathOperator{\Ric}{Ric}
\DeclareMathOperator{\End}{End}
\DeclareMathOperator{\hash}{\sharp}
\DeclareMathOperator{\ohash}{\overline{\sharp}}
\DeclareMathOperator{\ddbarhash}{\sharp\overline{\sharp}}
\DeclareMathOperator{\sgn}{sgn}
\DeclareMathOperator{\Imaginary}{Im}
\DeclareMathOperator{\Real}{Re}
\DeclareMathOperator{\contr}{\lrcorner}
\DeclareMathOperator{\lcpcontr}{lcpcontr}
\DeclareMathOperator{\rwedge}{\diamond}
\DeclareMathOperator{\krwedge}{\bullet}
\DeclareMathOperator{\hodge}{\star}
\DeclareMathOperator{\ohodge}{\overline{\star}}
\DeclareMathOperator{\cBoxb}{\widetilde{\Box}_b}
\DeclareMathOperator{\cuplength}{cup}
\DeclareMathOperator{\Lef}{Lef}
\DeclareMathOperator{\chern}{ch}
\newcommand{\defn}[1]{{\boldmath\bfseries#1}}
\newcommand{\Alpha}{A}
\newcommand{\Beta}{B}
\newcommand{\DiffOperator}[1]{\mathrm{DO}(#1)}
\newcommand{\PseudoDiffOperator}[1]{\mathrm{\Psi DO}(#1)}
\newcommand{\trace}{\Lambda}
\newcommand{\CH}{H_{\mathbb{C}}}
\newcommand{\CT}{T_{\mathbb{C}}}
\newcommand{\CTM}{T_{\mathbb{C}}M}
\newcommand{\COmega}{\Omega_{\mathbb{C}}}
\newcommand{\CmR}{\mathcal{R}_{\mathbb{C}}}
\newcommand{\CsA}{\mathscr{A}_{\mathbb{C}}}
\newcommand{\CsR}{\mathscr{R}_{\mathbb{C}}}
\newcommand{\ow}{\overline{w}}
\newcommand{\oz}{\overline{z}}
\newcommand{\otau}{\overline{\tau}}
\newcommand{\oomega}{\overline{\omega}}
\newcommand{\oBox}{\overline{\Box}}
\newcommand{\cu}{\widetilde{u}}
\newcommand{\cH}{\widetilde{H}}
\newcommand{\cL}{\widetilde{L}}
\newcommand{\cN}{\widetilde{N}}
\newcommand{\cQ}{\widetilde{Q}}
\newcommand{\cS}{\widetilde{S}}
\newcommand{\ctau}{\widetilde{\tau}}
\newcommand{\comega}{\widetilde{\omega}}
\newcommand{\cmH}{\widetilde{\mathcal{H}}}
\newcommand{\hA}{\widehat{A}}
\newcommand{\hE}{\widehat{E}}
\newcommand{\hP}{\widehat{P}}
\newcommand{\hR}{\widehat{R}}
\newcommand{\hS}{\widehat{S}}
\newcommand{\hT}{\widehat{T}}
\newcommand{\hW}{\widehat{W}}
\newcommand{\hOmega}{\widehat{\Omega}}
\newcommand{\hnabla}{\widehat{\nabla}}
\newcommand{\htheta}{\widehat{\theta}}
\newcommand{\hiota}{\widehat{\iota}}
\newcommand{\htau}{\widehat{\tau}}
\newcommand{\homega}{\widehat{\omega}}
\newcommand{\hmH}{\widehat{\mathcal{H}}}
\newcommand{\lp}{\langle}
\newcommand{\rp}{\rangle}
\newcommand{\lv}{\lvert}
\newcommand{\rv}{\rvert}
\newcommand{\lV}{\lVert}
\newcommand{\rV}{\rVert}
\newcommand{\llp}{\lp\!\lp}
\newcommand{\rrp}{\rp\!\rp}
\newcommand{\vertiii}[1]{{\left\vert\kern-0.25ex\left\vert\kern-0.25ex\left\vert #1 
    \right\vert\kern-0.25ex\right\vert\kern-0.25ex\right\vert}}
\newcommand{\db}{\partial_b}
\newcommand{\dbbar}{\overline{\partial}_b}
\newcommand{\dhor}{\partial_0}
\newcommand{\nablab}{\nabla_b}
\newcommand{\nablabbar}{\overline{\nabla}_b}
\newcommand{\mB}{\mathcal{B}}
\newcommand{\mC}{\mathcal{C}}
\newcommand{\mH}{\mathcal{H}}
\newcommand{\mL}{\mathcal{L}}
\newcommand{\mP}{\mathcal{P}}
\newcommand{\mR}{\mathcal{R}}
\newcommand{\mS}{\mathcal{S}}
\newcommand{\bC}{\mathbb{C}}
\newcommand{\bH}{\mathbb{H}}
\newcommand{\bN}{\mathbb{N}}
\newcommand{\bR}{\mathbb{R}}
\newcommand{\bZ}{\mathbb{Z}}
\newcommand{\sA}{\mathscr{A}}
\newcommand{\sI}{\mathscr{I}}
\newcommand{\sJ}{\mathscr{J}}
\newcommand{\sO}{\mathscr{O}}
\newcommand{\sP}{\mathscr{P}}
\newcommand{\sR}{\mathscr{R}}
\newcommand{\sS}{\mathscr{S}}
\newcommand{\nablas}{\slashed{\nabla}}
\newcommand{\onablas}{\overline{\nablas}}
\DeclareMathOperator{\Boxs}{\slashed{\Box}}
\def\sideremark#1{\ifvmode\leavevmode\fi\vadjust{\vbox to0pt{\vss
 \hbox to 0pt{\hskip\hsize\hskip1em
 \vbox{\hsize3cm\tiny\raggedright\pretolerance10000
 \noindent #1\hfill}\hss}\vbox to8pt{\vfil}\vss}}}
\newcommand{\suchthatcolon}{\mathrel{}:\mathrel{}}
\newtheorem{theorem}{Theorem}[section]
\newtheorem{proposition}[theorem]{Proposition}
\newtheorem{lemma}[theorem]{Lemma}
\newtheorem{corollary}[theorem]{Corollary}
\theoremstyle{definition}
\newtheorem{definition}[theorem]{Definition}
\newtheorem{conjecture}[theorem]{Conjecture}
\newtheorem{example}[theorem]{Example}
\theoremstyle{remark}
\newtheorem{remark}[theorem]{Remark}
\numberwithin{equation}{section}
\begin{document}

\title{The bigraded Rumin complex via differential forms}
\author{Jeffrey S. Case}
\address{Department of Mathematics \\ Penn State University \\ University Park, PA 16802 \\ USA}
\email{jscase@psu.edu}
\keywords{Rumin complex; bigraded Rumin complex; Kohn--Rossi cohomology; Hodge theory; pseudo-Einstein; Sasakian}
\subjclass[2010]{Primary 58J10; Secondary 32V05, 53C15, 58A10}
\begin{abstract}
 We give a new CR invariant treatment of the bigraded Rumin complex and related cohomology groups via differential forms.
 A key benefit is the identification of balanced $A_\infty$-structures on the Rumin and bigraded Rumin complexes.
 We also prove related Hodge decomposition theorems.
 Among many applications, we give a sharp upper bound on the dimension of the Kohn--Rossi groups $H^{0,q}(M^{2n+1})$, $1\leq q\leq n-1$, of a closed strictly pseudoconvex manifold with a contact form of nonnegative pseudohermitian Ricci curvature;
 we prove a sharp CR analogue of the Fr\"olicher inequalities in terms of the second page of a natural spectral sequence;
 we give new proofs of selected topological properties of closed Sasakian manifolds;
 and we generalize the Lee class $\mathcal{L}\in H^1(M;\mathscr{P})$ --- whose vanishing is necessary and sufficient for the existence of a pseudo-Einstein contact form --- to all nondegenerate orientable CR manifolds.
\end{abstract}
\maketitle

\tableofcontents

\part{Introduction and background}
\label{part:background}

The de Rham and Dolbeault cohomology algebras are two important invariants associated to a complex manifold.
Many of the deep and important links between these two algebras begin with the Dolbeault double complex and pass through the Hodge decomposition theorem.
It is natural to ask whether CR manifolds --- abstract analogues of boundaries of complex manifolds --- admit analogues of the de Rham and Dolbeault cohomology algebras.
Addressing this fundamental question has led to many important developments:

CR manifolds are special cases of contact manifolds.
The \defn{Rumin complex}~\cite{Rumin1990}
\[ 0 \longrightarrow \mR^0 \overset{d}{\longrightarrow} \mR^1 \overset{d}{\longrightarrow} \dotsm \overset{d}{\longrightarrow} \mR^{2n+1} \overset{d}{\longrightarrow} 0 \]
is a contact invariant refinement of the de Rham complex on a $(2n+1)$-dimensional contact manifold.
In Rumin's definition, if $k\leq n$, then $\mR^k$ is a space of equivalence classes of differential $k$-forms;
if $k\geq n+1$, then $\mR^k$ is a subspace of the space of differential $k$-forms.
In both cases, the operator $d$ is induced by the exterior derivative.
A key property of the Rumin complex is that the cohomology groups
\begin{equation*}
 H_R^k(M;\bR) := \frac{\ker \left( d \colon \mR^k \to \mR^{k+1} \right)}{\im \left( d \colon \mR^{k-1} \to \mR^k \right)}
\end{equation*}
are isomorphic to the de Rham cohomology groups~\cite{Rumin1990}.
However, Rumin's construction does not recover the de Rham cohomology \emph{algebra}.
Note also that, in contrast with the de Rham complex, $d\colon\mR^n\to\mR^{n+1}$ is a second-order operator, though $d\colon\mR^k\to\mR^{k+1}$, $k\not=n$, is a first-order operator.
Nevertheless, Rumin proved~\cite{Rumin1994} a Hodge decomposition theorem using these operators and used it to derive some cohomological vanishing theorems.

The \defn{Kohn--Rossi complex}~\cite{KohnRossi1965}
\[ 0 \longrightarrow \mR^{p,0} \overset{\dbbar}{\longrightarrow} \mR^{p,1} \overset{\dbbar}{\longrightarrow} \dotsm \overset{\dbbar}{\longrightarrow} \mR^{p,n} \overset{\dbbar}{\longrightarrow} 0 \]
is a CR invariant complex on a $(2n+1)$-dimensional CR manifold.
Kohn and Rossi~\cite{KohnRossi1965} introduced this complex to study Hodge theory for the Dolbeault complex on complex manifolds with boundary.
Kohn and his collaborators~\cites{KohnRossi1965,FollandKohn1972,Kohn1965} also proved a Hodge decomposition theorem on a closed, strictly pseudoconvex pseudohermitian $(2n+1)$-manifold which implies that if $q\not\in\{0,n\}$, then the \defn{Kohn--Rossi cohomology group}
\[ H^{p,q}(M) := \frac{\ker \left( \dbbar \colon \mR^{p,q} \to \mR^{p,q+1} \right)}{\im \left( \dbbar \colon \mR^{p,q-1} \to \mR^{p,q} \right)} \]
is finite-dimensional.
Tanaka~\cite{Tanaka1975} and Lee~\cite{Lee1988} used this Hodge theorem to derive some vanishing results for the groups $H^{0,q}(M)$.
The assumption $q\not\in\{0,n\}$ is essential:
If $(M^{2n+1},T^{1,0})$ is embeddable in $\bC^N$ for some $N$, then the space $H^{p,0}(M)$ of CR holomorphic $p$-forms is infinite-dimensional.
Notably, the extrinsic description~\cites{KohnRossi1965,FollandKohn1972,ChenShaw2001} of $\mR^{p,q}$ as the space of tangential $(p,q)$-forms requires either imposing an hermitian metric on the complex manifold or working with quotient spaces.
The intrinsic description~\cites{KohnRossi1965,Tanaka1975} requires either a pseudohermitian structure or working with sections of a bundle distinct from the exterior bundle $\Lambda^\bullet T^\ast M$.
In particular, early definitions of the space $\mR^{p,q}$ are generally \emph{not} obtained from the complexification of the space $\mR^{p+q}$ in the Rumin complex.
To the best of our knowledge, the Kohn--Rossi cohomology groups have not been given the structure of an algebra.

The \defn{bigraded Rumin complex}~\cites{Garfield2001,GarfieldLee1998}
\begin{equation*}
 \begin{tikzpicture}[baseline=(current bounding box.center),yscale=0.8,xscale=1.0]
  \node (0_0) at (0,0) {$\mR^{0,0}$};
  \node (1_0) at (1,1) {$\mR^{1,0}$};
  \node (1_1) at (2,0) {$\mR^{1,1}$};
  \node (0_1) at (1,-1) {$\mR^{0,1}$};
  \node (n-1_0) at (3,3) {$\mR^{n-1,0}$};
  \node (n_0) at (4,4) {$\mR^{n,0}$};
  \node (n-1_1) at (4,2) {$\mR^{n-1,1}$};
  \node (n+1_1) at (7,3) {$\mR^{n+1,1}$};
  \node (n+1_0) at (6,4) {$\mR^{n+1,0}$};
  \node (n_1) at (6,2) {$\mR^{n,1}$};
  \node (0_n-1) at (3,-3) {$\mR^{0,n-1}$};
  \node (2_n) at (7,-3) {$\mR^{2,n}$};
  \node (2_n-1) at (6,-2) {$\mR^{2,n-1}$};
  \node (1_n) at (6,-4) {$\mR^{1,n}$};
  \node (0_n) at (4,-4) {$\mR^{0,n}$};
  \node (1_n-1) at (4,-2) {$\mR^{1,n-1}$};
  \node (n_n-1) at (8,0) {$\mR^{n,n-1}$};
  \node (n+1_n) at (10,0) {$\mR^{n+1,n}$};
  \node (n+1_n-1) at (9,1) {$\mR^{n+1,n-1}$};
  \node (n_n) at (9,-1) {$\mR^{n,n}$};
  \draw [->] (0_0) to node[pos=0.8, left] {\tiny $\db$} (1_0);
  \draw [->] (1_0) to node[pos=0.8, left] {\tiny $\db$} (1.7,1.7) ;
  \draw [loosely dotted, line width=0.3mm] (1.9,1.9) -- (2.2,2.2);
  \draw [->] (2.3,2.3) to node[pos=0.8, left] {\tiny $\db$} (n-1_0);
  \draw [->] (n-1_0) to node[pos=0.8, left] {\tiny $\db$} (n_0);
  \draw [->] (0_1) to node[pos=0.8, left] {\tiny $\db$} (1_1);
  \draw [->] (1_1) to node[pos=0.8, left] {\tiny $\db$} (2.7, 0.7) ;
  \draw [loosely dotted, line width=0.3mm] (2.9,0.9) -- (3.2, 1.2);
  \draw [->] (3.3, 1.3) to node[pos=0.8, left] {\tiny $\db$} (n-1_1);
  \draw [->] (n-1_1) to node[pos=0.2, left] {\tiny $\db$} (n+1_0);
  \draw [loosely dotted, line width=0.3mm] (1.9,-1.9) -- (2.2,-2.2);
  \draw [->] (5.5,1.5) -- (n_1);
  \draw [->] (n_1) to node[pos=0.8, left] {\tiny $\db$} (n+1_1);
  \draw [->] (0_n-1) to node[pos=0.8, left] {\tiny $\db$} (1_n-1);
  \draw [->] (1_n-1) -- (4.5,-1.5);
  \draw [loosely dotted, line width=0.3mm] (4.6,-1.4) -- (4.9,-1.1);
  \draw [->] (0_n) to node[pos=0.2, left] {\tiny $\db$} (2_n-1);
  \draw [->] (2_n-1) to node[pos=0.8, left] {\tiny $\db$} (6.7,-1.3);
  \draw [loosely dotted, line width=0.3mm] (6.9,-1.1) -- (7.2,-0.8);
  \draw [->] (7.3,-0.7) to node[pos=0.8, left] {\tiny $\db$} (n_n-1);
  \draw [->] (n_n-1) to node[pos=0.8, left] {\tiny $\db$} (n+1_n-1);
  \draw [->] (1_n) to node[pos=0.8, left] {\tiny $\db$} (2_n);
  \draw [->] (2_n) to node[pos=0.8, left] {\tiny $\db$} (7.7,-2.3);
  \draw [loosely dotted, line width=0.3mm] (7.9,-2.1) -- (8.2,-1.8);
  \draw [->] (8.3,-1.7) to node[pos=0.8, left] {\tiny $\db$} (n_n);
  \draw [->] (n_n) to node[pos=0.8, left] {\tiny $\db$} (n+1_n);
  \draw [->] (0_0) to node[left] {\tiny $\dbbar$} (0_1);
  \draw [->] (0_1) to node[left] {\tiny $\dbbar$} (1.7,-1.7);
  \draw [loosely dotted, line width=0.3mm] (5.15,1.15) -- (5.5,1.5);
  \draw [->] (2.3,-2.3) to node[left] {\tiny $\dbbar$} (0_n-1);
  \draw [->] (0_n-1) to node[left] {\tiny $\dbbar$} (0_n);
  \draw [->] (1_0) to node[left] {\tiny $\dbbar$} (1_1);
  \draw [->] (1_1) to node[left] {\tiny $\dbbar$} (2.7,-0.7);
  \draw [loosely dotted, line width=0.3mm] (2.9,-0.9) -- (3.2,-1.2);
  \draw [->] (3.3,-1.3) to node[left] {\tiny $\dbbar$} (1_n-1);
  \draw [->] (1_n-1) to node[pos=0.2, left] {\tiny $\dbbar$} (1_n);
  \draw [->] (5.5,-1.5) -- (2_n-1);
  \draw [loosely dotted, line width=0.3mm] (5.15,-1.15) -- (5.5,-1.5);
  \draw [->] (2_n-1) to node[left] {\tiny $\dbbar$} (2_n);
  \draw [->] (n-1_0) to node[pos=0.2, left] {\tiny $\dbbar$} (n-1_1);
  \draw [->] (n-1_1) -- (4.5,1.5);
  \draw [loosely dotted, line width=0.3mm] (4.6,1.4) -- (4.9,1.1) ;
  \draw [->] (n_0) to node[pos=0.2, left] {\tiny $\dbbar$} (n_1);
  \draw [->] (n_1) to node[left] {\tiny $\dbbar$} (6.7,1.3);
  \draw [loosely dotted, line width=0.3mm] (6.9,1.1) -- (7.2,0.8);
  \draw [->] (7.3,0.7) to node[left] {\tiny $\dbbar$} (n_n-1);
  \draw [->] (n_n-1) to node[left] {\tiny $\dbbar$} (n_n);
  \draw [->] (n+1_0) to node[left] {\tiny $\dbbar$} (n+1_1);
  \draw [->] (n+1_1) to node[left] {\tiny $\dbbar$} (7.7,2.3);
  \draw [loosely dotted, line width=0.3mm] (7.9,2.1) -- (8.2,1.8);
  \draw [->] (8.3,1.7) to node[left] {\tiny $\dbbar$} (n+1_n-1);
  \draw [->] (n+1_n-1) to node[left] {\tiny $\dbbar$} (n+1_n);
  \draw [->] (n_0) to node[above] {\tiny $\dhor$} (n+1_0);
  \draw [->] (n-1_1) to node[above] {\tiny $\dhor$} (n_1);
  \draw [->] (1_n-1) to node[above] {\tiny $\dhor$} (2_n-1);
  \draw [->] (0_n) to node[above] {\tiny $\dhor$} (1_n);
 \end{tikzpicture}
\end{equation*}
is a CR invariant bigraded complex on a $(2n+1)$-dimensional CR manifold.
This complex, constructed by Garfield and Lee~\cites{Garfield2001,GarfieldLee1998}, is obtained by using the CR structure to induce a bigrading on the Rumin complex.
However, it is not a double complex:
The second-order operator $d\colon\mR^n\to\mR^{n+1}$ splits into a sum of \emph{three} operators.
One feature of the bigraded Rumin complex is that its downward diagonals are equivalent to the Kohn--Rossi complexes.
In particular, the first and limiting pages of the \defn{Garfield spectral sequence}~\cite{Garfield2001} --- the spectral sequence determined by the bigraded Rumin complex --- recover the Kohn--Rossi and de Rham cohomology groups, respectively.
Garfield and Lee also proved a Hodge decomposition theorem~\cites{Garfield2001,GarfieldLee1998} for the spaces $\mR^{p,q}$ on a closed, strictly pseudoconvex pseudohermitian manifold.

This article has two main purposes.
First, we give a new construction of the Rumin and bigraded Rumin complexes as balanced $A_\infty$-algebras~\cites{Keller2001,Markl1992} whose elements are (complex-valued) differential forms.
In particular, our construction of the Rumin complex recovers the de Rham cohomology algebra;
and our construction of the bigraded Rumin complex gives Kohn--Rossi cohomology the structure of an algebra.
Second, we provide tools --- including a useful long exact sequence, the Garfield spectral sequence, and some Hodge decomposition theorems --- which lead to effective applications of our complexes.
One of our Hodge theorems completes and corrects a sketched proof announced by Garfield and Lee~\cite{GarfieldLee1998} (cf.\ \cite{Garfield2001}).
We also present some applications of these tools to cohomological vanishing results, to topological properties of Sasakian manifolds~\cite{BoyerGalicki2008}, and to pseudo-Einstein manifolds~\cite{Lee1988}.

Our construction of the bigraded Rumin complex of a $(2n+1)$-dimensional CR manifold differs from that of Garfield and Lee only in the definition of the spaces $\mR^{p,q}$, $p+q\leq n$.
Our initial insight is that if $\Omega$ is an equivalence class in the space $\mR_{GL}^{p,q}$ defined by Garfield and Lee, then there is a unique complex-valued differential $(p+q)$-form $\omega\in\Omega$ such that $\theta\wedge\omega\wedge d\theta^{n+1-p-q}=0$ and $\theta \wedge d\omega \wedge d\theta^{n-p-q}=0$ for any local contact form $\theta$.
This choice
\begin{enumerate}
 \item defines an isomorphism $\mR^{p,q}\cong\mR_{GL}^{p,q}$;
 \item is CR invariant;
 \item is compatible with the exterior derivative on differential forms;
 \item leads to balanced $A_\infty$-structures on the Rumin and bigraded Rumin complexes.
\end{enumerate} 
Note that our choice of $\omega$ involves taking a derivative;
for this reason, it is most natural to regard the spaces in the bigraded Rumin complex as subsheaves of the sheaf of complex-valued differential forms.
When a contact form is fixed, the sheaf $\mR^{p,q}$ can be identified with the sheaf of local sections of an appropriate subbundle of $\bC\otimes\Lambda^{p+q}T^\ast M$.
This is the same bundle which appears in Garfield's work~\cite{Garfield2001}, and when $p=0$, it is also the same bundle which appears in the Kohn--Rossi complex~\cite{KohnRossi1965}.
Our construction of the Rumin complex on a contact manifold is along similar lines.

Our constructions are local and meaningful on all nondegenerate CR manifolds.
In particular, we do not need to impose assumptions on orientability, on (local) embeddability, or on the signature of the Levi form.
On orientable CR manifolds, we use this level generality to define a cohomology class $\mL \in H^1(M;\sP)$, where $\sP$ is the sheaf of CR pluriharmonic functions, whose vanishing is necessary and sufficient for the existence of a pseudo-Einstein contact form.
This generalizes a construction of Lee~\cite{Lee1988} to CR manifolds which are not locally embeddable.

Our construction of the bigraded Rumin complex is inspired by Takeuchi's observation~\cite{Takeuchi2019} that there is a CR invariant formulation of the operator $d^c := i(\dbbar-\db)$ on smooth functions which takes values in the space of differential one-forms.
Case and Yang~\cite{CaseYang2020} adapted Takeuchi's definition to give a CR invariant formulation of the $\dbbar$-operator on smooth functions which takes values in the space of complex-valued differential one-forms.
The benefits of Takeuchi's formulation are that
\begin{enumerate}
 \item the CR invariant, differential form-valued operator $dd^c$ characterizes CR pluriharmonic functions~\cites{Lee1988,Takeuchi2019} in all dimensions; and
 \item for CR manifolds realized as the boundary of a complex manifold, $d^cu$ recovers the restriction of $d^c\cu$ to the boundary, where $\cu$ is an extension of $u$ to the inside which is formally harmonic to low order~\cites{Hirachi2013,Takeuchi2019}.
\end{enumerate}
We expect that this perspective can be used to recover the relationship between the Kohn--Rossi and Dolbeault complexes on a complex manifold with boundary.

Given the length of this article, we have split it into four parts.
We have also opted to start each part with a more detailed description of its contents.
The rest of this introduction gives a broad overview of the contents of each part.

The rest of \cref{part:background} contains background material on CR and pseudohermitian manifolds, as well as a collection of facts about symplectic vector spaces needed to make the canonical choice of element $\omega\in\Omega$ described above.

In \cref{part:intrinsic} we collect most of our CR invariant, differential form-based constructions.
We begin this part by constructing the Rumin complex of a contact manifold via differential forms and by defining the cohomology groups $H_R^k(M;\bR)$.
We then construct the bigraded Rumin complex and define cohomology groups $H_R^{p,q}(M)$ and $H_R^k(M;\sP)$.
Next, we explicitly describe the long exact sequence
\begin{equation*}
 \dotsm \longrightarrow H_R^k(M;\bR) \longrightarrow H_R^{0,k}(M) \longrightarrow H_R^k(M;\sP) \longrightarrow H_R^{k+1}(M;\bR) \longrightarrow \dotsm
\end{equation*}
relating these groups.
We then introduce the balanced $A_\infty$-structures on the Rumin and bigraded Rumin complexes.
We conclude by showing that
\begin{enumerate}
 \item $H_R^k(M;\bR)$ and $H_R^{p,q}(M)$ are isomorphic to the corresponding de Rham and Kohn--Rossi cohomology groups, respectively;
 \item our $A_\infty$-structure induces the usual algebra structure on de Rham cohomology and a similar algebra structure on Kohn--Rossi cohomology; and
 \item for small $k$, the groups $H_R^k(M;\sP)$ are isomorphic to the corresponding sheaf cohomology group with coefficients in $\sP$.
\end{enumerate}

In \cref{part:hodge} we prove Hodge decomposition theorems for the Rumin and bigraded Rumin complex.
We also prove a Hodge decomposition theorem that is relevant for the second page of the Garfield spectral sequence.
The proofs of the first two Hodge decomposition theorems closely follow the original proofs of Rumin~\cite{Rumin1994} and of Garfield and Lee~\cites{Garfield2001,GarfieldLee1998}, respectively, with some streamlining to highlight the similarities of the two proofs.
The proof of the last Hodge decomposition theorem is modeled on Popovici's proof~\cite{Popovici2016} of a Hodge decomposition theorem for the second page of the Fr\"olicher spectral sequence~\cite{Frolicher1955}.
However, the fact that the Kohn Laplacian is not hypoelliptic on $\mR^{p,0}$ introduces new difficulties relative to Popovici's proof.
These are overcome using properties~\cite{BealsGreiner1988} of the Szeg\H o projection.

In \cref{part:applications} we discuss four classes of applications of the bigraded Rumin complex and our Hodge theorems.
First, we give estimates for the dimensions of the Kohn--Rossi cohomology groups $H^{p,q}(M)$ for strictly pseudoconvex manifolds with nonnegative curvature.
When $p=0$, this refines vanishing theorems of Tanaka~\cite{Tanaka1975} and Lee~\cite{Lee1988}.
Second, we present some results which indicate that the spaces $E_2^{p,q}$ on the second page of the Garfield spectral sequence should be regarded as the CR analogues of the Dolbeault cohomology groups.
For example, we show that on closed, embeddable, strictly pseudoconvex CR manifolds, the space $E_2^{p,q}$ is always finite-dimensional, and that
\begin{equation*}
 \dim H^k(M;\bC) \leq \sum_{p+q=k} \dim E_2^{p,q}
\end{equation*}
with equality on Sasakian manifolds (cf.\ \cite{Frolicher1955}).
This gives another example of Sasakian manifolds as odd-dimensional analogues of K\"ahler manifolds (cf.\ \cite{BoyerGalicki2008}).
Third, we give new proofs, with some improvements in the conclusions, of certain topological properties~\cites{Rukimbira1993,Itoh1997, CappellettiMontanoDeNicolaYudin2015,Bungart1992,Nozawa2014} of closed Sasakian manifolds.
These results indicate the utility of the bigraded Rumin complex for studying Sasakian manifolds, and especially for obstructions to the existence of a torsion-free contact form on a given CR manifold.
Fourth, we define the cohomology class $\mL\in H^1(M;\sP)$ and use it to discuss Lee's conjecture~\cite{Lee1988} that, under suitable hypotheses, a pseudo-Einstein contact form exists if and only if the real first Chern class $c_1(T^{1,0}) \in H^2(M;\bR)$ vanishes.

\section{CR manifolds}
\label{sec:bg}

The fundamental object of study in this article is a CR manifold.

\begin{definition}
 \label{defn:cr}
 A \defn{(nondegenerate) CR manifold} is a pair $(M^{2n+1},T^{1,0})$ consisting of a real, connected $(2n+1)$-dimensional manifold $M^{2n+1}$ and a complex rank $n$ distribution $T^{1,0}\subset \CTM := TM \otimes \bC$ such that
 \begin{enumerate}
  \item $T^{1,0} \cap T^{0,1} = \{0\}$, where $T^{0,1} := \overline{T^{1,0}}$;
  \item $[T^{1,0},T^{1,0}] \subset T^{1,0}$; and
  \item if $\theta$ is a real one-form on an open set $U\subset M$ and $\ker\theta = \Real (T^{1,0}\oplus T^{0,1})\rv_U$, then
  \begin{equation}
   \label{eqn:levi-form-invariant}
   L(Z,W) := -i\,d\theta(Z,\overline{W})
  \end{equation}
  defines a nondegenerate hermitian inner product on $T^{1,0}U$.
 \end{enumerate}
\end{definition}

The assumption that $M$ is connected is included only for technical convenience.
We always assume nondegeneracy, so that our CR manifolds are contact manifolds.
This is because the Rumin complex is in fact a contact invariant complex~\cite{Rumin1990}.

One source of examples of CR manifolds are boundaries of a strictly pseudoconvex domain in $\bC^{n+1}$.
We later explicitly describe two examples of quotients involving the flat Heisenberg manifold $\bH^{n+1}$:
\Cref{heisenberg-quotient-pseudo-einstein,heisenberg-quotient,cup-length} discuss properties of a compact quotient of $\bH^{n+1}$, and \cref{cr-hopf-manifold} discusses properties of a compact quotient of $\bH^{n+1}\setminus\{0\}$.

We give names to the important objects which appear in \cref{defn:cr}.

\begin{definition}
 Let $(M^{2n+1},T^{1,0})$ be a CR manifold.
 \begin{enumerate}
  \item The \defn{contact distribution} of $(M^{2n+1},T^{1,0})$ is the real hyperplane bundle $H:=\Real (T^{1,0}\oplus T^{0,1})$.
  \item A \defn{contact form} on $(M^{2n+1},T^{1,0})$ is a real one-form $\theta$ such that $\ker\theta = H$.
  \item A \defn{local contact form} for $(M^{2n+1},T^{1,0})$ is a contact form on $(U,T^{1,0}\rv_U)$ for some open set $U\subset M$.
  \item The \defn{Levi form} of a contact form $\theta$ is the hermitian inner product~\eqref{eqn:levi-form-invariant}.
 \end{enumerate}
\end{definition}

We emphasize two features of our definition of a CR manifold.

First, we do not assume the existence of a global contact form on $(M^{2n+1},T^{1,0})$.
Note that if $\theta$ is a local contact form for $(M^{2n+1},T^{1,0})$ defined on $U$, then another one-form $\htheta$ on $U$ is also a local contact form if and only if $\htheta=u\theta$ for some nowhere zero smooth function $u$ on $U$.
It follows that if the Levi form of $\theta$ has signature $(p,q)$ and $u>0$, then the Levi form of $u\theta$ has signature $(p,q)$ and the Levi form of $-u\theta$ has signature $(q,p)$.
This justifies the following definition:

\begin{definition}
 \label{defn:signature}
 A CR manifold $(M^{2n+1},T^{1,0})$ has \defn{signature $(p,q)$} if for each point $x\in M$, there is a local contact form for $(M^{2n+1},T^{1,0})$ with Levi form of signature $(p,q)$ defined on a neighborhood of $x$.
\end{definition}

In general, the existence of a global contact form on a CR manifold is equivalent to the assumption of orientability:
If $\theta$ is a global contact form, then $\theta\wedge d\theta^n$ is a nowhere-vanishing volume form;
conversely, if $(M^{2n+1},T^{1,0})$ is orientable, then the orientation on the contact distribution $H$ induced by $T^{1,0}$ determines a co-orientation on $H$.
However, if $(M^{2n+1},T^{1,0})$ is not of split signature, then it admits a global contact form.

\begin{lemma}
 \label{orientable}
 Let $(M^{2n+1},T^{1,0})$ be a CR manifold of signature $(p,q)$, $p\not=q$.
 Then there exists a global contact form $\theta$ for $(M^{2n+1},T^{1,0})$.
\end{lemma}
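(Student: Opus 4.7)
The plan is to exhibit a canonical coorientation of the contact distribution $H$ using the signature assumption, and then glue local contact forms compatible with this coorientation via a partition of unity.

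First I would observe that, by definition of signature, we may cover $M$ by open sets $\{U_\alpha\}$ on each of which there is a local contact form $\theta_\alpha$ whose Levi form has signature $(p,q)$. On an overlap $U_\alpha\cap U_\beta$, there is a nowhere vanishing real function $u_{\alpha\beta}$ with $\theta_\alpha=u_{\alpha\beta}\theta_\beta$. As noted in the discussion preceding the lemma, the Levi form of $u\theta$ has signature $(p,q)$ where $u>0$ and signature $(q,p)$ where $u<0$. Since $p\not=q$ and the signature of $\theta_\alpha$ is $(p,q)$ on all of $U_\alpha\cap U_\beta$, the function $u_{\alpha\beta}$ cannot change sign and must in fact be strictly positive everywhere on the overlap.

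Next I would patch the local forms together. Let $\{\rho_\alpha\}$ be a smooth partition of unity subordinate to $\{U_\alpha\}$, and define
\begin{equation*}
 \theta := \sum_\alpha \rho_\alpha\,\theta_\alpha,
\end{equation*}
where each summand is extended by zero off $U_\alpha$. This is a smooth real one-form on $M$, and at every $x\in M$ it annihilates $H_x$ since each $\theta_\alpha(x)$ does. To see that $\theta$ is nowhere zero, fix $x\in M$ and choose any index $\beta$ with $x\in U_\beta$. For every other $\alpha$ with $x\in U_\alpha$, the previous paragraph gives $\theta_\alpha(x)=u_{\alpha\beta}(x)\theta_\beta(x)$ with $u_{\alpha\beta}(x)>0$, so
\begin{equation*}
 \theta(x) = \Bigl(\rho_\beta(x) + \sum_{\alpha\neq\beta}\rho_\alpha(x)\,u_{\alpha\beta}(x)\Bigr)\theta_\beta(x),
\end{equation*}
and the scalar coefficient is a convex combination of positive numbers, hence positive. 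Thus $\ker\theta(x)=H_x$, so $\theta$ is a global contact form.

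The only subtle point is the sign rigidity of the transition functions, and that follows immediately from the hypothesis $p\neq q$ together with connectedness of overlaps (one may refine the cover so every nonempty $U_\alpha\cap U_\beta$ is connected, after which $u_{\alpha\beta}$ of constant sign is automatic from continuity and nonvanishing). Everything else reduces to the standard partition of unity construction, which works here precisely because all the local forms we wish to glue have been forced to induce the \emph{same} coorientation of $H$.
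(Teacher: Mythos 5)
Your proof is correct and follows essentially the same route as the paper: use $p\neq q$ to force the transition functions $u_{\alpha\beta}$ to be positive, then glue the local contact forms with a partition of unity. (The aside about refining to connected overlaps is unnecessary — positivity of $u_{\alpha\beta}$ at each point already follows from the pointwise signature comparison, as your second paragraph in fact shows.)
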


\begin{proof}
 Let $\{U_\alpha\}_{\alpha\in A}$ be a locally finite cover of $M$ such that for each $\alpha\in A$ there is a contact form $\theta_\alpha$ in $(U_\alpha, T^{1,0}\rv_{U_\alpha})$ such that the Levi form induced by $\theta_\alpha$ has signature $(p,q)$.
 Let $\{\eta_\alpha\}_{\alpha\in A}$ be a partition of unity subordinate to $\{U_\alpha\}$.
 Set
 \begin{equation}
  \label{eqn:global-contact-form}
  \theta := \sum_{\alpha\in  A} \eta_\alpha \theta_\alpha .
 \end{equation}
 Since $p\not=q$, if $U_\alpha\cap U_\beta\not=0$ then there is a positive $u_{\alpha\beta}\in C^\infty(U_\alpha\cap U_\beta)$ such that $\theta_\alpha = u_{\alpha\beta}\theta_\beta$.
 We conclude that Equation~\eqref{eqn:global-contact-form} defines a global, nowhere vanishing contact form with Levi form of signature $(p,q)$.
\end{proof}

Second, we do not make an assumption on the signature of the Levi form.  However, when discussing Hodge theorems in \cref{part:hodge} and applications thereof in \cref{part:applications}, we restrict to strictly pseudoconvex CR manifolds.

\begin{definition}
 \label{defn:strictly-pseudoconvex}
 A CR manifold $(M^{2n+1},T^{1,0})$ is \defn{strictly pseudoconvex} if there exists a global contact form $\theta$ such that the Levi form is positive definite.
\end{definition}

\Cref{orientable} implies that a CR manifold is strictly pseudoconvex if and only if around each point there exists a local contact form with positive definite Levi form.

\section{Pseudohermitian manifolds}
\label{sec:pseudohermitian}

On an orientable CR manifold, a choice of contact form determines a pseudohermitian structure.

\begin{definition}
 A \defn{pseudohermitian manifold} is a triple $(M^{2n+1},T^{1,0},\theta)$ consisting of an orientable CR manifold $(M^{2n+1},T^{1,0})$ and a contact form $\theta$.
 We say that $(M^{2n+1},T^{1,0},\theta)$  is \defn{strictly pseudoconvex} if the Levi form of $\theta$ is positive definite.
\end{definition}

There is a canonical vector field defined on a pseudohermitian manifold.

\begin{definition}
 Let $(M^{2n+1},T^{1,0},\theta)$ be a pseudohermitian manifold.
 The \defn{Reeb vector field} is the unique vector field $T$ such that $\theta(T)=1$ and $d\theta(T,\cdot)=0$.
\end{definition}

Given our focus on differential forms, it is convenient to perform computations in terms of local coframes which are well-adapted to the pseudohermitian structure.

\begin{definition}
 Let $(M^{2n+1},T^{1,0},\theta)$ be a pseudohermitian manifold.
 An \defn{admissible coframe} is a set $\{\theta^\alpha\}_{\alpha=1}^n$ of complex-valued one-forms on an open set $U\subset M$ such that
 \begin{enumerate}
  \item $\theta^\alpha$ annihilates $T$ and $T^{0,1}$ for each $\alpha\in\{1,\dotsc,n\}$; and
  \item $\{\theta,\theta^1,\dotsc,\theta^n,\theta^{\bar1},\dotsc,\theta^{\bar n}\}$ forms a basis for the space of sections of $\CT^\ast U$, where $\theta^{\bar\beta}:=\overline{\theta^\beta}$.
 \end{enumerate}
\end{definition}

Given an admissible coframe, we identify the Levi form with the hermitian matrix $h_{\alpha\bar\beta}$ of functions determined by
\begin{equation*}
 d\theta = ih_{\alpha\bar\beta}\,\theta^\alpha\wedge\theta^{\bar\beta} .
\end{equation*}
Here and throughout we employ Einstein summation notation.
When an admissible coframe has been fixed, we use the Levi form $h_{\alpha\bar\beta}$ to raise and lower indices without further comment;
e.g.\ given $\{\tau^\alpha\}_{\alpha=1}^n$, we set
\[ \tau_{\bar\beta} := h_{\mu\bar\beta}\tau^\mu . \]

Webster constructed~\cite{Webster1978}*{Theorem~1.1} a canonical connection on a pseudohermitian manifold:
If $(M^{2n+1},T^{1,0},\theta)$ is a pseudohermitian manifold, then for any admissible coframe there are unique one-forms $\{\omega_\alpha{}^\gamma\}_{\alpha,\gamma=1}^n$ and $\{\tau^\gamma\}_{\gamma=1}^n$ such that
\begin{equation}
 \label{eqn:tanaka-webster-connection}
 \begin{aligned}
  d\theta^\alpha & = \theta^\mu \wedge \omega_\mu{}^\alpha + \theta \wedge \tau^\alpha, \\
  \tau^\alpha & \equiv 0 \mod \theta^{\bar\beta}, \\
  dh_{\alpha\bar\beta} & = \omega_{\alpha\bar\beta} + \omega_{\bar\beta\alpha}, & \omega_{\bar\beta\alpha} & := \overline{\omega_{\beta\bar\alpha}} , \\
  \theta^\mu\wedge\tau_\mu & = 0 .
 \end{aligned}
\end{equation}
The one-forms $\omega_\alpha{}^\beta$ are the connection one-forms of the Tanaka--Webster connection and the one-forms $\tau^\alpha$ determine the nontrivial components of the torsion of the Tanaka--Webster connection (cf.\ \cite{Tanaka1975}*{Proposition~3.1}):

\begin{definition}
 Let $(M^{2n+1},T^{1,0},\theta)$ be a pseudohermitian manifold.
 The \defn{Tanaka--Webster connection} is the unique connection $\nabla$ on $\CTM$ determined by
 \[ \nabla T = 0, \qquad \nabla Z_\alpha = \omega_\alpha{}^\mu \otimes Z_\mu , \]
 and conjugation, where $\{Z_\alpha\}$ is the local frame for $T^{1,0}$ dual to $\{\theta^\alpha\}$.
 The \defn{pseudohermitian torsion} is
 \[ A := A_{\alpha\gamma}\,\theta^\alpha\otimes\theta^\gamma , \]
 where we use Display~\eqref{eqn:tanaka-webster-connection} to write $\tau^{\bar\beta}=A_\alpha{}^{\bar\beta}\,\theta^\alpha$.
\end{definition}

Display~\eqref{eqn:tanaka-webster-connection} implies that $A_{\alpha\gamma}=A_{\gamma\alpha}$.
Note that the pseudohermitian torsion is globally defined.
This motivates the following definition.

\begin{definition}
 A pseudohermitian manifold $(M^{2n+1},T^{1,0},\theta)$ is \defn{torsion-free} if the pseudohermitian torsion vanishes.
\end{definition}

Strictly pseudoconvex, torsion-free pseudohermitian manifolds are in one-to-one correspondence with Sasaki manifolds~\cite{BoyerGalicki2008}*{Chapter 6}.

Webster showed~\cite{Webster1978}*{Theorem~1.1a and Equation~(2.2)} that the curvature forms
\begin{equation}
 \label{eqn:curvature-forms}
 \Omega_\alpha{}^\gamma := d\omega_\alpha{}^\gamma - \omega_\alpha{}^\mu \wedge \omega_\mu{}^\gamma ,
\end{equation}
regarded as sections of $\Lambda^2\CT^\ast M \otimes \End(T^{1,0})$, are globally defined and skew-hermitian with respect to the Levi form.
He also showed that (cf.\ \cite{Lee1988}*{Equation~(2.4)})
\begin{equation}
 \label{eqn:curvature-form}
 \Omega_{\alpha\bar\beta} = R_{\alpha\bar\beta\rho\bar\sigma}\,\theta^\rho\wedge\theta^{\bar\sigma} - \nabla_{\bar\beta}A_{\alpha\rho}\,\theta\wedge\theta^\rho + \nabla_\alpha A_{\bar\beta\bar\sigma}\,\theta\wedge\theta^{\bar\sigma} + i\theta_\alpha\wedge\tau_{\bar\beta} - i\tau_\alpha\wedge\theta_{\bar\beta} ,
\end{equation}
where $R_{\alpha\bar\beta\rho\bar\sigma}$ satisfies
\begin{equation*}
 R_{\alpha\bar\beta\rho\bar\sigma} = R_{\alpha\bar\sigma\rho\bar\beta} = R_{\rho\bar\sigma\alpha\bar\beta}
\end{equation*}
and $\nabla_{\bar\beta}A_{\alpha\rho}$ denotes the obvious component of the covariant derivative $\nabla A$ of the pseudohermitian torsion.
Taking the exterior derivative of Equation~\eqref{eqn:curvature-forms} yields the Bianchi identity
\begin{equation}
 \label{eqn:bianchi}
 d\Omega_\alpha{}^\gamma = \omega_\alpha{}^\mu \wedge \Omega_\mu{}^\gamma - \Omega_\alpha{}^\mu \wedge \omega_\mu{}^\gamma 
\end{equation}
(cf.\ \cite{Lee1988}*{Lemma~2.2}).

The functions $R_{\alpha\bar\beta\gamma\bar\sigma}$ determine the pseudohermitian curvature, the pseudohermitian Ricci curvature, and the pseudohermitian scalar curvature.

\begin{definition}
 \label{defn:curvature}
 Let $(M^{2n+1},T^{1,0},\theta)$ be a pseudohermitian manifold.
 The \defn{pseudohermitian curvature} is the tensor
 \[ R_{\alpha\bar\beta\gamma\bar\sigma}\,\theta^\alpha\otimes\theta^{\bar\beta}\otimes\theta^{\gamma}\otimes\theta^{\bar\sigma} . \]
 The \defn{pseudohermitian Ricci curvature} is the tensor
 \[ R_{\alpha\bar\beta}\,\theta^\alpha\otimes\theta^{\bar\beta}, \]
 where $R_{\alpha\bar\beta}:=R_{\alpha\bar\beta\mu}{}^\mu$.
 The \defn{pseudohermitian scalar curvature} is $R:=R_\mu{}^\mu$.
\end{definition}

Both the pseudohermitian torsion and the pseudohermitian curvature appear in commutator formulas involving the Tanaka--Webster connection.
All such commutator formulas can be deduced from the following result of Lee~\cite{Lee1988}.

\begin{lemma}[\cite{Lee1988}*{Lemma~2.3}]
 \label{commutators}
 Let $(M^{2n+1},T^{1,0},\theta)$ be a pseudohermitian manifold.
 For every complex-valued function $f\in C^\infty(M;\bC)$, it holds that
 \begin{align*}
  [\nabla_{\bar\beta},\nabla_\alpha] f & = ih_{\alpha\bar\beta}\nabla_0f, \\
  [\nabla_\gamma,\nabla_\alpha]f & = 0, \\
  [\nabla_\alpha,\nabla_0]f & = A_{\alpha\mu}f^\mu .
 \end{align*}
 For every section $\omega_\alpha$ of $(T^{1,0})^\ast$, it holds that
 \begin{align*}
  [\nabla_{\bar\beta},\nabla_\alpha]\omega_\gamma & = ih_{\alpha\bar\beta}\nabla_0\omega_\gamma + R_{\alpha\bar\beta\gamma}{}^\mu\omega_\mu, \\
  [\nabla_\alpha,\nabla_\gamma]\omega_\rho & = iA_{\alpha\rho}\omega_\gamma - iA_{\gamma\rho}\omega_\alpha, \\
  [\nabla_{\bar\beta},\nabla_{\bar\sigma}]\omega_\alpha & = ih_{\alpha\bar\sigma}A_{\bar\beta}{}^\mu\omega_\mu - ih_{\alpha\bar\beta}A_{\bar\sigma}{}^\mu\omega_\mu , \\
  [\nabla_\gamma,\nabla_0]\omega_\alpha & = A_\gamma{}^{\bar\nu}\nabla_{\bar\nu}\omega_\alpha - \omega_\mu\nabla^\mu A_{\alpha\gamma}, \\
  [\nabla_{\bar\beta},\nabla_0]\omega_\alpha & = A_{\bar\beta}{}^\mu\nabla_\mu\omega_\alpha + \omega_\mu\nabla_\alpha A_{\bar\beta}{}^\mu .
 \end{align*}
\end{lemma}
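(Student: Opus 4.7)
The plan is to deduce all eight identities from the Ricci-type commutator formula
\[
 [\nabla_X, \nabla_Y] S = R(X,Y)\,S - \nabla_{\Tor(X,Y)}\,S,
\]
specialized to scalars (where $R(X,Y)$ drops out) and to sections of $(T^{1,0})^\ast$. Both the torsion $\Tor$ and the curvature action $R(X,Y)$ of the Tanaka--Webster connection have explicit expressions in the adapted frame $\{T, Z_\alpha, Z_{\bar\beta}\}$ dual to $\{\theta, \theta^\alpha, \theta^{\bar\beta}\}$, which I would first extract from the structure equations \eqref{eqn:tanaka-webster-connection} and the curvature formula \eqref{eqn:curvature-form}.

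First I would compute the Lie brackets $[Z_\alpha, Z_{\bar\beta}]$, $[Z_\alpha, Z_\gamma]$, and $[Z_\alpha, T]$ via the Cartan formula $d\eta(X,Y) = X(\eta Y) - Y(\eta X) - \eta([X,Y])$ applied to each of $\theta$, $\theta^\gamma$, and $\theta^{\bar\gamma}$. Using \eqref{eqn:tanaka-webster-connection} together with $\tau^\gamma = A_{\bar\sigma}{}^\gamma \theta^{\bar\sigma}$, this yields
\begin{align*}
 [Z_{\bar\beta}, Z_\alpha] &= ih_{\alpha\bar\beta} T + \omega_\alpha{}^\gamma(Z_{\bar\beta}) Z_\gamma - \omega_{\bar\beta}{}^{\bar\gamma}(Z_\alpha) Z_{\bar\gamma}, \\
 [Z_\gamma, Z_\alpha] &= \omega_\alpha{}^\rho(Z_\gamma) Z_\rho - \omega_\gamma{}^\rho(Z_\alpha) Z_\rho, \\
 [Z_\alpha, T] &= -\omega_\alpha{}^\rho(T) Z_\rho + A_\alpha{}^{\bar\rho} Z_{\bar\rho}.
\end{align*}
Combined with $\nabla_{Z_I} Z_J = \omega_J{}^K(Z_I)\,Z_K$ and $\nabla T = 0$, these read off the only nonzero torsion components as $\Tor(Z_\alpha, Z_{\bar\beta}) = i h_{\alpha\bar\beta}\,T$ and $\Tor(T, Z_\alpha) = A_\alpha{}^{\bar\rho}\,Z_{\bar\rho}$. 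Substituting into $[\nabla_X, \nabla_Y] f = -\Tor(X,Y)(f)$, using that $\nabla_X f = X(f)$, immediately gives the three scalar identities.

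For the one-form identities, I would use $(R(X,Y)\omega)_\gamma = -\Omega_\gamma{}^\mu(X,Y)\,\omega_\mu$ with $\Omega_\gamma{}^\mu$ taken from \eqref{eqn:curvature-form}, and combine with the bracket/torsion data from the first step. Setting $(X,Y) = (Z_\alpha, Z_{\bar\beta})$ extracts the $\theta^\rho\wedge\theta^{\bar\sigma}$ coefficient, which after using the symmetry $R_{\gamma\bar\beta\alpha\bar\nu} = R_{\alpha\bar\beta\gamma\bar\nu}$ produces $ih_{\alpha\bar\beta}\nabla_0\omega_\gamma + R_{\alpha\bar\beta\gamma}{}^\mu \omega_\mu$. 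Setting $(X,Y) = (Z_\alpha, Z_\gamma)$ extracts only the $i\theta_\alpha\wedge\tau_{\bar\mu} - i\tau_\alpha\wedge\theta_{\bar\mu}$ pieces, yielding $iA_{\alpha\rho}\omega_\gamma - iA_{\gamma\rho}\omega_\alpha$. The identity for $[\nabla_{\bar\beta},\nabla_{\bar\sigma}]\omega_\alpha$ follows either by complex conjugation or by an analogous direct evaluation of $\Omega_\alpha{}^\mu(Z_{\bar\beta}, Z_{\bar\sigma})$ via \eqref{eqn:curvature-form}. For the last two identities, evaluating $\Omega_\alpha{}^\mu$ on $(Z_\gamma, T)$ and on $(Z_{\bar\beta}, T)$ picks out precisely the $\theta\wedge\theta^\rho$ and $\theta\wedge\theta^{\bar\sigma}$ components $-\nabla_{\bar\mu}A_{\alpha\rho}$ and $\nabla_\alpha A_{\bar\mu\bar\sigma}$, which combined with the torsion contribution $\Tor(Z_\gamma, T) = -A_\gamma{}^{\bar\nu}Z_{\bar\nu}$ give the stated mixed formulas.

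The main obstacle is bookkeeping: matching the sign conventions on $(R(X,Y)\omega)_\gamma$, on the torsion action $-\nabla_{\Tor(X,Y)}\omega$, and on the curvature components of \eqref{eqn:curvature-form}; and then verifying that the connection-one-form terms generated by the Lie brackets cancel exactly against the analogous terms arising when $\nabla_X\nabla_Y\omega_\gamma$ is expanded. No genuinely new ideas are required beyond this careful unpacking of the structure and curvature equations.
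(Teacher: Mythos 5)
Your proposal is correct, and it is essentially the standard derivation: the paper itself gives no proof of this lemma but cites it from Lee, whose argument is precisely the computation you outline --- reading off the torsion and curvature of the Tanaka--Webster connection from the structure equations~\eqref{eqn:tanaka-webster-connection} and the curvature formula~\eqref{eqn:curvature-form}, then substituting into the Ricci identity with torsion. Your identification of the nonzero torsion components and of which pieces of $\Omega_{\alpha\bar\beta}$ contribute to each commutator all check out against the stated formulas, so the only remaining work is the sign bookkeeping you already flag.
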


Here the unbarred (resp.\ barred) subscripts denote components in $(T^{1,0})^\ast$ (resp.\ $(T^{0,1})^\ast$) of the associated tensor.
The subscript zero denotes covariant differentiation in the direction of the Reeb vector field.

We define the type of a differential form using the bundles $\Lambda^{p,q}$.

\begin{definition}
 Let $(M^{2n+1},T^{1,0})$ be a CR manifold.  We denote by $\Lambda^{p,q}$ the vector bundle
 \[ \Lambda^{p,q} := \Lambda^p(T^{1,0})^\ast \otimes \Lambda^q(T^{0,1})^\ast . \]
 and denote by $\Omega^{p,q}$ its space of smooth sections.
\end{definition}

We use multi-indices to compute with (local) sections of $\Lambda^{p,q}$:
Given $\omega\in\Omega^{p,q}$, we write
\begin{equation*}
 \omega = \frac{1}{p!q!}\omega_{\Alpha\bar\Beta}\,\theta^\Alpha\wedge\theta^{\bar\Beta} ,
\end{equation*}
where $\Alpha=(\alpha_1,\dotsc,\alpha_p)$ and $\Beta=(\beta_1,\dotsc,\beta_q)$ are multi-indices of length $p$ and $q$, respectively,
\begin{equation*}
 \theta^\Alpha := \theta^{\alpha_1} \wedge \dotsm \wedge \theta^{\alpha_p} ,
\end{equation*}
and $\{\theta^\alpha\}_{\alpha=1}^n$ is a local coframe for $(T^{1,0})^\ast$.
Given a multi-index $\Alpha$ of length $p$, we denote by $\Alpha^\prime \subset\Alpha$ a sub-multi-index of length $(p-1)$;
e.g.\ given a contact form, we express the contraction $\Lambda^{0,1}\otimes\Lambda^{p,q}\to\Lambda^{p-1,q}$ by
\begin{equation*}
 \tau_{\bar\sigma}\,\theta^{\bar\sigma} \otimes \frac{1}{p!q!}\omega_{\Alpha\bar\Beta}\,\theta^{\Alpha} \wedge \theta^{\bar\beta} \mapsto \frac{1}{(p-1)!q!}\tau^{\mu}\omega_{\mu\Alpha^\prime\bar\Beta}\,\theta^{\Alpha^\prime}\wedge\theta^{\bar\Beta} .
\end{equation*}
We use square brackets to denote skew symmetrization;
e.g.\ if $\tau=\tau_\alpha\,\theta^\alpha\in\Lambda^{1,0}$ and $\omega=\frac{1}{p!q!}\omega_{\Alpha\bar\Beta}\,\theta^{\Alpha}\wedge\theta^{\bar\Beta}\in\Lambda^{p,q}$, then the exterior product $\tau\wedge\omega=\frac{1}{(p+1)!q!}\Omega_{\alpha\Alpha\bar\Beta}\,\theta^{\alpha\Alpha}\wedge\theta^{\bar\Beta}$ has components
\begin{multline}
 \label{eqn:skew-model}
 \Omega_{\alpha\Alpha\bar\Beta} = (p+1)\tau_{[\alpha}\omega_{\Alpha\bar\Beta]} := \bigl( \tau_\alpha\omega_{\alpha_1\dotsm\alpha_p\bar\Beta} - \tau_{\alpha_1}\omega_{\alpha\alpha_2\dotsm\alpha_p\bar\Beta} \\
  - \tau_{\alpha_2}\omega_{\alpha_1\alpha\alpha_3\dotsm\alpha_p\bar\Beta} - \dotsm - \tau_{\alpha_p}\omega_{\alpha_1\dotsm\alpha_{p-1}\alpha\bar\Beta} \bigr) .
\end{multline}
With this notation, we only skew symmetrize over indices of the same type;
e.g.\ in Equation~\eqref{eqn:skew-model} we skew over all $p+1$ appearances of an alpha and over all $q$ appearances of a beta.
When clear from context, we identify $\Alpha$ and $(\alpha,\Alpha^\prime)$;
e.g.\ we express the action of $P_\alpha{}^\gamma\in\End(T^{1,0})$ as a derivation on $\Lambda^{p,q}$ by $P\hash\omega:=\frac{1}{p!q!}\Omega_{\Alpha\bar\Beta}\,\theta^{\Alpha}\wedge\theta^{\bar\Beta}$, where
\begin{equation*}
 \Omega_{\Alpha\bar\Beta} = -pP_{[\alpha}{}^\mu\omega_{\mu\Alpha^\prime\bar\Beta]}
\end{equation*}
and $\omega=\frac{1}{p!q!}\omega_{\Alpha\bar\Beta}\,\theta^{\Alpha}\wedge\theta^{\bar\Beta}$.
By our above convention, the index $\mu$ is fixed;
i.e.
\begin{equation*}
 pP_{[\alpha}{}^\mu\omega_{\mu\Alpha^\prime\bar\Beta]} := P_{\alpha_1}{}^\mu\omega_{\mu\alpha_2\dotsm\alpha_p\bar\Beta} + P_{\alpha_2}{}^\mu\omega_{\alpha_1\mu\alpha_3\dotsm\alpha_p\bar\Beta} + \dotsm + P_{\alpha_p}{}^\mu\omega_{\alpha_1\dotsm\alpha_{p-1}\mu\bar\Beta} .
\end{equation*}

While the bundle $\Lambda^{p,q}$ is CR invariant, the embedding $\Lambda^{p,q}\hookrightarrow\Lambda^{p+q}\CT^\ast M$ determined by admissible coframes is not.
This follows from the transformation law for admissible coframes under change of contact form (cf.\ \cite{Lee1988}*{Lemma~2.4}).

\begin{lemma}
 \label{coframe-transformation}
 Let $\{\theta^\alpha\}$ be an admissible coframe for a pseudohermitian manifold $(M^{2n+1},T^{1,0},\theta)$.
 If $\htheta=e^\Upsilon\theta$ is another contact form on $(M^{2n+1},T^{1,0})$, then
 \[ \htheta^\alpha := \theta^\alpha + i\Upsilon^\alpha\theta \]
 determines an admissible coframe $\{\htheta^\alpha\}$ for $(M^{2n+1},T^{1,0},\htheta)$.
\end{lemma}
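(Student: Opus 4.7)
The plan is to verify the two defining properties of an admissible coframe for $(M^{2n+1}, T^{1,0}, \htheta)$: that each $\htheta^\alpha$ annihilates the new Reeb vector field $\hT$ and $T^{0,1}$, and that $\{\htheta, \htheta^\alpha, \htheta^{\bar\beta}\}$ is a basis for $\CT^\ast M$. The annihilation of $T^{0,1}$ is immediate: both $\theta^\alpha$ and $\theta$ annihilate $T^{0,1}$, so their linear combination $\htheta^\alpha = \theta^\alpha + i\Upsilon^\alpha\theta$ does as well.

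The main computation is to identify the new Reeb vector field $\hT$ and then check that $\htheta^\alpha(\hT) = 0$. Writing $\hT = aT + Z + \overline{Z}$ with $Z = Z^\beta Z_\beta \in T^{1,0}$ (this is forced by $\hT$ being real and $\hT \in \Real(T^{1,0} \oplus T^{0,1})$ whenever $\htheta(\hT) \neq 0$ will not cause problems only if the $\theta$-component is correctly matched), I would use $\htheta(\hT) = 1$ to conclude $a = e^{-\Upsilon}$. For the condition $d\htheta(\hT, \cdot) = 0$, I would expand
\begin{equation*}
d\htheta = e^\Upsilon\bigl(d\Upsilon \wedge \theta + d\theta\bigr)
\end{equation*}
and evaluate against $Z_\gamma$ using $d\theta(Z_{\bar\beta}, Z_\gamma) = -ih_{\gamma\bar\beta}$. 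Solving the resulting linear equation would give $Z^\beta = -ie^{-\Upsilon}\Upsilon^\beta$, so that
\begin{equation*}
\hT = e^{-\Upsilon}\bigl(T - i\Upsilon^\beta Z_\beta + i\Upsilon^{\bar\beta} Z_{\bar\beta}\bigr).
\end{equation*}
With this explicit formula in hand, the check
\begin{equation*}
\htheta^\alpha(\hT) = \theta^\alpha(\hT) + i\Upsilon^\alpha\theta(\hT) = -ie^{-\Upsilon}\Upsilon^\alpha + i\Upsilon^\alpha e^{-\Upsilon} = 0
\end{equation*}
is a one-line verification.

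Finally, to verify the basis condition, I would observe that the transition matrix from $\{\theta, \theta^\alpha, \theta^{\bar\beta}\}$ to $\{\htheta, \htheta^\alpha, \htheta^{\bar\beta}\}$ is block triangular with diagonal blocks $e^\Upsilon$, $\Id$, $\Id$, hence nonsingular pointwise. The only subtle step is the explicit determination of $\hT$; the rest is bookkeeping. I do not anticipate any genuine obstacle, since all the structure equations required for the computation are given by the definition of the Levi form and the identity $\theta(T) = 1$, $d\theta(T, \cdot) = 0$.
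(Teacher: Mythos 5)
Your proposal is correct and follows the same route as the paper: compute $d\htheta = e^\Upsilon(d\theta - \Upsilon_\alpha\,\theta\wedge\theta^\alpha - \Upsilon_{\bar\beta}\,\theta\wedge\theta^{\bar\beta})$, deduce that $\hT = e^{-\Upsilon}(T - i\Upsilon^\alpha Z_\alpha + i\Upsilon^{\bar\beta}Z_{\bar\beta})$ is the Reeb field of $\htheta$, and then verify the admissibility conditions. The paper leaves the Reeb computation as "readily deduced," whereas you carry it out explicitly; the content is identical.
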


\begin{remark}
 Our choice of admissible coframe $\{\htheta^\alpha\}$ is such that the dual frames to $\{\theta^\alpha\}$ and $\{\htheta^\alpha\}$ are the same.
\end{remark}

\begin{proof}
 Note that
 \[ d\htheta = e^\Upsilon\left( d\theta - \Upsilon_\alpha\,\theta\wedge\theta^\alpha - \Upsilon_{\bar\beta}\,\theta\wedge\theta^{\bar\beta} \right) . \]
 We readily deduce that $\hT:=e^{-\Upsilon}(T - i\Upsilon^\alpha\,Z_\alpha + i\Upsilon^{\bar\beta}\,Z_{\bar\beta})$ is the Reeb vector field associated to $\htheta$, where $\{T,Z_\alpha,Z_{\bar\beta}\}$ is the frame dual to $\{\theta,\theta^\alpha,\theta^{\bar\beta}\}$.
 It follows that $\{\htheta^\alpha\}$ is an admissible coframe for $(M^{2n+1},T^{1,0},\htheta)$.
\end{proof}

A choice of contact form determines an embedding $\Lambda^{p,q}\hookrightarrow\Lambda^{p+q}\CT^\ast M$ of vector bundles by using an admissible coframe to regard elements of $\Lambda^{p,q}$ as elements of $\Lambda^{p+q}\CT^\ast M$.
Indeed, on a pseudohermitian manifold there is a canonical splitting of $\Lambda^k\CT^\ast M$ into the bundle generated by the choice of contact form and the bundle $\bigoplus_{p+q=k}\Lambda^{p,q}$.

\begin{lemma}
 \label{Lambdapq-embedding}
 Let $(M^{2n+1},T^{1,0},\theta)$ be a pseudohermitian manifold.
 There is a canonical embedding $\iota\colon\Omega^{p,q}\hookrightarrow \COmega^{p+q}M$ such that if $\comega=\iota(\omega)$, then $\comega\rv_{\CH}=\omega$ and $\comega(T,\cdot)=0$.
 This embedding induces an isomorphism
 \begin{equation}
  \label{eqn:Lambdak-isomorphism}
  \COmega^k M \cong \Omega^{k} \oplus (\theta \wedge \Omega^{k-1})
 \end{equation}
 for each $k\in\bN_0:=\bN\cup\{0\}$, where $\COmega^kM$ is the space of complex-valued differential $k$-forms and
 \begin{equation*}
  \Omega^{k} := \bigoplus_{p+q=k} \Omega^{p,q} .
 \end{equation*}
\end{lemma}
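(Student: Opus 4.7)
The plan is to realize the embedding $\iota$ and the splitting~\eqref{eqn:Lambdak-isomorphism} as consequences of the two natural direct sum decompositions $\CTM = \bC T \oplus \CH$ and $\CH = T^{1,0} \oplus T^{0,1}$. The first is determined by the contact form $\theta$ (via its Reeb vector field), while the second is purely CR-theoretic and follows from $T^{1,0}\cap T^{0,1}=\{0\}$ together with the fact that both summands have complex rank $n$.

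To construct $\iota$, fix an admissible coframe $\{\theta^\alpha\}$ and, given $\omega = \frac{1}{p!q!}\omega_{\Alpha\bar\Beta}\,\theta^\Alpha\wedge\theta^{\bar\Beta} \in \Omega^{p,q}$, define $\iota(\omega)$ by the same expression, now interpreted as an element of $\COmega^{p+q}M$. Since each $\theta^\alpha$ and each $\theta^{\bar\beta}$ annihilates $T$, one immediately gets $\iota(\omega)(T,\cdot)=0$ and $\iota(\omega)\rv_{\CH}=\omega$. Conversely, any $(p+q)$-form on $\CTM$ satisfying these two conditions is determined by its restriction to $\CH$, so $\iota(\omega)$ is the \emph{unique} such extension and therefore independent of the chosen admissible coframe.

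To obtain the splitting, given $\eta\in\COmega^kM$ set
\[
 \eta_1 := T\contr\eta, \qquad \eta_0 := \eta - \theta\wedge\eta_1.
\]
Because $\contr$ is an antiderivation squaring to zero, $T\contr\eta_1 = 0$; using $\theta(T)=1$ we then compute
\[
 T\contr\eta_0 = T\contr\eta - \eta_1 + \theta\wedge(T\contr\eta_1) = 0.
\]
Thus both $\eta_0$ and $\eta_1$ annihilate $T$, so their restrictions to $\CH$ lie in $\Lambda^k\CH^\ast$ and $\Lambda^{k-1}\CH^\ast$, respectively. The second decomposition $\CH = T^{1,0}\oplus T^{0,1}$ dualizes to $\CH^\ast = (T^{1,0})^\ast \oplus (T^{0,1})^\ast$ and, passing to exterior powers, gives $\Lambda^k\CH^\ast = \bigoplus_{p+q=k}\Lambda^{p,q}$. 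Composing the two maps $\eta_0\mapsto\eta_0\rv_{\CH}$ and $\eta_1\mapsto\eta_1\rv_{\CH}$ with the inverses of the corresponding embeddings $\iota$ yields the desired projection $\COmega^kM\to\Omega^k\oplus(\theta\wedge\Omega^{k-1})$.

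Finally, one checks that the map $\Omega^k\oplus(\theta\wedge\Omega^{k-1})\to\COmega^kM$, $(\omega_0,\theta\wedge\omega_1)\mapsto\iota(\omega_0)+\theta\wedge\iota(\omega_1)$, is the two-sided inverse: composed one way it returns $\eta_0+\theta\wedge\eta_1=\eta$; composed the other way, if $\iota(\omega_0)+\theta\wedge\iota(\omega_1)=0$ then contracting with $T$ gives $\iota(\omega_1)=0$, hence $\omega_1=0$ and then $\omega_0=0$, establishing injectivity. There is no real obstacle: the argument is essentially the bookkeeping of two elementary linear-algebraic splittings, and the only place to be careful is confirming that the embedding $\iota$ does not secretly depend on the admissible coframe — which, as noted, follows from the characterization by the two conditions $\comega\rv_\CH=\omega$ and $\comega(T,\cdot)=0$.
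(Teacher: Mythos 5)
Your proposal is correct and follows essentially the same route as the paper: define $\iota$ coframe-wise, characterize its image as the forms annihilating $T$, and split a general $k$-form as $(\omega-\theta\wedge(T\contr\omega))+\theta\wedge(T\contr\omega)$. The extra remarks you include (uniqueness of the extension characterized by $\comega\rv_{\CH}=\omega$ and $\comega(T,\cdot)=0$, hence coframe-independence, and the verification that the two maps are mutually inverse) are correct elaborations of steps the paper leaves implicit.
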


\begin{remark}
 These statements are also true for local sections of the respective bundles.
\end{remark}

\begin{proof}
 Let $\omega\in\Omega^{p,q}$ and let $\{\theta^\alpha\}$ be an admissible coframe for $(M^{2n+1},T^{1,0},\theta)$.
 By abuse of notation, regard $\{\theta^\alpha\}$ as a coframe for $(T^{1,0})^\ast$.
 Write $\omega=\frac{1}{p!q!}\omega_{\Alpha\bar\Beta}\,\theta^\Alpha\wedge\theta^{\bar\Beta}$.
 Then $\iota(\omega)$ is defined by regarding the right-hand side as an element of $\COmega^{p+q}M$.
 It is clear that $\iota$ is injective and that $\iota(\omega)(T,\cdot)=0$.
 Moreover, $\iota$ extends by linearity to an embedding $\iota\colon\Omega^k\hookrightarrow\COmega^kM$ with image
 \begin{equation*}
  \iota(\Omega^k) = \left\{ \omega \in \COmega^kM \suchthatcolon \omega(T,\cdot) = 0 \right\} .
 \end{equation*}
 We use the embedding $\iota$ to identify $\Omega^k$ with its image.
 
 It is clear that $\Omega^k \cap (\theta \wedge\Omega^{k-1})=\{0\}$.
 Hence $\Omega^k+(\theta\wedge\Omega^{k-1})\subset\COmega^kM$ is a direct sum.
 Now let $\omega\in\COmega^kM$.
 Set $\eta:=\theta\wedge\omega(T,\cdot)$.
 Then $\omega(T,\cdot)$ and $\omega-\eta$ both annihilate $T$.
 In particular, $\omega-\eta\in\Omega^k$ and $\eta \in \theta\wedge\Omega^{k-1}$.
 Therefore Equation~\eqref{eqn:Lambdak-isomorphism} holds.
\end{proof}

When a choice of pseudohermitian structure has been fixed, we use \cref{Lambdapq-embedding} to define the subspace $\Omega^{p,q}M\subset\COmega^{p+q}M$.

\begin{definition}
 \label{defn:Omegapq}
 Let $(M^{2n+1},T^{1,0},\theta)$ be a pseudohermitian manifold.
 Given nonnegative integers $p$ and $q$, we denote by $\Omega^{p,q}M$ the image of $\Omega^{p,q}$ in $\COmega^{p+q}M$ with respect to the embedding  described by \cref{Lambdapq-embedding}.
\end{definition}

Importantly, the $C^\infty(M;\bC)$-module $\theta\wedge\Omega^{p,q}M$ is CR invariant.

\begin{lemma}
 \label{Omegapq-CR-invariant}
 Let $(M^{2n+1},T^{ 1,0},\theta)$ be a pseudohermitian manifold.
 If $\htheta=e^\Upsilon\theta$ is another choice of contact form for $(M^{2n+1},T^{1,0})$, then, as $C^\infty(M;\bC)$-modules,
 \begin{equation*}
  \theta \wedge \Omega^{p,q}M = \htheta \wedge \hOmega^{p,q}M
 \end{equation*}
 where $\Omega^{p,q}M$ and $\hOmega^{p,q}M$ are determined from $\theta$ and $\htheta$, respectively, via \cref{Lambdapq-embedding}.
\end{lemma}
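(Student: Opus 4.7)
The plan is to reduce the equality to the observation that the two embeddings produced by \cref{Lambdapq-embedding}, one built from $\theta$ and one from $\htheta$, differ only by a form annihilated by $\theta$, so that wedging with the contact form washes out the difference. Labeling the two embeddings $\iota_\theta$ and $\iota_{\htheta}$, one has $\Omega^{p,q}M = \iota_\theta(\Omega^{p,q})$ and $\hOmega^{p,q}M = \iota_{\htheta}(\Omega^{p,q})$ inside $\COmega^{p+q}M$.

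First I would show that, for every $\omega \in \Omega^{p,q}$, the form $\iota_\theta(\omega) - \iota_{\htheta}(\omega)$ restricts to zero on the complexified contact distribution $\CH$. Both embeddings satisfy $\iota(\omega)\rv_\CH = \omega$ by \cref{Lambdapq-embedding}, and $\CH$ is intrinsic to the CR structure (in particular $\ker\theta = \ker\htheta$ on $\CT M$, since $e^\Upsilon$ is real and nowhere zero). Combining this with the splitting~\eqref{eqn:Lambdak-isomorphism}, any $\beta \in \COmega^{p+q}M$ satisfying $\beta\rv_\CH = 0$ must lie in $\theta\wedge\COmega^{p+q-1}M$: writing $\beta = \alpha + \theta\wedge\gamma$ with $\alpha\in\Omega^{p+q}M$ and evaluating on horizontal vectors forces $\alpha = 0$. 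Applied to $\beta = \iota_\theta(\omega) - \iota_{\htheta}(\omega)$, this yields
\[
\iota_\theta(\omega) - \iota_{\htheta}(\omega) = \theta \wedge \zeta
\]
for some $\zeta \in \COmega^{p+q-1}M$.

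Next I would wedge this identity with $\htheta$ and use $\htheta = e^\Upsilon\theta$:
\[
\htheta\wedge\iota_{\htheta}(\omega) = \htheta\wedge\iota_\theta(\omega) - \htheta\wedge\theta\wedge\zeta = \htheta\wedge\iota_\theta(\omega) = e^\Upsilon\,\theta\wedge\iota_\theta(\omega),
\]
where the middle equality uses $\htheta\wedge\theta = 0$. Since $e^{\pm\Upsilon}$ is an invertible element of $C^\infty(M;\bC)$, and $\Omega^{p,q}M$ and $\hOmega^{p,q}M$ are $C^\infty(M;\bC)$-submodules of $\COmega^{p+q}M$, this single identity supplies both inclusions at once: an arbitrary $\htheta\wedge\iota_{\htheta}(\omega) \in \htheta\wedge\hOmega^{p,q}M$ equals $\theta\wedge\bigl(e^\Upsilon\iota_\theta(\omega)\bigr)$ with $e^\Upsilon\iota_\theta(\omega) \in \Omega^{p,q}M$, and the symmetric rearrangement gives the reverse inclusion.

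I do not expect a serious obstacle. The most delicate point is the characterization of $(p+q)$-forms vanishing on $\CH$ as multiples of $\theta$, which is immediate from the direct sum~\eqref{eqn:Lambdak-isomorphism}. Notably, this approach never needs the explicit coframe transformation from \cref{coframe-transformation}; it uses only the defining properties of $\iota$ and the scalar rescaling $\htheta = e^\Upsilon\theta$.
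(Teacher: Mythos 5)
Your proof is correct. It also ends at exactly the identity the paper derives, namely $\htheta\wedge\iota_{\htheta}(\omega)=\theta\wedge\iota_\theta(e^\Upsilon\omega)$, but you reach it by a different and slightly more intrinsic route. The paper invokes \cref{coframe-transformation} and computes directly that $\htheta\wedge\htheta^{\Alpha}\wedge\htheta^{\bar\Beta}=e^\Upsilon\,\theta\wedge\theta^{\Alpha}\wedge\theta^{\bar\Beta}$, the point being that the correction terms $i\Upsilon^\alpha\theta$ in the transformed coframe are annihilated upon wedging with $\theta$. You bypass the coframe transformation entirely: since both embeddings from \cref{Lambdapq-embedding} restrict to the same $\omega$ on $\CH$ (and $\CH$ and the splitting $\CH^\ast=(T^{1,0})^\ast\oplus(T^{0,1})^\ast$ are intrinsic to the CR structure), their difference vanishes on $\CH$ and hence, by the decomposition~\eqref{eqn:Lambdak-isomorphism}, lies in $\theta\wedge\COmega^{p+q-1}M$ and is killed by $\htheta\wedge{}$. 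What your argument buys is independence from the explicit transformation law for admissible coframes — it uses only the defining properties of $\iota$ — at the cost of one extra observation (the characterization of forms vanishing on $\CH$ as multiples of $\theta$), which you justify correctly from the splitting. Both proofs are complete; the paper's is marginally shorter because \cref{coframe-transformation} is already available, while yours would survive even if that lemma were stated differently.
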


\begin{proof}
 \Cref{coframe-transformation} implies that
 \begin{equation*}
  \frac{1}{p!q!}\omega_{\Alpha\bar\Beta}\,\htheta\wedge\htheta^{\Alpha}\wedge\htheta^{\bar\Beta} = \frac{1}{p!q!}e^\Upsilon\omega_{\Alpha\bar\Beta}\,\theta\wedge\theta^{\Alpha}\wedge\theta^{\bar\Beta} .
 \end{equation*}
 In particular, $\htheta\wedge\hiota(\omega)=\theta\wedge\iota(e^\Upsilon\omega)$.
 The conclusion readily follows.
\end{proof}

We conclude this section by discussing how the Tanaka--Webster connection, the pseudohermitian torsion, and the pseudohermitian curvature transform under change of contact form.
The latter is more clearly stated in terms of the CR Schouten tensor and the Chern tensor.

\begin{definition}
 Let $(M^{2n+1},T^{1,0},\theta)$ be a pseudohermitian manifold.
 The \defn{CR Schouten tensor} is determined by
 \[ P_{\alpha\bar\beta} := \frac{1}{n+2}\left( R_{\alpha\bar\beta} - \frac{R}{2(n+1)}h_{\alpha\bar\beta}\right) . \]
 The \defn{Chern tensor} is determined by
 \[ S_{\alpha\bar\beta\gamma\bar\sigma} := R_{\alpha\bar\beta\gamma\bar\sigma} - P_{\alpha\bar\beta}h_{\gamma\bar\sigma} - P_{\alpha\bar\sigma}h_{\gamma\bar\beta} - P_{\gamma\bar\beta}h_{\alpha\bar\sigma} - P_{\gamma\bar\sigma}h_{\alpha\bar\beta} . \]
\end{definition}

Note that the Chern tensor is totally trace-free and has the same symmetries as the pseudohermitian curvature tensor;
i.e.\ $S_\mu{}^\mu{}_{\alpha\bar\beta}=0$ and $S_{\alpha\bar\beta\gamma\bar\sigma}=S_{\alpha\bar\sigma\gamma\bar\beta}=S_{\gamma\bar\sigma\alpha\bar\beta}$.
The following lemma collects the well-known transformation formulas for the CR Schouten tensor, the Chern tensor, the pseudohermitian torsion, and the Tanaka--Webster connection under change of contact form.

\begin{lemma}[\citelist{\cite{Lee1988}*{Lemma~2.4} \cite{GoverGraham2005}*{Equation (2.7)}}]
 \label{transformation}
 Let $(M^{2n+1},T^{1,0},\theta)$ be a pseudohermitian manifold.
 If $\htheta=e^\Upsilon\theta$, $\Upsilon\in C^\infty(M)$, then
 \begin{align*}
  \hS_{\alpha\bar\beta\gamma\bar\sigma} & = e^\Upsilon S_{\alpha\bar\beta\gamma\bar\sigma} , \\
  \hP_{\alpha\bar\beta} & = P_{\alpha\bar\beta} - \frac{1}{2}(\Upsilon_{\alpha\bar\beta}+\Upsilon_{\bar\beta\alpha}) - \frac{1}{2}\Upsilon_\mu\Upsilon^\mu h_{\alpha\bar\beta} , \\
  \hA_{\alpha\gamma} & = A_{\alpha\gamma} + i\Upsilon_{\alpha\gamma} -  i\Upsilon_\alpha\Upsilon_\gamma .
 \end{align*}
 where admissible coframes for $\theta$ and $\htheta$ are related as in \cref{coframe-transformation}.
 Additionally, if $\omega_\alpha\,\theta^\alpha\in\Omega^{1,0}M$, then
 \begin{align*}
  \hnabla_\alpha\omega_\gamma & = \nabla_\alpha\omega_\gamma - \Upsilon_\alpha\omega_\gamma - \Upsilon_\gamma\omega_\alpha , \\
  \hnabla_{\bar\beta}\omega_\gamma & = \nabla_{\bar\beta}\omega_\alpha + \Upsilon^\mu\omega_\mu h_{\gamma\bar\beta} .
 \end{align*}
\end{lemma}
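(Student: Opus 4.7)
My plan is to derive every formula by grinding the defining structure equations~\eqref{eqn:tanaka-webster-connection} through the coframe change $\htheta^\alpha = \theta^\alpha + i\Upsilon^\alpha \theta$ from \cref{coframe-transformation}. First I would compute
\[ d\htheta = e^\Upsilon\bigl( d\theta + d\Upsilon \wedge \theta \bigr) \]
and rewrite the right-hand side in the new coframe, using $\theta^\alpha = \htheta^\alpha - i\Upsilon^\alpha \htheta \cdot e^{-\Upsilon} \cdot e^\Upsilon$-style substitutions, to read off the Levi form transformation $\hh_{\alpha\bar\beta} = e^\Upsilon h_{\alpha\bar\beta}$. Next I would compute $d\htheta^\alpha$ from $d\theta^\alpha = \theta^\mu \wedge \omega_\mu{}^\alpha + \theta \wedge \tau^\alpha$, substitute the new coframe, and solve the uniqueness problem posed by~\eqref{eqn:tanaka-webster-connection} to obtain explicit formulas for $\hat\omega_\mu{}^\alpha$ and $\htau^\alpha$. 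The torsion transformation $\hA_{\alpha\gamma} = A_{\alpha\gamma} + i\Upsilon_{\alpha\gamma} - i\Upsilon_\alpha \Upsilon_\gamma$ then falls out by reading off the $\theta^\gamma$-coefficient of $\htau^{\bar\beta}$, after using \cref{commutators} to convert second covariant derivatives into a symmetric combination plus torsion terms.

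With $\hat\omega_\alpha{}^\gamma$ in hand, I would plug into~\eqref{eqn:curvature-forms} to get $\hOmega_\alpha{}^\gamma$ and extract its $(1,1)$-component via~\eqref{eqn:curvature-form}. This gives $\hR_{\alpha\bar\beta\rho\bar\sigma}$ as $R_{\alpha\bar\beta\rho\bar\sigma}$ plus a collection of explicit terms linear in $\Upsilon_{\alpha\bar\beta}, \Upsilon_{\bar\beta\alpha}, \Upsilon_\alpha \Upsilon_{\bar\beta}$, and $\Upsilon_\mu \Upsilon^\mu$. Taking traces (and using the commutator $[\nabla_{\bar\beta},\nabla_\alpha]\Upsilon = ih_{\alpha\bar\beta}\nabla_0 \Upsilon$ to symmetrize) yields $\hR_{\alpha\bar\beta}$ and $\hR$, and substituting into the definitions of $P$ and $S$ produces the asserted transformations. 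The conformal weight $\hS_{\alpha\bar\beta\gamma\bar\sigma} = e^\Upsilon S_{\alpha\bar\beta\gamma\bar\sigma}$ should appear as a clean cancellation of all $\Upsilon$-derivative terms once the trace-free projection is taken.

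Finally, for the action on $\omega = \omega_\alpha \theta^\alpha \in \Omega^{1,0}M$, I would use the fact that the dual frames $\{Z_\alpha\}$ agree (by the remark after \cref{coframe-transformation}), so the components $\omega_\alpha$ are unchanged. Then $\hnabla_\alpha \omega_\gamma - \nabla_\alpha \omega_\gamma$ and $\hnabla_{\bar\beta}\omega_\gamma - \nabla_{\bar\beta}\omega_\gamma$ are given by contracting $\omega$ with the difference $\hat\omega_\gamma{}^\mu - \omega_\gamma{}^\mu$ read off from Step~2, together with a correction coming from the fact that directional derivatives along $\hZ_\alpha$ differ from those along $Z_\alpha$ by lower-order pieces (which vanish here since $\hZ_\alpha = Z_\alpha$).

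The main obstacle is the curvature bookkeeping in Step~2: many $\Upsilon$-quadratic and $\Upsilon$-Hessian terms appear in $d\hat\omega_\alpha{}^\gamma - \hat\omega_\alpha{}^\mu \wedge \hat\omega_\mu{}^\gamma$, and one must repeatedly invoke \cref{commutators} (specifically $[\nabla_{\bar\beta},\nabla_\alpha]\Upsilon = ih_{\alpha\bar\beta}\nabla_0 \Upsilon$ and $[\nabla_\alpha,\nabla_\gamma]\Upsilon = 0$) to collect the symmetric part $\tfrac{1}{2}(\Upsilon_{\alpha\bar\beta}+\Upsilon_{\bar\beta\alpha})$ that appears in $\hP_{\alpha\bar\beta}$, while verifying that all skew parts recombine into the torsion contribution in $\hA_{\alpha\gamma}$. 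Given the length of this verification and its appearance in the cited works~\cite{Lee1988}*{Lemma~2.4} and~\cite{GoverGraham2005}*{Equation~(2.7)}, I would present only the key intermediate identities and refer to those sources for the remaining algebra.
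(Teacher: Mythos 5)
The paper does not prove \cref{transformation}: it is quoted directly from \cite{Lee1988}*{Lemma~2.4} and \cite{GoverGraham2005}, and your outline is exactly the standard structure-equation computation carried out in those references (use \cref{coframe-transformation} to get the new coframe, solve the uniqueness problem in~\eqref{eqn:tanaka-webster-connection} for the transformed connection and torsion forms, then push through~\eqref{eqn:curvature-forms} and~\eqref{eqn:curvature-form} and take traces). Your plan is correct and consistent with the paper's treatment, including the decision to defer the lengthy curvature bookkeeping to the cited sources.
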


By \cref{transformation}, the vanishing of the Chern tensor is a CR invariant condition.
Indeed, if $n\geq2$, then a CR manifold $(M^{2n+1},T^{1,0})$ is locally CR equivalent to the standard CR sphere if and only if the Chern tensor vanishes~\cite{ChernMoser1974}.

\begin{definition}
 A CR manifold $(M^{2n+1},T^{1,0})$, $n\geq2$, is \defn{locally spherical} if the Chern tensor of any local contact form vanishes.
\end{definition}

When $n=1$, the role of the Chern tensor is played instead by the Cartan tensor~\cites{Cartan1932a,Cartan1932b}.

\section{The Lefschetz operator}
\label{sec:linear-algebra}

The Levi form of a pseudohermitian manifold determines a symplectic structure on the fibers of the bundles $\Lambda^{p,q}$.
In this section we describe some consequences of this structure which are needed for our constructions of the Rumin complex and the bigraded Rumin complex.
Note that all of the results of this section are local;
we only describe them globally to avoid the introduction of new notation.

We begin by introducing three objects determined by a choice of contact form.
First, we define the Lefschetz operator on $\theta\wedge\Omega^{p,q}M$.

\begin{definition}
 Let $(M^{2n+1},T^{1,0},\theta)$ be a pseudohermitian manifold.
 The \defn{Lefschetz operator} $\mL \colon \theta \wedge \Omega^{p,q}M \to \theta\wedge \Omega^{p+1,q+1}M$ is given by
 \begin{equation*}
  \mL( \theta \wedge \omega ) := \theta \wedge \omega \wedge d\theta .
 \end{equation*}
\end{definition}

Second, we define the hermitian form on $\theta\wedge\Omega^{p,q}M$ induced by the Levi form.

\begin{definition}
 Let $(M^{2n+1},T^{1,0},\theta)$ be a pseudohermitian manifold.
 The \defn{hermitian form on $\theta\wedge\Omega^{p,q}M$} is
 \begin{equation*}
  \lp \theta \wedge \omega, \theta \wedge \tau \rp := \frac{1}{p!q!}\omega_{\Alpha\bar\Beta}\otau^{\bar\Beta\Alpha} ,
 \end{equation*}
 where $\omega=\frac{1}{p!q!}\omega_{\Alpha\bar\Beta}\,\theta^\Alpha\wedge\theta^{\bar\Beta}$ and $\tau=\frac{1}{p!q!}\tau_{\Alpha\bar\Beta}\,\theta^\Alpha\wedge\theta^{\bar\Beta}$, and we set $\otau_{\Beta\bar\Alpha}:=\overline{\tau_{\Alpha\bar\Beta}}$.
\end{definition}

Third, we define the adjoint Lefschetz operator.

\begin{definition}
 Let $(M^{2n+1},T^{1,0},\theta)$ be a pseudohermitian manifold.
 The \defn{adjoint Lefschetz operator} $\trace \colon \theta \wedge \Omega^{p,q}M \to \theta \wedge \Omega^{p-1,q-1}M$ is the adjoint of the Lefschetz operator;
 i.e.\ $\trace$ is defined by
 \begin{equation*}
  \lp \trace(\theta\wedge\omega), \theta\wedge\tau \rp = \lp \theta\wedge\omega, \mL(\theta\wedge\tau) \rp
 \end{equation*}
 for all $\omega\in\Omega^{p,q}M$ and all $\tau\in\Omega^{p-1,q-1}M$.
\end{definition}

For later computations, the following coordinate expressions of the Lefschetz operator and the adjoint Lefschetz operator are useful.

\begin{lemma}
 \label{lefschetz-computation}
 Let $(M^{2n+1},T^{1,0},\theta)$ be a pseudohermitian manifold.
 Then
 \begin{align*}
  \mL(\theta\wedge\omega) & = \frac{(-1)^p}{p!q!}ih_{\alpha\bar\beta}\omega_{\Alpha\bar\Beta}\,\theta \wedge \theta^{\alpha\Alpha}\wedge\theta^{\bar\beta\bar\Beta}, \\
  \trace(\theta\wedge\omega) & = \frac{(-1)^p}{(p-1)!(q-1)!}i\omega_{\mu\Alpha^\prime}{}^\mu{}_{\bar\Beta^\prime}\,\theta \wedge \theta^{\Alpha^\prime} \wedge \theta^{\bar\Beta^\prime}
 \end{align*}
 for all $\omega=\frac{1}{p!q!}\omega_{\Alpha\bar\Beta}\,\theta^{\Alpha}\wedge\theta^{\bar\Beta}\in\Omega^{p,q}M$.
\end{lemma}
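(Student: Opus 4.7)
The plan is to prove both formulas by direct computation in the admissible coframe, since the Lefschetz operator is defined wedge-by-wedge and the adjoint Lefschetz operator is characterized by the Hermitian pairing.

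For the first formula, I would substitute the structure equation $d\theta = ih_{\alpha\bar\beta}\,\theta^\alpha\wedge\theta^{\bar\beta}$ directly into the definition:
\[
 \mL(\theta\wedge\omega) = \theta\wedge\omega\wedge d\theta = \frac{i}{p!q!}h_{\alpha\bar\beta}\omega_{\Alpha\bar\Beta}\,\theta\wedge\theta^\Alpha\wedge\theta^{\bar\Beta}\wedge\theta^\alpha\wedge\theta^{\bar\beta},
\]
and then rearrange the wedge product on the right. Moving $\theta^\alpha$ leftward past $\theta^{\bar\Beta}$ (a $q$-form) and $\theta^\Alpha$ (a $p$-form) contributes the sign $(-1)^{p+q}$, and moving $\theta^{\bar\beta}$ past $\theta^{\bar\Beta}$ contributes an additional $(-1)^q$. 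The net sign is $(-1)^{p+2q}=(-1)^p$, yielding the displayed formula.

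For the second formula, I would compute the pairing $\lp\theta\wedge\omega,\mL(\theta\wedge\tau)\rp$ for $\tau=\frac{1}{(p-1)!(q-1)!}\tau_{\Alpha^\prime\bar\Beta^\prime}\,\theta^{\Alpha^\prime}\wedge\theta^{\bar\Beta^\prime}\in\Omega^{p-1,q-1}M$ using the first formula, rewrite the coefficients of $\mL(\theta\wedge\tau)$ in fully skew-symmetric form via Equation~\eqref{eqn:skew-model}, and then read off the adjoint $\trace(\theta\wedge\omega)$ from the requirement that the result equal $\lp\trace(\theta\wedge\omega),\theta\wedge\tau\rp$. A convenient shortcut is to observe that raising and lowering with $h_{\alpha\bar\beta}$ commutes with multi-index contraction, so the trace $h^{\alpha\bar\beta}\omega_{\alpha\Alpha^\prime\bar\beta\bar\Beta^\prime}=\omega_{\mu\Alpha^\prime}{}^\mu{}_{\bar\Beta^\prime}$ appears naturally. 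The resulting factor $i$ (from $d\theta$) and the sign $(-1)^p$ (from the same wedge manipulation as before) produce the stated expression.

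The main bookkeeping obstacle is keeping track of the combinatorial factors between the $\frac{1}{p!q!}$ convention and the skew-symmetric index expansion of Equation~\eqref{eqn:skew-model}. Concretely, when one writes $\theta^\alpha\wedge\theta^\Alpha=\theta^{\alpha\Alpha}$ with the convention that $\theta^{\alpha\Alpha}$ is already fully antisymmetric in all $p+1$ indices, the contractions with $h_{\alpha\bar\beta}\omega_{\Alpha\bar\Beta}$ in the Hermitian pairing generate exactly the needed traces without additional combinatorial prefactors, because the antisymmetry of $\omega_{\Alpha\bar\Beta}$ already absorbs the permutation sums. Once this bookkeeping is handled, both formulas follow immediately from the two direct computations described above.
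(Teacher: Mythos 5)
Your proposal is correct and is exactly the ``straightforward computation'' the paper invokes: substituting $d\theta = ih_{\alpha\bar\beta}\,\theta^\alpha\wedge\theta^{\bar\beta}$ and tracking the wedge-reordering sign $(-1)^{p+2q}=(-1)^p$ gives the first identity, and reading off the adjoint from the hermitian pairing (with the $pq$ terms of the antisymmetrization collapsing against the already skew $\omega_{\Alpha\bar\Beta}$ to convert $\tfrac{1}{p!q!}$ into $\tfrac{1}{(p-1)!(q-1)!}$) gives the second. The sign and combinatorial bookkeeping you describe checks out.
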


\begin{proof}
 This is a straightforward computation.
\end{proof}

The Lefschetz decomposition is stated in terms of primitive elements of $\theta\wedge\Omega^{p,q}M$.

\begin{definition}
 \label{defn:theta-wedge-primitive}
 Let $(M^{2n+1},T^{1,0},\theta)$ be a pseudohermitian manifold.
 The space of \defn{primitive elements} of $\theta\wedge\Omega^{p,q}M$ is
 \begin{equation*}
  \theta \wedge P^{p,q}M := \ker \left( \trace \colon \theta\wedge\Omega^{p,q}M \to \theta\wedge\Omega^{p-1,q-1}M \right) .
 \end{equation*}
\end{definition}

\Cref{lefschetz-computation} implies that $\omega\in\theta\wedge\Omega^{p,q}M$ is primitive if and only if $\omega$ is trace-free.

The following theorem collects the properties of $\mL$ that we require.

\begin{proposition}
 \label{lefschetz-decomposition}
 Let $(M^{2n+1},T^{1,0},\theta)$ be a pseudohermitian manifold.
 If $\htheta=e^\Upsilon\theta$, then $\mL^{\htheta}=e^\Upsilon \mL^\theta$, where $\mL^\theta$ and $\mL^{\htheta}$ denote the Lefschetz operators determined by $\theta$ and $\htheta$, respectively.
 Moreover, if $p,q\in\bN_0$ are such that $p+q\leq n$, then
 \begin{enumerate}
  \item $\mL^{n-p-q} \colon \theta \wedge \Omega^{p,q}M \to \theta \wedge \Omega^{n-q,n-p}M$ is an isomorphism; and
  \item $\ker \left( \mL^{n+1-p-q} \colon \theta\wedge \Omega^{p,q}M \to \theta \wedge \Omega^{n+1-q,n+1-p}M\right) = \theta \wedge P^{p,q}M$.
 \end{enumerate}
\end{proposition}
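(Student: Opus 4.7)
The CR-invariance identity $\mL^{\htheta} = e^{\Upsilon} \mL^{\theta}$ reduces to a direct calculation. For any $\eta = \theta\wedge\omega \in \theta\wedge\Omega^{p,q}M$, using $d\htheta = e^{\Upsilon}\,d\theta + e^{\Upsilon}\,d\Upsilon\wedge\theta$ gives
\begin{equation*}
 \mL^{\htheta}(\eta) = \eta\wedge d\htheta = e^{\Upsilon}\,\eta\wedge d\theta + e^{\Upsilon}\,\eta\wedge d\Upsilon\wedge\theta.
\end{equation*}
The last term vanishes because $\eta$ already contains a factor of $\theta$: moving the rightmost $\theta$ past the one-form $d\Upsilon$ and past $\omega$ produces $\pm \theta\wedge\theta\wedge\omega\wedge d\Upsilon = 0$. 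Thus $\mL^{\htheta}(\eta) = e^{\Upsilon}\mL^{\theta}(\eta)$, and by \cref{Omegapq-CR-invariant} both sides lie in the same space.

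For (i) and (ii), the plan is to reduce to a pointwise, linear-algebraic statement on a symplectic vector space. Fix $x \in M$. Via the embedding of \cref{Lambdapq-embedding}, the fiber of $\theta\wedge\Omega^{p,q}M$ at $x$ is identified with $\theta_x\wedge\Lambda^{p,q}_x$, and $\mL$ acts as wedging with $d\theta_x$. Because $\iota_T\,d\theta = 0$, the two-form $d\theta_x$ is completely determined by its restriction to $H_x$, where by the nondegeneracy of the Levi form it is a symplectic form. In any admissible coframe, $d\theta = ih_{\alpha\bar\beta}\,\theta^{\alpha}\wedge\theta^{\bar\beta}$ is of pure bidegree $(1,1)$, so $\mL$ preserves the bigrading in the expected way, and in particular $\mL^{n-p-q}$ maps $\theta\wedge\Omega^{p,q}M$ into $\theta\wedge\Omega^{n-q,n-p}M$.

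Once this identification is made, both statements follow from the standard hard-Lefschetz package on a $2n$-dimensional symplectic vector space, as collected in the preceding section: for $k = p+q \leq n$, wedging $n-k$ times with the symplectic form is an isomorphism $\Lambda^k \cong \Lambda^{2n-k}$, and the kernel of $\mL^{n-k+1}$ on $\Lambda^k$ coincides with the primitive elements (those annihilated by the adjoint $\trace$). The one step that deserves attention is verifying that this second characterization of primitives —- kernel of a power of $\mL$ versus kernel of $\trace$ —- matches our \cref{defn:theta-wedge-primitive}. This is the classical $\mathfrak{sl}_2$-triple argument for $(\mL,\trace,[\mL,\trace])$ acting on the exterior algebra, and \cref{lefschetz-computation} together with a direct computation of $[\mL,\trace]$ on pure bigrades $(p,q)$ gives the required commutation relations. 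Restricting the global statements to each fiber and invoking these facts on $\theta_x\wedge\Lambda^{p,q}_x$ yields both (i) and (ii) at every point; since all operators involved are $C^{\infty}(M;\bC)$-linear, the pointwise isomorphism and pointwise kernel computation assemble into the global claims.
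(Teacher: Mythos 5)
Your proposal is correct and follows essentially the same route as the paper: the conformal covariance is the same direct computation (the $d\Upsilon\wedge\theta$ term dies against the existing factor of $\theta$), and (i)–(ii) are reduced to the standard hard Lefschetz statements for the exterior algebra of a symplectic vector space, which the paper simply cites from Huybrechts (Proposition~1.2.30(iv),(v)) rather than rederiving via the $\mathfrak{sl}_2$-triple as you sketch.
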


\begin{proof}
 Let $\omega\in\Omega^{p,q}M$.
 \Cref{Omegapq-CR-invariant} yields an $\homega\in\hOmega^{p,q}M$ such that $\theta\wedge\omega=\htheta\wedge\homega$.
 A direct computation yields
 \begin{equation*}
  \mL^{\htheta}(\htheta\wedge\homega) = e^\Upsilon\,\theta\wedge\omega\wedge d\theta .
 \end{equation*}
 Therefore $\mL^{\htheta}=e^\Upsilon \mL^\theta$.
 
 Since the Levi form is nondegenerate, the restriction $d\theta\rv_{H}$ of $d\theta$ to the contact distribution $H$ is a symplectic form on $H$.
 Let $p,q\in\bN_0$ be such that $p+q\leq n$.
 Since $d\theta\rv_H\in\Lambda^{1,1}$, it holds that
 \begin{equation*}
  (d\theta\rv_H \wedge )^{n-p-q} \colon \Lambda^{p,q} \to \Lambda^{n-q,n-p}
 \end{equation*}
 is an isomorphism~\citelist{ \cite{Huybrechts2005}*{Proposition~1.2.30(iv)} }.
 The last claim follows from the alternative characterization~\cite{Huybrechts2005}*{Proposition~1.2.30(v)} of primitive elements of $\Lambda^{p,q}$.
\end{proof}

We can use the Lefschetz decomposition to identify, in a CR invariant way, the primitive part of a complex-valued differential form; see \cref{projection-to-R} below.
The key step is to identify, in a CR invariant way, the non-primitive part.

\begin{theorem}
 \label{inverse-lefschetz}
 Let $(M^{2n+1},T^{1,0},\theta)$ be a pseudohermitian manifold.
 If $k \leq n$, then for each $\omega \in \COmega^kM$ there is a unique $\theta \wedge \xi \in \theta \wedge \COmega^{k-2}M$ such that
 \begin{equation}
  \label{eqn:defn-Gamma-1}
  \theta \wedge \omega \wedge d\theta^{n+1-k} = \theta \wedge \xi \wedge d\theta^{n+2-k} .
 \end{equation}
 If $k \geq n+1$, then for each $\omega \in \COmega^kM$ there is a unique $\theta \wedge \xi \in \theta \wedge \COmega^{2n-k}M$ such that
 \begin{equation}
  \label{eqn:defn-Gamma-2}
  \theta \wedge \omega = \theta \wedge \xi \wedge d\theta^{k-n} .
 \end{equation}
 Moreover, the linear map $\Gamma \colon \COmega^kM \to \theta \wedge \COmega^{k-2}M$,
 \begin{equation*}
  \Gamma\omega :=
  \begin{cases}
  	\theta \wedge \xi , & \text{if $0 \leq k \leq n$}, \\
  	\theta \wedge \xi \wedge d\theta^{k-n-1} , & \text{if $n+1 \leq k \leq 2n+1$} ,
  \end{cases}
 \end{equation*}
 is such that
 \begin{enumerate}
  \item if $\htheta = e^\Upsilon\theta$, then $\Gamma^{\htheta} = \Gamma^\theta$;
  \item $\Gamma(\theta \wedge \omega) = 0$ for all $\omega \in \COmega^kM$;
  \item $\Gamma(\omega \wedge d\theta) = \theta \wedge \omega$ for all $\omega \in \COmega^{k-2}M$, $k \leq n+1$; and
  \item if $\omega\rv_H \in \Lambda^{p,q}$, then $\Gamma\omega \in \theta \wedge \Omega^{p-1,q-1}M$.
 \end{enumerate}
\end{theorem}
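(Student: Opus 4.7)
The plan is to reduce existence, uniqueness, and the four properties to the Lefschetz isomorphism of \cref{lefschetz-decomposition}, working componentwise in the bigrading. Using the splitting $\COmega^kM \cong \Omega^kM \oplus (\theta \wedge \Omega^{k-1}M)$ from \cref{Lambdapq-embedding}, I would write $\omega = \omega_0 + \theta\wedge\eta$ with $\omega_0\in\Omega^kM$ and similarly $\xi = \xi_0 + \theta\wedge\tau$, so that the equations for $\Gamma\omega$ depend only on the horizontal parts $\omega_0$ and $\xi_0$. Decomposing further by bidegree, $\omega_0 = \sum_{p+q=k}\omega_0^{p,q}$, both sides of the defining equations split into components living in bundles $\theta\wedge\Omega^{p',q'}M$ related by powers of $\mL$.

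For $k\le n$, equation \eqref{eqn:defn-Gamma-1} becomes $\mL^{n+1-k}(\theta\wedge\omega_0) = \mL^{n+2-k}(\theta\wedge\xi_0)$. Since $\mL$ shifts bidegree by $(1,1)$, the $(p,q)$-component of the left side lies in $\theta\wedge\Omega^{n+1-q,n+1-p}M$, and \cref{lefschetz-decomposition}(i) identifies $\mL^{n+2-k} = \mL^{n-(p-1)-(q-1)}$ as an isomorphism from $\theta\wedge\Omega^{p-1,q-1}M$ onto that very space, producing a unique $\xi_0^{p-1,q-1}$. The main bookkeeping obstacle is at the extreme bidegrees $(p,q) = (k,0)$ and $(0,k)$, where $(p-1,q-1)$ would have a negative index; this is resolved because $\Omega^{n+1-q,n+1-p}M$ vanishes whenever $p=0$ or $q=0$, so the left side has no contribution from those components. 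For $k\ge n+1$, equation \eqref{eqn:defn-Gamma-2} reads $\mL^{k-n}(\theta\wedge\xi_0) = \theta\wedge\omega_0$, and matching $(r,s) = (n-q, n-p)$ (legitimate because $p,q\in[k-n,n]$) makes $\mL^{k-n} = \mL^{n-(r+s)}$ an isomorphism onto $\theta\wedge\Omega^{p,q}M$, again determining a unique $\xi_0^{r,s}$. In both cases uniqueness of $\theta\wedge\xi$ is immediate once one observes that the $\theta\wedge\tau$ ambiguity in $\xi$ is invisible after wedging with $\theta$.

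For property (i), the key computation is
\begin{equation*}
 \htheta \wedge \alpha \wedge d\htheta^m = e^{(m+1)\Upsilon}\,\theta\wedge\alpha\wedge d\theta^m
\end{equation*}
for every form $\alpha$, derived by expanding $(d\theta + d\Upsilon\wedge\theta)^m = d\theta^m + m\,d\theta^{m-1}\wedge d\Upsilon\wedge\theta$ (higher powers vanish because $(d\Upsilon\wedge\theta)^2 = 0$) and noting that the second summand is killed after wedging on the left by $\htheta = e^\Upsilon\theta$, which produces a repeated factor of $\theta$. Applied to both sides of the defining equation for $\hxi$ and compared with the equation for $\xi$, this identity forces $\theta\wedge\hxi$ to be an explicit conformal rescaling of $\theta\wedge\xi$, and the rescaling factors cancel exactly against the factor arising from $d\htheta^{k-n-1}$ in the second case, yielding $\Gamma^\htheta\omega = \Gamma^\theta\omega$. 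Properties (ii) and (iii) follow from the injectivity of $\mL^{n+2-k}$ (resp.\ $\mL^{k-n}$) established above: for (ii) the left side of the defining equation vanishes because $\theta\wedge\theta=0$, forcing $\theta\wedge\xi = 0$; for (iii) substituting $\omega\wedge d\theta$ into the defining equation gives $\mL^{n+2-k}(\theta\wedge\xi) = \mL^{n+2-k}(\theta\wedge\omega)$ (with the analogous identity at $k = n+1$), from which $\theta\wedge\xi = \theta\wedge\omega$. Property (iv) is by direct inspection: the construction produces $\xi_0$ of pure bidegree $(p-1,q-1)$ in the first case and of bidegree $(n-q,n-p)$ in the second, the latter becoming $(p-1,q-1)$ after wedging with $d\theta^{k-n-1}$.
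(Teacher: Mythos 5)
Your proposal is correct and follows essentially the same route as the paper: both reduce existence and uniqueness of $\theta\wedge\xi$ to the Lefschetz isomorphism of \cref{lefschetz-decomposition} applied degree by degree, and derive properties (i)--(iv) directly from the defining equations \eqref{eqn:defn-Gamma-1} and \eqref{eqn:defn-Gamma-2}. The only difference is one of explicitness — you unpack the bidegree bookkeeping (including the vanishing at the extreme bidegrees) and the conformal rescaling computation for (i), which the paper leaves implicit — and your details check out.
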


\begin{proof}
 We first show that $\Gamma$ is well-defined.
 Let $\omega \in \COmega^kM$.
 
 Suppose first that $k \leq n$.
 \Cref{lefschetz-decomposition} implies that
 \begin{equation*}
  \mL^{n+2-k} \colon \theta \wedge \COmega^{k-2}M \to \theta \wedge \COmega^{2n+2-k}M
 \end{equation*}
 is an isomorphism.
 Thus there is a unique $\theta \wedge \xi \in \theta \wedge \COmega^{k-2}M$ such that Equation~\eqref{eqn:defn-Gamma-1} holds.
 
 Suppose next that $k \geq n+1$.
 \Cref{lefschetz-decomposition} implies that
 \begin{equation*}
  \mL^{k-n} \colon \theta \wedge \COmega^{2n-k}M \to \theta \wedge \COmega^kM
 \end{equation*}
 is an isomorphism.
 Thus there is a unique $\theta \wedge \xi \in \COmega^{2n-k}M$ such that Equation~\eqref{eqn:defn-Gamma-2} holds.
 
 Since the above cases are exhaustive, $\Gamma$ is well-defined.
 
 It follows immediately from Equations~\eqref{eqn:defn-Gamma-1} and~\eqref{eqn:defn-Gamma-2} that $\Gamma$ is CR invariant and that $\Gamma(\theta \wedge \omega) = 0$ for all $\omega \in \COmega^kM$.
 Moreover, Equations~\eqref{eqn:defn-Gamma-1} and~\eqref{eqn:defn-Gamma-2} readily imply that if $\omega \in \COmega^{k-2}M$, $k \leq n+1$, then $\Gamma(\omega \wedge d\theta) = \theta \wedge \omega$.
 The final claim follows directly from the application of \cref{lefschetz-decomposition}.
\end{proof}

\part{The bigraded Rumin complex and cohomology}
\label{part:intrinsic}

In this part we construct the bigraded Rumin complex using subsheaves of the sheaf of complex-valued differential forms.
This approach gives concrete characterizations of the de Rham cohomology groups, the Kohn--Rossi cohomology groups, and the cohomology groups with coefficients in the sheaf of CR pluriharmonic functions, as well as an explicitly-described long exact sequence relating these groups.
This approach also identifies balanced $A_\infty$-structures on the Rumin and bigraded Rumin complexes.
These recover the usual algebra structure on de Rham cohomology and establish a natural algebra structure on Kohn--Rossi cohomology.
We stress that our constructions are CR invariant and apply to all CR manifolds.
A more specific description of these results, as well as the structure of this part of this article, is as follows:

In \cref{sec:rumin_complex} we define a complex of subsheaves of the sheaf of differential forms and show that it is equivalent to Rumin's complex~\cite{Rumin1990}.
Specifically, given a CR manifold $(M^{2n+1},T^{1,0})$, we define
\begin{align*}
 \sR^k & := \left\{ \omega \in \sA^k \suchthatcolon \theta \wedge \omega \wedge d\theta^{n+1-k}=0, \theta \wedge d\omega \wedge d\theta^{n-k}=0 \right\}, && \text{if $k\leq n$}, \\
 \sR^k & := \left\{ \omega \in \sA^k \suchthatcolon \theta \wedge \omega = 0, \theta \wedge d\omega = 0 \right\}, && \text{if $k\geq n+1$} .
\end{align*}
where $\theta$ is a local contact form and $\sA^k$ is the sheaf of differential $k$-forms.
We also show that the \defn{Rumin complex}
\begin{equation*}
 0 \longrightarrow \sR^0 \overset{d}{\longrightarrow} \sR^1 \overset{d}{\longrightarrow} \dotsm \overset{d}{\longrightarrow} \sR^{2n+1} \overset{d}{\longrightarrow} 0
\end{equation*}
is a complex.
When $k\geq n+1$, the sheaf $\sR^k$ is identical to the corresponding sheaf defined by Rumin;
when $k\leq n$, we construct a CR invariant isomorphism between $\sR^k$ and the corresponding sheaf defined by Rumin.
We define $H_R^k(M;\bR)$ as the $k$-th cohomology group of this complex.
Like Rumin's original construction~\cite{Rumin1990}, the results of \cref{sec:rumin_complex} are applicable on contact manifolds.

In \cref{sec:complex} we define a bigraded complex of subsheaves of the sheaf of complex-valued differential forms and show that it is equivalent to Garfield and Lee's complex~\cites{GarfieldLee1998,Garfield2001}.
Specifically, given a CR manifold $(M^{2n+1},T^{1,0})$, we define
\begin{align*}
 \sR^{p,q} & := \left\{ \omega \in \CsR^{p+q} \suchthatcolon \omega\rv_{\CH} \in \Lambda^{p,q} \right\}, && \text{if $p+q\leq n$}, \\
 \sR^{p,q} & := \left\{ \omega \in \CsR^{p+q} \suchthatcolon (T \contr \omega)\rv_{\CH} \in \Lambda^{p-1,q} \right\}, && \text{if $p+q\geq n+1$} ,
\end{align*}
where $\CsR^k:=\sR^k\otimes\bC$ and $T$ is a local Reeb vector field.
By splitting the exterior derivative according to types, we obtain the \defn{bigraded Rumin complex}
\begin{equation*}
 \begin{tikzpicture}[baseline=(current bounding box.center),yscale=0.8,xscale=1.0]
  \node (0_0) at (0,0) {$\sR^{0,0}$};
  \node (1_0) at (1,1) {$\sR^{1,0}$};
  \node (1_1) at (2,0) {$\sR^{1,1}$};
  \node (0_1) at (1,-1) {$\sR^{0,1}$};
  \node (n-1_0) at (3,3) {$\sR^{n-1,0}$};
  \node (n_0) at (4,4) {$\sR^{n,0}$};
  \node (n-1_1) at (4,2) {$\sR^{n-1,1}$};
  \node (n+1_1) at (7,3) {$\sR^{n+1,1}$};
  \node (n+1_0) at (6,4) {$\sR^{n+1,0}$};
  \node (n_1) at (6,2) {$\sR^{n,1}$};
  \node (0_n-1) at (3,-3) {$\sR^{0,n-1}$};
  \node (2_n) at (7,-3) {$\sR^{2,n}$};
  \node (2_n-1) at (6,-2) {$\sR^{2,n-1}$};
  \node (1_n) at (6,-4) {$\sR^{1,n}$};
  \node (0_n) at (4,-4) {$\sR^{0,n}$};
  \node (1_n-1) at (4,-2) {$\sR^{1,n-1}$};
  \node (n_n-1) at (8,0) {$\sR^{n,n-1}$};
  \node (n+1_n) at (10,0) {$\sR^{n+1,n}$.};
  \node (n+1_n-1) at (9,1) {$\sR^{n+1,n-1}$};
  \node (n_n) at (9,-1) {$\sR^{n,n}$};
  \draw [->] (0_0) to node[pos=0.8, left] {\tiny $\db$} (1_0);
  \draw [->] (1_0) to node[pos=0.8, left] {\tiny $\db$} (1.7,1.7) ;
  \draw [loosely dotted, line width=0.3mm] (1.9,1.9) -- (2.2,2.2);
  \draw [->] (2.3,2.3) to node[pos=0.8, left] {\tiny $\db$} (n-1_0);
  \draw [->] (n-1_0) to node[pos=0.8, left] {\tiny $\db$} (n_0);
  \draw [->] (0_1) to node[pos=0.8, left] {\tiny $\db$} (1_1);
  \draw [->] (1_1) to node[pos=0.8, left] {\tiny $\db$} (2.7, 0.7) ;
  \draw [loosely dotted, line width=0.3mm] (2.9,0.9) -- (3.2, 1.2);
  \draw [->] (3.3, 1.3) to node[pos=0.8, left] {\tiny $\db$} (n-1_1);
  \draw [->] (n-1_1) to node[pos=0.2, left] {\tiny $\db$} (n+1_0);
  \draw [loosely dotted, line width=0.3mm] (1.9,-1.9) -- (2.2,-2.2);
  \draw [->] (5.5,1.5) -- (n_1);
  \draw [->] (n_1) to node[pos=0.8, left] {\tiny $\db$} (n+1_1);
  \draw [->] (0_n-1) to node[pos=0.8, left] {\tiny $\db$} (1_n-1);
  \draw [->] (1_n-1) -- (4.5,-1.5);
  \draw [loosely dotted, line width=0.3mm] (4.6,-1.4) -- (4.9,-1.1);
  \draw [->] (0_n) to node[pos=0.2, left] {\tiny $\db$} (2_n-1);
  \draw [->] (2_n-1) to node[pos=0.8, left] {\tiny $\db$} (6.7,-1.3);
  \draw [loosely dotted, line width=0.3mm] (6.9,-1.1) -- (7.2,-0.8);
  \draw [->] (7.3,-0.7) to node[pos=0.8, left] {\tiny $\db$} (n_n-1);
  \draw [->] (n_n-1) to node[pos=0.8, left] {\tiny $\db$} (n+1_n-1);
  \draw [->] (1_n) to node[pos=0.8, left] {\tiny $\db$} (2_n);
  \draw [->] (2_n) to node[pos=0.8, left] {\tiny $\db$} (7.7,-2.3);
  \draw [loosely dotted, line width=0.3mm] (7.9,-2.1) -- (8.2,-1.8);
  \draw [->] (8.3,-1.7) to node[pos=0.8, left] {\tiny $\db$} (n_n);
  \draw [->] (n_n) to node[pos=0.8, left] {\tiny $\db$} (n+1_n);
  \draw [->] (0_0) to node[left] {\tiny $\dbbar$} (0_1);
  \draw [->] (0_1) to node[left] {\tiny $\dbbar$} (1.7,-1.7);
  \draw [loosely dotted, line width=0.3mm] (5.15,1.15) -- (5.5,1.5);
  \draw [->] (2.3,-2.3) to node[left] {\tiny $\dbbar$} (0_n-1);
  \draw [->] (0_n-1) to node[left] {\tiny $\dbbar$} (0_n);
  \draw [->] (1_0) to node[left] {\tiny $\dbbar$} (1_1);
  \draw [->] (1_1) to node[left] {\tiny $\dbbar$} (2.7,-0.7);
  \draw [loosely dotted, line width=0.3mm] (2.9,-0.9) -- (3.2,-1.2);
  \draw [->] (3.3,-1.3) to node[left] {\tiny $\dbbar$} (1_n-1);
  \draw [->] (1_n-1) to node[pos=0.2, left] {\tiny $\dbbar$} (1_n);
  \draw [->] (5.5,-1.5) -- (2_n-1);
  \draw [loosely dotted, line width=0.3mm] (5.15,-1.15) -- (5.5,-1.5);
  \draw [->] (2_n-1) to node[left] {\tiny $\dbbar$} (2_n);
  \draw [->] (n-1_0) to node[pos=0.2, left] {\tiny $\dbbar$} (n-1_1);
  \draw [->] (n-1_1) -- (4.5,1.5);
  \draw [loosely dotted, line width=0.3mm] (4.6,1.4) -- (4.9,1.1) ;
  \draw [->] (n_0) to node[pos=0.2, left] {\tiny $\dbbar$} (n_1);
  \draw [->] (n_1) to node[left] {\tiny $\dbbar$} (6.7,1.3);
  \draw [loosely dotted, line width=0.3mm] (6.9,1.1) -- (7.2,0.8);
  \draw [->] (7.3,0.7) to node[left] {\tiny $\dbbar$} (n_n-1);
  \draw [->] (n_n-1) to node[left] {\tiny $\dbbar$} (n_n);
  \draw [->] (n+1_0) to node[left] {\tiny $\dbbar$} (n+1_1);
  \draw [->] (n+1_1) to node[left] {\tiny $\dbbar$} (7.7,2.3);
  \draw [loosely dotted, line width=0.3mm] (7.9,2.1) -- (8.2,1.8);
  \draw [->] (8.3,1.7) to node[left] {\tiny $\dbbar$} (n+1_n-1);
  \draw [->] (n+1_n-1) to node[left] {\tiny $\dbbar$} (n+1_n);
  \draw [->] (n_0) to node[above] {\tiny $\dhor$} (n+1_0);
  \draw [->] (n-1_1) to node[above] {\tiny $\dhor$} (n_1);
  \draw [->] (1_n-1) to node[above] {\tiny $\dhor$} (2_n-1);
  \draw [->] (0_n) to node[above] {\tiny $\dhor$} (1_n);
 \end{tikzpicture}
\end{equation*}
That this is a bigraded complex means that the sum of all compositions of two maps vanishes;
see \cref{justification-of-bigraded-complex} below for a more explicit statement.
It follows that for each $p\in\{0,\dotsc,n+1\}$, the \defn{Kohn--Rossi complex}
\begin{equation*}
 0 \longrightarrow \sR^{p,0} \overset{\dbbar}{\longrightarrow} \sR^{p,1} \overset{\dbbar}{\longrightarrow} \dotsm \overset{\dbbar}{\longrightarrow} \sR^{p,n} \overset{\dbbar}{\longrightarrow} 0 
\end{equation*}
is a complex.
We define $H_R^{p,q}(M)$ as the $q$-th cohomology group of this complex.
In particular, $H_R^{p,0}(M)$ is the space of global sections of the sheaf $\sO^p:=\ker\left( \dbbar \colon \sR^{p,0} \to \sR^{p,1} \right)$ of CR holomorphic $(p,0)$-forms.

There are two important comments to make about the bigraded Rumin complex relative to the literature.
First, we have made a deliberate change of notation from the prior literature~\cites{Garfield2001,GarfieldLee1998,AkahoriMiyajima1993,AkahoriGarfieldLee2002}, where $d^{\prime\prime}$ denotes $\dbbar$ and $d^\prime$ denotes the operator mapping $\sR^{p,q}$ to $\sR^{p+1,q}$.
On the one hand, our change of notation is such that one \emph{also} obtains a complex by following the operators $\db$ along a diagonal;
by contrast, it is not true that $d^\prime \circ d^\prime = 0$ in general.
On the other hand, the operators $\dbbar$ and $\db$ are always conjugates of one another, yielding an obvious isomorphism between the cohomology groups $H_R^{p,q}(M)$ and the corresponding cohomology groups defined in terms of the $\db$-complex.
Note that this change of notation complicates some notation for the spectral sequence associated to the bigrading $\sR^{p,q}$; see \cref{sec:frolicher}.
Second, our sheaves $\sR^{p,q}$, $p+q\geq n+1$, are identical to the corresponding sheaves defined by Garfield and Lee~\cites{Garfield2001,GarfieldLee1998}; and there is a CR invariant isomorphism between our sheaves $\sR^{p,q}$, $p+q\leq n$, and the corresponding sheaves defined by Garfield and Lee.

In \cref{sec:cohomology} we define the cohomology groups $H_R^k(M;\sP)$.
Specifically, given a CR manifold $(M^{2n+1},T^{1,0})$, we define sheaves $\sS^k$ by
\begin{equation*}
 \sS^k :=
 \begin{cases}
  \sR^0, & \text{if $k=0$}, \\
  \Real(\sR^{k,1} \oplus \sR^{k-1,2} \oplus \dotsm \oplus \sR^{1,k}), & \text{if $1\leq k\leq n-1$}, \\
  \sR^{k+1} , & \text{if $k\geq n$} .
 \end{cases}
\end{equation*}
It follows immediately that the \defn{CR pluriharmonic complex}
\begin{equation*}
 0 \longrightarrow \sS^0 \overset{id\dbbar}{\longrightarrow} \sS^1 \overset{d}{\longrightarrow} \sS^1 \overset{d}{\longrightarrow} \dotsm \overset{d}{\longrightarrow} \sS^{2n} \overset{d}{\longrightarrow} 0
\end{equation*}
is a complex.
Note that $\sP:=\ker\bigl( id\dbbar \colon \sS^0 \to \sS^1 \bigr)$ is the sheaf of CR pluriharmonic functions~\cite{Lee1988}.
We define $H_R^k(M;\sP)$ as the $k$-th cohomology group of this complex.

In \cref{sec:les} we give an explicit description of the long exact sequence
\[ \dotsm \longrightarrow H_R^k(M;\bR) \longrightarrow H_R^{0,k}(M) \longrightarrow H_R^k(M;\sP) \longrightarrow H_R^{k+1}(M;\bR) \longrightarrow \dotsm \]
using the bigraded Rumin complex.

In \cref{sec:wedge} we define balanced $A_\infty$-structures~\cites{Keller2001,Markl1992} on the Rumin and bigraded Rumin complexes.
The $A_\infty$-structure $(m_j)_{j\in\bN}$ on the Rumin complex has $m_1=d$ and is such that $m_2$ is induced by the exterior product on differential forms, $m_3$ is nonzero, but $m_j=0$ for $j\geq4$.
In particular, the Rumin complex with our $A_\infty$-structure is not a differential graded algebra.
The $A_\infty$-structure $(m_j^{\dbbar})_{j\in\bN}$ for the bigraded Rumin complex is similar, except that $m_1^{\dbbar}=\dbbar$.
These structures are similar to the $A_\infty$-structure on the differential forms of a symplectic manifold found by Tsai, Tseng and Yau~\cite{TsaiTsengYau2016}.
When combined with the isomorphisms in \cref{sec:resolution}, our $A_\infty$-structures on the Rumin and bigraded Rumin complex induce the usual algebra structure on de Rham cohomology and a new algebra structure on Kohn--Rossi cohomology, respectively.

In \cref{sec:resolution} we relate $H_R^k(M;\bR)$, $H_R^{p,q}(M)$, and $H_R^k(M;\sP)$ to the classical de Rham cohomology groups, Kohn--Rossi cohomology groups, and cohomology groups with coefficients in the sheaf of CR pluriharmonic functions, respectively.
For the de Rham groups, we adapt an argument of Rumin~\cite{Rumin1990} to show that the Rumin complex gives an acyclic resolution of the constant sheaf $\underline{\bR}$, and hence $H_R^k(M;\bR)$ is isomorphic to the $k$-th de Rham cohomology group.
For the Kohn--Rossi groups, we exhibit an isomorphism between $H_R^{p,q}(M)$ and the corresponding cohomology group determined by Tanaka's intrinsic characterization~\cite{Tanaka1975} of the Kohn--Rossi complex~\cite{KohnRossi1965}.
We also show that the Kohn--Rossi and CR pluriharmonic complexes give partial acyclic resolutions of the sheaves of CR holomorphic $(p,0)$-forms and CR pluriharmonic functions, respectively, on strictly pseudoconvex, locally embeddable CR manifolds.

\section{The Rumin complex}
\label{sec:rumin_complex}

In this section we construct the Rumin complex as a CR invariant complex of subsheaves of the sheaf of real-valued differential forms.
Moreover, we show that our complex is isomorphic to Rumin's original construction~\cite{Rumin1990}.

The sheaves in our construction of the Rumin complex are defined as follows:

\begin{definition}
 Let $(M^{2n+1},T^{1,0})$ be a CR manifold.
 Given $k\in\{0,\dotsc,2n+1\}$, set
 \begin{align*}
  \sR^k & := \left\{ \omega\in\sA^k \suchthatcolon \theta\wedge\omega\wedge d\theta^{n+1-k}=0, \theta\wedge d\omega \wedge d\theta^{n-k}=0 \right\} , && \text{if $k \leq n$}, \\
  \sR^k & := \left\{ \omega \in \sA^k \suchthatcolon \theta \wedge \omega = 0, \theta \wedge d\omega = 0 \right\} , && \text{if $k \geq n+1$},
 \end{align*}
 where $\theta$ is a local contact form and $\sA^k$ is the sheaf of differential $k$-forms. 
\end{definition}

It is clear that $\sR^k$ is CR invariant and that $d(\sR^k) \subset \sR^{k+1}$ for all $k\in\{0,\dotsc,n\}$.
We thus obtain the Rumin complex:

\begin{definition}
 \label{defn:rumin-complex}
 Let $(M^{2n+1},T^{1,0})$ be a CR manifold.
 The \defn{Rumin complex} is
 \[ 0 \longrightarrow \sR^0 \overset{d}{\longrightarrow} \sR^1 \overset{d}{\longrightarrow} \dotsm \overset{d}{\longrightarrow} \sR^{2n+1} \overset{d}{\longrightarrow} 0 . \]
 Given $k\in\{0,\dotsc,2n+1\}$, we set
 \begin{equation*}
  H_R^k(M;\bR) := \frac{\ker\left(d\colon\mR^k\to\mR^{k+1}\right)}{\im\left(d\colon\mR^{k-1}\to\mR^k\right)} ,
 \end{equation*}
 where $\mR^k$ is the space of global sections of $\sR^k$, and $\mR^{-1}:=0$ and $\mR^{2n+2}:=0$.
\end{definition}

\begin{remark}
 Throughout this article we will use the convention that script fonts (e.g.\ $\sR$) denote sheaves and calligraphic fonts (e.g.\ $\mR$) denote their corresponding spaces of global sections.
\end{remark}

In \cref{sec:resolution}, we show that $H_R^k(M;\bR)$ is isomorphic to the $k$-th de Rham cohomology group.

The linear map $\Gamma$ from \cref{inverse-lefschetz} determines a CR invariant projection $\pi \colon \sA^k \to \sR^k$ which should be regarded as mapping $\omega \in \sA^k$ to its primitive part $\pi\omega \in \sR^k$.

\begin{lemma}
 \label{projection-to-R}
 Let $(M^{2n+1},T^{1,0})$ be a CR manifold.
 Given $\omega \in \sA^k$, the formula
 \begin{equation}
  \label{eqn:projection-to-R}
  \pi\omega := \omega - d\Gamma\omega - \Gamma d\omega
 \end{equation}
 defines a CR invariant projection $\pi \colon \sA^k \to \sR^k$.
 Moreover, if $k \leq n$, then
 \begin{equation*}
  \ker\pi = \left\{ \theta \wedge \alpha + \beta \wedge d\theta \suchthatcolon \alpha \in \sA^{k-1}, \beta \in \sA^{k-2} \right\} .
 \end{equation*}
\end{lemma}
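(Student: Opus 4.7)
The plan is to verify the claim in four short steps, all of which are direct computations once one keeps track of the two piecewise definitions of $\Gamma$. CR invariance of $\pi$ is immediate from that of $\Gamma$ (\cref{inverse-lefschetz}(i)) together with the fact that the exterior derivative does not depend on the choice of contact form.

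To show $\pi\omega \in \sR^k$ when $k \leq n$, I would substitute \cref{eqn:projection-to-R} into the defining identities of $\sR^k$ and expand. The key observations are that $\Gamma$ always produces a form with $\theta$ as a factor, so $\theta \wedge \Gamma d\omega = 0$ automatically, and the $\theta \wedge d\xi$ piece of $d(\theta \wedge \xi) = d\theta \wedge \xi - \theta \wedge d\xi$ is killed by wedging with another $\theta$. What remains is $\theta \wedge d\theta \wedge \xi \wedge d\theta^{n+1-k} = \theta \wedge \xi \wedge d\theta^{n+2-k}$, which equals $\theta \wedge \omega \wedge d\theta^{n+1-k}$ by the defining relation \cref{eqn:defn-Gamma-1} for $\Gamma\omega = \theta \wedge \xi$; the identity $\theta \wedge d\pi\omega \wedge d\theta^{n-k} = 0$ reduces via $d\pi\omega = d\omega - d\Gamma d\omega$ to the same computation applied to $d\omega$, where one must split into the subcases $k + 1 \leq n$ and $k + 1 = n + 1$ and use the appropriate formula for $\Gamma$. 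The case $k \geq n+1$ is handled analogously, using \cref{eqn:defn-Gamma-2} in place of \cref{eqn:defn-Gamma-1}.

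Third, the projection identity $\pi^2 = \pi$ reduces to showing $\pi\omega = \omega$ when $\omega \in \sR^k$, i.e., $\Gamma\omega = 0 = \Gamma d\omega$. Both vanishings follow from uniqueness in \cref{inverse-lefschetz}: the defining conditions of $\sR^k$ make the left-hand sides of \cref{eqn:defn-Gamma-1,eqn:defn-Gamma-2} equal to zero, so the injectivity of the relevant power of $\mL$ granted by \cref{lefschetz-decomposition}(i) forces the corresponding $\theta \wedge \xi$ to vanish.

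Finally, for the kernel description when $k \leq n$: the inclusion $\supseteq$ is a direct check using properties (ii) and (iii) of \cref{inverse-lefschetz}. For $\omega = \theta \wedge \alpha$, property (ii) gives $\Gamma\omega = 0$, while $\Gamma d(\theta \wedge \alpha) = \Gamma(\alpha \wedge d\theta) - \Gamma(\theta \wedge d\alpha) = \theta \wedge \alpha$ by (iii) and (ii), yielding $\pi(\theta \wedge \alpha) = 0$; the computation for $\pi(\beta \wedge d\theta)$ is similar, using that $d(\beta \wedge d\theta) = d\beta \wedge d\theta$. For the reverse inclusion, suppose $\pi\omega = 0$; then $\omega = d\Gamma\omega + \Gamma d\omega$, and writing $\Gamma\omega = \theta \wedge \xi$ and $\Gamma d\omega = \theta \wedge \xi'$ and expanding gives $\omega = \xi \wedge d\theta + \theta \wedge (\xi' - d\xi)$, which has the asserted form. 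The main bookkeeping nuisance throughout is the boundary case $k = n$, where $d\omega$ lies in $\sA^{n+1}$ and one must switch between \cref{eqn:defn-Gamma-1,eqn:defn-Gamma-2} for $\Gamma d\omega$; but the computations are identical in substance, so I expect no genuine obstacle.
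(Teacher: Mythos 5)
Your proposal is correct and follows essentially the same route as the paper: membership of $\pi\omega$ in $\sR^k$ by direct substitution into the defining identities of $\sR^k$ using the defining equations for $\Gamma$, the projection property via the vanishing of $\Gamma$ on $\sR^k$ (and on $d(\sR^k)$) forced by uniqueness in \cref{inverse-lefschetz}, and the kernel description via properties (ii)--(iii) of \cref{inverse-lefschetz} for one inclusion and the containment $\Gamma(\sA^k)\subset\theta\wedge\sA^{k-2}$ for the other. The only cosmetic difference is that you make the boundary subcase $k=n$ (where $\Gamma d\omega$ is computed from \cref{eqn:defn-Gamma-2}) explicit, whereas the paper absorbs it into a uniform formula.
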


\begin{proof}
 It is clear that $\pi$ is linear.
 We begin by showing that if $\omega \in \sA^k$, then $\pi\omega \in \sR^k$.
 
 Suppose first that $k \leq n$.
 Since $\Gamma$ takes values in $\theta \wedge \COmega^{k-2}M$, it holds that $\theta \wedge d\Gamma\omega = \Gamma\omega \wedge d\theta$.
 Therefore
 \begin{align*}
  \theta \wedge \pi\omega \wedge d\theta^{n+1-k} & = \theta \wedge \omega \wedge d\theta^{n+1-k} - \Gamma\omega \wedge d\theta^{n+2-k} , \\
  \theta \wedge d\pi\omega \wedge d\theta^{n-k} & = \theta \wedge d\omega \wedge d\theta^{n-k} - \Gamma d\omega \wedge d\theta^{n+1-k} .
 \end{align*}
 \Cref{inverse-lefschetz} now implies that $\pi\omega \in \sR^k$.
 
 Suppose next that $k \geq n+1$.
 We compute that
 \begin{align*}
  \theta \wedge \pi\omega & = \theta \wedge \omega - \Gamma \omega \wedge d\theta , \\
  \theta \wedge d\pi\omega & = \theta \wedge d\omega - \Gamma d\omega \wedge d\theta .
 \end{align*}
 \Cref{inverse-lefschetz} now implies that $\pi\omega \in \sR^k$.
 
 Next, \cref{inverse-lefschetz} implies that if $\omega \in \sR^k$, then $\Gamma\omega = 0$.
 Since $d(\sR^k) \subset \sR^{k+1}$, we conclude that $\pi$ is a projection.
 
 Suppose now that $\alpha \in \sA^{k-1}$ and $\beta \in \sA^{k-2}$, $k \leq n$.
 \Cref{inverse-lefschetz} implies that $\Gamma(\theta \wedge \alpha) = 0$ and $\Gamma d(\theta \wedge \alpha) = \theta \wedge \alpha$.
 Hence $\pi(\theta \wedge \alpha) = 0$.
 \Cref{inverse-lefschetz} also implies that $\Gamma(\beta \wedge d\theta) = \theta \wedge \beta$ and $\Gamma(d\beta \wedge d\theta) = \theta \wedge d\beta$.
 Hence $\pi(\beta \wedge d\theta) = 0$.
 
 Finally, suppose that $\omega \in \sA^k$, $k \leq n$, is such that $\pi\omega = 0$.
 Then $\omega = \Gamma d\omega + d\Gamma\omega$.
 Since $\Gamma(\sA^k) \subset \theta \wedge \sA^{k-2}$, we conclude that $\omega \in \theta \wedge \sA^{k-1} + d\theta \wedge \sA^{k-2}$.
\end{proof}

We now show that $(\sR^k,d)$ is isomorphic to the complex constructed by Rumin~\cite{Rumin1990}.
In fact, it is clear that if $k \geq n+1$, then our space $\sR^k$ coincides with the space $\sJ^k$ defined by Rumin.
However, when $k \leq n$, the $k$-th space in Rumin's formulation of his complex is the equivalence class $\sA^k / \sI^k$, where $\sI^k$ is the intersection of $\sA^k$ with the ideal generated by $\theta$ and $d\theta$.
The projection $\pi \colon \sA^k \to \sR^k$ induces an isomorphism between $\sR^k$ and Rumin's $k$-th space.

\begin{corollary}
 \label{explicit-rumin}
 Let $(M^{2n+1},T^{1,0})$ be a CR manifold and let $k\in\{0,\dotsc,n\}$.
 Set
 \[ \sI^k := \left\{ \theta\wedge\alpha + \beta\wedge d\theta \suchthatcolon \alpha \in \sA^{k-1} , \beta\in\sA^{k-2} \right\} , \]
 where $\theta$ is a local contact form.
 Then $\pi \colon \sA^k \to \sR^k$ induces a CR invariant isomorphism $\sA^k / \sI^k \cong \sR^k$.
\end{corollary}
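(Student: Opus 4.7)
The proof is short and essentially consists of combining what has already been established in \cref{projection-to-R,inverse-lefschetz}. Here is the plan.

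First, I would check that $\sI^k$ is CR invariant. If $\htheta = e^\Upsilon\theta$ is another local contact form, then $d\htheta = e^\Upsilon\,d\theta + e^\Upsilon\,d\Upsilon\wedge\theta$, so $\beta\wedge d\htheta$ lies in $e^\Upsilon\beta\wedge d\theta + \theta\wedge\sA^{k-1}$, and $\htheta\wedge\alpha = e^\Upsilon\theta\wedge\alpha$. Hence the ideal generated locally by $\theta$ and $d\theta$ inside $\sA^k$ does not depend on the choice of contact form.

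Next, using \cref{projection-to-R}, $\pi\colon\sA^k\to\sR^k$ is a CR invariant linear projection, and (since $k\leq n$) its kernel is exactly $\sI^k$. Therefore $\pi$ descends to a CR invariant injection $\sA^k/\sI^k\hookrightarrow\sR^k$.

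For surjectivity, I would observe that $\pi$ restricted to $\sR^k$ is the identity. Indeed, if $\omega\in\sR^k$, then \cref{inverse-lefschetz} together with the defining conditions of $\sR^k$ give $\Gamma\omega=0$; moreover, since $d\omega\in\sR^{k+1}$ by the very definition of the Rumin differential, the same reasoning yields $\Gamma d\omega=0$. From \eqref{eqn:projection-to-R} we obtain $\pi\omega=\omega$. Hence the induced map $\sA^k/\sI^k\to\sR^k$ is surjective, and therefore an isomorphism.

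There is no real obstacle here: all the content has already been absorbed into \cref{inverse-lefschetz,projection-to-R}. The only thing worth spelling out carefully is the verification that $d\omega\in\sR^{k+1}$ when $\omega\in\sR^k$ (so that both terms $d\Gamma\omega$ and $\Gamma d\omega$ in~\eqref{eqn:projection-to-R} vanish) and the remark on the CR invariance of $\sI^k$, which is needed for the CR invariance assertion in the statement.
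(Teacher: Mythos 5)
Your proof is correct and follows essentially the same route as the paper: both rest entirely on \cref{projection-to-R}, which supplies the kernel identification $\ker\pi=\sI^k$ and the projection property $\pi\rv_{\sR^k}=\mathrm{id}$ (hence surjectivity), together with the CR invariance of $\sI^k$. You merely unpack these ingredients a bit more explicitly than the paper does.
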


\begin{proof}
 It is clear that $\sI^k$ and $\sA^k/\sI^k$ are CR invariant.
 We conclude from \cref{projection-to-R} that $\pi$ induces a CR invariant isomorphism $\sA^k / \sI^k \to \sR^k$.
\end{proof}

We conclude this section with the useful observation that the projection $\pi$ and the exterior derivative $d$ commute.

\begin{lemma}
 \label{projection-and-d-commute}
 Let $(M^{2n+1},T^{1,0})$ be a CR manifold.
 Then $\pi d = d\pi$.
\end{lemma}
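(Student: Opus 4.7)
The plan is to simply exploit the explicit formula for $\pi$ given by Equation~\eqref{eqn:projection-to-R}. Starting from $\pi\omega = \omega - d\Gamma\omega - \Gamma d\omega$, I would apply $d$ on the left to obtain
\begin{equation*}
 d\pi\omega = d\omega - d^2\Gamma\omega - d\Gamma d\omega = d\omega - d\Gamma d\omega,
\end{equation*}
using $d^2 = 0$. On the other hand, replacing $\omega$ by $d\omega$ in the formula for $\pi$ gives
\begin{equation*}
 \pi d\omega = d\omega - d\Gamma d\omega - \Gamma d^2\omega = d\omega - d\Gamma d\omega,
\end{equation*}
again by $d^2 = 0$. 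Comparing the two expressions yields $d\pi\omega = \pi d\omega$.

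The only thing to verify is that both computations make sense, i.e.\ that $\Gamma$ is defined on $d\omega \in \sA^{k+1}$ and $d\Gamma\omega \in \sA^{k-1}$ so that the manipulations above are legal; this is immediate from \cref{inverse-lefschetz}, which defines $\Gamma$ on $\COmega^\ell M$ for every $\ell$. Since the formula from \cref{projection-to-R} provides everything, I do not expect any real obstacle here: the identity $\pi d = d\pi$ is essentially built into the definition of $\pi$.
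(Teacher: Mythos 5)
Your proof is correct and is exactly the argument the paper has in mind: the paper's proof of this lemma is the single line ``This follows immediately from Equation~\eqref{eqn:projection-to-R},'' and your computation (apply $d$ to $\pi\omega = \omega - d\Gamma\omega - \Gamma d\omega$, use $d^2=0$ twice, and compare) is precisely what that line is asserting. Nothing further is needed.
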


\begin{proof}
 This follows immediately from Equation~\eqref{eqn:projection-to-R}.
\end{proof}

\section{The bigraded Rumin complex}
\label{sec:complex}

In this section we construct the bigraded Rumin complex as a CR invariant bigraded complex of subsheaves of the sheaf of complex-valued differential forms.
We also include local explicit formulas for the operators in this complex.

The sheaves in the bigraded Rumin complex are obtained from the sheaves in the Rumin complex using the bigrading on $\Omega^{p,q}$, which in this section denotes the sheaf of local sections of $\Lambda^{p,q}$.

\begin{definition}
 \label{defn:sRpq}
 Let $(M^{2n+1},T^{1,0})$ be a CR manifold and let $p\in\{0,\dotsc,n+1\}$ and $q\in\{0,\dotsc,n\}$.
 Set
 \begin{align*}
  \sR^{p,q} & := \left\{ \omega\in\CsR^{p+q} \suchthatcolon \omega\rv_{\CH} \in \Omega^{p,q} \right\} , && \text{if $p+q\leq n$}, \\
  \sR^{p,q} & := \left\{ \omega\in\CsR^{p+q} \suchthatcolon (T\contr\omega)\rv_{\CH} \in \Omega^{p-1,q} \right\} , && \text{if $p+q\geq n+1$} ,
 \end{align*}
 where $\CsR^k:=\sR^k\otimes\bC$ and $T$ is a local Reeb vector field.
\end{definition}

Note that
\begin{equation*}
 \CsR^k =
 \begin{cases}
  \sR^{k,0} \oplus \dotsm \oplus \sR^{0,k} , & \text{if $k\leq n$}, \\
  \sR^{n+1,k-n-1} \oplus \dotsm \oplus \sR^{k-n,n}, & \text{if $k\geq n+1$} .
 \end{cases}
\end{equation*}
We denote by $\pi^{p,q}\colon\CsR^{p+q}\to\sR^{p,q}$ the canonical projection.
These projections are CR invariant.

We begin by recording the mapping properties of conjugation.

\begin{lemma}
 \label{conjugate-rumin-bundles}
 Let $(M^{2n+1},T^{1,0})$ be a CR manifold and let $p\in\{0,\dotsc,n+1\}$ and $q\in\{0,\dotsc,n\}$.
 \begin{enumerate}
  \item If $p+q\leq n$, then conjugation defines an isomorphism $\sR^{p,q} \cong \sR^{q,p}$.
  \item If $p+q\geq n+1$, then conjugation defines an isomorphism $\sR^{p,q} \cong \sR^{q+1,p-1}$.
 \end{enumerate}
\end{lemma}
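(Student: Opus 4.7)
The plan is a direct verification from \cref{defn:sRpq}. Since $\sR^{p+q}$ is a sheaf of real-valued differential forms, complex conjugation is an $\bR$-linear involution on $\CsR^{p+q} = \sR^{p+q} \otimes \bC$ whose square is the identity. So to obtain isomorphisms it suffices to show that conjugation maps $\sR^{p,q}$ into the claimed target sheaf; applying conjugation again gives the inverse.

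For (i), suppose $p+q \leq n$ and $\omega \in \sR^{p,q}$. Then $\ow \in \CsR^{p+q}$, and since conjugation exchanges $T^{1,0}$ with $T^{0,1}$, it sends $\Omega^{p,q}$ to $\Omega^{q,p}$. Therefore
\begin{equation*}
 \ow\rv_{\CH} = \overline{\omega\rv_{\CH}} \in \overline{\Omega^{p,q}} = \Omega^{q,p} .
\end{equation*}
Since also $q+p \leq n$, this shows $\ow \in \sR^{q,p}$.

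For (ii), suppose $p+q \geq n+1$ and $\omega \in \sR^{p,q}$. Fix a local contact form $\theta$ with Reeb vector field $T$; since $T$ is real, $T \contr \ow = \overline{T \contr \omega}$. Therefore
\begin{equation*}
 (T \contr \ow)\rv_{\CH} = \overline{(T \contr \omega)\rv_{\CH}} \in \overline{\Omega^{p-1,q}} = \Omega^{q,p-1} .
\end{equation*}
Noting $(q+1) + (p-1) = p+q \geq n+1$, the defining condition for $\sR^{q+1,p-1}$ is exactly that $(T \contr \ow)\rv_{\CH}$ lies in $\Omega^{(q+1)-1,p-1} = \Omega^{q,p-1}$. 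Hence $\ow \in \sR^{q+1,p-1}$.

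There is no genuine obstacle here; the only point requiring care is the index shift in case (ii), which arises because the defining condition for $\sR^{p,q}$ in the top half of the complex is phrased in terms of $T \contr \omega$ rather than $\omega$ itself, so conjugation swaps $\Omega^{p-1,q}$ with $\Omega^{q,p-1}$ and this matches the shifted bidegree $(q+1,p-1)$ rather than $(q,p)$.
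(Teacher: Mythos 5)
Your proof is correct and follows the same route as the paper, which simply notes that conjugation gives $\Omega^{p,q}\cong\Omega^{q,p}$ and that the conclusion then follows from the definition of $\sR^{p,q}$. Your write-up just makes explicit the index shift in case (ii) and the involution argument, both of which the paper leaves implicit.
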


\begin{proof}
 It is clear that conjugation defines an isomorphism $\Omega^{p,q}\cong\Omega^{q,p}$.
 The conclusion readily follows from \cref{defn:sRpq}.
\end{proof}

It is useful to have a local expression for the projection of \cref{projection-to-R}.
For clarity, we separately handle the cases $k\leq n$ and $k \geq n+1$.
The general formula for $\pi$ follows from the following lemmas by bilinearity.

\begin{lemma}
 \label{projection-to-E-expression}
 Let $(M^{2n+1},T^{1,0})$ be a CR manifold and let $\omega\in\CsA^k$, $k\leq n$, be such that $\omega\rv_{\CH}\in\Omega^{p,q}$.
 Then there is a unique $\frac{1}{p!q!}\omega_{\Alpha\bar\Beta}\,\theta^{\Alpha}\wedge\theta^{\bar\Beta}\in\Omega^{p,q}$ such that $\omega_{\mu\Alpha^\prime}{}^{\mu}{}_{\bar\Beta^\prime}=0$ and
 \[ \omega\rv_{\CH} \equiv \frac{1}{p!q!}\omega_{\Alpha\bar\Beta}\,\theta^{\Alpha}\wedge\theta^{\bar\Beta} \mod \im\left( (d\theta\rv_{\CH}\wedge) \colon \Omega^{p-1,q-1} \to \Omega^{p,q} \right) . \]
 Moreover, for each local contact form $\theta$ for $(M^{2n+1},T^{1,0})$, it holds that
 \begin{multline}
  \label{eqn:projection-to-E-expression}
  \pi\omega := \frac{1}{p!q!}\Bigl(\omega_{A\bar B}\,\theta^{A}\wedge\theta^{\bar\Beta} - \frac{pi}{n-p-q+1}\nabla^\mu\omega_{\mu A^\prime\bar B}\,\theta\wedge\theta^{A^\prime}\wedge\theta^{\bar\Beta} \\
   + \frac{(-1)^pqi}{n-p-q+1}\nabla^{\bar\nu}\omega_{A\bar\nu\bar B^\prime}\,\theta\wedge\theta^{A}\wedge\theta^{\bar B^\prime}\Bigr)  ,
 \end{multline}
 where the right-hand side is computed with respect to an admissible coframe.
\end{lemma}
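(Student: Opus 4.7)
The plan is to first establish existence and uniqueness of the trace-free representative via the fiberwise Lefschetz decomposition from \cref{lefschetz-decomposition}, then reduce the computation of $\pi\omega$ to the projection of the embedded primitive form by using the annihilation properties of $\pi$ from \cref{projection-to-R}, and finally invert the operator $\Gamma$ from \cref{inverse-lefschetz} using the Tanaka--Webster structure equations together with a standard $\mathfrak{sl}_2$-type commutator identity.

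First I would observe that by the coordinate formula in \cref{lefschetz-computation}, the trace-free elements of $\Omega^{p,q}$ are exactly the primitive ones. Applying \cref{lefschetz-decomposition} fiberwise (valid because $p+q\leq n$) yields the direct sum $\Lambda^{p,q} = P^{p,q} \oplus (d\theta\rv_{\CH}\wedge\Lambda^{p-1,q-1})$, giving both the existence and uniqueness of the trace-free $\omega_{A\bar B}$ representing $\omega\rv_\CH$ modulo the image of $d\theta\rv_\CH\wedge(\cdot)$. Using the embedding of \cref{Lambdapq-embedding}, write $\omega = \eta + d\theta\wedge\iota(\xi) + \theta\wedge\alpha$, where $\eta := \iota(\omega^{\mathrm{prim}}) = \frac{1}{p!q!}\omega_{A\bar B}\theta^A\wedge\theta^{\bar B}$. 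The $\bC$-linear extension of \cref{projection-to-R} shows $\pi$ annihilates both $\theta\wedge(\cdot)$ and $d\theta\wedge(\cdot)$ in this degree range, so $\pi\omega = \pi\eta$. Primitivity of $\omega^{\mathrm{prim}}$ together with \cref{lefschetz-decomposition}(ii) gives $\theta\wedge\eta\wedge d\theta^{n+1-k}=0$, which by the defining equation of $\Gamma$ and the injectivity provided by \cref{lefschetz-decomposition}(i) forces $\Gamma\eta = 0$. Hence $\pi\eta = \eta - \Gamma(d\eta)$ by~\eqref{eqn:projection-to-R}.

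Next, the Tanaka--Webster structure equations~\eqref{eqn:tanaka-webster-connection} ensure that the connection one-forms $\omega_\mu{}^\alpha$ cancel between $d\omega_{A\bar B}$ and the $\theta^\mu\wedge\omega_\mu{}^\alpha$ part of $d\theta^{\alpha_j}$, while the torsion pieces $\theta\wedge\tau^{\alpha_j}$ contribute only to the $\theta$-ideal. This yields
\[ d\eta \equiv \mu_1 + \mu_2 \pmod{\theta\wedge(\cdot)}, \]
where $\mu_1 := \frac{1}{p!q!}\nabla_\alpha\omega_{A\bar B}\theta^\alpha\wedge\theta^A\wedge\theta^{\bar B}$ is of type $(p+1,q)$ and $\mu_2 := \frac{1}{p!q!}\nabla_{\bar\gamma}\omega_{A\bar B}\theta^{\bar\gamma}\wedge\theta^A\wedge\theta^{\bar B}$ is of type $(p,q+1)$. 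Writing $\Gamma(d\eta) = \theta\wedge(\xi_1+\xi_2)$ with $\xi_1\in\Omega^{p-1,q}M$ and $\xi_2\in\Omega^{p,q-1}M$, a type-by-type comparison of the defining equation for $\Gamma$ shows that $\xi_1$ pairs with $\mu_2$ and $\xi_2$ with $\mu_1$.

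The final, technically delicate step is to invert $\mL$ in these equations. The commutator identity $[\trace,\mL](\theta\wedge\nu) = (n-r-s)(\theta\wedge\nu)$ on $\theta\wedge\Omega^{r,s}M$, verified directly from \cref{lefschetz-computation}, together with \cref{lefschetz-decomposition} implies that if $\trace^2(\theta\wedge\mu_j) = 0$, then $\mu_j = \mu_j^{\mathrm{prim}} + \mL\nu_j$ with $\nu_j$ primitive, and consequently $\theta\wedge\xi_1 = (n-p-q+1)^{-1}\trace(\theta\wedge\mu_2)$ and analogously for $\xi_2$. The vanishing $\trace^2\mu_j = 0$ is the main obstacle; it follows from trace-freeness of $\omega_{A\bar B}$ and the parallelism of the Levi form under the Tanaka--Webster connection, since iterating the trace produces an expression of the form $h^{\sigma\bar\tau}\nabla^\mu\omega_{\mu\sigma A''\bar\tau\bar B''}$, which after using antisymmetry of $\omega$ in the unbarred block reduces to the trace-free contraction $\omega_{\mu X}{}^\mu{}_Y$ and hence vanishes. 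Substituting the coordinate expression of \cref{lefschetz-computation} for $\trace(\theta\wedge\mu_j)$, where the same trace-free observation collapses the antisymmetrization so that only $\nabla^\mu\omega_{\mu A'\bar B}$ and $\nabla^{\bar\nu}\omega_{A\bar\nu\bar B'}$ survive, produces $\Gamma(d\eta)$ in exactly the form demanded by~\eqref{eqn:projection-to-E-expression}.
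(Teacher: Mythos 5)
Your proof is correct, but it is architected differently from the paper's. The paper proceeds by verification: it writes down the right-hand side of~\eqref{eqn:projection-to-E-expression} as a candidate $\comega$, observes $\comega \equiv \omega \bmod (\theta, d\theta)$, checks the two defining wedge conditions $\theta\wedge\comega\wedge d\theta^{n+1-k}=0$ and $\theta\wedge d\comega\wedge d\theta^{n-k}=0$ directly (the latter via the trace-free identity~\eqref{eqn:tracefree-trick}), and then concludes $\comega=\pi\omega$ from the uniqueness statement in \cref{projection-to-R}. You instead compute $\pi\omega$ forwards from the definition $\pi = 1 - d\Gamma - \Gamma d$: you reduce to $\pi\eta = \eta - \Gamma(d\eta)$ for the embedded primitive representative $\eta$, and then actually invert the Lefschetz operator in the defining equation for $\Gamma$ by means of the $\mathfrak{sl}_2$-commutator $[\trace,\mL]$ and the vanishing $\trace^2(\theta\wedge\mu_j)=0$. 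The two routes share their computational core --- your observation that one application of $\trace$ to $\nabla_{[\alpha}\omega_{\Alpha\bar\Beta]}$ leaves only $\nabla^{\bar\nu}\omega_{\Alpha\bar\nu\bar\Beta'}$ (and its conjugate), with the second trace killed by trace-freeness of $\omega_{\Alpha\bar\Beta}$ and $\nabla h = 0$, is exactly the content of~\eqref{eqn:tracefree-trick} --- but the paper's verification is shorter because checking membership in $\ker(\mL^{n+1-k})$ avoids ever inverting $\mL$, whereas your argument is constructive and explains where the coefficients $\pm pi/(n-p-q+1)$ and $(-1)^p qi/(n-p-q+1)$ come from (as $1/[\trace,\mL]$-eigenvalues on $\theta\wedge\Omega^{p-1,q}M$ and $\theta\wedge\Omega^{p,q-1}M$). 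One small caveat: you assert rather than verify that the final substitution of \cref{lefschetz-computation} reproduces the stated constants and signs; this does check out, but in a formula-producing proof that bookkeeping is the part most worth writing down.
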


\begin{proof}
 The existence and uniqueness of $\frac{1}{p!q!}\omega_{A\bar B}\,\theta^{\Alpha}\wedge\theta^{\bar\Beta}$ is guaranteed by the Lefschetz decomposition~\cite{Huybrechts2005}*{Proposition~1.2.30(i)} applied to $\Omega^{p,q}$.
 
 Let $\theta$ be a local contact form for $(M^{2n+1},T^{1,0})$ and let $\comega$ be given by the right-hand side of Equation~\eqref{eqn:projection-to-E-expression}.
 Clearly $\comega\equiv\omega\mod\theta,d\theta$.
 Since $\omega_{\mu\Alpha^\prime}{}^{\mu}{}_{\bar\Beta^\prime}=0$, we conclude from \cref{lefschetz-computation,lefschetz-decomposition} that $\theta\wedge\comega\wedge d\theta^{n+1-k}=0$.
 A direct computation yields
 \begin{align*}
  d\comega & \equiv \frac{1}{p!q!}\left(\nabla_{\alpha}\omega_{A\bar B} - \frac{q}{n-p-q+1}h_{\alpha\bar\beta}\nabla^{\bar\nu}\omega_{A\bar\nu\bar B^\prime}\right)\,\theta^{\alpha A}\wedge\theta^{\bar B} \\
   & \quad + \frac{(-1)^p}{p!q!}\left(\nabla_{\bar\beta}\omega_{A\bar B} - \frac{p}{n-p-q+1}h_{\alpha\bar\beta}\nabla^\mu\omega_{\mu A^\prime\bar B}\right)\,\theta^{A}\wedge\theta^{\bar\beta\bar B} \mod \theta .
 \end{align*}
 We readily compute that
 \begin{equation}
  \label{eqn:tracefree-trick}
  \begin{split}
   0 & = h^{\alpha\bar\beta} \left(\nabla_{[\alpha}\omega_{\Alpha\bar\Beta]} - \frac{q}{n-p-q+1}h_{[\alpha\bar\beta}\nabla^{\bar\nu}\omega_{\Alpha\bar\nu\bar\Beta^\prime]}\right), \\
   0 & = h^{\alpha\bar\beta} \left( \nabla_{[\bar\beta}\omega_{\Alpha\bar\Beta]} - \frac{p}{n-p-q+1}h_{[\alpha\bar\beta}\nabla^\mu\omega_{\mu\Alpha^\prime\bar\Beta]} \right) .
  \end{split}
 \end{equation}
 \Cref{lefschetz-computation,lefschetz-decomposition} then imply that $\theta\wedge d\comega\wedge d\theta^{n-k}=0$.
 Therefore $\comega\in\sR^{p,q}$.
 The uniqueness and CR invariance of $\comega$ follows from \cref{projection-to-R}.
\end{proof}

\begin{lemma}
 \label{projection-to-F-expression}
 Let $(M^{2n+1},T^{1,0})$ be a CR manifold and let $\omega\in\sR^{p,q}$, $p+q\geq n+1$.
 For each local contact form $\theta$, there is a unique $\tau\in\Omega^{n-q,n+1-p}$ such that
 \[ \tau = \frac{1}{(n+1-p)!(n-q)!}\tau_{\Alpha\bar\Beta}\,\theta^{\Alpha}\wedge\theta^{\bar\Beta}, \qquad \tau_{\mu\Alpha^\prime}{}^\mu{}_{\bar\Beta^\prime} = 0, \]
 and
 \begin{equation}
  \label{eqn:projection-to-F-expression}
  \omega = \frac{1}{(p+q-n-1)!}\theta \wedge \tau \wedge d\theta^{p+q-n-1} .
 \end{equation}
 Moreover, if $\htheta=e^\Upsilon\theta$, then $\htau=e^{(n-p-q)\Upsilon}\tau$, where $\htau$ is determined by $\htheta$ as above.
\end{lemma}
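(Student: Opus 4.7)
The plan is to decompose the argument into three parts: first extract a unique $\tau\in\Omega^{n-q,n+1-p}$ satisfying Equation~\eqref{eqn:projection-to-F-expression} by inverting the Lefschetz operator; second, show that the trace-free condition on $\tau$ is forced by the defining condition $\theta\wedge d\omega=0$ of $\sR^{p,q}$; third, read off the transformation law by comparing expressions for $\omega$ written in admissible coframes for $\theta$ and $\htheta$.

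For the first step, since $\omega\in\CsR^{p+q}$ with $p+q\geq n+1$, we have $\theta\wedge\omega=0$, so I would write $\omega=\theta\wedge\eta$ with $\eta:=T\contr\omega$ horizontal; the bigrading condition in \cref{defn:sRpq} gives $\eta\rv_{\CH}\in\Omega^{p-1,q}$, so $\omega\in\theta\wedge\Omega^{p-1,q}M$. Setting $j:=p+q-n-1\geq 0$ and applying \cref{lefschetz-decomposition}(i) with the bidegree $(P,Q)=(n-q,n+1-p)$ (which satisfies $P+Q=2n+1-p-q\leq n$ and $n-P-Q=j$) yields that
\[ \mL^{j}\colon \theta\wedge\Omega^{n-q,n+1-p}M \longrightarrow \theta\wedge\Omega^{p-1,q}M \]
is an isomorphism. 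Hence there is a unique $\tau\in\Omega^{n-q,n+1-p}$ such that Equation~\eqref{eqn:projection-to-F-expression} holds.

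The key step is to show $\tau$ is trace-free. I would compute $d\omega=d\theta\wedge\eta-\theta\wedge d\eta$, so the condition $\theta\wedge d\omega=0$ becomes $\theta\wedge d\theta\wedge\eta=0$. Substituting $\theta\wedge\eta=\tfrac{1}{j!}\theta\wedge\tau\wedge d\theta^{j}$ gives
\[ 0 = \theta\wedge d\theta\wedge\eta = \tfrac{\pm 1}{j!}\,\theta\wedge\tau\wedge d\theta^{j+1} = \tfrac{\pm 1}{j!}\,\mL^{j+1}(\theta\wedge\tau). \]
Since $j+1=n+1-(n-q)-(n+1-p)$, \cref{lefschetz-decomposition}(ii) identifies $\ker\mL^{j+1}$ on $\theta\wedge\Omega^{n-q,n+1-p}M$ with $\theta\wedge P^{n-q,n+1-p}M$, i.e.\ the primitive forms. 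By the remark following \cref{defn:theta-wedge-primitive} (using \cref{lefschetz-computation}), primitivity is equivalent to the trace condition $\tau_{\mu\Alpha^\prime}{}^\mu{}_{\bar\Beta^\prime}=0$. This delivers the trace-free property. I expect this implication — that $\theta\wedge d\omega=0$ not only produces an equation but forces primitivity — to be the main conceptual point of the proof.

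For the transformation law, writing $\omega=\tfrac{1}{j!}\htheta\wedge\htau\wedge d\htheta^{j}$ with $\htheta=e^\Upsilon\theta$, I would expand $d\htheta=e^\Upsilon(d\theta+d\Upsilon\wedge\theta)$; all terms in $(d\theta+d\Upsilon\wedge\theta)^j$ involving $d\Upsilon\wedge\theta$ vanish after wedging with $\htheta=e^\Upsilon\theta$, leaving
\[ \htheta\wedge\htau\wedge d\htheta^{j} = e^{(j+1)\Upsilon}\,\theta\wedge\htau\wedge d\theta^{j}. \]
By \cref{coframe-transformation}, $\htheta^\alpha\equiv\theta^\alpha\pmod{\theta}$, so $\theta\wedge\htau$ and $\theta\wedge\tau$ lie in $\theta\wedge\Omega^{n-q,n+1-p}M$ with respect to a common admissible coframe for $\theta$. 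Equating the two expressions for $\omega$ and applying the injectivity of $\mL^{j}$ gives $\theta\wedge\tau=e^{(j+1)\Upsilon}\theta\wedge\htau$; since $j+1=p+q-n$, this is exactly $\htau=e^{(n-p-q)\Upsilon}\tau$, completing the proof.
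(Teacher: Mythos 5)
Your proof is correct and follows essentially the same route as the paper: identify $\omega\in\theta\wedge\Omega^{p-1,q}M$ via \cref{Lambdapq-embedding}, apply the Lefschetz decomposition of \cref{lefschetz-decomposition} to extract a unique primitive $\theta\wedge\tau$, and deduce the transformation law from uniqueness. You merely make explicit two steps the paper leaves implicit — that $\theta\wedge d\omega=0$ is equivalent to $\omega\wedge d\theta=0$, and that primitivity of $\tau$ is forced by $\mL^{p+q-n}(\theta\wedge\tau)=0$ — which is a faithful expansion rather than a different argument.
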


\begin{proof}
 Let $\omega\in\sR^{p,q}$ and let $\theta$ be a local contact form for $(M^{2n+1},T^{1,0})$.
 Since $\theta\wedge\omega=0$, we conclude from \cref{Lambdapq-embedding} that $\omega\in\theta\wedge\Omega^{p-1,q}M$.
 Since $\omega\wedge d\theta=0$, we conclude from \cref{lefschetz-decomposition} that there is a unique $\theta \wedge \tau \in \theta \wedge P^{n-q,n+1-p}$ such that $\omega = \frac{1}{(p+q-n-1)!}L^{p+q-n-1}(\theta \wedge \tau)$.
 This yields Equation~\eqref{eqn:projection-to-F-expression}.
 \Cref{Lambdapq-embedding} determines the corresponding element $\tau\in\Omega^{n-q,n+1-p}$.
 
 It follows readily from uniqueness that $\htau = e^{(n-p-q)\Upsilon}\tau$ if $\htheta=e^\Upsilon\theta$.
\end{proof}

\Cref{projection-to-F-expression} provides a CR covariant bijection $\sR^{p,q}\cong\sR^{n-q,n+1-p}$, $p+q\geq n+1$.

\begin{corollary}
 \label{sF-to-sE}
 Let $(M^{2n+1},T^{1,0},\theta)$ be a pseudohermitian manifold.
 For each $\omega\in\sR^{p,q}$, $p+q\geq n+1$, there is a unique $\tau\in\sR^{n-q,n+1-p}$ such that
 \[ \omega = \frac{1}{(p+q-n-1)!}\theta \wedge \tau \wedge d\theta^{p+q-n-1} . \]
 Moreover, if $\htheta=e^\Upsilon\theta$, then $\htau=e^{(n-p-q)\Upsilon}\tau$.
\end{corollary}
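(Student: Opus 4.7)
The plan is to construct $\tau$ from the primitive form $\tau_0\in\Omega^{n-q,n+1-p}$ supplied by \cref{projection-to-F-expression} by applying the CR invariant projection $\pi\colon\CsA^k\to\CsR^k$ of \cref{projection-to-R}, where $k:=2n+1-p-q$. The observation that drives the argument is that primitivity of $\tau_0$ makes $\pi\tau_0$ particularly clean: it forces $\Gamma\tau_0=0$, so $\pi\tau_0=\tau_0-\Gamma d\tau_0$, and the correction lies in $\theta\wedge\CsA^{k-1}$, which contributes nothing to the horizontal part and nothing to $\theta\wedge\pi\tau_0$.

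First I would invoke \cref{projection-to-F-expression} to produce the primitive $\tau_0$ together with the factorization $\omega=\frac{1}{r!}\theta\wedge\tau_0\wedge d\theta^r$, where $r:=p+q-n-1$. Since $p+q\geq n+1$ implies $k\leq n$, both \cref{inverse-lefschetz} and \cref{projection-to-R} apply. Primitivity of $\tau_0$ is equivalent, by \cref{lefschetz-decomposition}(ii), to $\theta\wedge\tau_0\wedge d\theta^{n+1-k}=0$; combined with the defining equation of $\Gamma$ in \cref{inverse-lefschetz} and the injectivity of $\mL^{n+2-k}$ on $\theta\wedge\CsA^{k-2}M$ (which is \cref{lefschetz-decomposition}(i), applied type-by-type to $\xi$), this yields $\Gamma\tau_0=0$. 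Setting $\tau:=\pi\tau_0=\tau_0-\Gamma d\tau_0$, the fact that $\Gamma d\tau_0\in\theta\wedge\CsA^{k-1}$ gives $\tau|_\CH=\tau_0|_\CH\in\Omega^{n-q,n+1-p}$, so $\tau\in\sR^{n-q,n+1-p}$. Moreover $\theta\wedge\tau=\theta\wedge\tau_0$, hence $\omega=\frac{1}{r!}\theta\wedge\tau\wedge d\theta^r$.

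For uniqueness, I would argue that any other $\tau'\in\sR^{n-q,n+1-p}$ with the same factorization has primitive horizontal part $\tau'|_\CH\in\Omega^{n-q,n+1-p}$, primitivity following from the condition $\theta\wedge\tau'\wedge d\theta^{n+1-k}=0$ in the definition of $\CsR^k$ and \cref{lefschetz-decomposition}(ii); the factorization for $\omega$ then passes to the horizontal part, so \cref{projection-to-F-expression} forces $\tau'|_\CH=\tau_0=\tau|_\CH$. Writing $\eta:=\tau-\tau'=\theta\wedge\xi$, the remaining condition $\theta\wedge d\eta\wedge d\theta^{n-k}=0$ simplifies, after expanding $d(\theta\wedge\xi)=d\theta\wedge\xi-\theta\wedge d\xi$, to $\theta\wedge\xi\wedge d\theta^{n+1-k}=0$. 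Since $k-1\leq n$, \cref{lefschetz-decomposition}(i) applied to each type component of $\xi$ shows that $\mL^{n+1-k}$ is injective on $\theta\wedge\CsA^{k-1}M$, so $\theta\wedge\xi=0$ and $\tau=\tau'$.

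The transformation rule $\htau=e^{(n-p-q)\Upsilon}\tau$ follows from the analogous rule for $\tau_0$ in \cref{projection-to-F-expression} together with the CR invariance of $\pi$: both $\htau=\pi\htau_0$ and $e^{(n-p-q)\Upsilon}\tau$ lie in $\sR^{n-q,n+1-p}$ and induce the same intrinsic section of $\Lambda^{n-q,n+1-p}$ under the horizontal-part identification, so the uniqueness just established identifies them. The main obstacle is the upgrade $\pi\tau_0\in\sR^{n-q,n+1-p}$ (not merely $\CsR^k$): this hinges on the type identification $(\pi\tau_0)|_\CH\in\Omega^{n-q,n+1-p}$, which in turn relies on the vanishing $\Gamma\tau_0=0$ forced by primitivity.
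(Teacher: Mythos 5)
Your proof is correct and follows essentially the same route as the paper: both produce the primitive representative from \cref{projection-to-F-expression} and then lift it into $\sR^{n-q,n+1-p}$ via the canonical projection (the paper invokes the explicit formula of \cref{projection-to-E-expression}, which is exactly $\pi$ on pure-type forms), with the factorization, uniqueness, and transformation law all read off from the fact that elements of $\sR^{n-q,n+1-p}$ are determined by their horizontal parts. One caveat: the form $e^{(n-p-q)\Upsilon}\tau$ itself need not lie in $\sR^{n-q,n+1-p}$, since multiplying an element of $\sR^k$, $k\leq n$, by a function can destroy the condition $\theta\wedge d(\cdot)\wedge d\theta^{n-k}=0$; the transformation law should therefore be read at the level of horizontal parts (equivalently, modulo $\theta$), which is how your final sentence --- and the paper's statement --- actually use it.
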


\begin{proof}
 Let $\omega\in\sR^{p,q}$ and let $\ctau$ be the unique element of $\Omega^{n-q,n+1-p}$ determined by \cref{projection-to-F-expression}.
 Let $\tau \in \sR^{n-q,n+1-p}$ be determined by $\ctau$ and \cref{projection-to-E-expression}.
 Then $\tau \equiv \ctau \mod \theta$.
 The conclusion readily follows.
\end{proof}

The operators in the bigraded Rumin complex are obtained from the exterior derivative via the projections $\pi^{p,q}$.
Since $T^{1,0}$ is integrable, $d$ splits into two operators on $\sR^{p,q}$, $p+q\not=n$, and into three operators on $\sR^{p+q}$, $p+q=n$.
A direct computation yields local formulas for these operators.

\begin{lemma}
 \label{formula-for-exterior-derivative}
 Let $(M^{2n+1},T^{1,0})$ be a CR manifold, let $\omega\in\sR^{p,q}$, and let $\theta$ be a local contact form.
 \begin{enumerate}
  \item If $p+q\leq n-1$, then
  \begin{align*}
   d\omega & \equiv \frac{1}{p!q!}\left( \nabla_{\alpha}\omega_{\Alpha\bar\Beta} - \frac{q}{n-p-q+1}h_{\alpha\bar\beta}\nabla^{\bar\nu}\omega_{\Alpha\bar\nu\bar\Beta^\prime} \right)\,\theta^{\alpha\Alpha}\wedge\theta^{\bar\Beta} \\
    & \quad + \frac{(-1)^p}{p!q!}\left( \nabla_{\bar\beta}\omega_{\Alpha\bar\Beta} - \frac{p}{n-p-q+1}h_{\alpha\bar\beta}\nabla^\mu\omega_{\mu\Alpha^\prime\bar\Beta} \right)\,\theta^{\Alpha}\wedge\theta^{\bar\beta\bar\Beta} \mod\theta ,
  \end{align*}
  where $\omega\equiv\frac{1}{p!q!}\omega_{\Alpha\bar\Beta}\,\theta^{\Alpha}\wedge\theta^{\bar\Beta}\mod\theta$.
  \item If $p+q=n$, then
  \begin{align*}
   d\omega & = \frac{(-1)^{p-1}i}{p!(q-1)!}\left( \nabla_{\alpha}\nabla^{\bar\nu}\omega_{\Alpha\bar\nu\bar\Beta^\prime} + iA_{\alpha}{}^{\bar\nu}\omega_{\Alpha\bar\nu\bar\Beta^\prime} \right)\,\theta\wedge\theta^{\alpha\Alpha}\wedge\theta^{\bar\Beta^\prime} \\
    & \quad + \frac{1}{p!q!}\left( \nabla_0\omega_{\Alpha\bar\Beta} + pi\nabla_{\alpha}\nabla^\mu\omega_{\mu \Alpha^\prime\bar\Beta} - qi\nabla_{\bar\beta}\nabla^{\bar\nu}\omega_{\Alpha\bar\nu\bar\Beta^\prime} \right)\,\theta\wedge\theta^{\Alpha}\wedge\theta^{\bar\Beta} \\
    & \quad + \frac{(-1)^{p-1}i}{(p-1)!q!}\left( \nabla_{\bar\beta}\nabla^\mu\omega_{\mu\Alpha^\prime\bar\Beta} - iA_{\bar\beta}{}^\mu\omega_{\mu\Alpha^\prime\bar\Beta} \right)\,\theta\wedge\theta^{\Alpha^\prime}\wedge\theta^{\bar\beta\bar\Beta} ,
  \end{align*}
  where $\omega\equiv\frac{1}{p!q!}\omega_{\Alpha\bar\Beta}\,\theta^{\Alpha}\wedge\theta^{\bar\Beta}\mod\theta$.
  \item If $p+q\geq n+1$, then
  \begin{align*}
   d\omega & = \frac{(-1)^{n-q}i}{(n-p)!(n-q)!(p+q-n)!}\nabla^{\bar\nu}\tau_{\Alpha\bar\nu\bar\Beta^\prime}\,\theta\wedge\theta^{\Alpha}\wedge\theta^{\bar\Beta^\prime}\wedge d\theta^{p+q-n} \\
    & \quad - \frac{i}{(n+1-p)!(n-q-1)!(p+q-n)!}\nabla^\mu\tau_{\mu\Alpha^\prime\bar\Beta}\,\theta\wedge\theta^{\Alpha^\prime}\wedge\theta^{\bar\Beta}\wedge d\theta^{p+q-n} ,
  \end{align*}
  where $\omega=\frac{1}{(p+q-n-1)!}\theta\wedge\tau\wedge d\theta^{p+q-n-1}$ for $\tau\equiv\frac{1}{(n+1-p)!(n-q)!}\tau_{\Alpha\bar\Beta}\,\theta^\Alpha\wedge\theta^{\bar\Beta}\mod\theta$ as in \cref{sF-to-sE}.
 \end{enumerate}
\end{lemma}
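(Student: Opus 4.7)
The proof is a direct local calculation in each of the three regimes, built on the canonical expressions for elements of $\sR^{p,q}$ supplied by \cref{projection-to-E-expression} (for $p+q\leq n$) and \cref{projection-to-F-expression} (for $p+q\geq n+1$), together with the structural identities $d\theta = ih_{\alpha\bar\beta}\theta^\alpha\wedge\theta^{\bar\beta}$ and $d\theta^\alpha = \theta^\mu\wedge\omega_\mu{}^\alpha + \theta\wedge\tau^\alpha$ of the Tanaka--Webster connection.

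In case (i), I would decompose $\omega = \omega_H + C$ into the trace-free horizontal representative $\omega_H := \frac{1}{p!q!}\omega_{\Alpha\bar\Beta}\theta^\Alpha\wedge\theta^{\bar\Beta}$ and the two $\theta$-corrections $C$ appearing in \cref{projection-to-E-expression}. Modulo $\theta$, the Leibniz expansion of $d\omega_H$ contributes $\nabla_\alpha\omega_{\Alpha\bar\Beta}\theta^{\alpha\Alpha}\wedge\theta^{\bar\Beta}$ and $\nabla_{\bar\beta}\omega_{\Alpha\bar\Beta}\theta^\Alpha\wedge\theta^{\bar\beta\bar\Beta}$ (the Reeb-direction and torsion contributions carry $\theta$ and drop out), while $dC \equiv d\theta \wedge (\textnormal{non-}\theta\textnormal{ part of }C)\mod\theta$ produces the trace terms with $h_{\alpha\bar\beta}$ after substituting $d\theta = ih_{\alpha\bar\beta}\theta^\alpha\wedge\theta^{\bar\beta}$. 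Collecting signs and factorials recovers the stated expression.

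For case (iii), \cref{projection-to-F-expression} lets me write $\omega = \frac{1}{(p+q-n-1)!}\theta\wedge\tau\wedge d\theta^{p+q-n-1}$ with $\tau$ horizontal and primitive. Leibniz yields $d\omega = \frac{1}{(p+q-n-1)!}\bigl[\tau\wedge d\theta^{p+q-n} - \theta\wedge d\tau\wedge d\theta^{p+q-n-1}\bigr]$. The first term vanishes: the form $\tau\wedge d\theta^{p+q-n}$ is horizontal (since $T\contr\tau = 0$ and $T\contr d\theta = 0$), and $\theta\wedge\tau\wedge d\theta^{p+q-n}=0$ by primitivity of $\tau$ via \cref{lefschetz-decomposition}, so $\tau\wedge d\theta^{p+q-n} = 0$. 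Only the horizontal bigraded parts of $d\tau$ survive the remaining $\theta\wedge d\tau\wedge d\theta^{p+q-n-1}$, and expanding these via the Tanaka--Webster connection produces the two displayed terms.

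Case (ii), $p+q=n$, is the most intricate, because $d$ at degree $n$ is a second-order operator. Again starting from $\omega = \omega_H + C$, I would expand $d\omega$ without reducing modulo $\theta$; the horizontal contribution must vanish because $d\omega\in\sR^{n+1}$ is automatically a multiple of $\theta$, which serves as a useful consistency check on the computation. The coefficient of $\theta$ assembles the three displayed terms: the outer two come from $d\omega_H$ together with the cross terms produced by the structural equation $d\theta^\alpha = \theta^\mu\wedge\omega_\mu{}^\alpha + \theta\wedge\tau^\alpha$, with the torsion factors $iA_\alpha{}^{\bar\nu}$ and $-iA_{\bar\beta}{}^\mu$ arising from the $\theta\wedge\tau^\alpha$ piece. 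The middle term, bearing the Reeb derivative $\nabla_0\omega_{\Alpha\bar\Beta}$ and the two mixed second derivatives $\nabla_\alpha\nabla^\mu\omega$ and $\nabla_{\bar\beta}\nabla^{\bar\nu}\omega$, requires the commutator identity $[\nabla_{\bar\beta},\nabla_\alpha]f = ih_{\alpha\bar\beta}\nabla_0 f$ from \cref{commutators} to collect contributions from $dC$ against those from $d\omega_H$. The main obstacle throughout case (ii) is exactly this bookkeeping: tracking signs, factorials, trace-removal constants, torsion contributions, and commutator-induced Reeb terms simultaneously.
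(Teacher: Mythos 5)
Your strategy coincides with the paper's: in each regime you start from the canonical representative supplied by \cref{projection-to-E-expression} (for $p+q\leq n$) or \cref{projection-to-F-expression,sF-to-sE} (for $p+q\geq n+1$) and compute $d$ directly. Your case (i) is precisely the computation already carried out in the proof of \cref{projection-to-E-expression}, and in case (ii) your observation that $d\omega\in\CsR^{n+1}$ forces $d\omega$ into the ideal generated by $\theta$ is exactly the simplification the paper exploits there.

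Case (iii), however, has a genuine gap. After you correctly discard $\tau\wedge d\theta^{p+q-n}$, the Leibniz expansion of $-\theta\wedge d\tau\wedge d\theta^{p+q-n-1}$ produces the full skew-symmetrized gradients $\nabla_{[\alpha}\tau_{\Alpha\bar\Beta]}$ and $\nabla_{[\bar\beta}\tau_{\Alpha\bar\Beta]}$ wedged with $d\theta^{p+q-n-1}$, whereas the stated formula involves only the divergences $\nabla^{\bar\nu}\tau_{\Alpha\bar\nu\bar\Beta^\prime}$ and $\nabla^\mu\tau_{\mu\Alpha^\prime\bar\Beta}$ wedged with the \emph{higher} power $d\theta^{p+q-n}$. ``Expanding via the Tanaka--Webster connection'' does not bridge these two expressions: the necessary input is that the primitive (trace-free) part of $\nabla\tau$ is annihilated upon wedging with $\theta$ and $d\theta^{p+q-n-1}$ --- in the paper this is Equation~\eqref{eqn:tracefree-trick} combined with \cref{lefschetz-computation,lefschetz-decomposition} --- so that only the trace components survive, and the substitution $h_{\alpha\bar\beta}\,\theta^\alpha\wedge\theta^{\bar\beta}=-i\,d\theta$ is what raises the exponent of $d\theta$ by one and produces the factors of $i$ in the statement. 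Without this Lefschetz step the computation in case (iii) does not close to the claimed formula.
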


\begin{proof}
 First suppose that $p+q\leq n-1$.
 Then the direct computation in the proof of \cref{projection-to-E-expression} yields the claimed formula.
 
 Next suppose that $p+q=n$.
 Let $\omega\in\sR^{p,q}$.
 Write $\omega\equiv\frac{1}{p!q!}\omega_{A\bar B}\,\theta^A\wedge\theta^{\bar B}\mod\theta$.
 \Cref{projection-to-E-expression} implies that
 \begin{multline}
  \label{eqn:compute-domega-middle}
  \omega = \frac{1}{p!q!}\Bigl(\omega_{\Alpha\bar\Beta}\,\theta^{\Alpha}\wedge\theta^{\bar\Beta} - pi\nabla^\mu\omega_{\mu\Alpha^\prime\bar\Beta}\,\theta\wedge\theta^{\Alpha^\prime}\wedge\theta^{\bar\Beta} \\
   + (-1)^pqi\nabla^{\bar\nu}\omega_{\Alpha\bar\nu\bar\Beta^\prime}\,\theta\wedge\theta^{\Alpha}\wedge\theta^{\bar\Beta^\prime}\Bigr) .
 \end{multline}
 Since $d\omega\in\CsR^{n+1}$, it holds that $\theta\wedge d\omega=0$.
 In particular, $d\omega$ is in the ideal generated by $\theta$.
 Direct computation then implies the claimed formula.

 Finally suppose that $p+q\geq n+1$.
 Direct computation gives
 \begin{multline*}
  d\omega = -\frac{1}{(n+1-p)!(n-q)!(p+q-n-1)!}\Bigl( \nabla_{\alpha}\tau_{\Alpha\bar\Beta}\,\theta\wedge\theta^{\alpha\Alpha}\wedge\theta^{\bar\Beta} \\
   + (-1)^{n-q}\nabla_{\bar\beta}\tau_{\Alpha\bar\Beta}\,\theta\wedge\theta^{\Alpha}\wedge\theta^{\bar\beta\bar\Beta}\Bigr) \wedge d\theta^{p+q-n-1} .
 \end{multline*}
 Combining Equation~\eqref{eqn:tracefree-trick} with \cref{lefschetz-computation,lefschetz-decomposition} yields
 \begin{align*}
  0 & = \left(\nabla_{\alpha}\tau_{\Alpha\bar\Beta} - \frac{n+1-p}{p+q-n}h_{\alpha\bar\beta}\nabla^{\bar\nu}\tau_{\Alpha\bar\nu\bar\Beta^\prime} \right)\,\theta\wedge\theta^{\alpha\Alpha}\wedge\theta^{\bar\Beta}\wedge d\theta^{p+q-n-1}, \\
  0 & = \left( \nabla_{\bar\beta}\tau_{\Alpha\bar\Beta} - \frac{n-q}{p+q-n}h_{\alpha\bar\beta}\nabla^\mu\tau_{\mu\Alpha^\prime\bar\Beta} \right)\,\theta\wedge\theta^{\Alpha}\wedge\theta^{\bar\beta\bar\Beta}\wedge d\theta^{p+q-n-1} .
 \end{align*}
 Combining the previous two displays yields the desired formula.
\end{proof}

\Cref{formula-for-exterior-derivative} implies that if $\omega\in\sR^{p,q}$, then
\begin{equation*}
 d\omega \in
  \begin{cases}
   \sR^{p+1,q} \oplus \sR^{p,q+1}, & \text{if $p+q\not=n$}, \\
   \sR^{p+2,q-1} \oplus \sR^{p+1,q} \oplus \sR^{p,q+1}, & \text{if $p+q=n$} . 
  \end{cases}
\end{equation*}
This justifies the following definition.

\begin{definition}
 \label{defn:dbbar}
 Let $(M^{2n+1},T^{1,0})$ be a CR manifold.
 The operators in the bigraded Rumin complex are defined as follows:
 \begin{enumerate}
  \item If $p+q\not=n$, then
  \begin{align*}
   \db & := \pi^{p+1,q}\circ d \colon \sR^{p,q} \to \sR^{p+1,q}, \\
   \dbbar & := \pi^{p,q+1}\circ d \colon \sR^{p,q} \to \sR^{p,q+1} .
  \end{align*}
  \item If $p+q=n$, then
  \begin{align*}
   \db & := \pi^{p+2,q-1}\circ d \colon \sR^{p,q} \to \sR^{p+2,q-1}, \\
   \dhor & := \pi^{p+1,q}\circ d \colon \sR^{p,q} \to \sR^{p+1,q} , \\
   \dbbar & := \pi^{p,q+1}\circ d \colon \sR^{p,q} \to \sR^{p,q+1} .
  \end{align*}
 \end{enumerate}
\end{definition}

Our notation is chosen for compatibility with conjugation.

\begin{lemma}
 \label{conjugate-rumin-operators}
 Let $(M^{2n+1},T^{1,0})$ be a CR manifold.
 Then
 \[ \overline{\db\omega} = \dbbar\oomega \]
 for all $\omega\in\sR^{p,q}$.
 Moreover, if $\omega\in\sR^{p,q}$ and $p+q=n$, then
 \[ \overline{\dhor\omega} = \dhor\oomega . \]
\end{lemma}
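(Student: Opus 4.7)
The plan is to deduce both identities by combining three ingredients: the elementary observation that exterior differentiation commutes with complex conjugation on $\CsA^\bullet$; the fact that the direct sum decompositions $\CsR^k = \bigoplus_{p+q=k}\sR^{p,q}$ (valid in both regimes $k\leq n$ and $k\geq n+1$) are canonical, so that each summand of $d\omega$ is uniquely determined by its location in $\CsR^{p+q+1}$; and the mapping properties of conjugation recorded in \cref{conjugate-rumin-bundles}. Given these, each identity reduces to matching pieces of $\overline{d\omega}$ and $d\oomega$ in the appropriate bigraded decomposition of $\CsR^{p+q+1}$.

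I would split into three cases according to whether $p+q$ is less than, equal to, or greater than $n$. In the subcritical case $p+q\leq n-1$, \cref{defn:dbbar} gives $d\omega = \db\omega + \dbbar\omega$ with summands in $\sR^{p+1,q}\oplus\sR^{p,q+1}$, and \cref{conjugate-rumin-bundles}(i) (applicable since $(p+1)+q\leq n$) sends their conjugates into $\sR^{q,p+1}\oplus\sR^{q+1,p}$. Since $\oomega\in\sR^{q,p}$ by the same lemma, the analogous decomposition reads $d\oomega = \db\oomega + \dbbar\oomega \in \sR^{q+1,p}\oplus\sR^{q,p+1}$, and applying $\overline{d\omega}=d\oomega$ together with uniqueness of the direct sum decomposition yields $\overline{\db\omega} = \dbbar\oomega$ (and its conjugate).

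In the critical case $p+q=n$, the decomposition has three pieces: $d\omega = \db\omega + \dhor\omega + \dbbar\omega \in \sR^{p+2,q-1}\oplus\sR^{p+1,q}\oplus\sR^{p,q+1}$, all of total degree $n+1$. Now \cref{conjugate-rumin-bundles}(ii) applies and sends conjugates of these three summands into $\sR^{q,p+1}$, $\sR^{q+1,p}$, and $\sR^{q+2,p-1}$ respectively. Since $\oomega\in\sR^{q,p}$ also satisfies $q+p=n$, its decomposition $d\oomega = \db\oomega + \dhor\oomega + \dbbar\oomega$ lands in exactly those three spaces (in the reverse order), so matching summands produces simultaneously $\overline{\db\omega} = \dbbar\oomega$, $\overline{\dhor\omega} = \dhor\oomega$, and $\overline{\dbbar\omega} = \db\oomega$. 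The supercritical case $p+q\geq n+1$ is handled by the same bookkeeping, now entirely within the regime of \cref{conjugate-rumin-bundles}(ii).

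No serious obstacle is expected; the only subtlety is verifying that the indexing shifts induced by $d$ and by conjugation fit together correctly on both sides of the critical diagonal. Boundary cases such as $q=0$ (where the $\db$-piece is vacuous) are automatic because the corresponding target space does not exist and the operator vanishes.
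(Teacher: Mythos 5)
Your proposal is correct and is essentially the paper's own argument: the paper's proof consists of observing that $\overline{d\omega}=d\oomega$ since $d$ is a real operator, and then invoking \cref{conjugate-rumin-bundles} and \cref{defn:dbbar} to match bigraded components. Your write-up simply carries out the bookkeeping in the three cases explicitly, and the index-matching in each case is accurate.
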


\begin{proof}
 Since the exterior derivative is a real operator, it holds that $\overline{d\omega}=d\oomega$.
 The conclusion follows from \cref{conjugate-rumin-bundles,defn:dbbar}.
\end{proof}

\Cref{defn:sRpq,defn:dbbar} give the bigraded Rumin complex the structure of a bigraded complex.
This is encoded in the following proposition:

\begin{proposition}
 \label{justification-of-bigraded-complex}
 Let $(M^{2n+1},T^{1,0})$ be a CR manifold and let $p\in\{0,\dotsc,n+1\}$ and $q\in\{0,\dotsc,n\}$.
 It holds that
 \begin{enumerate}
  \item $\db^2=0$ and $\dbbar^2=0$ on $\sR^{p,q}$;
  \item if $p+q\not\in\{n-1,n\}$, then $\db\dbbar+\dbbar\db=0$ on $\sR^{p,q}$;
  \item if $p+q=n-1$, then $\dhor\db+\db\dbbar=0$ and $\dbbar\db+\dhor\dbbar=0$ on $\sR^{p,q}$; and
  \item if $p+q=n$, then $\dbbar\db+\db\dhor=0$ and $\dbbar\dhor+\db\dbbar=0$ on $\sR^{p,q}$.
 \end{enumerate}
\end{proposition}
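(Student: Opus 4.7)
The plan is to deduce every identity from the single relation $d^2=0$ on the complexified Rumin complex (which follows from \cref{defn:rumin-complex}) together with the bigraded decomposition
\[ \CsR^k = \bigoplus_{p+q=k} \sR^{p,q} \]
that is recorded immediately after \cref{defn:sRpq}. The point is that by \cref{defn:dbbar}, the operators $\db$, $\dbbar$ and (when $p+q=n$) $\dhor$ are literally the projections of $d$ onto the individual bidegree summands, so
\[ d\omega = \db\omega + \dbbar\omega \quad\text{for }\omega\in\sR^{p,q},\ p+q\neq n, \]
and
\[ d\omega = \db\omega + \dhor\omega + \dbbar\omega \quad\text{for }\omega\in\sR^{p,q},\ p+q = n. \]
(That the bidegrees of $d\omega$ are as indicated is already encoded in \cref{formula-for-exterior-derivative}; equivalently, in the statement of \cref{defn:dbbar} that only these projections contribute.)

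First I would expand $0 = d^2\omega = d(d\omega)$ by applying $d$ to each summand above and then applying \cref{defn:dbbar} once more to decompose $d(\db\omega)$, $d(\dhor\omega)$, $d(\dbbar\omega)$ into their bidegree components. Uniqueness of the bidegree decomposition then forces each bidegree component of $d^2\omega$ to vanish separately, which will give the four cases of the proposition after matching bidegrees.

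Concretely, for case (ii) (where $p+q\notin\{n-1,n\}$), both $\db\omega$ and $\dbbar\omega$ lie away from level $n$, so $d(\db\omega) = \db^2\omega + \dbbar\db\omega$ and $d(\dbbar\omega) = \db\dbbar\omega + \dbbar^2\omega$; matching bidegrees $(p+2,q)$, $(p+1,q+1)$, $(p,q+2)$ gives $\db^2=0$, $\dbbar\db+\db\dbbar=0$, $\dbbar^2=0$, which covers (ii) and the non-critical instances of (i). For case (iii) with $p+q=n-1$, each of $\db\omega$ and $\dbbar\omega$ sits at level $n$, so applying $d$ produces three bidegree components each, and matching the $(p+3,q-1)$, $(p+2,q)$, $(p+1,q+1)$ and $(p,q+2)$ components yields $\db^2=0$, $\dhor\db+\db\dbbar=0$, $\dbbar\db+\dhor\dbbar=0$ and $\dbbar^2=0$. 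For case (iv) with $p+q=n$, one uses the three-term splitting of $d\omega$ itself, and since $\db\omega,\dhor\omega,\dbbar\omega$ now all lie at level $n+1$ where $\dhor$ is not defined, $d$ applied to each has only two bidegree components; matching gives $\db^2=0$, $\dbbar\db+\db\dhor=0$, $\dbbar\dhor+\db\dbbar=0$, $\dbbar^2=0$.

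The work is purely bookkeeping and holds no substantive obstacle; the only point that requires care is the non-uniform behavior of $d$ across the critical level $p+q=n$, which creates the asymmetric anticommutation relations in (iii) and (iv). I would organize the argument as a single uniform computation parameterized by which levels $\omega$, $d\omega$, and $d^2\omega$ sit at — so that (i)--(iv) all fall out by reading off the appropriate bidegree component — rather than duplicating the calculation four times.
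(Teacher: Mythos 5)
Your proposal is correct and is precisely the argument the paper has in mind: the paper's proof is the one-line statement that the proposition "follows immediately from the identity $d^2=0$," and your bidegree bookkeeping is exactly the expansion of that remark, with the component-matching across the critical level $p+q=n$ handled correctly in all four cases.
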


\begin{proof}
 This follows immediately from the identity $d^2=0$. 
\end{proof}

In particular, following downward-pointing diagonal arrows yields a complex.

\begin{definition}
 \label{defn:kohn-rossi}
 Let $(M^{2n+1},T^{1,0})$ be a CR manifold and let $p\in\{0,\dotsc,n+1\}$.
 The \defn{$p$-th Kohn--Rossi complex} is
 \begin{equation*}
  0 \longrightarrow \sR^{p,0} \overset{\dbbar}{\longrightarrow} \sR^{p,1} \overset{\dbbar}{\longrightarrow} \dotsm \overset{\dbbar}{\longrightarrow} \sR^{p,n} \overset{\dbbar}{\longrightarrow} 0 .
 \end{equation*}
 Given an integer $q\in[0,n]$, we set
 \begin{equation*}
  H_R^{p,q}(M) := \frac{\ker\left(\dbbar\colon\mR^{p,q}\to\mR^{p,q+1}\right)}{\im\left(\dbbar\colon\mR^{p,q-1}\to\mR^{p,q}\right)} ,
 \end{equation*}
 where $\mR^{p,q}$ is the space of global sections of $\sR^{p,q}$, and $\mR^{p,-1}:=0$ and $\mR^{p,n+1}:=0$.
\end{definition}

In \cref{sec:resolution}, we show that $H_R^{p,q}(M)$ is isomorphic to the Kohn--Rossi cohomology group~\cites{KohnRossi1965,Tanaka1975} of bidegree $(p,q)$.

We conclude this section by recording explicit local formulas for the operators in the bigraded Rumin complex.

\begin{proposition}
 \label{bigraded-operators}
 Let $(M^{2n+1},T^{1,0})$ be a CR manifold and let $\omega\in\sR^{p,q}$.
 \begin{enumerate}
  \item If $p+q\leq n-1$, then
  \begin{align*}
   \db\omega & \equiv \frac{1}{(p+1)!q!}\Omega^{(1)}_{\alpha\Alpha\bar\Beta}\,\theta^{\alpha\Alpha}\wedge\theta^{\bar\Beta} \mod \theta, \\
   \dbbar\omega & \equiv \frac{1}{p!(q+1)!}\Omega^{(2)}_{\Alpha\bar\beta\bar\Beta}\,\theta^{\Alpha}\wedge\theta^{\bar\beta\bar\Beta} \mod \theta ,
  \end{align*}
  where
  \begin{align*}
   \Omega^{(1)}_{\alpha\Alpha\bar\Beta} & :=  (p+1)\left( \nabla_{[\alpha}\omega_{\Alpha\bar\Beta]} - \frac{q}{n-p-q+1}h_{[\alpha\bar\beta}\nabla^{\bar\nu}\omega_{\Alpha\bar\nu\bar\Beta^\prime]} \right) , \\
   \Omega^{(2)}_{\Alpha\bar\beta\bar\Beta} & := (q+1)(-1)^p\left( \nabla_{[\bar\beta}\omega_{\Alpha\bar\Beta]} - \frac{p}{n-p-q+1}h_{[\alpha\bar\beta}\nabla^\mu\omega_{\mu\Alpha^\prime\bar\Beta]} \right) ,
  \end{align*}
  and $\omega \equiv \frac{1}{p!q!}\omega_{\Alpha\bar\Beta}\,\theta^{\Alpha}\wedge\theta^{\bar\Beta}\mod\theta$.
  \item If $p+q=n$, then
  \begin{align*}
   \db\omega & = \frac{1}{(n-p-1)!(n+1-q)!}\Omega^{(1)}_{\alpha\Alpha\bar\Beta^\prime}\,\theta\wedge\theta^{\alpha\Alpha}\wedge\theta^{\bar\Beta^\prime}, \\
   \dhor\omega & = \frac{1}{(n-p)!(n-q)!}\Omega^{(2)}_{\Alpha\bar\Beta}\,\theta\wedge\theta^{\Alpha}\wedge\theta^{\bar\Beta}, \\
   \dbbar\omega & = \frac{1}{(n+1-p)!(n-q-1)!}\Omega^{(3)}_{\Alpha^\prime\bar\beta\bar\Beta}\,\theta\wedge\theta^{\Alpha^\prime}\wedge\theta^{\bar\beta\bar\Beta} ,
  \end{align*}
  where
  \begin{align*}
   \Omega^{(1)}_{\alpha\Alpha\bar\Beta^\prime} & := (-1)^{p-1}(p+1)i\left( \nabla_{[\alpha}\nabla^{\bar\nu}\omega_{\Alpha\bar\nu\bar\Beta^\prime]} + iA_{[\alpha}{}^{\bar\nu}\omega_{\Alpha\bar\nu\bar\Beta^\prime]}\right) , \\
   \Omega^{(2)}_{\Alpha\bar\Beta} & := \nabla_0\omega_{\Alpha\bar\Beta} + pi\nabla_{[\alpha}\nabla^\mu\omega_{\mu\Alpha^\prime\bar\Beta]} - qi\nabla_{[\bar\beta}\nabla^{\bar\nu}\omega_{\Alpha\bar\nu\bar\Beta^\prime]} , \\
   \Omega^{(3)}_{\Alpha^\prime\bar\beta\bar\Beta} & := (-1)^{p-1}(q+1)i\left( \nabla_{[\bar\beta}\nabla^\mu\omega_{\mu\Alpha^\prime\bar\Beta]} - iA_{[\bar\beta}{}^\mu\omega_{\mu\Alpha^\prime\bar\Beta]} \right) ,
  \end{align*}
  and $\omega\equiv\frac{1}{p!q!}\omega_{\Alpha\bar\Beta}\,\theta^{\Alpha}\wedge\theta^{\bar\Beta}\mod\theta$.
  \item If $p+q\geq n+1$, then
  \begin{align*}
   \db\omega & = \frac{1}{(n-p)!(n-q)!(p+q-n)!}\Omega^{(1)}_{\Alpha\bar\Beta^\prime}\,\theta\wedge\theta^{\Alpha}\wedge\theta^{\bar\Beta^\prime}\wedge d\theta^{p+q-n} , \\
   \dbbar\omega & = \frac{1}{(n+1-p)!(n-q-1)!(p+q-n)!}\Omega^{(2)}_{\Alpha^\prime\bar\Beta}\,\theta\wedge\theta^{\Alpha^\prime}\wedge\theta^{\bar\Beta}\wedge d\theta^{p+q-n} ,
  \end{align*}
  where
  \begin{align*}
   \Omega^{(1)}_{\Alpha\bar\Beta^\prime} & := (-1)^{n-q}i\nabla^{\bar\nu}\omega_{\Alpha\bar\nu\bar\Beta^\prime} , \\
   \Omega^{(2)}_{\Alpha^\prime\bar\Beta} & := -i\nabla^\mu\omega_{\mu\Alpha^\prime\bar\Beta} ,
  \end{align*}
  and $\omega=\frac{1}{(n+1-p)!(n-q)!(p+q-n-1)!}\omega_{\Alpha\bar\Beta}\,\theta\wedge\theta^{\Alpha}\wedge\theta^{\bar\Beta}\wedge d\theta^{p+q-n-1}$.
 \end{enumerate}
\end{proposition}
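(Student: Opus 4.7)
The plan is to derive all three cases directly from the formula for $d\omega$ given in \cref{formula-for-exterior-derivative} by applying the type-graded projections of \cref{defn:dbbar}. Since each projection $\pi^{p',q'}$ simply extracts the component of bidegree $(p',q')$ from an element of $\CsR^{p+q+1}$, the proof amounts to reading off the bidegree decomposition of the three formulas in \cref{formula-for-exterior-derivative} and matching coefficients, using the CR invariance and uniqueness guaranteed by \cref{projection-to-E-expression,projection-to-F-expression}.

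For case (i), when $p+q\leq n-1$, \cref{formula-for-exterior-derivative}(i) already presents $d\omega \bmod \theta$ as the sum of a term of type $(p+1,q)$, supported on $\theta^{\alpha\Alpha}\wedge\theta^{\bar\Beta}$, and a term of type $(p,q+1)$, supported on $\theta^{\Alpha}\wedge\theta^{\bar\beta\bar\Beta}$. By \cref{projection-to-E-expression}, applying $\pi^{p+1,q}$ (respectively $\pi^{p,q+1}$) to any form only modifies it modulo $\theta$, so I would just extract the corresponding bidegree piece of \cref{formula-for-exterior-derivative}(i) and rewrite it using the skew-symmetrization brackets. The resulting coefficients $(p+1)$ and $(q+1)$ absorb into the factorials $(p+1)!q!$ and $p!(q+1)!$, producing precisely $\Omega^{(1)}$ and $\Omega^{(2)}$. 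The trace-correction terms involving $h_{\alpha\bar\beta}$ survive the projection unchanged because they already have the correct trace-free form required by \cref{projection-to-E-expression}, which follows from Equation~\eqref{eqn:tracefree-trick}.

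For case (ii), when $p+q=n$, \cref{formula-for-exterior-derivative}(ii) writes $d\omega$ as a sum of three $\theta\wedge(\dotsm)$ terms whose angular parts have types $(p+2,q-1)$, $(p+1,q)$, and $(p,q+1)$ respectively. I would verify that each such angular part, viewed through the correspondence of \cref{sF-to-sE} on the $\theta^{p+2,q-1}$ piece and directly on the middle $(p+1,q)$ piece, produces a primitive form so that $\pi^{p+2,q-1}$, $\pi^{p+1,q}$, and $\pi^{p,q+1}$ extract each summand intact. The sign $(-1)^{p-1}$ and the explicit torsion contributions $A_\alpha{}^{\bar\nu}$, $A_{\bar\beta}{}^\mu$ are inherited from \cref{formula-for-exterior-derivative}(ii), and re-packaging with skew-symmetrization absorbs the factor $(p+1)$ into $(n-p-1)!$, and similarly for $q$. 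For case (iii), with $p+q\geq n+1$, the formula from \cref{formula-for-exterior-derivative}(iii) is already a sum of two terms of types $(p,q+1)$ and $(p+1,q)$ in the form $\theta \wedge (\cdots)\wedge d\theta^{p+q-n}$ characteristic of \cref{sF-to-sE}, so the projections extract them directly and the claimed formulas follow with no further manipulation.

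The main obstacle is purely bookkeeping: tracking the signs $(-1)^p, (-1)^{p-1}, (-1)^{n-q}$ which arise from pushing $\theta$ past forms of various degrees and from orienting $\theta^{\bar\beta}\wedge\theta^{\bar\Beta}$ versus $\theta^{\bar\Beta}\wedge\theta^{\bar\beta}$, and keeping straight which indices are skew-symmetrized. No additional analytic input beyond \cref{formula-for-exterior-derivative} and the trace-identities \eqref{eqn:tracefree-trick} is needed, since both the existence and uniqueness of the primitive representative used to define $\pi$ have already been established in \cref{projection-to-E-expression,projection-to-F-expression,projection-to-R}.
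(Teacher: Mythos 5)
Your proposal is correct and follows the same route as the paper, whose entire proof is the one-line observation that the result follows directly from \cref{formula-for-exterior-derivative,defn:dbbar}; you have simply spelled out the bookkeeping of extracting each bidegree component via the projections $\pi^{p',q'}$. Nothing is missing.
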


\begin{proof}
 This follows directly from \cref{formula-for-exterior-derivative,defn:dbbar}.
\end{proof}

\section{The CR pluriharmonic complex}
\label{sec:cohomology}

In this section we construct the CR pluriharmonic complex as a CR invariant complex of subsheaves of the sheaf of real-valued differential forms.

\begin{definition}
 \label{defn:pluriharmonic-complex}
 Let $(M^{2n+1},T^{1,0})$ be a CR manifold.
 For each $k\in\{0,\dotsc,2n\}$, set
 \[ \sS^k := \begin{cases}
  \sR^0, & \text{if $k=0$}, \\
  \Real\bigoplus_{j=1}^k \sR^{j,k+1-j}, & \text{if $1\leq k\leq n-1$}, \\
  \sR^{k+1}, & \text{if $k\geq n$} .
 \end{cases} \]
 Define $D\colon\sS^k\to\sS^{k+1}$ by
 \[ D = \begin{cases}
  id\dbbar, & \text{if $k=0$}, \\
  d, & \text{if $k\geq1$} .
 \end{cases} \]
 The \defn{CR pluriharmonic complex} is
 \[ 0 \longrightarrow \sS^0 \overset{D}{\longrightarrow} \sS^1 \overset{D}{\longrightarrow}  \dotsb \overset{D}{\longrightarrow} \sS^{2n} \longrightarrow 0 . \]
\end{definition}

It is clear that the CR pluriharmonic complex is a complex.
Our terminology is partially justified by the definition of the sheaf of CR pluriharmonic functions.

\begin{definition}
 \label{defn:sP}
 Let $(M^{2n+1},T^{1,0})$ be a CR manifold.
 The \defn{sheaf $\sP$ of CR pluriharmonic functions} is
 \[ \sP := \ker \left( D\colon\sS^0\to\sS^1 \right) . \]
\end{definition}

Lee observed~\cite{Lee1988} that $\sP=\Real\sO$, where $\sO := \ker \left( \dbbar \colon \sR^{0,0} \to \sR^{0,1} \right)$ is the sheaf of CR functions:

\begin{lemma}[\cite{Lee1988}*{Lemma~3.1}]
 \label{sP-characterization}
 Let $(M^{2n+1},T^{1,0})$ be a CR manifold.
 It holds that $\sP=\Real\sO$.
\end{lemma}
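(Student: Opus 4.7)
The plan is to prove both inclusions directly. For the easier direction $\Real\sO \subset \sP$, I take $f \in \sR^{0,0}$ with $\dbbar f = 0$. Since $p+q = 0 \neq n$ (with $n \geq 1$), the bigraded splitting on $\sR^{0,0}$ gives $df = \db f + \dbbar f = \db f$, so $d^2 = 0$ forces $d \db f = 0$. Conjugating via \cref{conjugate-rumin-operators} yields $d \dbbar \bar f = \overline{d \db f} = 0$. Setting $u := \frac{1}{2}(f + \bar f) \in \sR^0$, one has $\dbbar u = \frac{1}{2}\dbbar \bar f$, whence $Du = i d \dbbar u = 0$, showing $u \in \sP$.

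For the nontrivial direction $\sP \subset \Real\sO$, given a real $u \in \sR^0$ with $id\dbbar u = 0$, I seek a real function $v$ (defined locally) such that $f := u + iv$ is CR. My candidate for $dv$ is the 1-form $\alpha := i(\dbbar u - \db u) \in \CsR^1$. The reality of $u$ combined with \cref{conjugate-rumin-operators} gives $\db u = \overline{\dbbar u}$, so $\bar\alpha = \alpha$ and hence $\alpha \in \sR^1$; the same identity (together with $d$ being a real operator) shows $d \db u = \overline{d \dbbar u} = 0$, and thus $d\alpha = 0$ in $\sR^2$. Note that closedness in $\sR^1$ coincides with ordinary de Rham closedness, since the defining constraint $\theta\wedge d\alpha\wedge d\theta^{n-1}=0$ (or $\theta\wedge d\alpha=0$ when $n=1$) for $\alpha\in\sR^1$ guarantees $d\alpha \in \sR^2 \subset \sA^2$.

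At this point, invoking the ordinary Poincar\'e lemma produces, locally, a real function $v$ with $dv = \alpha$. Decomposing both sides via $\CsR^1 = \sR^{1,0}\oplus \sR^{0,1}$ immediately yields $\db v = -i \db u$ and $\dbbar v = i \dbbar u$, so $\dbbar f = \dbbar u + i \dbbar v = 0$ with $\Real f = u$, giving $u \in \Real\sO$. The main step, tying the two directions together, is the conjugation identity $\db u = \overline{\dbbar u}$ for real $u$, which converts the single hypothesis $d\dbbar u = 0$ into simultaneous closedness of both $\db u$ and $\dbbar u$ needed to close $\alpha$; beyond this, the argument is essentially bookkeeping with the type decomposition, and no deeper obstacle is anticipated.
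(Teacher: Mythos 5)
The paper gives no proof of \cref{sP-characterization}: it is quoted from Lee with a citation to his Lemma~3.1. Your argument is a correct, self-contained proof in the paper's own formalism, and it is essentially the classical one underlying Lee's lemma: the real one-form $\alpha = i(\dbbar u - \db u)$ (i.e.\ $d^c u$) is closed exactly when $id\dbbar u = 0$, and a local primitive $v$ of $\alpha$ is the harmonic conjugate making $u+iv$ CR. All the individual steps check out — $\sR^0=\sA^0$ and $dv\in\sR^1$ for any smooth $v$, the operators $d$ in the Rumin complex are the honest exterior derivative so "closed in $\sR^1$" is ordinary closedness, and the type decomposition $\CsR^1=\sR^{1,0}\oplus\sR^{0,1}$ is a direct sum, so matching components is legitimate. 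Two minor remarks: the closedness of $\alpha$ follows even more directly from $0=d^2u=d\db u+d\dbbar u$, without invoking \cref{conjugate-rumin-operators}; and the reason your uniform argument also covers $n=1$ — where naive pointwise analogues of the $\partial\overline{\partial}$-condition fail to characterize CR-pluriharmonic functions — is precisely that you never leave the sheaf of honest forms, so for $n=1$ the hypothesis $d\dbbar u=0$ on the middle-degree form $\dbbar u\in\sR^{0,1}$ already encodes the third-order conditions $\db\dbbar u=0$ and $\dhor\dbbar u=0$ required by Lee's dimension-three characterization. No case analysis is needed, which is exactly the point of the paper's formulation.
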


We define the cohomology groups $H^k(M;\sP)$ via the CR pluriharmonic complex in the usual way.

\begin{definition}
 \label{defn:pluriharmonic-cohomology}
 Let $(M^{2n+1},T^{1,0})$ be a CR manifold.
 Given $k\in\{0,\dotsc,2n\}$, set
 \[ H_R^k(M;\sP) := \frac{\ker\left(D\colon\mS^k\to\mS^{k+1}\right)}{\im\left(D\colon\mS^{k-1}\to\mS^k\right)} . \]
 where $\mS^k$ is the space of global sections of $\sS^k$, and $\mS^{-1}:=0$ and $\mS^{2n+1}:=0$.
\end{definition}

In \cref{sec:resolution}, we show that the CR pluriharmonic complex yields a partial acyclic resolution of $\sP$ on strictly pseudoconvex, locally embeddable CR manifolds.
In particular, $H_R^k(M^{2n+1};\sP) = H^k(M^{2n+1};\sP)$ for $k < n$.

\section{The long exact sequence in cohomology}
\label{sec:les}

In this section we construct the long exact sequence connecting the cohomology groups $H_R^k(M;\bR)$, $H_R^{0,k}(M)$, and $H_R^k(M;\sP)$.
We begin by defining the maps in this long exact sequence.

\begin{definition}
 \label{defn:cohomology_maps}
 Let $(M^{2n+1},T^{1,0})$ be a CR manifold and let $k\in\bN_0$.
 \begin{enumerate}
  \item We define the morphism $H_R^k(M;\bR)\to H_R^{0,k}(M)$ by
  \begin{equation*}
   H_R^k(M;\bR) \ni [\omega] \mapsto [i\pi^{0,k}\omega] \in H_R^{0,k}(M) .
  \end{equation*}
  \item We define the morphism $H_R^{0,k}(M)\to H_R^k(M;\sP)$ by
  \begin{equation*}
   \begin{cases}
    H_R^{0,0}(M) \ni f \mapsto \Real f \in H_R^0(M;\sP), & \text{if $k=0$}, \\
    H_R^{0,k}(M) \ni [\omega] \mapsto [-\Imaginary d\omega] \in H_R^k(M;\sP), & \text{if $k\geq1$} .
   \end{cases} 
  \end{equation*}
  \item We define the morphism $H_R^k(M;\sP) \to H_R^{k+1}(M;\bR)$ by
  \begin{equation*}
   \begin{cases}
    H_R^0(M;\sP) \ni u \mapsto [-\Imaginary\dbbar u] \in H_R^1(M;\bR), & \text{if $k=0$}, \\
    H_R^k(M;\sP) \ni [\omega] \mapsto [\omega] \in H_R^{k+1}(M;\sP), & \text{if $k\geq1$}.
   \end{cases}
  \end{equation*}
 \end{enumerate}
\end{definition}

Since each morphism in \cref{defn:cohomology_maps} is defined by applying a CR invariant operator to a representative of the given cohomology class, we conclude that the morphisms themselves are CR invariant as soon as they are well-defined.
The next three lemmas show that these morphisms are well-defined.

\begin{lemma}
 \label{HkR-to-H0k-well-defined}
 Let $(M^{2n+1},T^{1,0})$ be a CR manifold.
 For each $k\in\bN_0$, the morphism $H_R^k(M;\bR)\to H_R^{0,k}(M)$ is well-defined and CR invariant. 
\end{lemma}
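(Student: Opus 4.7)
The plan is to verify, for $k \leq n$, both that (i) $i\pi^{0,k}\omega$ is $\dbbar$-closed whenever $\omega \in \mR^k$ is $d$-closed, and (ii) the resulting $\dbbar$-cohomology class depends only on the Rumin class of $\omega$. For $k > n$ the target $H_R^{0,k}(M)$ is zero, so the map is vacuously well-defined; I focus on $k \leq n$.

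First I would use the direct sum decomposition $\CsR^k = \bigoplus_{p+q=k}\sR^{p,q}$, valid for $k \leq n$, to write $\omega = \sum_{p+q=k}\omega^{p,q}$ with $\omega^{p,q} := \pi^{p,q}\omega \in \mR^{p,q}$, so that $i\pi^{0,k}\omega = i\omega^{0,k} \in \mR^{0,k}$ by construction. The key step is then to extract $\dbbar\omega^{0,k} = 0$ from $d\omega = 0$. For $k \leq n-1$, \cref{defn:dbbar} shows that $d$ decomposes on each $\omega^{p,q}$ as $\db + \dbbar$, so the only summand of $d(\omega^{p,q})$ of bidegree $(0,k+1)$ arises when $(p,q) = (0,k)$, namely $\dbbar\omega^{0,k}$, since $\db$ strictly raises the holomorphic index. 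Projecting $d\omega = 0$ onto $\sR^{0,k+1}$ therefore yields $\dbbar\omega^{0,k} = 0$. For $k = n$, no summand $\sR^{0,n+1}$ even appears in the bigraded complex (the operators $\db$ and $\dhor$ available on $\sR^{p,n}$ both raise the holomorphic index), so $\dbbar\omega^{0,n} = 0$ automatically.

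Next, to see that the class $[i\omega^{0,k}]$ depends only on $[\omega]$, I would suppose $\omega = d\eta$ with $\eta \in \mR^{k-1}$ and decompose $\eta = \sum_{p+q=k-1}\eta^{p,q}$. The same bidegree bookkeeping, together with the formulas of \cref{bigraded-operators}, gives $\pi^{0,k}(d\eta) = \dbbar\eta^{0,k-1}$, so $i\pi^{0,k}\omega = \dbbar(i\eta^{0,k-1})$ is $\dbbar$-exact. CR invariance of the induced morphism is then immediate, since each ingredient---the projection $\pi^{0,k}$ (\cref{projection-to-R}), the operator $\dbbar$, and both cohomology groups $H_R^k(M;\bR)$ and $H_R^{0,k}(M)$---is CR invariant by construction.

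I do not anticipate any serious obstacle; the only point requiring care is the boundary case $k = n$, where $d$ is second-order on $\mR^n$ and splits into three pieces $\db + \dhor + \dbbar$. This is harmless here because both $\dhor$ and $\db$ raise the holomorphic index, so neither contributes to the antiholomorphic column $\sR^{0,\bullet}$; the edge behavior simply reduces to the triviality of $\sR^{0,n+1}$.
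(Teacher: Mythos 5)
Your proof is correct and is essentially the paper's argument: the paper's entire proof is the identity $\pi^{0,k}d\tau=\dbbar\pi^{0,k-1}\tau$ for $\tau\in\mR^{k-1}$, which is exactly your bidegree bookkeeping ($\db$ and $\dhor$ strictly raise the holomorphic index), together with the same remark that CR invariance is automatic because every ingredient is CR invariant. You are somewhat more thorough than the paper, which leaves implicit both the check that $\dbbar\pi^{0,k}\omega=0$ when $d\omega=0$ and the edge cases $k=n$ (where $\sR^{0,n+1}=0$) and $k>n$.
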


\begin{proof}
 Let $\tau\in\mR^{k-1}$, so that $[d\tau]=0$ in $H_R^k(M;\bR)$.
 Since $\pi^{0,k}d\tau=\dbbar\pi^{0,k-1}\tau$, we conclude that $H_R^k(M;\bR) \to H_R^{0,k}(M)$ is well-defined.
\end{proof}

\begin{lemma}
 \label{H0k-to-HkP-well-defined}
 Let $(M^{2n+1},T^{1,0})$ be a CR manifold.
 For each $k\in\bN_0$, the morphism $H_R^{0,k}(M)\to H_R^k(M;\sP)$ is well-defined and CR invariant. 
\end{lemma}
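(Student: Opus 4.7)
The plan is to verify three conditions, after which CR invariance follows for free (every operator and sheaf in sight is CR invariant): (i) $-\Imaginary d\omega \in \mS^k$ whenever $\omega \in \mR^{0,k}$ satisfies $\dbbar\omega = 0$; (ii) $D(-\Imaginary d\omega) = 0$; and (iii) $-\Imaginary d\omega$ is $D$-exact in $\mS^{k-1}$ whenever $\omega = \dbbar\tau$ for some $\tau \in \mR^{0,k-1}$.

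The case $k = 0$ is immediate: a $\dbbar$-closed $f \in \mR^{0,0}$ is a CR function, so $\Real f \in \mP = \mS^0$ by \cref{sP-characterization}, and there are no coboundaries to quotient by.

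For $k \geq 1$, I would first use $\dbbar\omega = 0$ together with the type decomposition of $d$ on $\sR^{0,k}$ (\cref{defn:dbbar}) to write $d\omega = \db\omega \in \sR^{1,k}$ when $k \leq n-1$, and $d\omega = \db\omega + \dhor\omega \in \sR^{2,n-1} \oplus \sR^{1,n}$ when $k = n$. Taking $-\Imaginary$ and invoking \cref{conjugate-rumin-bundles} on the conjugate, the resulting real $(k+1)$-form has bidegree components confined to the summands $\sR^{j,k+1-j}$ with $1 \leq j \leq k$ when $k \leq n-1$, and to all of $\sR^{n+1}$ when $k=n$; either way, $-\Imaginary d\omega \in \mS^k$. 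Condition (ii) is immediate since $D = d$ on $\mS^k$ for $k \geq 1$, $d$ is a real operator, and $d^2 = 0$.

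The main work is (iii), which I would split according to $k \geq 2$ versus $k = 1$. For $k \geq 2$, the decomposition $d\tau = \db\tau + \dbbar\tau$ on $\sR^{0,k-1}$ (available because $k-1 \leq n-1$) combined with $d^2 = 0$ gives $d\dbbar\tau = -d\db\tau$, so $-\Imaginary d\omega = d(\Imaginary\db\tau)$; here $\Imaginary\db\tau$ has components in $\sR^{1,k-1}$ and $\sR^{k-1,1}$, both of which appear among the summands $\sR^{j,k-j}$ defining $\mS^{k-1}$, so $\eta = \Imaginary\db\tau$ works. The main obstacle is the edge case $k = 1$, where $D$ acts as $id\dbbar$ rather than $d$ on $\mS^0 = \mR^0$ and the analogue $\Imaginary\db f$ is not a function. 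There, writing $\omega = \dbbar f$, using $\overline{d\dbbar f} = d\db\bar f$ (by \cref{conjugate-rumin-operators}) and $0 = d^2\bar f = d\db\bar f + d\dbbar\bar f$ gives
\begin{equation*}
 -\Imaginary d\omega = \tfrac{i}{2}(d\dbbar f + d\dbbar\bar f) = id\dbbar\Real f = D\Real f,
\end{equation*}
so $\eta = \Real f \in \mS^0$ works.
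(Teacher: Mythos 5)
Your proposal is correct and follows essentially the same route as the paper: the key identities $-\Imaginary d\dbbar\tau = D\Imaginary\db\tau$ (for $k\geq 2$) and $-\Imaginary d\dbbar f = D\Real f$ (for $k=1$), both obtained from $d^2=0$ and conjugation, are exactly the ones the paper records. The only difference is that you also spell out the routine checks that $-\Imaginary d\omega$ lies in $\mS^k$ and is $D$-closed, which the paper leaves implicit.
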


\begin{proof}
 Clearly $H_R^{0,0}(M)\to H_R^0(M;\sP)$ is well-defined.
 
 Let $\tau\in\mR^{0,k-1}$, $k\geq 1$, so that $[\dbbar\tau]=0$ in $H_R^{0,k}(M)$.
 Observe that
 \begin{align*}
  -\Imaginary d\dbbar\tau & = D\Real\tau , && \text{if $k=0$}, \\
  -\Imaginary d\dbbar\tau & = D\Imaginary\db\tau , && \text{if $1\leq k\leq n$} , \\
  -\Imaginary d\dbbar\tau & = 0 , && \text{if $k\geq n+1$} .
 \end{align*}
 Therefore $H_R^{0,k}(M) \to H_R^k(M;\sP)$ is well-defined.
\end{proof}

\begin{lemma}
 \label{HkP-to-HkR-well-defined}
 Let $(M^{2n+1},T^{1,0})$ be a CR manifold.
 For each $k\in\bN_0$, the morphism $H_R^k(M;\sP)\to H_R^{k+1}(M;\bR)$ is well-defined and CR invariant. 
\end{lemma}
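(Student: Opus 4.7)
The plan is to verify for each $k$ that the formula of \cref{defn:cohomology_maps}(iii) sends cocycles to cocycles and coboundaries to coboundaries. CR invariance will then be automatic, since each ingredient ($d$, $\dbbar$, $\Imaginary$, and the inclusion $\mS^k \hookrightarrow \mR^{k+1}$) is manifestly CR invariant. Three cases arise: $k = 0$, the easy case $k \geq 2$, and $k = 1$, which will be the only delicate one.

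For $k = 0$, take $u \in \mP$, so that $u \in \mR^0$ is real with $id\dbbar u = 0$. Since $\dbbar u \in \mR^{0,1} \subset \CmR^1$ and $\overline{\dbbar u} = \db u$ by \cref{conjugate-rumin-operators}, the form $-\Imaginary \dbbar u = \tfrac{i}{2}(\dbbar u - \db u)$ is a real element of $\CmR^1$, hence lies in $\mR^1$. Because $d$ is a real operator, $d(-\Imaginary \dbbar u) = -\Imaginary(d\dbbar u) = 0$, so the class $[-\Imaginary \dbbar u] \in H_R^1(M;\bR)$ is defined. There is no quotient in the domain ($H_R^0(M;\sP) = \mP$), so no further verification is required.

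For $k \geq 1$, the map is the inclusion-induced $[\omega] \mapsto [\omega]$, with $\mS^k \hookrightarrow \mR^{k+1}$ (the inclusion holds because each $\sR^{j,k+1-j} \subset \CsR^{k+1}$ when $k+1 \leq n$, and $\mS^k = \mR^{k+1}$ when $k \geq n$). A cocycle $\omega \in \mS^k$ satisfies $d\omega = D\omega = 0$ and is therefore automatically $d$-closed in the Rumin complex. For $k \geq 2$, coboundaries descend immediately: if $\omega = D\tau = d\tau$ with $\tau \in \mS^{k-1} \subset \mR^k$, then $\omega$ is $d$-exact in $\mR^\bullet$.

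The main obstacle is the $k = 1$ coboundary case, where $D\tau = id\dbbar\tau$ for $\tau \in \mR^0$ is not of the form $d\sigma$ for $\sigma \in \mR^1$ a priori. Mirroring the computation at $k = 0$, I will establish the identity $id\dbbar\tau = d(-\Imaginary \dbbar\tau)$ for real $\tau \in \mR^0$. Indeed, on $\mR^0$ (where $p+q = 0 \neq n$), \cref{defn:dbbar} yields $d\tau = \db\tau + \dbbar\tau$, hence $d\db\tau = -d\dbbar\tau$, and so
\[
 d(-\Imaginary \dbbar\tau) = \tfrac{i}{2}d(\dbbar\tau - \db\tau) = \tfrac{i}{2}(d\dbbar\tau + d\dbbar\tau) = id\dbbar\tau.
\]
Since $-\Imaginary \dbbar\tau \in \mR^1$ by the argument used at $k = 0$, $D\tau$ is $d$-exact in the Rumin complex, completing the verification.
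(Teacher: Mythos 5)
Your proof is correct and follows essentially the same route as the paper, which simply notes that $D\colon\mS^{k-1}\to\mS^k$ factors through the exterior derivative on the left. Your treatment of the $k=1$ case is actually more careful than the paper's one-line assertion, since you explicitly replace the complex primitive $i\dbbar\tau$ by the real form $-\Imaginary\dbbar\tau\in\mR^1$ before concluding exactness in the real Rumin complex.
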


\begin{proof}
 Clearly $H_R^0(M;\sP)\to H_R^1(M;\bR)$ is well-defined.
 
 Let $k\geq 1$.
 Since $\mS^k\subset\mR^{k+1}$ and $D\colon\mS^{k-1}\to\mS^k$ factors through the exterior derivative on the left, we conclude that $H_R^k(M;\sP) \to H_R^{k+1}(M;\bR)$ is well-defined.
\end{proof}

The main result of this section is that the morphisms of \cref{defn:cohomology_maps} determine a long exact sequence involving the groups $H_R^k(M;\bR)$, $H_R^{0,k}(M)$, and $H_R^k(M;\sP)$.

\begin{theorem}
 \label{long-exact-sequence}
 Let $(M^{2n+1},T^{1,0})$ be a CR manifold.
 Then
 \begin{equation}
  \label{eqn:long-exact-sequence}
  0 \longrightarrow H_R^0(M;\bR) \longrightarrow H_R^{0,0}(M) \longrightarrow H_R^0(M;\sP) \longrightarrow H_R^1(M;\bR) \longrightarrow \dotsm
 \end{equation}
 is a CR invariant long exact sequence.
\end{theorem}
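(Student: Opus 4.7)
The proof verifies exactness at each of $H_R^k(M;\bR)$, $H_R^{0,k}(M)$, $H_R^k(M;\sP)$ in turn. The unifying tool is the bigraded decomposition of a real Rumin form: for any real $\omega\in\mR^k$ with $k\leq n$, \cref{conjugate-rumin-bundles} gives $\overline{\pi^{p,q}\omega}=\pi^{q,p}\omega$, so
\[ \omega=\pi^{0,k}\omega+\overline{\pi^{0,k}\omega}+\mu,\qquad \mu:=\sum_{\substack{p+q=k\\1\leq p,q}}\pi^{p,q}\omega\in\mS^{k-1}, \]
with $\mu$ real; and \cref{projection-and-d-commute} ensures each $\pi^{p,q}$ commutes appropriately with $d$. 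For $k\geq n+1$ the decomposition has no $(0,k)$-component, which simplifies the arguments there.

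First I verify that each composition vanishes. Splitting $d\omega=0$ into bidegrees yields $-\Real d\pi^{0,k}\omega=\tfrac12 d\mu=\tfrac12 D\mu$, so $H_R^k(M;\bR)\to H_R^{0,k}(M)\to H_R^k(M;\sP)$ is zero. The composition $H_R^{0,k}(M)\to H_R^k(M;\sP)\to H_R^{k+1}(M;\bR)$ is zero because $-\Imaginary d\omega=-d\Imaginary\omega$. The composition $H_R^k(M;\sP)\to H_R^{k+1}(M;\bR)\to H_R^{0,k+1}(M)$ is zero because elements of $\mS^k$ have no $(0,k+1)$-bidegree component when $k\leq n-1$, and $H_R^{0,k+1}(M)=0$ when $k\geq n$. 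The two maps touching $k=0$ are handled by direct computation using $\sP=\Real\sO$ from \cref{sP-characterization}: for instance $-\Imaginary\dbbar\Real f=\tfrac12 d\Imaginary f$ when $f\in\sO$.

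Next I construct explicit preimages. At $H_R^k(M;\bR)$ with $k\geq 2$: if $\pi^{0,k}\omega=\dbbar\sigma$ for some $\sigma\in\mR^{0,k-1}$, then $\omega':=\omega-2d\Imaginary\sigma$ is cohomologous to $\omega$ and, using $\pi d=d\pi$, has vanishing $(0,k)$- and $(k,0)$-projections, so $\omega'\in\mS^{k-1}$. The case $k=1$ is different: I take $u:=-2\Real\sigma$, verify via the closedness identity $\dbbar\db(\sigma+\overline\sigma)=0$ (obtained by projecting $d\omega=0$) that $u\in\sP$, and compute $\omega-(-\Imaginary\dbbar u)=d\Imaginary\sigma$. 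At $H_R^{0,k}(M)$: if $-\Imaginary d\omega=D\mu$ for $\mu\in\mS^{k-1}$, then $\widetilde{\eta}:=-i(\omega-\overline{\omega})+2\mu\in\mR^k$ is real, closed, and satisfies $i\pi^{0,k}\widetilde{\eta}=\omega$. At $H_R^k(M;\sP)$: if $\mu=d\nu$ with $\nu\in\mR^k$, write $\nu=\pi^{0,k}\nu+\overline{\pi^{0,k}\nu}+\rho$ with $\rho\in\mS^{k-1}$; then $\omega:=-2i\pi^{0,k}\nu\in\mR^{0,k}$ satisfies $\dbbar\omega=0$ (from projecting $d\nu=\mu$ to $\sR^{0,k+1}$, which annihilates $\overline{\pi^{0,k}\nu}$, $\rho$, and $\mu$ itself) and $-\Imaginary d\omega=\mu-D\rho$, so $[-\Imaginary d\omega]=[\mu]$.

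The main obstacle is the middle degree $k=n$, where the exterior derivative on $\sR^{p,q}$ with $p+q=n$ acquires a third component $\dhor\colon\sR^{p,q}\to\sR^{p+1,q}$. The cancellations above must then be reverified using parts (iii) and (iv) of \cref{justification-of-bigraded-complex}, and the identification $\omega'\in\mS^{n-1}$ relies on the fact that $\mR^n/\mS^{n-1}\cong\sR^{0,n}$ via $\omega\mapsto\pi^{0,n}\omega$. A secondary simplification is that for $k\geq n+1$ one has $\mS^k=\sR^{k+1}$ and $H_R^{0,k}(M)=0$, so the sequence degenerates to isomorphisms $H_R^k(M;\sP)\cong H_R^{k+1}(M;\bR)$ that follow immediately from the coincidence of the CR pluriharmonic and shifted Rumin complexes in this range. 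Finally, exactness at $H_R^0(M;\sP)$ requires a converse to \cref{sP-characterization}: given $u\in\sP$ with $-\Imaginary\dbbar u=d\sigma$ for some real $\sigma\in\mR^0$, the complex function $f:=u+2i\sigma$ satisfies $\dbbar f=0$ and $\Real f=u$, exhibiting $u$ in the image of $H_R^{0,0}(M)\to H_R^0(M;\sP)$.
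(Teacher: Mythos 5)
Your proposal is correct and takes essentially the same route as the paper's proof: decompose a real Rumin form into its bigraded components plus a piece of $\mS^{k-1}$, use $\pi d = d\pi$ to see that the compositions vanish, and exhibit the same explicit preimages (e.g.\ $\Imaginary\omega+\mu$ at $H_R^{0,k}(M)$, $\nu-2\Real\pi^{0,k}\nu$ at $H_R^k(M;\sP)$, and a $d$-exact correction of $\omega$ at $H_R^k(M;\bR)$), with the $k=0,1$ cases handled via $\sP=\Real\sO$ exactly as in the paper. The only blemish is a factor-of-$i$ bookkeeping slip at $H_R^k(M;\bR)$: with your stated hypothesis $\pi^{0,k}\omega=\dbbar\sigma$ the correction should be $d(2\Real\sigma)$ rather than $2d\Imaginary\sigma$ (the latter is what one wants for the literal kernel condition $i\pi^{0,k}\omega=\dbbar\sigma$), which is immediately repairable and does not affect the argument.
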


\begin{proof}
 Note that if $v\in C^\infty(M)$, then $dv=0$ if and only if $\dbbar v=0$.
 It follows readily that~\eqref{eqn:long-exact-sequence} is exact at $H_R^0(M;\bR)$ and at $H_R^{0,0}(M)$.
 
 Lee's characterization~\cite{Lee1988}*{Lemma~3.1} of CR pluriharmonic functions which are globally the real part of a CR function implies that~\eqref{eqn:long-exact-sequence} is exact at $H_R^0(M;\sP)$.
 
 Clearly the composition $H_R^0(M;\sP)\to H_R^1(M;\bR)\to H_R^{0,1}(M)$ is the zero map.
 Let $[\omega]\in\ker\bigl(H_R^1(M;\bR)\to H_R^{0,1}(M)\bigr)$.
 Then there is an $f=u+iv\in\mR^{0,0}$ such that $\pi^{0,1}\omega=-i\dbbar f$.
 Hence
 \begin{equation}
  \label{eqn:exact-at-H1}
  \omega = 2\Imaginary \dbbar u + dv .
 \end{equation}
 Since $d\omega=0$, we conclude from Equation~\eqref{eqn:exact-at-H1} that $u \in H_R^0(M;\sP)$.
 Therefore~\eqref{eqn:long-exact-sequence} is exact at $H_R^1(M;\bR)$.
 
 Observe that if $\omega\in\mR^1$, then
 \[ -\Imaginary d(i\pi^{0,1}\omega) = -\frac{1}{2}d\omega .  \]
 Hence the composition $H_R^1(M;\bR)\to H_R^{0,1}(M)\to H_R^1(M;\sP)$ is the zero map.
 Suppose now that $[\omega]\in\ker\bigl(H_R^{0,1}(M)\to H_R^1(M;\sP)\bigr)$.
 Then there is a $u\in\mR^0$ such that
 \[ -\Imaginary d\omega = -\Imaginary d\dbbar u . \]
 Set $\tau:=2\Imaginary(\omega-\dbbar u)$.
 Then $\tau$ is closed and $\omega = i\pi^{0,1}\tau + \dbbar u$.
 Therefore~\eqref{eqn:long-exact-sequence} is exact at $H_R^{0,1}(M)$.

 Let $k\geq 1$.
 Clearly the composition $H_R^{0,k}(M)\to H_R^k(M;\sP)\to H_R^{k+1}(M;\bR)$ is the zero map.
 Suppose that $[\omega]\in\ker\bigl(H_R^k(M;\sP)\to H_R^{k+1}(M;\bR)\bigr)$.
 Then there is a $\tau\in\mR^k$ such that $\omega=d\tau$.
 Note that $\dbbar\pi^{0,k}\tau=0$.
 Set $\rho:=\tau-2\Real\pi^{0,k}\tau\in\mS^{k-1}$, so that $\omega=d\rho+2\Real d\pi^{0,k}\tau$;
 note that $\rho=0$ if $k=1$.
 Therefore~\eqref{eqn:long-exact-sequence} is exact at $H_R^k(M;\sP)$.

 Let $k\geq2$.
 Clearly the composition $H_R^{k-1}(M;\sP)\to H_R^k(M;\bR)\to H_R^{0,k}(M)$ is the zero map.
 Suppose that $[\omega]\in\ker\bigl(H_R^k(M;\bR)\to H_R^{0,k}(M)\bigr)$.
 Then there is a $\tau\in\mR^{0,k-1}$ such that $\pi^{0,k}\omega=\dbbar\tau$.
 Since $\omega$ is real-valued, $\pi^{k,0}\omega=\db\otau$.
 Therefore $\omega - d(2\Real\tau)\in\mS^{k-1}$.
 In particular, \eqref{eqn:long-exact-sequence} is exact at $H_R^k(M;\bR)$.
 
 Let $k\geq2$.
 First note that if $\omega\in\mR^k$, then
 \begin{equation*}
  D \left( \sum_{j=1}^{k-1} \pi^{j,k-j}\omega \right) = d\omega - 2\Imaginary di\pi^{0,k}\omega .
 \end{equation*}
 Therefore $H_R^k(M;\bR) \to H_R^{0,k}(M) \to H_R^k(M;\sP)$ is the zero map.
 Suppose now that $[\omega] \in \ker \bigl( H_R^{0,k}(M) \to H_R^k(M;\sP) \bigr)$.
 Then there is a $\tau\in\mS^{k-1}$ such that $-\Imaginary d\omega = d\tau$.
 Thus $\tau+\Imaginary\omega$ is closed and $2i\pi^{0,k}(\tau+\Imaginary\omega)=\omega$.
 In particular, \eqref{eqn:long-exact-sequence} is exact at $H_R^{0,k}(M)$ for all $k\geq2$.
\end{proof}

\section{The $A_\infty$-structures on the bigraded Rumin complex}
\label{sec:wedge}

In this section we define operators which give $\sR^\bullet:=\bigoplus\sR^k$ and $\sR^{\bullet,\bullet}:=\bigoplus\sR^{p,q}$ the structure of sheaves of balanced $A_\infty$-algebras.
The induced multiplication on $H_R^\bullet(M;\bR)$ recovers the cup product on de Rham cohomology;
the induced multiplication on $H_R^{\bullet,\bullet}(M)$ is new.

We begin by recalling the definition of an $A_\infty$-algebra~\cite{Keller2001}.

\begin{definition}
 An \defn{$A_\infty$-algebra} is a pair $(A,d)$ consisting of a $\bZ$-graded vector space $A = \bigoplus_{n\in\bZ} A^n$ and a family $(m_k \colon A^{\otimes k} \to A)_{k\in\bN}$ of linear maps such that
 \begin{enumerate}
  \item $m_k$ is homogeneous of degree $2-k$ for each $k \in \bN$, in the sense that $m_k(A^{i_1} \otimes \dotsm \otimes A^{i_k}) \subset A^{i_1 + \dotsm + i_k + 2 - k}$ for all $i_1, \dotsc, i_k \in \bZ$; and
  \item for each $k \in \bN$, it holds that
  \begin{equation}
   \label{eqn:a-infinity-requirement}
   \sum_{r+s+t = k} (-1)^{r+st}m_{r+t+1}( 1^{\otimes r} \otimes m_s \otimes 1^{\otimes t}) = 0 ,
  \end{equation}
  where $1$ denotes the identity map, the tensor product of two homogeneous maps $f$ and $g$ is defined by the Koszul sign convention
  \begin{equation*}
  	(f \otimes g)(x \otimes y) := (-1)^{\lv g\rv \lv x\rv}f(x) \otimes g(y) ,
  \end{equation*}
  and vertical bars denote the degree.
 \end{enumerate}
\end{definition}

The cases $n=1$, $n=2$, and $n=3$ of Equation~\eqref{eqn:a-infinity-requirement} are
\begin{align}
 \label{eqn:A-infinity-first-product} m_1m_1 & = 0 , \\
 \label{eqn:A-infinity-second-product} m_1m_2 & = m_2(m_1 \otimes 1 + 1 \otimes m_1) , \\
 \label{eqn:A-infinity-third-product} m_2(1 \otimes m_2 - m_2 \otimes 1) & = m_1m_3 + m_3(m_1 \otimes 1^{\otimes 2} + 1 \otimes m_1 \otimes 1 + 1^{\otimes2} \otimes m_1) ,
\end{align}
respectively.
In particular, $(A,m_1)$ is a differential complex and $m_1$ is a graded derivation with respect to $m_2$.
The final equation states that $m_2$ is associative up to homotopy~\cite{Keller2001}.
In particular, the induced product on the cohomology $H^\bullet(A)$ of $(A,m_1)$ gives $H^\bullet(A)$ the structure of an associative algebra.

\begin{lemma}
 \label{a-infinity-cohomology}
 Let $(A,m)$ be an $A_\infty$-algebra.
 For each $k \in \bZ$, the cohomology group
 \begin{equation*}
  H^k(A) := \frac{\ker \left( m_1 \colon A^k \to A^{k+1} \right)}{\im \left( m_1 \colon A^{k-1} \to A^k \right)}
 \end{equation*}
 is well-defined.
 Moreover, $m_2$ induces an associative product on $\bigoplus_{k\in\bZ} H^k(A)$.
\end{lemma}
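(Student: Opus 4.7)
The plan is to extract everything from the three displayed consequences of Equation~\eqref{eqn:a-infinity-requirement}, namely Equations~\eqref{eqn:A-infinity-first-product}, \eqref{eqn:A-infinity-second-product}, and~\eqref{eqn:A-infinity-third-product}. First I would note that Equation~\eqref{eqn:A-infinity-first-product} gives $m_1 \circ m_1 = 0$, so $(A^\bullet, m_1)$ is a cochain complex (with $m_1$ of degree $+1$, since $m_1$ is homogeneous of degree $2-1=1$). Hence the groups $H^k(A)$ are well-defined in the usual way.

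Next I would show that $m_2$ descends to a bilinear map on $\bigoplus_k H^k(A)$. Equation~\eqref{eqn:A-infinity-second-product} unfolds, via the Koszul sign convention, to
\[ m_1\bigl(m_2(a \otimes b)\bigr) = m_2(m_1 a \otimes b) + (-1)^{\lv a\rv} m_2(a \otimes m_1 b), \]
i.e.\ $m_1$ is a graded derivation with respect to $m_2$. From this, if $m_1 a = m_1 b = 0$ then $m_1 m_2(a \otimes b) = 0$; and if in addition $a = m_1 a'$ (resp.\ $b = m_1 b'$), then $m_2(a \otimes b) = m_1 m_2(a' \otimes b)$ (resp.\ $\pm m_1 m_2(a \otimes b')$) is exact. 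Thus the formula $[a] \cdot [b] := [m_2(a \otimes b)]$ is a well-defined bilinear product on cohomology.

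For associativity, I would apply Equation~\eqref{eqn:A-infinity-third-product} to a tensor $a \otimes b \otimes c$ of three $m_1$-closed elements. The three terms of the form $m_3(\cdots \otimes m_1 \otimes \cdots)$ on the right-hand side vanish outright, leaving
\[ m_2(a \otimes m_2(b \otimes c)) - (-1)^{?}\, m_2(m_2(a \otimes b) \otimes c) = m_1\bigl( m_3(a \otimes b \otimes c) \bigr), \]
where the sign on the left is determined by unraveling the Koszul convention in the left-hand side of~\eqref{eqn:A-infinity-third-product}. Passing to cohomology kills the exact right-hand side, which gives $[a] \cdot ([b] \cdot [c]) = ([a] \cdot [b]) \cdot [c]$ up to the sign mentioned; tracking the signs carefully in~\eqref{eqn:A-infinity-third-product} shows this sign is $+1$, as expected for an associative (not merely graded-associative in a twisted sense) product on cohomology.

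The main obstacle is not conceptual but bookkeeping: one must unfold Equations~\eqref{eqn:A-infinity-second-product} and~\eqref{eqn:A-infinity-third-product} correctly using the Koszul sign rule $(f \otimes g)(x \otimes y) = (-1)^{\lv g\rv \lv x\rv}f(x)\otimes g(y)$, keeping in mind the degree shifts $\lv m_1\rv=1$, $\lv m_2\rv=0$, $\lv m_3\rv=-1$. Once the signs are correct the argument is immediate; no further structure on $A$ is needed.
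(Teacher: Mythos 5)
Your proof is correct and follows the same route as the paper: well-definedness from \eqref{eqn:A-infinity-first-product}, descent of the product from the derivation identity \eqref{eqn:A-infinity-second-product}, and associativity from \eqref{eqn:A-infinity-third-product} evaluated on closed elements so that the associator becomes $m_1m_3(\cdots)$ and hence exact. The paper's own proof is just a three-line citation of the same three equations, so your version is simply a more explicit rendering of the identical argument.
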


\begin{proof}
 Equation~\eqref{eqn:A-infinity-first-product} implies that $H^k(A)$ is well-defined for all $k \in \bZ$.
 Equation~\eqref{eqn:A-infinity-second-product} implies that $m_2$ induces a product $H^k(A) \otimes H^\ell(A) \to H^{k+\ell}(A)$.
 Equation~\eqref{eqn:A-infinity-third-product} implies that this product is associative. 
\end{proof}

We also require the notion of a balanced $A_\infty$-algebra~\cite{Markl1992}.

\begin{definition}
 \label{defn:balanced-a-infinity-algebra}
 An $A_\infty$-algebra $(A,d)$ is \defn{balanced} if
 \begin{equation*}
  m_{p+q} \circ \mu_{p,q} = 0
 \end{equation*}
 for all $p,q \in \bN$, where
 \begin{equation*}
  \mu_{p,q}(x_1 \otimes \dotsm \otimes x_n) := \sum_{\sigma \in Sh_{p,q}} \sgn(\sigma)\epsilon(\sigma;x_1,\dotsc,x_n) x_{\sigma^{-1}(1)} \otimes \dotsm \otimes x_{\sigma^{-1}(p+q)}
 \end{equation*}
 for $\epsilon(\sigma;x_1, \dotsc, x_n)$ the Koszul sign determined by
 \begin{equation*}
  x_1 \wedge \dotsm \wedge x_n = \sum_{\sigma \in S_n} \epsilon( \sigma; x_1, \dotsc, x_n) x_{\sigma^{-1}(1)} \otimes \dotsm \otimes x_{\sigma^{-1}(n)} ,
 \end{equation*}
 and $Sh_{p,q} \subset S_{p+q}$ is the subgroup
 \begin{equation*}
  Sh_{p,q} := \left\{ \sigma \in S_{p+q} \suchthatcolon \sigma(1) < \dotsm < \sigma(p), \quad \sigma(p+1) < \dotsm < \sigma(p+q) \right\}
 \end{equation*}
 of $(p,q)$-shuffles.
\end{definition}

We now turn to defining the $A_\infty$-structures on the Rumin complex.
Throughout this section we denote by $\lv\omega\rv$ the degree of $\omega \in \sR^\bullet$;
i.e.\ if $\omega \in \sR^k$, then $\lv\omega\rv := k$.
We first define the degree zero multiplication.

\begin{definition}
 \label{defn:cdot-product}
 Let $(M^{2n+1},T^{1,0})$ be a CR manifold.
 The \defn{exterior product on $\sR^\bullet$} is
 \begin{equation*}
  \omega \rwedge \tau := \pi ( \omega \wedge \tau ) ,
 \end{equation*}
 where $\pi$ is as in Equation~\eqref{eqn:projection-to-R}.
\end{definition}

Clearly the exterior product on $\sR^{\bullet}$ is a graded commutative $\bR$-linear map of degree $0$ on $\sR^{\bullet}$.
It also holds that $d$ is a graded derivation with respect to $\rwedge$.

\begin{lemma}
 \label{real-product-rule}
 Let $(M^{2n+1},T^{1,0})$ be a CR manifold.
 Then
 \begin{equation*}
  d(\omega\rwedge\tau) = d\omega \rwedge \tau + (-1)^{\lv\omega\rv}\omega \rwedge d\tau .
 \end{equation*}
\end{lemma}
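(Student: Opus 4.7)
The plan is to reduce the identity immediately to the corresponding Leibniz rule for the ordinary exterior product, using only two facts: the definition $\omega \rwedge \tau := \pi(\omega \wedge \tau)$ and the commutation relation $\pi d = d\pi$ from \cref{projection-and-d-commute}. There is essentially no obstacle; the computation is one line plus linearity.

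First, I would unwind the definition and apply the commutation of $\pi$ with $d$:
\begin{equation*}
 d(\omega \rwedge \tau) = d\,\pi(\omega \wedge \tau) = \pi\,d(\omega \wedge \tau) .
\end{equation*}
Here the first equality is \cref{defn:cdot-product} and the second is \cref{projection-and-d-commute} applied to the form $\omega \wedge \tau \in \sA^{\lv\omega\rv + \lv\tau\rv}$.

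Next I would invoke the usual graded Leibniz rule for $d$ on $\sA^{\bullet}$ and use linearity of $\pi$:
\begin{equation*}
 \pi\,d(\omega \wedge \tau) = \pi\bigl( d\omega \wedge \tau + (-1)^{\lv\omega\rv} \omega \wedge d\tau \bigr) = \pi(d\omega \wedge \tau) + (-1)^{\lv\omega\rv}\pi(\omega \wedge d\tau) .
\end{equation*}
Since $\omega, \tau \in \sR^{\bullet}$ and $d$ preserves the Rumin filtration ($d(\sR^{k}) \subset \sR^{k+1}$), the forms $d\omega$ and $d\tau$ again lie in $\sR^{\bullet}$, so by \cref{defn:cdot-product} the two terms on the right equal $d\omega \rwedge \tau$ and $\omega \rwedge d\tau$ respectively. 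This yields the claimed identity.

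The only thing worth checking carefully is that \cref{projection-and-d-commute} applies uniformly in degree, in particular across the critical degree $k = n$ where the induced Rumin differential is second order. But \cref{projection-to-R} defines $\pi$ on all of $\sA^{k}$ by the single formula $\pi = \id - d\Gamma - \Gamma d$, and the identity $\pi d = d\pi$ is just the consequence $d\pi = d - d\Gamma d = \pi d$ of $d^{2} = 0$, so no degree-dependent subtlety arises. Hence the proof is complete.
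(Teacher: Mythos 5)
Your proof is correct and is essentially identical to the paper's: both apply \cref{projection-and-d-commute} to write $d(\omega\rwedge\tau)=\pi d(\omega\wedge\tau)$, invoke the ordinary Leibniz rule, and finish by linearity of $\pi$ together with the fact that $d$ preserves $\sR^{\bullet}$. Your closing remark about the critical degree is a reasonable sanity check but adds nothing beyond what the cited lemma already guarantees.
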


\begin{proof}
 \Cref{projection-and-d-commute} implies that
 \begin{equation*}
  d(\omega \rwedge \tau) = \pi d(\omega \wedge \tau) = \pi \left( d\omega \wedge \tau + (-1)^{\lv\omega\rv} \omega \wedge d\tau \right) .
 \end{equation*}
 The conclusion follows from the linearity of $\pi$.
\end{proof}

The exterior product on $\sR^\bullet$ is not associative.
As we see below, its associator can be expressed in terms of the following operator:

\begin{definition}
 Let $(M^{2n+1},T^{1,0})$ be a CR manifold.
 The \defn{$m_3$-operator} is
 \begin{equation*}
  m_3(\omega \otimes \tau \otimes \eta) := \pi \left( \Gamma(\omega \wedge \tau) \wedge \eta - (-1)^{\lv\omega\rv} \omega \wedge \Gamma(\tau \wedge \eta) \right) ,
 \end{equation*}
 where $\Gamma$ is as in \cref{inverse-lefschetz}.
\end{definition}

The $m_3$-operator is clearly homogeneous of degree $-1$.
As our notation suggests, it is the third operator in the balanced $A_\infty$-structure on the Rumin complex.
To that end, we first note that the $m_3$-operator vanishes for a large range of degrees (cf.\ \cite{TsaiTsengYau2016}*{Equation~5.38}).

\begin{lemma}
 \label{A-infinity-m3-vanishing}
 Let $(M^{2n+1},T^{1,0})$ be a CR manifold.
 Let $\omega,\tau,\eta \in \sR^\bullet$ be homogeneous.
 If
 \begin{equation*}
 \lv\omega\rv + \lv\tau\rv + \lv\eta\rv \leq n+1 \quad\text{or}\quad \max \{ \lv\omega\rv, \lv\tau\rv, \lv\eta\rv \} \geq n+1 ,
 \end{equation*}
 then
 \begin{equation}
  \label{eqn:A-infinity-m3-vanishing}
  m_3(\omega \otimes \tau \otimes \eta) = 0 .
 \end{equation}
\end{lemma}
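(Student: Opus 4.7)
The plan is to reduce the two summands appearing inside $\pi$ in
\[ m_3(\omega \otimes \tau \otimes \eta) = \pi\bigl(\Gamma(\omega \wedge \tau) \wedge \eta - (-1)^{\lv\omega\rv}\omega \wedge \Gamma(\tau \wedge \eta)\bigr) \]
either to the kernel of $\pi$ (for the first regime) or already to zero before $\pi$ is applied (for the second regime). The structural facts I would invoke, all recorded earlier, are: (a) by \cref{inverse-lefschetz}, $\Gamma$ takes values in $\theta \wedge \sA^{\bullet-2}$ and annihilates every form of the type $\theta \wedge (\dotsm)$; (b) by \cref{projection-to-R}, for $k \leq n$ the kernel of $\pi \colon \sA^k \to \sR^k$ contains every form of type $\theta \wedge \alpha + \beta \wedge d\theta$; and (c) for $k \geq n+1$, any local section of $\sR^k$ is of the form $\theta \wedge \omega'$, a consequence of the defining relation $\theta \wedge \omega = 0$ combined with the local decomposition $\COmega^k M \cong \Omega^k \oplus (\theta \wedge \Omega^{k-1})$ from \cref{Lambdapq-embedding}.

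In the first regime, $\lv\omega\rv + \lv\tau\rv + \lv\eta\rv \leq n+1$, I would simply note that because the image of $\Gamma$ always carries a leftmost $\theta$-factor, both summands lie in $\theta \wedge \sA^{\lv\omega\rv+\lv\tau\rv+\lv\eta\rv-2}$. Their common degree $\lv\omega\rv+\lv\tau\rv+\lv\eta\rv - 1$ does not exceed $n$, so fact (b) yields $\pi = 0$ on each of them at once. This disposes of the first regime without any computation.

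In the second regime, $\max\{\lv\omega\rv,\lv\tau\rv,\lv\eta\rv\} \geq n+1$, I would work locally, fix a contact form $\theta$, and split into three subcases according to which of the three arguments has Rumin degree at least $n+1$. In each subcase, fact (c) lets me write the offending entry as $\theta \wedge (\dotsm)$; the adjacent wedge product (either $\omega \wedge \tau$ or $\tau \wedge \eta$, or both when $\lv\tau\rv \geq n+1$) then lies in $\theta \wedge \sA^\bullet$, so fact (a) forces its $\Gamma$-image to vanish; and the one remaining term is killed by $\theta \wedge \theta = 0$ because both the $\Gamma$-image and at least one of $\omega,\eta$ contribute a $\theta$-factor. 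Each subcase is a one-line sign-free manipulation.

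The only genuine obstacle I foresee is verifying fact (c), which is not literally stated in the excerpt; but it is immediate from the definition of $\sR^k$ in degrees $k \geq n+1$ together with the splitting of \cref{Lambdapq-embedding}, since $\theta \wedge \omega = 0$ forces the $\Omega^k$-component of $\omega$ to vanish. Beyond that, no serious identity, estimate, or curvature input is needed; the proof is pure bookkeeping with the graded product and the pointwise properties of $\Gamma$ and $\pi$.
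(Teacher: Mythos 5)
Your proposal is correct and follows essentially the same route as the paper: in the first regime both summands lie in $\theta \wedge \sA^{\lv\omega\rv+\lv\tau\rv+\lv\eta\rv-2}$ and are killed by the kernel description of $\pi$ in degree at most $n$, and in the second regime the defining relation $\theta\wedge(\cdot)=0$ in degrees at least $n+1$ forces the relevant $\Gamma$-images to vanish via \cref{inverse-lefschetz} and kills the remaining term by $\theta\wedge\theta=0$. The only cosmetic difference is that you derive your ``fact (c)'' through \cref{Lambdapq-embedding}, whereas the paper uses the relation $\theta\wedge\eta=0$ directly from the definition of $\sR^k$, $k\geq n+1$; no gap either way.
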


\begin{proof}
 Suppose first that $\lv\omega\rv + \lv\tau\rv + \lv\eta\rv \leq n+1 $.
 Note that
 \begin{equation*}
  \Gamma(\omega \wedge \tau) \wedge \eta - (-1)^{\lv\omega\rv} \omega \wedge \Gamma(\tau \wedge \eta) \in \theta \wedge \COmega^{\lv\omega\rv + \lv\tau\rv + \lv\eta\rv - 2} .
 \end{equation*}
 Since $\lv\omega\rv + \lv\tau\rv + \lv\omega\rv - 2 \leq n-1$, we conclude from \cref{projection-to-R} that Equation~\eqref{eqn:A-infinity-m3-vanishing} holds.
 
 Suppose second that $\max \{ \lv\omega\rv, \lv\eta\rv \} \geq n+1$.
 We present the case $\lv\eta\rv \geq n+1$;
 the case $\lv\omega\rv \geq n+1$ is similar.
 Since $\theta \wedge \eta = 0$, we conclude from \cref{inverse-lefschetz} that
 \begin{equation*}
  m_3(\omega \otimes \tau \otimes \eta) = \pi \left( \Gamma(\omega \wedge \tau) \wedge \eta \right) .
 \end{equation*}
 Since $\Gamma(\omega \wedge \tau) \in \theta \wedge \COmega^{\lv\omega\rv + \lv\tau\rv - 2}$ and $\theta \wedge \eta = 0$, we conclude that Equation~\eqref{eqn:A-infinity-m3-vanishing} holds.
 
 Suppose finally that $\lv\tau\rv \geq n+1$.
 Since $\theta \wedge \tau = 0$, we conclude from \cref{inverse-lefschetz} that $\Gamma(\omega \wedge \tau) = \Gamma(\tau \wedge \eta) = 0$.
 Therefore Equation~\eqref{eqn:A-infinity-m3-vanishing} holds.
\end{proof}

We now show that the exterior derivative, the exterior product on $\sR^\bullet$, and the $m_3$-operator together give the Rumin complex the structure of a balanced $A_\infty$-algebra.

\begin{theorem}
 \label{rumin-a-infinity}
 Let $(M^{2n+1},T^{1,0})$ be a CR manifold.
 Set $m_1 := d$ and $m_2 := \rwedge$ and $m_j := 0$ for all $j\geq4$.
 Then $(\sR^\bullet,m)$ is a balanced $A_\infty$-algebra.
\end{theorem}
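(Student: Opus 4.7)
The plan is to verify (i) each $m_k$ is homogeneous of degree $2-k$, (ii) the Stasheff identities $\sum_{r+s+t=k}(-1)^{r+st}m_{r+t+1}(1^{\otimes r}\otimes m_s\otimes 1^{\otimes t})=0$ for every $k\geq 1$, and (iii) the balanced identities $m_{p+q}\circ\mu_{p,q}=0$ for all $p,q\in\bN$. The homogeneity is immediate: $m_1=d$ has degree $+1$, $m_2=\rwedge$ has degree $0$, and $m_3$ has degree $-1$ because $\Gamma$ lowers degree by one while $\pi$ preserves it.

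Of the Stasheff identities, $k=1$ reduces to $d^2=0$ and $k=2$ is the graded Leibniz rule proved in Lemma~\ref{real-product-rule}. The $k=3$ associator is the first substantive computation: I would expand the two associations of $\rwedge$ using $\pi=1-d\Gamma-\Gamma d$ from Equation~\eqref{eqn:projection-to-R}, invoke Lemma~\ref{projection-and-d-commute} to extract an exact term $dm_3(\omega\otimes\tau\otimes\eta)$ from the $d\Gamma$-contributions, and use the Leibniz rule for $d$ to reassemble the $\Gamma d$-contributions as $m_3(d\omega\otimes\tau\otimes\eta)+(-1)^{\lv\omega\rv}m_3(\omega\otimes d\tau\otimes\eta)+(-1)^{\lv\omega\rv+\lv\tau\rv}m_3(\omega\otimes\tau\otimes d\eta)$; careful tracking of Koszul signs then closes the identity.

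For $k\geq 6$ every summand has either $s\geq 4$ or $r+t+1\geq 4$, so the identity is vacuous. For $k=5$ only compositions $m_3\circ m_3$ survive, and Lemma~\ref{A-infinity-m3-vanishing} kills each one individually: if the inner $m_3(\alpha,\beta,\gamma)$ is nonzero then its degree equals $\lv\alpha\rv+\lv\beta\rv+\lv\gamma\rv-1\geq n+1$, which in turn forces the outer $m_3$ to vanish by the max-degree clause of the lemma. The $k=4$ identity is the principal remaining step; I would expand each $\rwedge$ and each $m_3$ via $\pi$ and $\Gamma$, then use properties (ii) and (iii) of $\Gamma$ in Theorem~\ref{inverse-lefschetz} to pair off the $\Gamma$-terms, while deploying Lemma~\ref{A-infinity-m3-vanishing} to truncate the range of degree configurations in which multiple summands can simultaneously contribute.

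Finally, the balanced identity at $(p,q)=(1,1)$ is the graded commutativity of $\rwedge$, which descends from that of $\wedge$ via linearity of $\pi$; for $p+q\geq 4$ it is vacuous since $m_{p+q}=0$. The remaining cases $(1,2)$ and $(2,1)$ amount to a graded cyclic symmetry of $m_3$, which I would verify directly from its defining formula by using $\Gamma(x_j\wedge x_i)=(-1)^{\lv x_i\rv\lv x_j\rv}\Gamma(x_i\wedge x_j)$ and regrouping the shuffled summands so that the two halves of $m_3$ cancel against their counterparts in the other orderings. The hardest step will be the $k=4$ Stasheff identity in the intermediate-degree regime where the vanishing lemma does not eliminate all but one term; I anticipate the requisite cancellations to proceed in close analogy with the symplectic primitive case treated in~\cite{TsaiTsengYau2016}.
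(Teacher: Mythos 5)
Your proposal follows essentially the same route as the paper: homogeneity and the $k=1,2$ identities from $d^2=0$ and the Leibniz rule, the $k=3$ and $k=4$ identities by direct expansion via $\pi=1-d\Gamma-\Gamma d$ and the properties of $\Gamma$ in \cref{inverse-lefschetz}, the $k=5$ identity by the two-step degree argument from \cref{A-infinity-m3-vanishing} (image of $m_3$ lands in degrees $\geq n+1$, where $m_3$ then vanishes), vacuity for $k\geq 6$, and the balanced identities from graded commutativity of $\wedge$ at $(1,1)$ and a direct cancellation for $(1,2)$ and $(2,1)$. The plan is correct and its remaining work is exactly the bookkeeping the paper carries out in Equations~\eqref{eqn:check-m3-m2m2}--\eqref{eqn:check-m3-m3m1} and \eqref{eqn:A-infinity-fourth-product}.
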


\begin{proof}
 We first show that $(\sR^\bullet,m)$ is an $A_\infty$-algebra.
 It is immediate that Equation~\eqref{eqn:a-infinity-requirement} holds when $k=1$.
 \Cref{real-product-rule} implies that Equation~\eqref{eqn:a-infinity-requirement} holds when $k=2$.
 Since $m_j=0$ for $j\geq4$, we immediately see that Equation~\eqref{eqn:a-infinity-requirement} holds for $k\geq6$.
 It remains to consider the cases $k \in \{ 3, 4, 5 \}$.
 
 Suppose first that $k=3$.
 On the one hand, \cref{inverse-lefschetz} and Equation~\eqref{eqn:projection-to-R} imply that
 \begin{equation}
  \label{eqn:check-m3-m2m2}
  \begin{split}
   \MoveEqLeft m_2(1 \otimes m_2 - m_2 \otimes 1)(\omega \otimes \tau \otimes \eta) \\
    & = \pi \bigl( d\Gamma(\omega \wedge \tau) \wedge \eta + \Gamma d(\omega \wedge \tau) \wedge \eta - \omega \wedge d\Gamma(\tau \wedge \eta) - \omega \wedge \Gamma d(\tau \wedge \eta) \bigr) \\
    & = \pi \bigl( \Gamma d(\omega \wedge \tau) \wedge \eta - \omega \wedge \Gamma d(\tau \wedge \eta) \bigr) + d\Gamma(\omega \wedge \tau) \wedge \eta  \\
     & \quad - \omega \wedge d\Gamma(\tau \wedge \eta) - d\Gamma \left(d\Gamma(\omega \wedge \tau) \wedge \eta \right) + d\Gamma \left(\omega \wedge d\Gamma(\tau \wedge \eta) \right) \\
     & \quad - (-1)^{\lv\omega\rv + \lv\tau\rv}\Gamma \left( d\Gamma(\omega \wedge \tau) \wedge d\eta \right) + \Gamma \left( d\omega \wedge d\Gamma(\tau \wedge \eta) \right) .
  \end{split}
 \end{equation}
 On the other hand, direct computation using \cref{inverse-lefschetz} yields
 \begin{multline}
  \label{eqn:check-m3-m1m3}
  m_1m_3(\omega \otimes \tau \otimes \eta) = d\left(\Gamma(\omega \wedge \tau) \wedge \eta\right) - (-1)^{\lv\omega\rv}d\left(\omega \wedge \Gamma(\tau \wedge \eta) \right) \\
   - d\Gamma \left( d\Gamma(\omega \wedge \tau) \wedge \eta \right) + d\Gamma \left( \omega \wedge d\Gamma(\tau \wedge \eta) \right) 
 \end{multline}
 and
 \begin{align*}
  \MoveEqLeft m_3(m_1 \otimes 1^{\otimes 2} + 1 \otimes m_1 \otimes 1 + 1^{\otimes 2} \otimes m_1)(\omega \otimes \tau \otimes \eta) \\
   & = (-1)^{\lv\omega\rv}\pi \left( d\omega \wedge \Gamma(\tau \wedge \eta) + (-1)^{\lv\tau\rv}\Gamma(\omega \wedge \tau) \wedge d\eta \right) \\
    & \quad + \pi \bigl( \Gamma d(\omega \wedge \tau) \wedge \eta - \omega \wedge \Gamma d(\tau \wedge \eta) \bigr) .
 \end{align*}
 Therefore
 \begin{equation}
  \label{eqn:check-m3-m3m1}
  \begin{split}
  \MoveEqLeft m_3(m_1 \otimes 1^{\otimes 2} + 1 \otimes m_1 \otimes 1 + 1^{\otimes 2} \otimes m_1)(\omega \otimes \tau \otimes \eta) \\
   & = \pi\bigl( \Gamma d(\omega \wedge \tau) \wedge \eta - \omega \wedge \Gamma d(\tau \wedge \eta) \bigr) \\
    & \quad + (-1)^{\lv\omega\rv} d\omega \wedge \Gamma(\tau \wedge \eta) + (-1)^{\lv\omega\rv + \lv\tau\rv}\Gamma(\omega \wedge \tau) \wedge d\eta \\
    & \quad + \Gamma (d\omega \wedge d\Gamma(\tau \wedge \eta) ) - (-1)^{\lv\omega\rv + \lv\tau\rv}\Gamma (d\Gamma(\omega \wedge \tau) \wedge d\eta) .
  \end{split}
 \end{equation}
 Combining Equations~\eqref{eqn:check-m3-m2m2}, \eqref{eqn:check-m3-m1m3}, and~\eqref{eqn:check-m3-m3m1} yields Equation~\eqref{eqn:A-infinity-third-product}.
 
 Suppose next that $k=4$.
 Since $m_4=0$, Equation~\eqref{eqn:a-infinity-requirement} is equivalent to
 \begin{equation}
  \label{eqn:A-infinity-fourth-product}
  m_3\left( m_2 \otimes 1^{\otimes 2} - 1 \otimes m_2 \otimes 1 + 1^{\otimes 2} \otimes m_2 \right) = m_2\left( 1 \otimes m_3 + m_3 \otimes 1\right) .
 \end{equation}
 On the one hand, we compute using \cref{inverse-lefschetz} that
 \begin{align*}
  \MoveEqLeft m_3\left( m_2 \otimes 1^{\otimes 2} - 1 \otimes m_2 \otimes 1 + 1^{\otimes 2} \otimes m_2 \right)(\omega_1 \otimes \omega_2 \otimes \omega_3 \otimes \omega_4) \\
   & = \pi\Bigl( \Gamma\bigl(\pi(\omega_1\wedge\omega_2)\wedge\omega_3\bigr) \wedge \omega_4 - (-1)^{\lv\omega_1\rv+\lv\omega_2\rv}\pi(\omega_1\wedge\omega_2) \wedge \Gamma(\omega_3\wedge\omega_4) \\
    & \quad - \Gamma\bigl(\omega_1 \wedge \pi(\omega_2\wedge\omega_3)\bigr) \wedge \omega_4 + (-1)^{\lv\omega_1\rv}\omega_1 \wedge \Gamma\bigl(\pi(\omega_2\wedge\omega_3)\wedge\omega_4\bigr) \\
    & \quad + \Gamma(\omega_1\wedge\omega_2) \wedge \pi(\omega_3\wedge\omega_4) - (-1)^{\lv\omega_1\rv}\omega_1 \wedge \Gamma\bigl(\omega_2\wedge\pi(\omega_3\wedge\omega_4)\bigr) \Bigr) \\
   & = \pi\Bigl( \Gamma(\omega_1 \wedge \omega_2) \wedge \omega_3 \wedge \omega_4 - (-1)^{\lv\omega_1\rv + \lv\omega_2\rv}\omega_1 \wedge \omega_2 \wedge \Gamma(\omega_3 \wedge \omega_4) \\ 
    & \quad - (-1)^{\lv\omega_1\rv}\omega_1 \wedge \Gamma d\bigl( \Gamma(\omega_2 \wedge \omega_3) \wedge \omega_4\bigr) + (-1)^{\lv\omega_1\rv}\Gamma d\bigl(\omega_1 \wedge \Gamma(\omega_2 \wedge \omega_3)\bigr) \wedge \omega_4 \\
    & \quad - \Gamma d\bigl(\Gamma(\omega_1 \wedge \omega_2) \wedge \omega_3\bigr) \wedge \omega_4 + (-1)^{\lv\omega_1\rv + \lv\omega_2\rv}\omega_1 \wedge \Gamma d\bigl(\omega_2 \wedge \Gamma(\omega_3 \wedge \omega_4)\bigr) \Bigr) .
 \end{align*}
 On the other hand, we compute using \cref{inverse-lefschetz} that
 \begin{align*}
  \MoveEqLeft m_2(1 \otimes m_3 + m_3 \otimes 1)(\omega_1 \otimes \omega_2 \otimes \omega_3 \otimes \omega_4) \\
   & = \pi\Bigl( \Gamma(\omega_1 \wedge \omega_2) \wedge \omega_3 \wedge \omega_4 - (-1)^{\lv\omega_1\rv + \lv\omega_2\rv}\omega_1 \wedge \omega_2 \wedge \Gamma(\omega_3 \wedge \omega_4) \\ 
    & \quad - (-1)^{\lv\omega_1\rv}\omega_1 \wedge \Gamma d\bigl( \Gamma(\omega_2 \wedge \omega_3) \wedge \omega_4\bigr) + (-1)^{\lv\omega_1\rv}\Gamma d\bigl(\omega_1 \wedge \Gamma(\omega_2 \wedge \omega_3)\bigr) \wedge \omega_4 \\
    & \quad - \Gamma d\bigl(\Gamma(\omega_1 \wedge \omega_2) \wedge \omega_3\bigr) \wedge \omega_4 + (-1)^{\lv\omega_1\rv + \lv\omega_2\rv}\omega_1 \wedge \Gamma d\bigl(\omega_2 \wedge \Gamma(\omega_3 \wedge \omega_4)\bigr) \Bigr) .
 \end{align*}
 Therefore Equation~\eqref{eqn:A-infinity-fourth-product} holds.

 Suppose next that $k=5$.
 The first condition in \cref{A-infinity-m3-vanishing} implies that
 \begin{equation*}
  \im m_3 \subset \bigoplus_{j \geq n+1} \sR^j .
 \end{equation*}
 The second condition in \cref{A-infinity-m3-vanishing} then implies that
 \begin{equation*}
  m_3(m_3 \otimes 1^{\otimes 2}) = m_3(1 \otimes m_3 \otimes 1) = m_3(1^{\otimes 2} \otimes m_3) = 0 .
 \end{equation*}
 Since $m_4=m_5=0$, we conclude that Equation~\eqref{eqn:a-infinity-requirement} holds.
 
 Since the above cases are exhaustive, we conclude that $(\sR^\bullet,m)$ is an $A_\infty$-algebra.
 
 We conclude by showing that $(\sR^\bullet,m)$ is balanced.
 When $p+q=1$, the balanced requirement is vacuous.
 Since
 \begin{equation*}
  \mu_{1,1}(\omega \otimes \tau) = \omega \otimes \tau - (-1)^{\lv\omega\rv \lv\tau\rv}\tau \otimes \omega ,
 \end{equation*}
 we see that
 \begin{equation*}
  (m_2 \circ \mu_{1,1})(\omega \otimes \tau) = \pi \left( \omega \wedge \tau - (-1)^{\lv\omega\rv \lv\tau\rv} \tau \wedge \omega \right) = 0 .
 \end{equation*}
 Therefore the balanced requirement holds when $p+q=2$.
 Finally, note that
 \begin{align*}
  \mu_{1,2}(\omega \otimes \tau \otimes \eta) & = \omega \otimes \tau \otimes \eta - (-1)^{\lv\omega\rv\lv\tau\rv} \tau \otimes \omega \otimes \eta + (-1)^{\lv\omega\rv ( \lv\tau\rv + \lv \eta\rv)} \tau \otimes \eta \otimes \omega , \\
  \mu_{2,1}(\omega \otimes \tau \otimes \eta) & = \omega \otimes \tau \otimes \eta - (-1)^{\lv\tau\rv\lv\eta\rv} \omega \otimes \eta \otimes \tau + (-1)^{\lv\eta\rv(\lv\omega\rv + \lv\tau\rv)} \eta \otimes \omega \otimes \tau .
 \end{align*}
 We directly compute that
 \begin{align*}
  \MoveEqLeft (m_3 \circ \mu_{1,2})(\omega \otimes \tau \otimes \eta) = \pi \Bigl( \Gamma(\omega \wedge \tau) \wedge \eta - (-1)^{\lv\omega\rv} \omega \wedge \Gamma(\tau \wedge \eta) \\
   & - (-1)^{\lv\omega\rv\lv\tau\rv}\Gamma(\tau \wedge \omega) \wedge \eta + (-1)^{\lv\tau\rv(\lv\omega\rv + 1)}\tau \wedge \Gamma(\omega \wedge \eta) \\
   & + (-1)^{\lv\omega\rv(\lv\tau\rv + \lv\eta\rv)}\Gamma(\tau \wedge \eta) \wedge \omega - (-1)^{\lv\tau\rv(\lv\omega\rv + 1) + \lv\omega\rv\lv\eta\rv} \tau \wedge \Gamma(\eta \wedge \omega) \Bigr) .
 \end{align*}
 Therefore $m_3 \circ \mu_{1,2} = 0$.
 Similarly, $m_3 \circ \mu_{2,1} = 0$.
 Therefore $(\sR^\bullet,m)$ is balanced.
\end{proof}

We require two corollaries of \cref{rumin-a-infinity}.
First, the $A_\infty$-structure on $\sR^\bullet$ gives $H_R^\bullet(M;\bR)$ the structure of a graded commutative algebra.

\begin{corollary}
 \label{de-rham-cup}
 Let $(M^{2n+1},T^{1,0})$ be a CR manifold.
 If $k,\ell\in\{0,\dotsc,2n+1\}$, then the operator $\cup \colon H_R^k(M;\bR) \times H_R^\ell(M;\bR) \to H_R^{k+\ell}(M;\bR)$,
 \begin{equation*}
  [\omega] \cup [\tau] := [ \omega \rwedge \tau ] ,
 \end{equation*}
 is well-defined and associative.
\end{corollary}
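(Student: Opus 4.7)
The plan is to deduce both assertions directly from \cref{rumin-a-infinity}, which identifies $(\sR^\bullet, m)$ as a balanced $A_\infty$-algebra with $m_1=d$ and $m_2=\rwedge$, together with the abstract \cref{a-infinity-cohomology}. In fact, once those are in hand, the statement is essentially a specialization of \cref{a-infinity-cohomology} to this particular $A_\infty$-algebra, so the task reduces to unwinding the definitions.

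For well-definedness, I would argue as follows. Let $\omega\in\mR^k$ and $\tau\in\mR^\ell$ be closed. The graded Leibniz rule from \cref{real-product-rule} (equivalently, Equation~\eqref{eqn:A-infinity-second-product}) gives
\begin{equation*}
 d(\omega\rwedge\tau) = d\omega\rwedge\tau + (-1)^k\omega\rwedge d\tau = 0,
\end{equation*}
so $\omega\rwedge\tau$ is closed. If $\omega$ is replaced by $\omega + d\alpha$ for some $\alpha\in\mR^{k-1}$, then again by \cref{real-product-rule},
\begin{equation*}
 (\omega+d\alpha)\rwedge\tau - \omega\rwedge\tau = d\alpha\rwedge\tau = d(\alpha\rwedge\tau),
\end{equation*}
which is exact; the same holds with the roles of $\omega$ and $\tau$ exchanged. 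Hence $[\omega\rwedge\tau]$ depends only on $[\omega]$ and $[\tau]$, so the pairing $\cup$ is well-defined on $H_R^\bullet(M;\bR)$.

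For associativity, the key is Equation~\eqref{eqn:A-infinity-third-product}, verified in the proof of \cref{rumin-a-infinity}, which reads
\begin{equation*}
 m_2(1\otimes m_2 - m_2\otimes 1) = m_1 m_3 + m_3(m_1\otimes 1^{\otimes 2} + 1\otimes m_1\otimes 1 + 1^{\otimes 2}\otimes m_1).
\end{equation*}
Evaluating on closed representatives $\omega,\tau,\eta$, the second summand on the right vanishes term by term because each factor of $m_1$ hits a closed form. What remains is
\begin{equation*}
 \omega\rwedge(\tau\rwedge\eta) - (\omega\rwedge\tau)\rwedge\eta = d\bigl(m_3(\omega\otimes\tau\otimes\eta)\bigr),
\end{equation*}
which is exact. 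Passing to cohomology, this yields $[\omega]\cup\bigl([\tau]\cup[\eta]\bigr) = \bigl([\omega]\cup[\tau]\bigr)\cup[\eta]$, completing the proof.

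The argument is structural rather than computational: there is no genuine obstacle, since the hard work of checking the $A_\infty$-axioms was already carried out in \cref{rumin-a-infinity}. The only point that requires care is verifying that the right-hand side of the associator identity really is exact at the chain level when evaluated on closed inputs, which is immediate once one notes that every remaining term on the right is either $m_1$ applied to something or $m_3$ applied to a tensor containing a factor $m_1(\text{closed}) = 0$.
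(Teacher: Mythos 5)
Your proposal is correct and follows the same route as the paper: the paper's proof simply cites \cref{a-infinity-cohomology} together with \cref{rumin-a-infinity}, and your argument is just the explicit unwinding of what those two results say (the $k=2$ relation for well-definedness, the $k=3$ relation for associativity). No gaps.
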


\begin{proof}
 This follows immediately from \cref{a-infinity-cohomology,rumin-a-infinity}.
\end{proof}

\begin{remark}
 \label{rk:easy-recover-derham}
 If $\omega\in\Omega^kM$ is $d$-closed, then $\pi\omega \equiv \omega \mod \im d$.
 \Cref{projection-and-d-commute} then implies that $H_R^k(M;\bR)$ is isomorphic to the usual $k$-th de Rham cohomology group (cf.\ \cref{r-resolution}) and that our cup product is the usual one.
\end{remark}

Second, the exterior product gives $\sR^\bullet$ the structure of a $C^\infty(M)$-module.
More generally:

\begin{corollary}
 \label{A-infinity-m3-vanishing-module}
 Let $(M^{2n+1},T^{1,0})$ be a CR manifold.
 If $\lv\omega\rv + \lv\tau\rv + \lv\eta\rv \leq n$ or $\max \{ \lv\omega\rv , \lv\tau\rv, \lv\eta\rv \} \geq n+1$, then
 \begin{equation*}
  \left( \omega \rwedge \tau \right) \rwedge \eta = \omega \rwedge \left( \tau \rwedge \eta \right) .
 \end{equation*}
\end{corollary}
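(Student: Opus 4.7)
The plan is to invoke the third $A_\infty$-relation \eqref{eqn:A-infinity-third-product}, which by \cref{rumin-a-infinity} holds on the Rumin complex, and to show that in the stated degree ranges every $m_3$-term on its right-hand side vanishes by \cref{A-infinity-m3-vanishing}. Unwinding signs, applying \eqref{eqn:A-infinity-third-product} to $\omega \otimes \tau \otimes \eta$ gives the associator identity
\begin{align*}
 \omega \rwedge (\tau \rwedge \eta) - (\omega \rwedge \tau) \rwedge \eta & = d\, m_3(\omega \otimes \tau \otimes \eta) + m_3(d\omega \otimes \tau \otimes \eta) \\
  & \quad + (-1)^{\lv\omega\rv} m_3(\omega \otimes d\tau \otimes \eta) + (-1)^{\lv\omega\rv + \lv\tau\rv} m_3(\omega \otimes \tau \otimes d\eta),
\end{align*}
so it is enough to verify that each of the four $m_3$-calls on the right-hand side is zero.

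First I would handle the case $\lv\omega\rv+\lv\tau\rv+\lv\eta\rv \leq n$. Each of the four $m_3$-expressions then has total input degree at most $n+1$ (the $d$ raises the total by one), so by the first condition of \cref{A-infinity-m3-vanishing} all four expressions vanish, and the associator is zero.

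Next I would handle the case $\max\{\lv\omega\rv,\lv\tau\rv,\lv\eta\rv\} \geq n+1$. In the term $m_3(\omega\otimes\tau\otimes\eta)$, the second condition of \cref{A-infinity-m3-vanishing} applies directly. For the three remaining terms, the argument that was already of degree $\geq n+1$ either remains of degree $\geq n+1$ after differentiation (since $d$ raises degree) or is the slot to which $d$ was not applied; in the degenerate sub-case when its degree equals $2n+1$ and one tries to differentiate it, the corresponding term simply vanishes by linearity. Either way, one of the three arguments still has degree $\geq n+1$, so the second condition of \cref{A-infinity-m3-vanishing} forces the whole term to vanish.

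There is no real obstacle beyond this degree bookkeeping; the content of the statement is precisely the vanishing range of $m_3$, packaged through the $A_\infty$-associator identity.
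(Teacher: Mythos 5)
Your proposal is correct and follows essentially the same route as the paper: the paper's proof likewise applies the third $A_\infty$-relation from \cref{rumin-a-infinity} and observes that \cref{A-infinity-m3-vanishing} kills every $m_3$-term on the right-hand side in the stated degree ranges. The only difference is that you spell out the degree bookkeeping (including the $d$ raising degrees and the $\lv\cdot\rv = 2n+1$ degeneracy) that the paper leaves implicit.
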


\begin{proof}
 If $\lv\omega\rv + \lv\tau\rv + \lv\eta\rv \leq n$ or $\max \{ \lv\omega\rv , \lv\tau\rv, \lv\eta\rv \} \geq n+1$, then \cref{A-infinity-m3-vanishing} implies that
 \begin{equation*}
  \left( m_1m_3 + m_3(m_1 \otimes 1^{\otimes 2} + 1 \otimes m_1 \otimes 1 + 1^{\otimes 2} \otimes m_2) \right)(\omega \otimes \tau \otimes \eta) = 0 .
 \end{equation*}
 The conclusion now follows from \cref{rumin-a-infinity}.
\end{proof}

We now turn to the $A_\infty$-structure on the bigraded Rumin complex.
To begin, we determine the extent to which the exterior product on $\sR^\bullet$ does not preserve the bigrading on $\sR^{\bullet,\bullet}$.

\begin{lemma}
 \label{rwedge-image}
 Let $(M^{2n+1},T^{1,0})$ be a CR manifold and let $\omega\in\sR^{p,q}$ and $\tau\in\sR^{r,s}$.
 If $\max\{p+q,r+s\}\geq n+1$ or $p+q+r+s\leq n$, then
 \[ \omega\rwedge\tau \in \sR^{p+r,q+s} ; \]
 if $p+q,r+s\leq n$ and $p+q+r+s\geq n+1$, then
 \[ \omega\rwedge\tau \in \sR^{p+r+1,q+s-1} \oplus \sR^{p+r,q+s} . \]
\end{lemma}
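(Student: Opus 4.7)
The plan is to set $\alpha := \omega \wedge \tau$ and analyze $\omega \rwedge \tau = \pi\alpha$ through the formula $\pi\alpha = \alpha - d\Gamma\alpha - \Gamma d\alpha$ from Equation~\eqref{eqn:projection-to-R}, tracking types on $\CH$ (or after $T\contr$) via \cref{inverse-lefschetz}(iv) together with the integrability of $T^{1,0}$. The structure equation $d\theta^\alpha = \theta^\mu \wedge \omega_\mu{}^\alpha + \theta \wedge \tau^\alpha$ and its conjugate imply $d\theta^\alpha|_{\CH} \in \Lambda^{2,0} \oplus \Lambda^{1,1}$ and $d\theta^{\bar\beta}|_{\CH} \in \Lambda^{1,1} \oplus \Lambda^{0,2}$, whence a short computation in an admissible coframe gives $d\nu|_{\CH} \in \Lambda^{a+1,b} \oplus \Lambda^{a,b+1}$ for every $\nu \in \Omega^{a,b}M$. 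Combined with the decomposition $\omega = \omega_0 + \theta \wedge \omega_T$ of \cref{Lambdapq-embedding} and the explicit formula in \cref{projection-to-E-expression} identifying $\omega_T \in \Omega^{p-1,q}M \oplus \Omega^{p,q-1}M$ when $\omega \in \sR^{p,q}$ and $p+q \leq n$, this also yields $d\omega|_{\CH} \in \Lambda^{p+1,q} \oplus \Lambda^{p,q+1}$, and analogously for $\tau$.

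For the case $p+q+r+s \leq n$, one has $\alpha|_{\CH} = \omega_0|_{\CH} \wedge \tau_0|_{\CH} \in \Lambda^{p+r,q+s}$; since $\Gamma\alpha \in \theta \wedge \Omega^{p+r-1,q+s-1}M$ by \cref{inverse-lefschetz}(iv) and $\Gamma d\alpha$ also sits in the $\theta$-ideal, a direct calculation gives $(\pi\alpha)|_{\CH} \in \Lambda^{p+r,q+s}$, placing $\omega \rwedge \tau$ in $\sR^{p+r,q+s}$. For the case $\max\{p+q, r+s\} \geq n+1$, assume without loss of generality that $p+q \geq n+1$ and write $\omega = \theta \wedge \omega'$ with $\omega' \in \Omega^{p-1,q}M$. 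If also $r+s \geq n+1$, then $\omega \wedge \tau = 0$. Otherwise $\omega \wedge \tau = \theta \wedge (\omega' \wedge \tau)$, and the defining conditions $\theta \wedge (\omega \wedge \tau) = 0$ and $\theta \wedge d(\omega \wedge \tau) = 0$ follow immediately from $\theta \wedge \omega = 0$ and $\theta \wedge d\omega = 0$, so $\omega \wedge \tau$ already lies in $\sR^{p+q+r+s}$ and $\pi$ acts as the identity; the computation $(T \contr (\omega \wedge \tau))|_{\CH} = \omega'|_{\CH} \wedge \tau|_{\CH} \in \Lambda^{p+r-1,q+s}$ then certifies $\omega \rwedge \tau \in \sR^{p+r,q+s}$.

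The principal case is $p+q, r+s \leq n$ with $p+q+r+s \geq n+1$. Since $\pi\alpha$ lies in the $\theta$-ideal of $\CsR^{p+q+r+s}$, the claimed membership in $\sR^{p+r+1,q+s-1} \oplus \sR^{p+r,q+s}$ is equivalent to $(T \contr \pi\alpha)|_{\CH} \in \Lambda^{p+r,q+s-1} \oplus \Lambda^{p+r-1,q+s}$. Using $\omega = \omega_0 + \theta \wedge \omega_T$ and $\tau = \tau_0 + \theta \wedge \tau_T$, one finds
\begin{equation*}
 (T \contr \alpha)|_{\CH} = (-1)^{p+q}\, \omega_0|_{\CH} \wedge \tau_T|_{\CH} + \omega_T|_{\CH} \wedge \tau_0|_{\CH},
\end{equation*}
which sits in $\Lambda^{p+r-1,q+s} \oplus \Lambda^{p+r,q+s-1}$ from the known types of $\omega_T|_{\CH}$ and $\tau_T|_{\CH}$. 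Writing $\Gamma\alpha = \theta \wedge \nu$ with $\nu \in \Omega^{p+r-1,q+s-1}M$ yields $(T \contr d\Gamma\alpha)|_{\CH} = -d\nu|_{\CH}$, which lies in $\Lambda^{p+r,q+s-1} \oplus \Lambda^{p+r-1,q+s}$ by the integrability observation. Finally, splitting $d\alpha$ according to its $\CH$-types (which are $(p+r+1,q+s)$ and $(p+r,q+s+1)$) and applying \cref{inverse-lefschetz}(iv) again places $\Gamma d\alpha \in \theta \wedge \bigl(\Omega^{p+r,q+s-1}M \oplus \Omega^{p+r-1,q+s}M\bigr)$, whose $T$-contraction restricted to $\CH$ is of the same two types. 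Summing the three contributions yields the claim.

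The main obstacle is bookkeeping in this last case: types must be tracked consistently through $T\contr$, the Lefschetz embedding $\iota$, and the exterior derivative. The integrability input $d\Omega^{a,b}M|_{\CH} \subset \Lambda^{a+1,b} \oplus \Lambda^{a,b+1}$ is the essential ingredient that prevents spurious summands like $\sR^{p+r+2,q+s-2}$ or $\sR^{p+r-1,q+s+1}$ from appearing. Each individual step is mechanical once the right invariant is identified, but keeping signs, conjugation conventions, and the distinction between $\Omega^{p,q}$ and $\Omega^{p,q}M$ straight takes care.
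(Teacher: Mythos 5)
Your proof is correct and follows essentially the same route as the paper's: the same three-case split, the same reduction of the first case to the observation that $\omega\wedge\tau$ already lies in $\CsR^{p+q+r+s}$, and the same type-tracking of $\omega\rwedge\tau=\pi(\omega\wedge\tau)$ through \cref{inverse-lefschetz} and \cref{projection-to-R} using the decomposition $\omega\in\Omega^{p,q}M\oplus\theta\wedge(\Omega^{p,q-1}M\oplus\Omega^{p-1,q}M)$. The paper compresses the principal case into one sentence; your version just makes the bookkeeping (including the integrability input controlling the types of $d\nu\rv_{\CH}$) explicit.
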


\begin{proof}
 Let $\omega\in\sR^{p,q}$ and $\tau\in\sR^{r,s}$.

 Suppose that $\max\{p+q,r+s\}\geq n+1$.
 We may assume that $p+q\geq n+1$ and $r+s\leq n$.
 Then $\omega \rwedge \tau = \omega \wedge \tau$.
 Since $\omega \in \theta \wedge \Omega^{p-1,q}M$ and $\tau\rv_{\CH} \in \Omega^{r,s}$, we conclude that $\omega\rwedge\tau\in\sR^{p+r,q+s}$.
 
 Now suppose that $p+q+r+s\leq n$.
 Then $\omega\rv_{\CH}\in\Omega^{p,q}$ and $\tau\rv_{\CH}\in\Omega^{r,s}$.
 Hence $(\omega\wedge\tau)\rv_{\CH}\in\Omega^{p+r,q+s}$.
 We conclude from \cref{projection-to-E-expression} that $\omega\rwedge\tau\in\sR^{p+q,r+s}$.
 
 Finally, suppose that $p+q,r+s\leq n$ and $p+q+r+s\geq n+1$.
 Therefore $\omega \in \Omega^{p,q}M \oplus \theta \wedge (\Omega^{p,q-1}M \oplus \Omega^{p-1,q}M)$ and similarly for $\tau$.
 We readily conclude from \cref{inverse-lefschetz,projection-to-R} that $\omega\rwedge\tau\in\sR^{p+r+1,q+s-1}\oplus\sR^{p+r,q+s}$.
\end{proof}

\Cref{rwedge-image} indicates the natural way to define the exterior product on $\sR^{\bullet,\bullet}$.

\begin{definition}
 \label{defn:bullet-product}
 Let $(M^{2n+1},T^{1,0})$ be a CR manifold.
 The \defn{exterior product on $\sR^{\bullet,\bullet}$} is
 \[ \omega \krwedge \tau := \pi^{p+r,q+s}\left(\omega\rwedge\tau\right) . \]
 for all $\omega\in\sR^{p,q}$ and all $\tau\in\sR^{r,s}$.
\end{definition}

Clearly the exterior product on $\sR^{\bullet,\bullet}$ is a graded commutative $\bC$-linear map of degree $0$ on $\sR^{\bullet,\bullet}$.
It also holds that $\dbbar$ is a graded derivation with respect to $\krwedge$.

\begin{lemma}
 \label{kr-product-rule}
 Let $(M^{2n+1},T^{1,0})$ be a CR manifold.
 Then
 \begin{equation}
  \label{eqn:kr-product-rule}
  \dbbar(\omega\krwedge\tau) = \dbbar\omega \krwedge \tau + (-1)^{p+q}\omega \krwedge \dbbar\tau
 \end{equation}
 for all $\omega\in\sR^{p,q}$ and all $\tau\in\sR^{r,s}$.
\end{lemma}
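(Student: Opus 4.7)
The plan is to derive the Leibniz rule for $\dbbar$ from the already-established Leibniz rule for the (non-bigraded) Rumin product $\rwedge$ by taking an appropriate bigraded projection. Starting from Lemma~\ref{real-product-rule},
\[
 d(\omega \rwedge \tau) = d\omega \rwedge \tau + (-1)^{p+q}\omega \rwedge d\tau,
\]
I would apply $\pi^{p+r,\,q+s+1}$ to both sides and then identify each term. Since $d$ commutes with $\pi$ (Lemma~\ref{projection-and-d-commute}), this computation takes place entirely inside $\sR^{p+q+r+s+1}$, and Equation~\eqref{eqn:kr-product-rule} will follow once we check that every term extraneous to $\dbbar\omega\krwedge\tau$ and $(-1)^{p+q}\omega\krwedge\dbbar\tau$ vanishes under this bigraded projection.

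For the left-hand side: by Lemma~\ref{rwedge-image}, write $\omega \rwedge \tau = \omega \krwedge \tau + \beta$ with $\beta \in \sR^{p+r+1,\,q+s-1}$; the piece $\beta$ is nonzero only in the case $p+q,\,r+s \leq n$ and $p+q+r+s \geq n+1$. Inspecting Definition~\ref{defn:dbbar}, the bigraded components of $d\beta$ lie in $\sR^{p+r+1,\,q+s}$, $\sR^{p+r+2,\,q+s-1}$, and (only when $p+r+q+s=n$) $\sR^{p+r+3,\,q+s-2}$, none of which is the target bidegree $(p+r,\,q+s+1)$. Therefore $\pi^{p+r,\,q+s+1}(d\beta) = 0$, and applying Definition~\ref{defn:dbbar} to $\omega \krwedge \tau \in \sR^{p+r,\,q+s}$ gives
\[
 \pi^{p+r,\,q+s+1}\bigl(d(\omega \rwedge \tau)\bigr) = \pi^{p+r,\,q+s+1}\bigl(d(\omega \krwedge \tau)\bigr) = \dbbar(\omega \krwedge \tau).
\]

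For the right-hand side: split $d\omega$ into its bigraded pieces according to Definition~\ref{defn:dbbar}. Then $\db\omega$ lies in $\sR^{p+1,q}$ (or $\sR^{p+2,q-1}$ when $p+q = n$), and if $p+q=n$ there is the extra term $\dhor\omega \in \sR^{p+1,q}$. Applying Lemma~\ref{rwedge-image} in each case, the wedges $\db\omega \rwedge \tau$ and $\dhor\omega \rwedge \tau$ have bigraded components only in bidegrees $(p+1+r,\,q+s)$, $(p+2+r,\,q+s-1)$, $(p+3+r,\,q+s-2)$; none of these equals $(p+r,\,q+s+1)$, so their $\pi^{p+r,\,q+s+1}$-projection vanishes. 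By contrast, $\dbbar\omega \in \sR^{p,q+1}$ and Lemma~\ref{rwedge-image} shows $\dbbar\omega \rwedge \tau$ has components in $\sR^{p+r,\,q+s+1}$ (and possibly $\sR^{p+r+1,\,q+s}$); projecting yields $\dbbar\omega \krwedge \tau$ by Definition~\ref{defn:bullet-product}. An identical analysis for $\omega \rwedge d\tau$ yields $(-1)^{p+q}\omega \krwedge \dbbar\tau$, completing the proof.

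The main obstacle will be the bookkeeping at the boundary total degrees $p+q = n$ or $r+s = n$, where $d$ acquires the extra horizontal piece $\dhor$ and where the Rumin product $\rwedge$ can produce two distinct bigraded components rather than one. The point that rescues us is purely combinatorial: in every such case the extra bidegrees miss $(p+r,\,q+s+1)$ by either incrementing the $p$-index or decrementing the $q$-index, so they drop out under $\pi^{p+r,\,q+s+1}$ and do not disturb the identification above.
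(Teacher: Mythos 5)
Your proposal is correct and follows essentially the same route as the paper: the paper's proof likewise observes that $\dbbar(\omega\krwedge\tau)=\pi^{p+r,q+s+1}d(\omega\rwedge\tau)$ via \cref{rwedge-image,defn:dbbar,defn:bullet-product}, and then applies \cref{real-product-rule,rwedge-image}. Your version merely spells out the bidegree bookkeeping that the paper leaves implicit, and that bookkeeping is accurate.
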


\begin{proof}
 \Cref{rwedge-image,defn:dbbar,defn:bullet-product} imply that
 \[ \dbbar(\omega\krwedge\tau) = \pi^{p+r,q+s+1} d(\omega\rwedge\tau) . \]
 Combining this with \cref{real-product-rule,rwedge-image} yields Equation~\eqref{eqn:kr-product-rule}.
\end{proof}

Unlike the exterior product on the Rumin complex, the $m_3$-operator does preserve the bigrading on $\sR^{\bullet,\bullet}$.

\begin{lemma}
 \label{m3-image}
 Let $(M^{2n+1},T^{1,0})$ be a CR manifold.
 Let $\omega \in \sR^{p,q}$ and $\tau \in \sR^{r,s}$ and $\eta \in \sR^{t,u}$.
 Then
 \begin{equation*}
  m_3(\omega \otimes \tau \otimes \eta) \in \sR^{p+r+t, q+s+u - 1} .
 \end{equation*}
\end{lemma}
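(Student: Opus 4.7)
The plan is to reduce to a single non-degenerate regime using Lemma~\ref{A-infinity-m3-vanishing}, then track bidegrees through the operators $\Gamma$ and $\pi$ using Lemmas~\ref{inverse-lefschetz} and~\ref{Lambdapq-embedding}. First I would invoke Lemma~\ref{A-infinity-m3-vanishing} to assume $\max\{p+q,r+s,t+u\}\leq n$ and $p+q+r+s+t+u \geq n+2$; otherwise $m_3(\omega\otimes\tau\otimes\eta) = 0$ and the claim is vacuous. Under these assumptions the total degree $k := p+q+r+s+t+u - 1$ satisfies $k\geq n+1$, and by Definition~\ref{defn:sRpq} it suffices to exhibit $m_3(\omega\otimes\tau\otimes\eta)$ as an element of $\theta\wedge\Omega^{p+r+t-1,q+s+u-1}M$.

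Next I would analyze $\nu := \Gamma(\omega\wedge\tau)\wedge\eta - (-1)^{p+q}\omega\wedge\Gamma(\tau\wedge\eta)$. Since $(\omega\wedge\tau)\rv_{\CH} \in \Lambda^{p+r,q+s}$, part (iv) of Lemma~\ref{inverse-lefschetz} gives $\Gamma(\omega\wedge\tau) \in \theta\wedge\Omega^{p+r-1,q+s-1}M$, and analogously $\Gamma(\tau\wedge\eta) \in \theta\wedge\Omega^{r+t-1,s+u-1}M$. Writing $\omega = \omega_1 + \theta\wedge\omega_2$ and $\eta = \eta_1 + \theta\wedge\eta_2$ via Lemma~\ref{Lambdapq-embedding} with $\omega_1\in\Omega^{p,q}M$ and $\eta_1\in\Omega^{t,u}M$, the summands $\theta\wedge\omega_2$ and $\theta\wedge\eta_2$ are killed when wedged against the $\theta$-multiples just identified. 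This should leave $\nu = \theta\wedge\gamma$ for some $\gamma\in\Omega^{p+r+t-1,q+s+u-1}M$ built from $\omega_1$, $\eta_1$, and the bidegree-pure parts of $\Gamma(\omega\wedge\tau)$ and $\Gamma(\tau\wedge\eta)$.

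Finally I would apply $\pi$. Since $\Gamma$ annihilates the ideal generated by $\theta$ (part (ii) of Lemma~\ref{inverse-lefschetz}), the formula from Lemma~\ref{projection-to-R} collapses to $\pi(\theta\wedge\gamma) = \theta\wedge\gamma - \Gamma(\gamma\wedge d\theta)$. Because $(\gamma\wedge d\theta)\rv_{\CH} \in \Lambda^{p+r+t,q+s+u}$, one more application of part (iv) of Lemma~\ref{inverse-lefschetz} yields $\Gamma(\gamma\wedge d\theta)\in\theta\wedge\Omega^{p+r+t-1,q+s+u-1}M$. Hence $m_3(\omega\otimes\tau\otimes\eta) = \pi(\nu) \in \theta\wedge\Omega^{p+r+t-1,q+s+u-1}M \subset \sR^{p+r+t,q+s+u-1}$.

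The main obstacle will be the bidegree bookkeeping in the second step: cleanly confirming that the $\theta$-components $\omega_2$ and $\eta_2$ drop out of $\nu$ requires combining Lemma~\ref{Lambdapq-embedding} with the identity $(\theta\wedge\alpha)\wedge(\theta\wedge\beta)=0$, and one must remember that part (iv) of Lemma~\ref{inverse-lefschetz} records only the type of the restriction to $\CH$ rather than of the full form. Once this is handled, the remaining verification that wedges of elements of $\Omega^{p',q'}M$ stay in the corresponding space is immediate from how these subspaces were defined.
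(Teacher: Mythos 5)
Your proposal is correct and follows essentially the same route as the paper: reduce via Lemma~\ref{A-infinity-m3-vanishing} to the non-degenerate range, use Lemma~\ref{inverse-lefschetz}(iv) to place $\Gamma(\omega\wedge\tau)\wedge\eta$ and $\omega\wedge\Gamma(\tau\wedge\eta)$ in $\theta\wedge\Omega^{p+r+t-1,q+s+u-1}M$, and conclude with Lemma~\ref{projection-to-R}. The extra bookkeeping you supply (splitting off the $\theta$-components via Lemma~\ref{Lambdapq-embedding} and tracking $\Gamma d$ in the final projection) is exactly what the paper leaves implicit in ``the conclusion readily follows.''
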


\begin{proof}
 \Cref{A-infinity-m3-vanishing} implies that we may assume that $p+q, r+s, t+u \leq n$ and that $p+q+r+s+t+u \geq n+1$.
 \Cref{inverse-lefschetz} then implies that $\Gamma(\omega \wedge \tau) \wedge \eta$ and $\omega \wedge \Gamma(\eta \wedge \tau)$ are both elements of $\theta \wedge \Omega^{p+r+t-1,q+s+u-1}M$.
 The conclusion readily follows from \cref{projection-to-R}.
\end{proof}

As a consequence, the operators $\dbbar$, $\krwedge$, and $m_3$ give the bigraded Rumin complex the structure of an $A_\infty$-algebra.

\begin{theorem}
 \label{bigraded-rumin-a-infinity}
 Let $(M^{2n+1},T^{1,0})$ be a CR manifold.
 Set $m_1^{\dbbar} := \dbbar$ and $m_2^{\dbbar} := \krwedge$ and $m_3^{\dbbar} := m_3$ and $m_j^{\dbbar} := 0$ for $j \geq 4$.
 Then $(\sR^{\bullet,\bullet},m^{\dbbar})$ is a balanced $A_\infty$-algebra.
\end{theorem}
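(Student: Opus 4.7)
My plan is to deduce the bigraded $A_\infty$-relations for $m^{\dbbar}$ from the corresponding relations for $(\sR^\bullet,m)$ (\cref{rumin-a-infinity}) by projecting to the appropriate bigraded piece. The key observation is that for inputs $\omega_i\in\sR^{p_i,q_i}$, each composition appearing in the $k$-th $A_\infty$-relation for $m^{\dbbar}$ lands in a single bigraded component $\sR^{P,Q}$: by \cref{defn:bullet-product,m3-image}, the operators $m_j^{\dbbar}$ shift total bidegree by $(0,+1)$, $(0,0)$, $(0,-1)$ for $j=1,2,3$ respectively, so all summands in the $k$-th relation share a common bidegree shift. I would then project the Rumin $k$-th $A_\infty$-relation onto $\sR^{P,Q}$ and check that each projected summand agrees with the corresponding summand of the $m^{\dbbar}$-relation.

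For $k=1$ and $k=2$ this is immediate from \cref{justification-of-bigraded-complex,kr-product-rule}, respectively. For $k=3$ and $k=4$ --- the main substance of the proof --- I would argue that $\pi^{P,Q}$ filters out the discrepancy between $m_j$ and $m_j^{\dbbar}$: when a factor $d$ appears, its $\db$- and $\dhor$-components shift the bigrading the wrong way and are killed by $\pi^{P,Q}$, either directly or after a subsequent $m_3$-application (since by \cref{m3-image} $m_3$ has a fixed bigrading shift, only one specific bigraded component of its input contributes to $\sR^{P,Q}$). Similarly, when $\rwedge$ appears, the ``off-bigrading'' component described in \cref{rwedge-image} produces output outside $\sR^{P,Q}$ after subsequent operations, so only the $\krwedge$-part survives. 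A term-by-term bookkeeping then converts each Rumin identity into its bigraded counterpart.

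For $k=5$: since $m_j^{\dbbar}=0$ for $j\geq 4$, the only potentially nonzero terms are compositions of the form $m_3^{\dbbar}\circ(1^{\otimes r}\otimes m_3^{\dbbar}\otimes 1^{\otimes t})$, and these vanish by \cref{A-infinity-m3-vanishing} exactly as in the proof of \cref{rumin-a-infinity}, using $m_3^{\dbbar}=m_3$. The cases $k\geq 6$ are trivial. For the balanced property I would treat $p+q=1$ as vacuous; $p+q=2$ and $p+q=3$ by projecting the balanced identities $m_{p+q}\circ\mu_{p,q}=0$ of $(\sR^\bullet,m)$ to $\sR^{P,Q}$ (noting $m_3=m_3^{\dbbar}$ for $p+q=3$); and $p+q\geq 4$ as trivial.

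The main obstacle will be the bookkeeping in $k=3$ and $k=4$: showing term-by-term that the projection of each Rumin-level summand produces precisely the corresponding $m^{\dbbar}$-summand with no spurious contributions. This requires separately handling the ``middle-degree'' regime in \cref{rwedge-image} where $\rwedge$ fails to preserve the bigrading, as well as the degree-$n$ regime where $d$ splits into three components $\db+\dhor+\dbbar$; but no essentially new computation should arise beyond those already performed in the proof of \cref{rumin-a-infinity}.
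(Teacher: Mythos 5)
Your proposal is correct and follows essentially the same route as the paper: the paper's proof likewise observes that $m_j^{\dbbar}$ is the projection $\pi^{\lv P\rv,\lv Q\rv+2-j}\circ m_j$, deduces the balanced property immediately, and uses the mapping properties in \cref{rwedge-image,m3-image} to identify each summand of the bigraded $A_\infty$-relation with the projection of the corresponding summand of the Rumin-level relation, which vanishes by \cref{rumin-a-infinity}. The term-by-term bookkeeping you outline for $k=3,4$ is exactly the content the paper compresses into its Equation~\eqref{eqn:m3-to-m3dbbar}.
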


\begin{proof}
 Note that if $\omega_\ell \in \sR^{p_\ell,q_\ell}$, $\ell \in \{ 1 , \dotsc, j \}$, then
 \begin{equation*}
  m_j^{\dbbar}(\omega_1 \otimes \dotsm \otimes \omega_j) = \pi^{\lv P\rv, \lv Q\rv + 2 - j}m_j(\omega_1 \otimes \dotsm \otimes \omega_j) ,
 \end{equation*}
 where $\lv P\rv := p_1 + \dotsm + p_j$ and $\lv Q\rv := q_1 + \dotsm + q_j$.
 On the one hand, it immediately follows from \cref{rumin-a-infinity} that $m^{\dbbar}$ is balanced.
 On the other hand, \cref{rwedge-image,m3-image} yield
 \begin{multline}
  \label{eqn:m3-to-m3dbbar}
  \sum_{r+s+t=k} (-1)^{r+st} m_{r+t+1}^{\dbbar} (1^{\otimes r} \otimes m_s^{\dbbar} \otimes 1^{\otimes t}) \\
   = \sum_{r+s+t=k} (-1)^{r+st} \pi^{\lv P\rv,\lv Q\rv + 2 - k} m_{r+t+1} (1^{\otimes r} \otimes m_s \otimes 1^{\otimes t})
 \end{multline}
 when restricted to $\sR^{p_1,q_1} \otimes \dotsm \otimes \sR^{p_k,q_k}$.
 In particular, \cref{rumin-a-infinity} implies that $m^{\dbbar}$ is an $A_\infty$-structure on $\sR^{\bullet,\bullet}$.
\end{proof}

We require two corollaries of \cref{bigraded-rumin-a-infinity}.
First, the $A_\infty$-structure on $\sR^{\bullet,\bullet}$ gives $H_R^{\bullet,\bullet}(M)$ the structure of a graded commutative algebra.

\begin{corollary}
 \label{kohn-rossi-cup}
 Let $(M^{2n+1},T^{1,0})$ be a CR manifold.
 If $p,r\in\{0,\dotsc,n+1\}$ and $q,s \in \{ 0, \dotsc, n\}$, then the operator $\sqcup \colon H_R^{p,q}(M) \times H_R^{r,s}(M) \to H_R^{p+r,q+s}(M)$,
 \begin{equation*}
  [\omega] \sqcup [\tau] := [ \omega \krwedge \tau ] ,
 \end{equation*}
 is well-defined and associative.
\end{corollary}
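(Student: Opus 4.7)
The plan is to deduce the corollary from \cref{bigraded-rumin-a-infinity} (the balanced $A_\infty$-structure on $\sR^{\bullet,\bullet}$) by specializing the standard proof of \cref{a-infinity-cohomology} to the bigraded setting, along with the Leibniz rule of \cref{kr-product-rule}. The main observation is that every operator in the $A_\infty$-structure respects the natural bigrading: $m_1^{\dbbar}=\dbbar$ maps $\sR^{p,q}$ to $\sR^{p,q+1}$; $m_2^{\dbbar}=\krwedge$ maps $\sR^{p,q}\otimes\sR^{r,s}$ to $\sR^{p+r,q+s}$ by \cref{defn:bullet-product}; and $m_3^{\dbbar}=m_3$ maps $\sR^{p,q}\otimes\sR^{r,s}\otimes\sR^{t,u}$ to $\sR^{p+r+t,q+s+u-1}$ by \cref{m3-image}.

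For well-definedness of $[\omega]\sqcup[\tau]$, let $\omega\in\sR^{p,q}$ and $\tau\in\sR^{r,s}$ be $\dbbar$-closed. \Cref{kr-product-rule} gives
\[ \dbbar(\omega\krwedge\tau)=\dbbar\omega\krwedge\tau+(-1)^{p+q}\omega\krwedge\dbbar\tau=0, \]
so $\omega\krwedge\tau$ defines a class in $H_R^{p+r,q+s}(M)$. If $\omega$ is replaced by $\omega+\dbbar\alpha$ with $\alpha\in\sR^{p,q-1}$, then \cref{kr-product-rule} yields $(\omega+\dbbar\alpha)\krwedge\tau-\omega\krwedge\tau=\dbbar(\alpha\krwedge\tau)$; the case of varying $\tau$ is analogous. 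Hence $\sqcup$ descends to cohomology.

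For associativity, I would invoke the degree three $A_\infty$-relation~\eqref{eqn:A-infinity-third-product} applied to $m^{\dbbar}$:
\[ m_2^{\dbbar}(1\otimes m_2^{\dbbar}-m_2^{\dbbar}\otimes 1)=m_1^{\dbbar}m_3^{\dbbar}+m_3^{\dbbar}(m_1^{\dbbar}\otimes 1^{\otimes 2}+1\otimes m_1^{\dbbar}\otimes 1+1^{\otimes 2}\otimes m_1^{\dbbar}). \]
Evaluating both sides on $\dbbar$-closed $\omega$, $\tau$, $\eta$, the terms on the right involving $m_1^{\dbbar}$ in the inner position vanish, leaving
\[ \omega\krwedge(\tau\krwedge\eta)-(\omega\krwedge\tau)\krwedge\eta=\dbbar\bigl(m_3^{\dbbar}(\omega\otimes\tau\otimes\eta)\bigr), \]
which is $\dbbar$-exact (and lies in the correct bigraded component by \cref{m3-image}). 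Therefore $\sqcup$ is associative at the level of cohomology classes.

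I do not anticipate a real obstacle: the entire content of the corollary has been prepackaged into \cref{bigraded-rumin-a-infinity}, and the argument is the bigraded analogue of the de Rham case of \cref{de-rham-cup}. The only point meriting explicit mention is verifying that the products and homotopy land in the expected bigraded summands, which is exactly the content of \cref{rwedge-image,m3-image}.
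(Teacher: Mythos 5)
Your proposal is correct and follows essentially the same route as the paper: the paper's proof simply cites \cref{a-infinity-cohomology,bigraded-rumin-a-infinity}, and your argument is exactly the content of \cref{a-infinity-cohomology} specialized to $(\sR^{\bullet,\bullet},m^{\dbbar})$, with the bigrading checks supplied by \cref{rwedge-image,m3-image} already built into \cref{bigraded-rumin-a-infinity}. Nothing is missing.
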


\begin{proof}
 This follows immediately from \cref{a-infinity-cohomology,bigraded-rumin-a-infinity}.
\end{proof}

Second, the exterior product gives $\sR^{\bullet,\bullet}$ the structure of a $C^\infty(M;\bC)$-module.
More generally:

\begin{corollary}
 \label{bigraded-A-infinity-m3-vanishing-module}
 Let $(M^{2n+1},T^{1,0})$ be a CR manifold.
 If $\lv\omega\rv + \lv\tau\rv + \lv\eta\rv \leq n$ or $\max \{ \lv\omega\rv , \lv\tau\rv, \lv\eta\rv \} \geq n+1$, then
 \begin{equation*}
  \left( \omega \krwedge \tau \right) \krwedge \eta = \omega \krwedge \left( \tau \krwedge \eta \right) .
 \end{equation*}
\end{corollary}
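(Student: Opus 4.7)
The plan is to mimic the proof of Corollary \ref{A-infinity-m3-vanishing-module}, replacing the $A_\infty$-structure on $\sR^\bullet$ by the balanced $A_\infty$-structure on $\sR^{\bullet,\bullet}$ established in Theorem \ref{bigraded-rumin-a-infinity}. The starting point is the degree-three $A_\infty$-identity \eqref{eqn:A-infinity-third-product} applied to $m^{\dbbar}$, which evaluated on $\omega\otimes\tau\otimes\eta$ reads
\begin{equation*}
 \omega \krwedge (\tau \krwedge \eta) - (\omega \krwedge \tau) \krwedge \eta = \left( m_1^{\dbbar} m_3^{\dbbar} + m_3^{\dbbar}\bigl(m_1^{\dbbar} \otimes 1^{\otimes 2} + 1 \otimes m_1^{\dbbar} \otimes 1 + 1^{\otimes 2} \otimes m_1^{\dbbar}\bigr) \right)(\omega \otimes \tau \otimes \eta) .
\end{equation*}
Thus it suffices to show that the right-hand side vanishes under either hypothesis.

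The key observation is that $m_3^{\dbbar}$ and $m_3$ agree as operators: by Lemma \ref{m3-image}, $m_3^{\dbbar}$ is obtained from $m_3$ merely by refining the codomain along the bigrading. Consequently the vanishing criterion of Lemma \ref{A-infinity-m3-vanishing} applies verbatim to $m_3^{\dbbar}$: it annihilates triples of homogeneous elements whose total Rumin degree is at most $n+1$ or whose maximum degree is at least $n+1$. Since $m_1^{\dbbar} = \dbbar$ raises total degree by $1$ and preserves the property of having some factor of degree $\geq n+1$, the hypothesis of the corollary is stable under all four operations $1^{\otimes 3}$ and $m_1^{\dbbar}\otimes 1^{\otimes 2}$, $1\otimes m_1^{\dbbar}\otimes 1$, $1^{\otimes 2}\otimes m_1^{\dbbar}$.

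Concretely, if $\lv\omega\rv+\lv\tau\rv+\lv\eta\rv\leq n$, then each of the four arguments of $m_3^{\dbbar}$ appearing on the right-hand side has total degree at most $n+1$, so vanishes by Lemma \ref{A-infinity-m3-vanishing}; and $m_1^{\dbbar}m_3^{\dbbar}(\omega\otimes\tau\otimes\eta)$ vanishes because its input already has total degree $\leq n$. If instead $\max\{\lv\omega\rv,\lv\tau\rv,\lv\eta\rv\}\geq n+1$, then the same maximum persists under any application of $\dbbar$, so again each term vanishes by Lemma \ref{A-infinity-m3-vanishing}. In either case, the right-hand side is zero and associativity follows.

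There is no real obstacle here beyond unpacking the $A_\infty$-identity: the work was already done in establishing Theorem \ref{bigraded-rumin-a-infinity} and Lemma \ref{A-infinity-m3-vanishing}, and the mild verification is that $\dbbar$ respects the degree bounds, which it does trivially.
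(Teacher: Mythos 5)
Your argument is correct and is essentially the argument the paper uses: the paper's proof simply cites Corollary~\ref{A-infinity-m3-vanishing-module} together with Equation~\eqref{eqn:m3-to-m3dbbar} (the bigraded associator is the projection of the ungraded one), while you unpack the same degree-three $A_\infty$-identity directly for $m^{\dbbar}$ and reuse the vanishing criterion of Lemma~\ref{A-infinity-m3-vanishing} via the identification of $m_3^{\dbbar}$ with $m_3$. The degree bookkeeping you carry out --- total degree at most $n+1$ after one application of $m_1^{\dbbar}$, and stability of $\max\geq n+1$ under $\dbbar$ --- is exactly what drives the proof of the ungraded Corollary~\ref{A-infinity-m3-vanishing-module}, so the two proofs coincide in substance.
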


\begin{proof}
 This follows immediately from \cref{A-infinity-m3-vanishing-module} and Equation~\eqref{eqn:m3-to-m3dbbar}.
\end{proof}

\section{Recovering the usual de Rham and Kohn--Rossi groups}
\label{sec:resolution}

We conclude this part by relating $H_R^k(M;\bR)$, $H_R^{p,q}(M)$, and $H_R^k(M;\sP)$ to the usual de Rham cohomology groups, Kohn--Rossi cohomology groups, and cohomology groups with coefficients in the sheaf of CR pluriharmonic functions, respectively.
Our main strategy is to construct an acyclic resolution of the relevant sheaf.

We first show that the Rumin complex is an acyclic resolution of the constant sheaf $\underline{\bR}$, and hence $H_R^k(M;\bR)$ is isomorphic to the usual $k$-th de Rham cohomology group.
This is similar to the corresponding proof by Rumin~\cite{Rumin1990}.

\begin{theorem}
 \label{r-resolution}
 Let $(M^{2n+1},T^{1,0})$ be a CR manifold and let $k\in\{0,\dotsc,2n+1\}$.
 Then $H_R^k(M;\bR)$ is isomorphic to the $k$-th de Rham cohomology group.
\end{theorem}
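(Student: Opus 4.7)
The plan is to show that the Rumin complex, augmented by the constant sheaf, gives an acyclic (in fact, fine) resolution of $\underline{\bR}$, and then invoke the abstract de Rham theorem.

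First I would establish that
\[
 0 \longrightarrow \underline{\bR} \longrightarrow \sR^0 \overset{d}{\longrightarrow} \sR^1 \overset{d}{\longrightarrow} \dotsm \overset{d}{\longrightarrow} \sR^{2n+1} \longrightarrow 0
\]
is an exact sequence of sheaves. Note that $\sR^0 = \sA^0 = \underline{C^\infty}$ (for a function $f$, both conditions $\theta\wedge f\wedge d\theta^{n+1}=0$ and $\theta\wedge df\wedge d\theta^n=0$ hold for dimension reasons on $M^{2n+1}$), so exactness at $\underline{\bR}$ and $\sR^0$ reduces to the usual statement that the locally constant functions are exactly the locally $d$-closed ones. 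For exactness at $\sR^k$ with $k\geq 1$, I would argue as follows: on a contractible open set $U\subset M$, suppose $\omega\in\sR^k(U)$ with $d\omega=0$. Since $\sR^k\subset\sA^k$ (for $k\leq n$, after passing through $\pi$; for $k\geq n+1$ directly), the classical Poincar\'e lemma furnishes some $\eta\in\sA^{k-1}(U)$ with $d\eta=\omega$. Applying $\pi\colon\sA^{k-1}\to\sR^{k-1}$ and invoking \cref{projection-and-d-commute} gives
\[
 \omega = \pi\omega = \pi d\eta = d\pi\eta \in d\bigl(\sR^{k-1}(U)\bigr),
\]
where the first equality uses that $\pi$ restricts to the identity on $\sR^k$ (\cref{projection-to-R}). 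The same argument, applied to a top-form $\omega\in\sR^{2n+1}(U)=\sA^{2n+1}(U)$ and using that every top-form on a contractible open set is exact, yields surjectivity of $d\colon\sR^{2n}\to\sR^{2n+1}$ at the sheaf level.

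Next I would verify that each $\sR^k$ is a fine sheaf by producing a partition-of-unity system of endomorphisms. Given a locally finite open cover $\{U_\alpha\}$ with subordinate partition of unity $\{\eta_\alpha\}\subset C^\infty(M;\bR)$, define $\phi_\alpha\colon\sR^k\to\sR^k$ by
\[
 \phi_\alpha(\omega) :=
 \begin{cases}
  \pi(\eta_\alpha\omega), & k\leq n, \\
  \eta_\alpha\omega, & k\geq n+1 .
 \end{cases}
\]
For $k\geq n+1$, the element $\eta_\alpha\omega$ lies in $\sR^k$ because $\theta\wedge(\eta_\alpha\omega)=\eta_\alpha(\theta\wedge\omega)=0$ and $\theta\wedge d(\eta_\alpha\omega)=\theta\wedge d\eta_\alpha\wedge\omega+\eta_\alpha\theta\wedge d\omega=0$, using $\theta\wedge\omega=0$. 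For $k\leq n$, the projection $\pi$ lands in $\sR^k$ by \cref{projection-to-R}, and $\supp\phi_\alpha\subset\supp\eta_\alpha\subset U_\alpha$. By $\bR$-linearity of $\pi$ and the fact that $\pi$ acts as the identity on $\sR^k$, we have $\sum_\alpha\phi_\alpha(\omega)=\pi(\sum_\alpha\eta_\alpha\omega)=\pi\omega=\omega$, so $\{\phi_\alpha\}$ witnesses fineness. Fine sheaves on paracompact Hausdorff spaces are acyclic.

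Finally, by the abstract de Rham theorem applied to the fine resolution $\underline{\bR}\to\sR^\bullet$, the cohomology of global sections computes sheaf cohomology:
\[
 H_R^k(M;\bR) = H^k\bigl(\Gamma(M,\sR^\bullet)\bigr) \cong H^k(M;\underline{\bR}),
\]
and the latter coincides with the classical $k$-th de Rham cohomology group (since the sheaves $\sA^k$ of smooth forms also form a fine resolution of $\underline{\bR}$, computing the same sheaf cohomology). I expect the main delicacy to be the exactness of the Rumin complex at the sheaf level---in particular checking local exactness at the jump $k=n\to k=n+1$, where the operator $d\colon\sR^n\to\sR^{n+1}$ is second-order---but the identity $\pi d=d\pi$ of \cref{projection-and-d-commute}, combined with the Poincar\'e lemma for the ambient sheaf $\sA^\bullet$, reduces this uniformly to the classical statement and sidesteps any direct analysis of the second-order operator.
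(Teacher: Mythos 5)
Your proposal is correct and follows essentially the same route as the paper: the paper also establishes that the augmented Rumin complex is a fine resolution of $\underline{\bR}$ (fineness coming from the $C^\infty(M)$-module structure $f\cdot\omega=\pi(f\omega)$, which is exactly your partition-of-unity endomorphisms) and deduces local exactness from the classical Poincar\'e lemma together with the identity $\pi d=d\pi$ of \cref{projection-and-d-commute}. The only cosmetic difference is that you verify fineness by hand, whereas the paper cites the associativity statement of \cref{A-infinity-m3-vanishing-module} to get the module structure.
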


\begin{proof}
 It suffices to prove that
 \begin{equation}
  \label{eqn:rumin-resolution}
  0 \longrightarrow \underline{\bR} \lhook\joinrel\longrightarrow \sR^0 \overset{d}{\longrightarrow} \sR^1 \overset{d}{\longrightarrow} \dotsm \overset{d}{\longrightarrow} \sR^{2n+1} \overset{d}{\longrightarrow} 0
 \end{equation}
 is an acyclic resolution of the constant sheaf $\underline{\bR}=\ker\left(d\colon\sR^0\to\sR^1\right)$.
 
 \Cref{A-infinity-m3-vanishing-module} implies that the $A_\infty$-structure on $\sR^\bullet$ gives each sheaf $\sR^k$ the structure of a $C^\infty(M)$-module.
 Thus the sheaves $\sR^k$ are fine, and hence acyclic.
 
 We now show that~\eqref{eqn:rumin-resolution} is a resolution.
 By the Poincar\'e Lemma, it suffices to show that if $\omega\in\sR^k$ is exact as an element of $\sA^k$, then it is exact as an element of $\sR^k$.
 Let $\tau\in\sA^{k-1}$ be such that $\omega=d\tau$.
 Therefore $\omega = \pi d\tau$.
 We conclude from \cref{projection-and-d-commute} that $\omega = d\pi\tau$, as desired.
\end{proof}

We next show that $H_R^{p,q}(M)$ is isomorphic to the Kohn--Rossi cohomology group of bidegree $(p,q)$.
The original definition by Kohn and Rossi~\cite{KohnRossi1965} of these groups is given extrinsically for embedded CR manifolds.
Tanaka~\cite{Tanaka1975} later gave an intrinsic characterization of these groups which generalizes to all CR manifolds.
We show that our groups $H_R^{p,q}(M)$ are isomorphic to those introduced by Tanaka.

\begin{theorem}
 \label{kr-resolution}
 Let $(M^{2n+1},T^{1,0})$ be a CR manifold.
 Then $H_R^{p,q}(M)$ is isomorphic to the Kohn--Rossi cohomology group of bidegree $(p,q)$.
\end{theorem}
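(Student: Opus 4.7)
The plan is to construct, for each $p$, a quasi-isomorphism between the Kohn--Rossi complex $(\sR^{p,\bullet}, \dbbar)$ of the bigraded Rumin complex and Tanaka's intrinsic Kohn--Rossi complex built from sections of $\Lambda^{p,q}$. Recall that Tanaka's operator sends $\sigma \in \Omega^{p,q}$ to the $(p, q+1)$-type part of the restriction to $\CH$ of $d\widetilde\sigma$, for any extension $\widetilde\sigma \in \COmega^{p+q}M$ of $\sigma$; this is well-defined because two extensions differ by a multiple of $\theta$, which annihilates on $\CH$.

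First I would define a CR-invariant comparison map $\Phi \colon \sR^{p,q} \to \Omega^{p,q}$ at the sheaf level. In the regime $p + q \leq n$, the natural map is restriction, $\Phi(\omega) := \omega|_{\CH}$; by \cref{projection-to-E-expression}, its image is precisely the primitive (trace-free) part of $\Lambda^{p,q}$. In the regime $p + q \geq n + 1$, I would use \cref{sF-to-sE} to associate to $\omega$ the unique trace-free $\tau \in \sR^{n-q, n+1-p}$ with $\omega = \frac{1}{(p+q-n-1)!}\theta \wedge \tau \wedge d\theta^{p+q-n-1}$, and then apply the fiberwise Lefschetz operator of \cref{lefschetz-decomposition} to $\tau|_{\CH}$ the appropriate number of times to produce a class in $\Omega^{p,q}$ compatible with the primitive decomposition.

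Next I would verify that $\Phi$ intertwines $\dbbar$ with Tanaka's operator. For $p + q \leq n - 1$ this is essentially tautological: $\omega$ itself is a valid extension of $\omega|_{\CH}$, and by \cref{defn:dbbar} the $(p, q+1)$-type part of $d\omega|_{\CH}$ equals $(\dbbar \omega)|_{\CH}$. The transitional case $p + q = n$ requires combining \cref{bigraded-operators}(ii) with the Lefschetz identification and observing that the extra trace-type corrections appearing in $\dhor$ vanish upon restriction to $\CH$, so that $\Phi$ still carries $\dbbar$ to Tanaka's first-order operator at this bidegree. The regime $p + q \geq n + 1$ then follows from \cref{bigraded-operators}(iii) together with the CR-covariant transformation of $\tau$ given by \cref{sF-to-sE}.

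The main obstacle is showing that $\Phi$ is a quasi-isomorphism. My strategy is a Hodge--Lefschetz decomposition argument at the sheaf level: one decomposes Tanaka's complex pointwise via $\Omega^{p,q} = P^{p,q} \oplus (d\theta|_{\CH} \wedge \Omega^{p-1,q-1})$, which is preserved by Tanaka's operator since $d\theta|_{\CH}$ is of pure type $(1,1)$ and closed (using $d^2 = 0$ together with integrability of $T^{1,0}$). Iterating produces a filtration of Tanaka's complex whose associated graded factors are primitive subcomplexes in successively lower bidegrees; induction on the filtration then concentrates the cohomology on the primitive part, which $\Phi$ identifies with $(\sR^{p,\bullet}, \dbbar)$. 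The principal technical difficulty lies at the middle degree $p + q = n$, where the bigraded Rumin complex involves the second-order operator $\dhor$ while Tanaka's complex is purely first-order; the cleanest resolution may be to match our sheaves with the Garfield--Lee sheaves $\sR^{p,q}_{GL}$ (using the identification noted in the introduction to \cref{part:intrinsic}) and then invoke the equivalence between Garfield's complex and Tanaka's intrinsic one established in \cite{Garfield2001}.
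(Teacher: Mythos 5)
Your setup misdescribes the target complex, and this propagates into a genuine gap. Tanaka's Kohn--Rossi complex is the \emph{quotient} complex $\mC^{p,q} = F^p\COmega^{p+q}M / F^{p+1}\COmega^{p+q}M$, not a complex of sections of $\Lambda^{p,q}$: the recipe ``extend $\sigma$, apply $d$, restrict to $\CH$, take the $(p,q+1)$-part'' is \emph{not} independent of the extension, because two extensions differ by $\theta\wedge\alpha$ and $d(\theta\wedge\alpha)\rv_{\CH} = d\theta\rv_{\CH}\wedge\alpha\rv_{\CH} \neq 0$. So the operator you want to compare against is only defined modulo $d\theta\wedge\Omega^{p-1,q}$, which is exactly why one must work in the quotient (and why the paper's spaces $\sR^{p,q}$ are engineered to select a canonical representative). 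Relatedly, your claimed splitting $\Omega^{p,q} = P^{p,q}\oplus (d\theta\rv_{\CH}\wedge\Omega^{p-1,q-1})$ is not a decomposition of complexes: the $d\theta\wedge$ summand is a subcomplex, but the primitive summand is not preserved by $\dbbar$ (see Equation~\eqref{eqn:dbbar-to-onablas}, where $\dbbar$ of a primitive form acquires a $d\theta\wedge\nablas^\ast$ correction). You therefore only have a filtration whose top graded piece is primitive; concluding that the cohomology ``concentrates on the primitive part'' is precisely the content that needs proving, and your induction does not supply the required homotopy. Finally, the fallback of citing \cite{Garfield2001} for the equivalence with Tanaka's complex defers the main point rather than proving it, since identifying your $\sR^{p,q}$ with the Garfield--Lee sheaves \emph{as complexes} is the same kind of work.

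The paper's proof avoids all of this by going in the opposite direction: since $\mR^{p,q}\subset F^p\COmega^{p+q}M$, the quotient map gives a cochain map $\Pi\colon\mR^{p,q}\to\mC^{p,q}$ which is injective by \cref{projection-to-E-expression,projection-to-F-expression} and tautologically satisfies $\dbbar\Pi = \Pi\dbbar$. Surjectivity on cohomology is then obtained from the explicit homotopy $\omega - \pi\omega = d\Gamma\omega + \Gamma d\omega$ of \cref{projection-to-R}: for $\omega\in F^p$ with $d\omega\in F^{p+1}$, \cref{inverse-lefschetz} shows $\Gamma\omega\in F^p\COmega^{p+q-1}M$ and $\Gamma d\omega\in F^{p+1}\COmega^{p+q}M$, so $[\omega]=[\pi^{p,q}\pi\omega]$ in $\mC^{p,q}$ with $\pi^{p,q}\pi\omega$ a $\dbbar$-closed element of $\mR^{p,q}$. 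If you want to salvage your approach, the operator $\Gamma$ is the missing ingredient: it is exactly the device that inverts the Lefschetz operator in a CR-invariant way and converts your heuristic ``filtration and induct'' step into an explicit chain homotopy.
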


\begin{proof}
 We first recall Tanaka's definition~\cite{Tanaka1975} of the Kohn--Rossi cohomology group $H^{p,q}(M)$ of bidegree $(p,q)$.
 Define
 \[ F^p\COmega^kM := \left\{ \omega\in\COmega^kM \suchthatcolon \omega(\bar Z_1,\dotsm,\bar Z_{k+1-p}, \cdot,\dotsc,\cdot)=0, \bar Z_1,\dotsc,\bar Z_{k+1-p}\in T^{0,1} \right\} . \]
 Note that
 \[ \COmega^kM = F^0\COmega^kM \supset F^1\COmega^{k-1}M \supset \dotsm \supset F^k\COmega^k M \supset F^{k+1}\COmega^kM = 0 . \]
 Define
 \[ \mC^{p,q} := F^p\COmega^{p+q}M / F^{p+1}\COmega^{p+q}M . \]
 Since $d\left(F^p\COmega^kM\right)\subset F^p\COmega^{k+1}M$, the operator $\dbbar\colon\mC^{p,q}\to\mC^{p,q+1}$, $\dbbar[\omega]:=[d\omega]$, is well-defined.
 Moreover, $\dbbar^2=0$.
 The Kohn--Rossi cohomology groups~\cites{KohnRossi1965,Tanaka1975} are
 \[ H^{p,q}(M) := \frac{\ker\left(\dbbar\colon\mC^{p,q}\to\mC^{p,q+1}\right)}{\im\left(\dbbar\colon\mC^{p,q-1}\to\mC^{p,q}\right)} . \]
 
 We now define a morphism $H_R^{p,q}(M)\to H^{p,q}(M)$.
 Since $\mR^{p,q}\subset F^p\COmega^{p+q}M$, there is a canonical projection $\Pi\colon\mR^{p,q}\to\mC^{p,q}$.
 \Cref{projection-to-E-expression,projection-to-F-expression} imply that $\Pi$ is injective.
 By definition, $\dbbar\Pi(\omega) = \Pi(\dbbar\omega)$.
 Therefore $\Pi$ induces an injective morphism $H_R^{p,q}(M)\to H^{p,q}(M)$.
 
 Finally, we show that $H_R^{p,q}(M) \to H^{p,q}(M)$ is surjective.
 We prove this in the case $p+q\leq n$;
 the proof in the case $p+q\geq n+1$ is similar.
 Let $\omega\in F^p\COmega^{p+q}M$ be such that $d\omega\in F^{p+1}\COmega^{p+q+1}M$.
 Then $[\omega] \in H^{p,q}(M)$.
 \Cref{inverse-lefschetz} implies that $\Gamma\omega \in F^p\COmega^{p+q-1}M$ and $\Gamma d\omega \in F^{p+1}\COmega^{p+q}M$.
 On the one hand, \cref{projection-to-R} implies that $[\omega] = [\pi\omega] = [\pi^{p,q}\pi\omega]$ in $H^{p,q}(M)$.
 On the other hand, \cref{projection-and-d-commute} implies that $\dbbar\pi^{p,q}\pi\omega=0$.
 Therefore $[\omega]\in\im\bigl( H_R^{p,q}(M) \to H^{p,q}(M)\bigr)$.
\end{proof}

An alternative approach to \cref{kr-resolution}, based on the Kohn and Rossi's identification~\cite{KohnRossi1965} of $H^{p,q}(M)$ as the $q$-th cohomology group with coefficients in the sheaf of CR holomorphic $(p,0)$-forms goes roughly as follows.
Let
\begin{equation*}
 \sO^p := \ker \left( \dbbar \colon \sR^{p,0} \to \sR^{p,1} \right)
\end{equation*}
denote the sheaf of CR holomorphic $(p,0)$-forms.
The exterior product on $\sR^{\bullet,\bullet}$ gives each sheaf $\sR^{p,q}$ the structure of a $C^\infty(M;\bC)$-module, and hence these sheaves are fine.
Consider now the complex
\begin{equation*}
 0 \longrightarrow \sO^p \lhook\joinrel\longrightarrow \sR^{p,0} \overset{\dbbar}{\longrightarrow} \sR^{p,1} \overset{\dbbar}{\longrightarrow} \dotsm \overset{\dbbar}{\longrightarrow} \sR^{p,n} \overset{\dbbar}{\longrightarrow} 0 .
\end{equation*}
If $(M^{2n+1},T^{1,0})$ is strictly pseudoconvex and locally embeddable, then truncating this complex at $\sR^{p,n-1}$ yields a resolution of $\sO^p$.
In particular, if $q\leq n-1$, then $H^{p,q}(M) \cong H^q(M;\sO^p)$.
The isomorphism in the case $q=n$ can be exhibited directly~\cite{GarfieldLee1998}.

While we do not require the isomorphism $H_R^{p,q}(M) \cong H^q(M;\sO^p)$ in this article, we do require the analogous isomorphism between $H_R^1(M;\sP)$ and the usual cohomology group with coefficients in the sheaf of CR pluriharmonic functions.
This is obtained by the argument outlined above.

\begin{theorem}
 \label{sP-resolution}
 Let $(M^{2n+1},T^{1,0})$ be a strictly pseudoconvex, locally embeddable CR manifold.
 Then
 \[ 0 \longrightarrow \sP \lhook\joinrel\longrightarrow \sS^0 \overset{D}{\longrightarrow} \sS^1 \overset{D}{\longrightarrow} \sS^2 \overset{D}{\longrightarrow} \dotsm \overset{D}{\longrightarrow} \sS^{n} \]
 is an acyclic resolution of $\sP$.
\end{theorem}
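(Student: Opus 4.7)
The plan is to verify the two ingredients needed for an acyclic resolution: that each sheaf $\sS^k$ is acyclic, and that the displayed complex is exact at $\sP$ and at each $\sS^k$ for $0 \leq k \leq n-1$. The main non-formal input will be the $\dbbar$-Poincar\'e lemma for strictly pseudoconvex, locally embeddable CR manifolds.

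First I would show each $\sS^k$ is a fine sheaf, hence acyclic. By construction, $\sS^k$ is built out of the sheaves $\sR^{p,q}$ (and, for $k = 0$ or $k \geq n$, the real Rumin sheaves $\sR^j$). \Cref{bigraded-A-infinity-m3-vanishing-module} (resp.\ \cref{A-infinity-m3-vanishing-module}) shows that $\krwedge$ (resp.\ $\rwedge$) is associative whenever the three-factor degree sum is at most $n$ \emph{or} the maximum degree is at least $n+1$; in the special case where two factors are functions (degree $0$), the remaining factor has some degree $m$ with either $m \leq n$ or $m \geq n+1$, so one of the hypotheses always applies. Combined with the unit property $1 \krwedge \omega = \omega$ and the local character of $\pi$ (since $\Gamma$ is algebraic and $d$ is first-order), multiplication by a partition of unity $\{\psi_\alpha\}$ decomposes any section as $\omega = \sum_\alpha \psi_\alpha \krwedge \omega$ with $\supp(\psi_\alpha \krwedge \omega) \subseteq \supp \psi_\alpha$. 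Since $\Gamma$ also commutes with complex conjugation (by uniqueness in \cref{inverse-lefschetz}), reality is preserved and the real subsheaves $\sS^k$ inherit the $C^\infty(M;\bR)$-module structure, hence are fine.

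Next I would verify exactness. At $\sS^0$ this is the definition $\sP = \ker (D \colon \sS^0 \to \sS^1)$ from \cref{defn:sP}. For $1 \leq k \leq n-1$ it suffices to verify exactness at stalks; equivalently, $H_R^k(U; \sP) = 0$ for a neighborhood basis of each point. Apply the long exact sequence of \cref{long-exact-sequence} to a small open $U$:
\[ \cdots \longrightarrow H_R^k(U;\bR) \longrightarrow H_R^{0,k}(U) \longrightarrow H_R^k(U;\sP) \longrightarrow H_R^{k+1}(U;\bR) \longrightarrow \cdots . \]
By \cref{r-resolution}, the outer groups are the ordinary de Rham groups of $U$, which vanish in positive degree whenever $U$ is contractible; thus for sufficiently small $U$ both outer groups are zero in the range $k \geq 1$. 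It follows that $H_R^k(U;\sP) \cong H_R^{0,k}(U)$, and by \cref{kr-resolution} the latter coincides with the Kohn--Rossi group $H^{0,k}(U)$.

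The heart of the argument is therefore the Kohn--Rossi Poincar\'e lemma: for a strictly pseudoconvex, locally embeddable CR manifold, each point admits a neighborhood basis $\{U\}$ such that $H^{0,k}(U) = 0$ for $1 \leq k \leq n-1$. This is the classical result of Kohn--Rossi (and Folland--Kohn, Kuranishi), obtained via the local embedding into $\bC^{n+1}$ and solvability of the tangential Cauchy--Riemann equations in the appropriate range. Invoking this vanishing, we conclude $H_R^k(U;\sP) = 0$ for $1 \leq k \leq n-1$ on such $U$, which passes to the stalks to yield exactness of the CR pluriharmonic complex at $\sS^k$. The main obstacle is precisely citing and applying this Poincar\'e lemma correctly; this is the only step where strict pseudoconvexity and local embeddability are used, while everything else follows formally from the structural results of the previous sections.
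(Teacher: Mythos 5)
Your proposal is correct and follows essentially the same route as the paper: fineness of the $\sS^k$ via the $C^\infty(M)$-module structure from the exterior product, exactness at $\sS^0$ by definition of $\sP$, and exactness at $\sS^k$ for $1\leq k\leq n-1$ by combining the long exact sequence of \cref{long-exact-sequence} on small open sets with the local vanishing of the de Rham groups and of the Kohn--Rossi groups $H^{0,k}(U)$ (the paper cites Andreotti--Hill for the latter). The only cosmetic difference is that you first extract the isomorphism $H_R^k(U;\sP)\cong H_R^{0,k}(U)$ from the vanishing of both flanking de Rham groups, whereas the paper reads off $H_R^k(U;\sP)=0$ directly from exactness using only $H_R^{0,k}(U)=0$ and $H_R^{k+1}(U;\bR)=0$.
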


\begin{remark}
 If $n\geq3$, then every strictly pseudoconvex CR manifold is locally embeddable~\cite{Akahori1987}.
 If $n=2$, then every closed, strictly pseudoconvex CR manifold is locally embeddable~\cite{Boutet1975}.
\end{remark}

\begin{proof}
 Clearly $\sP\hookrightarrow\sS^0$ is injective and $\sP=\ker\left(D\colon\sS^0\to\sS^1\right)$.
 The exterior product on $\sR^\bullet$ gives each sheaf $\sS^k$ the structure of a $C^\infty(M)$-module.
 Thus $\sS^k$ is fine, and hence acyclic.
 
 Now let $k \in \{1,\dotsc,n-1\}$.
 Since $M$ is strictly pseudoconvex and locally embeddable, for each point $x \in M$ there is an open set $U \subset M$ containing $x$ such that the Kohn--Rossi group $H^{0,k}(U)$ vanishes~\cite{AndreottiHill1972b}*{Conclusion on pg.\ 802}.
 \Cref{kr-resolution} then implies that $H_R^{0,k}(U)=0$.
 \Cref{r-resolution} and the Poincar\'e Lemma imply that, by shrinking $U$ if necessary, $H_R^{k+1}(U)=0$.
 We conclude from \cref{long-exact-sequence} that $H^k(U;\sP)=0$.
\end{proof}
\part{Hodge theory}
\label{part:hodge}

There are a number of different Hodge isomorphism theorems for pseudohermitian manifolds.
Rumin~\cite{Rumin1994} used his formulation of the Rumin complex to establish an isomorphism between the de Rham cohomology group $H^k(M;\bR)$ and a suitably-defined space of harmonic $k$-forms.
Kohn and Rossi~\cite{KohnRossi1965} and Tanaka~\cite{Tanaka1975} used their respective formulations of the Kohn--Rossi complex to establish isomorphisms between the Kohn--Rossi cohomology group $H^{p,q}(M)$ and a suitably-defined space of harmonic $(p,q)$-forms.
Garfield and Lee~\cites{Garfield2001,GarfieldLee1998} used their formulation of the bigraded Rumin complex to establish another version of this latter isomorphism.
In this part, we establish similar isomorphisms in terms of our formulation of the bigraded Rumin complex;
our proofs closely parallel the proofs of Rumin~\cite{Rumin1994} and of Garfield and Lee~\cites{Garfield2001,GarfieldLee1998}.
A more precise outline of this part is as follows:

In \cref{sec:hodge_star} we define the Hodge star operator on $\mR^{\bullet,\bullet}$.
We then define the adjoints of the $\dbbar$-, $\db$-, and $\dhor$-operators and derive local formulas.

In \cref{sec:setup} we introduce some new families of operators on pseudohermitian manifolds which facilitate computations.
The first family of operators, $\onablas$, is obtained from $\nablabbar\omega$ by skew symmetrization.
While this operator does not in general agree with $\dbbar$ --- indeed, it does not even preserve $\mR^{\bullet,\bullet}$ --- there is a simple relationship between these two operators.
The second family of operators are the actions of the pseudohermitian torsion, the pseudohermitian curvature, and the pseudohermitian Ricci curvature on complex-valued differential forms.
These formulas enable concise Weitzenb\"ock-type formulas.
Both families of operators are present in the approaches of Rumin~\cite{Rumin1994} and of Garfield and Lee~\cites{Garfield2001,GarfieldLee1998} to pseudohermitian Hodge theorems.
We have changed notation in an effort to more clearly distinguish the operators $\dbbar$ and $\onablas$, and thereby clarify their respective roles in the proof.

In \cref{sec:hodge/folland-stein} we collect some essential facts about Folland--Stein spaces~\cite{FollandStein1974} and Heisenberg pseudodifferential operators~\cites{BealsGreiner1988,Ponge2008}.
In particular, we include relevant sufficient conditions for a Heisenberg pseudodifferential operator to be maximally hypoelliptic.

In \cref{sec:weitzenbock} we define the Kohn and Rumin Laplacians.
Our definition of the Kohn Laplacian differs from other definitions in the literature, but retains the properties that it is formally self-adjoint and its kernel is the intersection of $\ker\dbbar$ and $\ker\dbbar^\ast$;
our definition of the Rumin Laplacian coincides with Rumin's original definition~\cite{Rumin1994} via the identification of \cref{explicit-rumin}.
We also derive some useful relations involving these operators.
On the one hand, we derive some Weitzenb\"ock-type and factorization formulas for the Kohn Laplacian;
these results are later used both to prove that the Kohn Laplacian is maximally hypoelliptic under appropriate hypotheses and to derive vanishing theorems for certain Kohn--Rossi cohomology groups.
On the other hand, we show that there is a close relationship between the Kohn and Rumin Laplacians;
for example, on torsion-free pseudohermitian manifolds, the Rumin Laplacian is twice the real part of the Kohn Laplacian (plus $\dhor^\ast\dhor$ or $\dhor\dhor^\ast$ in the middle degrees), generalizing the well-known relationship~\cite{Lee1986}*{Theorem~2.3} between the sublaplacian and the Kohn Laplacian on functions.
This relationship both justifies our definition of the Kohn Laplacian and provides a new perspective on Rumin's proof~\cite{Rumin1994} of the maximal hypoellipticity of the Rumin Laplacian.

In \cref{sec:hypoelliptic} we show that --- with the exception of the Kohn Laplacian on $\mR^{p,q}$, $q\in\{0,n\}$ --- the Kohn and Rumin Laplacians are maximally hypoelliptic on closed, strictly pseudoconvex, $(2n+1)$-dimensional pseudohermitian manifolds.

In \cref{sec:hodge_thm} we deduce Hodge decomposition theorems involving the Kohn and Rumin Laplacians.
As an application, we recover Serre duality~\cites{Garfield2001,Tanaka1975} for the Kohn--Rossi cohomology groups and Poincar\'e duality for the de Rham cohomology groups.

In \cref{sec:popovici} we adapt a construction of Popovici~\cite{Popovici2016} in complex geometry to prove a new Hodge decomposition theorem for $\mR^{p,q}$ with no constraints on $p$ and $q$.
This result is obtained by studying a maximally hypoelliptic, Heisenberg pseudodifferential operator which is closely related to the Kohn and Rumin Laplacians.
We use this theorem in \cref{part:applications} to prove a Hodge isomorphism theorem for the second page of the natural spectral sequence associated to the bigraded Rumin complex.

\section{The Hodge star operator}
\label{sec:hodge_star}

In this section  we define the Hodge star operator on $\mR^{\bullet,\bullet}$ and use it to define the formal adjoints of the operators in the Rumin and bigraded Rumin complexes.
These definitions are not CR invariant.

The Levi form induces an hermitian form on $\mR^{p,q}$.

\begin{definition}
 \label{defn:sRpq-inner-product}
 Let $(M^{2n+1},T^{1,0},\theta)$ be a pseudohermitian manifold.
 The \defn{hermitian form on $\mR^{p,q}$} is
 defined by
 \[ \lp \omega,\tau\rp := \lp \omega\rv_{\CH}, \tau\rv_{\CH} \rp, \]
 for all $\omega,\tau\in\mR^{p,q}$, $p+q\leq n$, and by
 \[ \lp \omega,\tau\rp := \lp (T\contr\omega)\rv_{\CH}, (T\contr\tau)\rv_{\CH} \rp, \]
 for all $\omega,\tau\in\mR^{p,q}$, $p+q\geq n+1$, where $\lp\cdot,\cdot\rp$ is the hermitian inner product on $\Lambda^{p,q}$ induced by the Levi form.
\end{definition}

Our terminology is justified by the following lemma.

\begin{lemma}
 \label{sR-inner-product}
 Let $(M^{2n+1},T^{1,0},\theta)$ be a pseudohermitian manifold.
 Then $\lp\cdot,\cdot\rp$ defines an hermitian form on $\mR^{p,q}$.
\end{lemma}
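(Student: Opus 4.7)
The plan is to verify the two cases of Definition~\ref{defn:sRpq-inner-product} by directly tracing through the definitions. In each case, the proposed pairing factors as the composition of a $\bC$-linear map from $\mR^{p,q}$ into sections of some $\Lambda^{p',q'}$ with the hermitian form induced on $\Lambda^{p',q'}$ by the Levi form; since such a composition automatically inherits sesquilinearity and conjugate-symmetry from the target, it suffices to check well-definedness, i.e.\ that the linear map does land in the claimed bundle.

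For $p+q \leq n$, the restriction $\omega \mapsto \omega|_{\CH}$ is $\bC$-linear on $\CsA^{p+q}$, and the defining condition of $\sR^{p,q}$ in Definition~\ref{defn:sRpq} is precisely that this restriction lies in $\Omega^{p,q}$. The Levi form $h_{\alpha\bar\beta}$, being a nondegenerate hermitian form on $T^{1,0}$, induces one on $(T^{1,0})^{\ast}$ via the musical isomorphism and hence, by antisymmetric tensor products with its conjugate, on each bundle $\Lambda^{p,q}$. This is entirely analogous to the construction of the hermitian form on $\theta \wedge \Omega^{p,q}M$ given earlier in Section~\ref{sec:linear-algebra}, and the resulting pairing on $\mR^{p,q}$ inherits hermitian symmetry and sesquilinearity from the Levi form.

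For $p+q \geq n+1$, the Reeb vector field $T$ is canonically determined by $\theta$, so the assignment $\omega \mapsto (T \contr \omega)|_{\CH}$ is $\bC$-linear; again the definition of $\sR^{p,q}$ in Definition~\ref{defn:sRpq} ensures that it maps $\mR^{p,q}$ into $\Omega^{p-1,q}$. The Levi-induced hermitian form on $\Lambda^{p-1,q}$ then supplies the desired form on $\mR^{p,q}$. The principal (and rather minor) obstacle is purely notational: since the Levi form is allowed to be indefinite, one must keep in mind that ``hermitian form'' is meant throughout in the sesquilinear, conjugate-symmetric sense and not in the positive-definite sense, so no extra work is needed to check positivity.
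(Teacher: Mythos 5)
There is a genuine gap. Your proposal verifies only sesquilinearity and conjugate symmetry, which the paper dismisses in one line as clear; the actual content of the lemma, and the entirety of the paper's proof, is \emph{nondegeneracy}. Your argument that the pairing ``automatically inherits'' the properties of an hermitian form because it is a pullback of the Levi-induced form on $\Lambda^{p,q}$ (resp.\ $\Lambda^{p-1,q}$) fails precisely at this point: the pullback of a nondegenerate form along a linear map is nondegenerate only if that map is \emph{injective}, and injectivity of $\omega \mapsto \omega\rv_{\CH}$ on $\mR^{p,q}$ is not part of \cref{defn:sRpq} --- the definition only asserts that the image lands in $\Omega^{p,q}$. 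Nondegeneracy matters downstream, since the Hodge star operator of \cref{defn:hodge} is only well-defined because the form is nondegenerate.

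To close the gap one argues as the paper does. For $p+q \leq n$: if $\omega \in \mR^{p,q}$ satisfies $\lp\omega,\tau\rp = 0$ for all $\tau$, then $\omega\rv_{\CH} = 0$, so $\omega$ lies in the ideal generated by $\theta$; but \cref{projection-to-R} shows $\pi$ kills that ideal while fixing $\sR^{p+q}$ pointwise, so $\omega = \pi\omega = 0$. For $p+q \geq n+1$: such an $\omega$ satisfies $(T\contr\omega)\rv_{\CH} = 0$, and combined with the defining condition $\theta \wedge \omega = 0$ (which forces $\omega \in \theta \wedge \Omega^{p-1,q}M$, so that $\omega$ is determined by $T\contr\omega$), this gives $\omega = 0$. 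Your remark about indefiniteness of the Levi form is correct but beside the point: what must be checked is not positivity but that no nonzero element pairs trivially with everything.
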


\begin{proof}
 It is clear that $\lp\cdot,\cdot\rp$ is sesquilinear.
 It remains to show nondegeneracy.
 
 Suppose first that $p+q\leq n$.
 Let $\omega\in\mR^{p,q}$ be such that $\lp\omega,\tau\rp=0$ for all $\tau\in\mR^{p,q}$.
 Then $\omega\rv_{\CH}=0$, and hence $\omega\equiv0\mod\theta$.  \Cref{projection-to-R} implies that $\omega=0$.
 
 Suppose now that $p=q\geq n+1$.
 Let $\omega\in\mR^{p,q}$ be such that $\lp\omega,\tau\rp=0$ for all $\tau\in\mR^{p,q}$.
 Then $(T\contr\omega)\rv_{\CH}=0$.
 Since $\theta\wedge\omega=0$, we conclude that $\omega=0$.
\end{proof}

We now define the Hodge star operator on $\mR^{\bullet,\bullet}$.

\begin{definition}
 \label{defn:hodge}
 Let $(M^{2n+1},T^{1,0},\theta)$ be a pseudohermitian manifold.
 The \defn{Hodge star operator} $\hodge\colon\mR^{\bullet,\bullet}\to\mR^{\bullet,\bullet}$ is defined by
 \[ \omega \wedge \hodge \otau = \frac{1}{n!}\lp \omega,\tau\rp\,\theta\wedge d\theta^n \]
 for all $\omega,\tau\in\mR^{\bullet,\bullet}$.
\end{definition}

We begin by recording some basic properties of the Hodge star operator.

\begin{lemma}
 \label{hodge-mapping-properties}
 Let $(M^{2n+1},T^{1,0},\theta)$ be a pseudohermitian manifold.
 Then
 \begin{enumerate}
  \item $\hodge$ is $C^\infty(M;\bC)$-linear;
  \item if $p+q\leq n$, then $\hodge\left(\mR^{p,q}\right)\subset\mR^{n+1-q,n-p}$;
  \item if $p+q\geq n+1$, then $\hodge\left(\mR^{p,q}\right)\subset\mR^{n-q,n+1-p}$;
  \item $\overline{\hodge\omega}=\hodge\oomega$ for all $\omega\in\mR^{p,q}$.
 \end{enumerate}
\end{lemma}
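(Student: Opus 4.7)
The plan is to reduce the four claims to an existence-and-uniqueness statement for the defining relation, verify existence by producing an explicit local formula, and then derive (1) and (4) as formal consequences of the defining relation combined with uniqueness.

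First I would verify that for each $\tau \in \mR^{p,q}$ the defining equation
\[
 \omega \wedge \hodge\otau = \tfrac{1}{n!}\lp\omega,\tau\rp\,\theta\wedge d\theta^n, \qquad \omega \in \mR^{p,q},
\]
admits a unique solution $\hodge\otau \in \mR^{n+1-p,n-q}$; this is exactly what claims (2) and (3) assert when translated via the conjugation isomorphisms of Lemma~\ref{conjugate-rumin-bundles}. Uniqueness follows from the nondegeneracy of the wedge-product pairing $\mR^{p,q} \times \mR^{n+1-p,n-q} \to \COmega^{2n+1}M$, itself a consequence of the Lefschetz decomposition in Proposition~\ref{lefschetz-decomposition}. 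For existence, I would write $\tau$ in an admissible coframe: when $p+q \leq n$ use Lemma~\ref{projection-to-E-expression} to put $\tau \equiv \tfrac{1}{p!q!}\tau_{\Alpha\bar\Beta}\,\theta^\Alpha \wedge \theta^{\bar\Beta} \pmod\theta$ with Lefschetz-primitive coefficients, and when $p+q \geq n+1$ first invoke Corollary~\ref{sF-to-sE}. In each case I would construct $\hodge\otau$ as an explicit wedge involving $\theta$, a suitable power of $d\theta$, and the index-raised conjugate coefficient tensor; Lemma~\ref{projection-to-F-expression}, together with the primitivity inherited from the Lefschetz decomposition, places this candidate in $\mR^{n+1-p,n-q}$, and verifying the defining relation and fixing the normalizing constant is then a direct monomial computation via Lemma~\ref{lefschetz-computation}.

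Claim (1) is immediate from uniqueness: substituting $\tau \mapsto f\tau$ in the defining relation and using the conjugate-linearity of $\lp\omega,\cdot\rp$ in its second slot gives $\omega \wedge \hodge\overline{f\tau} = \bar f\,\omega \wedge \hodge\otau$ for every $\omega$, so $\hodge(\bar f\otau) = \bar f\hodge\otau$, equivalently $\hodge(g\eta) = g\hodge\eta$ for all $g \in C^\infty(M;\bC)$ and $\eta \in \mR^{\bullet,\bullet}$. For claim (4), apply the defining relation with $\oomega$ in place of $\otau$: for every $\alpha \in \mR^{p,q}$,
\[
 \alpha \wedge \hodge\oomega = \tfrac{1}{n!}\lp\alpha,\omega\rp\,\theta\wedge d\theta^n .
\]
Conjugating both sides, using that $\theta \wedge d\theta^n$ is real-valued and that $\overline{\lp\alpha,\omega\rp} = \lp\omega,\alpha\rp$, yields
\[
 \bar\alpha \wedge \overline{\hodge\oomega} = \tfrac{1}{n!}\lp\omega,\alpha\rp\,\theta\wedge d\theta^n .
\]
But this is exactly the defining relation for $\hodge\omega$ tested against $\bar\alpha$ in the appropriate conjugate bidegree, so uniqueness forces $\overline{\hodge\oomega} = \hodge\omega$, equivalently $\overline{\hodge\omega} = \hodge\oomega$.

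The main obstacle is Step~1: producing the correct candidate formula and verifying the defining relation requires careful bookkeeping with the multi-index conventions of Section~\ref{sec:pseudohermitian} and the Lefschetz combinatorics of Proposition~\ref{lefschetz-decomposition}, in order to reduce the expression $\omega \wedge \hodge\otau$ to the canonical volume form $\theta \wedge d\theta^n$ with the correct coefficient $\tfrac{1}{n!}\lp\omega,\tau\rp$. Once the explicit formula is established, the bidegree claims (2) and (3) follow immediately, and (1) and (4) are the formal manipulations described above.
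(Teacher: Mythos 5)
Your proposal is correct and follows essentially the same route as the paper: parts (1) and (4) are obtained by exactly the same formal manipulations of the defining relation (sesquilinearity of $\lp\cdot,\cdot\rp$ in its second slot, and the identity $\overline{\lp\omega,\tau\rp}=\lp\oomega,\otau\rp$ combined with uniqueness), and parts (2)--(3) reduce to bidegree bookkeeping against $\theta\wedge d\theta^n\in\mR^{n+1,n}$ together with the conjugation isomorphisms of \cref{conjugate-rumin-bundles}. The only difference is that you front-load the existence half of well-definedness by constructing explicit local formulas for $\hodge$; the paper treats existence as implicit in \cref{defn:hodge} and records precisely those formulas afterwards in \cref{hodge-star-sE,hodge-star-sF}.
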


\begin{proof}
 That $\hodge$ is $C^\infty(M;\bC)$-linear follows from the fact that the inner product on $\mR^{p,q}$ is $C^\infty(M;\bC)$-antilinear in its second component.
 
 That $\hodge\left(\mR^{p,q}\right) \subset \mR^{n+1-q,n-p}$ when $p+q\leq n$ and $\hodge \left(\mR^{p,q}\right) \subset \mR^{n-q,n+1-p}$ when $p+q\geq n+1$ follows from \cref{conjugate-rumin-bundles} and the fact that $\theta\wedge d\theta^n\in\mR^{n+1,n}$.
 
 That $\overline{\hodge\omega}=\hodge\oomega$ follows from the identity $\overline{\lp\omega,\tau\rp}=\lp\oomega,\otau\rp$.
\end{proof}

There are two isomorphisms $\mR^{p,q}\cong\mR^{n+1-q,n-p}$, $p+q\leq n$, one from \cref{sF-to-sE} and one from the Hodge star operator.
These isomorphisms differ by the composition with the almost complex structure on $\mR^{\bullet,\bullet}$.

\begin{lemma}
 \label{hodge-star-sE}
 Let $(M^{2n+1},T^{1,0},\theta)$ be a pseudohermitian manifold.
 Then
 \[ \hodge \omega = \frac{(-1)^{\frac{(p+q)(p+q+1)}{2}}i^{q-p}}{(n-p-q)!}\theta \wedge \omega \wedge d\theta^{n-p-q} \]
 for all $\omega\in\mR^{p,q}$, $p+q\leq n$. 
\end{lemma}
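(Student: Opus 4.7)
The plan is to verify that the explicit right-hand side satisfies the defining property of the Hodge star operator from \cref{defn:hodge}. Write
\[
 \alpha_\omega := \frac{(-1)^{(p+q)(p+q+1)/2} i^{q-p}}{(n-p-q)!}\, \theta \wedge \omega \wedge d\theta^{n-p-q},
\]
so that we must show $\hodge\omega = \alpha_\omega$.

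First I would verify that $\alpha_\omega\in\mR^{n+1-q,n-p}$. Since $p+q\leq n$, the total degree is $\geq n+1$, so by \cref{defn:sRpq} we need $\theta\wedge\alpha_\omega=0$ (automatic, as $\alpha_\omega$ already contains $\theta$), $\theta\wedge d\alpha_\omega=0$, and $(T\contr\alpha_\omega)\rv_{\CH}\in\Omega^{n-q,n-p}$. The second condition reduces, after a Leibniz computation, to $\theta\wedge\omega\wedge d\theta^{n+1-p-q}=0$, which holds by $\omega\in\mR^{p,q}$. For the third, using \cref{Lambdapq-embedding} to write $\omega=\omega_0+\theta\wedge\omega_1$ with $\omega_0\in\Omega^{p,q}M$, a direct contraction gives $T\contr(\theta\wedge\omega\wedge d\theta^{n-p-q})=\omega_0\wedge d\theta^{n-p-q}$, whose restriction to $\CH$ lies in $\Omega^{n-q,n-p}$ since $d\theta\rv_{\CH}\in\Lambda^{1,1}$.

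Next I would establish the defining identity
\[
 \tau \wedge \overline{\alpha_\omega} = \tfrac{1}{n!}\lp\tau,\omega\rp\, \theta\wedge d\theta^n \qquad \text{for all } \tau\in\mR^{p,q}.
\]
Both sides are $C^\infty(M;\bC)$-sesquilinear in $(\tau,\omega)$ and, because $\overline{\alpha_\omega}$ already contains the factor $\theta$, the wedge $\tau\wedge\overline{\alpha_\omega}$ depends on $\tau$ only through its restriction to $\CH$; similarly, by \cref{defn:sRpq-inner-product}, $\lp\tau,\omega\rp$ depends only on $\tau\rv_{\CH}$ and $\omega\rv_{\CH}$. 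Writing $\omega\equiv\omega_0$ and $\tau\equiv\tau_0\bmod\theta$ with $\omega_0,\tau_0\in\Omega^{p,q}$, the identity reduces to the pointwise statement
\[
 \tau_0 \wedge \theta\wedge\overline{\omega_0}\wedge d\theta^{n-p-q} = \tfrac{(-1)^{(p+q)(p+q+1)/2+1}i^{p-q}(n-p-q)!}{n!}\,\lp\tau_0,\omega_0\rp\,\theta\wedge d\theta^n,
\]
which I would check at a point $x$ in an admissible coframe normalized so that $h_{\alpha\bar\beta}(x)=\delta_{\alpha\bar\beta}$, then invoked by density. By the Lefschetz decomposition of $\Lambda^{p,q}$ for the symplectic form $d\theta\rv_{\CH}$ (used in \cref{lefschetz-decomposition}), it suffices to verify this when $\omega_0$ and $\tau_0$ run through a basis of primitive elements and their images under iterates of $\mL$; the primitive-element case is the Weil identity for a Hermitian inner product space, and the non-primitive cases follow from iterated application together with \cref{lefschetz-computation}.

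The main obstacle is an honest bookkeeping of signs. The $(-1)^{(p+q)(p+q+1)/2}$ arises from reordering $\tau_0$, $\theta$, and $\overline{\omega_0}$ (using $\lv\tau_0\rv=p+q$ and $\lv\overline{\omega_0}\rv=p+q$) together with a Koszul sign from permuting the admissible coframe factors so as to match $\theta\wedge d\theta^n$. The factor $i^{q-p}$ comes from contracting $p$ barred indices of $\overline{\omega_0}$ against $p$ unbarred factors of $d\theta^{n-p-q}$ (and symmetrically for the $q$ unbarred indices of $\overline{\omega_0}$), each contributing a factor of $i$ from $d\theta=ih_{\alpha\bar\beta}\theta^\alpha\wedge\theta^{\bar\beta}$, with the asymmetry in $q$ versus $p$ coming from the convention in \cref{defn:sRpq-inner-product}. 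Once the pointwise identity is secured, the conclusion follows.
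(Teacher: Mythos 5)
Your strategy --- verifying that the proposed right-hand side satisfies the defining identity of \cref{defn:hodge}, reducing to a fiberwise computation on $(\Lambda^{p,q},d\theta\rv_{\CH})$, and invoking the Weil identity --- is essentially the paper's proof, which passes to the fiberwise Hodge star on $\Lambda^{p,q}$ and quotes \cite{Huybrechts2005}*{Proposition~1.2.31}. The membership check $\alpha_\omega\in\mR^{n+1-q,n-p}$ and the passage between $\hodge\omega$ and $\hodge\oomega$ via conjugation are fine.

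One step would fail as written. You propose to verify the pointwise identity on a basis of $\Lambda^{p,q}$ consisting of primitive elements \emph{and their images under iterates of $\mL$}, asserting that the non-primitive cases "follow from iterated application." But the identity you display is false for non-primitive $\omega_0$: the Weil identity gives $\hodge(L^j\alpha)=(-1)^{k(k+1)/2}\frac{j!}{(n-k-j)!}L^{n-k-j}I(\alpha)$ for $\alpha$ primitive of degree $k$, and the factor $\frac{j!}{(n-k-j)!}$ does not reduce to $\frac{1}{(n-k-2j)!}$ once $j\geq1$. Concretely, for $n=3$ and $\omega_0=\tau_0=d\theta\rv_{\CH}\in\Lambda^{1,1}$, the left-hand side of your display is $\theta\wedge d\theta^3$ while the right-hand side is $\pm\frac{1}{2}\,\theta\wedge d\theta^3$. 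The correct observation --- which should replace that step --- is that the non-primitive case never arises: for $p+q\leq n$ the condition $\theta\wedge\omega\wedge d\theta^{n+1-p-q}=0$ in the definition of $\sR^{p+q}$ forces $\omega\rv_{\CH}$ (and likewise $\tau\rv_{\CH}$) to be primitive, by \cref{lefschetz-decomposition} (cf.\ \cref{projection-to-E-expression}), so only the $j=0$ case of the Weil identity is needed. With that substitution your argument closes. (A secondary point: the constant in your displayed pointwise identity differs from what unwinding \cref{defn:hodge} actually requires by a factor of $(-1)^{1+p-q}$; since you defer the sign bookkeeping, this is presumably a transcription slip, but it needs to be corrected before the computation is carried out.)
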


\begin{proof}
 Let $\omega,\tau\in\mR^{p,q}$, $p+q\leq n$.
 By abuse of notation, let $\hodge\colon\Lambda^{p,q}\to\Lambda^{n-q,n-p}$ denote the Hodge star operator on $\Lambda^{p,q}$ defined in each fiber via the volume element $\frac{1}{n!}(d\theta\rv_H)^n$.
 Using \cref{defn:sRpq-inner-product}, we compute that
 \begin{equation*}
  \tau \wedge \hodge \oomega = \theta \wedge \tau\rv_{\CH} \wedge \hodge \overline{\omega\rv_{\CH}} = (-1)^{p+q}\tau \wedge \overline{\theta \wedge \hodge \omega\rv_{\CH}} .
 \end{equation*}
 It is well-known~\cite{Huybrechts2005}*{Proposition~1.2.31} that
 \begin{equation*}
  \hodge \overline{\omega\rv_{\CH}} = \frac{(-1)^{\frac{(p+q)(p+q+1)}{2}}i^{q-p}}{(n-p-q)!} \oomega\rv_H \wedge d\theta\rv_{\CH}^{n-p-q} .
 \end{equation*}
 The conclusion readily follows.
\end{proof}

\begin{lemma}
 \label{hodge-star-sF}
 Let $(M^{2n+1},T^{1,0},\theta)$ be a pseudohermitian manifold.
 Then
 \begin{equation*}
  \hodge \omega = (-1)^n (-1)^{\frac{(p+q)(p+q+1)}{2}} i^{p-q+1}\xi
 \end{equation*}
 for all $\omega\in\mR^{p,q}$, $p+q\geq n+1$, where $\xi$ is the unique element of $\mR^{n-q,n+1-p}$ such that
 \begin{equation}
  \label{eqn:hodge-star-sF-choose-xi}
  \omega = \frac{1}{(p+q-n-1)!}\theta\wedge\xi\wedge d\theta^{p+q-n-1} .
 \end{equation}
\end{lemma}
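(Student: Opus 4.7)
The plan is to use the preceding corollary (\cref{sF-to-sE}) to write $\omega = \frac{1}{k!}\theta\wedge\xi\wedge d\theta^{k}$ with $\xi\in\mR^{n-q,n+1-p}$ and $k := p+q-n-1$, and then to reduce the computation of $\hodge\omega$ to the formula already established in \cref{hodge-star-sE} for bidegree sums at most $n$. Applied to $\xi$ (whose bidegree sum is $2n+1-p-q\leq n$), that lemma gives
\begin{equation*}
 \hodge\xi = c_\xi\,\omega, \qquad c_\xi := (-1)^{\frac{(2n+1-p-q)(2n+2-p-q)}{2}}\,i^{q-p+1}.
\end{equation*}
This transfers the problem back into relations on the lower half of the bigraded complex, where we already understand the Hodge star.

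By \cref{hodge-mapping-properties}, $\hodge\omega\in\mR^{n-q,n+1-p}$, and \cref{defn:hodge} characterises it by the requirement $\zeta\wedge\hodge\omega = \tfrac{1}{n!}\lp\zeta,\overline\omega\rp\theta\wedge d\theta^n$ for all $\zeta\in\mR^{\bullet,\bullet}$. Since $\overline\omega\in\mR^{q+1,p-1}$ and distinct bidegrees are orthogonal, the main case is $\zeta\in\mR^{q+1,p-1}$, for which I write $\zeta = \frac{1}{k!}\theta\wedge\sigma\wedge d\theta^k$ with $\sigma\in\mR^{n+1-p,n-q}$. Applying \cref{defn:hodge} to the pair $\sigma,\overline\xi\in\mR^{n+1-p,n-q}$ and substituting $\hodge\xi = c_\xi\omega$ gives $\sigma\wedge\omega = \tfrac{1}{c_\xi n!}\lp\sigma,\overline\xi\rp\theta\wedge d\theta^n$. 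Sliding $\theta$ past $\sigma$ (of odd degree $2n+1-p-q$) then yields
\begin{equation*}
\zeta\wedge\xi = \frac{(-1)^{p+q+1}}{c_\xi\,n!}\lp\sigma,\overline\xi\rp\,\theta\wedge d\theta^n.
\end{equation*}

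To finish, I must show $\lp\zeta,\overline\omega\rp = \lp\sigma,\overline\xi\rp$ and then verify $c' = (-1)^{p+q+1}c_\xi$. For the inner products, \cref{defn:sRpq-inner-product} together with the identity $(T\contr\zeta)\rv_{\CH} = \tfrac{1}{k!}\sigma\rv_{\CH}\wedge L^k$ (and similarly for $\overline\omega$, where $L := d\theta\rv_{\CH}$) reduces the task to the fiberwise Lefschetz identity $\lp L^k\alpha,L^k\beta\rp = \tfrac{k!(n-r-s)!}{(n-r-s-k)!}\lp\alpha,\beta\rp$ for primitive forms $\alpha,\beta$ of bidegree $(r,s) = (n+1-p,n-q)$; in our range $n-r-s-k = 0$, so the factor collapses to $(k!)^2$, and primitivity of $\sigma\rv_{\CH},\overline\xi\rv_{\CH}$ is automatic from the trace-free Rumin conditions of \cref{projection-to-E-expression}. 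The sign identity then reduces to
\begin{equation*}
 (p+q)(p+q+1)-(2n+1-p-q)(2n+2-p-q) = 2(n+1)\bigl(2(p+q)-2n-1\bigr),
\end{equation*}
from which the desired matching follows by modular arithmetic. Consistency of the formula for $\zeta$ in bidegrees other than $(q+1,p-1)$ is a direct bidegree check showing that both sides vanish. The main obstacle is the careful bookkeeping of signs through the several commutations and the fiberwise Lefschetz identity.
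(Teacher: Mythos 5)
Your proof is correct, and it reaches the formula by a genuinely different route than the paper's. The paper computes $\hodge\oomega$ directly: it introduces the fiberwise Hodge star on $\Lambda^{p-1,q}$ determined by the volume element $\frac{1}{n!}(d\theta\rv_{\CH})^n$, reduces $\tau\wedge\hodge\oomega$ via \cref{defn:sRpq-inner-product} to the fiberwise star of $\xi\rv_{\CH}\wedge (d\theta\rv_{\CH})^{p+q-n-1}$, and then makes a single appeal to the standard formula~\cite{Huybrechts2005}*{Proposition~1.2.31} for the Hodge star of a Lefschetz power of a primitive form. You instead bootstrap from the already-proved \cref{hodge-star-sE}: applying it to $\xi$ gives $\hodge\xi=c_\xi\omega$, and you then identify $\hodge\omega$ by testing the candidate $c'\xi$ against the defining relation of \cref{defn:hodge}, which reduces the problem to the inner-product identity $\lp\zeta,\oomega\rp=\lp\sigma,\oxi\rp$ and hence to the Lefschetz norm identity $\lp L^k\alpha,L^k\beta\rp=\tfrac{k!(n-d)!}{(n-d-k)!}\lp\alpha,\beta\rp$ on primitive forms. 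The two external inputs are of comparable depth, so neither route is more elementary; what yours buys is that the constant is forced by the low-degree case together with the relation $c'c_\xi=1$, so \cref{hodge-star-squared} drops out for free, while the paper's computation is shorter. Your supporting checks all go through: only $\zeta\in\mR^{q+1,p-1}$ contributes by the bidegree count, the Lefschetz factor collapses to $(k!)^2$ because $n-d-k=0$ here, and the sign identity $\tfrac{(p+q)(p+q+1)}{2}-\tfrac{(2n+1-p-q)(2n+2-p-q)}{2}=(n+1)(2(p+q)-2n-1)\equiv n+1\pmod 2$ is exactly what is needed. One cosmetic slip: $\sigma$ has degree $2n+1-p-q$, which is odd only when $p+q$ is even, so the parenthetical ``of odd degree'' is a misstatement --- but the sign you actually use, $(-1)^{p+q+1}=(-1)^{2n+1-p-q}$, is the correct one, so nothing breaks.
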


\begin{proof}
 Let $\omega,\tau\in\mR^{p,q}$, $p+q\geq n+1$.
 \Cref{sF-to-sE} implies that there is a unique $\xi\in\mR^{n-q,n+1-p}$ such that Equation~\eqref{eqn:hodge-star-sF-choose-xi} holds.
 By abuse of notation, let $\hodge\colon\Lambda^{p-1,q}\to\Lambda^{n-q,n+1-p}$ denote the Hodge star operator on $\Lambda^{p-1,q}$ defined in each fiber via the volume element $\frac{1}{n!}(d\theta\rv_{\CH})^n$.
 Using \cref{defn:sRpq-inner-product}, we compute that
 \begin{equation*}
  \tau \wedge \hodge \oomega = \frac{1}{(p+q-n-1)!}\tau \wedge \hodge \overline{ \xi\rv_{\CH} \wedge d\theta\rv_{\CH}^{p+q-n-1} } .
 \end{equation*}
 It is well-known~\cite{Huybrechts2005}*{Proposition~1.2.31} that
 \begin{equation*}
  \hodge \xi\rv_{\CH} \wedge d\theta\rv_{\CH}^{p+q-n-1} = (-1)^{\frac{(2n+1-p-q)(2n+2-p-q)}{2}}(p+q-n-1)!i^{q+1-p}\overline{\xi\rv_{\CH}} .
 \end{equation*}
 The conclusion readily follows.
\end{proof}

In particular, we recover the well-known fact $\hodge^2=1$.

\begin{corollary}
 \label{hodge-star-squared}
 Let $(M^{2n+1},T^{1,0},\theta)$ be a pseudohermitian manifold.
 For any $\omega\in\mR^{p,q}$, it holds that $\hodge^2\omega=\omega$.
\end{corollary}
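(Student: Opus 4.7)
The corollary is a computation using the two explicit descriptions of $\hodge$ already in hand: Lemma \ref{hodge-star-sE} on the ``first half'' $p+q \leq n$ and Lemma \ref{hodge-star-sF} on the ``second half'' $p+q \geq n+1$. My plan is to split into these two cases and, in each, apply the formula from the opposite lemma in the second step, since Lemma \ref{hodge-mapping-properties} tells us precisely which half contains $\hodge\omega$.

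First case, $p+q \leq n$: Lemma \ref{hodge-star-sE} gives
\[ \hodge\omega = \frac{(-1)^{(p+q)(p+q+1)/2} i^{q-p}}{(n-p-q)!} \, \theta \wedge \omega \wedge d\theta^{n-p-q}, \]
and $\hodge\omega \in \mR^{n+1-q,n-p}$ lies in the second half. Applying Lemma \ref{hodge-star-sF} to $\hodge\omega$ requires identifying the unique $\xi \in \mR^{p,q}$ that appears in the factorization; comparing the display above with the normalization in Lemma \ref{hodge-star-sF} forces
$\xi = (-1)^{(p+q)(p+q+1)/2} i^{q-p} \omega$, and the lemma then yields $\hodge^2\omega$ as an explicit scalar multiple of $\omega$.

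Second case, $p+q \geq n+1$: Lemma \ref{hodge-star-sF} writes $\hodge\omega = (-1)^n (-1)^{(p+q)(p+q+1)/2} i^{p-q+1}\xi$ for the unique $\xi \in \mR^{n-q,n+1-p}$ with $\omega = \frac{1}{(p+q-n-1)!}\theta \wedge \xi \wedge d\theta^{p+q-n-1}$. Since $\xi$ lies in the first half, applying Lemma \ref{hodge-star-sE} to $\xi$ reconstructs precisely $\theta \wedge \xi \wedge d\theta^{p+q-n-1}$ up to an explicit constant, and this recovers $\omega$ up to scalar.

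The only real content is then a sign check. Writing $k = p+q$ and $m = 2n+1-k$, the imaginary parts in each case multiply to $i^2 = -1$, and the total sign is $(-1)^{k(k+1)/2 + m(m+1)/2 + n}$. Reducing modulo $2$ using $m \equiv k+1 \pmod 2$ gives $k(k+1)/2 + m(m+1)/2 + n \equiv k(k+1) + (n+1) + n \equiv 1 \pmod 2$, since $k(k+1)$ is even; this $-1$ cancels the $-1$ from $i^2$, and we conclude $\hodge^2\omega = \omega$. The only place any care is required is verifying that the uniqueness clause in Lemma \ref{hodge-star-sF} genuinely pins down $\xi$ as the scalar multiple of $\omega$ I claimed, but this is immediate from the fact that $\omega$ itself lies in $\mR^{p,q}$ in the first case.
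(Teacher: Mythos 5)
Your proposal is correct and is exactly the paper's argument: the paper's proof of this corollary reads ``This follows directly from \cref{hodge-star-sE,hodge-star-sF},'' and you have simply written out the case split, the identification of $\xi$ via the uniqueness clause, and the sign/power-of-$i$ bookkeeping that the paper leaves implicit. Your parity computation ($k(k+1)/2+m(m+1)/2\equiv n+1\pmod 2$ with $m=2n+1-k$, so the residual $-1$ cancels the $i^2$) checks out.
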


\begin{proof}
 This follows directly from \cref{hodge-star-sE,hodge-star-sF}.
\end{proof}

The volume element $\frac{1}{n!}\,\theta\wedge d\theta^n$ and the hermitian form on $\mR^{\bullet,\bullet}$ give rise to an $L^2$-inner product on the space of compactly supported sections of $\mR^{\bullet,\bullet}$:

\begin{definition}
 Let $(M^{2n+1},T^{1,0},\theta)$ be a pseudohermitian manifold.
 The \defn{$L^2$-inner product on $\mR^{p,q}$} is defined by
 \begin{equation}
  \label{eqn:sR-inner-product}
  \llp \omega,\tau \rrp := \frac{1}{n!}\int_M \lp \omega,\tau\rp\,\theta\wedge d\theta^n
 \end{equation}
 for all compactly supported $\omega,\tau\in\mR^{p,q}$.
\end{definition}

We now define the formal adjoints of the operators in the bigraded Rumin complex.

\begin{definition}
 \label{defn:dbbar-adjoint}
 Let $(M^{2n+1},T^{1,0},\theta)$ be a pseudohermitian manifold.
 For each pair $p\in\{0,\dotsc,n+1\}$ and $q\in\{0,\dotsc,n\}$, we define $\dbbar^\ast\colon\mR^{p,q}\to\mR^{p,q-1}$ by
 \[ \dbbar^\ast\omega := (-1)^{p+q}\hodge\db\hodge\omega .\]
 If $p+q\not=n+1$, then we define $\db^\ast\colon\mR^{p,q}\to\mR^{p-1,q}$ by
 \[ \db^\ast\omega := (-1)^{p+q}\hodge\dbbar\hodge\omega . \]
 If $p+q=n+1$, then we define $\db^\ast\colon\mR^{p,q}\to\mR^{p-2,q+1}$ and $\dhor^\ast\colon\mR^{p,q}\to\mR^{p-1,q}$ by
 \begin{align*}
  \db^\ast\omega & := (-1)^{n+1}\hodge\dbbar\hodge\omega , \\
  \dhor^\ast\omega & := (-1)^{n+1}\hodge\dhor\hodge\omega .
 \end{align*}
\end{definition}

These operators are formal adjoints with respect to the $L^2$-inner product.

\begin{lemma}
 \label{formal-adjoint}
 Let $(M^{2n+1},T^{1,0},\theta)$ be a pseudohermitian manifold.
 Let $\omega\in\mR^{p,q}$ be compactly supported.
 \begin{enumerate}
  \item If $\tau\in\mR^{p,q-1}$ is compactly supported, then
  \[ \llp \dbbar\tau, \omega \rrp = \llp \tau, \dbbar^\ast\omega \rrp . \]
  \item If $p+q\not=n+1$ and $\tau\in\mR^{p-1,q}$ is compactly supported, then
  \[ \llp \db\tau, \omega \rrp = \llp \tau, \db^\ast\omega \rrp . \]
  \item If $p+q=n+1$ and $\tau\in\mR^{p-2,q+1}$ is compactly supported, then
  \[ \llp \db\tau, \omega \rrp = \llp \tau, \db^\ast\omega \rrp . \]
  \item If $p+q=n+1$ and $\tau\in\mR^{p-1,q}$ is compactly supported, then
  \[ \llp \dhor\tau, \omega \rrp = \llp \tau, \dhor^\ast\omega \rrp . \]
 \end{enumerate}
\end{lemma}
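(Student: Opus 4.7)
The plan is to reduce each of the four formal-adjoint identities to a single application of Stokes' theorem on the compactly supported $2n$-form $\tau \wedge \hodge\oomega$, to identify which piece of the exterior derivative contributes using bidegree constraints, and then to translate the output via the conjugation identities together with $\hodge^2=\id$.

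I focus on part~(i); the others are entirely analogous. Since $\theta \wedge d\theta^n$ is a nowhere-vanishing $(2n+1)$-form, $M$ is orientable, and \cref{defn:hodge} together with Equation~\eqref{eqn:sR-inner-product} rewrites the $L^2$-pairing as $\llp \sigma, \rho \rrp = \int_M \sigma \wedge \hodge\orho$ for $\sigma, \rho \in \mR^{\bullet,\bullet}$. Stokes' theorem applied to the compactly supported $2n$-form $\tau \wedge \hodge\oomega$ gives
\begin{equation*}
 \int_M d\tau \wedge \hodge\oomega = (-1)^{p+q}\int_M \tau \wedge d\hodge\oomega.
\end{equation*}
Combining \cref{conjugate-rumin-bundles,hodge-mapping-properties} shows $\hodge\oomega \in \mR^{n+1-p,\,n-q}$ irrespective of where $p+q$ sits relative to middle degree.

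The crux is to show that on each side of this identity only the $\dbbar$-summand of the exterior derivative survives. Decomposing $d\tau$ into its bigraded parts using \cref{defn:dbbar}, I claim that every summand other than $\dbbar\tau$ vanishes \emph{pointwise} when wedged with $\hodge\oomega$. This rests on two elementary observations: (a) $\theta \wedge \theta = 0$, so any term with two $\theta$-factors drops out; and (b) $\Lambda^{r,s} = 0$ whenever $r>n$ or $s>n$, so any wedge whose $\CH$-bidegree lies outside the admissible range vanishes. Using the $\theta$-splittings furnished by \cref{Lambdapq-embedding}, each candidate summand reduces either to an expression carrying two $\theta$-factors or to a $\CH$-bidegree of the form $(n+1,n-1)$, $(n-1,n+1)$, $(n+2,n-2)$, or similar --- precisely which case arises depends on where $p+q-1$ and $2n+1-p-q$ sit relative to $n$. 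The same analysis on the right-hand side of Stokes eliminates every summand of $d\hodge\oomega$ except $\dbbar\hodge\oomega$.

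Once this reduction is in place, Stokes has become $\int_M \dbbar\tau \wedge \hodge\oomega = (-1)^{p+q}\int_M \tau \wedge \dbbar\hodge\oomega$. A short conjugation computation combining part~(iv) of \cref{hodge-mapping-properties}, \cref{conjugate-rumin-operators}, \cref{hodge-star-squared}, and \cref{defn:dbbar-adjoint} yields $\hodge\overline{\dbbar^\ast\omega} = (-1)^{p+q}\dbbar\hodge\oomega$, and substituting produces $\llp\dbbar\tau,\omega\rrp = \llp\tau,\dbbar^\ast\omega\rrp$. Parts (ii)--(iv) follow by the same recipe, with the surviving summand replaced by $\db$ or $\dhor$ and the conjugation identity $\overline{\db\alpha}=\dbbar\overline{\alpha}$ replaced (in case (iv)) by $\overline{\dhor\alpha}=\dhor\overline{\alpha}$. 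The main technical obstacle is the bidegree bookkeeping, which is uniform in principle but requires separate treatment of the sub-, middle-, and super-critical regimes.
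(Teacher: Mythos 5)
Your proposal is correct and follows essentially the same route as the paper's proof: rewrite the pairing as $\int_M \dbbar\tau \wedge \hodge\oomega$, use type (bidegree) considerations to replace $\dbbar\tau$ by $d\tau$, apply Stokes, use type considerations again to isolate $\dbbar\hodge\oomega$, and finish with \cref{hodge-star-squared}, \cref{hodge-mapping-properties}, and \cref{conjugate-rumin-operators}. The only difference is that you spell out the bidegree bookkeeping and the conjugation identity $\hodge\overline{\dbbar^\ast\omega} = (-1)^{p+q}\dbbar\hodge\oomega$ in more detail than the paper does.
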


\begin{proof}
 We prove that $\dbbar^\ast$ is the formal adjoint of $\dbbar$;
 the remaining cases are similar.
 
 The definition of the Hodge star operator implies that
 \[ \llp \dbbar\tau, \omega \rrp = \int_M \dbbar\tau \wedge\hodge\oomega . \]
 Type considerations imply that $\dbbar\tau\wedge\hodge\oomega=d\tau\wedge\hodge\oomega$.
 Stokes' Theorem and type considerations then imply that
 \[ \llp \dbbar\tau, \omega \rrp = \int_M (-1)^{p+q}\tau\wedge\dbbar\hodge\oomega . \]
 The final conclusion follows from \cref{hodge-star-squared}.
\end{proof}

Using the Hodge star operator to dualize the bigraded Rumin complex yields another bigraded complex.
We record the resulting identities involving compositions of two of $\db^\ast$, $\dbbar^\ast$, and $\dhor^\ast$.

\begin{proposition}
 \label{dual-justification}
 Let $(M^{2n+1},T^{1,0},\theta)$ be a pseudohermitian manifold and let $p \in \{ 0 , \dotsc , n+1 \}$ and $q \in \{ 0 , \dotsc , n \}$.
 It holds that
 \begin{enumerate}
  \item $(\db^\ast)^2=0$ and $(\dbbar^\ast)^2=0$ on $\mR^{p,q}$;
  \item if $p+q \not\in \{ n+1, n+2\}$, then $\db^\ast\dbbar^\ast + \dbbar^\ast\db^\ast = 0$ on $\mR^{p,q}$;
  \item if $p+q = n+1$, then $\dbbar^\ast\db^\ast + \db^\ast\dhor^\ast = 0$ and $\dbbar^\ast\dhor^\ast + \db^\ast\dbbar^\ast = 0$ on $\mR^{p,q}$; and
  \item if $p+q = n+2$, then $\dhor^\ast\db^\ast + \db^\ast\dbbar^\ast = 0$ and $\dbbar^\ast\db^\ast + \dhor^\ast\dbbar^\ast = 0$ on $\mR^{p,q}$.
 \end{enumerate}
\end{proposition}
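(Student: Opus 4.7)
The plan is to prove this result by dualizing Proposition~\ref{justification-of-bigraded-complex} through the Hodge star operator. The guiding observation is that, by Definition~\ref{defn:dbbar-adjoint} and Corollary~\ref{hodge-star-squared}, each of the operators $\db^\ast$, $\dbbar^\ast$, and $\dhor^\ast$ acting on $\mR^{p,q}$ takes the uniform shape
\begin{equation*}
 (-1)^{p+q}\hodge \circ (\text{corresponding component of }d) \circ \hodge ,
\end{equation*}
where the middle operator is whichever of $\db$, $\dbbar$, or $\dhor$ is appropriate to the degree of $\hodge\omega$ (the sign $(-1)^{n+1}$ appearing in the middle cases of Definition~\ref{defn:dbbar-adjoint} coincides with $(-1)^{p+q}$ when $p+q = n+1$, so the formula really is uniform).

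Given this, I would first prove (i). For $\omega \in \mR^{p,q}$, the element $\dbbar^\ast\omega$ lies in $\mR^{p,q-1}$, so applying the definition twice and collapsing the middle pair $\hodge\hodge = \id$ via Corollary~\ref{hodge-star-squared} gives
\begin{equation*}
 (\dbbar^\ast)^2\omega = (-1)^{p+q-1}(-1)^{p+q}\hodge\,\db\,\db\,\hodge\omega = -\hodge\,\db^2\,\hodge\omega.
\end{equation*}
The form $\hodge\omega$ has degree $2n+1-p-q$ (using the mapping properties in Lemma~\ref{hodge-mapping-properties}), and in every such degree Proposition~\ref{justification-of-bigraded-complex}(i) forces $\db^2 = 0$, so $(\dbbar^\ast)^2 = 0$. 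The argument for $(\db^\ast)^2 = 0$ is identical, noting that at the boundary degree where $\db$ is a second-order operator the chain $\db \circ \db$ just becomes a composite of the two halves of the splitting, which still vanishes by Proposition~\ref{justification-of-bigraded-complex}(i).

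For (ii), (iii), and (iv), the exact same collapse of interior $\hodge$'s reduces each statement to a linear combination of compositions of components of $d$ applied to $\hodge\omega$, with an overall sign of $-1$. Specifically, every composition of two adjoints takes the form $-\hodge \circ (AB) \circ \hodge$ for appropriate components $A$, $B$ of $d$; adding two such compositions gives $-\hodge \circ (AB + CD) \circ \hodge$. For (ii), with $p+q \notin \{n+1, n+2\}$, the form $\hodge\omega$ has degree $2n+1-p-q \notin \{n-1, n\}$, so Proposition~\ref{justification-of-bigraded-complex}(ii) yields $\db\dbbar + \dbbar\db = 0$ on $\hodge\omega$, and both sides of the identity collapse to zero. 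For (iii), $p+q = n+1$ forces $\hodge\omega$ to have degree $n$, and unwinding the adjoint identities $\dbbar^\ast\db^\ast + \db^\ast\dhor^\ast = 0$ and $\dbbar^\ast\dhor^\ast + \db^\ast\dbbar^\ast = 0$ transports them into the two identities of Proposition~\ref{justification-of-bigraded-complex}(iv) at degree $n$. For (iv), $p+q = n+2$ makes $\hodge\omega$ of degree $n-1$, and the dual identities transport into Proposition~\ref{justification-of-bigraded-complex}(iii).

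The main obstacle is entirely bookkeeping rather than conceptual: one must carefully track which component of $d$ is being summoned by the degree of $\hodge\omega$, verify that the codomains in the stated compositions really coincide (e.g., in (iii), both $\dbbar^\ast\db^\ast\omega$ and $\db^\ast\dhor^\ast\omega$ must land in $\mR^{p-2,q}$, which follows from Definition~\ref{defn:dbbar-adjoint} and the two different type-shifting behaviors of $\db^\ast$ at middle and off-middle bidegrees), and check that the signs $(-1)^{p+q}$ and $(-1)^{p+q-1}$ multiplied at each step really produce the $-1$ needed for the pairs of terms in (ii)--(iv) to cancel cleanly against the corresponding pair in Proposition~\ref{justification-of-bigraded-complex}. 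No new analytic input is required beyond $\hodge^2 = \id$ and the already-established identities for the exterior derivative.
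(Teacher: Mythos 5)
Your proposal is correct and takes exactly the paper's route: the paper's proof consists of the single sentence that the result follows immediately from \cref{justification-of-bigraded-complex,hodge-star-squared}, which is precisely the dualization-through-$\hodge$ argument you spell out. Your bookkeeping (the uniform sign $(-1)^{p+q}$, the degree of $\hodge\omega$ landing in the complementary range, and the matching of (iii) with \cref{justification-of-bigraded-complex}(iv) and of (iv) with (iii)) is accurate.
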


\begin{proof}
 This follows immediately from \cref{justification-of-bigraded-complex,hodge-star-squared}.
\end{proof}

We conclude with local formulas for $\db^\ast$, $\dhor^\ast$, and $\dbbar^\ast$.

\begin{lemma}
 \label{divergence-formula}
 Let $(M^{2n+1},T^{1,0},\theta)$ be a pseudohermitian manifold.
 Let $\omega\in\mR^{p,q}$.
 \begin{enumerate}
  \item If $p+q\leq n$, then
  \begin{align*}
   \db^\ast\omega & \equiv -\frac{1}{(p-1)!q!}\nabla^\mu\omega_{\mu\Alpha^\prime\bar\Beta}\,\theta^{\Alpha^\prime}\wedge\theta^{\bar\Beta} \mod\theta , \\
   \dbbar^\ast\omega & \equiv -\frac{(-1)^p}{p!(q-1)!}\nabla^{\bar\nu}\omega_{\Alpha\bar\nu\bar\Beta^\prime}\,\theta^{\Alpha}\wedge\theta^{\bar\Beta^\prime} \mod\theta ,
  \end{align*}
  where $\omega\equiv\frac{1}{p!q!}\omega_{\Alpha\bar\Beta}\,\theta^{\Alpha}\wedge\theta^{\bar\Beta}\mod\theta$.
  \item If $p+q=n+1$, then
  \begin{align*}
   \db^\ast\omega & \equiv \frac{(-1)^p(q+1)i}{(p-2)!(q+1)!}\left(\nabla_{\bar\beta}\nabla^\mu\omega_{\mu\Alpha^\prime\bar\Beta} - iA_{\bar\beta}{}^\mu\omega_{\mu\Alpha^\prime\bar\Beta}\right)\,\theta^{\Alpha^\prime}\wedge\theta^{\bar\beta\bar\Beta} \mod\theta , \\
   \dhor^\ast\omega & \equiv -\frac{1}{(p-1)!q!}\Bigl(\nabla_0\omega_{\Alpha\bar\Beta} \\
    & \qquad + (p-1)i\nabla_{\alpha}\nabla^\mu\omega_{\mu\Alpha^\prime\bar\Beta} - qi\nabla_{\bar\beta}\nabla^{\bar\nu}\omega_{\Alpha\bar\nu\bar\Beta^\prime}\Bigr)\,\theta^{\Alpha}\wedge\theta^{\bar\Beta} \mod\theta , \\
   \dbbar^\ast\omega & \equiv \frac{(-1)^ppi}{p!(q-1)!}\left(\nabla_{\alpha}\nabla^{\bar\nu}\omega_{\Alpha\bar\nu\bar\Beta^\prime} + iA_{\alpha}{}^{\bar\nu}\omega_{\Alpha\bar\nu\bar\Beta^\prime}\right)\,\theta^{\alpha\Alpha}\wedge\theta^{\bar\Beta^\prime} \mod\theta ,
  \end{align*}
  where $\omega=\frac{1}{(p-1)!q!}\omega_{\Alpha\bar\Beta}\,\theta\wedge\theta^{\Alpha}\wedge\theta^{\bar\Beta}$.
  \item If $p+q \geq n+2$, then
  \begin{align*}
   \db^\ast \omega & = \frac{1}{(n+2-p)!(n-q)!(p+q-n-2)!} \Omega_{\Alpha\bar\beta\bar\Beta}^{(1)} \, \theta \wedge \theta^\Alpha \wedge \theta^{\bar\beta\bar\Beta} \wedge d\theta^{p+q-n-2} , \\
   \dbbar^\ast \omega & = \frac{1}{(n+1-p)!(n+1-q)!(p+q-n-2)!}\Omega_{\Alpha\bar\Beta}^{(2)} \, \theta \wedge \theta^{\alpha\Alpha} \wedge \theta^{\bar\Beta} \wedge d\theta^{p+q-n-2} ,
  \end{align*}
  where $\omega = \frac{1}{(n+1-p)!(n-q)!(p+q-n-1)!}\omega_{\Alpha\bar\Beta}\,\theta \wedge \theta^\Alpha \wedge \theta^{\bar\Beta} \wedge d\theta^{p+q-n-1}$ and
  \begin{align*}
   \Omega_{\bar\beta\Alpha\bar\Beta}^{(1)} & := (-1)^{n-q}(n+2-p)i \left( \nabla_{[\bar\beta}\omega_{\Alpha\bar\Beta]} - \frac{n-q}{p+q-n}h_{[\alpha\bar\beta}\nabla^\mu\omega_{\mu\Alpha^\prime\bar\Beta]} \right) , \\
   \Omega_{\alpha\Alpha\bar\Beta}^{(2)} & := -(n+1-q)i\left( \nabla_{[\alpha}\omega_{\Alpha\bar\Beta]} - \frac{n+1-p}{p+q-n}h_{[\alpha\bar\beta}\nabla^{\bar\nu}\omega_{\Alpha\bar\nu\bar\Beta^\prime]} \right) .
  \end{align*}
 \end{enumerate}
\end{lemma}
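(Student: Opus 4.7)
The plan is to derive each formula by combining the explicit expressions for $\hodge$ in \cref{hodge-star-sE,hodge-star-sF} with the local formulas for $\db$, $\dhor$, $\dbbar$ in \cref{bigraded-operators}, and then invoking \cref{defn:dbbar-adjoint}. In every case the strategy is identical: first apply $\hodge$ to send $\omega\in\mR^{p,q}$ to a form of complementary bidegree, then differentiate via \cref{bigraded-operators}, and finally apply $\hodge$ once more (using $\hodge^2=1$ from \cref{hodge-star-squared}) to land back in the correct Rumin space.

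For case (i), with $p+q\le n$, \cref{hodge-star-sE} identifies $\hodge\omega$ with a constant multiple of $\theta\wedge\omega\wedge d\theta^{n-p-q}\in\mR^{n+1-q,n-p}$, where $(n+1-q)+(n-p)\ge n+1$. Apply \cref{bigraded-operators}(iii) to compute $\db\hodge\omega$ and $\dbbar\hodge\omega$; the resulting forms are then of the shape $\theta\wedge\xi\wedge d\theta^{n-p-q+1}$, so \cref{hodge-star-sF} identifies $\hodge$ of them back in $\mR^{p,q-1}$, respectively $\mR^{p-1,q}$. Collecting the signs and factorials and simplifying yields the displayed expressions. For case (iii), with $p+q\ge n+2$, one proceeds symmetrically: \cref{hodge-star-sF} sends $\omega$ to a form in $\mR^{n-q,n+1-p}$ of total degree $\le n-1$, on which \cref{bigraded-operators}(i) applies; a final application of \cref{hodge-star-sE} produces the claimed formula. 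Case (ii), with $p+q=n+1$, is the most involved because $\hodge\omega$ sits at the middle degree $n$: one writes $\omega=\theta\wedge\xi$ as in \cref{hodge-star-sF}, applies \cref{bigraded-operators}(ii) which splits the exterior derivative into its three pieces $\db$, $\dhor$, $\dbbar$, and then dualizes through $\hodge$ to obtain the three adjoint operators $\db^\ast$, $\dhor^\ast$, $\dbbar^\ast$ separately.

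The main obstacle is bookkeeping: keeping track of the signs $(-1)^{\frac{(p+q)(p+q+1)}{2}}$ and $i^{q-p}$ contributed by each invocation of the Hodge star conventions, the Koszul signs from skew-symmetrization in our formulas such as Equation~\eqref{eqn:skew-model}, and the factorials from expanding wedge products with $d\theta^k$. A secondary subtlety is that the displayed adjoint formulas in cases (i) and (ii) are written modulo $\theta$: one must verify that the contractions with $h_{\alpha\bar\beta}$ appearing in the trace-free corrections of \cref{bigraded-operators} assemble into terms that are proportional to $\theta$, hence drop out, so that only the ``divergence'' pieces $\nabla^\mu\omega_{\mu\Alpha^\prime\bar\Beta}$ and $\nabla^{\bar\nu}\omega_{\Alpha\bar\nu\bar\Beta^\prime}$ survive modulo $\theta$. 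The identities in Equation~\eqref{eqn:tracefree-trick} play a central role in this cancellation. Once these simplifications are made, a direct comparison with \cref{defn:dbbar-adjoint} yields each claim.

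As a sanity check, the formulas can be cross-verified in a few low-degree cases by integration by parts against a test form using \cref{formal-adjoint}, together with the commutators of \cref{commutators}; this independent route confirms in particular the placement of the torsion terms $A_{\alpha}{}^{\bar\nu}$ and $A_{\bar\beta}{}^\mu$ that appear in case (ii) on the middle degree, where the exterior derivative is of second order.
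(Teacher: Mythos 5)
Your proposal is correct and follows essentially the same route as the paper: set $\tau:=\hodge\omega$, compute $\db\tau$, $\dhor\tau$, $\dbbar\tau$ from the explicit formulas of \cref{bigraded-operators} together with \cref{hodge-star-sE,hodge-star-sF}, and read off the adjoints from \cref{defn:dbbar-adjoint}. The paper writes out only the case $p+q\leq n$ and declares the rest similar, so your sketch of cases (ii) and (iii) is consistent with its intent.
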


\begin{proof}
 We prove the case $p+q\leq n$;
 the remaining cases are similar.

 Let $\omega\in\mR^{p,q}$.
 Set $\tau:=\hodge\omega$.
 \Cref{bigraded-operators,hodge-star-sE} imply that
 \begin{align*}
  \db\tau & = \frac{(-1)^{\frac{(p+q)(p+q+1)}{2}}i^{q-p+1}(-1)^{p}}{p!(q-1)!(n+1-p-q)!}\nabla^{\bar\nu}\omega_{\Alpha\bar\nu\bar\Beta^\prime}\,\theta\wedge\theta^{\Alpha}\wedge\theta^{\bar\Beta^\prime}\wedge d\theta^{n+1-p-q} , \\
  \dbbar\tau & = \frac{(-1)^{\frac{(p+q)(p+q+1)}{2}}i^{q-p-1}}{(p-1)!q!(n+1-p-q)!}\nabla^\mu\omega_{\mu\Alpha^\prime\bar\Beta}\,\theta\wedge\theta^{\Alpha^\prime}\wedge\theta^{\bar\Beta}\wedge d\theta^{n+1-p-q} .
 \end{align*}
 The claimed formulas follow from \cref{hodge-star-sF,defn:dbbar-adjoint}.
\end{proof}

\section{Some convenient operators}
\label{sec:setup}

Recall that if $(M^{2n+1},T^{1,0},\theta)$ is a pseudohermitian manifold, then $\Omega^{p,q}M$ is the space of complex-valued differential $(p+q)$-forms $\omega$ such that $\omega(T,\cdot)=0$ and $\omega\rv_{\CH}\in\Lambda^{p,q}$.
This allows us to define primitive $(p,q)$-forms.

\begin{definition}
 Let $(M^{2n+1},T^{1,0},\theta)$ be a pseudohermitian manifold.
 The space of \defn{primitive $(p,q)$-forms $P^{p,q}M$} is
 \begin{equation*}
  P^{p,q}M = \left\{ \frac{1}{p!q!}\omega_{\Alpha\bar\Beta}\,\theta^{\Alpha}\wedge\theta^{\bar\Beta} \in \Omega^{p,q}M \suchthatcolon \omega_{\mu\Alpha^\prime}{}^\mu{}_{\bar\Beta^\prime} = 0 \right\} .
 \end{equation*}
\end{definition}

\Cref{lefschetz-computation} implies that this is consistent with \cref{defn:theta-wedge-primitive}.

Given a choice of contact form, \cref{projection-to-E-expression} provides an isomorphism between $\mR^{p,q}$ and $P^{p,q}M$ when $p+q\leq n$.
In this section we introduce new operators on $\Omega^{p,q}M$ and $P^{p,q}M$ which help us systematically study the operators $\dbbar$, $\db$, $\dhor$, and their adjoints.

We begin by splitting the Tanaka--Webster connection according to types.

\begin{definition}
 \label{defn:nablab}
 Let $(M^{2n+1},T^{1,0},\theta)$ be a pseudohermitian manifold.
 We define $\nablab\colon\Omega^{p,q}M\to\Omega^{1,0}M\otimes\Omega^{p,q}M$ by
 \[ \nablab\omega := \frac{1}{p!q!}\nabla_\gamma\omega_{\Alpha\bar\Beta}\,\theta^\gamma\otimes\theta^{\Alpha}\wedge\theta^{\bar\Beta} ; \]
 we define $\nablabbar\colon\Omega^{p,q}M\to\Omega^{0,1}M\otimes\Omega^{p,q}M$ by
 \[ \nablabbar\omega := \frac{1}{p!q!}\nabla_{\bar\sigma}\omega_{\Alpha\bar\Beta}\,\theta^{\bar\sigma}\otimes\theta^{\Alpha}\wedge\theta^{\bar\Beta} ; \]
 and we define $\nabla_0\colon\Omega^{p,q}M\to\Omega^{p,q}M$ by
 \[ \nabla_0\omega = \frac{1}{p!q!}\nabla_0\omega_{\Alpha\bar\Beta}\,\theta^{\Alpha}\wedge\theta^{\bar\Beta} , \]
 where $\omega=\frac{1}{p!q!}\omega_{\Alpha\bar\Beta}\,\theta^{\Alpha}\wedge\theta^{\bar\Beta}$.
\end{definition}

Note that $\nabla_0$ is a real operator and that $\nablab$ and $\nablabbar$ are conjugates of one another.

Skew symmetrizing $\nablab$ and $\nablabbar$ yields useful operators on $\Omega^{\bullet,\bullet}M:=\bigoplus_{p,q}\Omega^{p,q}M$ (cf.\ \citelist{ \cite{GarfieldLee1998}*{Equation~(2)} \cite{Rumin1990}*{Equation~(2)} }).

\begin{definition}
 \label{defn:nablas}
 Let $(M^{2n+1},T^{1,0},\theta)$ be a pseudohermitian manifold.
 We define
 \begin{align*}
  \nablas\omega & := \frac{1}{p!q!}\nabla_{\alpha}\omega_{\Alpha\bar\Beta}\,\theta^{\alpha\Alpha}\wedge\theta^{\bar\Beta}, \\
  \onablas\omega & := \frac{(-1)^p}{p!q!}\nabla_{\bar\beta}\omega_{\Alpha\bar\Beta}\,\theta^{\Alpha}\wedge\theta^{\bar\beta\bar\Beta}
 \end{align*}
 for all $\omega=\frac{1}{p!q!}\omega_{\Alpha\bar\Beta}\,\theta^{\Alpha}\wedge\theta^{\bar\Beta}\in\Omega^{p,q}M$.
\end{definition}

Note that $\nablas$ and $\onablas$ are conjugates of one another.

The pseudohermitian curvature tensor, the pseudohermitian Ricci tensor, and the pseudohermitian torsion all act on $\Omega^{p,q}M$ in a natural way.

\begin{definition}
 \label{defn:curvature-actions}
 Let $(M^{2n+1},T^{1,0},\theta)$ be a pseudohermitian manifold.
 We define
 \begin{align*}
  R\ddbarhash\omega & := \frac{pq}{p!q!}R_{\alpha\bar\beta}{}^{\bar\nu\mu}\omega_{\mu\Alpha^\prime\bar\nu\bar\Beta^\prime}\,\theta^{\Alpha}\wedge\theta^{\bar\Beta}, \\
  \Ric\hash\omega & := -\frac{p}{p!q!}R_{\alpha}{}^\mu\omega_{\mu\Alpha^\prime\bar\Beta}\,\theta^{\Alpha}\wedge\theta^{\bar\Beta}, \\
  \Ric\ohash\omega & := -\frac{q}{p!q!}R^{\bar\nu}{}_{\bar\beta}\omega_{\Alpha\bar\nu\bar\Beta^\prime}\,\theta^{\Alpha}\wedge\theta^{\bar\Beta}, \\
  A\hash\omega & := -\frac{(-1)^p(p+1)}{(p+1)!(q-1)!}A_{\alpha}{}^{\bar\nu}\omega_{\Alpha\bar\nu\bar\Beta^\prime}\,\theta^{\alpha\Alpha}\wedge\theta^{\bar\Beta^\prime} , \\
  A\ohash\omega & := -\frac{(-1)^{p-1}(q+1)}{(p-1)!(q+1)!}A_{\bar\beta}{}^\mu\omega_{\mu\Alpha^\prime\bar\Beta}\,\theta^{\Alpha^\prime}\wedge\theta^{\bar\beta\bar\Beta}
 \end{align*}
 for all $\omega=\frac{1}{p!q!}\omega_{\Alpha\bar\Beta}\,\theta^{\Alpha}\wedge\theta^{\bar\Beta}\in\Omega^{p,q}M$.
\end{definition}

In general, only $A\hash$ and $A\ohash$ preserve primitive forms.

Note that $R\hash\ohash$ is a real operator and that $\Ric\hash$ and $\Ric\ohash$ (resp.\ $A\hash$ and $A\ohash$) are conjugates of one another.

The pseudohermitian structure induces an hermitian inner product on $\Omega^{p,q}M$.

\begin{definition}
 \label{defn:Omega-inner-product}
 Let $(M^{2n+1},T^{1,0},\theta)$ be a pseudohermitian manifold.
 We define
 \[ \lp \omega, \tau \rp := \frac{1}{p!q!}\omega_{\Alpha\bar\Beta}\otau^{\bar\Beta\Alpha} \]
 for all $\omega=\frac{1}{p!q!}\omega_{\Alpha\bar\Beta}\,\theta^{\Alpha}\wedge\theta^{\bar\Beta}$ and $\tau=\frac{1}{p!q!}\tau_{\Alpha\bar\Beta}\,\theta^{\Alpha}\wedge\theta^{\bar\Beta}$ in $\Omega^{p,q}M$, where $\otau_{\Beta\bar\Alpha}:=\overline{\tau_{\Alpha\bar\Beta}}$.
 This induces the \defn{$L^2$-inner product}
 \begin{equation}
  \label{eqn:Omega-inner-product}
  ( \omega, \tau ) := \frac{1}{n!}\int_M \lp \omega, \tau\rp\,\theta\wedge d\theta^n
 \end{equation}
 on compactly supported elements of $\Omega^{p,q}M$.
\end{definition}

In particular, we have a Laplace-type operator associated to $\onablas$.

\begin{definition}
 Let $(M^{2n+1},T^{1,0},\theta)$ be a pseudohermitian manifold.
 We define
 \[ \Boxs\omega := \onablas^\ast\onablas \omega + \onablas\onablas^\ast \omega \]
 for all $\omega\in\Omega^{p,q}M$, where $\onablas^\ast$ is the $L^2$-adjoint of $\onablas$.
\end{definition}

We conclude this section with some useful formulas involving these operators.

First, we give expressions for $\nablas^\ast$, $\nablab^\ast\nablab$, and their conjugates.

\begin{lemma}
 \label{nablab-and-nablas-adjoints}
 Let $(M^{2n+1},T^{1,0},\theta)$ be a pseudohermitian manifold.
 Then
 \begin{align}
  \label{eqn:nablasast} \nablas^\ast\omega & = -\frac{1}{(p-1)!q!}\nabla^\mu\omega_{\mu\Alpha^\prime\bar\Beta}\,\theta^{\Alpha^\prime}\wedge\theta^{\bar\Beta}, \\
  \label{eqn:onablasast} \onablas^\ast\omega & = -\frac{(-1)^p}{p!(q-1)!}\nabla^{\bar\nu}\omega_{\Alpha\bar\nu\bar\Beta^\prime}\,\theta^{\Alpha}\wedge\theta^{\bar\Beta^\prime} , \\
  \label{eqn:rough-laplacian} \nablab^\ast\nablab\omega & = -\frac{1}{p!q!}\nabla^\mu\nabla_\mu\omega_{\Alpha\bar\Beta}\,\theta^{\Alpha}\wedge\theta^{\bar\Beta}, \\
  \label{eqn:rough-laplacian-bar} \nablabbar^\ast\nablabbar\omega & = -\frac{1}{p!q!}\nabla^{\bar\nu}\nabla_{\bar\nu}\omega_{\Alpha\bar\Beta}\,\theta^{\Alpha}\wedge\theta^{\bar\Beta} ,
 \end{align}
 for all $\omega=\frac{1}{p!q!}\omega_{\Alpha\bar\Beta}\,\theta^{\Alpha}\wedge\theta^{\bar\Beta}\in\Omega^{p,q}M$.
\end{lemma}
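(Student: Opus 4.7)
The plan is to reduce the four identities to a single integration-by-parts computation on $(p,q)$-forms, using the fact that the Tanaka--Webster connection preserves the volume form $\tfrac{1}{n!}\theta\wedge d\theta^n$ and hence satisfies the divergence identity
\[
 \int_M \nabla_\mu X^\mu \, \theta \wedge d\theta^n = 0 = \int_M \nabla_{\bar\nu} X^{\bar\nu}\, \theta \wedge d\theta^n
\]
for any compactly supported $(1,0)$- or $(0,1)$-vector field. I will assume throughout that $\omega\in\Omega^{p,q}M$ and all test forms are compactly supported; the general case follows by localization.

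First I would establish~\eqref{eqn:nablasast}. Take a test form $\eta=\frac{1}{(p-1)!q!}\eta_{\Alpha^\prime\bar\Beta}\,\theta^{\Alpha^\prime}\wedge\theta^{\bar\Beta}\in\Omega^{p-1,q}M$. Using \cref{defn:nablas} and the skew-symmetrization convention, $\nablas\eta$ has components $p\nabla_{[\alpha}\eta_{\Alpha^\prime\bar\Beta]}$ in type $(p,q)$. Pair against $\omega$ via \cref{defn:Omega-inner-product} and use that the inner product is itself totally skew-symmetric in its index groups so the factor of $p$ combines with $\tfrac{1}{p!}$ to give $\tfrac{1}{(p-1)!}$ and the skew-symmetrization drops:
\[
 (\nablas\eta,\omega) = \frac{1}{(p-1)!q!}\int_M \nabla_\mu\eta_{\Alpha^\prime\bar\Beta}\,\oomega^{\bar\Beta\mu\Alpha^\prime}\,\tfrac{1}{n!}\theta\wedge d\theta^n.
\]
Integration by parts then yields~\eqref{eqn:nablasast}. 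Formula~\eqref{eqn:onablasast} is obtained either by the same argument (with the sign $(-1)^p$ tracking the type of $\theta^{\bar\beta}$ past $\theta^\Alpha$ in \cref{defn:nablas}) or, equivalently, by conjugating~\eqref{eqn:nablasast} using that $\onablas$ and $\nablas$ are conjugates and that the $L^2$-inner product satisfies $\overline{(\alpha,\beta)}=(\oalpha,\obeta)$.

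Next I would handle~\eqref{eqn:rough-laplacian}. Unlike $\nablas$, the operator $\nablab$ of \cref{defn:nablab} produces a $(1,0)$-form--valued $(p,q)$-form with no skew-symmetrization, so its formal adjoint on tensors of the form $\tau=\tfrac{1}{p!q!}\tau_{\gamma\Alpha\bar\Beta}\,\theta^\gamma\otimes\theta^\Alpha\wedge\theta^{\bar\Beta}$ is simply $\nablab^\ast\tau = -\tfrac{1}{p!q!}\nabla^\gamma\tau_{\gamma\Alpha\bar\Beta}\,\theta^\Alpha\wedge\theta^{\bar\Beta}$, by the same divergence identity. Substituting $\tau=\nablab\omega$ gives~\eqref{eqn:rough-laplacian}, and~\eqref{eqn:rough-laplacian-bar} follows either by the same direct argument or by conjugation.

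The only subtlety, which I expect to be the main bookkeeping obstacle, is matching the combinatorial factors coming from the antisymmetrization bracket $p\tau_{[\alpha}\omega_{\Alpha'\bar\Beta]}$ in \cref{defn:nablas} against the already-antisymmetric inner product of \cref{defn:Omega-inner-product}, and tracking the $(-1)^p$ in the definition of $\onablas$ against the sign produced by moving a $\theta^{\bar\beta}$ past the $\theta^\Alpha$ factors; both signs are unambiguously fixed by the convention of Equation~\eqref{eqn:skew-model}, so the resulting identities hold with the sign and factorial conventions displayed in the statement.
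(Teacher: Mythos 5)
Your proposal is correct and follows essentially the same route as the paper: the paper's proof also pairs $\nablas\omega$ against a compactly supported test form, applies the divergence theorem for the Tanaka--Webster connection (citing Lee's Equation~(2.18)), and notes that the remaining three identities follow similarly. Your extra remarks on tracking the factor of $p$ against $\tfrac{1}{p!}$ and the sign $(-1)^p$ are exactly the bookkeeping the paper leaves implicit.
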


\begin{proof}
 Let $\omega=\frac{1}{(p-1)!q!}\omega_{\Alpha\bar\Beta}\,\theta^{\Alpha}\wedge\theta^{\bar\Beta}\in\Omega^{p-1,q}M$ and $\tau=\frac{1}{p!q!}\tau_{\Gamma\bar\Sigma}\,\theta^{\Gamma}\wedge\theta^{\bar\Sigma}\in\Omega^{p,q}M$ be compactly supported.
 We compute from \cref{defn:nablas,defn:Omega-inner-product} that
 \begin{align*}
  (\omega, \nablas^\ast\tau) := (\nablas\omega, \tau) & = \frac{1}{n!(p-1)!q!}\int_M \otau^{\bar\Beta\alpha\Alpha}\nabla_\alpha\omega_{\Alpha\bar\Beta}\,\theta\wedge d\theta^n \\
  & = -\frac{1}{n!(p-1)!q!}\int_M \omega_{\Alpha\bar\Beta}\nabla_\mu\tau^{\bar\Beta\mu\Alpha}\,\theta\wedge d\theta^n , 
 \end{align*}
 where the last equality uses the divergence theorem~\cite{Lee1988}*{Equation~(2.18)}.
 Equation~\eqref{eqn:nablasast} readily follows.
 Equations~\eqref{eqn:onablasast}, \eqref{eqn:rough-laplacian}, and~\eqref{eqn:rough-laplacian-bar} follow similarly.
\end{proof}

Second, we record the canonical isomorphism between $\mR^{p,q}$ and $P^{p,q}M$ and the relations, in terms of this isomorphism, between the operators in the bigraded Rumin complex and our newly defined operators (cf.\ \citelist{ \cite{Garfield2001}*{Propositions~2.9 and~3.10} }).

\begin{lemma}
 \label{sR-to-P}
 Let $(M^{2n+1},T^{1,0},\theta)$ be a pseudohermitian manifold.
 Let $p,q\in\bN_0$ be such that $p+q\leq n$.
 Define $I\colon\mR^{p,q}\to P^{p,q}M$ by
 \[ I(\omega) := \frac{1}{p!q!}\omega_{\Alpha\bar\Beta}\,\theta^{\Alpha}\wedge\theta^{\bar\Beta}, \]
 where $\omega\equiv\frac{1}{p!q!}\omega_{\Alpha\bar\Beta}\,\theta^{\Alpha}\wedge\theta^{\bar\Beta}\mod\theta$.
 Then $I$ is an isomorphism.
 With respect to this isomorphism, it holds that
 \begin{align}
  \label{eqn:dbast-to-nablasast} \db^\ast\omega & = \nablas^\ast \omega , \\
  \label{eqn:dbbarast-to-onablasast} \dbbar^\ast\omega & = \onablas^\ast \omega .
 \end{align}
 Moreover,
 \begin{equation}
  \label{eqn:dbbar-to-onablas}
  \dbbar\omega = \onablas \omega + \frac{i}{n-p-q+1}d\theta \wedge \nablas^\ast \omega
 \end{equation}
 if $p+q\leq n-1$; and
 \begin{align}
  \label{eqn:db-to-nablas-middle} \hodge\db\omega & = (-1)^pi^{n^2}\left(-i\nablas\onablas^\ast + A\hash\right)\omega , \\
  \label{eqn:dhor-to-onablas-middle} \hodge\dhor\omega & = (-1)^pi^{n^2}\left( \nabla_0 - i\nablas\nablas^\ast + i\onablas\onablas^\ast\right) \omega , \\
  \label{eqn:dbbar-to-onablas-middle} \hodge\dbbar\omega & = (-1)^pi^{n^2}\left(i\onablas\nablas^\ast + A\ohash\right)\omega
 \end{align}
 if $p+q=n$.
\end{lemma}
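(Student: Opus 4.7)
The proof will be a direct verification of each assertion by matching components in a local admissible coframe, using the explicit formulas established in \cref{bigraded-operators,divergence-formula,nablab-and-nablas-adjoints,hodge-star-sF,projection-to-E-expression}. No new technical machinery is needed.

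First I would set up the isomorphism $I$. By \cref{projection-to-E-expression}, each $\omega \in \mR^{p,q}$ with $p+q \leq n$ decomposes uniquely as a leading primitive $(p,q)$-form, whose coefficients $\omega_{\Alpha\bar\Beta}$ satisfy $\omega_{\mu\Alpha^\prime}{}^\mu{}_{\bar\Beta^\prime}=0$, plus explicit lower-order terms in the ideal generated by $\theta$. Defining $I(\omega)$ to be this leading primitive form produces a $C^\infty(M;\bC)$-linear map into $P^{p,q}M$; its inverse is obtained by extending any primitive form with the lower-order corrections prescribed by (eqn:projection-to-E-expression). So $I$ is an isomorphism.

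The adjoint identities \eqref{eqn:dbast-to-nablasast} and \eqref{eqn:dbbarast-to-onablasast} now follow from direct comparison: the $p+q\leq n$ cases of \cref{divergence-formula} give $\db^\ast\omega$ and $\dbbar^\ast\omega$ modulo $\theta$, and these expressions are manifestly the same as the formulas for $\nablas^\ast$ and $\onablas^\ast$ in \cref{nablab-and-nablas-adjoints}. Applying $I$ (which extracts the mod-$\theta$ primitive part) yields the claimed equalities in $P^{\bullet,\bullet}M$. Identity \eqref{eqn:dbbar-to-onablas} is only marginally harder. Applying $I$ to the $p+q \leq n-1$ formula for $\dbbar\omega$ in \cref{bigraded-operators} gives two terms: a part $\nabla_{[\bar\beta}\omega_{\Alpha\bar\Beta]}\,\theta^\Alpha\wedge\theta^{\bar\beta\bar\Beta}$ that is $\onablas I(\omega)$ by definition, and a trace correction proportional to $h_{[\alpha\bar\beta}\nabla^\mu\omega_{\mu\Alpha^\prime\bar\Beta]}$. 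Expanding $d\theta = ih_{\rho\bar\sigma}\theta^\rho\wedge\theta^{\bar\sigma}$ and recognizing $-\nabla^\mu\omega_{\mu\Alpha^\prime\bar\Beta}\,\theta^{\Alpha^\prime}\wedge\theta^{\bar\Beta}$ as $\nablas^\ast I(\omega)$ shows that this correction equals $\frac{i}{n-p-q+1}d\theta \wedge \nablas^\ast I(\omega)$.

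The middle-degree identities \eqref{eqn:db-to-nablas-middle}--\eqref{eqn:dbbar-to-onablas-middle} work the same way but require more bookkeeping. Each of $\db\omega,\dhor\omega,\dbbar\omega$ lies in $\mR^{\bullet,\bullet}$ with bidegree sum $n+1$, so the $p+q=n$ case of \cref{bigraded-operators} presents them in the form $\theta \wedge \xi$ with $\xi$ a primitive form whose coefficients are the tensors $\Omega^{(1)},\Omega^{(2)},\Omega^{(3)}$; then \cref{hodge-star-sF} evaluates the Hodge star as a scalar multiple of $\xi$. On the right-hand side, the compositions $\nablas\onablas^\ast$, $\nablas\nablas^\ast$, $\onablas\onablas^\ast$, $\onablas\nablas^\ast$, and the operator $\nabla_0$, produce precisely the second-order covariant-derivative combinations appearing in $\Omega^{(1)},\Omega^{(2)},\Omega^{(3)}$, while $A\hash\omega$ and $A\ohash\omega$ provide the matching torsion terms in $\Omega^{(1)}$ and $\Omega^{(3)}$. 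The main obstacle is the phase bookkeeping: one has to verify that the factor $(-1)^n(-1)^{(n+1)(n+2)/2}i^{p-q+1}$ from \cref{hodge-star-sF} combines with the explicit prefactor $(-1)^{p-1}i$ (and analogous signs) in $\Omega^{(j)}$ to yield the common constant $(-1)^p i^{n^2}$. Using $p+q=n$ and the parity of $n(n+1)$, this reduces to showing $(n+1)(n+2) \equiv 0 \pmod 2$, which is immediate. The content is otherwise entirely contained in the formulas already established.
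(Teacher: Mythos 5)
Your proposal is correct and follows exactly the paper's route: the paper's proof is a four-line citation of \cref{projection-to-E-expression} for the isomorphism, \cref{divergence-formula,nablab-and-nablas-adjoints} for the adjoint identities, \cref{bigraded-operators,nablab-and-nablas-adjoints} for \eqref{eqn:dbbar-to-onablas}, and \cref{bigraded-operators,hodge-star-sF,nablab-and-nablas-adjoints} for the middle-degree formulas, and you have simply written out the component-matching that those citations compress. The only place you gloss is the phase bookkeeping in \eqref{eqn:db-to-nablas-middle}--\eqref{eqn:dbbar-to-onablas-middle}, which requires a slightly longer reconciliation of $i^{p-q+1}$ with $i^{n^2}$ than the single parity observation you state, but this is routine and the paper does not display it either.
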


\begin{proof}
 \Cref{projection-to-E-expression} implies that $I$ is an isomorphism.
 
 Combining \cref{divergence-formula,nablab-and-nablas-adjoints} yields Equations~\eqref{eqn:dbast-to-nablasast} and~\eqref{eqn:dbbarast-to-onablasast}.
 
 Combining \cref{bigraded-operators,nablab-and-nablas-adjoints} yields Equation~\eqref{eqn:dbbar-to-onablas}.
 
 Combining \cref{bigraded-operators,hodge-star-sF,nablab-and-nablas-adjoints} yields Equations~\eqref{eqn:db-to-nablas-middle}, \eqref{eqn:dhor-to-onablas-middle}, and~\eqref{eqn:dbbar-to-onablas-middle}.
\end{proof}

In particular, the operator $\nablas^\ast + \onablas^\ast$ squares to zero on primitive forms.

\begin{corollary}
 \label{nablasast-complex}
 Let $(M^{2n+1},T^{1,0},\theta)$ be a pseudohermitian manifold.
 Let $p,q \in \bN_0$ be such that $p+q \leq n$.
 Then
 \begin{align*}
  \nablas^\ast\nablas^\ast & = 0 , \\
  \onablas^\ast\onablas^\ast & = 0 , \\
  \nablas^\ast\onablas^\ast + \onablas^\ast\nablas^\ast & = 0
 \end{align*}
 on $P^{p,q}M$.
\end{corollary}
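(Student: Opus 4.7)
The plan is to transport the identities of \cref{dual-justification} across the isomorphism $I\colon\mR^{p,q}\to P^{p,q}M$ provided by \cref{sR-to-P}, which is available precisely when $p+q\leq n$. Indeed, \cref{sR-to-P} gives $\nablas^\ast\omega = I(\db^\ast\tilde\omega)$ and $\onablas^\ast\omega = I(\dbbar^\ast\tilde\omega)$ for $\omega\in P^{p,q}M$ and $\tilde\omega:=I^{-1}(\omega)\in\mR^{p,q}$. Since both $\db^\ast$ and $\dbbar^\ast$ lower total degree by one, the outputs $\db^\ast\tilde\omega\in\mR^{p-1,q}$ and $\dbbar^\ast\tilde\omega\in\mR^{p,q-1}$ still satisfy $(p-1)+q\leq n$ and $p+(q-1)\leq n$, so \cref{sR-to-P} applies a second time to these outputs. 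Consequently
\begin{align*}
(\nablas^\ast)^2\omega &= I\bigl((\db^\ast)^2\tilde\omega\bigr), \\
(\onablas^\ast)^2\omega &= I\bigl((\dbbar^\ast)^2\tilde\omega\bigr), \\
(\nablas^\ast\onablas^\ast + \onablas^\ast\nablas^\ast)\omega &= I\bigl((\db^\ast\dbbar^\ast + \dbbar^\ast\db^\ast)\tilde\omega\bigr).
\end{align*}

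By \cref{dual-justification}, $(\db^\ast)^2=0$ and $(\dbbar^\ast)^2=0$ unconditionally, and $\db^\ast\dbbar^\ast + \dbbar^\ast\db^\ast = 0$ on $\mR^{p,q}$ whenever $p+q\notin\{n+1,n+2\}$ — a hypothesis automatically satisfied under our assumption $p+q\leq n$. Since $I$ is a linear isomorphism, all three identities on the right transfer to the corresponding identities on $P^{p,q}M$. No real obstacle is anticipated; the result is a direct corollary, since the substantive content (the behavior of the dual bigraded complex in the middle degrees) has already been absorbed into \cref{dual-justification}, and the degree restriction $p+q\leq n$ is precisely what guarantees both the availability of $I$ and the avoidance of the exceptional degrees $n+1, n+2$.
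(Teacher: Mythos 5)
Your proposal is correct and is essentially the paper's own proof: the paper derives this corollary directly from \cref{dual-justification,sR-to-P}, i.e.\ by transporting the identities of the dual bigraded complex through the isomorphism $I$, exactly as you do. Your additional checks — that the outputs of $\db^\ast$ and $\dbbar^\ast$ stay in total degree $\leq n$ so that $I$ applies again, and that $p+q\leq n$ avoids the exceptional degrees $n+1,n+2$ — are the right details to verify and are implicit in the paper's one-line argument.
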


\begin{proof}
 This follows immediately from \cref{dual-justification,sR-to-P}.
\end{proof}

Third, we compute the commutator of $\onablas^\ast$ and the Lefschetz operator.

\begin{lemma}
 \label{onablas-simple-identities}
 Let $(M^{2n+1},T^{1,0},\theta)$ be a pseudohermitian manifold.
 Then
 \begin{align}
  \label{eqn:onablasonablah} \onablas^\ast( d\theta \wedge \omega ) & = d\theta \wedge \onablas^\ast\omega + i\nablas\omega , \\
  \label{eqn:nablasnablah} \nablas^\ast( d\theta \wedge \omega ) & = d\theta \wedge \nablas^\ast\omega - i\onablas\omega 
 \end{align}
 for all $\omega\in\Omega^{p,q}M$.
\end{lemma}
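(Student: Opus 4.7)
The strategy is to prove Equation~\eqref{eqn:onablasonablah} by direct computation in an admissible coframe and then deduce Equation~\eqref{eqn:nablasnablah} by complex conjugation. Both identities ultimately rest on the fact that the Tanaka--Webster connection preserves the Levi form, so that $\nabla^{\bar\nu}$ commutes with multiplication by $h_{\mu\bar\sigma}$ and hence with wedging by $d\theta = ih_{\mu\bar\sigma}\theta^\mu\wedge\theta^{\bar\sigma}$. The error terms $i\nablas\omega$ and $-i\onablas\omega$ arise only from the single contraction in which the index being divergenced is the one supplied by $d\theta$ itself.

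To prove \eqref{eqn:onablasonablah}, I would write $\omega = \frac{1}{p!q!}\omega_{\Alpha\bar\Beta}\,\theta^\Alpha\wedge\theta^{\bar\Beta}$ and expand
\[ d\theta\wedge\omega = \frac{(-1)^p i}{p!q!}h_{\mu\bar\nu}\omega_{\Alpha\bar\Beta}\,\theta^{\mu\Alpha}\wedge\theta^{\bar\nu\bar\Beta}, \]
the sign $(-1)^p$ arising from commuting $\theta^{\bar\nu}$ past the $p$ one-forms of $\theta^\Alpha$. Applying Equation~\eqref{eqn:onablasast} (with $(p,q)$ replaced by $(p+1,q+1)$) contracts $\nabla^{\bar\rho}$ against one of the $q+1$ barred slots of $d\theta\wedge\omega$. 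Using the skew-symmetry of $\theta^{\bar\nu\bar\Beta}$, these contributions split into two cases. When $\bar\rho$ is contracted with the $\bar\nu$ that came from $d\theta$, the identity $\nabla^{\bar\rho}h_{\mu\bar\rho} = 0$ lets the derivative pass through the Levi form factor, and $h_{\mu\bar\rho}\nabla^{\bar\rho}=\nabla_\mu$ reassembles the result as $i\nablas\omega$. When $\bar\rho$ is contracted with an index from $\bar\Beta$, the $q$ such contributions reassemble, by the same fact $\nabla^{\bar\rho}h_{\mu\bar\nu}=0$, as $d\theta\wedge\onablas^\ast\omega$.

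For \eqref{eqn:nablasnablah}, I would apply \eqref{eqn:onablasonablah} to $\overline{\omega'}\in\Omega^{q,p}M$ for an arbitrary $\omega'\in\Omega^{p,q}M$ and conjugate the resulting identity, using $\overline{d\theta}=d\theta$, $\overline{\onablas^\ast\eta}=\nablas^\ast\overline{\eta}$, $\overline{\nablas\eta}=\onablas\overline{\eta}$, and $\overline{i}=-i$; the last of these produces the sign change between the two identities. The main technical obstacle is the careful bookkeeping of factorials and signs under skew-symmetrization: one must verify that the combinatorial factor $q+1$ arising from the number of ways to distribute $\bar\rho$, together with the ratios $\tfrac{1}{(p+1)!(q+1)!}$ versus $\tfrac{1}{p!q!}$, conspire to produce the correct normalizations $\tfrac{1}{p!q!}$ for $i\nablas\omega$ and $\tfrac{1}{p!(q-1)!}$ for $d\theta\wedge\onablas^\ast\omega$.
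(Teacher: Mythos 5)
Your proposal is correct and follows essentially the same route as the paper: a direct coordinate computation of $\onablas^\ast(d\theta\wedge\omega)$ using the formula for $\onablas^\ast$ from \cref{nablab-and-nablas-adjoints}, with the two terms arising exactly as you describe, followed by conjugation to obtain \eqref{eqn:nablasnablah}.
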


\begin{proof}
 Let $\omega=\frac{1}{p!q!}\omega_{\Alpha\bar\Beta}\,\theta^{\Alpha}\wedge\theta^{\bar\Beta}\in\Omega^{p,q}M$.
 \Cref{nablab-and-nablas-adjoints} implies that
 \begin{equation*}
  \onablas^\ast( d\theta \wedge \omega) = \frac{i}{p!q!}\left( \nabla_\alpha\omega_{\Alpha\bar\Beta} - qh_{\alpha\bar\beta}\nabla^{\bar\nu}\omega_{\Alpha\bar\nu\bar\Beta^\prime} \right)\,\theta^{\alpha\Alpha}\wedge\theta^{\bar\Beta} ,
 \end{equation*}
 which is equivalent to Equation~\eqref{eqn:onablasonablah}.
 Equation~\eqref{eqn:nablasnablah} follows by conjugation.
\end{proof}

Fourth, we derive a Weitzenb\"ock-type formula for $\Boxs$.

\begin{lemma}
 \label{nablas-weitzenbock}
 Let $(M^{2n+1},T^{1,0},\theta)$ be a pseudohermitian manifold.
 Then
 \[ \Boxs\omega = \frac{q}{n}\nablab^\ast\nablab\omega + \frac{n-q}{n}\nablabbar^\ast\nablabbar\omega - R\hash\ohash\omega - \frac{q}{n}\Ric\hash\omega - \frac{n-q}{n}\Ric\ohash\omega \]
 for all $\omega\in\Omega^{p,q}M$.
\end{lemma}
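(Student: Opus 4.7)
The identity to prove is pointwise, so I would fix an admissible coframe $\{\theta^\alpha\}$ and argue in components. Writing $\omega=\tfrac{1}{p!q!}\omega_{\Alpha\bar\Beta}\,\theta^\Alpha\wedge\theta^{\bar\Beta}$, the plan is to expand both $\onablas^\ast\onablas\omega$ and $\onablas\onablas^\ast\omega$ using \cref{defn:nablas} and \cref{nablab-and-nablas-adjoints}, and then combine the two terms.

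First I would unpack the $(q{+}1)$-fold antisymmetrization in $\onablas\omega=\tfrac{(-1)^p(q+1)}{p!(q+1)!}\nabla_{[\bar\beta}\omega_{|\Alpha|\bar\Beta]}\,\theta^\Alpha\wedge\theta^{\bar\beta\bar\Beta}$ and apply Equation~\eqref{eqn:onablasast}; this yields $\onablas^\ast\onablas\omega$ as a sum in which one distinguished term has $\bar\nu$ differentiating the same slot it fills, contributing $\nabla^{\bar\nu}\nabla_{\bar\nu}\omega_{\Alpha\bar\Beta}$, while the remaining $q$ terms are "crossed" in the sense that $\nabla^{\bar\nu}$ meets a $\bar\beta_j$-slot of $\omega$. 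An analogous expansion applies to $\onablas\onablas^\ast\omega$, with the two derivatives in the opposite order. In the diagonal terms, using \eqref{eqn:rough-laplacian-bar}, these produce a coefficient of $\nablabbar^\ast\nablabbar\omega$ that I would then rearrange into the claimed $(n-q)/n$ multiple plus a leftover contribution to be absorbed below.

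Next, the crossed terms on the two sides combine into commutators $[\nabla_{\bar\beta},\nabla^{\bar\nu}]$ acting on $\omega_{\Alpha\bar\Beta}$. Writing $\nabla^{\bar\nu}=h^{\alpha\bar\nu}\nabla_\alpha$ and invoking \cref{commutators} (extended to $(p,q)$-forms by the Leibniz rule), each commutator contributes three pieces: a Kronecker-$\delta$ factor times $\nabla_0$, a curvature action of the form $R_{\alpha\bar\beta\gamma}{}^\mu$ on an unbarred slot, and a curvature action of the form $R_{\alpha\bar\beta}{}^{\bar\nu}{}_{\bar\sigma}$ on a barred slot. Using the symmetries $R_{\alpha\bar\beta\gamma\bar\sigma}=R_{\gamma\bar\beta\alpha\bar\sigma}=R_{\alpha\bar\sigma\gamma\bar\beta}$ recorded after \eqref{eqn:curvature-form}, together with the trace identity $R_{\alpha\bar\beta\mu}{}^\mu=R_{\alpha\bar\beta}$, I would identify the surviving curvature contractions with the operators in \cref{defn:curvature-actions}: the unbarred slot contractions assemble into $-R\hash\ohash\omega-\tfrac{q}{n}\Ric\hash\omega$ and the barred slot contractions (after one application of the symmetries) into the $-\tfrac{n-q}{n}\Ric\ohash\omega$ piece.

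Finally, the $\nabla_0$ terms that survive must be converted into the $\nablab^\ast\nablab$ contribution by means of the scalar identity $\nabla^\mu\nabla_\mu-\nabla^{\bar\nu}\nabla_{\bar\nu}=-in\nabla_0$ (and its $(p,q)$-form upgrade, whose correction terms are again curvature and absorbed into the curvature operators of the preceding step). This replacement is precisely what produces the asymmetric weights $q/n$ and $(n-q)/n$, whose sum is $1$, converting a multiple of $\nabla_0$-terms into the combination $\tfrac{q}{n}\bigl(\nablab^\ast\nablab-\nablabbar^\ast\nablabbar\bigr)\omega$. Adding this to the diagonal $\nablabbar^\ast\nablabbar$ contribution from the first step gives exactly the right-hand side of the claimed Weitzenböck identity.

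The main obstacle is the bookkeeping: keeping track of the combinatorial factors from the $(q{+}1)$-fold antisymmetrizations, of the signs produced by moving indices past $p$ unbarred slots, and of the normalizations in \cref{defn:curvature-actions}. In particular, the appearance of the asymmetric weights $q/n$ and $(n-q)/n$ rather than symmetric coefficients requires the careful application of the scalar identity $[\nabla_\alpha,\nabla^\alpha]=-in\nabla_0$, and the verification that all residual torsion terms cancel — which is forced by the fact that $\onablas$ involves only barred covariant derivatives, hence no explicit pseudohermitian torsion enters the commutator $[\nabla_{\bar\beta},\nabla_{\bar\sigma}]$ in a way that survives.
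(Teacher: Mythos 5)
Your proposal is correct and follows essentially the same route as the paper: expand $\onablas^\ast\onablas$ and $\onablas\onablas^\ast$ in components via \cref{nablab-and-nablas-adjoints}, combine the crossed terms into mixed commutators using \cref{commutators} (which indeed involve no torsion), and then trade the resulting $-qi\nabla_0$ term for $\tfrac{q}{n}(\nablab^\ast\nablab-\nablabbar^\ast\nablabbar)$ plus Ricci corrections via the identity $\nablabbar^\ast\nablabbar-\nablab^\ast\nablab = ni\nabla_0-\Ric\hash+\Ric\ohash$. This is exactly the paper's two-step argument (Equations~\eqref{eqn:Boxs-with-0} and~\eqref{eqn:nabla0-fancy-commutator}), so only the combinatorial bookkeeping you flag remains to be written out.
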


\begin{proof}
 Let $\omega=\frac{1}{p!q!}\omega_{\Alpha\bar\Beta}\,\theta^{\Alpha}\wedge\theta^{\bar\Beta}\in\Omega^{p,q}M$.
 \Cref{nablab-and-nablas-adjoints} implies that
 \begin{align}
  \label{eqn:onablasonablasast} \onablas\onablas^\ast\omega & = -\frac{1}{p!(q-1)!}\nabla_{\bar\beta}\nabla^{\bar\nu}\omega_{\Alpha\bar\nu\bar\Beta^\prime}\,\theta^{\Alpha}\wedge\theta^{\bar\Beta} , \\
  \label{eqn:onablasastonablas} \onablas^\ast\onablas\omega & = -\frac{1}{p!q!}\left(\nabla^{\bar\nu}\nabla_{\bar\nu}\omega_{\Alpha\bar\Beta} - q\nabla^{\bar\nu}\nabla_{\bar\beta}\omega_{\Alpha\bar\nu\bar\Beta^\prime} \right)\,\theta^{\Alpha}\wedge\theta^{\bar\Beta} .
 \end{align}
 On the one hand, using \cref{commutators} to add Equations~\eqref{eqn:onablasonablasast} and~\eqref{eqn:onablasastonablas} yields
 \begin{equation}
  \label{eqn:Boxs-with-0}
  \Boxs\omega = \nablabbar^\ast\nablabbar\omega - qi\nabla_0\omega - R\hash\ohash\omega - \Ric\ohash\omega .
 \end{equation}
 On the other hand, \cref{commutators,nablab-and-nablas-adjoints} imply that
 \begin{equation}
  \label{eqn:nabla0-fancy-commutator}
  \nablabbar^\ast\nablabbar\omega - \nablab^\ast\nablab\omega = ni\nabla_0\omega - \Ric\hash\omega + \Ric\ohash\omega .
 \end{equation}
 Combining Equations~\eqref{eqn:Boxs-with-0} and~\eqref{eqn:nabla0-fancy-commutator} yields the desired conclusion.
\end{proof}

Fifth, we derive a formula for $\onablas\nablas^\ast + \nablas^\ast\onablas \colon\Omega^{p,q}M \to \Omega^{p-1,q+1}M$.

\begin{lemma}
 \label{slash-laplacian-add-one-to-q}
 Let $(M^{2n+1},T^{1,0},\theta)$ be a pseudohermitian manifold.
 Then
 \begin{align}
  \label{eqn:slash-laplacian-down} \onablas\nablas^\ast\omega + \nablas^\ast\onablas\omega & = (n-p-q)iA\ohash\omega , \\
  \label{eqn:slash-laplacian-up} \nablas\onablas^\ast\omega + \onablas^\ast\nablas\omega & = -(n-p-q)iA\hash\omega
 \end{align}
 for all $\omega\in\Omega^{p,q}M$.
\end{lemma}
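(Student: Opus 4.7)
The plan is to establish \eqref{eqn:slash-laplacian-down} by a direct coordinate computation; \eqref{eqn:slash-laplacian-up} then follows by complex conjugation, using $\overline{\nablas\omega}=\onablas\oomega$, $\overline{\nablas^\ast\omega}=\onablas^\ast\oomega$, and $\overline{A\ohash\omega}=A\hash\oomega$ (verified directly from \cref{defn:curvature-actions}), so that $\overline{iA\ohash\omega}=-iA\hash\oomega$.

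For $\omega=\frac{1}{p!q!}\omega_{\Alpha\bar\Beta}\,\theta^\Alpha\wedge\theta^{\bar\Beta}\in\Omega^{p,q}M$, the explicit formulas from \cref{defn:nablas,nablab-and-nablas-adjoints} give expressions for $\onablas\nablas^\ast\omega$ and $\nablas^\ast\onablas\omega$ as elements of $\Omega^{p-1,q+1}M$. The essential bookkeeping step is: by rewriting $\nablas^\ast\onablas\omega$ relative to the canonical $\frac{1}{p!(q+1)!}$-normalization of $\onablas\omega$ and then using the $(q+1)$-fold antisymmetry of the output wedge together with antisymmetry of $\omega$ in its barred indices, the sum consolidates into the single commutator form
\begin{equation*}
 (\onablas\nablas^\ast + \nablas^\ast\onablas)\omega = -\frac{(-1)^p}{(p-1)!q!}[\nabla_{\bar\beta},\nabla^\mu]\omega_{\mu\Alpha^\prime\bar\Beta}\,\theta^{\Alpha^\prime}\wedge\theta^{\bar\beta\bar\Beta}.
\end{equation*}
Since $\nabla^\mu=h^{\mu\bar\sigma}\nabla_{\bar\sigma}$, the bracket equals $h^{\mu\bar\sigma}[\nabla_{\bar\sigma},\nabla_{\bar\beta}]\omega_{\mu\Alpha^\prime\bar\Beta}$. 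By \cref{commutators} and its conjugate for lower barred indices, this commutator of two antiholomorphic derivatives has no curvature or Reeb-derivative contributions; it is purely pseudohermitian torsion.

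Applying this commutator index-by-index to $\omega_{\mu\Alpha^\prime\bar\Beta}$ and contracting with $h^{\mu\bar\sigma}$ yields three types of contributions. The trace index $\mu$ gives $(-n+1)iA_{\bar\beta}{}^\nu\omega_{\nu\Alpha^\prime\bar\Beta}$, using $h^{\mu\bar\sigma}h_{\mu\bar\sigma}=n$ and $h^{\mu\bar\sigma}h_{\mu\bar\beta}=\delta^{\bar\sigma}_{\bar\beta}$. Each of the $p-1$ free unbarred indices $\alpha_k$ contributes $+iA_{\bar\beta}{}^\nu\omega_{\nu\Alpha^\prime\bar\Beta}$: one piece uses $h^{\mu\bar\sigma}h_{\alpha_k\bar\sigma}=\delta^\mu_{\alpha_k}$ followed by a single sign swap from antisymmetry of $\omega$ in its unbarred slots, while the other vanishes because it pairs the symmetric tensor $A^{\mu\nu}:=h^{\mu\bar\sigma}h^{\nu\bar\rho}A_{\bar\sigma\bar\rho}$ against the $\mu,\nu$-antisymmetric $\omega$. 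Each of the $q$ free barred indices $\bar\beta_l$ contributes $+iA_{\bar\beta}{}^\nu\omega_{\nu\Alpha^\prime\bar\Beta}$ after using the antisymmetry of $\theta^{\bar\beta\bar\Beta}$ in all $q+1$ barred slots to recast $h^{\mu\bar\sigma}A_{\bar\sigma\bar\beta_l}\omega_{\mu\Alpha^\prime\bar\Beta[\bar\beta_l\to\bar\beta]}$ into canonical form, while the $A_{\bar\beta\bar\beta_l}$-piece vanishes by symmetric--antisymmetric pairing in $(\bar\beta,\bar\beta_l)$. Summing gives $[-(n-1)+(p-1)+q]iA_{\bar\beta}{}^\nu\omega_{\nu\Alpha^\prime\bar\Beta}=-(n-p-q)iA_{\bar\beta}{}^\nu\omega_{\nu\Alpha^\prime\bar\Beta}$, which exactly matches the coefficient of $\theta^{\Alpha^\prime}\wedge\theta^{\bar\beta\bar\Beta}$ in $(n-p-q)iA\ohash\omega$ via \cref{defn:curvature-actions}. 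The main obstacle is the combinatorial consolidation into a single commutator expression in the first step, and within the last step the careful tracking of signs when rearranging $h^{\mu\bar\sigma}A_{\bar\sigma\bar\beta_l}\omega_{\mu\Alpha^\prime\bar\Beta[\bar\beta_l\to\bar\beta]}$ into the canonical form using antisymmetries of $\omega$ and of the output wedge.
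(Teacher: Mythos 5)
Your proof is correct and follows essentially the same route as the paper's: both compute $\onablas\nablas^\ast\omega$ and $\nablas^\ast\onablas\omega$ from \cref{nablab-and-nablas-adjoints}, combine them into a single commutator, evaluate it index-by-index with \cref{commutators}, and obtain \eqref{eqn:slash-laplacian-up} by conjugation. One cosmetic caution: since $\onablas\nablas^\ast\omega = +\tfrac{(-1)^p}{(p-1)!q!}\nabla_{\bar\beta}\nabla^\mu\omega_{\mu\Alpha^\prime\bar\Beta}\,\theta^{\Alpha^\prime}\wedge\theta^{\bar\beta\bar\Beta}$ while $\nablas^\ast\onablas\omega = -\tfrac{(-1)^p}{(p-1)!q!}\nabla^\mu\nabla_{\bar\beta}\omega_{\mu\Alpha^\prime\bar\Beta}\,\theta^{\Alpha^\prime}\wedge\theta^{\bar\beta\bar\Beta}$, your consolidated display should carry $+[\nabla_{\bar\beta},\nabla^\mu]$ rather than $-[\nabla_{\bar\beta},\nabla^\mu]$; this slip is exactly cancelled by your subsequent (also sign-reversed) identification $[\nabla_{\bar\beta},\nabla^\mu]=h^{\mu\bar\sigma}[\nabla_{\bar\sigma},\nabla_{\bar\beta}]$, so the net expression you actually evaluate --- and hence the final coefficient $(n-p-q)i$ --- comes out correct.
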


\begin{proof}
 Let $\omega=\frac{1}{p!q!}\omega_{\Alpha\bar\Beta}\,\theta^{\Alpha}\wedge\theta^{\bar\Beta}\in\Omega^{p,q}M$.
 \Cref{nablab-and-nablas-adjoints} implies that
 \begin{align*}
  \onablas\nablas^\ast \omega & = \frac{(-1)^p}{(p-1)!q!}\nabla_{\bar\beta}\nabla^\mu\omega_{\mu\Alpha^\prime\bar\Beta}\,\theta^{\Alpha^\prime}\wedge\theta^{\bar\beta\bar\Beta} , \\
  \nablas^\ast\onablas\omega & = -\frac{(-1)^p}{(p-1)!q!}\nabla^\mu\nabla_{\bar\beta}\omega_{\mu\Alpha^\prime\bar\Beta}\,\theta^{\Alpha^\prime}\wedge\theta^{\bar\beta\bar\Beta} .
 \end{align*}
 Adding these equations using \cref{commutators} yields Equation~\eqref{eqn:slash-laplacian-down}.
 Equation~\eqref{eqn:slash-laplacian-up} follows by conjugation.
\end{proof}

Finally, we derive a partial analogue of the identity $\dbbar^\ast\db^\ast + \db^\ast\dbbar^\ast=0$.

\begin{lemma}
 \label{triple-commute}
 Let $(M^{2n+1},T^{1,0},\theta)$ be a pseudohermitian manifold.
 If $\omega\in\Omega^{p,0}M$, $p\in\{0,\dotsc,n+1\}$, then
 \begin{align}
  \label{eqn:triple-down} \onablas^\ast\nablas^\ast\onablas\omega + \nablas^\ast\onablas^\ast\onablas\omega + i\nabla_0\nablas^\ast\omega & = 0 , \\
  \label{eqn:triple-up} \onablas^\ast\nablas^\ast\onablas\omega + \nablas^\ast\onablas^\ast\onablas\omega + i\nabla_0\nablas^\ast\omega & = 0 .
 \end{align}
\end{lemma}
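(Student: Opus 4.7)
The plan is to compute both sides in local coordinates, reduce the claim to a single commutator identity on tensors, and then verify that identity using the Tanaka--Webster commutator formulas from \cref{commutators} together with symmetries of the pseudohermitian curvature.

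Writing $\omega = \tfrac{1}{p!}\omega_\Alpha\,\theta^\Alpha \in \Omega^{p,0}M$ and iterating the adjoint formulas~\eqref{eqn:nablasast}, \eqref{eqn:onablasast}, and~\eqref{eqn:rough-laplacian-bar} from \cref{nablab-and-nablas-adjoints}, one obtains
\[ \onablas^\ast\nablas^\ast\onablas\omega = -\tfrac{1}{(p-1)!}\nabla^{\bar\nu}\nabla^\mu\nabla_{\bar\nu}\omega_{\mu\Alpha^\prime}\,\theta^{\Alpha^\prime}, \qquad \nablas^\ast\onablas^\ast\onablas\omega = \tfrac{1}{(p-1)!}\nabla^\mu\nabla^{\bar\nu}\nabla_{\bar\nu}\omega_{\mu\Alpha^\prime}\,\theta^{\Alpha^\prime}, \]
together with $i\nabla_0\nablas^\ast\omega = -\tfrac{i}{(p-1)!}\nabla_0\nabla^\mu\omega_{\mu\Alpha^\prime}\,\theta^{\Alpha^\prime}$. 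The sum vanishes precisely when the pointwise identity
\[ [\nabla^\mu, \nabla^{\bar\nu}]\nabla_{\bar\nu}\omega_{\mu\Alpha^\prime} = i\nabla_0\nabla^\mu\omega_{\mu\Alpha^\prime} \]
holds, so the entire proof reduces to verifying this.

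Next I would write $[\nabla^\mu, \nabla^{\bar\nu}] = h^{\mu\bar\rho}h^{\gamma\bar\nu}[\nabla_{\bar\rho}, \nabla_\gamma]$ and apply \cref{commutators} to the tensor $T_{\bar\nu\mu\Alpha^\prime} := \nabla_{\bar\nu}\omega_{\mu\Alpha^\prime}$, using the formula for $(T^{1,0})^\ast$-indices directly and its complex conjugate on the $\bar\nu$-slot. The commutator decomposes into three contributions: (a) the scalar $ih_{\gamma\bar\rho}\nabla_0$ piece, (b) the curvature action on the $(T^{0,1})^\ast$-index $\bar\nu$, and (c) the curvature action on the $p$ indices $\mu\Alpha^\prime$. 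Contracting (a) with $h^{\mu\bar\rho}h^{\gamma\bar\nu}$ produces exactly $i\nabla_0\nabla^\mu\omega_{\mu\Alpha^\prime}$, so the task reduces to showing that (b) and (c) together vanish.

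Within (c), the contributions at the $p-1$ slots inside $\Alpha^\prime$ each vanish by an antisymmetry argument: using the K\"ahler-type symmetry $R_{\alpha\bar\beta\gamma\bar\sigma} = R_{\gamma\bar\beta\alpha\bar\sigma}$, which follows from the Bianchi identity~\eqref{eqn:bianchi}, the corresponding contraction becomes symmetric in the contracted pair $(\mu, \delta)$, while $\omega_{\mu\Alpha^\prime(i \to \delta)}$ is antisymmetric in those indices. The remaining slot in (c) (at position $\mu$) collapses via the Ricci contraction $R_{\gamma\bar\rho\mu}{}^\mu = R_{\gamma\bar\rho}$ to a term of the form $R^{\bar\nu\delta}\nabla_{\bar\nu}\omega_{\delta\Alpha^\prime}$. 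A parallel computation for (b), again invoking K\"ahler symmetry and the Ricci trace, produces $-R^{\bar\delta\mu}\nabla_{\bar\delta}\omega_{\mu\Alpha^\prime}$; hermiticity of the Ricci tensor forces these two terms to cancel after relabeling summation indices, completing the proof. The main obstacle is the index-bookkeeping in (c): one must carefully match index types and curvature symmetries to confirm that the $\Alpha^\prime$-slot contributions really are a symmetric-against-antisymmetric pairing, and that the $\mu$-slot contraction produces exactly the Ricci term needed to cancel (b). The second displayed identity is printed identically to the first in the statement, so it requires no separate argument; should it instead be intended as the conjugate identity on $\Omega^{0,q}M$, it follows immediately by applying \cref{conjugate-rumin-bundles,conjugate-rumin-operators} to the first.
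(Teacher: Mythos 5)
Your proposal is correct and follows essentially the same route as the paper: the published proof is exactly the three-line computation you describe — express $\onablas^\ast\nablas^\ast\onablas$, $\nablas^\ast\onablas^\ast\onablas$, and $i\nabla_0\nablas^\ast$ in components via \cref{nablab-and-nablas-adjoints}, commute $\nabla^{\bar\nu}$ past $\nabla^\mu$ using \cref{commutators}, and observe that only the $i\nabla_0$ term survives — with the cancellation of the curvature contributions (Ricci traces cancelling pairwise, the $\Alpha^\prime$-slot terms dying by symmetric-against-antisymmetric pairing) left implicit, whereas you spell it out. You are also right that the two displayed identities are printed identically (a typo); the paper obtains the intended second identity by conjugation, exactly as you suggest.
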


\begin{proof}
 Let $\omega=\frac{1}{p!}\omega_{\Alpha}\,\theta^{\Alpha}$.
 Using \cref{commutators,nablab-and-nablas-adjoints}, we compute that
 \begin{align*}
  \onablas^\ast\nablas^\ast\onablas\omega & = -\frac{1}{(p-1)!}\nabla^{\bar\nu}\nabla^\mu\nabla_{\bar\nu}\omega_{\mu\Alpha^\prime}\,\theta^{\Alpha^\prime} \\
  & = -\frac{1}{(p-1)!}\left( \nabla^\mu\nabla^{\bar\nu}\nabla_{\bar\nu}\omega_{\mu\Alpha^\prime} - i\nabla_0\nabla^\mu\omega_{\mu\Alpha^\prime} \right)\,\theta^{\Alpha^\prime} \\
  & = -\nablas^\ast\onablas^\ast\onablas\omega - i\nabla_0\nablas^\ast\omega . \qedhere
 \end{align*}
 Hence Equation~\eqref{eqn:triple-down} holds.
 Equation~\eqref{eqn:triple-up} follows by conjugation.
\end{proof}

\section{Folland--Stein spaces}
\label{sec:hodge/folland-stein}

In this section we collect some necessary background about Folland--Stein spaces on closed, strictly pseudoconvex, pseudohermitian manifolds.

Let $(M^{2n+1},T^{1,0},\theta)$ be a closed, strictly pseudoconvex manifold and let $E$ be a \defn{CR tensor bundle (over $M$)}; i.e.\ a subbundle of $\bigoplus_{a,b \in \bN_0} (T^{1,0})^{\otimes a} \otimes (T^{0,1})^{\otimes b}$.
Let $( \cdot , \cdot )$ denote the $L^2$-inner product on the space $\Gamma(E)$ of smooth sections of $E$ induced by the Levi form (cf.\ \eqref{eqn:Omega-inner-product}).
Let $\nablab$ and $\nablabbar$ and $\nabla_0$ be the obvious components of the Tanaka--Webster connection on $\Gamma(E)$, and let $\nablab^\ast$ and $\nablabbar^\ast$ and $\nabla_0^\ast$ denote their respective adjoints.
Given $\alpha \in \bC$, we define the \defn{Folland--Stein operator} $\mL_\alpha \colon \Gamma(E) \to \Gamma(E)$ by
\begin{equation*}
 \mL_\alpha := \nablab^\ast\nablab + \nablabbar^\ast\nablabbar + i\alpha\nabla_0 .
\end{equation*}
Given $s \in \bZ$, the \defn{Folland--Stein space $S^{s}(M;E)$} is the Hilbert space obtained as the completion of $\Gamma(E)$ with respect to the inner product
\begin{equation}
 \label{eqn:fs-inner-product}
 ( \omega, \tau )_s := ( \omega , (1 + \mL_0)^s\tau ) .
\end{equation}
It is known~\cites{Ponge2008,ChengLee1990} that these spaces are equivalent to the original spaces introduced by Folland and Stein~\cite{FollandStein1974} when $s \in \bN_0$.
When the CR tensor bundle $E$ is clear from context, we shall simply write $S^{s}(M)$ for the corresponding Folland--Stein space;
when $E$ is the vector bundle whose sections are the space $P^{p,q}$ of primitive $(p,q)$-forms, we sometimes write the hermitian inner product as $\llp \cdot , \cdot \rrp$ to emphasize the equivalence with~\eqref{eqn:sR-inner-product}.
In all cases, we denote the squared norm on $S^s(M)$ by
\begin{equation*}
 \lV \omega \rV_{s}^2 := ( \omega , \omega )_s .
\end{equation*}

We first record some basic mapping properties involving Folland--Stein spaces.

\begin{lemma}
 \label{embeddings}
 Let $(M^{2n+1},T^{1,0},\theta)$ be a closed, strictly pseudoconvex manifold and let $E$ be a CR tensor bundle.
 Then
 \begin{enumerate}
  \item $\Gamma(E) = \bigcap_{s\in\bZ} S^{s}(M;E)$;
  \item if $s,t \in \bN_0$ are such that $s < t$, then the inclusion map $S^{t}(M) \hookrightarrow S^{s}(M)$ is a compact operator; and
  \item if $r < s < t$ are integers and $\varepsilon>0$, then there is a constant $C>0$ such that
  \begin{equation*}
   \lV \omega \rV_s \leq \varepsilon \lV \omega \rV_t + C \lV \omega \rV_r
  \end{equation*}
  for all $\omega \in S^t(M)$.
 \end{enumerate}
 Moreover, in the case $E = \Lambda^{p,q}$, it holds that
 \begin{enumerate}
  \setcounter{enumi}{3}
  \item $\hodge \colon S^s(M) \to S^s(M)$ is an isomorphism; and
  \item conjugation induces an isomorphism $S^s(M;\Lambda^{p,q}) \cong S^s(M;\Lambda^{q,p})$.
 \end{enumerate}
\end{lemma}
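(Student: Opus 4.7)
The strategy is to build everything from the spectral theory of the Folland--Stein operator $\mL_0 = \nablab^\ast\nablab + \nablabbar^\ast\nablabbar$ on $L^2(M;E)$. The key underlying facts, all classical, are: $\mL_0$ is nonnegative and essentially self-adjoint; it is maximally hypoelliptic as a Heisenberg pseudodifferential operator of order $2$, as discussed in Folland--Stein and in the cited references on Heisenberg calculus; and $(1+\mL_0)^{-1}$ is therefore a compact operator on $L^2$. Combined with hypoellipticity, this yields a Hilbert basis $\{\varphi_j\}$ of $L^2(M;E)$ of smooth eigenvectors with eigenvalues $0 \leq \lambda_j \nearrow \infty$. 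Under the identification $\omega \leftrightarrow (a_j)$ with $a_j := (\omega,\varphi_j)$, the norm on $S^s(M;E)$ becomes $\lV \omega \rV_s^2 = \sum_j (1+\lambda_j)^s \lv a_j \rv^2$, which is the standard weighted $\ell^2$ description.

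From this representation, parts (i)--(iii) are essentially formal. For (i), the inclusion $\Gamma(E) \subset S^s(M;E)$ holds for all $s$ since $(1+\mL_0)^s$ preserves $\Gamma(E)$; conversely, if $\omega \in \bigcap_s S^s(M;E)$, then $\mL_0^k \omega \in L^2$ for all $k$, so maximal hypoellipticity of $\mL_0$ (iteratively applied) gives $\omega \in \Gamma(E)$. For (ii), the embedding $S^t \hookrightarrow S^s$, $t > s$, is the diagonal operator with entries $(1+\lambda_j)^{(s-t)/2} \to 0$, hence compact. For (iii), the scalar inequality $(1+\lambda)^s \leq \varepsilon(1+\lambda)^t + C_\varepsilon (1+\lambda)^r$ for $r < s < t$ (valid for some $C_\varepsilon>0$ depending on $\varepsilon, r, s, t$) transfers term-by-term to the spectral expansion.

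For (iv) and (v), I would argue that both $\hodge$ and complex conjugation are parallel bundle isomorphisms and so intertwine $\mL_0$ on the respective CR tensor bundles, giving an isometry of Folland--Stein inner products of all orders. For conjugation this is immediate: the Tanaka--Webster connection is real, so $\overline{\nablab\omega} = \nablabbar\oomega$ and $\overline{\nablabbar\omega} = \nablab\oomega$, whence $\overline{\mL_0\omega} = \mL_0\oomega$, yielding (v). For $\hodge$, the pointwise operators $\hodge \colon \Lambda^{p,q} \to \Lambda^{n-q,n-p}$ (or $\Lambda^{n+1-q,n-p}$, $\Lambda^{n-q,n+1-p}$ as appropriate) are built out of the Levi form and the volume form $\tfrac{1}{n!}\theta \wedge d\theta^n$, both parallel for $\nabla$; hence $\hodge$ commutes with $\nablab$, $\nablabbar$, and $\nabla_0$, and therefore with $\mL_0$ on the corresponding bundles. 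Combined with $\hodge^2 = \id$ (\cref{hodge-star-squared}), this gives (iv).

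The only real obstacle is the Heisenberg-calculus input: the maximal hypoellipticity of $\mL_0$, which underlies both the smoothness statement in (i) and (through compactness of $(1+\mL_0)^{-1}$) the compactness in (ii). Everything else reduces to either spectral bookkeeping or to the parallelism of the Levi form and the pseudohermitian volume form. Accordingly, I would present the proof as a short reduction to the standard mapping properties of the Folland--Stein operator recorded in the references cited at the start of this section, invoking hypoellipticity as a black box rather than re-deriving it.
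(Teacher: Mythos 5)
Your proof is correct, but it takes a different route from the paper's. You diagonalize $\mL_0$ (compact resolvent plus hypoellipticity giving a Hilbert basis of smooth eigenfunctions with $\lambda_j \nearrow \infty$), identify $S^s(M;E)$ with a weighted $\ell^2$ space, and then read off (i)--(iii) as sequence-space facts; in particular your (iii) is a direct pointwise inequality $(1+\lambda)^s \leq \varepsilon(1+\lambda)^t + C_\varepsilon(1+\lambda)^r$ with an explicit constant. The paper instead quotes the continuous embeddings $W^{s,2}(M;E) \hookrightarrow S^s(M;E) \hookrightarrow W^{s/2,2}(M;E)$ from Ponge and Cheng--Lee, deduces (i) from the right-hand embedding together with the ordinary Sobolev embedding theorem, gets (ii) from Rellich compactness after shifting by a power of $(1+\mL_0)$, and obtains (iii) from compactness by the standard Ehrling-type argument; (iv) and (v) are proved exactly as you do, from parallelism of $\theta \wedge d\theta^n$ and the fact that conjugation is a fiberwise isomorphism intertwining $\nablab$ and $\nablabbar$. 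The trade-off is minor: your spectral argument is more self-contained once compactness of $(1+\mL_0)^{-1}$ is granted, but that compactness (and the smoothness step at the end of your (i), passing from all Heisenberg derivatives in $L^2$ to $C^\infty$) is itself normally established via the very comparison with ordinary Sobolev spaces that the paper cites, so the two proofs rest on the same analytic input.
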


\begin{proof}
 Denote by $W^{\ell,2}(M;E)$ the usual fractional Sobolev space defined with respect to some fixed Riemannian metric on $M$.
 It is known (cf.\ \citelist{ \cite{Ponge2008}*{Proposition~5.5.7} \cite{ChengLee1990}*{Lemma~4.1} }) that if $s \geq 0$, then the embeddings
 \begin{equation*}
  W^{s,2}(M;E) \hookrightarrow S^s(M;E) \hookrightarrow W^{s/2,2}(M;E)
 \end{equation*}
 are continuous.
 
 Statement~(i) follows immediately from the embedding $S^s(M;E) \hookrightarrow W^{s/2}(M;E)$ when $s\geq0$.
 
 Let $s>0$.
 The embedding $S^s(M;E) \hookrightarrow W^{s/2}(M;E)$ implies that the inclusion $S^s(M;E) \hookrightarrow S^0(M;E)$ is compact.
 Now let $s<t$ be integers and let $( \omega_j )_{j \in \bN}$ be a bounded sequence in $S^t(M)$.
 Then $\bigl( (1 + \mL_0)^{-s/2}\omega_j \bigr)_{j\in\bN}$ has a convergent subsequence.
 Statement~(ii) readily follows.
 
 Statement~(iii) follows easily from the compactness of $S^t(M) \hookrightarrow S^s(M) \hookrightarrow S^r(M)$.
 
 Statement~(iv) follows from the fact that $\theta \wedge d\theta^n$ is parallel with respect to the Tanaka--Webster connection.
 
 Statement~(v) follows from the fact that conjugation is a fiberwise isomorphism.
\end{proof}

Given two CR tensor bundles $E$ and $F$, a \defn{differential operator} is a linear operator $D \colon \Gamma(E) \to \Gamma(F)$ of the form
\begin{equation*}
 D\omega = \sum_{j=0}^k \sum_{r+s+2t = j} \lcpcontr \left( A \otimes \nablab^r \nablabbar^s\nabla_0^t \omega \right)
\end{equation*}
where $\lcpcontr ( A \otimes \nablab^r \nablabbar^s\nabla_0^t \omega )$ denotes a linear combinations of partial contractions of tensors of the form $A \otimes \nablab^r \nablabbar^s \nabla_0^t \omega$.
If $k > 0$, then we require that the sum $\sum_{r+s+2t=k} \lcpcontr ( A \otimes \nablab^r \nablabbar^s \nabla_0^t \omega )$ is nonzero;
in this case we call $k$ the \defn{order} of $D$.
If $k=0$, then we say that $D$ has \defn{order zero}.
We denote by \defn{$\DiffOperator{k}$} the set of differential operators of order $k$.

More generally, we denote by \defn{$\PseudoDiffOperator{k}$}, $k \in \bZ$, the set of Heisenberg pseudodifferential operators $D \colon \Gamma(E) \to \Gamma(F)$ of order $k$.
Recall that
\begin{equation*}
 \PseudoDiffOperator{-\infty} = \bigcap_{k\in\bZ} \PseudoDiffOperator{k}
\end{equation*}
is the space of \defn{smoothing operators}.
Here we only collect important properties of Heisenberg pseudodifferential operators, and refer to the monographs of Beals and Greiner~\cite{BealsGreiner1988} and Ponge~\cite{Ponge2008} for definitions and details.
Note that $\DiffOperator{k} \subset \PseudoDiffOperator{k}$.

The following lemma collects some basic properties of Heisenberg pseudodifferential operators.

\begin{lemma}
 \label{compositions-in-pseudodiffoperator}
 Let $(M^{2n+1},T^{1,0},\theta)$ be a closed, strictly pseudoconvex manifold.
 \begin{enumerate}
  \item If $D \in \PseudoDiffOperator{k}$ and $s \in \bZ$, then $D$ extends to a bounded linear map $D \colon S^{s+k}(M) \to S^s(M)$.
  \item If $D_1 \in \PseudoDiffOperator{k_1}$ and $D_2 \in \PseudoDiffOperator{k_2}$ are properly supported and the composition $D_1D_2$ makes sense, then $D_1D_2 \in \PseudoDiffOperator{k_1+k_2}$.
 \end{enumerate}
\end{lemma}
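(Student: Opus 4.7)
The plan is to treat both statements as consequences of the standard Heisenberg pseudodifferential calculus, as developed in the monographs of Beals--Greiner \cite{BealsGreiner1988} and Ponge \cite{Ponge2008}; since the paper has explicitly deferred definitions of $\PseudoDiffOperator{k}$ to those references, the argument here should be a careful citation rather than a self-contained construction. The key point is to verify that the Folland--Stein spaces $S^s(M;E)$ defined via the inner product~\eqref{eqn:fs-inner-product} coincide (up to equivalence of norms) with the Heisenberg--Sobolev spaces in the cited references, so that the boundedness and composition theorems quoted there apply directly.

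For part~(i), I would first establish the case $s+k = 0 = s$, i.e., order-zero operators on $S^0(M;E) = L^2(M;E)$. This is the content of the $L^2$-boundedness theorem for order-zero Heisenberg $\Psi$DOs \cite{Ponge2008}*{Proposition~3.2.9}, proved by localizing to coordinate charts, pulling back to the Heisenberg group $\bH^n$ via Darboux-type coordinates, and applying a Calder\'on--Vaillancourt-type argument for the Heisenberg calculus. The general case then reduces to this one: for $D \in \PseudoDiffOperator{k}$ and a parametrix $Q \in \PseudoDiffOperator{-s}$ for $(1 + \mL_0)^{s/2}$ (which exists because $\mL_0$ is a positive self-adjoint elliptic operator in the Heisenberg sense for CR tensor bundles), the composition $(1+\mL_0)^{s/2} D (1+\mL_0)^{-(s+k)/2}$ is a Heisenberg $\Psi$DO of order zero, modulo a smoothing operator whose $L^2$-boundedness is trivial. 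Here I should be slightly careful that the equivalence between the norm defined via $(1+\mL_0)^s$ and the intrinsic Heisenberg--Sobolev norm of the references holds; this is proved in \cite{Ponge2008}*{Proposition~5.5.7} using the fact that $\mL_0$ is a Rockland-type operator on the associated tangent group.

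For part~(ii), I would invoke the composition theorem for the Heisenberg calculus \cite{BealsGreiner1988}*{Theorem~14.7} (see also \cite{Ponge2008}*{Proposition~3.1.22}), which states that if $D_1 \in \PseudoDiffOperator{k_1}$ and $D_2 \in \PseudoDiffOperator{k_2}$ are properly supported, then $D_1 D_2$ is again a Heisenberg $\Psi$DO with order at most $k_1 + k_2$, obtained by the standard asymptotic expansion of symbols under the noncommutative (group) convolution on the model Heisenberg tangent bundle. The proper support hypothesis is what ensures the composition is defined on $\Gamma(E)$ and can be reduced to a local computation via partitions of unity. No new computation beyond a careful quotation is needed.

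The main (only) obstacle is a bookkeeping one: verifying that the Folland--Stein spaces and pseudodifferential classes as formulated in this paper match those of the cited monographs, since there are minor variations in the literature (e.g., the definition via $\mL_0$ versus via local Heisenberg--Sobolev norms, or the convention of extending the order parameter $s$ to $\bR$ versus $\bZ$). Once these identifications are made, both statements are immediate references. Accordingly, the written proof should be short: a single paragraph combining the citations above, together with a brief remark that the norm equivalence required to transfer the boundedness theorem from the references to the present setting is a consequence of the hypoellipticity of $\mL_0$ on CR tensor bundles, which is itself a standard application of the Rockland condition.
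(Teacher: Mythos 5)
Your proposal is correct and takes essentially the same route as the paper, which simply cites \cite{Ponge2008}*{Proposition~5.5.8} for the Sobolev boundedness and \cite{BealsGreiner1988}*{Theorem~14.1} for the composition; the norm-equivalence issue you flag is indeed the only point needing care, and the paper disposes of it separately (in \cref{embeddings}, via \cite{Ponge2008}*{Proposition~5.5.7} and \cite{ChengLee1990}). The additional scaffolding you supply (reduction to order zero, conjugation by powers of $1+\mL_0$) is sound but not needed once the citations are in place.
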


\begin{remark}
 Differential operators are always properly supported.
 More generally, Heisenberg pseudodifferential operators are properly supported modulo smoothing operators~\cite{BealsGreiner1988}*{pg.\ 83}.
\end{remark}

\begin{proof}
 Statement~(i) is~\cite{Ponge2008}*{Proposition~5.5.8}.
 
 Statement~(ii) is~\cite{BealsGreiner1988}*{Theorem~14.1}.
\end{proof}

The remainder of this section concerns necessary and sufficient conditions for a Heisenberg pseudodifferential operator to admit a parametrix.

\begin{definition}
 Let $(M^{2n+1},T^{1,0},\theta)$ be a closed, strictly pseudoconvex manifold, and let $D \in \PseudoDiffOperator{k}$, $k \in \bN$.
 A \defn{parametrix} for $D$ is a $Q \in \PseudoDiffOperator{-k}$ such that $DQ - I$ and $QD - I$ are smoothing operators.
\end{definition}

One can determine whether a Heisenberg pseudodifferential operator admits a parametrix by studying its principal symbol.

\begin{proposition}
 \label{invertibility-characterizations}
 Let $(M^{2n+1},T^{1,0},\theta)$ be a closed, strictly pseudoconvex manifold and let $E$ be a CR tensor bundle.
 Let $D \in \PseudoDiffOperator{k}$, $k \in \bN$, be an endomorphism of $\Gamma(E)$.
 The following are equivalent:
 \begin{enumerate}
  \item $D$ admits a parametrix.
  \item The principal symbol $\sigma_k(D)$ of $D$ is invertible with respect to the convolution product~\citelist{ \cite{BealsGreiner1988}*{Definition~13.10} \cite{Ponge2008}*{Proposition~3.2.8} }.
  \item $D$ and $D^\ast$ satisfy the Rockland condition~\cite{Ponge2008}*{Definition~3.3.8}.
 \end{enumerate}
 Moreover, if any of the equivalent conditions hold, then $D$ is \defn{maximally hypoelliptic}:
 For each $s \in \bZ$, there is a constant $C>0$ such that
 \begin{equation*}
  \lV \omega \rV_{s+k}^2 \leq C \left( \lV D\omega \rV_s^2 + \lV \omega \rV_s^2 \right) .
 \end{equation*}
\end{proposition}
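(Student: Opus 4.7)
The plan is to treat this as an essentially known result from the Heisenberg calculus, combining pieces of Beals--Greiner and Ponge rather than reproving anything from scratch. I would begin by fixing the language of the Heisenberg principal symbol $\sigma_k(D)$ living in a graded symbol algebra whose product is the convolution $\ast$ modeled on the tangent group of the Heisenberg structure, and recall that $D \mapsto \sigma_k(D)$ is multiplicative modulo lower-order symbols.

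For the equivalence (i) $\Leftrightarrow$ (ii), I would argue as follows. The implication (i) $\Rightarrow$ (ii) is direct: if $Q \in \PseudoDiffOperator{-k}$ is a parametrix, then applying $\sigma_0$ to $DQ - I$ and $QD - I$, both of which are smoothing and hence have vanishing principal symbol of every order, gives $\sigma_k(D) \ast \sigma_{-k}(Q) = 1 = \sigma_{-k}(Q) \ast \sigma_k(D)$. Conversely, given pointwise invertibility of $\sigma_k(D)$ with respect to $\ast$, the standard parametrix construction in the Heisenberg calculus (see~\cite{BealsGreiner1988}*{Chapter~15} and~\cite{Ponge2008}*{\S 3.1--3.2}) produces $Q_0 \in \PseudoDiffOperator{-k}$ with $\sigma_{-k}(Q_0) = \sigma_k(D)^{-1}$, followed by a Neumann-type series on the lower-order error; since $M$ is closed this can be summed asymptotically via \cref{compositions-in-pseudodiffoperator}, yielding the full parametrix $Q$.

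The equivalence (ii) $\Leftrightarrow$ (iii) is the Rockland theorem for the Heisenberg calculus, proved by Helffer--Nourrigat and recorded in the form we need in~\cite{Ponge2008}*{Theorem~3.3.9}: pointwise invertibility of the model convolution operator is equivalent to the injectivity of the induced representations of the Heisenberg group on nontrivial irreducible unitary representations, for both $D$ and $D^\ast$. I would simply invoke this, since to reproduce it would essentially require importing Helffer--Nourrigat's harmonic-analytic machinery. This is the one nontrivial input and is where the bulk of the real work lies, but it is entirely off-the-shelf.

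Finally, for the maximal hypoelliptic estimate, I would work directly from a parametrix $Q \in \PseudoDiffOperator{-k}$ with $QD = I + R$ for $R \in \PseudoDiffOperator{-\infty}$. By \cref{compositions-in-pseudodiffoperator}(i), $Q$ maps $S^s \to S^{s+k}$ boundedly and $R$ maps $S^s \to S^t$ boundedly for every $t$; hence for $\omega \in \Gamma(E)$,
\begin{equation*}
 \lV \omega \rV_{s+k} \leq \lV Q D\omega \rV_{s+k} + \lV R\omega \rV_{s+k} \leq C_1 \lV D\omega \rV_s + C_2 \lV \omega \rV_s,
\end{equation*}
which upon squaring yields the stated inequality after absorbing the cross term. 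The extension from $\Gamma(E)$ to $S^{s+k}(M;E)$ is by density using \cref{embeddings}(i). The main obstacle, as noted, is not in any step I write out but in quoting Rockland's theorem correctly; everything else is bookkeeping within the Heisenberg symbolic calculus.
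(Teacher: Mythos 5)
Your proposal is correct and follows essentially the same route as the paper: the paper's proof consists of citing Ponge for each of the three assertions (the parametrix/symbol equivalence, the Rockland condition equivalence, and the maximal hypoelliptic estimate), and you invoke the same off-the-shelf results, merely adding a sketch of the standard parametrix construction and a direct derivation of the estimate from $QD = I + R$ with $R$ smoothing. The only discrepancy is cosmetic: the paper cites Ponge's Proposition~3.3.1, Theorem~3.3.18, and Proposition~5.5.9, while your reference numbers differ slightly, but the content invoked is the same.
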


\begin{remark}
 The Rockland condition depends only on the principal symbol of $D$.
 It has recently been shown~\cite{AndroulidakisMohsenYuncken2022} that if $D \in \DiffOperator{k}$, $k \in \bN$, then $D$ admits a parametrix if and only if it is maximally hypoelliptic.
\end{remark}

\begin{proof}
 The equivalence of (i) and (ii) is~\cite{Ponge2008}*{Proposition~3.3.1}.
 
 The equivalence of (ii) and (iii) is~\cite{Ponge2008}*{Theorem~3.3.18}.

 The final conclusion is~\cite{Ponge2008}*{Proposition~5.5.9}.
\end{proof}

One can often show that a given Heisenberg pseudodifferential operator admits a parametrix by comparing it to an appropriate Folland--Stein operator.
Executing this strategy requires two observations.

First, the Folland--Stein operators which admit a parametrix are classified.

\begin{lemma}
 \label{invert-folland-stein}
 Let $(M^{2n+1},T^{1,0},\theta)$ be a closed, strictly pseudoconvex manifold and let $E$ be a CR tensor bundle.
 The Folland--Stein operator $\mL_\alpha$ admits a parametrix if and only if $\alpha \not\in \{ \pm n, \pm (n+2), \dotsc \}$.
\end{lemma}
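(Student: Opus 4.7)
The plan is to apply the characterization in \cref{invertibility-characterizations}: I will verify the Rockland condition for $\mL_\alpha$ and its formal adjoint, by computing their principal Heisenberg symbols and reducing to the classical theorem of Folland--Stein on the Heisenberg group.

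First I would compute the principal $H$-symbol of $\mL_\alpha$ at an arbitrary point $x\in M$. Choose an admissible coframe $\{\theta^\alpha\}$ near $x$ such that $h_{\alpha\bar\beta}(x)=\delta_{\alpha\bar\beta}$ and such that the connection forms $\omega_\alpha{}^\gamma$ vanish at $x$ (Tanaka--Webster normal coordinates). Trivialize $E$ by parallel transport along Reeb/horizontal paths. Using \cref{nablab-and-nablas-adjoints}, the leading terms of $\nablab^\ast\nablab+\nablabbar^\ast\nablabbar$ reduce at $x$ to $-\sum_\alpha(Z_\alpha\bar Z_\alpha+\bar Z_\alpha Z_\alpha)/2$ acting componentwise on the fiber, while $i\alpha\nabla_0$ contributes $i\alpha T$ componentwise (connection endomorphisms are absorbed into lower-order terms). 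Thus the model operator of $\mL_\alpha$ is, up to a universal positive multiple, the scalar Folland--Stein operator
\begin{equation*}
 \mathcal{L}_\alpha^{H_n} := -\tfrac{1}{2}\sum_{j=1}^{n}\bigl(X_j^2+Y_j^2\bigr)+i\alpha T
\end{equation*}
on the Heisenberg group $H_n$, acting on the trivial bundle $\bC^{\rank E}$.

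Next I would invoke the classical Folland--Stein theorem~\cite{FollandStein1974}: the operator $\mathcal{L}_\alpha^{H_n}$ on $H_n$ is hypoelliptic, equivalently its left-invariant model satisfies the Rockland condition in every nontrivial irreducible unitary representation, if and only if
\begin{equation*}
 \alpha \notin \bigl\{\pm n,\pm(n+2),\pm(n+4),\ldots\bigr\} .
\end{equation*}
Since the model acts as a scalar operator on the fiber, the condition for the bundle-valued operator is identical. By \cref{invertibility-characterizations}, this gives the Rockland condition for $\mL_\alpha$ under the stated arithmetic restriction on $\alpha$, and fails otherwise.

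Finally I would dispose of the adjoint. The Reeb field $T$ preserves the volume form $\theta\wedge d\theta^n$, so $\nabla_0^\ast=-\nabla_0+\lots$; consequently the principal symbol of $\mL_\alpha^\ast$ agrees with that of $\mL_{-\overline{\alpha}}$. The forbidden set $\{\pm n,\pm(n+2),\dotsc\}$ lies in $\bR$ and is invariant under $\alpha\mapsto-\alpha$, so $\alpha$ is excluded if and only if $-\overline\alpha$ is excluded; thus the Rockland conditions for $\mL_\alpha$ and $\mL_\alpha^\ast$ hold simultaneously. Applying \cref{invertibility-characterizations} once more yields the ``if'' direction of the claim, and the failure of the Rockland condition when $\alpha$ lies in the forbidden set gives the ``only if'' direction. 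The main obstacle is the careful bookkeeping required to identify the principal symbol with the scalar Folland--Stein operator on $H_n$ acting diagonally on the fiber; once this identification is made, the result is an immediate consequence of Folland and Stein's original analysis.
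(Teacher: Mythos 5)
Your argument is correct and is essentially the paper's own proof: the paper simply cites Folland--Stein~\cite{FollandStein1974}*{Theorem~6.2} (and Ponge~\cite{Ponge2008}*{pg.~62}), and what you have written is the standard unpacking of that citation --- reduce to the scalar model operator on the Heisenberg group, invoke the classical classification of the admissible $\alpha$, and note that the forbidden set is real and symmetric under $\alpha\mapsto-\alpha$ so the adjoint causes no trouble. The only caution is that the phrase ``up to a universal positive multiple'' must be applied to the whole operator at once, since the critical set $\{\pm n,\pm(n+2),\dotsc\}$ is sensitive to the relative normalization of the sublaplacian part against $i\alpha T$; with the Tanaka--Webster normalization used in the paper this is automatic.
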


\begin{proof}
 This follows from~\cite{FollandStein1974}*{Theorem~6.2};
 see also~\cite{Ponge2008}*{pg.\ 62}.
\end{proof}

Second, there are two ways to favorably compare Heisenberg pseudodifferential operators.

\begin{definition}
 \label{defn:comparisons}
 Let $(M^{2n+1},T^{1,0},\theta)$ be a closed, strictly pseudoconvex manifold and let $E$ be a CR tensor bundle.
 Let $A , B \in \PseudoDiffOperator{k}$, $k \in \bN$, be endomorphisms of $\Gamma(E)$.
 \begin{enumerate}
  \item We write \defn{$A \simeq B$} if $A-B \in \PseudoDiffOperator{k-1}$.
  \item We write \defn{$A \gtrsim B$} if there are $P_1 , \dotsc, P_j \in \PseudoDiffOperator{k/2}$ such that
  \begin{equation}
   \label{eqn:simeq-condition}
   A \simeq B + P_1^\ast P_1 + \dotsm + P_j^\ast P_j .
  \end{equation}
 \end{enumerate}
\end{definition}

For example, \cref{commutators} implies that
\begin{equation}
 \label{eqn:folland-stein-no-0}
 \mL_\alpha \equiv \frac{n-\alpha}{n}\nablab^\ast\nablab + \frac{n+\alpha}{n}\nablabbar^\ast\nablabbar \mod \DiffOperator{0} .
\end{equation}
In particular, if $\alpha \in (-n,n)$, then $\mL_\alpha \gtrsim C\mL_0$ for some constant $C>0$.

\Cref{defn:comparisons} is used as follows (cf.\ \cite{Rumin1994}*{Corollaire~3 and Proposition~6}):

\begin{proposition}
 \label{comparisons}
 Let $(M^{2n+1},T^{1,0},\theta)$ be a closed, strictly pseudoconvex manifold and let $E$ be a CR tensor bundle.
 Let $A,B \in \PseudoDiffOperator{k}$, $k \in \bN$, be endomorphisms.
 \begin{enumerate}
  \item If $A \simeq B$, then $A$ admits a parametrix if and only if $B$ admits a parametrix.
  \item If $A \gtrsim B \gtrsim 0$ and $B$ admits a parametrix, then $A$ admits a parametrix.
 \end{enumerate}
\end{proposition}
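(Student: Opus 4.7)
The plan is to reduce both parts to the equivalence, furnished by Proposition \ref{invertibility-characterizations}, between admitting a parametrix and the Rockland condition on the principal symbol.

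Part (i) is immediate: $A \simeq B$ means $A - B \in \PseudoDiffOperator{k-1}$, so the degree-$k$ principal symbols agree, $\sigma_k(A) = \sigma_k(B)$. Since the convolution-invertibility criterion of Proposition \ref{invertibility-characterizations}(ii) depends only on the principal symbol, $A$ and $B$ admit parametrices simultaneously.

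For part (ii), the strategy is to verify the Rockland condition for $A$ in each nontrivial irreducible unitary representation, using positivity of the symbol. Unwinding the definitions, $B \gtrsim 0$ provides $R_1, \dotsc, R_m \in \PseudoDiffOperator{k/2}$ with $B \simeq R_1^\ast R_1 + \dotsm + R_m^\ast R_m$, while $A \gtrsim B$ provides $P_1, \dotsc, P_j \in \PseudoDiffOperator{k/2}$ with $A \simeq B + P_1^\ast P_1 + \dotsm + P_j^\ast P_j$. In particular $\sigma_k(A) = \sigma_k(A)^\ast$ and $\sigma_k(B) = \sigma_k(B)^\ast$ in the convolution algebra, so in each representation the Rockland conditions for $A$ and $A^\ast$ (respectively $B$ and $B^\ast$) coincide, and it suffices to check injectivity for $A$ alone. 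Fix $x \in M$ and a nontrivial irreducible unitary representation $\pi$ of the osculating Heisenberg group at $x$, and apply $\pi$ to the symbol identities to obtain operators $a := \pi(\sigma_k(A))$, $b := \pi(\sigma_k(B))$, $p_i := \pi(\sigma_{k/2}(P_i))$, and $r_i := \pi(\sigma_{k/2}(R_i))$ on the space of smooth vectors of $\pi$, satisfying $a = b + \sum_i p_i^\ast p_i$ and $b = \sum_i r_i^\ast r_i$. Since $B$ admits a parametrix, Proposition \ref{invertibility-characterizations}(iii) guarantees that $b$ is injective on smooth vectors. If $v$ is a smooth vector with $av = 0$, then by nonnegativity
\[ 0 = \lp av, v\rp = \sum_i \lV r_i v\rV^2 + \sum_i \lV p_i v\rV^2 , \]
so each $r_i v = 0$, hence $bv = 0$, and finally $v = 0$. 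Thus $a$ is injective, the Rockland condition holds for $A$, and Proposition \ref{invertibility-characterizations}(iii) furnishes a parametrix.

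The main obstacle is the functorial translation of the symbol-level identities $A - B - \sum P_i^\ast P_i \in \PseudoDiffOperator{k-1}$ and $B - \sum R_i^\ast R_i \in \PseudoDiffOperator{k-1}$ into the operator identities $a = b + \sum p_i^\ast p_i$ and $b = \sum r_i^\ast r_i$ in each irreducible unitary representation $\pi$; that is, that the convolution algebra of degree-$k$ principal symbols acts on smooth vectors with the expected $\ast$-structure and products. Once this functoriality --- essentially built into the Rockland framework via the group Fourier transform on the osculating group --- is in place, the positivity step above is transparent.
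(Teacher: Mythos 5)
Your proposal is correct and follows essentially the same route as the paper: part (i) via equality of principal symbols, and part (ii) by passing the sum-of-squares decompositions to model operators in each nontrivial irreducible unitary representation (the functoriality you flag is exactly \cite{Ponge2008}*{Proposition~3.3.6}, which the paper cites) and using positivity plus the injectivity of the representation of $B$ to verify the Rockland condition for $A$. The only cosmetic difference is that you unwind $B \gtrsim 0$ into an explicit decomposition $B \simeq \sum_i R_i^\ast R_i$ to see $\lp bv, v\rp \geq 0$, where the paper simply quotes the known nonnegativity of $\overline{\pi_{B^x}}$.
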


\begin{proof}
 (i).
 If $A \simeq B$, then $A$ and $B$ have the same principal symbol.
 The conclusion follows from \cref{invertibility-characterizations}.
 
 (ii).
 We prove this using the Rockland condition~\cite{Ponge2008}*{Definition~3.3.8}.
 Let $P_1 , \dotsc , P_j$ be such that~\eqref{eqn:simeq-condition} holds.
 Let $\pi$ be a nontrivial unitary representation of the Heisenberg group and let $x \in M$.
 Let $A^x$, $B^x$, and $P_\ell^x$, $1 \leq \ell \leq j$, be model operators~\cite{Ponge2008}*{Section~3.2} for $A$, $B$, and $P_\ell$, respectively.
 It is known~\cite{Ponge2008}*{Proposition~3.3.6} that $\overline{\pi_{B^x}} \geq 0$ and
 \begin{equation*}
  \overline{\pi_{A^x}} = \overline{\pi_{B^x}} + \overline{\pi_{P_1^x}}^\ast \overline{\pi_{P_1^x}} + \dotsm + \overline{\pi_{P_j^x}}^\ast \overline{\pi_{P_j^x}} .
 \end{equation*}
 (See~\cite{Ponge2008}*{Section~3.3.2} for definitions; note that $\overline{\pi_P}$ can be defined for Heisenberg pseudodifferential operators $P \colon \Gamma(E) \to \Gamma(F)$.)
 \Cref{invertibility-characterizations} implies that $B$ satisfies the Rockland condition;
 in particular, $\overline{\pi_{B^x}}$ is injective.
 Therefore $\overline{\pi_{A^x}}$ is injective.
 Since $A$ is formally self-adjoint, it follows that $A$ satisfies the Rockland condition.
 The conclusion follows from \cref{invertibility-characterizations}.
\end{proof}

We also require a related comparison result.

\begin{proposition}
 \label{interior-composition-comparison}
 Let $(M^{2n+1},T^{1,0},\theta)$ be a closed, strictly pseudoconvex manifold and let $E$ be a CR tensor bundle.
 Let $A \in \PseudoDiffOperator{2k}$ be an endomorphism of $\Gamma(E)$ such that
 \begin{equation*}
  A \simeq \sum_{j=1}^m P_j^\ast P_j 
 \end{equation*}
 and let $B_j^\ast B_j \in \PseudoDiffOperator{2\ell}$, $1 \leq j \leq m$, be endomorphisms of $\Gamma(E)$.
 If $A$ and $B_j^\ast B_j$ admit parametrices, then
 \begin{equation*}
  D := \sum_{j=1}^m P_j^\ast B_j^\ast B_j P_j
 \end{equation*}
 admits a parametrix.
\end{proposition}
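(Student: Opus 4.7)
The plan is to verify the Rockland condition for $D$ at every point and then invoke \cref{invertibility-characterizations}; because $D = \sum_{j=1}^m (B_jP_j)^\ast (B_jP_j)$ is manifestly formally self-adjoint, it suffices to verify the condition for $D$ alone.

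First, I would fix $x \in M$ and a nontrivial unitary representation $\pi$ of the Heisenberg group. Using the fact that the model-operator assignment respects compositions and adjoints of properly supported Heisenberg pseudodifferential operators (cf.\ \cite{Ponge2008}*{Proposition~3.3.6}, used identically in the proof of \cref{comparisons}(ii)), I would identify
\begin{equation*}
 \overline{\pi_{D^x}} = \sum_{j=1}^m \overline{\pi_{P_j^x}}^\ast \overline{\pi_{B_j^x}}^\ast \overline{\pi_{B_j^x}} \overline{\pi_{P_j^x}} .
\end{equation*}
If $u$ is a smooth vector with $\overline{\pi_{D^x}} u = 0$, then pairing with $u$ gives $\sum_j \lV \overline{\pi_{B_j^x}} \overline{\pi_{P_j^x}} u \rV^2 = 0$, so $\overline{\pi_{B_j^x}} \overline{\pi_{P_j^x}} u = 0$ for every $j$.

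Next, since $B_j^\ast B_j$ admits a parametrix, \cref{invertibility-characterizations} implies that the operator $\overline{\pi_{(B_j^\ast B_j)^x}} = \overline{\pi_{B_j^x}}^\ast \overline{\pi_{B_j^x}}$ is injective on smooth vectors; pairing with $v$ shows that this is equivalent to $\overline{\pi_{B_j^x}}$ being injective on smooth vectors, so I would conclude $\overline{\pi_{P_j^x}} u = 0$ for each $j$. Because $A \simeq \sum_j P_j^\ast P_j$, the two sides share the same principal symbol and hence the same model operators, so
\begin{equation*}
 \overline{\pi_{A^x}} u = \sum_{j=1}^m \overline{\pi_{P_j^x}}^\ast \overline{\pi_{P_j^x}} u = 0 .
\end{equation*}
The Rockland condition for $A$, again supplied by \cref{invertibility-characterizations} since $A$ admits a parametrix, then forces $u=0$.

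The main technical obstacle is making the model-operator identifications fully rigorous --- specifically, justifying the factorizations of $\overline{\pi_{D^x}}$ and of $\overline{\pi_{(B_j^\ast B_j)^x}}$ and passing between injectivity of $\overline{\pi_{B_j^x}}^\ast \overline{\pi_{B_j^x}}$ and that of $\overline{\pi_{B_j^x}}$ on the relevant dense subspace of smooth vectors; the unboundedness of the model operators means one must argue on smooth vectors throughout. Once these standard facts from \cite{Ponge2008} are in place, the remainder is a direct Cauchy--Schwarz-style chase entirely parallel to the proof of \cref{comparisons}(ii), and no new estimates beyond the Rockland criterion are needed.
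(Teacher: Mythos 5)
Your proposal is correct and follows essentially the same route as the paper's proof: factor the model operator $\overline{\pi_{D^x}}$ via \cite{Ponge2008}*{Proposition~3.3.6}, deduce injectivity from the Rockland conditions for $A$ and for each $B_j^\ast B_j$ supplied by \cref{invertibility-characterizations}, and conclude using the formal self-adjointness of $D$. The paper simply states the injectivity chain without writing out the Cauchy--Schwarz pairing that you spell out; there is no substantive difference.
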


\begin{proof}
 Let $\pi$ be a nontrivial unitary representation of the Heisenberg group and let $x \in M$.
 Let $P_j^x$, $B_j^x$, and $D^x$ be model operators~\cite{Ponge2008}*{Section~3.2} for $P_j$, $B_j$, and $D$, respectively.
 It is known~\cite{Ponge2008}*{Proposition~3.3.6} that
 \begin{equation*}
  \overline{\pi_{D^x}} = \sum_{j=1}^\ell \overline{\pi_{P_j^x}}^\ast \overline{\pi_{B_j^x}}^\ast \overline{\pi_{B_j^x}} \overline{\pi_{P_j^x}} .
 \end{equation*}
 Moreover, \cref{invertibility-characterizations} implies that $A$ and $B_j^\ast B_j$ satisfy the Rockland condition;
 in particular, $\sum \overline{\pi_{P_j^x}}^\ast \overline{\pi_{P_j^x}}$ and each of $\overline{\pi_{B_j^x}}^\ast \overline{\pi_{B_j^x}}$ are injective.
 Since $D$ is formally self-adjoint, we deduce that $D$ satisfies the Rockland condition.
 The conclusion follows from \cref{invertibility-characterizations}.
\end{proof}

We conclude with two general theorems needed to prove our Hodge theorems.

The first theorem is a collection of important facts about the partial inverses of Heisenberg pseudodifferential operators which admit a parametrix.

\begin{theorem}[\cite{BealsGreiner1988}*{Theorem~19.16}]
 \label{invert-maximally-hypoelliptic}
 Let $(M^{2n+1},T^{1,0},\theta)$ be a closed, strictly pseudoconvex manifold.
 Let $D \in \PseudoDiffOperator{k}$, $k \in \bN$, be formally self-adjoint and admit a parametrix.
 Then $\ker D$ is finite-dimensional.
 Moreover, there are $H \in \PseudoDiffOperator{-\infty}$ and $N \in \PseudoDiffOperator{-k}$ such that
 \begin{equation}
  \label{eqn:partial-inverse}
  \begin{aligned}
  I & = DN + H , & I & = ND + H , \\
  H^\ast & = H = H^2 , & N^\ast & = N , \\
  HN & = NH = 0 ,
  \end{aligned}
 \end{equation}
 where $I$ is the identity map.
\end{theorem}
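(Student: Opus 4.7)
The plan is to reproduce the standard parametrix-based proof (essentially that of Beals--Greiner) in three steps: analyze $\ker D$, define $H$ as the orthogonal projection onto $\ker D$, and construct $N$ as a self-adjoint partial inverse whose regularity is established by comparison with an explicit approximate inverse built from the parametrix.

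First, I would show that $\ker D$ is finite-dimensional and consists of smooth sections. Let $Q \in \PseudoDiffOperator{-k}$ be a parametrix, so $QD = I - R$ with $R \in \PseudoDiffOperator{-\infty}$. If $\omega \in L^2$ satisfies $D\omega = 0$, then $\omega = R\omega$; iterating yields $\omega = R^m\omega$ for every $m$, and since each $R^m$ still lies in $\PseudoDiffOperator{-\infty}$ the mapping property of \cref{compositions-in-pseudodiffoperator}(i) gives $\omega \in \bigcap_s S^s(M) = \Gamma(E)$. The same identity produces, for every $s$, a bound $\lV\omega\rV_s \leq C_s \lV\omega\rV_0$ on $\ker D$, so the unit ball of $\ker D$ in $L^2$ is bounded in $S^s(M)$ and therefore precompact in $L^2$ by \cref{embeddings}(ii); hence $\dim \ker D < \infty$.

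Second, I would build $H$ in a completely explicit way. Choose an $L^2$-orthonormal basis $\{e_1,\dotsc,e_N\}$ of $\ker D$, which by step one lies in $\Gamma(E)$, and set
\[
 H\omega := \sum_{j=1}^N \llp \omega, e_j\rrp\, e_j .
\]
The Schwartz kernel of $H$ is the smooth section $\sum_j e_j(x) \otimes \overline{e_j(y)}$, so $H \in \PseudoDiffOperator{-\infty}$. The identities $H^\ast = H = H^2$ are built in, $DH = 0$ is definitional, and $HD = (DH)^\ast = 0$ follows from the formal self-adjointness of $D$.

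Third, I would construct $N$ and verify its regularity. The maximal hypoellipticity supplied by \cref{invertibility-characterizations}, combined with the compact embedding in \cref{embeddings}(ii), shows that $D$ has closed range on $L^2$ and is essentially self-adjoint, so spectral theory gives a bounded self-adjoint inverse $N$ of $D$ on $(\ker D)^\perp$ (extended by zero on $\ker D$) satisfying $DN = ND = I - H$, $HN = NH = 0$, and $N = N^\ast$. Mapping properties of the a priori estimate upgrade this to a bounded map $N \colon S^s(M) \to S^{s+k}(M)$ for every $s$. To see that $N$ lies in the Heisenberg calculus, define the approximate inverse $N_0 := (I-H)Q(I-H) \in \PseudoDiffOperator{-k}$. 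Using $DH = 0$ and $DQ = I - R'$ with $R' \in \PseudoDiffOperator{-\infty}$, one computes $DN_0 = I - H - S$ with $S := R'(I-H) \in \PseudoDiffOperator{-\infty}$, and then
\[
 N - N_0 = (I-H)(N - N_0) = ND(N - N_0) = N S .
\]
Because $S$ is smoothing and $N$ is continuous $S^s(M) \to S^{s+k}(M)$ for every $s$, the composition $NS$ has smooth Schwartz kernel, so $N - N_0 \in \PseudoDiffOperator{-\infty}$ and consequently $N \in \PseudoDiffOperator{-k}$.

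The principal obstacle is the third step, specifically the promotion of the abstract spectral inverse $N$ to an element of the Heisenberg calculus with the predicted order. The comparison identity $N - N_0 = NS$ is what carries the argument: it ties the two sources of information (spectral theory on $L^2$ and symbolic calculus) together. Both inputs are essential, as the calculus produces only $N_0$, which inverts $D$ modulo a smoothing error, while the spectral inverse $N$ is a priori just a bounded operator; neither alone yields a self-adjoint two-sided inverse in $\PseudoDiffOperator{-k}$, and it is their compatibility through $S \in \PseudoDiffOperator{-\infty}$ that closes the proof.
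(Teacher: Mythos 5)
The paper does not prove this statement: it is quoted verbatim from Beals--Greiner (Theorem~19.16), so there is no internal proof to compare against. Your reconstruction is the standard parametrix argument and is correct in outline and in all essential steps: smoothness and finite-dimensionality of $\ker D$ via $\omega = R\omega$ and \cref{embeddings}(ii), the explicit finite-rank smoothing projection $H$, the spectral partial inverse $N$, and the key identity $N - N_0 = NS$ with $S \in \PseudoDiffOperator{-\infty}$ forcing $N - N_0$ to be smoothing.

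Two steps are compressed and deserve a line each if this were to be written out. First, the closed-range claim: one should record the estimate $\lV\omega\rV_0 \leq C\lV D\omega\rV_0$ for $\omega \perp \ker D$, obtained from the maximal hypoellipticity estimate of \cref{invertibility-characterizations} together with \cref{embeddings}(ii) by the usual contradiction/compactness argument; closed range and the decomposition $L^2 = \ker D \oplus \overline{\im D}$ then follow from formal self-adjointness plus hypoellipticity (a distributional solution of $D\tau = 0$ orthogonal to $\im D$ is smooth and lies in $\ker D$). Second, the assertion that $N$ is bounded $S^s(M) \to S^{s+k}(M)$ for every $s$ is what makes $NS$ smoothing; for $s \geq 0$ this follows by induction on $s$ from the maximal hypoellipticity estimate applied to $\omega = Nf$ (using $D\omega = (I-H)f$), and boundedness on the nonnegative scale already suffices, since $S$ maps $S^{-t}(M)$ into every $S^u(M)$. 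With those details supplied, the argument closes as you describe.
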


The second theorem constructs a partial inverse for certain non-hypoelliptic pseudodifferential operators.

\begin{theorem}
 \label{invert-almost-invertible}
 Let $(M^{2n+1},T^{1,0},\theta)$ be a closed, strictly pseudoconvex manifold.
 Let $D \in \PseudoDiffOperator{k}$, $k \in \bN$, be formally self-adjoint.
 Suppose additionally that the $L^2$-closure of $D$ has closed range.
 If there are $Q \in \PseudoDiffOperator{-k}$ and $S \in \PseudoDiffOperator{0}$ and $R \in \PseudoDiffOperator{-1}$ such that
 \begin{align}
  \label{eqn:DQ-assumption} DQ + S & = I - R , \\
  \label{eqn:DS-assumption} DS & \equiv 0 \mod \PseudoDiffOperator{-\infty} ,
 \end{align}
 then there are $N \in \PseudoDiffOperator{-k}$ and $H \in \PseudoDiffOperator{0}$ such that~\eqref{eqn:partial-inverse} holds.
 Moreover, if $S \in \PseudoDiffOperator{-\infty}$, then $H \in \PseudoDiffOperator{-\infty}$.
\end{theorem}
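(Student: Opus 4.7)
I would begin with two reductions. A Neumann series in $R \in \PseudoDiffOperator{-1}$ yields $P \in \PseudoDiffOperator{0}$ with $(I - R)P \equiv I$ modulo $\PseudoDiffOperator{-\infty}$; replacing $(Q, S, R)$ with $(QP, SP, 0)$ yields $DQ + S \equiv I$ and $DS \equiv 0$ modulo $\PseudoDiffOperator{-\infty}$. Taking adjoints and using $D^\ast = D$ gives $Q^\ast D + S^\ast \equiv I$ and $S^\ast D \equiv 0$; left-multiplying $DQ + S \equiv I$ by $S^\ast$ and absorbing $S^\ast DQ \equiv 0$ yields $S^\ast S \equiv S^\ast$, and taking adjoints gives $S^\ast S \equiv S$. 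Hence $S \equiv S^\ast$, $S^2 \equiv S$, and $SD \equiv 0$, all modulo $\PseudoDiffOperator{-\infty}$. Replacing $S$ by $\tfrac12(S + S^\ast)$, I may assume $S = S^\ast$.

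Second, I would introduce the candidates for $H$ and $N$. The orthogonal projection $H$ onto $\ker D$ in $L^2$ and the Green's operator $N$ (zero on $\ker D$, equal to $D^{-1}$ on $(\ker D)^\perp$) exist as bounded self-adjoint $L^2$ operators by the closed-range hypothesis, and they automatically satisfy the algebraic identities in the statement. What remains is to establish the pseudodifferential orders $H \in \PseudoDiffOperator{0}$ and $N \in \PseudoDiffOperator{-k}$.

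Third, I would show $H - S \in \PseudoDiffOperator{-\infty}$. Since $S \in \PseudoDiffOperator{0}$ is self-adjoint with $S(I - S) \in \PseudoDiffOperator{-\infty}$ compact on $L^2$, a Weyl sequence argument shows $\sigma_{\mathrm{ess}}(S) \subseteq \{0, 1\}$. For a small contour $\gamma$ around $1$ avoiding the isolated eigenvalues of $S$, the Riesz projection
\[
\Pi := \frac{1}{2\pi i}\oint_\gamma (zI - S)^{-1}\,dz
\]
exists as a bounded $L^2$ projection. Using the ansatz $A(z) := z^{-1} I + (z(z-1))^{-1} S$, one computes $(zI - S)A(z) = I - (z(z-1))^{-1}(S^2 - S)$, so $(zI - S)^{-1} - A(z)$ is a smoothly varying family of smoothing operators on $\gamma$; integrating yields $\Pi - S \in \PseudoDiffOperator{-\infty}$, and in particular $\Pi \in \PseudoDiffOperator{0}$. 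Then $H - \Pi$ is a finite-rank smoothing operator: isolated eigenvectors of $S$ for $\lambda \not\in \{0, 1\}$ are smooth (apply $S(I-S)v = \lambda(1-\lambda)v$ and divide), while vectors in $\ker D$ coincide with fixed points of $S$ modulo smooth errors (by the parametrix identity), so the image of $H$ differs from the image of $\Pi$ by a finite-dimensional subspace of smooth sections. Hence $H - \Pi \in \PseudoDiffOperator{-\infty}$, whence $H \in \PseudoDiffOperator{0}$, and $H - S \in \PseudoDiffOperator{-\infty}$. If $S \in \PseudoDiffOperator{-\infty}$, every step preserves smoothing, so $H \in \PseudoDiffOperator{-\infty}$. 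Finally, I set $N_0 := (I - H) Q (I - H) \in \PseudoDiffOperator{-k}$; using $DQ \equiv I - S \equiv I - H$, $DH = HD = 0$, and $SH \equiv H$ modulo smoothing, one checks $DN_0 \equiv I - H$ modulo $\PseudoDiffOperator{-\infty}$. Symmetrizing $N_0 \mapsto \tfrac12(N_0 + N_0^\ast)$ and adding a smoothing correction produced by the $L^2$ Green's operator on $(\ker D)^\perp$ (which maps smoothing data to smoothing data by closed range) gives $N \in \PseudoDiffOperator{-k}$ satisfying all required identities exactly.

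The main obstacle is the pseudodifferential analysis of the Riesz projection $\Pi$: one must verify that the resolvent $(zI - S)^{-1}$ belongs to $\PseudoDiffOperator{0}$ uniformly for $z$ on the contour and that the contour integral of a smoothly varying family of smoothing operators remains smoothing. Both facts rely on careful use of the Heisenberg pseudodifferential calculus on Folland--Stein spaces. The other key ingredient is the closed-range hypothesis, which provides the $L^2$ Green's operator on $(\ker D)^\perp$ and ensures that smoothing corrections remain smoothing after inversion; this hypothesis substitutes for the hypoellipticity of $D$ (which we do not have), and is precisely what allows us to pass from the approximate parametrix $(Q, S, R)$ to an exact partial inverse $(N, H)$.
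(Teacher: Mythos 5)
Your overall architecture matches the paper's: compose with a parametrix for $I-R$ to reduce to an approximate projection satisfying $S \equiv S^\ast \equiv S^2$, $DQ+S \equiv I$, and $DS \equiv 0$ modulo $\PseudoDiffOperator{-\infty}$; take $H$ and $N$ to be the exact $L^2$ projection and Green's operator furnished by self-adjointness and closed range; then show the exact operators differ from the approximate ones by smoothing operators. (The paper does exactly this, setting $\cS := SA$ and $\cQ := (I-\cS)QA$ for $A$ a parametrix of $I-R$, and then citing the proofs of Proposition~25.4 and Theorem~25.20 of Beals--Greiner for the two halves.) The gap is in your treatment of the last and hardest step, namely $H - S \in \PseudoDiffOperator{-\infty}$. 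Your Riesz-projection computation does show $\Pi - S \in \PseudoDiffOperator{-\infty}$, but the assertion that ``$H - \Pi$ is a finite-rank smoothing operator'' is unsupported and, as stated, false in the intended applications: both $\im H = \ker D$ and $\im\Pi$ (the spectral subspace of $S$ near $1$) are typically infinite-dimensional --- this theorem exists precisely to handle the Szeg\H{o}-type case --- and the two facts you invoke, that $v \in \ker D$ forces $(I-S)v \in C^\infty$ and that $Sv = v$ forces $Dv \in C^\infty$, do not make the two subspaces commensurable; indeed $\left\{ v \suchthatcolon Dv \in C^\infty \right\}$ already contains all of $C^\infty$ and is vastly larger than $\ker D$.

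Concretely, what your identities yield is an expression of the form $H - S = HKH - NK' + \dotsb$ with $K, K'$ smoothing; but a composition $B_1 K B_2$ of a smoothing operator with operators that are merely bounded on $L^2$ need not be smoothing (take $K = \phi \otimes \psi$ of rank one: $KH = \phi \otimes \overline{H\psi}$ has a kernel that is only $L^2$ in the second variable, since elements of $\ker D$ need not be smooth). Upgrading this requires first establishing mapping properties of $H$ and $N$ on the Folland--Stein scale, a bootstrap in which the closed-range estimate $\lV Dv \rV \geq c\lV v\rV$ on $(\ker D)^\perp \cap \Dom D$ enters essentially; your argument uses closed range only to define $N$ and never to control $H - \Pi$, which is a symptom of the missing step. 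This bootstrap is precisely the content of the Beals--Greiner argument the paper relies on; without it, or a substitute for it, your proof does not close. The same issue recurs in your final paragraph, where the ``smoothing correction'' to $N_0$ is again a sandwich of a smoothing operator by the not-yet-controlled operators $H$ and $N$.
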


\begin{proof}
 The assumptions that $D$ is formally self-adjoint and its $L^2$-closure has closed range implies that there are operators $H$ and $N$ on $L^2(M)$ such that~\eqref{eqn:partial-inverse} holds.
 
 Let $A$ be a parametrix for $I-R$.
 Set
 \begin{align*}
  \cS & := SA , \\
  \cQ & := (I-\cS)QA .
 \end{align*}
 Then $\cS \in \PseudoDiffOperator{0}$ and $\cQ \in \PseudoDiffOperator{-k}$;
 moreover, if $S \in \PseudoDiffOperator{-\infty}$, then $\cS \in \PseudoDiffOperator{-\infty}$.
 Following computations~\cite{BealsGreiner1988}*{Proof of Proposition~25.4} of Beals and Greiner, we deduce that
 \begin{align*}
  I & \sim D\cQ + \cS , & I & \sim \cQ D + \cS , \\
  \cS^\ast & \sim \cS \sim \cS^2 , & \cQ^\ast & \sim \cQ , \\
  \cQ\cS & \sim \cS\cQ \sim 0 ,
 \end{align*}
 where $A \sim B$ means $A \equiv B \mod \PseudoDiffOperator{-\infty}$.
 The final conclusion follows from another argument~\cite{BealsGreiner1988}*{Proof of Theorem~25.20} of Beals and Greiner.
\end{proof}

\section{Some identities involving the Kohn and Rumin Laplacians}
\label{sec:weitzenbock}

In this section we derive a number of useful identities involving the Kohn and Rumin Laplacians.
These identities will be used both to prove maximal hypoellipticity and to study harmonic differential forms.

We begin by defining the Kohn and Rumin Laplacians.

\begin{definition}
 \label{defn:kohn-laplacian}
 Let $(M^{2n+1},T^{1,0},\theta)$ be a pseudohermitian manifold.
 The \defn{Kohn Laplacian} on $\mR^{p,q}$ is the operator $\Box_b\colon\mR^{p,q}\to\mR^{p,q}$ given by
 \begin{align*}
  \Box_b & := \frac{n-p-q}{n-p-q+1}\dbbar\dbbar^\ast + \dbbar^\ast\dbbar, && \text{if $p+q\leq n-1$;} \\
  \Box_b & := \dbbar^\ast\dbbar + \dbbar\db^\ast\db\dbbar^\ast + \frac{1}{2}\dbbar\db\db^\ast\dbbar^\ast + \dbbar\dbbar^\ast\dbbar\dbbar^\ast, && \text{if $p+q=n$;} \\
  \Box_b & := \dbbar\dbbar^\ast + \dbbar^\ast\db\db^\ast\dbbar + \frac{1}{2}\dbbar^\ast\db^\ast\db\dbbar + \dbbar^\ast\dbbar\dbbar^\ast\dbbar, && \text{if $p+q=n+1$;} \\
  \Box_b & := \frac{p+q-n-1}{p+q-n}\dbbar^\ast\dbbar + \dbbar\dbbar^\ast, && \text{if $p+q\geq n+2$.}
 \end{align*}
\end{definition}

\begin{definition}
 \label{defn:rumin-laplacian}
 Let $(M^{2n+1},T^{1,0},\theta)$ be a pseudohermitian manifold.
 The \defn{Rumin Laplacian} on $\mR^k$ is the operator $\Box_b\colon\mR^k\to\mR^k$ given by
 \begin{align*}
  \Delta_b & := d^\ast d + \frac{n-k}{n-k+1}dd^\ast, && \text{if $k\leq n-1$;} \\
  \Delta_b & := d^\ast d + dd^\ast dd^\ast, && \text{if $k=n$;} \\
  \Delta_b & := dd^\ast + d^\ast dd^\ast d, && \text{if $k=n+1$;} \\
  \Delta_b & := dd^\ast + \frac{k-n-1}{k-n}d^\ast d, && \text{if $k\geq n+2$.}
 \end{align*}
\end{definition}

Note that if $p+q=k\in\{n,n+1\}$, then the Kohn and Rumin Laplacians are fourth-order operators;
otherwise they are second-order operators.

Our definition of the Rumin Laplacian coincides with Rumin's original definition~\cite{Rumin1994}*{pg.\ 290} via \cref{explicit-rumin}.
However, our definition of the Kohn Laplacian differs from existing definitions in the literature (e.g.\ \cites{Kohn1965,FollandKohn1972,FollandStein1974,Tanaka1975,Garfield2001,GarfieldLee1998}).
Like other definitions, our Kohn Laplacian is a formally self-adjoint operator which has kernel equal to $\ker\dbbar\cap\ker\dbbar^\ast$ and which commutes with the conjugate Hodge star operator.
Our definition has the extra property that, modulo lower order terms, $\Delta_b=\Box_b+\oBox_b$, plus $\dhor^\ast\dhor$ or $\dhor\dhor^\ast$ if $p+q=n$ or $p+q=n+1$, respectively (cf.\ \cite{Rumin1994}*{Proposition~5(ii)}).
This leads to a uniform presentation of subelliptic estimates for the Kohn and Rumin Laplacians (see \cref{sec:hypoelliptic}) as well as to K\"ahler-like identities for these operators on Sasaki manifolds (see \cref{sec:frolicher,sec:sasakian}).

We begin to make these comments precise by recording the fact that the Kohn and Rumin Laplacians are nonnegative, formally self-adjoint operators and their kernels are the expected ones from the point of view of the bigraded Rumin  complex.

\begin{lemma}
 \label{kernel-kohn-laplacian}
 Let $(M^{2n+1},T^{1,0},\theta)$ be a closed pseudohermitian manifold.
 Given $p\in\{0,\dotsc,n+1\}$ and $q\in\{0,\dotsc,n\}$, the Kohn Laplacian $\Box_b\colon\mR^{p,q}\to\mR^{p,q}$ is a nonnegative, formally self-adjoint operator with
 \[ \ker \left( \Box_b \colon \mR^{p,q} \to \mR^{p,q}\right) = \left\{ \omega \in \mR^{p,q} \suchthatcolon \dbbar\omega = 0 = \dbbar^\ast \omega \right\} . \]
\end{lemma}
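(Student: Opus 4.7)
The proof plan is essentially an integration-by-parts argument using the adjoint relationship of \cref{formal-adjoint}. The claim that $\Box_b$ is formally self-adjoint follows immediately from the observation that each of $\dbbar^\ast\dbbar$, $\dbbar\dbbar^\ast$ is self-adjoint by \cref{formal-adjoint}, while in the middle degrees each compound term has the palindrome-like structure $(A_1 A_2 A_3 A_4)^\ast = A_4^\ast A_3^\ast A_2^\ast A_1^\ast$ which reproduces the original ordering: for example, $(\dbbar\db^\ast\db\dbbar^\ast)^\ast = \dbbar\db^\ast\db\dbbar^\ast$, and similarly for $\dbbar\db\db^\ast\dbbar^\ast$ and $\dbbar\dbbar^\ast\dbbar\dbbar^\ast$ (and their $p+q=n+1$ analogues).

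For nonnegativity and the kernel characterization, the plan is to compute $\llp \Box_b \omega, \omega \rrp$ as a sum of squared $L^2$-norms. In the non-middle cases $p+q \leq n-1$ and $p+q \geq n+2$, \cref{formal-adjoint} gives
\begin{equation*}
 \llp \Box_b \omega, \omega \rrp = \tfrac{n-p-q}{n-p-q+1}\lV \dbbar^\ast \omega \rV^2 + \lV \dbbar \omega \rV^2
\end{equation*}
(respectively its mirror for $p+q \geq n+2$), with strictly positive coefficients, from which nonnegativity and $\ker\Box_b = \ker\dbbar \cap \ker\dbbar^\ast$ follow immediately. In the middle degree $p+q=n$, each term peels off using $\llp A^\ast B A \omega, \omega \rrp = \lV BA\omega\rV^2$ and $\llp \dbbar\dbbar^\ast\dbbar\dbbar^\ast \omega, \omega \rrp = \lV \dbbar\dbbar^\ast \omega\rV^2$ to yield
\begin{equation*}
 \llp \Box_b \omega, \omega \rrp = \lV \dbbar\omega\rV^2 + \lV \db\dbbar^\ast\omega\rV^2 + \tfrac{1}{2}\lV \db^\ast\dbbar^\ast\omega\rV^2 + \lV \dbbar\dbbar^\ast\omega\rV^2 ;
\end{equation*}
the analogous identity holds for $p+q=n+1$ with the roles of $\dbbar$ and $\dbbar^\ast$ reversed.

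Given this sum-of-squares expression, both directions of the kernel characterization in the middle degrees fall out. If $\Box_b \omega = 0$ then each summand vanishes, and in particular $\dbbar\dbbar^\ast\omega = 0$; pairing once more with $\omega$ via \cref{formal-adjoint} gives $\lV\dbbar^\ast\omega\rV^2 = 0$, so $\dbbar\omega = 0 = \dbbar^\ast\omega$. Conversely, the definition of $\Box_b$ in \cref{defn:kohn-laplacian} is arranged so that every term ends (reading right to left as an operator) with either $\dbbar$ or $\dbbar^\ast$ applied to $\omega$, whence $\dbbar\omega = 0 = \dbbar^\ast\omega$ trivially forces $\Box_b\omega = 0$.

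There is no real obstacle; the only place requiring care is verifying in the middle degrees that the fourth-order composite terms indeed pair to give honest $L^2$-squared norms. This is why the Kohn Laplacian is defined by the specific combinations in \cref{defn:kohn-laplacian}: each term is of the form $P^\ast Q^\ast Q P$ (up to the symmetric factor), so that integration by parts using \cref{formal-adjoint} twice produces $\lV QP\omega\rV^2$, and the right-most operator in each $P$ is either $\dbbar$ or $\dbbar^\ast$.
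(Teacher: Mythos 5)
Your proposal is correct and is exactly the verification the paper leaves implicit: its proof of this lemma is the single line ``This is immediate from the definition of the Kohn Laplacian,'' and your integration-by-parts computation (self-adjointness of each palindromic term, the sum-of-squares identity for $\llp\Box_b\omega,\omega\rrp$, and the observation that every summand terminates in $\dbbar$ or $\dbbar^\ast$) is precisely what makes it immediate.
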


\begin{proof}
 This is immediate from the definition of the Kohn Laplacian.
\end{proof}

\begin{lemma}
 \label{kernel-rumin-laplacian}
 Let $(M^{2n+1},T^{1,0},\theta)$ be a closed pseudohermitian manifold.
 Given $k\in\{0,\dotsc,2n+1\}$, the Rumin Laplacian $\Delta_b\colon\mR^k\to\mR^k$ is a nonnegative, formally self-adjoint operator with
 \[ \ker \left( \Delta_b \colon \mR^k \to \mR^k \right) = \left\{ \omega \in \mR^k \suchthatcolon d\omega = 0 = d^\ast\omega \right \} . \]
\end{lemma}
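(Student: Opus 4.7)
The proof splits according to degree, mirroring the four cases in the definition of $\Delta_b$. In every case formal self-adjointness is immediate: $d$ and $d^\ast$ are formal adjoints by construction, so both $d^\ast d$ and $dd^\ast$ are formally self-adjoint. For the quartic terms appearing in the middle degrees, the identity $(AB)^\ast = B^\ast A^\ast$ together with self-adjointness of $dd^\ast$ and of $d^\ast d$ gives
\begin{equation*}
 (dd^\ast dd^\ast)^\ast = (dd^\ast)^\ast (dd^\ast)^\ast = dd^\ast dd^\ast, \qquad (d^\ast dd^\ast d)^\ast = d^\ast dd^\ast d,
\end{equation*}
so that $\Delta_b$ is formally self-adjoint in all four cases.

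For nonnegativity and the kernel, I would pair $\Delta_b\omega$ with $\omega$ in the $L^2$-inner product of \cref{sec:hodge_star} and use formal adjointness (\cref{formal-adjoint}) repeatedly. The four key identities are
\begin{align*}
 \llp d^\ast d\,\omega, \omega \rrp & = \lV d\omega \rV^2, & \llp dd^\ast\omega, \omega \rrp & = \lV d^\ast\omega \rV^2, \\
 \llp dd^\ast dd^\ast\omega, \omega \rrp & = \lV dd^\ast\omega \rV^2, & \llp d^\ast dd^\ast d\,\omega, \omega \rrp & = \lV d^\ast d\,\omega \rV^2.
\end{align*}
In the non-middle cases $k \leq n-1$ and $k \geq n+2$, these express $\llp\Delta_b\omega,\omega\rrp$ as a positive combination of $\lV d\omega \rV^2$ and $\lV d^\ast\omega \rV^2$. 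Nonnegativity is then immediate, and vanishing forces $d\omega = 0 = d^\ast\omega$; the reverse inclusion is clear from the definition.

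In the middle degrees $k \in \{n,n+1\}$, the identities yield
\begin{equation*}
 \llp \Delta_b\omega, \omega \rrp = \lV d\omega \rV^2 + \lV dd^\ast\omega \rV^2 \quad (k=n), \qquad \llp \Delta_b\omega,\omega \rrp = \lV d^\ast\omega \rV^2 + \lV d^\ast d\,\omega \rV^2 \quad (k=n+1),
\end{equation*}
so nonnegativity again follows. If $\Delta_b\omega = 0$, one of $d\omega = 0$ or $d^\ast\omega = 0$ is immediate, while the other appears only indirectly, through $dd^\ast\omega = 0$ (for $k=n$) or $d^\ast d\,\omega = 0$ (for $k=n+1$). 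The one subtlety of the proof is to perform a \emph{second} integration by parts: $\llp dd^\ast\omega, \omega\rrp = \lV d^\ast\omega \rV^2$ (respectively $\llp d^\ast d\,\omega, \omega\rrp = \lV d\omega \rV^2$) promotes the vanishing of $dd^\ast\omega$ (respectively $d^\ast d\,\omega$) to the vanishing of $d^\ast\omega$ (respectively $d\omega$). The reverse inclusion again follows tautologically from the definition of $\Delta_b$, completing the proof in the middle degrees.
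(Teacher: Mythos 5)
Your proof is correct and is exactly the standard computation that the paper compresses into ``This is immediate from the definition of the Rumin Laplacian''; in particular you correctly identify the only subtlety, namely the second integration by parts needed in the middle degrees to upgrade $dd^\ast\omega=0$ (resp.\ $d^\ast d\omega=0$) to $d^\ast\omega=0$ (resp.\ $d\omega=0$). No gaps.
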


\begin{proof}
 This is immediate from the definition of the Rumin Laplacian.
\end{proof}

Next we show that the Kohn Laplacian (resp.\ Rumin Laplacian) commutes with the conjugate Hodge star operator (resp.\ Hodge star operator).

\begin{lemma}
 \label{kohn-laplacian-hodge-star}
 Let $(M^{2n+1},T^{1,0},\theta)$ be a pseudohermitian manifold.
 Then
 \[ \Box_b\hodge\oomega = \hodge\overline{\Box_b\omega}  \]
 for all $\omega\in\mR^{p,q}$.
\end{lemma}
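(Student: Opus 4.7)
The strategy is to introduce the conjugate Hodge star $T\omega := \hodge\oomega$ and reduce the stated identity to the commutation relation $T\Box_b = \Box_b T$. From $\hodge^2=\Id$ (\cref{hodge-star-squared}) and $\overline{\hodge\omega}=\hodge\oomega$ (\cref{hodge-mapping-properties}(iv)) it follows immediately that $T^2=\Id$; together with \cref{conjugate-rumin-bundles}, this shows $T$ maps $\mR^{p,q}$ isomorphically onto $\mR^{n+1-p,n-q}$ in every bidegree.

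Next, I would establish, for each of the operators $D\in\{\dbbar,\db,\dhor\}$ in the bigraded Rumin complex, the intertwining relations
\begin{align*}
 T\,D\omega &= (-1)^{p+q+1}\,D^\ast\,T\omega, \\
 T\,D^\ast\omega &= (-1)^{p+q}\,D\,T\omega,
\end{align*}
valid on $\mR^{p,q}$ whenever the indicated operator is defined. These are direct consequences of \cref{defn:dbbar-adjoint}, which expresses each formal adjoint in the form $D^\ast = \epsilon\,\hodge\,\overline{D}\,\hodge$ for an explicit sign $\epsilon$, together with the conjugation rules $\overline{\dbbar\omega}=\db\oomega$ and $\overline{\dhor\omega}=\dhor\oomega$ from \cref{conjugate-rumin-operators} and $\hodge^2=\Id$. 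The one point that requires care is that at the middle bidegrees $p+q\in\{n,n+1\}$ \cref{defn:dbbar-adjoint} replaces $(-1)^{p+q}$ by $(-1)^{n+1}$ in the formulas for $\db^\ast$ and $\dhor^\ast$; however the two conventions agree precisely on those bidegrees, so the uniform statement above goes through.

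With the intertwining relations in hand, I would verify $T\Box_b=\Box_b T$ by a direct term-by-term calculation. When $p+q\leq n-1$, applying $T$ to $\dbbar\dbbar^\ast\omega$ and $\dbbar^\ast\dbbar\omega$ moves $T$ past each factor at the cost of two sign factors whose product is $+1$, while swapping $\dbbar\leftrightarrow\dbbar^\ast$; since $T\omega\in\mR^{n+1-p,n-q}$ has bidegree-sum $\geq n+2$, and $\tfrac{n-p-q}{n+1-p-q}=\tfrac{(2n+1-p-q)-n-1}{(2n+1-p-q)-n}$, the result matches the formula of \cref{defn:kohn-laplacian} for $\Box_b$ on $\mR^{n+1-p,n-q}$. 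The case $p+q\geq n+2$ is symmetric. The main obstacle is the middle bidegrees $p+q\in\{n,n+1\}$, where $\Box_b$ is the fourth-order operator $\dbbar^\ast\dbbar+\dbbar\db^\ast\db\dbbar^\ast+\tfrac12\dbbar\db\db^\ast\dbbar^\ast+\dbbar\dbbar^\ast\dbbar\dbbar^\ast$ (or its dual); here one must track the intermediate bidegrees through each four-fold composition, verify that the accumulated signs cancel in pairs, and check that the four summands are permuted to the four summands of $\Box_b$ on the dual bidegree $\mR^{n+1-p,n-q}$. This is mechanical but the only step where the bookkeeping is nontrivial, and it completes the proof.
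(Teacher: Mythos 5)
Your proposal is correct and follows essentially the same route as the paper, which disposes of the lemma in two sentences: \cref{hodge-star-squared} and \cref{defn:dbbar-adjoint} give $\Box_b\hodge = \hodge\oBox_b$ (the $\db$-Laplacian), and \cref{conjugate-rumin-operators,hodge-mapping-properties} convert $\oBox_b\oomega$ into $\overline{\Box_b\omega}$. Your argument is the fully expanded version of this, packaging the two steps into the single operator $T=\hodge\circ\overline{(\cdot)}$ and checking the sign bookkeeping explicitly, including the correct observation that the $(-1)^{n+1}$ convention at the middle bidegrees coincides with $(-1)^{p+q}$ there.
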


\begin{proof}
 \Cref{hodge-star-squared,defn:dbbar-adjoint} imply that $\Box_b\hodge = \hodge\oBox_b$ as operators on $\mR^{p,q}$.  The conclusion follows from \cref{conjugate-rumin-operators,hodge-mapping-properties}.
\end{proof}

\begin{lemma}
 \label{rumin-hodge-star}
 Let $(M^{2n+1},T^{1,0},\theta)$ be a pseudohermitian manifold.
 Then
 \[ \Delta_b\hodge\omega = \hodge\Delta_b\omega \]
 for all $\omega\in\mR^k$.
\end{lemma}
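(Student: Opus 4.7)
The strategy is to show that the Hodge star operator intertwines $d$ with $d^\ast$ up to sign, and that the piecewise definition of $\Delta_b$ is symmetric under this intertwining combined with the degree reflection $k \leftrightarrow 2n+1-k$.

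First, I would derive the identity $d^\ast = (-1)^k \hodge d \hodge$ on $\mR^k$ by integration by parts: for compactly supported $\omega \in \mR^{k-1}$ and $\tau \in \mR^k$,
\begin{equation*}
 \llp d\omega, \tau \rrp = \int_M d\omega \wedge \hodge \otau = (-1)^k \int_M \omega \wedge d\hodge \otau,
\end{equation*}
which, combined with $\hodge^2 \omega = \omega$ from \cref{hodge-star-squared} and $\overline{\hodge \otau} = \hodge \tau$ from \cref{hodge-mapping-properties}, yields the claim. This argument goes through in all degrees, including the middle degrees where $d$ is second-order, since the integration by parts only invokes Stokes' theorem on $(2n+1)$-forms. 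Rearranging using $\hodge^2 = 1$ then gives the intertwining relations $\hodge d = (-1)^{k+1} d^\ast \hodge$ and $\hodge d^\ast = (-1)^k d \hodge$ on $\mR^k$.

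Next, iterating these two relations and noting that the factors $(-1)^k$ always appear in squared pairs, I would deduce
\begin{gather*}
 \hodge d^\ast d = dd^\ast \hodge, \qquad \hodge dd^\ast = d^\ast d \hodge, \\
 \hodge dd^\ast dd^\ast = d^\ast d d^\ast d \hodge, \qquad \hodge d^\ast dd^\ast d = dd^\ast dd^\ast \hodge,
\end{gather*}
on the appropriate spaces. The result then follows by matching the piecewise definition of $\Delta_b$ under $k \leftrightarrow 2n+1-k$: for $k \leq n-1$, the coefficient $\frac{n-k}{n-k+1}$ in $\Delta_b$ on $\mR^k$ equals $\frac{(2n+1-k)-n-1}{(2n+1-k)-n}$, which is precisely the coefficient of $d^\ast d$ in $\Delta_b$ on $\mR^{2n+1-k}$, and symmetrically for $k \geq n+2$; in the middle degrees $k \in \{n, n+1\}$, all numerical coefficients are unity and the symmetry is manifest.

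The computation is essentially formal once the intertwining relations are established, so the main bookkeeping concern is tracking parities of signs through the middle-degree four-fold compositions; however, each such composition produces sign factors in squared pairs, so they always cancel. The only slightly delicate point is justifying the integration by parts in the middle degrees, where $d\colon\mR^n\to\mR^{n+1}$ is second-order, but this is resolved by the observation that $d\omega$ is still an honest $(n+1)$-form whenever $\omega \in \mR^n$, so Stokes' theorem applies without modification.
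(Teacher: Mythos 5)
Your proposal is correct and takes essentially the same route as the paper: the paper's proof consists precisely of noting that \cref{defn:dbbar-adjoint,formal-adjoint} give $d^\ast = (-1)^k\hodge d\hodge$ on $\mR^k$, after which the conclusion "readily follows" by the sign-cancellation and coefficient-matching bookkeeping you spell out. Your explicit verification that $\frac{n-k}{n-k+1}$ on $\mR^k$ matches $\frac{(2n+1-k)-n-1}{(2n+1-k)-n}$ on $\mR^{2n+1-k}$, and your remark on Stokes' theorem in the middle degrees, are exactly the details the paper leaves implicit.
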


\begin{proof}
 \Cref{defn:dbbar-adjoint,formal-adjoint} imply that $d^\ast=(-1)^k\hodge d\hodge$ on $\mR^k$.
 The conclusion readily follows.
\end{proof}

Our next objective is to establish the aforementioned identity relating the Kohn and Rumin Laplacians.
This requires three lemmas which relate different compositions of the operators $\db$, $\dbbar$, $\dhor$, and their adjoints which preserve $\mR^k$ but not $\mR^{p,q}$.
Each lemma identifies two differential operators of order $s$ modulo a differential operator of order $s-2$ which vanishes on torsion-free manifolds.
This is enough for our purposes, though explicit formulas for the lower-order terms are readily derived.

\begin{lemma}
 \label{dbdbbarstar}
 Let $(M^{2n+1},T^{1,0},\theta)$ be a pseudohermitian manifold.
 Suppose that $k\in\{0,\dotsc,n-1\} \cup \{ n+2,\dotsc,2n+1\}$.
 Then
 \begin{subequations}
  \label{eqn:dbdbbarstar-sub}
  \begin{align}
   \label{eqn:dbdbbarstar-sub-noconj} \frac{n-k}{n-k+1}\db\dbbar^\ast + \dbbar^\ast\db & \equiv 0 \mod \DiffOperator{0} , \\
   \label{eqn:dbdbbarstar-sub-conj} \frac{n-k}{n-k+1}\dbbar\db^\ast + \db^\ast\dbbar & \equiv 0 \mod \DiffOperator{0} 
  \end{align}
 \end{subequations}
 on $\mR^k$, $k \leq n-1$; and
 \begin{subequations}
  \label{eqn:dbdbbarstar-sup}
  \begin{align}
   \label{eqn:dbdbbarstar-sup-conj} \frac{k-n-1}{k-n}\db^\ast\dbbar + \dbbar\db^\ast & \equiv 0 \mod \DiffOperator{0} , \\
   \label{eqn:dbdbbarstar-sup-noconj} \frac{k-n-1}{k-n}\dbbar^\ast\db + \db\dbbar^\ast & \equiv 0 \mod \DiffOperator{0} 
  \end{align}
 \end{subequations}
 on $\mR^k$, $k \geq n+2$.
 Moreover, if $\theta$ is torsion-free, then Equations~\eqref{eqn:dbdbbarstar-sub} and~\eqref{eqn:dbdbbarstar-sup} hold exactly.
\end{lemma}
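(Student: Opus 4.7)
The plan is to first establish \eqref{eqn:dbdbbarstar-sub-noconj} by direct computation on primitive forms via the isomorphism of \cref{sR-to-P}, then obtain \eqref{eqn:dbdbbarstar-sub-conj} by formal adjointness and reduce the sup case to the sub case by Hodge star conjugation.

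For the first identity, let $\omega \in \mR^{p,q}$ with $k = p + q \leq n-1$ and write $\eta = I(\omega) \in P^{p,q}M$. Equations~\eqref{eqn:dbbar-to-onablas} and~\eqref{eqn:dbbarast-to-onablasast} translate the relevant operators into expressions in $\nablas$, $\onablas$ and their formal adjoints; conjugating~\eqref{eqn:dbbar-to-onablas} by means of \cref{conjugate-rumin-operators} (and using that $d\theta$ is real) gives the companion formula $I(\db\omega) = \nablas\eta - \tfrac{i}{n-k+1}\,d\theta\wedge\onablas^\ast\eta$ for $\db$. With these in hand, I compute
\begin{align*}
I(\db\dbbar^\ast\omega) &= \nablas\onablas^\ast\eta,\\
I(\dbbar^\ast\db\omega) &= \onablas^\ast\nablas\eta + \tfrac{1}{n-k+1}\,\nablas\onablas^\ast\eta,
\end{align*}
where the simplifications come from $(\onablas^\ast)^2 = 0$ on primitive forms (\cref{nablasast-complex}) and from the commutation relation $\onablas^\ast(d\theta\wedge\cdot) = d\theta\wedge\onablas^\ast + i\nablas$ of \cref{onablas-simple-identities}. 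The particular coefficient $\tfrac{n-k}{n-k+1}$ in \eqref{eqn:dbdbbarstar-sub-noconj} is precisely what is needed for the two $\nablas\onablas^\ast$-contributions to combine cleanly into a single $\nablas\onablas^\ast$, giving
\[
I\!\left(\tfrac{n-k}{n-k+1}\db\dbbar^\ast\omega + \dbbar^\ast\db\omega\right) = \nablas\onablas^\ast\eta + \onablas^\ast\nablas\eta.
\]
Equation~\eqref{eqn:slash-laplacian-up} of \cref{slash-laplacian-add-one-to-q} identifies the right-hand side with $-(n-k)i\,A\hash\eta$, which is a zeroth-order operator (and one which maps $P^{p,q}M$ into $P^{p+1,q-1}M$, so the identification via $I$ is consistent) that vanishes identically in the torsion-free case. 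This proves \eqref{eqn:dbdbbarstar-sub-noconj}, including its torsion-free refinement.

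I would then obtain \eqref{eqn:dbdbbarstar-sub-conj} by taking formal $L^2$-adjoints of \eqref{eqn:dbdbbarstar-sub-noconj} via \cref{formal-adjoint}: since $(\db\dbbar^\ast)^\ast = \dbbar\db^\ast$ and $(\dbbar^\ast\db)^\ast = \db^\ast\dbbar$, and the adjoint of an order-zero (resp.\ vanishing) operator is again of order zero (resp.\ vanishing), the adjoint identity gives \eqref{eqn:dbdbbarstar-sub-conj} on the target bidegree $(p+1,q-1)$, and summing over all such bidegrees with $p+q = k$ recovers the identity on $\mR^k$. The two sup identities~\eqref{eqn:dbdbbarstar-sup-conj} and~\eqref{eqn:dbdbbarstar-sup-noconj} follow by Hodge star duality: for $\omega \in \mR^{p,q}$ with $p+q = k \geq n+2$, the form $\hodge\omega$ lies in $\mR^{n-q,n+1-p}$ of total degree $2n+1-k \leq n-1$, and the identities $\dbbar^\ast = (-1)^{p+q}\hodge\db\hodge$ and $\db^\ast = (-1)^{p+q}\hodge\dbbar\hodge$ from \cref{defn:dbbar-adjoint} together with $\hodge^2 = \id$ (\cref{hodge-star-squared}) transform the already-proved identities \eqref{eqn:dbdbbarstar-sub-noconj} and \eqref{eqn:dbdbbarstar-sub-conj} at degree $2n+1-k$ into \eqref{eqn:dbdbbarstar-sup-conj} and \eqref{eqn:dbdbbarstar-sup-noconj} at degree $k$, with the Hodge-conjugated error term remaining of order zero and again vanishing in the torsion-free case.

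The main obstacle is careful bookkeeping of bidegree shifts and coefficients: computing $I(\dbbar^\ast\db\omega)$ requires applying the $\db$-formula of \cref{sR-to-P} at total degree $k$ (with coefficient $\tfrac{i}{n-k+1}$) followed by $\dbbar^\ast$ at total degree $k+1$, whereas $I(\db\dbbar^\ast\omega)$ uses $\dbbar^\ast$ at degree $k$ followed by $\db$ at degree $k-1$ (where the shifted coefficient $\tfrac{i}{n-k+2}$ appears, but is annihilated by $(\onablas^\ast)^2 = 0$). It is precisely the weighting $\tfrac{n-k}{n-k+1}$ that forces every $d\theta\wedge\onablas^\ast$ cross-term to telescope to the clean combination $\nablas\onablas^\ast + \onablas^\ast\nablas$, and an analogous bidegree check in the sup case verifies that the Hodge-star reduction yields exactly the coefficient $\tfrac{k-n-1}{k-n}$.
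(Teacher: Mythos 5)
Your argument is correct and follows essentially the same route as the paper's proof: reduce to a fixed bidegree, use \cref{sR-to-P,onablas-simple-identities,nablasast-complex} to rewrite $\frac{n-k}{n-k+1}\db\dbbar^\ast + \dbbar^\ast\db$ as $\nablas\onablas^\ast + \onablas^\ast\nablas$, identify this with the order-zero torsion term via \cref{slash-laplacian-add-one-to-q}, and transfer to degrees $k\geq n+2$ with the Hodge star. The only (immaterial) differences are that the paper obtains \eqref{eqn:dbdbbarstar-sub-conj} by conjugation rather than by formal adjoints, and reduces the high-degree case by applying \eqref{eqn:dbdbbarstar-sub} to $\hodge\oomega$.
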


\begin{proof}
 By linearity, it suffices to prove Equations~\eqref{eqn:dbdbbarstar-sub} and~\eqref{eqn:dbdbbarstar-sup} on $\mR^{p,q}$.
 Set $k := p+q$.
 
 Suppose that $k\leq n-1$.
 On the one hand, \cref{onablas-simple-identities,nablasast-complex,sR-to-P} imply that
 \begin{equation*}
  \frac{n-k}{n-k+1}\db\dbbar^\ast + \dbbar^\ast\db = \nablas\onablas^\ast + \onablas^\ast\nablas .
 \end{equation*}
 On the other hand, \cref{slash-laplacian-add-one-to-q} asserts that
 \begin{equation*}
  \nablas\onablas^\ast + \onablas^\ast\nablas \equiv 0 \mod \DiffOperator{0} ,
 \end{equation*}
 with exact equality if $\theta$ is torsion-free.
 Equation~\eqref{eqn:dbdbbarstar-sub-noconj} readily follows.
 Equation~\eqref{eqn:dbdbbarstar-sub-conj} follows by conjugation.
 
 Suppose now that $k\geq n+2$.
 Applying Equation~\eqref{eqn:dbdbbarstar-sub} to $\hodge\oomega$ yields Equation~\eqref{eqn:dbdbbarstar-sup}.
\end{proof}

\begin{lemma}
 \label{dhordbbarstar}
 Let $(M^{2n+1}, T^{1,0},\theta)$ be a pseudohermitian manifold.
 Then
 \begin{subequations}
  \label{eqn:dhordbbarstar-sub}
  \begin{align}
   \label{eqn:dhordbbarstar-sub-noconj} \db^\ast\dhor + \dhor^\ast\dbbar & \equiv -\dbbar\db^\ast\db\db^\ast - \dbbar\dbbar^\ast\dbbar\db^\ast \mod \DiffOperator{2} , \\
   \label{eqn:dhordbbarstar-sub-conj} \dbbar^\ast\dhor + \dhor^\ast\db & \equiv -\db\dbbar^\ast\dbbar\dbbar^\ast - \db\db^\ast\db\dbbar^\ast \mod \DiffOperator{2}
  \end{align}
 \end{subequations}
 on $\mR^n$;
 and
 \begin{subequations}
  \label{eqn:dhordbbarstar-sup}
  \begin{align}
   \label{eqn:dhordbbarstar-sup-noconj} \db\dhor^\ast + \dhor\dbbar^\ast \equiv -\dbbar^\ast\db\db^\ast\db - \dbbar^\ast\dbbar\dbbar^\ast\db \mod \DiffOperator{2} \\
   \label{eqn:dhordbbarstar-sup-conj} \dbbar\dhor^\ast + \dhor\db^\ast \equiv -\db^\ast\dbbar\dbbar^\ast\dbbar - \db^\ast\db\db^\ast\dbbar \mod \DiffOperator{2}
  \end{align}
 \end{subequations}
 on $\mR^{n+1}$.
 Moreover, if $\theta$ is torsion-free, then Equations~\eqref{eqn:dhordbbarstar-sub} and~\eqref{eqn:dhordbbarstar-sup} hold exactly.
\end{lemma}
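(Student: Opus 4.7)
My plan is to first reduce the four identities to a single one, then translate to the primitive-form calculus of \cref{sec:setup}, and finally verify the identity using the commutator machinery of \cref{commutators,slash-laplacian-add-one-to-q,triple-commute}.

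First I would reduce. The conjugate identities \eqref{eqn:dhordbbarstar-sub-conj} and \eqref{eqn:dhordbbarstar-sup-conj} follow from \eqref{eqn:dhordbbarstar-sub-noconj} and \eqref{eqn:dhordbbarstar-sup-noconj} by applying the conjugation isomorphism of \cref{conjugate-rumin-bundles,conjugate-rumin-operators}, which intertwines $\db$ with $\dbbar$ and preserves $\dhor$. To pass from the statements on $\mR^n$ to those on $\mR^{n+1}$, I would conjugate by the Hodge star: by \cref{hodge-star-squared} and \cref{defn:dbbar-adjoint}, each operator $\partial_\bullet$ satisfies $\hodge \partial_\bullet \hodge = \pm \partial_\bullet^\ast$ (on the appropriate bigraded piece), so applying $\hodge$ to both sides of an identity on $\mR^n$ produces the corresponding identity on $\mR^{n+1}$ with every operator replaced by its adjoint. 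Thus it suffices to prove \eqref{eqn:dhordbbarstar-sub-noconj} on each $\mR^{p,q}$ with $p+q = n$.

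Next I would translate to primitive forms. Fix $\omega \in \mR^{p,q}$ with $p+q = n$ and identify $\omega \cong I(\omega) \in P^{p,q}M$ via \cref{sR-to-P}. The first-order adjoints become $\db^\ast = \nablas^\ast$ and $\dbbar^\ast = \onablas^\ast$ by \eqref{eqn:dbast-to-nablasast}--\eqref{eqn:dbbarast-to-onablasast}. For the middle-degree second-order operators, I use \eqref{eqn:db-to-nablas-middle}--\eqref{eqn:dbbar-to-onablas-middle}: up to the invertible factor $(-1)^p i^{n^2}\hodge$, we have
\[
\db \;\leftrightarrow\; -i\nablas\onablas^\ast + A\hash, \qquad \dhor \;\leftrightarrow\; \nabla_0 - i\nablas\nablas^\ast + i\onablas\onablas^\ast, \qquad \dbbar \;\leftrightarrow\; i\onablas\nablas^\ast + A\ohash.
\]
Substituting, both $\db^\ast\dhor + \dhor^\ast\dbbar$ and $-\dbbar(\db^\ast\db+\dbbar^\ast\dbbar)\db^\ast = -\dbbar\db^\ast\db\db^\ast - \dbbar\dbbar^\ast\dbbar\db^\ast$ become explicit fourth-order compositions in $\nablas,\onablas,\nablas^\ast,\onablas^\ast,\nabla_0$ (plus torsion-valued order-zero pieces from $A\hash, A\ohash$).

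The heart of the proof is then a direct manipulation. Using $(\nablas^\ast)^2 = (\onablas^\ast)^2 = 0$ and $\{\nablas^\ast,\onablas^\ast\} = 0$ from \cref{nablasast-complex}, together with the $(p,q)$-analogue of \cref{triple-commute} (which reads $\onablas^\ast\nablas^\ast\onablas + \nablas^\ast\onablas^\ast\onablas + i\nabla_0\nablas^\ast \equiv 0 \mod \DiffOperator{0}$, derived from \cref{commutators} exactly as in the stated $(p,0)$ case) and \cref{slash-laplacian-add-one-to-q}, both sides collapse modulo $\DiffOperator{2}$ to the same expression of the schematic shape $-\onablas\nabla_0\nablas^\ast$ (the precise normalization being fixed by the constants in \eqref{eqn:db-to-nablas-middle}--\eqref{eqn:dbbar-to-onablas-middle}). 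When $\theta$ is torsion-free, all of $A\hash$, $A\ohash$, and the error terms in \cref{slash-laplacian-add-one-to-q,triple-commute} vanish identically, so the computation yields equality rather than congruence.

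The main obstacle will be the bookkeeping in the third step. Every time one reorders a pair among $\nablab,\nablabbar,\nabla_0$ past one another or past an adjoint, \cref{commutators} introduces pseudohermitian torsion and curvature terms, each of which becomes a zero-order operator composed with a second-order one and so contributes to $\DiffOperator{2}$. The delicate point is to confirm that no such commutator term can produce a residual third-order contribution — equivalently, that all ``principal'' third-order mismatches between the two sides are absorbed by the identity of \cref{triple-commute}. The torsion-free case, where all the commutator debris disappears and the computation reduces to a purely algebraic manipulation on $P^{p,q}M$, serves as the consistency check that the principal-order matching is correctly arranged.
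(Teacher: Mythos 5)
Your skeleton is the paper's: reduce by conjugation and Hodge duality to the single identity \eqref{eqn:dhordbbarstar-sub-noconj} on $\mR^{p,q}$, $p+q=n$, translate via \cref{sR-to-P} into the $\nablas,\onablas,\nabla_0$ calculus, and finish with commutator identities. One small caution on the reduction: under the plain Hodge star the two complex operators cross, $\hodge\db\hodge=\pm\dbbar^\ast$ and $\hodge\dbbar\hodge=\pm\db^\ast$ (see \cref{defn:dbbar-adjoint}), so conjugating \eqref{eqn:dhordbbarstar-sub-noconj} by $\hodge$ lands on \eqref{eqn:dhordbbarstar-sup-conj}, not \eqref{eqn:dhordbbarstar-sup-noconj}; the paper applies the identity to $\hodge\oomega$ instead. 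Since you also invoke conjugation, the four identities are still all reachable from one, but your stated rule ``$\hodge\partial_\bullet\hodge=\pm\partial_\bullet^\ast$'' is not literally correct for $\db$ and $\dbbar$.

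The substantive problem is in the central cancellation, which does not work the way you describe. Using \eqref{eqn:n-hodge-dbbar}--\eqref{eqn:dhor-to-onablas-middle} together with \cref{dbdbbarstar} and $\dbbar^2=0$ (which kill the cross terms of the form $\dbbar\db^\ast\dbbar\dbbar^\ast$), one gets $\db^\ast\dhor \equiv -i\onablas\nablas^\ast\nabla_0 - \dbbar\db^\ast\db\db^\ast$ and $\dhor^\ast\dbbar \equiv i\nabla_0\onablas\nablas^\ast - \dbbar\dbbar^\ast\dbbar\db^\ast$ modulo $\DiffOperator{2}$. The two $\nabla_0$-terms then cancel \emph{against each other} because $[\nabla_0,\onablas\nablas^\ast]\in\DiffOperator{2}$ --- an immediate consequence of \cref{commutators} --- and what survives is exactly the stated right-hand side, which contains no $\nabla_0$ at all. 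Your claim that both sides collapse to a common expression of the shape $-\onablas\nabla_0\nablas^\ast$ is therefore not the mechanism, and the tools you cite for it are not the right ones: \cref{slash-laplacian-add-one-to-q} concerns the anticommutator $\onablas\nablas^\ast+\nablas^\ast\onablas$, which does not appear here, and \cref{triple-commute} is proved only on $\Omega^{p,0}M$, where its exactness relies on the absence of barred indices; a general $(p,q)$ version would pick up curvature corrections you have not tracked, and you would still owe an argument that the genuinely fourth-order part of $-\dbbar\db^\ast\db\db^\ast-\dbbar\dbbar^\ast\dbbar\db^\ast$ can be traded for a $\nabla_0$-term. Once the computation is arranged as above, your torsion-free exactness claim is fine, since every inexact ingredient (the congruences for $\hodge\db$ and $\hodge\dbbar$, the $\nabla_0$-commutator, and \cref{dbdbbarstar}) becomes an exact identity when $A=0$.
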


\begin{proof}
 By linearity, it suffices to verify Equations~\eqref{eqn:dhordbbarstar-sub} and~\eqref{eqn:dhordbbarstar-sup} on $\mR^{p,q}$ for $p+q\in\{n,n+1\}$.
 
 Suppose that $p+q=n$.
 We deduce from \cref{sR-to-P,nablasast-complex} that
 \begin{align}
  \label{eqn:n-hodge-dbbar} \hodge\dbbar & \equiv (-1)^pi^{n^2+1}\dbbar\db^\ast \mod \DiffOperator{0} , \\
  \label{eqn:n-hodge-db} \hodge\db & \equiv (-1)^pi^{n^2-1}\db\dbbar^\ast \mod \DiffOperator{0} , \\
  \label{eqn:n-hodge-dhor} \hodge\dhor & = (-1)^pi^{n^2}\left( \nabla_0 - i\db\db^\ast + i\dbbar\dbbar^\ast\right) ;
 \end{align}
 moreover, if $\theta$ is torsion-free, then Equations~\eqref{eqn:n-hodge-dbbar} and~\eqref{eqn:n-hodge-db} hold exactly.
 On the one hand, combining \cref{justification-of-bigraded-complex,sR-to-P,dbdbbarstar} with Equations~\eqref{eqn:n-hodge-dbbar}, \eqref{eqn:n-hodge-db}, and~\eqref{eqn:n-hodge-dhor} yields
 \begin{align}
  \label{eqn:I-dbbarast-dhor} \db^\ast\dhor & \equiv -i\onablas\nablas^\ast\nabla_0 - \dbbar\db^\ast\db\db^\ast \mod \DiffOperator{2} , \\
  \label{eqn:I-dhorast-dbbar} \dhor^\ast\dbbar & \equiv i\nabla_0\onablas\nablas^\ast - \dbbar\dbbar^\ast\dbbar\db^\ast \mod \DiffOperator{2} ;
 \end{align}
 moreover, if $\theta$ is torsion-free, then Equations~\eqref{eqn:I-dbbarast-dhor} and~\eqref{eqn:I-dhorast-dbbar} hold exactly.
 On the other hand, \cref{commutators,nablab-and-nablas-adjoints} imply that
 \begin{equation}
  \label{eqn:nablas-nabla0-commutator}
  \onablas\nablas^\ast\nabla_0 - \nabla_0\onablas\nablas^\ast \equiv 0 \mod \DiffOperator{2};
 \end{equation}
 moreover, if $\theta$ is torsion-free, then Equation~\eqref{eqn:nablas-nabla0-commutator} holds exactly.
 Combining Equations~\eqref{eqn:I-dbbarast-dhor}, \eqref{eqn:I-dhorast-dbbar}, and~\eqref{eqn:nablas-nabla0-commutator} yields Equation~\eqref{eqn:dhordbbarstar-sub-noconj}.
 Equation~\eqref{eqn:dhordbbarstar-sub-conj} follows by conjugation.
 
 Suppose now that $p+q=n+1$.
 Applying Equation~\eqref{eqn:dhordbbarstar-sub} to $\hodge\oomega$ yields Equation~\eqref{eqn:dhordbbarstar-sup}. 
\end{proof}

\begin{lemma}
 \label{critical-dbdbbarstar}
 Let $(M^{2n+1}, T^{1,0},\theta)$ be a pseudohermitian manifold.
 Then
 \begin{subequations}
  \label{eqn:critical-dbdbbarstar-sub}
  \begin{align}
   \label{eqn:critical-dbdbbarstar-sub-noconj} \dbbar^\ast\db & \equiv 0 \mod \DiffOperator{2} , \\
   \label{eqn:critical-dbdbbarstar-sub-conj} \db^\ast\dbbar & \equiv 0 \mod \DiffOperator{2}
  \end{align}
 \end{subequations}
 on $\mR^n$;
 and
 \begin{subequations}
  \label{eqn:critical-dbdbbarstar-sup}
  \begin{align}
   \label{eqn:critical-dbdbbarstar-sup-conj} \dbbar\db^\ast & \equiv 0 \mod \DiffOperator{2} , \\
   \label{eqn:critical-dbdbbarstar-sup-noconj} \db\dbbar^\ast & \equiv 0 \mod \DiffOperator{2}
  \end{align}
 \end{subequations}
 on $\mR^{n+1}$.
 Moreover, if $\theta$ is torsion-free, then Equations~\eqref{eqn:critical-dbdbbarstar-sub} and~\eqref{eqn:critical-dbdbbarstar-sup} hold exactly.
\end{lemma}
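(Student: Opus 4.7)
By linearity, it suffices to verify the identities on each bidegree summand $\mR^{p,q}$.  The four claims are equivalent to one another under the symmetries of the bigraded Rumin complex:  complex conjugation exchanges \eqref{eqn:critical-dbdbbarstar-sub-noconj} on $\mR^{p,q}$ with \eqref{eqn:critical-dbdbbarstar-sub-conj} on $\mR^{q,p}$ (using \cref{conjugate-rumin-operators,hodge-mapping-properties}(iv) together with $\overline{\dbbar^\ast \omega}=\db^\ast\oomega$, which follows from \cref{defn:dbbar-adjoint}), and the $\mR^{n+1}$ statements are the formal $L^2$-adjoints of the $\mR^n$ statements.  It therefore suffices to prove \eqref{eqn:critical-dbdbbarstar-sub-noconj} on $\mR^{p,q}$ with $p+q=n$.

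The main computation will use the Hodge star identification from \cref{sR-to-P}.  Since $\hodge^2=1$ (\cref{hodge-star-squared}) and $\dbbar^\ast=(-1)^{n+1}\hodge\db\hodge$ on $\mR^{n+1}$, we have
\[
 \dbbar^\ast\db\omega \;=\; (-1)^{n+1}\,\hodge\db\bigl(\hodge\db\omega\bigr).
\]
By Equation \eqref{eqn:db-to-nablas-middle}, for any $\xi\in\mR^{p',q'}$ with $p'+q'=n$ one has $\hodge\db\xi=(-1)^{p'+1}i^{n^2+1}\nablas\onablas^\ast\xi+(-1)^{p'}i^{n^2}A\hash\xi$, the second term being a zeroth-order (torsion) operator and hence of at most second order when we later compose.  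Using this for $\omega$ and then again for $\hodge\db\omega\in\mR^{n+1-q,n-p-1}$ (which has total bidegree $n$), we obtain
\[
 \dbbar^\ast\db\omega \;\equiv\; C\,\hodge\bigl(\nablas\onablas^\ast\nablas\onablas^\ast\omega\bigr) \mod \DiffOperator{2},
\]
for an explicit constant $C$, since each $A\hash$-contribution yields at most a second-order operator in $\omega$.

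The final step is to show $\nablas\onablas^\ast\nablas\onablas^\ast\omega\equiv 0\mod \DiffOperator{2}$.  Equation~\eqref{eqn:slash-laplacian-up} of \cref{slash-laplacian-add-one-to-q}, applied at total bidegree $n$, gives $\nablas\onablas^\ast+\onablas^\ast\nablas=0$ modulo $\DiffOperator{0}$.  Applying this to the intermediate term $\nablas\onablas^\ast\omega\in\Omega^{p+1,q-1}M$ (still sum $n$) yields
\[
 \nablas\onablas^\ast\nablas\onablas^\ast\omega \;\equiv\; -\,\onablas^\ast(\nablas\nablas)\onablas^\ast\omega \mod \DiffOperator{2}.
\]
By the skew-symmetrization in two unbarred slots and the commutator formulas of \cref{commutators}, the operator $\nablas\nablas$ equals a zeroth-order curvature-plus-torsion expression; hence $\onablas^\ast(\nablas\nablas)\onablas^\ast\in\DiffOperator{2}$, completing the proof of \eqref{eqn:critical-dbdbbarstar-sub-noconj}.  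The torsion-free case requires tracking these errors exactly:  $A=0$ makes $A\hash=0$ and also makes \eqref{eqn:slash-laplacian-up} an exact identity; moreover Webster's formula \eqref{eqn:curvature-form} shows that when $A=0$ the curvature form $\Omega_\alpha{}^\gamma$ has no $\theta^{\alpha'}\wedge\theta^\alpha$ component, so $\nablas\nablas=0$ exactly on all of $\Omega^{\bullet,\bullet}M$.  The main obstacle is the careful bookkeeping of bidegrees and orders when chaining the Hodge star with the second-order operators $\db,\dbbar^\ast$ at the middle degrees; once this is set up, the algebraic vanishing is driven entirely by the single identity $\nablas\onablas^\ast=-\onablas^\ast\nablas$ in bidegree $n$ together with $\nablas\nablas\equiv 0\mod\DiffOperator{0}$.
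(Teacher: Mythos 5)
Your computation for the middle degree $p+q=n$ is correct and is essentially the paper's argument in a slightly different dress. The paper writes $\dbbar^\ast\db=(-1)^{n+1}\hodge\db\hodge\db$ and kills the leading term by substituting $\hodge\db\equiv c\,\db\dbbar^\ast \mod \DiffOperator{0}$ (Equation~\eqref{eqn:n-hodge-db}) and invoking $\db^2=0$; you instead substitute \eqref{eqn:db-to-nablas-middle} twice and kill the leading term with the anticommutator \eqref{eqn:slash-laplacian-up} (whose torsion term indeed vanishes identically at total degree $n$) together with $\nablas\nablas\equiv 0\mod\DiffOperator{0}$ from \cref{commutators}. Both arguments amount to the leading part of $\hodge\db$ squaring to zero, and your tracking of the torsion-free case is right. (The outer $\hodge$ in your displayed formula for $\dbbar^\ast\db\omega$ should not be there --- both Hodge stars in $\hodge\db\hodge\db$ are already consumed by the two applications of \eqref{eqn:db-to-nablas-middle} --- but since $\hodge$ is an invertible zeroth-order operator this does not affect the conclusion.)

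The genuine problem is your reduction of the $\mR^{n+1}$ statements to the $\mR^n$ statements via ``formal $L^2$-adjoints.'' The formal adjoint of $\dbbar^\ast\db\colon\mR^{p,q}\to\mR^{p+2,q-2}$ with $p+q=n$ is $\db^\ast\dbbar\colon\mR^{p+2,q-2}\to\mR^{p,q}$, which again acts within $\mR^n$ --- it is \eqref{eqn:critical-dbdbbarstar-sub-conj}, not either operator in \eqref{eqn:critical-dbdbbarstar-sup}. Likewise $(\dbbar\db^\ast)^\ast=\db\dbbar^\ast$ pairs the two $\mR^{n+1}$ statements with each other. Adjoints never move you between total degrees $n$ and $n+1$, so as written your proposal proves nothing about $\mR^{n+1}$. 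The fix is to conjugate by the Hodge star rather than take adjoints: since $\dbbar^\ast=(-1)^{p+q}\hodge\db\hodge$ and $\hodge^2=1$, one has $\db\dbbar^\ast=\pm\hodge\,(\dbbar^\ast\db)\,\hodge$ on $\mR^{n+1}$ (the paper phrases this as applying \eqref{eqn:critical-dbdbbarstar-sub} to $\hodge\oomega$), and conjugation by the $C^\infty$-linear isomorphism $\hodge$ preserves $\DiffOperator{2}$ and preserves exact vanishing in the torsion-free case.
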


\begin{proof}
 By linearity, it suffices to verify Equations~\eqref{eqn:critical-dbdbbarstar-sub} and~\eqref{eqn:critical-dbdbbarstar-sup} on $\mR^{p,q}$ for $p+q\in\{n,n+1\}$.
 
 Suppose that $p+q=n$.
 By definition, $\dbbar^\ast\db = (-1)^{n+1}\hodge\db\hodge\db$ on $\mR^{p,q}$.
 Combining this with Equation~\eqref{eqn:n-hodge-db} yields Equation~\eqref{eqn:critical-dbdbbarstar-sub-noconj}.
 Equation~\eqref{eqn:critical-dbdbbarstar-sub-conj} follows by conjugation.
 
 Suppose now that $p+q=n+1$.
 Applying Equation~\eqref{eqn:critical-dbdbbarstar-sub} to $\hodge\oomega$ yields Equation~\eqref{eqn:critical-dbdbbarstar-sup}.
\end{proof}

We now prove the claimed relationship between the Kohn and Rumin Laplacians.
To state this clearly, define $L_b\colon\mR^{p,q}\to\mR^{p,q}$ by
\begin{equation}
 \label{eqn:Lb}
 L_b :=
 \begin{cases}
  \Box_b + \oBox_b , & \text{if $p+q\not\in\{n,n+1\}$}, \\
  \Box_b + \oBox_b + \dhor^\ast\dhor , & \text{if $p+q=n$}, \\
  \Box_b + \oBox_b + \dhor\dhor^\ast, & \text{if $p+q=n+1$} .
 \end{cases}
\end{equation}
We show that $\Delta_b \equiv L_b$ modulo lower-order terms, recovering an observation of Rumin~\cite{Rumin1994}*{Proposition~5}.

\begin{proposition}
 \label{kohn-laplacian-to-rumin-laplacian}
 Let $(M^{2n+1},T^{1,0},\theta)$ be a pseudohermitian manifold.
 Then
 \begin{subequations}
  \label{eqn:kohn-laplacian-to-rumin-laplacian}
  \begin{align}
   \label{eqn:kohn-laplacian-to-rumin-laplacian-non-middle} \Delta_b & \equiv L_b \mod \DiffOperator{0}, && \text{on $\mR^k$, $k\not\in\{n,n+1\}$}, \\
   \label{eqn:kohn-laplacian-to-rumin-laplacian-n} \Delta_b & \equiv L_b \mod \DiffOperator{2} , && \text{on $\mR^k$, $k\in\{n,n+1\}$} .
  \end{align}
 \end{subequations}
 Moreover, if $\theta$ is torsion-free, then Equation~\eqref{eqn:kohn-laplacian-to-rumin-laplacian} holds exactly.
\end{proposition}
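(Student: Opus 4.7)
The plan is to decompose both $d$ and $d^\ast$ into their bigraded components on each summand $\mR^{p,q} \subset \mR^k$, expand the defining formulas for $\Delta_b$ and $L_b$, identify the diagonal contributions as matching term by term, and absorb all cross terms into the stated remainders using Lemmas~\ref{dbdbbarstar}, \ref{dhordbbarstar}, and~\ref{critical-dbdbbarstar}.

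First I would dispatch the non-middle case $k \leq n-1$ (the case $k \geq n+2$ follows by conjugating with $\hodge$ via Lemma~\ref{rumin-hodge-star}). On each $\mR^{p,q}$ with $p+q=k$, one has $d = \db + \dbbar$ and $d^\ast = \db^\ast + \dbbar^\ast$, so a direct expansion of $\Delta_b = d^\ast d + \tfrac{n-k}{n-k+1}dd^\ast$ gives
\begin{equation*}
\Delta_b - L_b = \left(\dbbar^\ast\db + \tfrac{n-k}{n-k+1}\db\dbbar^\ast\right) + \left(\db^\ast\dbbar + \tfrac{n-k}{n-k+1}\dbbar\db^\ast\right),
\end{equation*}
and each parenthesized summand lies in $\DiffOperator{0}$ by Lemma~\ref{dbdbbarstar}, and vanishes identically when $\theta$ is torsion-free.

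Next I would tackle the middle case $k=n$ (from which $k=n+1$ follows by conjugation with $\hodge$ using Lemmas~\ref{kohn-laplacian-hodge-star} and~\ref{rumin-hodge-star}). The second-order part of $\Delta_b$ is $d^\ast d$, which expands into nine terms: the three diagonal pieces $\db^\ast\db + \dhor^\ast\dhor + \dbbar^\ast\dbbar$ match the second-order part of $L_b$; the two terms $\dbbar^\ast\db$ and $\db^\ast\dbbar$ are in $\DiffOperator{2}$ by Lemma~\ref{critical-dbdbbarstar}; and the four remaining cross terms combine in the pairs $\db^\ast\dhor + \dhor^\ast\dbbar$ and $\dbbar^\ast\dhor + \dhor^\ast\db$, which Lemma~\ref{dhordbbarstar} identifies modulo $\DiffOperator{2}$ with the explicit fourth-order quantities
\begin{equation*}
-\dbbar\db^\ast\db\db^\ast - \dbbar\dbbar^\ast\dbbar\db^\ast - \db\dbbar^\ast\dbbar\dbbar^\ast - \db\db^\ast\db\dbbar^\ast.
\end{equation*}
The remaining fourth-order contribution $dd^\ast dd^\ast$ I would then expand using $dd^\ast = \db\db^\ast + \db\dbbar^\ast + \dbbar\db^\ast + \dbbar\dbbar^\ast$ on $\mR^n$ into sixteen products, invoking Lemma~\ref{dbdbbarstar} on the non-middle spaces $\mR^{n-1}$ (with coefficient $\tfrac12$) and $\mR^{n-2}$ (with coefficient $\tfrac23$) to absorb the mixed-type products into $\DiffOperator{2}$. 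The surviving terms, added to those produced by Lemma~\ref{dhordbbarstar}, should reconstitute the fourth-order part of $\Box_b + \oBox_b$ --- in particular, the coefficients $\tfrac12$ on $\dbbar\db\db^\ast\dbbar^\ast$ and $\db\dbbar\dbbar^\ast\db^\ast$ are expected to emerge from the $\tfrac12$ attached to Lemma~\ref{dbdbbarstar} on $\mR^{n-1}$.

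The main obstacle is the combinatorial bookkeeping in the middle-degree step: verifying that after the Lemma~\ref{dbdbbarstar} cancellations the residual fourth-order terms from $dd^\ast dd^\ast$ match the asymmetric coefficients in the definition of $\Box_b$ with no leftover order-$4$ residue outside $\DiffOperator{2}$. The exact version for torsion-free $\theta$ is then automatic, since each invocation of Lemmas~\ref{dbdbbarstar}, \ref{dhordbbarstar}, and~\ref{critical-dbdbbarstar} is an equality in that setting.
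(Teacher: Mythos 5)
Your proposal is correct and follows essentially the same route as the paper: reduce to $k\leq n$ via the Hodge star, handle $k\leq n-1$ by direct expansion plus \cref{dbdbbarstar}, and in the middle degree expand $d^\ast d$ with \cref{critical-dbdbbarstar}, expand $(dd^\ast)^2$ with \cref{dbdbbarstar}, and absorb the remaining cross terms with \cref{dhordbbarstar}. The bookkeeping you flag as the main obstacle is resolved in the paper exactly by the mechanism you anticipate, namely the identity $\db\db^\ast\dbbar\dbbar^\ast \equiv \tfrac12\dbbar\db\db^\ast\dbbar^\ast \bmod \DiffOperator{2}$ coming from \cref{dbdbbarstar} at degree $n-1$ together with \cref{justification-of-bigraded-complex}.
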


\begin{proof}
 By \cref{kohn-laplacian-hodge-star,rumin-hodge-star}, it suffices to consider $\Delta_b$ and $L_b$ on $\mR^k$, $k\leq n$.
 
 Suppose first that $k\leq n-1$.
 By \cref{defn:kohn-laplacian,defn:rumin-laplacian}, if $\omega\in\mR^k$, then
 \[ \Delta_b\omega = \Box_b\omega + \oBox_b\omega + \dbbar^\ast\db\omega + \frac{n-k}{n-k+1}\db\dbbar^\ast \omega + \db^\ast\dbbar\omega + \frac{n-k}{n-k+1}\dbbar\db^\ast \omega . \]
 Equation~\eqref{eqn:kohn-laplacian-to-rumin-laplacian-non-middle} readily follows from \cref{dbdbbarstar}.
 
 Suppose next that $k=n$.
 On the one hand, \cref{critical-dbdbbarstar} implies that
 \begin{equation*}
  d^\ast d \equiv \dbbar^\ast\dbbar + \db^\ast\db + \dhor^\ast\dhor + \db^\ast\dhor + \dhor^\ast\dbbar + \dhor^\ast\db + \dbbar^\ast\dhor \mod \DiffOperator{2} ;
 \end{equation*}
 moreover, if $\theta$ is torsion-free, then equality holds exactly.
 On the other hand, expanding $(dd^\ast)^2$ and applying \cref{dbdbbarstar} yields
 \begin{align*}
  (dd^\ast)^2 & \equiv \dbbar\dbbar^\ast\dbbar\dbbar^\ast +  \db\db^\ast\db\db^\ast + \db\db^\ast\dbbar\dbbar^\ast + \dbbar\dbbar^\ast\db\db^\ast + \dbbar\db^\ast\db\dbbar^\ast + \db\dbbar^\ast\dbbar\db^\ast \\
   & \quad + \dbbar\dbbar^\ast\dbbar\db^\ast + \dbbar\db^\ast\db\db^\ast + \db\db^\ast\db\dbbar^\ast + \db\dbbar^\ast\dbbar\dbbar^\ast \mod \DiffOperator{2} ;
 \end{align*}
 moreover, if $\theta$ is torsion-free, then equality holds exactly.
 Combining the previous two displays with \cref{dhordbbarstar} yields
 \begin{equation}
  \label{eqn:n-rumin-partial-simplification}
  \begin{split}
  \Delta_b & \equiv \dbbar^\ast\dbbar + \dbbar\db^\ast\db\dbbar^\ast + \db\db^\ast\dbbar\dbbar^\ast + \dbbar\dbbar^\ast\dbbar\dbbar^\ast  \\
   & \quad + \db^\ast\db + \db\dbbar^\ast\dbbar\db^\ast + \dbbar\dbbar^\ast\db\db^\ast + \db\db^\ast\db\db^\ast \\
   & \quad + \dhor^\ast\dhor \mod \DiffOperator{2} ;
   \end{split}
 \end{equation}
 moreover, if $\theta$ is torsion-free, then equality holds exactly.
 Finally, \cref{justification-of-bigraded-complex,dbdbbarstar} imply that
 \begin{equation}
  \label{eqn:n-rumin-last-observation}
  \db\db^\ast\dbbar\dbbar^\ast \equiv \frac{1}{2}\dbbar\db\db^\ast\dbbar^\ast \mod \DiffOperator{2} ;
 \end{equation}
 moreover, if $\theta$ is torsion-free, then equality holds exactly.
 Equation~\eqref{eqn:kohn-laplacian-to-rumin-laplacian-n} follows immediately from Equations~\eqref{eqn:n-rumin-partial-simplification} and~\eqref{eqn:n-rumin-last-observation}.
\end{proof}

The basic ingredient in proving that the Kohn and Rumin Laplacians are maximally hypoelliptic is the following Weitzenb\"ock-type formula for the Kohn Laplacian.

\begin{proposition}
 \label{bochner-order2}
 Let $(M^{2n+1},T^{1,0},\theta)$ be a pseudohermitian manifold.
 Then
 \begin{multline}
  \label{eqn:bochner-order2}
  \Box_b\omega = \frac{q}{n}\nablab^\ast\nablab\omega + \frac{n-q}{n}\nablabbar^\ast\nablabbar\omega - \frac{1}{n-p-q+1}(\dbbar\dbbar^\ast\omega + \db\db^\ast\omega) \\
   - R\hash\ohash\omega - \frac{q}{n}\Ric\hash\omega - \frac{n-q}{n}\Ric\ohash\omega
 \end{multline}
 for all $\omega\in\mR^{p,q}$, $p+q\leq n-1$.
\end{proposition}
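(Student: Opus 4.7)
The plan is to work throughout on the side of primitive forms, via the canonical isomorphism $I\colon\mR^{p,q}\to P^{p,q}M$ from Lemma~\ref{sR-to-P}, which converts $\db^\ast,\dbbar^\ast$ into $\nablas^\ast,\onablas^\ast$ and rewrites $\dbbar$ as the right-hand side of \eqref{eqn:dbbar-to-onablas}. Once $\Box_b$ is expressed entirely in terms of $\nablas$-operators, the target identity will follow by invoking the Weitzenb\"ock formula for $\Boxs$ (Lemma~\ref{nablas-weitzenbock}).

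The first step is to derive, by conjugating Equation~\eqref{eqn:dbbar-to-onablas} (using that $\db$ and $\dbbar$ are conjugates by Lemma~\ref{conjugate-rumin-operators}, that $d\theta$ is real, and that $\nablas,\onablas$ are conjugates by Definition~\ref{defn:nablas}), the companion identity
\begin{equation*}
 \db\omega = \nablas\omega - \frac{i}{n-p-q+1} d\theta \wedge \onablas^\ast\omega
\end{equation*}
on $P^{p,q}M$. With the four first-order formulas in hand, I will then expand $\dbbar\dbbar^\ast\omega$, $\dbbar^\ast\dbbar\omega$, and $\db\db^\ast\omega$ directly. In each case the nontrivial step is the passage of $\onablas^\ast$ or $\nablas^\ast$ through a $d\theta\wedge$, which is exactly what Equations~\eqref{eqn:onablasonablah} and~\eqref{eqn:nablasnablah} are designed to handle; the resulting $\onablas^\ast\nablas^\ast$ and $\nablas^\ast\onablas^\ast$ terms are then identified via the anticommutation relation on primitive forms given by Corollary~\ref{nablasast-complex}.

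The crux of the computation is that the three resulting expressions, when weighted by the coefficients in Definition~\ref{defn:kohn-laplacian} (or rather its rewrite $\Box_b = \dbbar^\ast\dbbar + \dbbar\dbbar^\ast - \tfrac{1}{n-p-q+1}\dbbar\dbbar^\ast$), produce exactly the combination
\begin{equation*}
 \dbbar^\ast\dbbar\omega + \dbbar\dbbar^\ast\omega + \frac{1}{n-p-q+1}\db\db^\ast\omega = \onablas^\ast\onablas\omega + \onablas\onablas^\ast\omega = \Boxs\omega .
\end{equation*}
The $\nablas\nablas^\ast\omega$ contributions cancel between $\dbbar^\ast\dbbar\omega$ and $\tfrac{1}{n-p-q+1}\db\db^\ast\omega$, while the $d\theta \wedge \nablas^\ast\onablas^\ast\omega$ contributions cancel via the arithmetic identity
\begin{equation*}
 \frac{1}{m+1} - \frac{1}{m} + \frac{1}{m(m+1)} = 0, \qquad m := n-p-q+1 .
\end{equation*}
This cancellation is the main obstacle to worry about: it is the algebraic manifestation of primitivity, i.e.\ of the fact that $\dbbar^\ast\dbbar\omega$, $\dbbar\dbbar^\ast\omega$, and $\db\db^\ast\omega$ must individually land in $P^{p,q}M\cong\mR^{p,q}$ even though their naive expressions contain non-primitive pieces; getting the coefficients right throughout requires careful bookkeeping of the $q$- versus $q\pm 1$-dependence of the factor $(n-p-q+1)^{-1}$ in \eqref{eqn:dbbar-to-onablas}.

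Having established the displayed identity, the proof concludes by substituting the Weitzenb\"ock formula of Lemma~\ref{nablas-weitzenbock} for $\Boxs\omega$ and rearranging to isolate $\Box_b\omega$, yielding exactly the claimed formula.
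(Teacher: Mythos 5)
Your proposal is correct and follows essentially the same route as the paper: the paper's proof consists precisely of establishing $\dbbar^\ast\dbbar + \dbbar\dbbar^\ast = \Boxs - \frac{1}{n-p-q+1}\db\db^\ast$ from \cref{sR-to-P,onablas-simple-identities,nablasast-complex} and then combining with \cref{nablas-weitzenbock} and \cref{defn:kohn-laplacian}. You have simply spelled out the cancellation of the $\nablas\nablas^\ast$ and $d\theta\wedge\nablas^\ast\onablas^\ast$ terms that the paper leaves implicit, and your bookkeeping (including the shift from $n-p-q+1$ to $n-p-q+2$ after applying $\dbbar^\ast$ or $\db^\ast$) checks out.
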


\begin{proof}
 Let $\omega\in\mR^{p,q}M$, $p+q\leq n-1$.
 \Cref{sR-to-P,onablas-simple-identities,nablasast-complex} imply that
 \begin{equation}
  \label{eqn:Boxb-to-Boxs}
  \dbbar^\ast\dbbar + \dbbar\dbbar^\ast = \Boxs\omega - \frac{1}{n-p-q+1}\db\db^\ast\omega .
 \end{equation}
 Combining \cref{nablas-weitzenbock,defn:kohn-laplacian} with Equation~\eqref{eqn:Boxb-to-Boxs} yields Equation~\eqref{eqn:bochner-order2}.
\end{proof}

\Cref{kohn-laplacian-to-rumin-laplacian,bochner-order2} easily yield a Weitzenb\"ock formula for the Rumin Laplacian (cf.\ \cite{Rumin1994}*{Lemma~10}).
The following expression for $L_b$ is sufficient for our purposes.

\begin{corollary}
 \label{rumin-order2}
 Let $(M^{2n+1},T^{1,0},\theta)$ be a pseudohermitian manifold.
 Then
 \begin{align*}
  \frac{n-p-q+2}{n-p-q}L_b\omega & = \frac{n-p+q}{n}\nablab^\ast\nablab\omega + \frac{n-q+p}{n}\nablabbar^\ast\nablabbar\omega \\
   & \quad + \frac{2}{n-p-q}\left( \db^\ast\db + \dbbar^\ast\dbbar \right)\omega - 2 R\hash\ohash \omega \\
   & \quad - \frac{n-p+q}{n}\Ric\hash\omega - \frac{n-q+p}{n}\Ric\ohash\omega 
 \end{align*}
 for all $\omega\in\mR^{p,q}$, $p+q\leq n-1$.
\end{corollary}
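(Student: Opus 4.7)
The plan is to derive the formula by symmetrizing the Weitzenb\"ock identity for $\Box_b$ from \cref{bochner-order2} and then using the definition of the Kohn Laplacian to trade the ``wrong order'' compositions $\dbbar\dbbar^\ast + \db\db^\ast$ for the ``right order'' ones $\dbbar^\ast\dbbar + \db^\ast\db$ together with $L_b$ itself.

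First, since $p+q \leq n-1$, Equation~\eqref{eqn:Lb} gives $L_b = \Box_b + \oBox_b$. By conjugation, the formula in \cref{bochner-order2} applied to $\overline{\omega} \in \mR^{q,p}$ (also satisfying $q+p \leq n-1$) and then conjugated yields the companion formula
\begin{equation*}
 \oBox_b\omega = \frac{p}{n}\nablabbar^\ast\nablabbar\omega + \frac{n-p}{n}\nablab^\ast\nablab\omega - \frac{1}{n-p-q+1}(\db\db^\ast\omega + \dbbar\dbbar^\ast\omega) - R\hash\ohash\omega - \frac{p}{n}\Ric\ohash\omega - \frac{n-p}{n}\Ric\hash\omega ,
\end{equation*}
using that $\overline{\nablab\omega}$ corresponds to $\nablabbar\overline{\omega}$, that $R\hash\ohash$ is real, and that $\Ric\hash$ and $\Ric\ohash$ are conjugates of one another. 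Adding this to the formula in \cref{bochner-order2} gives
\begin{equation}
 \label{eqn:proposal-Lb}
 \begin{aligned}
  L_b\omega &= \frac{n-p+q}{n}\nablab^\ast\nablab\omega + \frac{n-q+p}{n}\nablabbar^\ast\nablabbar\omega - \frac{2}{n-p-q+1}(\dbbar\dbbar^\ast + \db\db^\ast)\omega \\
   &\quad - 2R\hash\ohash\omega - \frac{n-p+q}{n}\Ric\hash\omega - \frac{n-q+p}{n}\Ric\ohash\omega .
 \end{aligned}
\end{equation}

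Next, the definitions of $\Box_b$ and $\oBox_b$ on $\mR^{p,q}$ for $p+q \leq n-1$, namely $\Box_b = \frac{n-p-q}{n-p-q+1}\dbbar\dbbar^\ast + \dbbar^\ast\dbbar$ and its conjugate, yield
\begin{equation*}
 \dbbar\dbbar^\ast\omega + \db\db^\ast\omega = \frac{n-p-q+1}{n-p-q}\bigl( L_b\omega - \dbbar^\ast\dbbar\omega - \db^\ast\db\omega \bigr) .
\end{equation*}
Substituting this into \eqref{eqn:proposal-Lb} and solving for $L_b\omega$ by collecting the $L_b$ terms produces the factor $\frac{n-p-q+2}{n-p-q}$ on the left-hand side and the claimed expression on the right.

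The computation is entirely routine once the conjugate Weitzenb\"ock formula is in hand; the only care needed is in tracking the coefficients through the conjugation (in particular, $q/n$ and $(n-q)/n$ become $p/n$ and $(n-p)/n$, and the roles of $\Ric\hash$ and $\Ric\ohash$ swap), and in the final algebraic rearrangement. There is no genuine obstacle, since \cref{bochner-order2} already contains all of the analytic content.
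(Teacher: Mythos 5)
Your proposal is correct and follows the same route as the paper: add \cref{bochner-order2} to its conjugate to obtain the formula for $L_b = \Box_b + \oBox_b$, then use the definitions of the Kohn Laplacians to substitute $\frac{2}{n-p-q+1}(\db\db^\ast + \dbbar\dbbar^\ast)\omega = \frac{2}{n-p-q}(L_b - \db^\ast\db - \dbbar^\ast\dbbar)\omega$ and collect the $L_b$ terms. The coefficient bookkeeping through the conjugation is also exactly as in the paper.
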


\begin{proof}
 On the one hand, adding \cref{bochner-order2} to its conjugate yields
 \begin{align*}
  L_b\omega & = \frac{n-p+q}{n}\nablab^\ast\nablab\omega + \frac{n-q+p}{n}\nablabbar^\ast\nablabbar\omega - \frac{2}{n-p-q+1}\left( \dbbar\dbbar^\ast + \db\db^\ast \right)\omega \\
   & \quad - 2R\hash\ohash\omega - \frac{n-p+q}{n}\Ric\hash\omega - \frac{n-q+p}{n}\Ric\ohash\omega .
 \end{align*}
 On the other hand,
 \begin{equation}
  \label{eqn:simplify-db-dbast-term}
  \frac{2}{n-p-q+1}\left(\db\db^\ast + \dbbar\dbbar^\ast\right)\omega = \frac{2}{n-p-q}\left(L_b - \db^\ast\db - \dbbar^\ast\dbbar\right)\omega .
 \end{equation}
 The conclusion readily follows.
\end{proof}

\cref{bochner-order2} is sufficient to prove that the Kohn Laplacian is maximally hypoelliptic on $\mR^{0,q}$, $q\in\{1,\dotsc,n-1\}$, and to deduce a sufficient condition for $H^{0,q}(M)$ to vanish.
The corresponding statements for $\mR^{p,q}$ and $H^{p,q}(M)$ require a more complicated formula to control the term $\db\db^\ast\omega$.

\begin{corollary}
 \label{bochner-order2-better-sign}
 Let $(M^{2n+1},T^{1,0},\theta)$ be a pseudohermitian manifold.  Then
 \begin{equation}
  \label{eqn:bochner-order2-better-sign}
  \begin{split}
   \Box_b\omega & = \frac{(q-1)(n-p-q)}{n(n-p-q+2)}\nablab^\ast\nablab\omega  + \frac{(n-q+1)(n-p-q)}{n(n-p-q+2)}\nablabbar^\ast\nablabbar\omega \\
    & \quad + \frac{1}{n-p-q+2}\left( \db^\ast\db + \dbbar^\ast\dbbar \right)\omega \\
    & \quad - \frac{n-p-q}{n-p-q+2}R\hash\ohash\omega - \frac{(q-1)(n-p-q)}{n(n-p-q+2)} \Ric\hash\omega \\
    & \quad - \frac{(n-q+1)(n-p-q)}{n(n-p-q+2)} \Ric\ohash\omega 
  \end{split}
 \end{equation}
 for all $\omega\in\mR^{p,q}$, $p+q\leq n-1$.
\end{corollary}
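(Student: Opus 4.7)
The proof should proceed as an algebraic manipulation combining the second-order Weitzenböck formula of Proposition~\ref{bochner-order2} with the formula for $L_b$ in Corollary~\ref{rumin-order2}. The obstacle is not conceptual: the only difference between Proposition~\ref{bochner-order2} and Corollary~\ref{bochner-order2-better-sign} is that the $\dbbar\dbbar^\ast+\db\db^\ast$ term has been traded for a $\db^\ast\db+\dbbar^\ast\dbbar$ term with a more favorable sign. The plan is to use $L_b$ as a bridge between these two second-order expressions.

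First I would recall that, on $\mR^{p,q}$ with $p+q\leq n-1$, Definition~\ref{defn:kohn-laplacian} together with the identification $\oBox_b=\overline{\Box_b}$ (which follows from \cref{conjugate-rumin-operators}) gives
\begin{equation*}
 L_b = \dbbar^\ast\dbbar + \db^\ast\db + \frac{n-p-q}{n-p-q+1}\bigl(\dbbar\dbbar^\ast + \db\db^\ast\bigr).
\end{equation*}
Solving for $\dbbar\dbbar^\ast+\db\db^\ast$ and substituting into Proposition~\ref{bochner-order2} yields
\begin{equation*}
 \Box_b\omega + \frac{1}{n-p-q}\bigl(L_b - \dbbar^\ast\dbbar - \db^\ast\db\bigr)\omega = \frac{q}{n}\nablab^\ast\nablab\omega + \frac{n-q}{n}\nablabbar^\ast\nablabbar\omega - R\hash\ohash\omega - \frac{q}{n}\Ric\hash\omega - \frac{n-q}{n}\Ric\ohash\omega.
\end{equation*}

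Next I would eliminate $L_b$ using Corollary~\ref{rumin-order2}, which expresses $L_b/(n-p-q)$ as a linear combination of $\nablab^\ast\nablab$, $\nablabbar^\ast\nablabbar$, $\db^\ast\db+\dbbar^\ast\dbbar$, $R\hash\ohash$, $\Ric\hash$, and $\Ric\ohash$. After rearranging, each coefficient reduces to a routine common-denominator simplification over $n(n-p-q+2)$: for instance, the coefficient of $\nablab^\ast\nablab$ becomes
\begin{equation*}
 \frac{q}{n} - \frac{n-p+q}{n(n-p-q+2)} = \frac{q(n-p-q+2)-(n-p+q)}{n(n-p-q+2)} = \frac{(q-1)(n-p-q)}{n(n-p-q+2)},
\end{equation*}
and the coefficient of $\db^\ast\db+\dbbar^\ast\dbbar$ telescopes as $\frac{1}{n-p-q}-\frac{2}{(n-p-q)(n-p-q+2)}=\frac{1}{n-p-q+2}$. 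Analogous identities dispatch the remaining five coefficients, yielding Equation~\eqref{eqn:bochner-order2-better-sign}. Since the entire argument is a linear combination of identities already established on $\mR^{p,q}$ for $p+q\leq n-1$, no new analytic input is required; the only step demanding care is tracking the signs and common denominators, and this can be checked term by term.
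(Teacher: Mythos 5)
Your proposal is correct and follows essentially the same route as the paper: the paper's proof likewise substitutes $\frac{1}{n-p-q+1}(\db\db^\ast+\dbbar\dbbar^\ast)=\frac{1}{n-p-q}\bigl(L_b-\db^\ast\db-\dbbar^\ast\dbbar\bigr)$ (Equation~\eqref{eqn:simplify-db-dbast-term}) into \cref{bochner-order2} and then eliminates $L_b$ via \cref{rumin-order2}. The sample coefficient checks you give are the right ones and do close the argument.
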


\begin{proof}
 \Cref{bochner-order2} and Equation~\eqref{eqn:simplify-db-dbast-term} imply that
 \begin{equation*}
  \begin{split}
   \Box_b\omega & = \frac{q}{n}\nablab^\ast\nablab\omega + \frac{n-q}{n}\nablabbar^\ast\nablabbar\omega - \frac{1}{n-p-q}L_b\omega \\
    & \quad + \frac{1}{n-p-q}\left( \dbbar^\ast\dbbar + \db^\ast\db \right)\omega - R \hash\hash \omega - \frac{q}{n}\Ric\ohash\omega - \frac{n-q}{n}\Ric\ohash\omega .
  \end{split}
 \end{equation*}
 Combining this with \cref{rumin-order2} yields Equation~\eqref{eqn:bochner-order2-better-sign}.
\end{proof}

When $p+q\in\{n,n+1\}$, it is a much more tedious task to derive suitable Weitzenb\"ock-type formulas for the Kohn and Rumin Laplacians.
For the purposes of establishing maximal hypoellipticity, we only need to understand the leading-order terms of these operators.
This is mostly done using \cref{sR-to-P,onablas-simple-identities}.
The remaining ingredient, needed for a more precise description of the partial inverse of these operators in the cases $q\in\{0,n\}$, requires the following fact about the commutators $[L_b,\dbbar\dbbar^\ast]$ and $[L_b,\dbbar^\ast\dbbar]$.

\begin{lemma}
 \label{L-dbbardbbarast-commutator}
 Let $(M^{2n+1},T^{1,0},\theta)$ be a pseudohermitian manifold.
 Suppose that $p\in\{0,\dotsc,n+1\}$ and $q\in\{0,\dotsc,n\}$ are such that $p+q\not\in\{n,n+1\}$.
 Then, as operators on $\mR^{p,q}$,
 \[ [ L_b, \dbbar\dbbar^\ast ], [ L_b, \dbbar^\ast\dbbar ] \in \DiffOperator{2} . \]
 Moreover, if $\theta$ is torsion-free, then $[ L_b, \dbbar\dbbar^\ast] , [L_b, \dbbar^\ast\dbbar] = 0$.
\end{lemma}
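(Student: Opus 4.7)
The plan is to write $L_b = \Box_b + \oBox_b$ and exploit two observations: (a) $\Box_b$ commutes \emph{exactly} with both $\dbbar\dbbar^\ast$ and $\dbbar^\ast\dbbar$, reducing the problem to commutators with $\oBox_b$, and (b) the difference $\oBox_b - \Box_b$ is a first-order operator, whence its commutator with a second-order operator automatically lies in $\DiffOperator{2}$.

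Observation (a) follows directly from $\dbbar^2 = 0 = (\dbbar^\ast)^2$: on $\mR^{p,q}$ with $p+q\leq n-1$, $\Box_b = \frac{n-p-q}{n-p-q+1}\dbbar\dbbar^\ast + \dbbar^\ast\dbbar$, and
\begin{equation*}
\Box_b\,\dbbar\dbbar^\ast = \frac{n-p-q}{n-p-q+1}(\dbbar\dbbar^\ast)^2 = \dbbar\dbbar^\ast\,\Box_b ,
\end{equation*}
and analogously for $\dbbar^\ast\dbbar$; the same argument using $\Box_b = \frac{p+q-n-1}{p+q-n}\dbbar^\ast\dbbar + \dbbar\dbbar^\ast$ handles $p+q\geq n+2$. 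For (b), I would conjugate \cref{bochner-order2} (applied on $\mR^{q,p}$) to obtain an explicit formula for $\oBox_b$, using that $\nablab \leftrightarrow \nablabbar$, $\dbbar\dbbar^\ast\leftrightarrow\db\db^\ast$, and $\Ric\hash \leftrightarrow \Ric\ohash$ under complex conjugation while $R\hash\ohash$ is a real operator. Subtracting from \cref{bochner-order2}, the $\dbbar\dbbar^\ast + \db\db^\ast$ and $R\hash\ohash$ contributions cancel identically and the remainder combines as
\begin{equation*}
\oBox_b - \Box_b = \frac{n-p-q}{n}\bigl( \nablab^\ast\nablab - \nablabbar^\ast\nablabbar + \Ric\ohash - \Ric\hash \bigr).
\end{equation*}
The exact identity $\nablabbar^\ast\nablabbar - \nablab^\ast\nablab = ni\nabla_0 - \Ric\hash + \Ric\ohash$ from the proof of \cref{nablas-weitzenbock} (equation~\eqref{eqn:nabla0-fancy-commutator}) then collapses the right-hand side to
\begin{equation*}
\oBox_b - \Box_b = -(n-p-q)i\,\nabla_0
\end{equation*}
on $\mR^{p,q}$, $p+q\leq n-1$. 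Since $\nabla_0 \in \DiffOperator{1}$ and $\dbbar\dbbar^\ast, \dbbar^\ast\dbbar \in \DiffOperator{2}$, the commutators $[\nabla_0, \dbbar\dbbar^\ast]$ and $[\nabla_0, \dbbar^\ast\dbbar]$ lie in $\DiffOperator{2}$, completing the sub-middle case.

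The case $p+q\geq n+2$ I would reduce to the sub-middle case via the order-zero isomorphism $P\colon \mR^{p,q} \to \mR^{n+1-p, n-q}$ defined by $P\omega := \hodge\oomega$. By \cref{kohn-laplacian-hodge-star} and $\hodge^2 = 1$, one has $P\Box_b = \Box_b P$ and $P\oBox_b = \oBox_b P$, while \cref{defn:dbbar-adjoint} together with $\overline{\dbbar} = \db$ yield $P\,\dbbar\dbbar^\ast = \pm\,\dbbar^\ast\dbbar\,P$ and $P\,\dbbar^\ast\dbbar = \pm\,\dbbar\dbbar^\ast\,P$ (for appropriate signs depending on $p+q$). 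Since $(n+1-p) + (n-q) \leq n-1$, the commutator bounds proved on $\mR^{n+1-p,n-q}$ transfer back to $\mR^{p,q}$. For the torsion-free refinement: setting $A = 0$ in \cref{commutators} yields $[\nabla_0,\nabla_\gamma] = [\nabla_0,\nabla_{\bar\beta}] = 0$ on all tensors, and since $d\theta$ is parallel under the Tanaka--Webster connection the Lefschetz operator commutes with $\nabla_0$; combined with the formulas in \cref{sR-to-P} this shows $\nabla_0$ commutes with both $\dbbar$ and $\dbbar^\ast$, hence with $\dbbar\dbbar^\ast$ and $\dbbar^\ast\dbbar$, giving $[L_b,\dbbar\dbbar^\ast] = [L_b,\dbbar^\ast\dbbar] = 0$ exactly.

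The only real obstacle is the bookkeeping in deriving the \emph{exact} identity $\oBox_b - \Box_b = -(n-p-q)i\nabla_0$: one must carefully verify that every second-order curvature contribution (both the $\Ric$ terms and the mixed $\dbbar\dbbar^\ast + \db\db^\ast$ pieces) cancels, so that the difference is genuinely of order one rather than order two modulo lower order. Once this is in hand, everything else is algebra: the key identities $\dbbar^2 = 0$, $(\dbbar^\ast)^2 = 0$, and the intertwining $P$ together reduce both $[L_b, \dbbar\dbbar^\ast]$ and $[L_b, \dbbar^\ast\dbbar]$ to $\pm(n-p-q)i[\nabla_0, \dbbar\dbbar^\ast]$ or its counterpart, which is manifestly in $\DiffOperator{2}$ and vanishes when $\theta$ is torsion-free.
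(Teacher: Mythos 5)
Your route is genuinely different from the paper's and, in outline, it works --- arguably more cleanly. The paper also reduces to $p+q\leq n-1$ via \cref{kohn-laplacian-hodge-star}, but then treats the four commutators $[\db^\ast\db,\dbbar\dbbar^\ast]$, $[\db\db^\ast,\dbbar\dbbar^\ast]$, $[\db\db^\ast,\dbbar^\ast\dbbar]$, $[\db^\ast\db,\dbbar^\ast\dbbar]$ one at a time using the anticommutation identities of \cref{dbdbbarstar} together with \cref{justification-of-bigraded-complex,dual-justification}, with a separate argument (via the Hodge star and the identity $\dbbar\db\dbbar=\db\dbbar\db$ on $\mR^{n-1}$) in the borderline case $p+q=n-1$. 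Your exact identity $\oBox_b-\Box_b=-(n-p-q)i\nabla_0$ is correct --- it does follow from \cref{bochner-order2}, its conjugate, and Equation~\eqref{eqn:nabla0-fancy-commutator} exactly as you describe --- and it collapses everything into the single commutator $[\nabla_0,\dbbar\dbbar^\ast]$ (resp.\ $[\nabla_0,\dbbar^\ast\dbbar]$), with no case distinction at $p+q=n-1$. That is a real simplification.

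However, the justification of your final step is wrong as stated. In the Heisenberg filtration the paper uses (order $r+s+2t$ for $\nablab^r\nablabbar^s\nabla_0^t$), the Reeb derivative $\nabla_0$ lies in $\DiffOperator{2}$, not $\DiffOperator{1}$; and in this calculus commutators do not automatically drop order --- for instance $[\nabla_\alpha,\nabla_{\bar\beta}]$ contains $ih_{\alpha\bar\beta}\nabla_0$, which has the same Heisenberg order as the product. So ``order one commuted with order two lands in $\DiffOperator{2}$'' is not a valid inference here. The conclusion you need --- $[\nabla_0,\dbbar\dbbar^\ast]\in\DiffOperator{2}$, vanishing when $A=0$ --- is nonetheless true, but for a different reason: by \cref{sR-to-P,nablab-and-nablas-adjoints}, $\dbbar$ and $\dbbar^\ast$ are compositions of horizontal derivatives contracted against parallel tensors, and \cref{commutators} shows that $[\nabla_0,\nabla_\gamma]$ and $[\nabla_0,\nabla_{\bar\beta}]$ are first-order operators whose coefficients are $A$ and $\nabla A$. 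Commuting $\nabla_0$ past the two horizontal derivatives in $\dbbar\dbbar^\ast$ therefore produces only terms of Heisenberg order at most two, each carrying a factor of the torsion. You essentially invoke this for the torsion-free refinement at the end; the same computation, kept with its torsion terms, is what the general case requires.
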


\begin{proof}
 By \cref{kohn-laplacian-hodge-star}, it suffices to consider the case $p+q \leq n-1$.
 
 First, observe that \cref{justification-of-bigraded-complex,dual-justification,dbdbbarstar} imply that
 \begin{equation}
  \label{eqn:L-dbdbarddbarast-commutator-easy-case}
  [ \db^\ast\db , \dbbar\dbbar^\ast ] , [ \db\db^\ast , \dbbar\dbbar^\ast ] , [ \db\db^\ast , \dbbar^\ast\dbbar ] \equiv 0 \mod \DiffOperator{2} ;
 \end{equation}
 moreover, if $\theta$ is torsion-free, then equality holds exactly.
 
 Second, observe that if $p+q \leq n-2$, then \cref{justification-of-bigraded-complex,dual-justification,dbdbbarstar} imply that
 \begin{equation}
  \label{eqn:L-dbdbarddbarast-commutator-hard-case}
  [ \db^\ast\db , \dbbar^\ast\dbbar ] \equiv 0 \mod \DiffOperator{2} ;
 \end{equation}
 moreover, if $\theta$ is torsion-free, then equality holds exactly.
 If instead $p+q = n-1$, then \cref{sR-to-P} implies that
 \begin{align*}
  \db^\ast\db\dbbar^\ast\dbbar & \equiv (-1)^pi^{-n^2+1} \db^\ast \hodge \db \dbbar \mod \DiffOperator{2} , \\
  \dbbar^\ast\dbbar\db^\ast\db & \equiv (-1)^pi^{-n^2+1} \dbbar^\ast \hodge \dbbar\db \mod \DiffOperator{2} ;
 \end{align*}
 moreover, if $\theta$ is torsion-free, then equality holds exactly.
 Combining this with the definitions of $\db^\ast$ and $\dbbar^\ast$ yields
 \begin{align*}
  \db^\ast\db\dbbar^\ast\dbbar & \equiv (-1)^pi^{n^2+1} \hodge \dbbar \db \dbbar \mod \DiffOperator{2} , \\
  \dbbar^\ast\dbbar\db^\ast\db & \equiv (-1)^pi^{n^2+1} \hodge \db \dbbar\db \mod \DiffOperator{2} ;
 \end{align*}
 moreover, if $\theta$ is torsion-free, then equality holds exactly.
 \Cref{justification-of-bigraded-complex} implies that $\dbbar\db\dbbar = \db\dbbar\db$ on $\mR^{n-1}$, and hence Equation~\eqref{eqn:L-dbdbarddbarast-commutator-hard-case} again holds.
 
 Combining Equations~\eqref{eqn:L-dbdbarddbarast-commutator-easy-case} and~\eqref{eqn:L-dbdbarddbarast-commutator-hard-case} yields the final conclusion.
\end{proof}

\section{Maximal hypoellipticity of the Kohn and Rumin Laplacians}
\label{sec:hypoelliptic}

In this section we show that the Kohn Laplacian is maximally hypoelliptic.
More precisely, we show that on a closed $(2n+1)$-dimensional strictly pseudoconvex manifold, if $q\not\in\{0,n\}$ then
\begin{enumerate}
 \item the second-order Kohn Laplacian is bounded below by a Folland--Stein operator $\mL_\alpha$, $\alpha\in(-n,n)$, modulo a zeroth-order differential operator; and
 \item the fourth-order Kohn Laplacian is bounded below by a product of two such Folland--Stein operators modulo a second-order differential operator.
\end{enumerate} 
In particular, the Kohn Laplacian admits a parametrix if $q\not\in\{0,n\}$.
This is consistent with known results~\cites{Garfield2001,GarfieldLee1998,Tanaka1975,FollandKohn1972,FollandStein1974,BealsGreiner1988} in other contexts.

We also show that $L_b\colon\mR^{p,q}\to\mR^{p,q}$ is maximally hypoelliptic.
More precisely, we show that on a closed $(2n+1)$-dimensional strictly pseudoconvex manifold,
\begin{enumerate}
 \item if $p+q\not\in\{n,n+1\}$, then $L_b$ is bounded below by a Folland--Stein operator $\mL_\alpha$, $\alpha\in(-n,n)$, modulo a zeroth-order differential operator;
 \item if $p+q\in\{n,n+1\}$ and $q\not\in\{0,n\}$, then $L_b$ is bounded below by a product of two such Folland--Stein operators modulo a second-order differential operator; and
 \item if $p+q\in\{n,n+1\}$ and $q\in\{0,n\}$, then $L_b$ is the square of $\mL_{\pm(n+1)}$ modulo a second-order differential operator.
\end{enumerate}
In particular, the Rumin Laplacian admits a parametrix.
This fact was previously observed by Rumin~\cite{Rumin1994}.

We begin by showing that the second-order Kohn Laplacians admit a parametrix except for in the endpoint cases $q\in\{0,n\}$.

\begin{proposition}
 \label{hypoelliptic2}
 Let $(M^{2n+1},T^{1,0},\theta)$ be a closed, strictly pseudoconvex manifold.
 Let $p\in\{0,\dotsc,n+1\}$ and $q\in\{1,\dotsc,n-1\}$ be such that $p+q\not\in\{n,n+1\}$.
 Then $\Box_b\colon\mR^{p,q}\to\mR^{p,q}$ admits a parametrix.
\end{proposition}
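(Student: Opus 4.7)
The plan is to invoke \cref{comparisons}(ii) by exhibiting a bound of the form $\Box_b \gtrsim C\mL_\alpha$ modulo $\DiffOperator{0}$ for some $C > 0$ and some $\alpha \in (-n, n)$; by \cref{invert-folland-stein}, the operator $\mL_\alpha$ admits a parametrix, and therefore so does $\Box_b$. As a first reduction, the identity $\Box_b \hodge \oomega = \hodge\overline{\Box_b\omega}$ from \cref{kohn-laplacian-hodge-star} together with \cref{hodge-mapping-properties} shows that $\hodge \circ \overline{(\cdot)}$ intertwines $\Box_b$ on $\mR^{p,q}$ with $\Box_b$ on $\mR^{n+1-q, n-p}$; since admitting a parametrix is preserved under conjugation and $(n+1-q)+(n-p) \leq n-1$ whenever $p+q \geq n+2$, I can assume $p+q \leq n-1$ throughout.

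In this regime, \cref{bochner-order2-better-sign} gives, modulo $\DiffOperator{0}$,
\begin{equation*}
 \Box_b \simeq a\,\nablab^\ast\nablab + b\,\nablabbar^\ast\nablabbar + \tfrac{1}{n-p-q+2}\bigl(\db^\ast\db + \dbbar^\ast\dbbar\bigr),
\end{equation*}
where $a = \tfrac{(q-1)(n-p-q)}{n(n-p-q+2)} \geq 0$, $b = \tfrac{(n-q+1)(n-p-q)}{n(n-p-q+2)} > 0$, and $a > 0$ iff $q \geq 2$. Using \eqref{eqn:folland-stein-no-0}, which asserts $\mL_0 \simeq \nablab^\ast\nablab + \nablabbar^\ast\nablabbar$, together with $\db^\ast\db + \dbbar^\ast\dbbar \gtrsim 0$, the case $q \in \{2, \dots, n-1\}$ immediately yields $\Box_b \gtrsim \min(a, b)\mL_0$, and \cref{comparisons}(ii) closes the argument since $\mL_0$ admits a parametrix.

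The main obstacle is the endpoint $q = 1$, where $a = 0$ so the direct comparison only yields $\Box_b \gtrsim b\,\nablabbar^\ast\nablabbar$; this matches $\mL_{-n}$ modulo $\DiffOperator{0}$, which lies outside the parametrix range of \cref{invert-folland-stein}. I would resolve this by taking a convex combination of \cref{bochner-order2-better-sign} with the original Weitzenb\"ock identity \cref{bochner-order2}. The latter contributes a positive coefficient $q/n$ on $\nablab^\ast\nablab$ at the price of the unfavorable term $-\tfrac{1}{n-p-q+1}(\dbbar\dbbar^\ast + \db\db^\ast)$, which I would absorb using the algebraic identity $\tfrac{n-p-q}{n-p-q+1}\dbbar\dbbar^\ast = \Box_b - \dbbar^\ast\dbbar$ (and its conjugate) to rewrite
\begin{equation*}
 \dbbar\dbbar^\ast + \db\db^\ast = \tfrac{n-p-q+1}{n-p-q}\bigl(\Box_b + \oBox_b - \dbbar^\ast\dbbar - \db^\ast\db\bigr).
\end{equation*}
Collecting $\Box_b, \oBox_b$ on the left then produces, for $q = 1$, an identity of the form
\begin{equation*}
 \tfrac{n-p}{n-p-1}\Box_b + \tfrac{1}{n-p-1}\oBox_b \simeq \tfrac{1}{n}\nablab^\ast\nablab + \tfrac{n-1}{n}\nablabbar^\ast\nablabbar + \tfrac{1}{n-p-1}\bigl(\dbbar^\ast\dbbar + \db^\ast\db\bigr)
\end{equation*}
modulo $\DiffOperator{0}$, whose right-hand side is $\gtrsim \tfrac{1}{n}\mL_0$.

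The most delicate step is transferring hypoellipticity of the combination $\tfrac{n-p}{n-p-1}\Box_b + \tfrac{1}{n-p-1}\oBox_b$ to $\Box_b$ itself. I would verify the Rockland condition for $\Box_b$ directly via \cref{invertibility-characterizations}: the model operator $\overline{\pi_{\Box_b}} = \overline{\pi_\dbbar}^\ast\overline{\pi_\dbbar} + \tfrac{n-p-q}{n-p-q+1}\overline{\pi_\dbbar}\,\overline{\pi_\dbbar}^\ast$ is a sum of nonnegative self-adjoint operators with positive coefficients, so its kernel equals the joint kernel of $\overline{\pi_\dbbar}$ and $\overline{\pi_\dbbar}^\ast$ on Heisenberg $(p,1)$-form symbols, which is classically known to be trivial precisely because $q \in \{1, \dots, n-1\}$. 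This completes the proof.
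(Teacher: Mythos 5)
Your reduction to $p+q\leq n-1$ via the conjugate Hodge star and your treatment of the case $q\geq 2$ are correct and essentially the paper's argument: both amount to reading off from \cref{bochner-order2-better-sign} that $\Box_b\gtrsim a\,\nablab^\ast\nablab+b\,\nablabbar^\ast\nablabbar$ with $a,b>0$, recognizing the right side as a positive multiple of a Folland--Stein operator with admissible parameter, and invoking \cref{comparisons,invert-folland-stein}.

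The endpoint $q=1$ is where your proposal has a genuine gap. The substitution $\dbbar\dbbar^\ast+\db\db^\ast=\tfrac{n-p}{n-p-1}\bigl(\Box_b+\oBox_b-\dbbar^\ast\dbbar-\db^\ast\db\bigr)$ is algebraically correct, but it produces a lower bound for the \emph{combination} $\tfrac{n-p}{n-p-1}\Box_b+\tfrac{1}{n-p-1}\oBox_b$ --- this is essentially \cref{rumin-order2}, i.e.\ the input to \cref{hypoelliptic-lb2} for $L_b$ --- and since $\oBox_b\gtrsim 0$, a parametrix for $\Box_b$ alone does not follow (compare $A=0$ and $B$ elliptic: $A+B$ admitting a parametrix says nothing about $A$). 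Your proposed escape, asserting that the joint kernel of $\overline{\pi_{\dbbar}}$ and $\overline{\pi_{\dbbar}}^{\,\ast}$ on $(p,1)$-form symbols is ``classically known'' to be trivial, begs the question: the triviality of that joint kernel in every nontrivial irreducible representation is exactly what the Rockland condition demands, and it cannot be imported from the classical Kohn--Rossi setting because for $p\geq 1$ the operators $\dbbar$ and $\dbbar^\ast$ of the bigraded Rumin complex act on \emph{primitive} forms and carry trace-correction terms (see \cref{bigraded-operators,divergence-formula}), so the model operator is not the classical Folland--Stein Laplacian on $(p,1)$-forms. The paper's fix stays entirely on the $\Box_b$ side: keep $\dbbar\dbbar^\ast\leq\tfrac{n-p}{n-p-1}\Box_b$, but control the other bad term by $\db\db^\ast\lesssim K\nablabbar^\ast\nablabbar$, which is legitimate because $\db^\ast$ is an algebraic contraction of $\nablabbar$ by \cref{divergence-formula}; this yields $\Box_b\gtrsim\tfrac{n-p-1}{2(n-p)}\bigl(\mL_{n-2}-K\mL_{n}\bigr)$, and a convex combination with the unconditional estimate $\Box_b\gtrsim c\,\mL_{n}$ from \cref{bochner-order2-better-sign} gives $\Box_b\gtrsim C\mL_{n-2}$ with $C>0$, after which \cref{comparisons,invert-folland-stein} finish the proof.
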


\begin{proof}
 By \cref{embeddings,kohn-laplacian-hodge-star}, it suffices to consider the case $p+q\leq n-1$.
 
 First observe that, by \cref{bochner-order2-better-sign} and Equation~\eqref{eqn:folland-stein-no-0},
 \begin{equation}
  \label{eqn:Boxb-general-estimate}
  \Box_b \gtrsim  \frac{n-p-q}{2(n-p-q+2)}\mL_{n-2q+2} .
 \end{equation}
 We conclude from \cref{invertibility-characterizations,comparisons,invert-folland-stein} that if $q>1$, then $\Box_b$ admits a parametrix.
 
 Second observe that, by \cref{bochner-order2} and Equation~\eqref{eqn:folland-stein-no-0}, if $q=1$, then
 \begin{equation*}
  \frac{n-p}{n-p-1}\Box_b \gtrsim \frac{1}{2}\mL_{n-2} - \frac{1}{n-p}\db\db^\ast .
 \end{equation*}
 Since $\db^\ast\omega$ is a partial contraction of $\nablabbar\omega$, there is a constant $K>0$ such that $K\nablabbar^\ast\nablabbar \gtrsim \frac{2}{n-p}\db\db^\ast$.
 In particular,
 \begin{equation}
  \label{eqn:Boxb-q=1-estimate}
  \Box_b \gtrsim \frac{n-p-1}{2(n-p)}\left( \mL_{n-2} - K\mL_{n}\right) .
 \end{equation}
 By taking a suitable convex combination of Inequalities~\eqref{eqn:Boxb-general-estimate} and~\eqref{eqn:Boxb-q=1-estimate}, we see that there is a constant $C>0$ such that
 \begin{equation*}
  \Box_b \gtrsim C\mL_{n-2} .
 \end{equation*}
 We again conclude from \cref{invertibility-characterizations,comparisons,invert-folland-stein} that $\Box_b$ admits a parametrix.
\end{proof}

The estimates in the proof of \cref{hypoelliptic2} imply that, away from the middle degrees, the operator $L_b$ of Equation~\eqref{eqn:Lb} admits a parametrix.

\begin{proposition}
 \label{hypoelliptic-lb2}
 Let $(M^{2n+1},T^{1,0},\theta)$ be a closed, strictly pseudoconvex manifold.
 Let $p\in\{0,\dotsc,n+1\}$ and $q\in\{1,\dotsc,n-1\}$ be such that $p+q\not\in\{n,n+1\}$.
 Then $L_b\colon\mR^{p,q}\to\mR^{p,q}$ admits a parametrix.
\end{proposition}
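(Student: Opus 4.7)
The plan is to deduce \cref{hypoelliptic-lb2} directly from \cref{hypoelliptic2} together with the comparison principle of \cref{comparisons}(ii), with no further analysis required. Since $p+q \notin \{n, n+1\}$, Equation~\eqref{eqn:Lb} gives $L_b = \Box_b + \oBox_b$ as formally self-adjoint endomorphisms of $\Gamma(\mR^{p,q})$ of order two.

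The first step is to observe that, in the ranges where $L_b$ is second-order, the Kohn Laplacian decomposes as a sum of manifest nonnegative squares. Specifically, by \cref{defn:kohn-laplacian} we have $\Box_b = c_1\,\dbbar\dbbar^\ast + c_2\,\dbbar^\ast\dbbar$ with strictly positive coefficients $c_1, c_2$: namely $c_1 = \frac{n-p-q}{n-p-q+1}$, $c_2 = 1$ when $p+q \leq n-1$, and $c_1 = 1$, $c_2 = \frac{p+q-n-1}{p+q-n}$ when $p+q \geq n+2$. Setting $P_1 := \sqrt{c_1}\,\dbbar^\ast$ and $P_2 := \sqrt{c_2}\,\dbbar$, both first-order differential operators, this exhibits $\Box_b = P_1^\ast P_1 + P_2^\ast P_2$, so $\Box_b \gtrsim 0$ in the sense of \cref{defn:comparisons}. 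The parallel argument with $\db$ in place of $\dbbar$ shows $\oBox_b = P_3^\ast P_3 + P_4^\ast P_4$ and hence $\oBox_b \gtrsim 0$.

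The second step is to combine these: one has $L_b = \Box_b + P_3^\ast P_3 + P_4^\ast P_4$, so $L_b \gtrsim \Box_b \gtrsim 0$. Since $q \in \{1, \dotsc, n-1\}$ and $p+q \notin \{n, n+1\}$, the hypotheses of \cref{hypoelliptic2} are met, so $\Box_b \colon \mR^{p,q} \to \mR^{p,q}$ admits a parametrix. Applying \cref{comparisons}(ii) with $A := L_b$, $B := \Box_b$, and $k = 2$ then gives the desired parametrix for $L_b$.

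No substantive obstacle is anticipated: the entire proof reduces mechanically to \cref{hypoelliptic2} once one notes that $\oBox_b$ is a manifestly nonnegative perturbation of $\Box_b$. The actual work in establishing hypoellipticity of the second-order operators has already been carried out in the preceding proposition, where delicate Weitzenb\"ock estimates (in particular the $q=1$ boundary case) were required; here only the comparison calculus remains.
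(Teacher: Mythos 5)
Your proof is correct, but it takes a genuinely different route from the paper's. You reduce everything to \cref{hypoelliptic2}: writing $L_b = \Box_b + \oBox_b$ with $\oBox_b = \frac{n-p-q}{n-p-q+1}\db\db^\ast + \db^\ast\dbbar{}^{\!\ast\ast}$-type nonnegative squares, you get $L_b \gtrsim \Box_b \gtrsim 0$ and invoke \cref{comparisons}(ii); since $L_b$ is formally self-adjoint and the hypotheses of \cref{hypoelliptic2} coincide exactly with those of the present proposition, the reduction is complete and no new estimate is needed. The paper instead adds Inequality~\eqref{eqn:Boxb-general-estimate} to its conjugate to obtain the single lower bound $L_b \gtrsim \frac{n-p-q}{n-p-q+2}\mL_{p-q}$ and then applies \cref{invert-folland-stein} directly. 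The trade-off is this: your argument is more modular and inherits the delicate $q=1$ convex-combination step from \cref{hypoelliptic2} for free, whereas the paper's summation exploits a cancellation in the $\nabla_0$ coefficients so that the resulting Folland--Stein parameter is $p-q$, which is automatically in $(-n,n)$ throughout the relevant range --- the $q=1$ difficulty simply disappears, and the same estimate remains valid in cases (such as $q\in\{0,n\}$ with $\lv p-q\rv<n$) beyond the stated hypotheses, which is the form in which it feeds into the hypoellipticity of the Rumin Laplacian later on. For the proposition as literally stated, both arguments are complete.
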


\begin{proof}
 By \cref{embeddings,kohn-laplacian-hodge-star}, it suffices to consider the case $p+q\leq n-1$.
 
 Inequality~\eqref{eqn:Boxb-general-estimate} and its conjugate imply that
 \begin{equation*}
  L_b \gtrsim \frac{n-p-q}{n-p-q+2}\mL_{p-q} .
 \end{equation*}
 Since $\lv p-q\rv<n$, we conclude from \cref{invertibility-characterizations,comparisons,invert-folland-stein} that $L_b\colon\mR^{p,q}\to\mR^{p,q}$ admits a parametrix.
\end{proof}

We now turn to the maximal hypoellipticity of the fourth-order Kohn Laplacian.
Our approach is inspired by Rumin's proof~\cite{Rumin1994} of the maximal hypoellipticity of the fourth-order Rumin Laplacian.
However, we slightly simplify his argument by means of the following consequence of the Lefschetz decomposition of a symplectic vector space.

\begin{lemma}
 \label{lefschetz-consequence}
 Let $(M^{2n+1},T^{1,0},\theta)$ be a pseudohermitian manifold.
 Suppose that $p,q\in\{0,\dotsc,n\}$ are such that $p+q=n$.
 Then
 \begin{equation}
  \label{eqn:lefschetz-consequence}
  \onablas^\ast\onablas\omega = \nablas\nablas^\ast\omega - i\,d\theta \wedge \onablas^\ast\nablas^\ast \omega
 \end{equation}
 for all $\omega\in P^{p,q}M$.
\end{lemma}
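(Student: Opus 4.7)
The plan is to derive the identity by repeatedly applying \cref{onablas-simple-identities} starting from the vanishing $d\theta\wedge\omega=0$, which holds for primitive forms of top total degree.

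First I would verify that, for $\omega\in P^{p,q}M$ with $p+q=n$, one has $d\theta\wedge\omega=0$. This is a direct consequence of \cref{lefschetz-decomposition}(ii) applied to $\theta\wedge\omega\in\theta\wedge P^{p,q}M$: since $p+q=n$, we have $n+1-p-q=1$, so primitivity means precisely that $\mL(\theta\wedge\omega)=\theta\wedge\omega\wedge d\theta=0$; equivalently, $d\theta\wedge\omega=0$ as a section of $\Lambda^{p+1,q+1}$.

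Next I would apply $\nablas^\ast$ to the identity $d\theta\wedge\omega=0$. Using Equation~\eqref{eqn:nablasnablah} from \cref{onablas-simple-identities}, this yields
\begin{equation*}
 0 = \nablas^\ast(d\theta\wedge\omega) = d\theta\wedge\nablas^\ast\omega - i\onablas\omega,
\end{equation*}
and hence $\onablas\omega = -i\,d\theta\wedge\nablas^\ast\omega$. Applying $\onablas^\ast$ to both sides and then using Equation~\eqref{eqn:onablasonablah} of \cref{onablas-simple-identities} (with $\eta=\nablas^\ast\omega\in\Omega^{p-1,q}M$, for which no primitivity assumption is needed) gives
\begin{equation*}
 \onablas^\ast\onablas\omega = -i\onablas^\ast(d\theta\wedge\nablas^\ast\omega) = -i\bigl(d\theta\wedge\onablas^\ast\nablas^\ast\omega + i\nablas\nablas^\ast\omega\bigr) = \nablas\nablas^\ast\omega - i\,d\theta\wedge\onablas^\ast\nablas^\ast\omega,
\end{equation*}
which is exactly Equation~\eqref{eqn:lefschetz-consequence}.

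There is no significant obstacle here: the only subtle point is the reduction to $d\theta\wedge\omega=0$ via the Lefschetz decomposition in the top middle degree $p+q=n$, after which the computation is a purely formal two-step application of the commutation rules in \cref{onablas-simple-identities}. Note that the intermediate object $\nablas^\ast\omega$ need not be primitive, but we never use primitivity of it; only the general commutator relations from \cref{onablas-simple-identities}, which hold on all of $\Omega^{p-1,q}M$, are invoked.
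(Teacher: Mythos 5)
Your proof is correct. It follows the same overall skeleton as the paper's — establish the intermediate identity $\onablas\omega=-i\,d\theta\wedge\nablas^\ast\omega$ and then apply $\onablas^\ast$ together with \cref{onablas-simple-identities} — but you reach that intermediate identity by a different mechanism. The paper observes, via Equation~\eqref{eqn:tracefree-trick}, that $\onablas\omega+i\,d\theta\wedge\nablas^\ast\omega$ lies in $P^{p,q+1}M$ and then invokes the vanishing of primitive forms of bidegree $(p,q+1)$ with $p+q+1>n$. You instead start from the vanishing $d\theta\wedge\omega=0$, which you correctly extract from \cref{lefschetz-decomposition}(ii) in the middle degree, and then apply the commutator~\eqref{eqn:nablasnablah}. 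This buys you a derivation that uses only the Lefschetz kernel characterization and the already-proved identities of \cref{onablas-simple-identities}, avoiding both Equation~\eqref{eqn:tracefree-trick} and the vanishing of primitive forms above middle degree; the paper's version makes the primitivity mechanism more visible. One small point worth making explicit in your write-up: the passage from $\theta\wedge\omega\wedge d\theta=0$ to $\omega\wedge d\theta=0$ uses that $\omega\wedge d\theta$ annihilates the Reeb vector field, so that wedging with $\theta$ is injective on such forms.
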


\begin{proof}
 Equation~\eqref{eqn:tracefree-trick} implies that if $\omega\in P^{p,q}M$, $p+q=n$, then
 \[ \onablas\omega + i\,d\theta \wedge \nablas^\ast\omega \in P^{p,q+1}M . \]
 Since $p+q+1>n$, it holds~\cite{Huybrechts2005}*{Proposition~1.2.30(ii)} that $P^{p,q+1}M=0$.
 Therefore
 \begin{equation}
  \label{eqn:critical-lefschetz-consequence}
  \onablas\omega = -i\,d\theta \wedge \nablas^\ast\omega .
 \end{equation}
 Combining this with \cref{onablas-simple-identities} yields Equation~\eqref{eqn:lefschetz-consequence}.
\end{proof}

That the fourth-order Kohn Laplacian on $\mR^{p,q}$, $q \not\in \{ 0, n \}$, admits a parametrix follows by comparing it to an operator of the form $P_1^\ast D_1 P_1 + P_2^\ast D_2 P_2$ with each of $P_1^\ast P_1 + P_2^\ast P_2$ and $D_1$ and $D_2$ bounded below by a Folland--Stein operator (cf.\ \cite{Garfield2001}*{Theorem~3.22}).

\begin{proposition}
 \label{hypoelliptic4}
 Let $(M^{2n+1},T^{1,0},\theta)$ be a closed, strictly pseudoconvex manifold.
 Let $p\in\{0,\dotsc,n\}$ and $q\in\{1,\dotsc,n-1\}$ be such that $p+q\in\{n,n+1\}$.
 Then $\Box_b\colon\mR^{p,q}\to\mR^{p,q}$ admits a parametrix.
\end{proposition}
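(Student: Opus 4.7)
My plan is to apply \cref{interior-composition-comparison} to $\Box_b$, after reducing to the case $p+q=n$. The case $p+q=n+1$ follows from $p+q=n$ by conjugating with $\hodge\circ\overline{(\cdot)}$ and invoking \cref{kohn-laplacian-hodge-star,embeddings}, since this identification sends the bidegree range $\{(p,q)\colon p+q=n,\ 1\leq q\leq n-1\}$ bijectively onto its counterpart at $p+q=n+1$. I will then rewrite the fourth-order Kohn Laplacian as
\begin{equation*}
 \Box_b = \dbbar^\ast\dbbar + \dbbar\, D\, \dbbar^\ast , \qquad D := \db^\ast\db + \tfrac{1}{2}\db\db^\ast + \dbbar^\ast\dbbar ,
\end{equation*}
where $D$ is a second-order operator on $\mR^{p,q-1}$. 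On $\mR^{p,q}$ with $p+q=n$, \cref{sR-to-P} identifies $\mR^{p,q}$ with $P^{p,q}M$ and shows that $\dbbar^\ast=\onablas^\ast$ is first-order, while $\dbbar$ is second-order and equals, up to a unimodular constant, $\hodge\circ\onablas\circ\nablas^\ast$ modulo $\DiffOperator{0}$ by \eqref{eqn:dbbar-to-onablas-middle}. This gives $\dbbar^\ast\dbbar \simeq (\nablas^\ast)^\ast(\onablas^\ast\onablas)(\nablas^\ast)$ modulo $\DiffOperator{2}$, and hence
\begin{equation*}
 \Box_b \simeq P_1^\ast B_1^\ast B_1 P_1 + P_2^\ast B_2^\ast B_2 P_2 \pmod{\DiffOperator{2}} ,
\end{equation*}
with first-order $P_1:=\nablas^\ast$, $P_2:=\onablas^\ast$ and second-order $B_1^\ast B_1:=\onablas^\ast\onablas$, $B_2^\ast B_2:=D$ (after rescaling to absorb the $\tfrac{1}{2}$).

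To invoke \cref{interior-composition-comparison} it remains to produce parametrices for $A:=\nablas\nablas^\ast+\onablas\onablas^\ast$ on $\mR^{p,q}$, for $\onablas^\ast\onablas$ on the adjacent primitive space, and for $D$ on $\mR^{p,q-1}$. For each, my approach is to apply \cref{nablas-weitzenbock}---and, in the middle-degree case, \cref{lefschetz-consequence} to substitute $\nablas\nablas^\ast$ for $\onablas^\ast\onablas$ modulo lower order---to re-express the operator at top order in terms of $\nablab^\ast\nablab$ and $\nablabbar^\ast\nablabbar$, and then compare it with the Folland--Stein model \eqref{eqn:folland-stein-no-0}. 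The effective parameter $\alpha$ that emerges depends on $p$ and $q$, and the hypothesis $q\in\{1,\dotsc,n-1\}$ is used precisely to guarantee $\alpha\in(-n,n)$; \cref{comparisons} together with \cref{invert-folland-stein} then produces the parametrix. Once all three parametrices are in hand, \cref{interior-composition-comparison} yields a parametrix for $\Box_b$.

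The hardest part will be the analysis of the inner operator $D$. Although each of the summands $\db^\ast\db$, $\db\db^\ast$, and $\dbbar^\ast\dbbar$ is individually nonnegative, the specific coefficients---in particular the $\tfrac{1}{2}$ stemming from \cref{defn:kohn-laplacian} and the non-principal $d\theta\wedge\nablas^\ast$ correction in \eqref{eqn:dbbar-to-onablas} for $\dbbar$ on $\mR^{p,q-1}$---must be tracked carefully when rewriting $D$ via $\nablas,\onablas$ and their adjoints using \cref{sR-to-P,onablas-simple-identities,slash-laplacian-add-one-to-q}. Keeping the effective Folland--Stein parameter strictly inside $(-n,n)$, and thereby avoiding the endpoint values $q\in\{0,n\}$ where maximal hypoellipticity fails, is the pivot of the estimate.
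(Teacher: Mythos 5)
Your reduction to $p+q=n$ and your identification of the outer operator $A=\nablas\nablas^\ast+\onablas\onablas^\ast\simeq\Boxs$ agree with the paper, and your inner operator $D=\db^\ast\db+\tfrac12\db\db^\ast+\dbbar^\ast\dbbar$ on $\mR^{p,q-1}$ is in fact unproblematic (no Folland--Stein bookkeeping needed: $D=\oBox_b+\dbbar^\ast\dbbar\gtrsim\oBox_b$ on $\mR^{p,q-1}$, and $\oBox_b$ there admits a parametrix by \cref{hypoelliptic2} and conjugation since $p=n-q\in\{1,\dotsc,n-1\}$). The fatal gap is your other inner operator: $B_1^\ast B_1:=\onablas^\ast\onablas$ on $P^{p-1,q}M$ does \emph{not} admit a parametrix, so \cref{interior-composition-comparison} cannot be applied with your grouping. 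This operator is only ``half'' of $\Boxs$; by Equations~\eqref{eqn:onablasonablasast}--\eqref{eqn:onablasastonablas} and \cref{commutators} one has $\onablas^\ast\onablas=\nablabbar^\ast\nablabbar-\onablas\onablas^\ast+\DiffOperator{0}$, i.e.\ it is bounded above by $\tfrac12\mL_{n}$ modulo $\DiffOperator{0}$, and $\alpha=n$ is precisely an excluded value in \cref{invert-folland-stein}. Concretely, in the unitary representations of the Heisenberg group in which the model operators of the $\nabla_{\bar\beta}$ share a null vector $v_0$, the model operator of $\onablas$ annihilates $v_0\otimes\Lambda^{p-1,q}$, so the Rockland condition fails for $\onablas^\ast\onablas$ --- this is the same mechanism by which the Kohn Laplacian fails to be hypoelliptic at $q\in\{0,n\}$, and it cannot be repaired by a $\gtrsim$ comparison because the obstruction sits in the principal Heisenberg symbol. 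Note also that \cref{lefschetz-consequence}, which you invoke to trade $\onablas^\ast\onablas$ for $\nablas\nablas^\ast$, is only valid on $P^{p,q}M$ with $p+q=n$, whereas your $B_1^\ast B_1$ lives on the degree-$(n-1)$ space $P^{p-1,q}M$.

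The repair is the paper's regrouping. Writing $\dbbar^\ast\dbbar\simeq\db\dbbar^\ast\dbbar\db^\ast$ (via \cref{sR-to-P}) puts the whole fourth-order operator in the form
\begin{equation*}
 \Box_b \simeq \db\,(\dbbar^\ast\dbbar)\,\db^\ast + \dbbar\, D\, \dbbar^\ast ,
\end{equation*}
and then one uses the slack in $\gtrsim$ to \emph{complete each inner half-Laplacian to a full Kohn Laplacian}:
\begin{equation*}
 \Box_b \gtrsim \tfrac12\db\,\Box_b\,\db^\ast + \tfrac12\dbbar\,\oBox_b\,\dbbar^\ast ,
\end{equation*}
where the inner $\Box_b$ and $\oBox_b$ act on $\mR^{p-1,q}$ and $\mR^{p,q-1}$ and are hypoelliptic by the second-order results precisely because $q$, respectively $p=n-q$, lies in $\{1,\dotsc,n-1\}$. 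Only after this completion do all hypotheses of \cref{interior-composition-comparison} hold. As written, your argument stalls at the parametrix for $\onablas^\ast\onablas$, which does not exist.
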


\begin{proof}
 By \cref{embeddings,kohn-laplacian-hodge-star}, it suffices to consider the case $p+q=n$.
 
 On the one hand, \cref{sR-to-P} implies that
 \begin{equation*}
  \dbbar^\ast\dbbar \simeq \db\dbbar^\ast\dbbar\db^\ast .
 \end{equation*}
 Inserting this into \cref{defn:kohn-laplacian} yields
 \begin{equation}
  \label{eqn:middle-kohn-to-Boxs}
  \Box_b \simeq \dbbar\dbbar^\ast\dbbar\dbbar^\ast + \db\dbbar^\ast\dbbar\db^\ast + \dbbar\db^\ast\db\dbbar^\ast + \frac{1}{2}\dbbar\db\db^\ast\dbbar^\ast .
 \end{equation}
 In particular,
 \begin{equation}
  \label{eqn:middle-kohn-to-Boxs-estimate}
  \Box_b \gtrsim \frac{1}{2}\db\Box_b\db^\ast + \frac{1}{2}\dbbar\oBox_b\dbbar^\ast .
 \end{equation}
 On the other hand, \cref{lefschetz-consequence,sR-to-P,nablasast-complex} imply that
 \begin{equation}
  \label{eqn:middle-Boxs-formula}
  \Boxs \simeq \db\db^\ast + \dbbar\dbbar^\ast .
 \end{equation}
 
 Now, \cref{hypoelliptic-lb2} implies that $\Box_b$ and $\oBox_b$ admit parametrices on $\mR^{p-1,q}$ and $\mR^{p,q-1}$, respectively.
 Additionally, \cref{nablas-weitzenbock,invert-folland-stein,comparisons,invertibility-characterizations} imply that $\Boxs$ admits a parametrix on $\mR^{p,q}$.
 Combining Equations~\eqref{eqn:middle-kohn-to-Boxs-estimate} and~\eqref{eqn:middle-Boxs-formula} with \cref{interior-composition-comparison} yields the final conclusion.
\end{proof}

\Cref{hypoelliptic4} implies that the operator $L_b\colon\mR^{p,q}\to\mR^{p,q}$, $p+q\in\{n,n+1\}$, admits a parametrix if $q\not\in\{0,n\}$.
A more delicate argument shows that this is also true in the remaining cases $q \in \{ 0 , n \}$.

\begin{proposition}
 \label{hypoelliptic-lb4}
 Let $(M^{2n+1},T^{1,0},\theta)$ be a closed, strictly pseudoconvex manifold.
 Let $p\in\{0,\dotsc,n+1\}$ and $q\in\{0,\dotsc,n\}$ be such that $p+q\in\{n,n+1\}$.
 Then $L_b\colon\mR^{p,q}\to\mR^{p,q}$ admits a parametrix.
\end{proposition}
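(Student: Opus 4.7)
The plan is to dichotomize according to whether $q \in \{1,\dotsc,n-1\}$ or $q \in \{0,n\}$. In the first range, \cref{hypoelliptic4} already provides a parametrix for $\Box_b$ and $L_b$ is handled by a direct comparison; in the endpoint range $\Box_b$ is no longer hypoelliptic, and the additional summands of $L_b$ must upgrade the principal symbol to that of a square of an admissible Folland--Stein operator.

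For the interior case, Equation~\eqref{eqn:Lb} gives $L_b - \Box_b = \oBox_b + \dhor^\ast\dhor$ when $p+q = n$ and $L_b - \Box_b = \oBox_b + \dhor\dhor^\ast$ when $p+q = n+1$. Inspecting \cref{defn:kohn-laplacian} one sees that each of these is a sum of operators of the form $P^\ast P$ with $P \in \PseudoDiffOperator{2}$, hence $L_b \gtrsim \Box_b \gtrsim 0$ in the sense of \cref{defn:comparisons}. \Cref{hypoelliptic4} provides a parametrix for $\Box_b$ throughout $q \in \{1,\dotsc,n-1\}$, and \cref{comparisons}(ii) then delivers one for $L_b$.

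For the boundary case, the four possibilities $\mR^{n,0}$, $\mR^{n+1,0}$, $\mR^{0,n}$, and $\mR^{1,n}$ all reduce to $\mR^{n,0}$: conjugation intertwines $L_b$ between the first and third, and between the second and fourth, by applying \cref{conjugate-rumin-bundles,conjugate-rumin-operators} summand-by-summand to~\eqref{eqn:Lb}, while the Hodge star composed with \cref{rumin-hodge-star,kohn-laplacian-to-rumin-laplacian} intertwines the two remaining spaces with $\mR^{n,0}$ modulo a second-order error. \Cref{comparisons}(i) then reduces the task to producing a parametrix for $L_b$ on $\mR^{n,0}$. For $\omega \in \mR^{n,0}$ the codomains of $\db$ and $\dbbar^\ast$ are zero, so $\db\omega = \dbbar^\ast\omega = 0$, and \cref{lefschetz-consequence} gives $\onablas\omega = -i\,d\theta\wedge\nablas^\ast\omega$. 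Substituting into \cref{sR-to-P,onablas-simple-identities} and collecting fourth-order terms using \cref{commutators,triple-commute}, one obtains
\begin{equation*}
 L_b \simeq \mL_{n+1}^2 \mod \DiffOperator{2} .
\end{equation*}
Since $n+1 \notin \{\pm n, \pm(n+2),\dotsc\}$, \cref{invert-folland-stein} supplies a parametrix for $\mL_{n+1}$, \cref{compositions-in-pseudodiffoperator}(ii) supplies one for $\mL_{n+1}^2$, and \cref{comparisons}(i) completes the proof.

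The main obstacle is the explicit computation $L_b \simeq \mL_{n+1}^2 \mod \DiffOperator{2}$ on $\mR^{n,0}$. The Weitzenb\"ock formulas of \cref{bochner-order2,rumin-order2} only cover $p+q \leq n-1$, so the middle-degree calculation must be carried out from scratch; the delicate step is to verify that the cross-terms between $\Box_b$, $\oBox_b$, and $\dhor^\ast\dhor$ produce exactly the parameter $n+1$ and not one of the forbidden values $\pm n$ or $\pm(n+2)$ for which \cref{invert-folland-stein} would fail. The vanishing of $\db\omega$, $\dbbar^\ast\omega$, and $\onablas^\ast\omega$ on $\mR^{n,0}$, combined with the Lefschetz collapse $\onablas\omega = -i\,d\theta\wedge\nablas^\ast\omega$ from \cref{lefschetz-consequence}, is what forces the correct parameter.
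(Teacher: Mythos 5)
Your proposal is correct and follows essentially the same route as the paper: reduce to $\mR^{n,0}$ via conjugation and the Hodge star, dispose of $q\in\{1,\dotsc,n-1\}$ by $L_b\gtrsim\Box_b$ together with \cref{hypoelliptic4,comparisons}, and in the endpoint case establish $L_b\simeq\mL_{n+1}^2$ modulo $\DiffOperator{2}$ using \cref{sR-to-P,lefschetz-consequence,slash-laplacian-add-one-to-q,triple-commute,commutators} before invoking \cref{invert-folland-stein}. The only cosmetic differences are that the reduction from $\mR^{n+1,0}$ to $\mR^{n,0}$ is exact under the Hodge star (no second-order error or appeal to \cref{kohn-laplacian-to-rumin-laplacian} is needed), and the central identity $L_b\simeq\left(2\nablas\nablas^\ast+i\nabla_0\right)^2\simeq\mL_{n+1}^2$, which you assert and correctly flag as the main obstacle, is precisely the computation the paper carries out.
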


\begin{proof}
 It suffices to consider the case $p+q=n$.
 
 Suppose first that $q\not\in\{0,n\}$.
 Then $p\not\in\{0,n\}$.
 Since
 \[ L_b = \Box_b+\oBox_b+\dhor^\ast\dhor \gtrsim \Box_b \]
 on $\mR^{p,q}$, the conclusion follows from \cref{hypoelliptic4,comparisons}.
 
 Suppose next that $q\in\{0,n\}$.
 By conjugating if necessary, we may assume that $q=0$.
 On the one hand, \cref{sR-to-P} implies that
 \begin{equation}
  \label{eqn:n0-dhor}
  \dhor^\ast\dhor = (\nablas\nablas^\ast + i\nabla_0)^2 .
 \end{equation}
 On the other hand, combining \cref{sR-to-P} with Equation~\eqref{eqn:middle-kohn-to-Boxs} and its conjugate imply that
 \begin{align}
  \label{eqn:n0-box} \Box_b & \simeq \nablas\onablas^\ast\onablas\nablas^\ast, \\
  \label{eqn:n0-obox} \oBox_b & \simeq \nablas\onablas^\ast\onablas\nablas^\ast + \nablas\nablas^\ast\nablas\nablas^\ast .
 \end{align}
 Combining \cref{commutators} with Equations~\eqref{eqn:n0-dhor}, \eqref{eqn:n0-box}, and~\eqref{eqn:n0-obox} yields
 \begin{equation}
  \label{eqn:n0-first-expression}
  L_b \simeq 2\nablas\nablas^\ast\nablas\nablas^\ast + 2\nablas\onablas^\ast\onablas\nablas^\ast + 2i\nabla_0\nablas\nablas^\ast - \nabla_0\nabla_0 .
 \end{equation}
 It follows readily from \cref{slash-laplacian-add-one-to-q,triple-commute} that
 \begin{equation}
  \label{eqn:partial-commute-middle}
  \nablas\onablas^\ast\onablas\nablas^\ast \simeq \nablas\nablas^\ast\onablas^\ast\onablas + i\nablas\nabla_0\nablas^\ast .
 \end{equation}
 Using \cref{commutators,lefschetz-consequence} to combine Equations~\eqref{eqn:n0-first-expression} and~\eqref{eqn:partial-commute-middle} yields
 \begin{equation}
  \label{eqn:n0-second-expression}
  L_b \simeq \left( 2\nablas\nablas^\ast + i\nabla_0 \right)^2 .
 \end{equation}
 Combining \cref{nablas-weitzenbock} with Equations~\eqref{eqn:folland-stein-no-0}, \eqref{eqn:middle-Boxs-formula}, and~\eqref{eqn:n0-second-expression} yields
 \begin{equation*}
  L_b \simeq \left( 2\Boxs\omega + i\nabla_0 \right)^2 \simeq \mL_{n+1}^2 .
 \end{equation*}
 The conclusion follows from \cref{invertibility-characterizations,comparisons,invert-folland-stein}. 
\end{proof}

In particular, the Rumin Laplacian admits a parametrix.

\begin{corollary}
 \label{hypoelliptic-rumin}
 Let $(M^{2n+1},T^{1,0},\theta)$ be a closed, strictly pseudoconvex manifold.
 Let $k\in\{0,\dotsc,2n+1\}$.
 Then $\Delta_b\colon\mR^k\to\mR^k$ admits a parametrix.
\end{corollary}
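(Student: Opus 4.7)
\emph{Plan.} The strategy is to combine \cref{kohn-laplacian-to-rumin-laplacian} with \cref{hypoelliptic-lb2,hypoelliptic-lb4}, which together give a parametrix for the closely related operator $L_b$ of Equation~\eqref{eqn:Lb}.

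First, \cref{rumin-hodge-star} yields $\hodge \Delta_b = \Delta_b \hodge$, so by Hodge duality it suffices to treat $k \leq n+1$. Next, \cref{kohn-laplacian-to-rumin-laplacian} gives $\Delta_b \equiv L_b$ modulo a differential operator of order at most $2m-2$, where $2m \in \{2, 4\}$ is the order of $\Delta_b$; in particular $\Delta_b \simeq L_b$ in the sense of \cref{defn:comparisons}. By \cref{comparisons}(i), it then suffices to prove that $L_b$ admits a parametrix on $\mR^k$.

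Since $L_b$ preserves the bidegree decomposition $\mR^k = \bigoplus_{p+q=k} \mR^{p,q}$, I will argue on each summand separately. For $k \in \{n, n+1\}$, every bidegree is directly handled by \cref{hypoelliptic-lb4}. For $k \leq n-1$, the summands with $q \in \{1, \ldots, n-1\}$ are handled by \cref{hypoelliptic-lb2}. The only summand not yet covered is $\mR^{k,0}$ with $k \leq n-1$.

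To dispatch this last case, I will use that conjugation (\cref{conjugate-rumin-bundles,conjugate-rumin-operators}) gives an antilinear isomorphism $\mR^{k,0} \to \mR^{0,k}$ which interchanges $\Box_b$ and $\oBox_b$, and therefore preserves $L_b = \Box_b + \oBox_b$. Hence $L_b|_{\mR^{k,0}}$ admits a parametrix if and only if $L_b|_{\mR^{0,k}}$ does. For $k \geq 1$ the latter follows from \cref{hypoelliptic-lb2}; for $k = 0$, the operator $L_b = \db^\ast\db + \dbbar^\ast\dbbar$ on functions coincides with the Folland--Stein operator $\mL_0$ (up to a positive constant), which admits a parametrix by \cref{invert-folland-stein}. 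The only mildly delicate step is this bidegree $(k, 0)$ case, which lies outside the hypotheses of \cref{hypoelliptic-lb2}; the conjugation symmetry of $L_b$ resolves it cleanly.
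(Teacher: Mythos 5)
Your argument is correct and is essentially the paper's own proof, which consists precisely of citing \cref{kohn-laplacian-to-rumin-laplacian,hypoelliptic-lb2,hypoelliptic-lb4}. The one place you add genuine content is the bidegree-$(k,0)$ summands, which the stated hypotheses of \cref{hypoelliptic-lb2} do not literally cover (though its proof does, since the estimate $L_b \gtrsim \frac{n-p-q}{n-p-q+2}\mL_{p-q}$ only needs $\lv p-q\rv < n$); your conjugation argument, together with the identification $L_b=\mL_0$ on $\mR^{0,0}$, closes that small gap correctly.
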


\begin{proof}
 This follows from \cref{kohn-laplacian-to-rumin-laplacian,hypoelliptic-lb2,hypoelliptic-lb4}.
\end{proof}

\section{The Hodge theorems and cohomological dualities}
\label{sec:hodge_thm}

In this section we prove Hodge decomposition theorems for $\mR^{p,q}$ and $\mR^k$ using the Kohn and Rumin Laplacians, respectively.
As applications, we prove Hodge isomorphism theorems for the Kohn--Rossi and de Rham cohomology groups, and then deduce Serre and Poincar\'e duality.

We begin by introducing notation for the spaces of $\dbbar$-harmonic $(p,q)$-forms and $d$-harmonic $k$-forms.

\begin{definition}
 Let $(M^{2n+1},T^{1,0},\theta)$ be a pseudohermitian manifold.
 For each $p\in\{0,\dotsc,n+1\}$ and $q\in\{0,\dotsc,n\}$, the \defn{space of $\dbbar$-harmonic $(p,q)$-forms} is
 \[ \mH^{p,q} := \ker\left( \Box_b\colon \mR^{p,q}\to\mR^{p,q}\right) . \]
\end{definition}

Recall from \cref{kernel-kohn-laplacian} that $\omega\in\mH^{p,q}$ if and only if $\dbbar\omega=0$ and $\dbbar^\ast\omega=0$.

\begin{definition}
 Let $(M^{2n+1},T^{1,0},\theta)$ be a pseudohermitian manifold.
 For each $k\in\{0,\dotsc,2n+1\}$, the \defn{space of $d$-harmonic $k$-forms} is
 \[ \mH^k := \ker \left( \Delta_b \colon \mR^k \to \mR^k \right) . \]
\end{definition}

Recall from \cref{kernel-rumin-laplacian} that $\omega\in\mH^k$ if and only if $d\omega=0$ and $d^\ast\omega=0$.

Our first Hodge decomposition theorem combines three statements with close analogues in the literature.
First, if $q\not\in\{0,n\}$, then there is an $L^2$-orthogonal splitting $\mR^{p,q} = \mH^{p,q} \oplus \im \Box_b$ and $\mH^{p,q}$ is finite-dimensional (cf.\ \cites{KohnRossi1965,Garfield2001,GarfieldLee1998}).
Second, if $n\geq2$ and $q\in\{0,n\}$, then the same splitting holds, though $\mH^{p,q}$ may be infinite-dimensional (cf.\ \cites{BealsGreiner1988,Garfield2001,GarfieldLee1998}).
Third, if $(M^3,T^{1,0})$ is embeddable, then the same splitting holds, though again $\mH^{p,q}$ may be infinite-dimensional (cf.\ \cites{Kohn1986,Miyajima1999}).

\begin{theorem}
 \label{hodge-decomposition}
 Let $(M^{2n+1},T^{1,0},\theta)$ be a closed, embeddable, strictly pseudoconvex manifold and let $p\in\{0,\dotsc,n+1\}$ and $q\in\{0,\dotsc,n\}$.
 Set
 \begin{equation*}
  s :=
  \begin{cases}
   2, & \text{if $p+q\not\in\{n,n+1\}$}, \\
   4, & \text{if $p+q\in\{n,n+1\}$} .
  \end{cases}
 \end{equation*}
 Then there is an $L^2$-orthogonal decomposition
 \begin{equation}
  \label{eqn:mRpq-closed-range}
  \mR^{p,q} = \mH^{p,q} \oplus \im \left( \Box_b \colon \mR^{p,q} \to \mR^{p,q} \right) .
 \end{equation}
 Moreover, the $L^2$-orthogonal projection $H^{p,q}\colon \mR^{p,q}\to\mH^{p,q}$ and the partial inverse $N^{p,q}\colon \mR^{p,q}\to \mR^{p,q}$ are Heisenberg pseudodifferential operators of order $0$ and $-s$, respectively.
 There is also an $L^2$-orthogonal decomposition
 \begin{equation}
  \label{eqn:hodge-decomposition}
  \mR^{p,q} = \mH^{p,q} \oplus \im \bigl( \dbbar \colon \mR^{p,q-1} \to \mR^{p,q} \bigr) \oplus \im \bigl( \dbbar^\ast \colon \mR^{p,q+1} \to \mR^{p,q} \bigr) .
 \end{equation}
 Additionally, if $q\not\in\{0,n\}$, then $\mH^{p,q}$ is finite-dimensional.
\end{theorem}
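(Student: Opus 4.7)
The plan is to split the argument according to whether $q\in\{0,n\}$ and apply the two partial-inverse machines from \cref{invert-maximally-hypoelliptic,invert-almost-invertible}. The decomposition~\eqref{eqn:hodge-decomposition} will then follow from~\eqref{eqn:mRpq-closed-range} by inspecting the form of $\Box_b$ in \cref{defn:kohn-laplacian}.

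First I would handle the generic case $q\not\in\{0,n\}$. By \cref{hypoelliptic2,hypoelliptic4}, the formally self-adjoint operator $\Box_b\colon\mR^{p,q}\to\mR^{p,q}$ admits a parametrix in $\PseudoDiffOperator{-s}$. \Cref{invert-maximally-hypoelliptic} then yields finite-dimensionality of $\mH^{p,q}=\ker\Box_b$ together with operators $N^{p,q}\in\PseudoDiffOperator{-s}$ and $H^{p,q}\in\PseudoDiffOperator{-\infty}$ satisfying the partial-inverse relations~\eqref{eqn:partial-inverse}. The identity $I=\Box_b N^{p,q}+H^{p,q}$ applied to any $\omega\in\mR^{p,q}$ gives the orthogonal splitting~\eqref{eqn:mRpq-closed-range}; the orthogonality itself is immediate from the self-adjointness of $\Box_b$ and \cref{kernel-kohn-laplacian}.

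The main obstacle is the case $q\in\{0,n\}$, where $\Box_b$ is not maximally hypoelliptic on $\mR^{p,q}$ and \cref{invert-maximally-hypoelliptic} no longer applies; this is exactly where the embeddability hypothesis enters. I would invoke \cref{invert-almost-invertible}, whose premises must be verified: closedness of the range of the $L^2$-closure of $\Box_b$, and the existence of operators $Q\in\PseudoDiffOperator{-s}$, $S\in\PseudoDiffOperator{0}$, $R\in\PseudoDiffOperator{-1}$ with $\Box_bQ+S=I-R$ and $\Box_bS\equiv 0\bmod\PseudoDiffOperator{-\infty}$. Because $L_b=\Box_b+\oBox_b+(\dhor^\ast\dhor\text{ or }\dhor\dhor^\ast)$ admits a parametrix in all degrees by \cref{hypoelliptic-lb2,hypoelliptic-lb4}, and because $[L_b,\dbbar\dbbar^\ast]$ and $[L_b,\dbbar^\ast\dbbar]$ are of lower order by \cref{L-dbbardbbarast-commutator}, the partial inverse of $L_b$ can be composed with appropriate compositions of $\dbbar$ and $\dbbar^\ast$ to produce the candidate $Q$; the complementary projector $S$ will be built from the Szeg\H o-type projector onto $\ker\Box_b$, whose properties as a Heisenberg pseudodifferential operator are available from~\cite{BealsGreiner1988} and underlie the embeddability assumption (the assumption is needed precisely when $n=1$ to ensure such a projector exists). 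Closedness of the range follows from this construction together with the $L^2$-estimate $\lV\omega\rV^2\leq C(\lV\Box_b\omega\rV^2+\lV S\omega\rV^2)$ obtained on the orthogonal complement of $\ker\Box_b$.

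Once~\eqref{eqn:mRpq-closed-range} and the pseudodifferential nature of $H^{p,q},N^{p,q}$ are established in both cases, the Hodge decomposition~\eqref{eqn:hodge-decomposition} is a short unpacking: inspecting \cref{defn:kohn-laplacian}, every summand in $\Box_b$ begins on the left with either $\dbbar$ or $\dbbar^\ast$, so $\im\Box_b\subset\im\dbbar+\im\dbbar^\ast$; the reverse containment and pairwise $L^2$-orthogonality of the three summands follow from $\dbbar^2=0$, $(\dbbar^\ast)^2=0$, \cref{formal-adjoint}, and \cref{kernel-kohn-laplacian}. Finite-dimensionality of $\mH^{p,q}$ when $q\not\in\{0,n\}$ has already been recorded above, completing the proof. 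The key analytic difficulty throughout is the endpoint case and, within it, the middle degrees $p+q\in\{n,n+1\}$, where the fourth-order structure of $\Box_b$ forces one to combine \cref{hypoelliptic-lb4} with a careful Szeg\H o projector argument rather than a direct maximal subelliptic estimate.
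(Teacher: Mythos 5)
Your treatment of the generic case $q\notin\{0,n\}$ and the final unpacking of~\eqref{eqn:hodge-decomposition} from~\eqref{eqn:mRpq-closed-range} match the paper and are fine. The gap is in the endpoint case $q\in\{0,n\}$, which is the heart of the theorem. You correctly identify that \cref{invert-almost-invertible} is the right tool and that its hypotheses (closed range, plus operators $Q$, $S$, $R$ with $\Box_bQ+S=I-R$ and $\Box_bS\equiv0$ modulo smoothing) must be verified, but you never actually produce $Q$ and $S$. Saying that $S$ ``will be built from the Szeg\H{o}-type projector onto $\ker\Box_b$, whose properties are available from Beals--Greiner'' is circular: for $(p,0)$-forms with $p>0$ the membership of that projector in the Heisenberg calculus is precisely what has to be established, and it is not in the cited literature. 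The paper's key device, which your sketch omits entirely, is the identity $\dbbar=(H^{p,1}+\Box_bN^{p,1})\dbbar=\Box_bN^{p,1}\dbbar$ on $\mR^{p,0}$, valid because for $n\geq2$ the degree $q=1$ is non-endpoint and so $N^{p,1}$, $H^{p,1}$ already exist by the generic case. Unwinding this gives the explicit candidate $H^{p,0}=I-c\,\dbbar^\ast N^{p,1}\dbbar$ (with $c$ depending on $p$), from which both the closed-range property (via $\lV\dbbar^\ast N^{p,1}\dbbar\omega\rV^2\leq C\lV\dbbar\omega\rV^2$) and the operators $Q$, $S$ fall out; the commutator lemma \cref{L-dbbardbbarast-commutator} and the parametrix for $L_b$ then enter only in the two mismatched-order subcases $p=n-1$ and $p=n+1$. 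Without this ``one degree up'' step your verification of the hypotheses of \cref{invert-almost-invertible} cannot get started.

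A second, related gap: you assert embeddability is needed ``precisely when $n=1$ to ensure such a projector exists,'' which is the right sentiment, but your construction gives no route to the projector when $n=1$. In that dimension every $q\in\{0,1\}=\{0,n\}$ is an endpoint, so there is no intermediate hypoelliptic Kohn Laplacian to bootstrap from, and the paper instead invokes Miyajima's closed-range result for embeddable three-dimensional CR structures applied to $F^p\COmega^pM$, together with duality. Some such external input is unavoidable there, and your sketch should name it.
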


\begin{remark}
 Our proof does not require embeddability when $n\geq2$.
 Of course, closed, strictly pseudoconvex manifolds of dimension at least five are embeddable~\cite{Boutet1975}.
\end{remark}

\begin{proof}
 The proof splits into three cases.
 
 \textbf{Case 1}: $q\not\in\{0,n\}$.
 
 \Cref{invert-maximally-hypoelliptic} implies both the existence of $H^{p,q}$ and $N^{p,q}$ and the $L^2$-orthogonal decomposition~\eqref{eqn:mRpq-closed-range}.
 The $L^2$-orthogonal decomposition~\eqref{eqn:hodge-decomposition} follows from the fact $\dbbar^2=0$.
 
 \textbf{Case 2}: $q\in\{0,n\}$ and $n\geq 2$.
 
 By \cref{kohn-laplacian-hodge-star}, it suffices to consider the case $q=0$.
 The argument splits into three subcases.
 The first case, when $p\in\{0,\dotsc,n-2\}$ or $p=n$, is the case when the Kohn Laplacians on $\mR^{p,0}$ and $\mR^{p,1}$ have the same order.
 In this case, it is straightforward to write down explicit formulas for the operators $H^{p,0}$ and $N^{p,0}$ in terms of $H^{p,1}$ and $N^{p,1}$.
 The second and third cases, when $p=n-1$ and $p=n+1$, respectively, are the cases when the Kohn Laplacians on $\mR^{p,0}$ and $\mR^{p,1}$ have different orders.
 In these cases, we determine $H^{p,0}$ and $N^{p,0}$ modulo lower-order terms and apply \Cref{invert-almost-invertible}.
 Before considering these cases separately, note that since $n\geq2$, the resolution of Case 1 implies that
 \begin{equation}
  \label{eqn:apply-dbbar-trick}
  \dbbar = (H^{p,1} + \Box_b N^{p,1})\dbbar = \Box_b N^{p,1}\dbbar 
 \end{equation}
 on $\mR^{p,0}$.
 Our objective is to use Equation~\eqref{eqn:apply-dbbar-trick} and the fact that $\im\dbbar$ and $\im\dbbar^\ast$ are $L^2$-orthogonal to derive (approximate) formulas for $H^{p,0}$ and $N^{p,0}$.
 
 \textbf{Case 2a}: $p\in\{0,\dotsc,n-2\}$ or $p=n$.
 
 Combining Equation~\eqref{eqn:apply-dbbar-trick} with \cref{defn:kohn-laplacian} yields
 \begin{equation*}
  \dbbar =
  \begin{cases}
   \frac{n-p-1}{n-p}\dbbar\dbbar^\ast N^{p,1} \dbbar + \dbbar^\ast\dbbar N^{p,1}\dbbar , & \text{if $p\leq n-2$}, \\
   \dbbar\dbbar^\ast N^{p,1} \dbbar + \dbbar^\ast L_b \dbbar N^{p,1} \dbbar , & \text{if $p=n$}.
  \end{cases}
 \end{equation*}
 It readily follows that
 \begin{equation}
  \label{eqn:szego-same-order}
  H^{p,0} :=
  \begin{cases}
   I^{p,0} - \frac{n-p-1}{n-p}\dbbar^\ast N^{p,1} \dbbar , & \text{if $p\leq n-2$}, \\
   I^{p,0} - \dbbar^\ast N^{p,1} \dbbar , & \text{if $p=n$} ,
  \end{cases}
 \end{equation}
 defines an $L^2$-orthogonal projection $H^{p,0} \colon \mR^{p,0} \to \mH^{p,0}$.
 Indeed, if $\omega\in\mR^{n,0}$, then $\tau:=\dbbar^\ast N^{p,1}\dbbar\omega$ is such that $\dbbar\tau=\dbbar\omega$ and $\lV\tau\rV_2^2 \leq C\lV\dbbar\omega\rV_2^2$, where $C$ is independent of $\omega$.
 It follows that the $L^2$-closure of $\dbbar$, and hence of $\Box_b$, has closed range.
 A similar argument shows that the $L^2$-closure of $\Box_b$ has closed range on $\mR^{p,0}$, $p\leq n-2$.
 
 Now, since $\dbbar^\ast N^{p,1}=\dbbar^\ast\Box_b N^{p,1}N^{p,1}$, we deduce from \cref{dual-justification} and Equation~\eqref{eqn:szego-same-order} that
 \begin{equation*}
  I^{p,0} =
  \begin{cases}
   H^{p,0} + \bigl( \frac{n-p-1}{n-p} \bigr)^2 \Box_b \dbbar^\ast N^{p,1} N^{p,1} \dbbar, & \text{if $p\leq n-2$}, \\
   H^{p,0} + \Box_b \dbbar^\ast N^{p,1} N^{p,1} \dbbar , & \text{if $p=n$}.
  \end{cases}
 \end{equation*}
 We conclude that
 \begin{equation}
  \label{eqn:partial-inverse-same-order}
  N^{p,0} :=
  \begin{cases}
   \bigl( \frac{n-p-1}{n-p} \bigr)^2 \dbbar^\ast N^{p,1} N^{p,1} \dbbar , & \text{if $p\leq n-2$}, \\
   \dbbar^\ast N^{p,1} N^{p,1} \dbbar, & \text{if $p=n$} ,
  \end{cases}
 \end{equation}
 is the partial inverse of $\Box_b\colon\mR^{p,0}\to\mR^{p,0}$.
 This yields Equation~\eqref{eqn:mRpq-closed-range}.
 That $H^{p,0}$ (resp.\ $N^{p,0}$) is a Heisenberg pseudodifferential operator of order $0$ (resp.\ order $-s$) follows readily from Equation~\eqref{eqn:szego-same-order} (resp.\ Equation~\eqref{eqn:partial-inverse-same-order}) and \cref{compositions-in-pseudodiffoperator}.
 
 \textbf{Case 2b}: $p=n-1$.
 
 Combining Equation~\eqref{eqn:apply-dbbar-trick} with \cref{defn:kohn-laplacian} yields
 \begin{equation}
  \label{eqn:case2b-dbbar}
  \dbbar = \dbbar L_b \dbbar^\ast N^{n-1,1} \dbbar + \dbbar^\ast \dbbar N^{n-1,1} \dbbar
 \end{equation}
 on $\mR^{n-1,0}$.
 It follows that
 \begin{equation}
  \label{eqn:szego-n-1}
  S := I^{n-1,0} - L_b\dbbar^\ast N^{n-1,1}\dbbar
 \end{equation}
 maps $\mR^{n-1,0}$ to $\mH^{n-1,0}$.
 Arguing as in Case 2a, we see that the $L^2$-closure of $\Box_b$ has closed range.
 \Cref{dual-justification} and the identity $\dbbar^\ast N^{n-1,1}=\dbbar^\ast\Box_b N^{n-1,1} N^{n-1,1}$ imply that
 \begin{equation*}
   L_b \dbbar^\ast N^{n-1,1}\dbbar = L_b \dbbar^\ast \dbbar L_b \dbbar^\ast  N^{n-1,1} N^{n-1,1} \dbbar .
 \end{equation*}
 Hence, by \cref{L-dbbardbbarast-commutator},
 \begin{equation}
  \label{eqn:case2b-near-final}
  \begin{split}
   L_b \dbbar^\ast N^{n-1,1}\dbbar & = L_b\Box_b L_b \dbbar^\ast N^{n-1,1} N^{n-1,1} \dbbar \\
    & \equiv \Box_b L_b L_b \dbbar^\ast N^{n-1,1} N^{n-1,1} \dbbar \mod \PseudoDiffOperator{-2} .
  \end{split}
 \end{equation}
 Set $Q:=L_b L_b \dbbar^\ast N^{n-1,1} N^{n-1,1} \dbbar$.
 Combining Equations~\eqref{eqn:case2b-dbbar}, \eqref{eqn:szego-n-1}, and~\eqref{eqn:case2b-near-final} yields
 \begin{align*}
  I^{n-1,0} & \equiv S + \Box_b Q \mod \PseudoDiffOperator{-2} , \\
  \Box_b S & = 0 .
 \end{align*}
 Clearly $S\in\PseudoDiffOperator{0}$ and $Q\in\PseudoDiffOperator{-2}$.
 The conclusion now follows from \cref{invert-almost-invertible}.

 \textbf{Case 2c}: $p=n+1$.
 
 Combining Equation~\eqref{eqn:apply-dbbar-trick} with the definition of the Kohn Laplacian yields
 \begin{equation*}
  \dbbar = \dbbar\dbbar^\ast N^{n+1,1} \dbbar + \frac{1}{2}\dbbar^\ast\dbbar N^{n+1,1}\dbbar .
 \end{equation*}
 As in Case 2a, we see that the $L^2$-closure of $\Box_b$ has closed range and that
 \begin{equation}
  \label{eqn:szego-n+1}
  H^{n+1,0} := I^{n+1,0} - \dbbar^\ast N^{n+1,1}\dbbar
 \end{equation}
 is the $L^2$-orthogonal projection $H^{n+1,0}\colon\mR^{n+1,0}\to\mH^{n+1,0}$.
 As before, it holds that
 \begin{equation*}
  \dbbar^\ast N^{n+1,1}\dbbar = \dbbar^\ast \dbbar \dbbar^\ast N^{n+1,1} N^{n+1,1} \dbbar .
 \end{equation*}
 It follows from \cref{invert-maximally-hypoelliptic,hypoelliptic-lb2} that there is a $P^{n+1,1} \in \PseudoDiffOperator{-2}$ such that $L_bP^{n+1,1}\equiv I^{n+1,1}$ modulo a smoothing operator.
 Therefore
 \begin{align}
  \label{eqn:case2c-first-observation} \dbbar^\ast N^{n+1,1}\dbbar & \equiv \dbbar^\ast L_b P^{n+1,1} \dbbar \dbbar^\ast N^{n+1,1} N^{n+1,1} \dbbar \mod \PseudoDiffOperator{-\infty} , \\
  \label{eqn:M-commutator} [\dbbar\dbbar^\ast, P^{n+1,1}] & \equiv P^{n+1,1}[L_b,\dbbar\dbbar^\ast]P^{n+1,1} \mod \PseudoDiffOperator{-\infty} .
 \end{align}
 It follows from \cref{L-dbbardbbarast-commutator} and Equation~\eqref{eqn:M-commutator} that $[P^{n+1,1},\dbbar\dbbar^\ast]\in\PseudoDiffOperator{-2}$.
 Combining this with \cref{nablasast-complex} and Equation~\eqref{eqn:case2c-first-observation} yields
 \begin{equation}
  \label{eqn:case2c-second-observation}
  \begin{split}
   \dbbar^\ast N^{n+1,1}\dbbar & \equiv \dbbar^\ast L_b \dbbar\dbbar^\ast P^{n+1,1} N^{n+1,1} N^{n+1,1} \dbbar \mod \PseudoDiffOperator{-2} \\
   & = \Box_b \dbbar^\ast P^{n+1,1} N^{n+1,1} N^{n+1,1} \dbbar \mod \PseudoDiffOperator{-2} .
  \end{split}
 \end{equation}
 Combining Equations~\eqref{eqn:szego-n+1} and~\eqref{eqn:case2c-second-observation} implies that $\cN:=\dbbar^\ast P^{n+1,1} N^{n+1,1} N^{n+1,1} \dbbar$ is such that
 \begin{align*}
  I^{n+1,0} & \equiv H^{n+1,0} + \Box_b \cN \mod \PseudoDiffOperator{-2} , \\
  \Box_b H^{n+1,0} & = 0 .
 \end{align*}
 The conclusion again follows from \cref{invert-almost-invertible}.
 
 \textbf{Case 3}: $n=1$.
 Let $p\in\{0,1,2\}$.
 The existence of the operators $H^{p,0}$ and $N^{p,0}$ follows by applying a result of Miyajima~\cite{Miyajima1999}*{Proposition~4.1} to $F^p\COmega^pM$ and arguing as in the proof of \cref{kr-resolution}.
 The existence of the operators $H^{p,1}$ and $N^{p,1}$ follows from duality.
\end{proof}

Our second Hodge decomposition theorem is precisely the analogue of Rumin's Hodge decomposition theorem~\cite{Rumin1994}*{pg.\ 290} adapted to our formulation of the Rumin complex via differential forms.
Note that embeddability is not assumed.

\begin{theorem}
 \label{rumin-hodge-decomposition}
 Let $(M^{2n+1},T^{1,0},\theta)$ be a closed, strictly pseudoconvex manifold and let $k\in\{0,\dotsc,2n+1\}$.
 Set
 \begin{equation*}
  s :=
  \begin{cases}
   2, & \text{if $k\not\in\{n,n+1\}$}, \\
   4, & \text{if $k\in\{n,n+1\}$} .
  \end{cases}
 \end{equation*}
 Then $\mH^k$ is finite-dimensional and there is an $L^2$-orthogonal decomposition
 \begin{equation}
  \label{eqn:mRk-closed-range}
  \mR^k = \mH^k \oplus \im \left( \Delta_b \colon \mR^k \to \mR^k \right) .
 \end{equation}
 Moreover, the $L^2$-orthogonal projection $H^k\colon\mR^k\to\mH^k$ is a smoothing operator and the partial inverse $N^k\colon\mR^k\to\mR^k$ is a Heisenberg pseudodifferential operator of order $-s$.
 There is also an $L^2$-orthogonal decomposition
 \begin{equation}
  \label{eqn:rumin-hodge-decomposition}
  \mR^k = \mH^k \oplus \im \left( d \colon \mR^{k-1} \to \mR^k \right) \oplus \im \left( d^\ast \colon \mR^{k+1} \to \mR^k \right) .
 \end{equation}
\end{theorem}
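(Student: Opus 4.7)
The plan is to apply the general hypoellipticity machinery to the Rumin Laplacian and then extract the two decompositions from the operator identities~\eqref{eqn:partial-inverse}. First I would combine \cref{kernel-rumin-laplacian} (which gives formal self-adjointness and nonnegativity of $\Delta_b$ and identifies $\mH^k$) with \cref{hypoelliptic-rumin} (the Rumin Laplacian admits a parametrix) and feed these into \cref{invert-maximally-hypoelliptic}. This produces a finite-dimensional $\mH^k$ together with a smoothing operator $H^k \in \PseudoDiffOperator{-\infty}$ and an operator $N^k \in \PseudoDiffOperator{-s}$ (where $s \in \{2,4\}$ is the order of $\Delta_b$) satisfying~\eqref{eqn:partial-inverse}. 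The properties $H^k = (H^k)^\ast = (H^k)^2$ and $I = \Delta_b N^k + H^k$ immediately give the $L^2$-orthogonal decomposition~\eqref{eqn:mRk-closed-range}, since $H^k$ is the orthogonal projection onto $\mH^k$ and $\Delta_b N^k \omega = \omega - H^k\omega \in \mH^{k\perp}$.

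Next I would establish the second decomposition~\eqref{eqn:rumin-hodge-decomposition}. Orthogonality of the three summands is immediate: \cref{kernel-rumin-laplacian} characterizes $\mH^k$ as $\ker d \cap \ker d^\ast$, so $\mH^k \perp \im d$ and $\mH^k \perp \im d^\ast$ by formal self-adjointness of $d$; moreover $\im d \perp \im d^\ast$ because $d^2 = 0$ gives $\llp d\alpha, d^\ast\beta \rrp = \llp d^2\alpha, \beta \rrp = 0$. It then suffices to check the identity $\im \Delta_b = \im d + \im d^\ast$ of subspaces of $\mR^k$, after which~\eqref{eqn:rumin-hodge-decomposition} follows from~\eqref{eqn:mRk-closed-range}.

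The inclusion $\im \Delta_b \subset \im d + \im d^\ast$ is read directly off \cref{defn:rumin-laplacian} in each of the four cases: for $k \leq n-1$ one has $\Delta_b \eta = d^\ast(d\eta) + \tfrac{n-k}{n-k+1}d(d^\ast\eta)$; for $k = n$ one has $\Delta_b \eta = d^\ast(d\eta) + d(d^\ast d d^\ast \eta)$; and the cases $k = n+1$ and $k \geq n+2$ are symmetric. For the reverse inclusion, given $d\tau \in \im d$ and any $\eta \in \mH^k$, the computation $\llp d\tau, \eta \rrp = \llp \tau, d^\ast\eta \rrp = 0$ shows that $d\tau \in \mH^{k\perp}$, whence $d\tau \in \im \Delta_b$ by~\eqref{eqn:mRk-closed-range}; the argument for $\im d^\ast$ is symmetric.

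The main obstacle is entirely front-loaded: one needs \cref{hypoelliptic-rumin}, which in turn rests on the maximal hypoellipticity analysis of \cref{sec:hypoelliptic} and ultimately on the comparison with Folland--Stein operators via \cref{comparisons}. Once \cref{invert-maximally-hypoelliptic} is applicable, the rest of the argument is a formal exercise paralleling the classical Hodge theorem, with the four-case structure of \cref{defn:rumin-laplacian} as the only bookkeeping subtlety.
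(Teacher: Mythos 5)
Your proposal is correct and follows essentially the same route as the paper: the paper likewise obtains~\eqref{eqn:mRk-closed-range} by feeding \cref{hypoelliptic-rumin} into \cref{invert-maximally-hypoelliptic}, and deduces~\eqref{eqn:rumin-hodge-decomposition} from the identity $d^2=0$. Your write-up merely makes explicit the intermediate step $\im\Delta_b = \im d + \im d^\ast$ that the paper leaves implicit.
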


\begin{proof}
 \Cref{invert-maximally-hypoelliptic,hypoelliptic-rumin} imply both the existence of $H^k$ and $N^k$ and the $L^2$-orthogonal decomposition~\eqref{eqn:mRk-closed-range}.
 The $L^2$-orthogonal decomposition~\eqref{eqn:rumin-hodge-decomposition} follows from the identity $d^2=0$.
\end{proof}

\Cref{hodge-decomposition,rumin-hodge-decomposition} immediately imply Hodge isomorphism theorems for the Kohn--Rossi and Rumin cohomology groups, respectively.

\begin{corollary}
 \label{hodge-isomorphism}
 Let $(M^{2n+1},T^{1,0},\theta)$ be a closed, embeddable, strictly pseudoconvex manifold and let $p\in\{0,\dotsc,n+1\}$ and $q\in\{0,\dotsc,n\}$.
 Then each class $[\omega]\in H^{p,q}(M)$ has a unique representative in $\mH^{p,q}$.
 In particular, there is a canonical isomorphism
 \[ H^{p,q}(M) \cong \mH^{p,q} . \]
\end{corollary}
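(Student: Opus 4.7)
The plan is to combine the $L^2$-orthogonal decomposition~\eqref{eqn:hodge-decomposition} of \cref{hodge-decomposition} with the identification $H^{p,q}(M)\cong H_R^{p,q}(M)$ of \cref{kr-resolution}. Since the main technical content---closed range of $\Box_b$ and the existence of $H^{p,q}$ and $N^{p,q}$---has already been established in \cref{hodge-decomposition}, this corollary is a routine consequence of standard Hodge-theoretic bookkeeping.

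First I would recall that, by \cref{kr-resolution}, the inclusion $\mR^{p,q}\hookrightarrow F^p\COmega^{p+q}M$ induces a canonical isomorphism $H_R^{p,q}(M)\cong H^{p,q}(M)$, so it suffices to exhibit a canonical isomorphism $\mH^{p,q}\cong H_R^{p,q}(M)$. Next I would define the natural map
\[ \Phi \colon \mH^{p,q} \longrightarrow H_R^{p,q}(M), \qquad \omega \longmapsto [\omega], \]
which is well-defined because any $\omega\in\mH^{p,q}$ satisfies $\dbbar\omega=0$ by \cref{kernel-kohn-laplacian}.

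To check injectivity of $\Phi$, suppose $\omega\in\mH^{p,q}$ and $\omega=\dbbar\tau$ for some $\tau\in\mR^{p,q-1}$. Then $\omega\in\mH^{p,q}\cap\im(\dbbar\colon\mR^{p,q-1}\to\mR^{p,q})$, which is zero by the $L^2$-orthogonality in~\eqref{eqn:hodge-decomposition}. To check surjectivity, let $[\omega]\in H_R^{p,q}(M)$ and apply~\eqref{eqn:hodge-decomposition} to write
\[ \omega = \omega_H + \dbbar\alpha + \dbbar^\ast\beta \]
with $\omega_H\in\mH^{p,q}$, $\alpha\in\mR^{p,q-1}$, and $\beta\in\mR^{p,q+1}$. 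Applying $\dbbar$ and using $\dbbar\omega=0$, $\dbbar\omega_H=0$, and $\dbbar^2=0$ gives $\dbbar\dbbar^\ast\beta=0$. Pairing with $\beta$ using \cref{formal-adjoint} then yields $\llp\dbbar^\ast\beta,\dbbar^\ast\beta\rrp=0$, so $\dbbar^\ast\beta=0$ and hence $\omega=\omega_H+\dbbar\alpha$. Thus $[\omega]=\Phi(\omega_H)$.

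The only subtlety worth flagging is that in the exceptional cases $q\in\{0,n\}$ the space $\mH^{p,q}$ need not be finite-dimensional, but this does not affect the argument above: the $L^2$-orthogonality supplied by~\eqref{eqn:hodge-decomposition} in \cref{hodge-decomposition} is the only ingredient required, and it is valid for all $p,q$ in the stated range (including when $n=1$, thanks to the embeddability hypothesis). Composing the bijection $\Phi$ with the isomorphism of \cref{kr-resolution} then yields the canonical isomorphism $H^{p,q}(M)\cong\mH^{p,q}$ claimed in the statement.
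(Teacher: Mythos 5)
Your proposal is correct and follows essentially the same route as the paper, which simply cites \cref{kr-resolution} and \cref{hodge-decomposition} and notes that the projection $H^{p,q}$ induces the isomorphism; you have merely written out the standard injectivity/surjectivity bookkeeping that the paper leaves implicit. The details you supply (orthogonality for injectivity, the pairing argument forcing $\dbbar^\ast\beta=0$ for surjectivity) are exactly the intended ones.
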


\begin{proof}
 It follows immediately from \cref{kr-resolution,hodge-decomposition} that the $L^2$-orthogonal projection $H^{p,q}$ induces an isomorphism $H^{p,q}\colon H^{p,q}(M)\to\mH^{p,q}$.
\end{proof}

\begin{corollary}
 \label{rumin-hodge-isomorphism}
 Let $(M^{2n+1},T^{1,0},\theta)$ be a closed, strictly pseudoconvex manifold and let $k\in\{0,\dotsc,2n+1\}$.
 Then each class $[\omega]\in H^{k}(M;\bR)$ has a unique representative in $\mH^{k}$.
 In particular, there is a canonical isomorphism
 \[ H^{k}(M;\bR) \cong \mH^{k} . \]
\end{corollary}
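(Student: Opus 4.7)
The plan is to combine the three-fold $L^2$-orthogonal decomposition of $\mR^k$ from \cref{rumin-hodge-decomposition} with the identification $H^k(M;\bR) \cong H_R^k(M;\bR)$ from \cref{r-resolution}, in much the same way that the classical Hodge theorem for de Rham cohomology on a Riemannian manifold is deduced from the Hodge decomposition of differential forms.

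First I would observe that every $d$-harmonic form represents a cohomology class: if $\omega \in \mH^k$, then by \cref{kernel-rumin-laplacian} we have $d\omega = 0$, so $\omega$ defines an element $[\omega] \in H_R^k(M;\bR)$. This gives a linear map $\mH^k \to H_R^k(M;\bR)$, and, via \cref{r-resolution}, a linear map $\mH^k \to H^k(M;\bR)$. The content of the corollary is that this map is a bijection.

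Next I would show that every closed form $\omega \in \mR^k$ is cohomologous to a unique harmonic form. Existence follows from the decomposition~\eqref{eqn:rumin-hodge-decomposition} of \cref{rumin-hodge-decomposition}: write $\omega = \omega_H + d\alpha + d^\ast\beta$ with $\omega_H \in \mH^k$, $\alpha \in \mR^{k-1}$, $\beta \in \mR^{k+1}$. Applying $d$ gives $0 = dd^\ast\beta$, and then taking the $L^2$-inner product with $\beta$ yields $\lV d^\ast\beta\rV^2 = 0$, so $d^\ast\beta = 0$. Hence $\omega = \omega_H + d\alpha$, showing $[\omega] = [\omega_H]$ in $H_R^k(M;\bR)$. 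For uniqueness, if $\omega_H \in \mH^k$ is of the form $d\alpha$, then $\lV\omega_H\rV^2 = \llp d\alpha,\omega_H\rrp = \llp\alpha,d^\ast\omega_H\rrp = 0$ since harmonic forms are $d^\ast$-closed by \cref{kernel-rumin-laplacian}. This proves that the map $\mH^k \to H_R^k(M;\bR)$ is an isomorphism.

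Finally, composing with the isomorphism $H_R^k(M;\bR) \cong H^k(M;\bR)$ from \cref{r-resolution} yields the desired canonical isomorphism $H^k(M;\bR) \cong \mH^k$, and the uniqueness statement about representatives follows from the argument above applied directly at the level of $H_R^k(M;\bR)$. There is no serious obstacle; all the heavy lifting has already been done in \cref{r-resolution,rumin-hodge-decomposition}, and what remains is the standard two-line orthogonality argument. The main thing to be careful about is to invoke the correct decomposition~\eqref{eqn:rumin-hodge-decomposition} (which requires \cref{justification-of-bigraded-complex}-style identities $d^2=0$ together with $\mH^k \subset \ker d \cap \ker d^\ast$) rather than merely~\eqref{eqn:mRk-closed-range}.
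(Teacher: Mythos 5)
Your proposal is correct and follows essentially the same route as the paper, which simply cites \cref{r-resolution,rumin-hodge-decomposition} and notes that the $L^2$-orthogonal projection $H^k$ induces the isomorphism; you have merely written out the standard orthogonality argument that the paper leaves implicit. No gaps.
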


\begin{proof}
 It follows immediately from \cref{r-resolution,rumin-hodge-decomposition} that the $L^2$-orthogonal projection $H^{k}$ induces an isomorphism $H^{k}\colon H^{k}(M;\bR)\to\mH^{k}$.
\end{proof}

We conclude with new proofs of Serre duality for the Kohn--Rossi cohomology groups (cf.\ \cites{Tanaka1975,GarfieldLee1998,Garfield2001}) and of Poincar\'e duality for the de Rham cohomology groups.

\begin{corollary}
 \label{serre}
 Let $(M^{2n+1},T^{1,0})$ be a closed, embeddable, strictly pseudoconvex CR manifold.
 Given $p\in\{0,\dotsc,n+1\}$ and $q\in\{0,\dotsc,n\}$, the conjugate Hodge star operator induces an isomorphism $H^{p,q}(M) \cong H^{n+1-p,n-q}(M)$.
\end{corollary}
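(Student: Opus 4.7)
The plan is to pass through the Hodge isomorphism of \cref{hodge-isomorphism} and exhibit an explicit fiberwise isomorphism between the harmonic spaces $\mH^{p,q}$ and $\mH^{n+1-p,n-q}$ induced by the conjugate Hodge star. Since $(M,T^{1,0})$ is closed, embeddable, and strictly pseudoconvex, \cref{orientable} and \cref{defn:strictly-pseudoconvex} furnish a global contact form $\theta$ with positive definite Levi form, so all the machinery of \cref{sec:hodge_star,sec:hodge_thm} applies. Define $\ohodge \colon \mR^{p,q} \to \mR^{n+1-p,n-q}$ by $\ohodge\omega := \hodge\oomega$.

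First I would verify the mapping property in both bidegree ranges. If $p+q\leq n$, then \cref{conjugate-rumin-bundles} gives $\oomega\in\mR^{q,p}$ with $q+p\leq n$, and then \cref{hodge-mapping-properties}(ii) yields $\hodge\oomega\in\mR^{n+1-p,n-q}$. If $p+q\geq n+1$, then \cref{conjugate-rumin-bundles} gives $\oomega\in\mR^{q+1,p-1}$ with $(q+1)+(p-1)\geq n+1$, and \cref{hodge-mapping-properties}(iii) gives $\hodge\oomega\in\mR^{n-(p-1),n+1-(q+1)}=\mR^{n+1-p,n-q}$. In either case $\ohodge$ lands in the correct bidegree.

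Next I would show $\ohodge$ is an involution, hence an isomorphism. Using \cref{hodge-mapping-properties}(iv) to commute conjugation past $\hodge$, and then \cref{hodge-star-squared},
\begin{equation*}
 \ohodge^2\omega = \hodge\overline{\hodge\oomega} = \hodge\hodge\omega = \omega .
\end{equation*}
To conclude that $\ohodge$ restricts to an isomorphism $\mH^{p,q}\cong\mH^{n+1-p,n-q}$, I would apply \cref{kohn-laplacian-hodge-star}, which says exactly that $\Box_b\hodge\oomega=\hodge\overline{\Box_b\omega}$, i.e.\ $\Box_b\ohodge=\ohodge\Box_b$; in particular $\omega\in\ker\Box_b$ iff $\ohodge\omega\in\ker\Box_b$, so $\ohodge$ restricts to an isomorphism of harmonic spaces.

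Finally I would assemble the pieces: \cref{hodge-isomorphism} supplies canonical isomorphisms $H^{p,q}(M)\cong\mH^{p,q}$ and $H^{n+1-p,n-q}(M)\cong\mH^{n+1-p,n-q}$, and composing with the involution $\ohodge$ on harmonic representatives gives the desired isomorphism $H^{p,q}(M)\cong H^{n+1-p,n-q}(M)$. There is no real obstacle here: all the analytic content (closed range, finite-dimensionality in the relevant degrees, and identification of cohomology with harmonic forms) has been absorbed into \cref{hodge-decomposition,hodge-isomorphism}, and the remaining argument is purely formal manipulation of the Hodge star together with its compatibility with conjugation and the Kohn Laplacian.
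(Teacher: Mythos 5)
Your proposal is correct and follows exactly the paper's argument: the paper also proves \cref{serre} by noting that \cref{hodge-mapping-properties,hodge-star-squared,kohn-laplacian-hodge-star} make $\omega\mapsto\hodge\oomega$ an isomorphism $\mH^{p,q}\to\mH^{n+1-p,n-q}$ and then invoking \cref{hodge-isomorphism}. You have merely spelled out the bidegree bookkeeping and the involution property in more detail than the paper does.
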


\begin{proof}
 It follows immediately from \cref{hodge-mapping-properties,hodge-star-squared,kohn-laplacian-hodge-star} that
 \[ \mH^{p,q} \ni \omega \mapsto \hodge\oomega \in \mH^{n+1-p,n-q} \]
 is an isomorphism.
 The conclusion now follows from \cref{hodge-isomorphism}.
\end{proof}

\begin{corollary}
 \label{rumin-serre}
 Let $(M^{2n+1},T^{1,0})$ be a closed, strictly pseudoconvex CR manifold.
 Given $k\in\{0,\dotsc,2n+1\}$, the Hodge star operator induces an isomorphism $H^{k}(M;\bR) \cong H^{2n+1-k}(M;\bR)$.
\end{corollary}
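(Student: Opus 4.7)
The plan is to mimic the proof of Corollary \ref{serre} (Serre duality) essentially verbatim, substituting the Hodge star operator $\hodge$ for its conjugate. The key observation is that all the ingredients needed are already in hand: the Hodge star shifts degree correctly, is an involution, commutes with the Rumin Laplacian, and preserves real-valued forms.

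First, I would verify that $\hodge$ carries $\mR^k$ to $\mR^{2n+1-k}$. Indeed, by Lemma \ref{hodge-mapping-properties}, if $p+q \leq n$ then $\hodge(\mR^{p,q}) \subset \mR^{n+1-q,n-p}$, and if $p+q \geq n+1$ then $\hodge(\mR^{p,q}) \subset \mR^{n-q,n+1-p}$; in both cases the total bidegree adds to $2n+1-k$, so extending $\bC$-linearly to $\CmR^k := \mR^k \otimes \bC = \bigoplus_{p+q=k}\mR^{p,q}$ (or the corresponding sum when $k \geq n+1$) gives $\hodge \colon \CmR^k \to \CmR^{2n+1-k}$. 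Moreover, since $\overline{\hodge \omega} = \hodge\oomega$ by Lemma \ref{hodge-mapping-properties}(iv), the Hodge star restricts to an $\bR$-linear map on real-valued forms.

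Next, by Corollary \ref{hodge-star-squared} we have $\hodge^2 = 1$, so $\hodge \colon \mR^k \to \mR^{2n+1-k}$ is an isomorphism. By Lemma \ref{rumin-hodge-star}, $\hodge$ commutes with the Rumin Laplacian $\Delta_b$, so it restricts to an isomorphism $\mH^k \cong \mH^{2n+1-k}$ between the corresponding spaces of $d$-harmonic forms. Finally, Corollary \ref{rumin-hodge-isomorphism} identifies $H^k(M;\bR) \cong \mH^k$ and $H^{2n+1-k}(M;\bR) \cong \mH^{2n+1-k}$, so composition yields the desired isomorphism.

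There is no real obstacle here; the entire argument is a bookkeeping exercise once the Hodge decomposition theorem (Theorem \ref{rumin-hodge-decomposition}) and the basic properties of $\hodge$ are available. The only point requiring any care is confirming that the Hodge star, originally defined on the complexified bigraded complex, descends to a well-defined operator on the real Rumin complex, which follows immediately from the conjugation identity $\overline{\hodge\omega} = \hodge\oomega$.
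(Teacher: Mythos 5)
Your proposal is correct and follows essentially the same route as the paper: the author likewise combines $\hodge^2=1$ (\cref{hodge-star-squared}), the commutation $\Delta_b\hodge=\hodge\Delta_b$ (\cref{rumin-hodge-star}), and the Hodge isomorphism $H^k(M;\bR)\cong\mH^k$ (\cref{rumin-hodge-isomorphism}) to conclude. Your extra check that $\hodge$ preserves real-valued forms via $\overline{\hodge\omega}=\hodge\oomega$ is a reasonable bit of bookkeeping the paper leaves implicit.
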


\begin{proof}
 It follows immediately from \cref{hodge-star-squared,rumin-hodge-star} that
 \[ \mH^{k} \ni \omega \mapsto \hodge\omega \in \mH^{2n+1-k} \]
 is an isomorphism.
 The conclusion now follows from \cref{rumin-hodge-isomorphism}.
\end{proof}

\section{A Hodge decomposition theorem via a nonlocal Laplacian}
\label{sec:popovici}

We conclude this part by proving a Hodge decomposition theorem for $\mR^{p,q}$ which will be used in \cref{sec:frolicher} to deduce a Hodge isomorphism theorem for the second page of the Garfield spectral sequence~\cite{Garfield2001}.
This decomposition theorem is obtained via a nonlocal Laplace-type operator on $\mR^{p,q}$ modeled on a similar operator appearing in work of Popovici~\cite{Popovici2016} on complex manifolds.

\begin{definition}
 \label{defn:popovici-laplacian}
 Let $(M^{2n+1},T^{1,0},\theta)$ be a closed, embeddable, strictly pseudoconvex manifold.
 Given $p\in\{0,\dotsc,n+1\}$ and $q\in\{0,\dotsc,n\}$, we define the \defn{Popovici Laplacian} $\cBoxb\colon\mR^{p,q}\to\mR^{p,q}$ by
 \begin{align*}
  \cBoxb & := \Box_b + \frac{n-p-q}{n-p-q+1}\db H^{p-1,q} \db^\ast + \db^\ast H^{p+1,q} \db, && \text{if $p+q\leq n-1$;} \\
  \cBoxb & := \Box_b + \dhor^\ast H^{p+1,q} \dhor + \db H^{p-1,q} \db^\ast \db H^{p-1,q} \db^\ast, && \text{if $p+q=n$;} \\
  \cBoxb & := \Box_b + \dhor H^{p-1,q} \dhor^\ast + \db^\ast H^{p+1,q} \db \db^\ast H^{p+1,q} \db, && \text{if $p+q=n+1$;} \\
  \cBoxb & := \Box_b + \frac{p+q-n-1}{p+q-n} \db^\ast H^{p+1,q} \db + \db H^{p-1,q} \db^\ast, && \text{if $p+q \geq n+2$.}
 \end{align*}
\end{definition}

Like the Kohn and Rumin Laplacians, the main properties of the Popovici Laplacian are that it is a nonnegative, formally self-adjoint, maximally hypoelliptic, Heisenberg pseudodifferential operator which commutes with the conjugate Hodge star operator.
As we will see in \cref{sec:frolicher}, the kernel of the Popovici Laplacian on $\mR^{p,q}$ is isomorphic to the space $E_2^{p,q}$ on the second page of the Garfield spectral sequence.
Unlike the Kohn and Rumin Laplacians, the Popovici Laplacian is a nonlocal operator.

We begin by verifying that the Popovici Laplacian is a nonnegative, formally self-adjoint operator and giving an alternative characterization of its kernel.
For notational simplicity, we often use $H$ to denote any of the $L^2$-orthogonal projections associated to the Kohn Laplacian, with the domain of $H$ determined by context.

\begin{lemma}
 \label{popovici-kernel}
 Let $(M^{2n+1},T^{1,0},\theta)$ be a closed, embeddable, strictly pseudoconvex manifold and let $p\in\{0,\dotsc,n+1\}$ and $q\in\{0,\dotsc,n\}$.
 Then $\cBoxb$ is a nonnegative, formally self-adjoint, Heisenberg pseudodifferential operator.
 Moreover,
 \begin{equation}
  \label{eqn:popovici-kernel}
  \ker \left( \cBoxb \colon \mR^{p,q} \to \mR^{p,q} \right) = \cmH^{p,q} ,
 \end{equation}
 where
 \begin{enumerate}
  \item $\cmH^{p,q} := \left\{ \omega \in \mR^{p,q} \suchthatcolon \Box_b\omega = H(\db\omega) = H(\db^\ast\omega) = 0 \right\}$, if $p+q\not\in\{n,n+1\}$;
  \item $\cmH^{p,q} := \left\{ \omega \in \mR^{p,q} \suchthatcolon \Box_b\omega = H(\dhor\omega) = H(\db^\ast\omega) = 0 \right\}$, if $p+q=n$;
  \item $\cmH^{p,q} := \left\{ \omega \in \mR^{p,q} \suchthatcolon \Box_b\omega = H(\db\omega) = H(\dhor^\ast\omega) = 0 \right\}$, if $p+q=n+1$ .
 \end{enumerate}
\end{lemma}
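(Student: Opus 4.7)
The plan is to verify in turn the three asserted properties of $\cBoxb$---being a Heisenberg pseudodifferential operator, formal self-adjointness, and nonnegativity---and then to read off the kernel characterization by pairing $\cBoxb\omega$ with $\omega$ and examining which summands must vanish.

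First I would observe that $\cBoxb$ is a Heisenberg pseudodifferential operator. The Kohn Laplacian $\Box_b$ and each of $\db$, $\db^\ast$, $\dhor$, $\dhor^\ast$ are differential operators by \cref{defn:kohn-laplacian,bigraded-operators,divergence-formula}, while each $L^2$-orthogonal projection $H^{p\pm1,q}$ is a Heisenberg pseudodifferential operator of order zero by \cref{hodge-decomposition}. Applying \cref{compositions-in-pseudodiffoperator} term by term in \cref{defn:popovici-laplacian} gives the claim.

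Second I would establish self-adjointness and nonnegativity of each auxiliary term simultaneously by rewriting it as $A^\ast A$ or $(A^\ast A)^2$. Using $(H^{p\pm1,q})^\ast = H^{p\pm1,q} = (H^{p\pm1,q})^2$, one has
\[
 \db H^{p-1,q}\db^\ast = (H^{p-1,q}\db^\ast)^\ast (H^{p-1,q}\db^\ast),
\]
and analogous factorizations hold for $\db^\ast H^{p+1,q}\db$, $\dhor^\ast H^{p+1,q}\dhor$, and $\dhor H^{p-1,q}\dhor^\ast$. In the middle degrees,
\[
 \db H^{p-1,q}\db^\ast\db H^{p-1,q}\db^\ast = \bigl((H^{p-1,q}\db^\ast)^\ast(H^{p-1,q}\db^\ast)\bigr)^2,
\]
and similarly for the $p+q = n+1$ quartic. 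Each such operator is self-adjoint and nonnegative, as is $\Box_b$ by \cref{kernel-kohn-laplacian}, so $\cBoxb$ is self-adjoint and nonnegative.

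Third I would compute $\llp\cBoxb\omega,\omega\rrp$ and read off the vanishing conditions. For $p+q\leq n-1$ the pairing equals
\[
 \llp\Box_b\omega,\omega\rrp + \frac{n-p-q}{n-p-q+1}\lV H^{p-1,q}\db^\ast\omega\rV^2 + \lV H^{p+1,q}\db\omega\rV^2,
\]
so $\cBoxb\omega = 0$ forces each summand to vanish, yielding case~(i) via \cref{kernel-kohn-laplacian}. The case $p+q\geq n+2$ is parallel. In the middle degrees the quartic contribution equals $\lV B^\ast B\omega\rV^2$ for an appropriate $B = H^{p\mp1,q}\db^{(\ast)}$, and since $B^\ast B$ is self-adjoint and nonnegative the chain $B^\ast B\omega = 0 \Longleftrightarrow \llp B^\ast B\omega,\omega\rrp = 0 \Longleftrightarrow B\omega = 0$ gives cases~(ii) and~(iii). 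The main anticipated issue is only mild bookkeeping: one must recognize the middle-degree quartic terms in the factored form $(B^\ast B)^2$ and unwind $\ker(B^\ast B)^2 = \ker B$ via the above chain; no genuine analytic difficulty arises.
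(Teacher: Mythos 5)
Your proposal is correct and follows essentially the same route as the paper: the paper likewise cites \cref{compositions-in-pseudodiffoperator} for the pseudodifferential claim and uses $H=H^\ast=H^2$ to reduce each auxiliary term to a squared norm $\lV H\db\omega\rV^2$, $\lV H\db^\ast\omega\rV^2$, $\lV H\dhor\omega\rV^2$, or $\lV H\dhor^\ast\omega\rV^2$, from which nonnegativity, self-adjointness, and the kernel description follow. You have merely written out explicitly the $A^\ast A$ factorizations and the $\ker(B^\ast B)^2=\ker B$ step that the paper leaves as "readily follow."
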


\begin{proof}
 \Cref{compositions-in-pseudodiffoperator} implies that $\cBoxb$ is a Heisenberg pseudodifferential operator.
 
 Using $H^2=H$ and $H=H^\ast$, we deduce that
 \begin{align*}
  \llp \db\omega, H\db\omega\rrp & = \lV H\db\omega\rV_2^2, & \llp \dhor\omega, H\dhor\omega\rrp & = \lV H\dhor\omega\rV_2^2, \\
  \llp \db^\ast\omega, H\db^\ast\omega\rrp & = \lV H\db^\ast\omega\rV_2^2, & \llp \dhor^\ast\omega, H\dhor^\ast\omega\rrp & = \lV H\dhor^\ast\omega\rV_2^2,
 \end{align*}
 when defined.
 Equation~\eqref{eqn:popovici-kernel} and the fact that $\cBoxb$ is nonnegative and formally self-adjoint readily follow.
\end{proof}

Next, we show that the Popovici Laplacian commutes with the conjugate Hodge star operator.

\begin{lemma}
 \label{popovici-hodge-star}
 Let $(M^{2n+1},T^{1,0},\theta)$ be a closed, embeddable, strictly pseudoconvex manifold.
 If $\omega\in\mR^{p,q}$, $p\in\{0,\dotsc,n+1\}$ and $q\in\{0,\dotsc,n\}$, then
 \[ \cBoxb \ohodge \omega = \ohodge \cBoxb \omega, \]
 where $\ohodge\omega := \overline{\hodge\omega}$.
\end{lemma}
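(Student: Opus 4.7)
The plan is to verify, term by term, that each summand in Definition~\ref{defn:popovici-laplacian} commutes with $\ohodge$. The $\Box_b$ summand is immediate: taking conjugates in the identity $\Box_b\hodge\oomega=\hodge\overline{\Box_b\omega}$ of Lemma~\ref{kohn-laplacian-hodge-star}, together with $\ohodge\tau=\hodge\otau$ from Lemma~\ref{hodge-mapping-properties}, yields $\Box_b\ohodge=\ohodge\Box_b$ on each $\mR^{p,q}$. What remains is (a) to show that the harmonic projection $H$ commutes with $\ohodge$ on each $\mR^{p,q}$, and (b) to check that $\ohodge$ intertwines every composition of $\db,\db^\ast,\dhor,\dhor^\ast$ appearing in the extra terms of $\cBoxb$ on $\mR^{p,q}$ with the corresponding composition on $\mR^{n+1-p,n-q}$, with matching numerical coefficients.

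For step (a), I first note that $\ohodge$ is a $\bC$-antilinear involution (from $\hodge^2=I$ in Corollary~\ref{hodge-star-squared} combined with $\overline{\hodge\tau}=\hodge\otau$), and that it is a conjugate-linear $L^2$-isometry in the sense $\llp\ohodge\omega,\ohodge\tau\rrp=\llp\tau,\omega\rrp$. The isometry follows from the defining formula $\llp\alpha,\beta\rrp=\int_M\alpha\wedge\hodge\overline\beta$ together with $\hodge^2=I$ and a short graded-commutativity check exploiting the fact that $(p+q)(2n+1-p-q)$ is always even. Since $\ohodge$ also intertwines $\Box_b$ with itself, it maps $\mH^{p,q}$ isomorphically onto $\mH^{n+1-p,n-q}$. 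The isometry forces $\ohodge$ to send $(\mH^{p,q})^\perp$ onto $(\mH^{n+1-p,n-q})^\perp$: for $\beta\perp\mH^{p,q}$ and $\gamma\in\mH^{n+1-p,n-q}$, setting $u=\beta$ and $v=\ohodge\gamma\in\mH^{p,q}$ in the isometry identity gives $\llp\ohodge\beta,\gamma\rrp=\llp\ohodge\gamma,\beta\rrp=0$. This yields $H\ohodge=\ohodge H$ on each bidegree.

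For step (b), the key is a sign-precise list of intertwining relations. Starting from the definitions $\dbbar^\ast=(-1)^{p+q}\hodge\db\hodge$, $\db^\ast=(-1)^{p+q}\hodge\dbbar\hodge$ (and their middle-degree analogues) of Definition~\ref{defn:dbbar-adjoint}, and using $\hodge^2=I$, $\overline{\hodge\tau}=\hodge\otau$, and the conjugation identities $\overline{\dbbar\alpha}=\db\oalpha$, $\overline{\dhor\alpha}=\dhor\oalpha$ of Lemma~\ref{conjugate-rumin-operators}, a one-line computation gives, on $\mR^{p,q}$ with $p+q\leq n-1$, the identities $\ohodge\db=(-1)^{p+q+1}\db^\ast\ohodge$ and $\ohodge\db^\ast=(-1)^{p+q}\db\ohodge$, and analogous formulas in the other degree ranges, including $\ohodge\dhor=(-1)^{n+1}\dhor^\ast\ohodge$ when $p+q=n$. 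Combined with $H\ohodge=\ohodge H$, these convert each compound operator in $\cBoxb$ on $\mR^{p,q}$ into the matching compound operator on $\mR^{n+1-p,n-q}$; for instance $\ohodge(\db H\db^\ast\omega)=\db^\ast H\db\ohodge\omega$, the two $\pm$ signs from $\db$ and $\db^\ast$ always multiplying to $+1$, and the quartic middle-degree blocks $\db H\db^\ast\db H\db^\ast$ and the $\dhor$-terms are handled by the same sign chase. Finally, the numerical coefficients match automatically because on $\mR^{n+1-p,n-q}$, of total degree $2n+1-p-q$, the coefficient $\tfrac{(2n+1-p-q)-n-1}{(2n+1-p-q)-n}$ collapses to $\tfrac{n-p-q}{n-p-q+1}$, exactly the coefficient on $\mR^{p,q}$. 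The only real obstacle is accurate bookkeeping of signs across all four degree ranges; no genuinely new analytic input is needed beyond the $H$ supplied by Theorem~\ref{hodge-decomposition}.
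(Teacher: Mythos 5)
Your proposal is correct and follows essentially the same route as the paper: commute $\ohodge$ past each summand of $\cBoxb$ using $[\Box_b,\ohodge]=0$, $[H,\ohodge]=0$, and the intertwining relations $\db\ohodge=(-1)^{k}\ohodge\db^\ast$ and $\dhor\ohodge=(-1)^{n}\ohodge\dhor^\ast$ coming from \cref{conjugate-rumin-operators,defn:dbbar-adjoint,hodge-star-squared}. The only difference is that you spell out the proof of $[H,\ohodge]=0$ via the conjugate-linear isometry property of $\ohodge$, which the paper leaves implicit in its citation of \cref{hodge-decomposition}; your sign and coefficient bookkeeping checks out.
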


\begin{proof}
 On the one hand, \cref{kohn-laplacian-hodge-star,hodge-decomposition} imply that
 \begin{align*}
  [\Box_b , \ohodge] & = 0 , \\
  [ H , \ohodge ] & = 0 .
 \end{align*}
 On the other hand, \cref{conjugate-rumin-operators,defn:dbbar-adjoint,hodge-star-squared} imply that
 \begin{align*}
  \db\ohodge & = (-1)^k\ohodge\db^\ast , & \text{on $\mR^{p,q}$, $p+q=k$} , \\
  \dhor\ohodge & = (-1)^n\ohodge\dhor^\ast , & \text{on $\mR^{p,q}$, $p+q=n$} .
 \end{align*}
 The conclusion readily follows.
\end{proof}

The remainder of this section is devoted to showing that the Popovici Laplacian is maximally hypoelliptic and deducing the corresponding Hodge decomposition theorem.
This requires some properties of the $L^2$-orthogonal projections $H$.

\begin{lemma}
 \label{hodge-projection-mapping-properties}
 Let $(M^{2n+1},T^{1,0},\theta)$ be a closed, embeddable, strictly pseudoconvex manifold and let $p\in\{0,\dotsc,n+1\}$ and $q\in\{0,\dotsc,n\}$.
 Then
 \begin{subequations}
  \begin{align}
   \label{eqn:hodge-projection-mapping-properties-db} \db H^{p,q} & \equiv H^{p+1,q}\db \mod \PseudoDiffOperator{-1} , && \text{if $p+q\not=n$} , \\
   \label{eqn:hodge-projection-mapping-properties-dbast} \db^\ast H^{p,q} & \equiv H^{p-1,q}\db^\ast \mod \PseudoDiffOperator{-1} , && \text{if $p+q\not=n+1$}, \\
   \label{eqn:hodge-projection-mapping-properties-dhor} \dhor H^{p,q} & \equiv H^{p+1,q}\dhor \mod \PseudoDiffOperator{0} , && \text{if $p+q=n$}, \\
   \label{eqn:hodge-projection-mapping-properties-dhorast} \dhor^\ast H^{p,q} & \equiv H^{p-1,q}\dhor^\ast \mod \PseudoDiffOperator{0}, && \text{if $p+q=n+1$} , \\ 
   \label{eqn:hodge-projection-mapping-properties-db-middle} \db H^{p,q} & \equiv 0 \mod \PseudoDiffOperator{0}, && \text{if $p+q=n$} , \\
   \label{eqn:hodge-projection-mapping-properties-dbast-middle} \db^\ast H^{p,q} & \equiv 0 \mod \PseudoDiffOperator{0} , && \text{if $p+q=n+1$} .
  \end{align}
 \end{subequations}
\end{lemma}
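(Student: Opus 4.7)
The plan is to apply the Hodge decomposition identity $I = H + \Box_b N$ from Theorem \ref{hodge-decomposition} to each of the six commutation identities. For instance, for Equation \eqref{eqn:hodge-projection-mapping-properties-db}, the commutator splits as
\[
\db H^{p,q} - H^{p+1,q} \db = N^{p+1,q} \Box_b^{p+1,q} \db H^{p,q} - \bigl(N^{p,q} \Box_b^{p,q} \db^\ast H^{p+1,q}\bigr)^\ast,
\]
and each of the other five identities admits the analogous decomposition after substituting the relevant first-order operator $A \in \{\db, \db^\ast, \dhor, \dhor^\ast\}$ and shifting the target bigrading. Thus every identity reduces to controlling the order of $\Box_b \circ A \circ H$, which in turn reduces to analyzing $\Box_b A \omega$ for $\omega \in \mH^{p,q}$, i.e.\ for $\omega$ satisfying $\dbbar\omega = 0 = \dbbar^\ast\omega$ (Lemma \ref{kernel-kohn-laplacian}).

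For the non-middle identities \eqref{eqn:hodge-projection-mapping-properties-db}--\eqref{eqn:hodge-projection-mapping-properties-dbast}, $\Box_b$ has order $2$ and $N \in \PseudoDiffOperator{-2}$. For $\omega \in \mH^{p,q}$ with $p+q \leq n-2$, Proposition \ref{justification-of-bigraded-complex}(ii) gives $\dbbar\db\omega = -\db\dbbar\omega = 0$, while Lemma \ref{dbdbbarstar} yields $\dbbar^\ast \db \omega = -\frac{n-k}{n-k+1}\db\dbbar^\ast\omega + S\omega = S\omega$ for some $S \in \DiffOperator{0}$; hence $\Box_b^{p+1,q}\db H^{p,q} \in \PseudoDiffOperator{1}$, which after composition with $N^{p+1,q}$ gives the required $\PseudoDiffOperator{-1}$ bound. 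The boundary cases $p+q \in \{n-1, n+1\}$ and the range $p+q \geq n+2$ proceed identically, invoking Propositions \ref{justification-of-bigraded-complex}(iii)--(iv) and \ref{dual-justification}(iii)--(iv) where needed. Equation \eqref{eqn:hodge-projection-mapping-properties-dbast} then follows by the same argument with $\db$ replaced by $\db^\ast$ and Proposition \ref{dual-justification} in place of Proposition \ref{justification-of-bigraded-complex}, or equivalently by taking adjoints of \eqref{eqn:hodge-projection-mapping-properties-db}.

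The middle-degree identities \eqref{eqn:hodge-projection-mapping-properties-dhor}--\eqref{eqn:hodge-projection-mapping-properties-dbast-middle} follow the same template with $\Box_b$ now of order $4$ and $N \in \PseudoDiffOperator{-4}$. The sharp identities \eqref{eqn:hodge-projection-mapping-properties-db-middle} and \eqref{eqn:hodge-projection-mapping-properties-dbast-middle} follow immediately by inspection of Proposition \ref{bigraded-operators}(ii) and Lemma \ref{divergence-formula}(ii): for $\omega \in \mH^{p,q}$ with $p+q = n$, the condition $\dbbar^\ast \omega = 0$ kills the divergence-type component $\nabla^{\bar\nu}\omega_{\Alpha\bar\nu\bar\Beta^\prime}$, so the formula for $\db\omega$ collapses to the zeroth-order torsion term $iA_{[\alpha}{}^{\bar\nu}\omega_{\Alpha\bar\nu\bar\Beta^\prime]}$; defining $S \in \DiffOperator{0}$ by this expression yields $\db H^{p,q} = S H^{p,q} \in \PseudoDiffOperator{0}$. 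The argument at $p+q = n+1$, using $\dbbar\omega = 0$ to kill $\nabla^\mu\omega_{\mu\Alpha^\prime\bar\Beta}$, gives \eqref{eqn:hodge-projection-mapping-properties-dbast-middle}. For \eqref{eqn:hodge-projection-mapping-properties-dhor}--\eqref{eqn:hodge-projection-mapping-properties-dhorast}, Proposition \ref{justification-of-bigraded-complex}(iv) forces $\dbbar\dhor\omega = -\db\dbbar\omega = 0$, eliminating three of the four summands of $\Box_b^{p+1,q}\dhor\omega$; the remaining term $\dbbar\dbbar^\ast\dhor\omega$ is analyzed using Lemma \ref{dhordbbarstar}, which after substituting $\dbbar^\ast\omega = 0$ and the zeroth-order expression for $\db\omega$ yields $\Box_b^{p+1,q}\dhor H^{p,q} \in \PseudoDiffOperator{3}$, hence the $\PseudoDiffOperator{-1}$ bound.

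The main obstacle will be the bookkeeping for the middle-degree $\dhor$ identities, where the four composite summands of the fourth-order Kohn Laplacian must be tracked simultaneously through Lemmas \ref{dhordbbarstar} and \ref{critical-dbdbbarstar}, which hold only modulo $\DiffOperator{2}$. The target exponent $\PseudoDiffOperator{0}$ leaves little slack, so one must carefully verify that no cross term between the surviving contributions raises the effective order above what the order-$(-4)$ partial inverse $N$ can absorb.
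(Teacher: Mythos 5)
Your proposal is correct and rests on the same key lemmas as the paper's proof (\cref{justification-of-bigraded-complex}, \cref{dual-justification}, \cref{dbdbbarstar}, \cref{dhordbbarstar}, and the identities~\eqref{eqn:n-hodge-dbbar}--\eqref{eqn:n-hodge-db}), but it packages the argument differently, and the difference is worth recording. The paper proceeds in two steps for each identity: first it shows $HA\rho \equiv A\rho$ for $\rho$ harmonic by explicitly exhibiting the $\im\dbbar$- and $\im\dbbar^\ast$-components of $A\rho$ (this is what forces the delicate Case~3, where the correction terms $L_b\dbbar^\ast NA\rho$ and $\dbbar NB\rho$ must be constructed by hand because $\dbbar\db^\ast\rho$ lands in a middle degree), and second it reduces general $\omega$ to $H\omega$ via the decomposition $\omega = H\omega + \Box_bN\omega$. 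Your single identity
\begin{equation*}
 \db H^{p,q} - H^{p+1,q}\db = N^{p+1,q}\Box_b^{p+1,q}\db H^{p,q} - \bigl(N^{p,q}\Box_b^{p,q}\db^\ast H^{p+1,q}\bigr)^\ast
\end{equation*}
collapses these two steps into two symmetric terms (the first is $(I-H^{p+1,q})\db H^{p,q}$, i.e.\ the paper's first step; the adjointed term is the paper's second step), and it replaces the explicit construction of harmonic projections by a pure order count on $\Box_b\circ A\circ H$. Since the order count only needs $\dbbar(A\rho)=0$ exactly and $\dbbar^\ast(A\rho)$ of reduced order for harmonic $\rho$ --- both of which follow directly from the cited anticommutation lemmas --- your route genuinely bypasses the hardest computation in the paper's proof. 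What you give up is the sharper structural information the paper extracts along the way (explicit formulas for $H\db\rho$, reused elsewhere), and you leave two symmetric pieces unverified: the second (adjointed) term in the middle-degree cases~\eqref{eqn:hodge-projection-mapping-properties-dhor}--\eqref{eqn:hodge-projection-mapping-properties-dhorast} requires the dual computation via \cref{dual-justification} and the $\mR^{n+1}$-versions~\eqref{eqn:dhordbbarstar-sup} of \cref{dhordbbarstar} (or, as in the paper, an application of the conjugate Hodge star), and the identity $\db H^{p,q}=SH^{p,q}$ in the $p+q=n$ case should be justified as holding because $H^{p,q}$ takes values in $\mH^{p,q}$, on which $\dbbar^\ast$ vanishes. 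Neither point is a gap in the method; both are routine completions of the symmetry you already invoke.
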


\begin{proof}
 By applying the conjugate Hodge star operator if necessary, it suffices to consider the case $p+q \leq n$.
 In particular, we need only verify Equations~\eqref{eqn:hodge-projection-mapping-properties-db}, \eqref{eqn:hodge-projection-mapping-properties-dbast}, \eqref{eqn:hodge-projection-mapping-properties-dhor}, and~\eqref{eqn:hodge-projection-mapping-properties-db-middle}.
 The first two equations are relevant when $p+q \leq n-1$ and the last three are relevant when $p+q = n$.
 We consider these equations and the cases $p+q \leq n-1$ or $p+q = n$ separately.
 
 \textbf{Case 1.} The proof of \cref{eqn:hodge-projection-mapping-properties-db}.
 
 Let $\rho \in \mH^{p,q}$, $p+q \leq n-1$.
 On the one hand, \cref{justification-of-bigraded-complex} implies that $\dbbar\db\rho = 0$.
 On the other hand, \cref{dbdbbarstar} implies that there is an $A \in \DiffOperator{0}$ such that $\dbbar^\ast\db\rho = A\rho$.
 Applying \cref{hodge-decomposition} to $A\rho$ yields
 \begin{equation*}
  \dbbar^\ast\db\rho = HA\rho + \Box_b NA\rho .
 \end{equation*}
 The $L^2$-orthogonal decomposition~\eqref{eqn:hodge-decomposition} implies that
 \begin{equation*}
  \dbbar^\ast \left( \db\rho - \dbbar NA\rho \right) = 0 . 
 \end{equation*}
 Therefore $H\db\rho = \db\rho - \dbbar NA\rho$.
 In particular,
 \begin{equation}
  \label{eqn:lower-HdbH}
  H\db\rho \equiv \db\rho \mod \PseudoDiffOperator{-1} .
 \end{equation}
 
 Now let $\omega \in \mR^{p,q}$.
 \Cref{hodge-decomposition} implies that
 \begin{equation*}
  \omega = H\omega + \frac{n-p-q}{n-p-q+1}\dbbar\dbbar^\ast N\omega + \dbbar^\ast\dbbar N\omega .
 \end{equation*}
 We deduce from \cref{justification-of-bigraded-complex} that
 \begin{equation}
  \label{eqn:lower-Hdb}
  H\db\omega = H\db H\omega + H\db\dbbar^\ast\dbbar N\omega .
 \end{equation}
 If $p+q < n-1$, then \cref{dbdbbarstar} implies that
 \begin{equation*}
  \db\dbbar^\ast\dbbar N\omega \equiv -\frac{n-p-q}{n-p-q-1}\dbbar^\ast\db\dbbar N\omega \mod \PseudoDiffOperator{-1} .
 \end{equation*}
 If instead $p+q = n-1$, then \cref{eqn:n-hodge-db} implies that
 \begin{equation*}
  \db\dbbar^\ast\dbbar N\omega \equiv (-1)^p i^{n^2-1}\dbbar^\ast \hodge \dbbar N\omega \mod \PseudoDiffOperator{-1} .
 \end{equation*}
 In either case, we conclude that $H\db\dbbar^\ast\db N\omega \equiv 0 \mod \PseudoDiffOperator{-1}$.
 Combining this with \cref{eqn:lower-Hdb} yields Equation~\eqref{eqn:hodge-projection-mapping-properties-db}.

 \textbf{Case 2.} The proof of \cref{eqn:hodge-projection-mapping-properties-dbast} when $p+q \leq n-1$.
 
 Let $\rho \in \mH^{p,q}$.
 On the one hand, \cref{dual-justification} implies that $\dbbar^\ast\db^\ast\rho = 0$.
 On the other hand, \cref{dbdbbarstar} implies that there is an $A \in \DiffOperator{0}$ such that $\dbbar\db^\ast\rho = A\rho$.
 Arguing as in Case~1, we deduce from \cref{hodge-decomposition} that
 \begin{equation}
  \label{eqn:lower-HdbastH}
  H\db^\ast\rho \equiv \db^\ast\rho \mod \PseudoDiffOperator{-1} .
 \end{equation}
 
 Now let $\omega \in \mR^{p,q}$.
 It follows from \cref{hodge-decomposition,dual-justification} that
 \begin{equation}
  \label{eqn:lower-Hdbast}
  H\db^\ast\omega = H\db^\ast H\omega + \frac{n-p-q}{n-p-q+1}H\db^\ast\dbbar\dbbar^\ast N\omega .
 \end{equation}
 Combining Equations~\eqref{eqn:lower-HdbastH} and~\eqref{eqn:lower-Hdbast} with \cref{dbdbbarstar} yields Equation~\eqref{eqn:hodge-projection-mapping-properties-dbast} when $p+q \leq n-1$.

 \textbf{Case 3.} The proof of \cref{eqn:hodge-projection-mapping-properties-dbast} when $p+q = n$.
 
 Let $\rho \in \mH^{p,q}$.
 On the one hand, Equation~\eqref{eqn:n-hodge-dbbar} yields an $A \in \DiffOperator{0}$ such that $\dbbar\db^\ast\rho = A\rho$.
 We deduce from \cref{hodge-decomposition} that
 \begin{equation}
  \label{eqn:get-case3-inverse-a}
  \dbbar\left( \db^\ast\rho - L_b\dbbar^\ast NA\rho \right) = 0 .
 \end{equation}
 On the other hand, the definition~\eqref{eqn:Lb} of $L_b$ and \cref{dual-justification} imply that
 \begin{align*}
  \dbbar^\ast L_b\dbbar^\ast NA\rho & = \dbbar^\ast\db^\ast\db\dbbar^\ast NA\rho + \frac{1}{2}\dbbar^\ast\db\db^\ast\dbbar^\ast NA\rho \\
   & = -\db^\ast\dbbar^\ast\db\dbbar^\ast NA\rho - \frac{1}{2}\dbbar^\ast\db\dbbar^\ast\db^\ast NA\rho \\
   & = -\db^\ast\left( \dbbar^\ast\db + \frac{1}{2}\db\dbbar^\ast \right)\dbbar^\ast NA\rho - \dbbar^\ast \left( \frac{1}{2}\db\dbbar^\ast + \dbbar^\ast\db \right)\db^\ast NA\rho .
 \end{align*}
 Hence, by \cref{dbdbbarstar}, there is a $B \in \PseudoDiffOperator{-2}$ such that
 \begin{equation}
  \label{eqn:define-B}
  \dbbar^\ast L_b \dbbar^\ast NA\rho = B\rho .
 \end{equation}
 \Cref{dual-justification} implies that $\dbbar^\ast\db^\ast\rho = 0$.
 Combining this with \cref{hodge-decomposition} and Equation~\eqref{eqn:define-B} yields
 \begin{equation}
  \label{eqn:get-case3-inverse-b}
  \dbbar^\ast \left( \db^\ast\rho - L_b\dbbar^\ast NA\rho + \dbbar NB\rho \right) = 0 .
 \end{equation}
 Combining Equations~\eqref{eqn:get-case3-inverse-a} and~\eqref{eqn:get-case3-inverse-b} yields
 \begin{equation}
  \label{eqn:middle-HdbastH}
  H\db^\ast\rho \equiv \db^\ast\rho \mod \PseudoDiffOperator{-1} .
 \end{equation}

 Now let $\omega \in \mR^{p,q}$.
 It follows from \cref{hodge-decomposition,dual-justification} that
 \begin{equation}
  \label{eqn:middle-Hdbast}
  H\db^\ast\omega = H\db^\ast H\omega + H\db^\ast\dbbar L_b \dbbar^\ast N\omega .
 \end{equation}
 Combining Equations~\eqref{eqn:middle-HdbastH} and~\eqref{eqn:middle-Hdbast} with \cref{dbdbbarstar} yields Equation~\eqref{eqn:hodge-projection-mapping-properties-dbast} when $p+q=n$.
 
 \textbf{Case 4.} The proof of \cref{eqn:hodge-projection-mapping-properties-dhor}.
 
 Let $\rho \in \mH^{p,q}$, $p+q = n$.
 \Cref{justification-of-bigraded-complex} implies that $\dbbar\dhor\rho = 0$.
 \Cref{dhordbbarstar} and Equation~\eqref{eqn:n-hodge-db} imply that there is an $A \in \DiffOperator{2}$ such that $\dbbar^\ast\dhor\rho = A\rho$.
 Arguing as in Case 1, we see that
 \begin{equation}
  \label{eqn:middle-HdbH}
  H\dhor\rho \equiv \dhor\rho \mod \PseudoDiffOperator{0} .
 \end{equation}
 
 Now let $\omega \in \mR^{p,q}$.
 \Cref{hodge-decomposition} implies that
 \begin{equation*}
  \omega = H\omega + \dbbar L_b \dbbar^\ast N\omega + \dbbar^\ast\dbbar N\omega .
 \end{equation*}
 We deduce from \cref{justification-of-bigraded-complex} that
 \begin{equation}
  \label{eqn:middle-Hdb}
  H\dhor\omega = H\dhor H\omega + H\dhor \dbbar^\ast \dbbar N\omega .
 \end{equation}
 On the one hand, \cref{dhordbbarstar} implies that
 \begin{equation}
  \label{eqn:middle-Hdb-simplify1}
  H (\dhor \dbbar^\ast + \db \dhor^\ast) \dbbar N\omega \equiv 0 \mod \PseudoDiffOperator{0} .
 \end{equation}
 On the other hand, \cref{eqn:n-hodge-db} implies that
 \begin{equation}
  \label{eqn:middle-Hdb-simplify2}
  \begin{split}
   H\db & \equiv (-1)^{p}i^{n^2+1} H \hodge \db \dbbar^\ast \mod \PseudoDiffOperator{0} \\
    & = (-1)^p i^{n^2+1} H \dbbar^\ast \db \hodge \\
    & = 0
  \end{split}
 \end{equation}
 on $\mR^{p-1,q+1}$.
 Combining Equations~\eqref{eqn:middle-HdbH}, \eqref{eqn:middle-Hdb}, \eqref{eqn:middle-Hdb-simplify1}, and~\eqref{eqn:middle-Hdb-simplify2} yields Equation~\eqref{eqn:hodge-projection-mapping-properties-dhor}.
 
 \textbf{Case 5.} The proof of \cref{eqn:hodge-projection-mapping-properties-db-middle}.
 
 Let $\rho \in \mH^{p,q}$, $p+q = n$.
 Equation~\eqref{eqn:n-hodge-db} implies that $\db\rho \equiv 0 \mod \DiffOperator{0}$.
 Equation~\eqref{eqn:hodge-projection-mapping-properties-db-middle} readily follows from \cref{hodge-decomposition}.
\end{proof}

We now show that the Popovici Laplacian $\cBoxb\colon\mR^{p,q}\to\mR^{p,q}$ is maximally hypoelliptic for all $p\in\{0,\dotsc,n+1\}$ and all $q\in\{0,\dotsc,n\}$.
Since $\cBoxb\gtrsim\Box_b$, maximal hypoellipticity is automatic when $q\not\in\{0,n\}$ (cf.\ \cite{Popovici2016}).
The cases $q\in\{0,n\}$ are more subtle.
We present a unified argument which is valid for all $q\in\{0,\dotsc,n\}$.

\begin{proposition}
 \label{popovici-hypoelliptic}
 Let $(M^{2n+1},T^{1,0},\theta))$ be a closed, embeddable, strictly pseudoconvex manifold.
 For each $p\in\{0,\dotsc,n+1\}$ and $q\in\{0,\dotsc,n\}$, the Popovici Laplacian $\cBoxb\colon\mR^{p,q}\to\mR^{p,q}$ admits a parametrix.
\end{proposition}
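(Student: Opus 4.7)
The plan is to prove \cref{popovici-hypoelliptic} by splitting on $q$, treating the generic range $q \in \{1,\dotsc,n-1\}$ separately from the endpoints $q \in \{0,n\}$. In the generic range I will show that the extra terms in \cref{defn:popovici-laplacian} are smoothing operators, so that $\cBoxb \simeq \Box_b$ in the sense of \cref{defn:comparisons} and \cref{comparisons}(i) reduces matters to \cref{hypoelliptic2,hypoelliptic4}. In the endpoints I will verify the Rockland condition of \cref{invertibility-characterizations} directly, exploiting the commutation rules of \cref{hodge-projection-mapping-properties}.

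First I would treat the generic range $q \in \{1,\dotsc,n-1\}$. Here every adjacent space $\mR^{p \pm 1, q}$---and, when $p+q\in\{n,n+1\}$, also $\mR^{p \mp 1, q}$---still has $q \in \{1,\dotsc,n-1\}$, so \cref{hypoelliptic2,hypoelliptic4} imply that the neighboring Kohn Laplacians admit parametrices. \Cref{invert-maximally-hypoelliptic} then places each corresponding harmonic projection $H^{p',q'}$ in $\PseudoDiffOperator{-\infty}$, so every term in \cref{defn:popovici-laplacian} other than $\Box_b$ is a smoothing operator. Consequently $\cBoxb \simeq \Box_b$, and \cref{comparisons}(i) yields a parametrix for $\cBoxb$.

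Next I would address the endpoints $q \in \{0,n\}$. By \cref{popovici-hodge-star} it suffices to treat $q = 0$. The Kohn Laplacian on $\mR^{p,0}$ itself fails to be maximally hypoelliptic and the projections $H^{p \pm 1, 0}$ lie only in $\PseudoDiffOperator{0}$, so the added terms in $\cBoxb$ contribute genuinely to the principal symbol. The plan is to combine \cref{hodge-projection-mapping-properties}---which gives $\db^\ast H^{p,0} \equiv H^{p-1,0}\db^\ast$, $\db H^{p-1,0} \equiv H^{p,0}\db$, and their analogues for $\dhor$ and $\dhor^\ast$, all modulo operators one order lower than the leading term---with the commutation identities of \cref{dbdbbarstar,critical-dbdbbarstar,dhordbbarstar} to rewrite each additional term in $\cBoxb$ as the composition of $H^{p,0}$ with a nonnegative operator of the appropriate order. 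The expected outcome is that, passing to model operators in each nontrivial unitary representation of the Heisenberg group, one has $\overline{\pi_{\cBoxb^x}} = \overline{\pi_{\Box_b^x}} + P$ with $P \geq 0$ and $P$ injective on the joint kernel of $\overline{\pi_{\Box_b^x}}$. Injectivity of $\overline{\pi_{\cBoxb^x}}$ then follows, and \cref{invertibility-characterizations} supplies the desired parametrix.

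The main obstacle will be the middle-degree endpoint, namely $p+q \in \{n,n+1\}$ with $q \in \{0,n\}$, where \cref{defn:popovici-laplacian} contains not only $\dhor^\ast H \dhor$ (or its adjoint) but also a quartic composition of the shape $\db H \db^\ast \db H \db^\ast$. Here \cref{interior-composition-comparison} is not directly applicable, since the inner $H$ is not an elliptic factor. The intended route is to follow the template of Case~2c in the proof of \cref{hodge-decomposition}, equivalently the endpoint subcase of \cref{hypoelliptic-lb4}: use \cref{sR-to-P,lefschetz-consequence,onablas-simple-identities,nablasast-complex} to rewrite the quartic piece and the $\dhor$-piece, modulo a Heisenberg pseudodifferential operator of order $2$, as the square of a second-order expression of the form $2\nablas\nablas^\ast + i\nabla_0$, which is comparable to the Folland--Stein operator $\mL_{n+1}$ and so admits a parametrix by \cref{invert-folland-stein} and \cref{comparisons}.
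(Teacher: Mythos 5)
Your treatment of the generic range $q \in \{1,\dotsc,n-1\}$ is correct, and is essentially the remark the paper itself makes before the proposition: every projection $H^{p\pm1,q}$ appearing in \cref{defn:popovici-laplacian} carries the same second index $q$, so \cref{hypoelliptic2,hypoelliptic4,invert-maximally-hypoelliptic} place it in $\PseudoDiffOperator{-\infty}$, the added terms are smoothing, $\cBoxb \simeq \Box_b$, and \cref{comparisons} finishes.

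The endpoint cases $q \in \{0,n\}$, however, contain two genuine gaps. First, your Rockland-condition argument rests on the assertion that the model operator of the added term is injective on $\ker \overline{\pi_{\Box_b^x}}$; this is precisely the crux, and nothing you cite delivers it. From $\overline{\pi_{(\db^\ast H \db)^x}}\,\omega = 0$ one only concludes $\overline{\pi_{H^x}}\,\overline{\pi_{\db^x}}\,\omega = 0$, i.e.\ that $\overline{\pi_{\db^x}}\,\omega$ is orthogonal to the kernel of the adjacent model Kohn Laplacian; it does not follow that $\overline{\pi_{\db^x}}\,\omega = 0$, which is what you would need in order to invoke the injectivity of the model of $L_b$. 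Second, in the middle-degree endpoint you propose to rewrite $\dhor^\ast H \dhor + (\db H \db^\ast)^2$ as $\bigl(2\nablas\nablas^\ast + i\nabla_0\bigr)^2$ modulo $\PseudoDiffOperator{2}$; but that computation, taken from the proof of \cref{hypoelliptic-lb4}, is for $L_b = \Box_b + \oBox_b + \dhor^\ast\dhor$ with no projections inserted. When $q \in \{0,n\}$ the operators $H^{p\pm1,0}$ are order zero but are neither $\equiv I$ nor $\equiv 0$ modulo lower order, so they cannot be discarded at the level of principal symbols. The paper's proof sidesteps both difficulties: using \cref{hodge-projection-mapping-properties} it commutes the outer operators past the projections to obtain $\cBoxb \equiv \Box_b + H L_b H$ modulo lower-order terms, with $H = H^{p,q}$ acting on the \emph{same} space so that $H\Box_b = \Box_b H = 0$; it then exhibits the explicit approximate inverse $N + HPH$, where $N$ is the partial inverse of $\Box_b$ and $P$ is a parametrix for $L_b$, and concludes with \cref{invert-almost-invertible} rather than by verifying the Rockland condition for $\cBoxb$ directly. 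You would need either this algebraic reduction or a genuine analysis of the model operators of the Szeg\H{o}-type projections to complete your route.
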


\begin{proof}
 Let $s$ denote the order of $\cBoxb$;
 i.e.\ $s=2$ if $p+q \not\in \{ n , n+1 \}$ and $s=4$ otherwise.
 
 Since $H^2=H$ and $H\Box_b = \Box_bH = 0$, we deduce from \cref{hodge-projection-mapping-properties} that
 \begin{equation*}
  \cBoxb \equiv \Box_b + H L_b H \mod \PseudoDiffOperator{s-2} .
 \end{equation*}
 In particular, it suffices to show that $\cL_b := \Box_b + HL_bH$ admits a parametrix.
 Note that $\cL_b$ is formally self-adjoint.

 Let $N$ and $H$ be the partial inverse for $\Box_b$ and projection to $\ker \Box_b$, respectively, the existence of which is guaranteed by \cref{hodge-decomposition}.
 \Cref{invert-maximally-hypoelliptic,hypoelliptic-lb2,hypoelliptic-lb4} imply that there is a $P \in \PseudoDiffOperator{-s}$ such that $L_bP \equiv I$ and $PL_b \equiv I$ modulo $\PseudoDiffOperator{-\infty}$.
 We compute that
 \begin{align*}
  \cL_b(N + HPH) & = \Box_bN + HL_bHPH = I - H + HL_bHPH , \\
  (N + HPH)\cL_b & = N\Box_b + HPHL_bH = I - H + HPHL_bH .
 \end{align*}
 \Cref{hodge-projection-mapping-properties} implies that $HL_bH \equiv HL_b \equiv L_bH \mod \PseudoDiffOperator{s-2}$.
 Combining this with the previous display and the definition of $P$ yields
 \begin{align*}
  \cL_b(N + HPH) & \equiv I \mod \PseudoDiffOperator{s-2} , \\
  (N + HPH)\cL_b & \equiv I \mod \PseudoDiffOperator{s-2} .
 \end{align*}
 The second equivalence implies that the $L^2$-closure of $\cL_b$ has closed range.
 The conclusion now follows from \cref{invertibility-characterizations,invert-almost-invertible}.
\end{proof}

As a consequence, we have the following Hodge decomposition theorem.

\begin{theorem}
 \label{popovici-hodge-decomposition}
 Let $(M^{2n+1},T^{1,0},\theta))$ be a closed, embeddable, strictly pseudoconvex manifold and let $p\in\{0,\dotsc,n+1\}$ and $q\in\{0,\dotsc,n\}$.
 Then $\cmH^{p,q}$ is finite-dimensional.
 Moreover, there is an $L^2$-orthogonal splitting
 \begin{align*}
  \mR^{p,q} & = \cmH^{p,q} \oplus \left( \im \dbbar + \im \db \rv_{\ker\dbbar} \right) \oplus \left( \im \left(\db^\ast\circ H\right) + \im \dbbar^\ast \right) , && \text{$p+q\not\in\{n,n+1\}$} , \\
  \mR^{p,q} & = \cmH^{p,q} \oplus \left( \im \dbbar + \im \db\rv_{\ker\dbbar} \right) \oplus \left( \im \left(\dhor^\ast\circ H\right) + \im \dbbar^\ast \right) , && \text{$p+q=n$}, \\
  \mR^{p,q} & = \cmH^{p,q} \oplus \left( \im \dbbar + \im \dhor\rv_{\ker\dbbar} \right) \oplus \left( \im \left(\db^\ast\circ H\right) + \im \dbbar^\ast \right) , && \text{$p+q=n+1$} ,
 \end{align*}
 where $\im\dbbar$ denotes the image of $\dbbar$ in $\mR^{p,q}$, and so on.
\end{theorem}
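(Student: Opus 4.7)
The plan is first to obtain the abstract $L^2$-orthogonal splitting
\[ \mR^{p,q} = \cmH^{p,q} \oplus \im\bigl(\cBoxb\colon\mR^{p,q}\to\mR^{p,q}\bigr) \]
with $\cmH^{p,q}$ finite-dimensional, and then to refine the second summand into the two claimed pieces. The abstract splitting and finite-dimensionality come from applying \cref{invert-maximally-hypoelliptic} to $\cBoxb$: this operator is formally self-adjoint with $\ker\cBoxb = \cmH^{p,q}$ by \cref{popovici-kernel}, and it admits a parametrix by \cref{popovici-hypoelliptic}.

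To refine the second summand, I would expand $\cBoxb\omega$ using \cref{defn:popovici-laplacian} in each of the four ranges $p+q\leq n-1$, $p+q=n$, $p+q=n+1$, $p+q\geq n+2$. In every case, each term coming from $\Box_b$ visibly lies in $\im\dbbar$ or in $\im\dbbar^\ast$. Every remaining term carries exactly one factor of $H$, and since $\im H \subset \mH^\bullet \subset \ker\dbbar\cap\ker\dbbar^\ast$, any composition of the form $\db H(\cdot)$ (respectively $\dhor H(\cdot)$ in the middle) lies in $\im\db\rv_{\ker\dbbar}$ (respectively $\im\dhor\rv_{\ker\dbbar}$), while any $\db^\ast H(\cdot)$ or $\dhor^\ast H(\cdot)$ lies in $\im(\db^\ast\circ H)$ or $\im(\dhor^\ast\circ H)$. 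The fourth-order $H$-compositions in the middle-degree cases --- such as $\db H\db^\ast\db H\db^\ast\omega$ when $p+q=n$ --- are handled by treating everything after the outermost differential as a single element of $\mH^\bullet$. This shows $\im\cBoxb$ sits inside the sum of the two claimed pieces.

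Pairwise $L^2$-orthogonality of the three summands is the main verification. Orthogonality of $\cmH^{p,q}$ with the $\dbbar^\ast$-piece is immediate from $\dbbar\omega = 0$ and $H(\db\omega)=0$ (or the middle-degree analogue), using self-adjointness of $H$. Orthogonality of $\cmH^{p,q}$ with the $\db$-piece requires the auxiliary decomposition $\tau = H\tau + \dbbar\sigma$ for $\tau\in\ker\dbbar$ supplied by \cref{hodge-decomposition}, together with the identity $\dbbar^\ast\db^\ast + \db^\ast\dbbar^\ast = 0$ and its middle-degree counterpart $\dbbar^\ast\dhor^\ast + \db^\ast\dbbar^\ast = 0$ from \cref{dual-justification}, which kills the resulting $\dbbar^\ast\db^\ast\omega$ (or $\dbbar^\ast\dhor^\ast\omega$) term. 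Orthogonality between the two non-harmonic pieces follows from $\dbbar^2 = \db^2 = 0$, the relations $\dbbar\db + \db\dbbar = 0$ and their middle-degree analogues from \cref{justification-of-bigraded-complex}, and $\mH^\bullet \subset \ker\dbbar^\ast$. Combining these three orthogonality statements with the inclusion from the previous paragraph forces equality.

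The hard part will be shepherding all four ranges of $p+q$ --- especially the middle degrees $p+q\in\{n,n+1\}$, where $\Box_b$ is fourth-order and $\dhor$, $\dhor^\ast$ replace $\db$, $\db^\ast$ at selected positions in both $\cBoxb$ and the stated decomposition --- through the orthogonality calculations, since each anticommutation identity must be invoked at the correct total degree of the form being acted upon, and the sub-case bookkeeping multiplies accordingly.
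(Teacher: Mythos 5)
Your proposal is correct and follows essentially the same route as the paper: the abstract splitting $\mR^{p,q}=\cmH^{p,q}\oplus\im\cBoxb$ from \cref{invert-maximally-hypoelliptic,popovici-hypoelliptic,popovici-kernel}, the term-by-term inclusion of $\im\cBoxb$ in the two claimed summands using $\im H\subset\ker\dbbar$, and the orthogonality checks via \cref{justification-of-bigraded-complex,dual-justification}. The only difference is that you spell out the orthogonality of $\cmH^{p,q}$ with the two non-harmonic pieces (via $\tau=H\tau+\dbbar\sigma$ and the anticommutation of adjoints), which the paper compresses into ``the conclusion follows readily''; your details are accurate.
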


\begin{proof}
 Recall from \cref{popovici-kernel} that $\cmH^{p,q}=\ker\cBoxb$.
 It follows readily from \cref{invert-maximally-hypoelliptic,popovici-hypoelliptic} that $\cmH^{p,q}$ is finite-dimensional and that there is an $L^2$-orthogonal splitting
 \begin{equation}
  \label{eqn:popovici-subelliptic-consequence}
  \mR^{p,q} = \cmH^{p,q} \oplus \im \cBoxb .
 \end{equation}
 Since $H(\mR^{p,q})\subset\ker\dbbar$, we see that
 \begin{equation}
  \label{eqn:im-dbH}
  \begin{aligned}
   \im \left(\db\circ H\right) & \subset \im \db\rv_{\ker\dbbar}, && \text{if $p+q\not=n+1$}, \\
   \im \left(\dhor\circ H\right) & \subset \im \dhor\rv_{\ker\dbbar}, && \text{if $p+q=n+1$} .
  \end{aligned}
 \end{equation}
 
 We give the rest of the proof in the case $p+q=n$;
 the other cases are similar.
 \Cref{defn:popovici-laplacian} and Display~\eqref{eqn:im-dbH} imply that
 \begin{equation*}
  \im \cBoxb \subset \left( \im \dbbar + \im \db\rv_{\ker\dbbar} \right) + \left( \im \left(\dhor^\ast \circ H\right) + \im \dbbar^\ast \right) .
 \end{equation*}
 \Cref{justification-of-bigraded-complex} implies that $(\im\dbbar + \im\db\rv_{\ker\dbbar})$ and $(\im \left(\dhor^\ast\circ H\right) + \im\dbbar^\ast)$ are $L^2$-orthogonal;
 in particular, this sum is direct.
 Hence, by Equation~\eqref{eqn:popovici-subelliptic-consequence},
 \begin{equation*}
  \mR^{p,q} \subset \cmH^{p,q} \oplus \left( \im \dbbar + \im \db\rv_{\ker\dbbar} \right) \oplus \left( \im \left(\dhor^\ast\circ H\right) + \im \dbbar^\ast \right) .
 \end{equation*}
 The conclusion follows readily.
\end{proof}

\part{Applications}
\label{part:applications}

We conclude this article by discussing some applications of the bigraded Rumin complex and our Hodge theorems to cohomology groups and pseudo-Einstein contact forms on CR manifolds.

In \cref{sec:vanishing} we give two estimates on the dimensions of the Kohn--Rossi cohomology groups $H^{p,q}(M)$.
First, we show that if $(M^{2n+1},T^{1,0})$, $n\geq2$, is a closed, strictly pseudoconvex CR manifold which admits a contact form with nonnegative pseudohermitian Ricci curvature, then $\dim H^{0,q}(M)\leq\binom{n}{q}$ for all $q\in\{1,\dotsc,n-1\}$;
moreover, $\dim H^{0,q}(M)=0$ if the pseudohermitian Ricci curvature is positive at a point.
This result is sharp and refines results of Tanaka~\cite{Tanaka1975}*{Proposition~7.4} and Lee~\cite{Lee1988}*{Proposition~6.4}.
Second, we give a similar estimate for $\dim H^{p,q}(M)$, $p+q\not\in\{n,n+1\}$ and $q\not\in\{0,n\}$, for closed, locally spherical, strictly pseudoconvex manifolds which admit a pseudo-Einstein contact form with nonnegative pseudohermitian scalar curvature.
This result is also sharp.
Of course, the assumptions on the latter result are quite strong;
this result is primarily intended to motivate future studies of the Kohn--Rossi cohomology groups.

In \cref{sec:frolicher} we introduce the CR analogue of the Fr\"olicher spectral sequence of a complex manifold~\cite{Frolicher1955}.
This spectral sequence was introduced by Garfield~\cite{Garfield2001}, and so we call it the \defn{Garfield spectral sequence}.
One property of the Garfield spectral sequence is that its first and limiting pages recover the Kohn--Rossi and complexified de Rham cohomology groups, respectively.
We present two results which indicate that the second page $E_2^{p,q}$ of the Garfield spectral sequence should be regarded as the CR analogue of the Dolbeault cohomology groups.
First, we show that $\dim E_2^{p,q}<\infty$ for all closed, embeddable, strictly pseudoconvex CR manifolds.
Second, we prove the \defn{CR Fr\"olicher inequality}
\begin{equation*}
 \dim H^k(M;\bC) \leq \sum_{p+q=k} \dim E_2^{p,q}
\end{equation*}
(cf.\ \cite{Frolicher1955}).
Moreover, we show that the existence of a torsion-free contact form implies the Hodge decomposition
\begin{equation*}
 H^k(M;\bC) \cong \bigoplus_{p+q=k} E_2^{p,q} .
\end{equation*}
In particular, the CR Fr\"olicher inequality holds with equality.
This furthers the analogy of torsion-free pseudohermitian manifolds --- also known as \defn{Sasaki manifolds} --- as the odd-dimensional analogue of K\"ahler manifolds~\cite{BoyerGalicki2008}.
It also yields a new proof of the fact~\cites{BlairGoldberg1967,Fujitani1966} that if $(M^{2n+1},T^{1,0})$ is a closed \defn{Sasakian manifold} --- meaning it admits a torsion-free contact form --- and $k\leq n$ is odd, then $\dim H^k(M;\bR)$ is even.

In \cref{sec:sasakian} we present five additional applications of the bigraded Rumin complex to the topology of closed Sasakian manifolds.
Our first result is a vanishing result for the spaces $E_2^{p,0}$, $p \geq 1$, on closed Sasakian $(2n+1)$-manifolds with nonnegative Ricci curvature.
When $p\not\in\{n,n+1\}$, this follows from the isomorphism $E_2^{p,0} \cong \overline{E_2^{0,p}}$ on Sasaki manifolds and an aforementioned vanishing theorem in \cref{sec:vanishing};
when $p \in \{ n, n+1 \}$, additional properties of Sasaki manifolds are required.
This result is analogous to a vanishing result of Nozawa~\cite{Nozawa2014} for the basic Dolbeault cohomology groups.
Our second result is the vanishing of the cup product
\begin{equation*}
 \cup \colon H^k(M;\bC) \otimes H^\ell(M;\bC) \to H^{k+\ell}(M;\bC)
\end{equation*}
when $k,\ell \leq n$ but $k+\ell \geq n+1$.
This result, and its application as an obstruction to the existence of a torsion-free contact form on a given CR manifold, was first observed by Bungart~\cite{Bungart1992}.
Notably, this result implies that the cuplength of a closed Sasakian $(2n+1)$-manifold is at most $n+1$;
closed quotients of the Heisenberg group~\cite{Folland2004} show that this estimate is sharp.
A weaker estimate on the cuplength was previously observed by Rukimbira~\cite{Rukimbira1993} and Itoh~\cite{Itoh1997}, and the sharpness corrects a misstatement in Boyer and Galicki's book~\cite{BoyerGalicki2008}*{Example~8.1.13}.
Our third result is that all real Chern classes of degree at least $n+1$ vanish on a closed Sasakian $(2n+1)$-manifold.
Though we cannot find this statement in the literature, it readily follows from known facts~\cite{BoyerGalicki2008}*{Theorem~7.2.9 and Lemma~7.5.22}.
This estimate gives an obstruction to the existence of a torsion-free contact form on a given CR manifold~\cite{Takeuchi2018}*{Proposition~4.2}.
Our fourth result is a Sasakian analogue of the Hard Lefschetz Theorem:
If $(M^{2n+1},T^{1,0},\theta)$ is a closed Sasaki manifold and $H$ denotes the $L^2$-orthogonal projection onto the kernel of the Rumin Laplacian, then
\begin{equation*}
 H^k(M;\bC) \ni [\omega] \mapsto \Lef([\omega]) := [ \theta \wedge H\omega \wedge d\theta^{n-k}] \in H^{2n+1-k}(M;\bC)
\end{equation*}
is an isomorphism for all $k \in \{ 0, \dotsc, n \}$.
Moreover, if $\hT^{1,0}$ is another CR structure such that $(M^{2n+1},\hT^{1,0},\theta)$ is Sasaki, then $\widehat{\Lef}=\Lef$.
In other words, $\Lef$ is independent of the choice of Sasaki structure.
This fact was first observed by Cappelletti-Montano, De Nicola, and Yudin~\cite{CappellettiMontanoDeNicolaYudin2015};
a key point is that the bigraded Rumin complex makes the identity $\widehat{\Lef}=\Lef$ transparent.
Our fifth result is a pair of Sasaki analogues of the $\partial\overline{\partial}$-lemma.
These analogues include extra assumptions involving the adjoints $\db^\ast$ and $\dbbar^\ast$ which are chosen for their application to the Lee Conjecture in \cref{sec:lee-conjecture}.
Examples are given which show that some extra assumption is necessary.

In \cref{sec:lee-conjecture}, we introduce the CR invariant cohomology class
\begin{equation*}
 \mL := \left[ -i\bigl( P_{\alpha\bar\beta} - \frac{P}{n} h_{\alpha\bar\beta}\bigr) \,\theta^\alpha\wedge\theta^{\bar\beta} \right] \in H_R^1(M;\sP) ,
\end{equation*}
$P:=P_\mu{}^\mu$, on an orientable CR manifold $(M^{2n+1},T^{1,0})$ and prove that $\mL=0$ if and only if $(M^{2n+1},T^{1,0})$ admits a pseudo-Einstein contact form.
This generalizes a construction of Lee~\cite{Lee1988} for locally embeddable CR manifolds.
The image of $\mL$ under the morphism $H_R^1(M;\sP)\to H^2(M;\bR)$ is a dimensional multiple of the real first Chern class $c_1(T^{1,0})$, and hence we recover Lee's observation~\cite{Lee1988}*{Proposition~D} that the vanishing of $c_1(T^{1,0})$ is necessary for the existence of a pseudo-Einstein contact form.
This motivates the Lee Conjecture~\cite{Lee1988}:
If $(M^{2n+1},T^{1,0})$ is a closed, embeddable, strictly pseudoconvex manifold with $c_1(T^{1,0})=0$, then it admits a pseudo-Einstein contact form.
We use our Hodge theorems to recover Lee's partial results~\cite{Lee1988} towards the Lee Conjecture:
It is true on Sasakian manifolds and on CR manifolds of dimension at least five which admit a contact form with nonnegative pseudohermitian Ricci tensor.

\section{Dimension estimates for Kohn--Rossi groups}
\label{sec:vanishing}

In this section we prove dimension estimates for the Kohn--Rossi cohomology groups provided there is a contact form with nonnegative curvature.
For notational convenience, we make the following definition.

\begin{definition}
 \label{defn:betti}
 Let $(M^{2n+1},T^{1,0})$ be a closed, strictly pseudoconvex manifold.
 Given $p\in\{0,\dotsc,n+1\}$ and $q\in\{1,\dotsc,n-1\}$, we set
 \[ b^{p,q} := \dim H^{p,q}(M) . \]
\end{definition}

The assumptions in \cref{defn:betti} imply that $b^{p,q}$ is finite.

We first estimate $b^{0,q}$.
This improves results of Tanaka~\cite{Tanaka1975}*{Proposition~7.4} and Lee~\cite{Lee1988}*{Proposition~6.4} both by giving a sharp estimate on $b^{0,q}$ and by showing that $b^{0,q}=0$ if $R_{\alpha\bar\beta}>0$ at a point.

\begin{theorem}
 \label{H0q-vanishing}
 Let $(M^{2n+1},T^{1,0},\theta)$ be a closed, strictly pseudoconvex pseudohermitian manifold with nonnegative pseudohermitian Ricci curvature $R_{\alpha\bar\beta}$.
 If $\omega\in\mH^{0,q}$, $q\in\{1,\dotsc,n-1\}$, then $\omega$ is parallel and $\Ric\ohash \omega=0$.
 In particular,
 \begin{enumerate}
  \item $b^{0,q} \leq \binom{n}{q}$; and
  \item if there is a point at which $R_{\alpha\bar\beta}>0$, then $b^{0,q}=0$.
 \end{enumerate}
\end{theorem}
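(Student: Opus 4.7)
The plan is to apply the Weitzenböck-type formula from \cref{bochner-order2-better-sign} to an element $\omega \in \mH^{0,q}$. Setting $p=0$, the curvature-like operators $R\hash\ohash\omega$ and $\Ric\hash\omega$ vanish identically by \cref{defn:curvature-actions}, so the formula reduces to
\begin{equation*}
 \Box_b\omega = \frac{(q-1)(n-q)}{n(n-q+2)}\nablab^\ast\nablab\omega + \frac{(n-q+1)(n-q)}{n(n-q+2)}\nablabbar^\ast\nablabbar\omega + \frac{1}{n-q+2}\bigl(\db^\ast\db + \dbbar^\ast\dbbar\bigr)\omega - \frac{(n-q+1)(n-q)}{n(n-q+2)}\Ric\ohash\omega .
\end{equation*}
Pairing with $\omega$ in the $L^2$-inner product of \cref{defn:Omega-inner-product}, every coefficient on the right is nonnegative for $q \in \{1,\dotsc,n-1\}$, and the Ricci term $-\llp \Ric\ohash\omega,\omega\rrp$ is a fiberwise sum of expressions of the form $R^{\bar\nu\bar\beta}u_{\bar\nu}\overline{u}_{\bar\beta}\geq 0$ under the hypothesis $R_{\alpha\bar\beta}\geq 0$. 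Since $\Box_b\omega=0$, every term must vanish separately, yielding $\nablabbar\omega=0$, $\db\omega=0$, and $\Ric\ohash\omega\equiv 0$ pointwise.

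Next I would show $\nablab\omega=0$ as well. In degrees $q\geq 2$ this is immediate from the positive coefficient of $\lV\nablab\omega\rV^2$. For $q=1$ the coefficient vanishes, but \cref{bigraded-operators,divergence-formula} give, after using $\dbbar^\ast\omega=0$ to kill the divergence trace,
\begin{equation*}
 \db\omega \equiv \nabla_\alpha\omega_{\bar\Beta}\,\theta^\alpha\wedge\theta^{\bar\Beta} \mod \theta ,
\end{equation*}
so $\db\omega=0$ forces $\nablab\omega=0$ directly. Combining $\nablab\omega=\nablabbar\omega=0$ with the commutator identity (eqn:nabla0-fancy-commutator) (applied with $p=0$, so $\Ric\hash\omega=0$) gives $ni\nabla_0\omega=-\Ric\ohash\omega=0$, whence $\omega$ is parallel with respect to the Tanaka--Webster connection.

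Given these pointwise conclusions, the two numerical statements are quick. For (i): a parallel section is determined by its value at any point, so the evaluation map $\mH^{0,q}\to \Lambda^{0,q}_x$ is injective, which gives $b^{0,q}\leq \binom{n}{q}$. For (ii): at a point $x$ where $R_{\alpha\bar\beta}(x)>0$, the tensor $R^{\bar\nu}{}_{\bar\beta}$ is invertible, so the vanishing $R^{\bar\nu}{}_{\bar\beta}\omega_{\bar\nu\bar\Beta^\prime}(x)=0$ (read off from $\Ric\ohash\omega=0$) forces $\omega(x)=0$, and parallel transport along paths in the connected manifold $M$ then gives $\omega\equiv 0$. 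The only mildly subtle step is the $q=1$ case, where the coefficient of $\lV\nablab\omega\rV^2$ in the Weitzenböck formula degenerates and one must instead extract $\nablab\omega=0$ from the combination $\db\omega=0$ and $\dbbar^\ast\omega=0$; all other pieces are direct consequences of the Weitzenböck identity and the commutator formula.
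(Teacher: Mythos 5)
Your proof is correct, and it reaches all the stated conclusions, but it routes through a different Weitzenb\"ock identity than the paper does, which costs you an extra step. The paper applies \cref{bochner-order2} directly: with $p=0$ the terms $R\hash\ohash\omega$, $\Ric\hash\omega$, and $\db\db^\ast\omega$ all vanish identically (the last because $\db^\ast$ maps $\mR^{0,q}$ into $\mR^{-1,q}=0$), and $\dbbar\dbbar^\ast\omega=0$ for harmonic $\omega$, leaving $0=\frac{q}{n}\nablab^\ast\nablab\omega+\frac{n-q}{n}\nablabbar^\ast\nablabbar\omega-\frac{n-q}{n}\Ric\ohash\omega$. Here the coefficient of $\lV\nablab\omega\rV^2$ is $q/n>0$ for every $q\in\{1,\dotsc,n-1\}$, so $\nablab\omega=\nablabbar\omega=0$ and $\Ric\ohash\omega=0$ come out of a single integration with no degenerate case. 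You instead use \cref{bochner-order2-better-sign}, whose coefficient $\frac{(q-1)(n-q)}{n(n-q+2)}$ of $\nablab^\ast\nablab$ vanishes at $q=1$, forcing you to recover $\nablab\omega=0$ separately from $\db\omega=0$ together with $\dbbar^\ast\omega=0$ via \cref{bigraded-operators,divergence-formula}; that patch is valid (the trace correction in the formula for $\db\omega$ is exactly a multiple of $\dbbar^\ast\omega$), but it is work the paper's choice of identity makes unnecessary. The corollary you chose is the one the paper reserves for the general $(p,q)$ case in \cref{Hpq-vanishing}, where controlling $\db\db^\ast\omega$ genuinely requires it; for $p=0$ the plain \cref{bochner-order2} is the sharper tool. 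Your deduction of $\nabla_0\omega=0$ from Equation~\eqref{eqn:nabla0-fancy-commutator} and your counting and positivity arguments for (i) and (ii) match the paper's (the paper cites \cref{commutators} for parallelism and uses the pointwise nonnegativity of the Ricci integrand in exactly the way you describe); only note that $\Ric\ohash\omega=0$ gives the vanishing of a skew-symmetrized contraction, so the clean way to conclude $\omega(x)=0$ is from the pointwise vanishing of $\lp\Ric\ohash\omega,\omega\rp$, as you in fact set it up.
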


\begin{proof}
 Let $\omega\in\mH^{0,q}$, $q\in\{1,\dotsc,n-1\}$.
 \Cref{bochner-order2} implies that
 \[ \frac{n-q}{n}\nablabbar^\ast\nablabbar\omega + \frac{q}{n}\nablab^\ast\nablab\omega - \frac{n-q}{n}\Ric\ohash\omega = 0 . \]
 Write $\omega = \frac{1}{q!}\omega_{\bar\Beta}\,\theta^{\bar\Beta}$.
 Integrating over $M$ yields
 \[ \int_M \left( \frac{n-q}{n}\lv\nablabbar\omega\rv^2 + \frac{q}{n}\lv\nablab\omega\rv^2 + \frac{n-q}{n(q-1)!}R^{\bar\nu}{}_{\bar\sigma}\omega_{\bar\nu\bar\Beta^\prime}\oomega^{\bar\sigma\bar\Beta^\prime} \right)\,\theta\wedge d\theta^n = 0 . \]
 Since $R_{\alpha\bar\beta}\geq0$, we deduce that $\nablab\omega=\nablabbar\omega=0$ and that $\Ric\ohash\omega=0$.
 \Cref{commutators} then implies that $\omega$ is parallel.
 Since $\mR^{0,q}$ is isomorphic to $\Omega^{0,q}M$, we conclude that $\dim\mH^{0,q}\leq\binom{n}{q}$.
 
 Suppose now that $R_{\alpha\bar\beta}>0$ at a point $x\in M$.
 Let $\omega\in\mH^{0,q}$.
 Then $\omega$ is parallel and $\omega=0$ at $x$.
 Hence $\omega=0$.
 Therefore $\mH^{0,q}=0$.
 
 The final conclusion follows from \cref{hodge-isomorphism}.
\end{proof}

Note that the dimension bounds in \cref{H0q-vanishing} are sharp:

\begin{example}
 \label{heisenberg-quotient}
 Consider the Heisenberg $n$-manifold $\bH^n:=\bC^n\times\bR$ with $T^{1,0}$ generated by $\bigl\{Z_\alpha := \partial_{z^\alpha} + \frac{i\oz^\alpha}{2}\partial_t\bigr\}_{\alpha=1}^n$.
 It is straightforward to verify that
 \[ \theta := dt + \frac{i}{2}\sum_{\alpha=1}^n \left( z^\alpha\,d\oz^\alpha - \oz^\alpha\,dz^\alpha \right) \]
 is a flat, torsion-free contact form on $(\bH^n,T^{1,0})$.
 Moreover, one readily checks that
 \[ (w,s)\cdot(z,t) := (z+w, t+s-\Imaginary z\cdot\ow) \]
 defines a free, proper left action of $\Gamma := \bigl(\bZ[i]\bigr)^n\times\bZ$ on $\bH^n$ by CR maps which preserve $\theta$.
 In particular, $\theta$ descends to the quotient $I^{2n+1}:=\Gamma \backslash \bH^n$.
 Moreover, each $d\oz^\alpha$, $\alpha\in\{1,\dotsc,n\}$, descends to a nowhere-vanishing, $\dbbar$-harmonic $(0,1)$-form.
 Therefore $d\oz^{\alpha_1} \krwedge \dotsm \krwedge d\oz^{\alpha_q}$ is a $\dbbar$-harmonic $(0,q)$-form on $I^{2n+1}$.
 We conclude from \cref{hodge-isomorphism,H0q-vanishing} that $b^{0,q}=\binom{n}{q}$ (cf.\ \cite{Folland2004}*{Corollary~3.4}).
\end{example}

\Cref{H0q-vanishing} gives a sufficient condition for $H_R^k(M;\sP)\to H^{k+1}(M;\bR)$ to be injective.

\begin{corollary}
 \label{cohomology-map-trivial}
 Let $(M^{2n+1},T^{1,0},\theta)$ be a closed, strictly pseudoconvex manifold with nonnegative pseudohermitian Ricci tensor.
 Then $H_R^k(M;\sP)\to H^{k+1}(M;\bR)$ is injective for each $k\in\{1,\dotsc,n-1\}$.
\end{corollary}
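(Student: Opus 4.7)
The plan is to apply the long exact sequence of Theorem~\ref{long-exact-sequence}. By exactness at $H_R^k(M;\sP)$, the kernel of the morphism $H_R^k(M;\sP) \to H_R^{k+1}(M;\bR)$ equals the image of the preceding morphism $H_R^{0,k}(M) \to H_R^k(M;\sP)$, which by Definition~\ref{defn:cohomology_maps} is given by $[\omega] \mapsto [-\Imaginary d\omega]$. So it suffices to show that this latter map is the zero map.

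Since we are working in the range $k \in \{1,\dotsc,n-1\}$ we have $n \geq 2$, so $(M^{2n+1},T^{1,0})$ is locally embeddable (either by dimension $\geq 5$ or by Boutet de Monvel's theorem in the closed case), and Theorem~\ref{hodge-isomorphism} together with Theorem~\ref{kr-resolution} lets me represent every class in $H_R^{0,k}(M)$ by a unique $\dbbar$-harmonic form $\omega \in \mH^{0,k}$. Under the hypothesis that $R_{\alpha\bar\beta}\geq 0$, Theorem~\ref{H0q-vanishing} asserts that every such $\omega$ is parallel with respect to the Tanaka--Webster connection, i.e.\ all of $\nabla_\alpha \omega_{\bar\Beta}$, $\nabla_{\bar\beta}\omega_{\bar\Beta}$, and $\nabla_0\omega_{\bar\Beta}$ vanish.

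I would then argue that $d\omega = 0$. Because $k = 0+k \leq n-1$, Definition~\ref{defn:dbbar} and the formulas of Proposition~\ref{bigraded-operators} give $d\omega = \db\omega + \dbbar\omega$ inside $\mR^{1,k}\oplus\mR^{0,k+1}$. The explicit expression for $\db\omega$ modulo $\theta$ involves only $\nabla_\alpha \omega_{\bar\Beta}$ and $\nabla^{\bar\nu}\omega_{\bar\nu\bar\Beta^\prime}$; both vanish by parallelism, so $\db\omega \equiv 0 \mod \theta$, and Lemma~\ref{projection-to-E-expression} then forces $\db\omega=0$ outright. The analogous argument, or alternatively Equation~\eqref{eqn:dbbar-to-onablas} of Lemma~\ref{sR-to-P} combined with $\nablas^\ast\omega = 0$ (as $\omega$ is of type $(0,k)$) and $\onablas\omega = 0$ (parallelism), gives $\dbbar\omega = 0$.

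With $d\omega = 0$ for every harmonic representative, the morphism $H_R^{0,k}(M)\to H_R^k(M;\sP)$ is identically zero, so injectivity of $H_R^k(M;\sP)\to H_R^{k+1}(M;\bR)$ follows from exactness. The only subtle point worth double-checking is that the harmonic representative $\omega \in \mR^{0,k}$ carries no hidden $\theta$-component obstructing $d\omega = 0$; but the correction term $\frac{ki}{n-k+1}\nabla^{\bar\nu}\omega_{\bar\nu\bar\Beta^\prime}\,\theta\wedge\theta^{\bar\Beta^\prime}$ appearing in Lemma~\ref{projection-to-E-expression} is already killed by parallelism, so no further computation is needed.
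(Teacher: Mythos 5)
Your proposal is correct and follows essentially the same route as the paper: reduce via the long exact sequence to showing $H_R^{0,k}(M)\to H_R^k(M;\sP)$ is zero, invoke \cref{H0q-vanishing} to get parallel harmonic representatives, and conclude that $d\omega=0$ for such representatives. The paper states this more tersely (leaving the computation that parallel implies $d$-closed implicit), but the content is identical.
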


\begin{proof}
 Let $k\in\{1,\dotsc,n-1\}$.
 \Cref{H0q-vanishing} implies that elements of $H_R^{0,k}(M)$ are represented by parallel $(0,k)$-forms.
 Therefore $H_R^{0,k}(M)\to H_R^k(M;\sP)$ is the zero map.
 The conclusion follows from \cref{long-exact-sequence,r-resolution}.
\end{proof}

Our next result implies that $b^{p,q}=0$ for locally spherical pseudo-Einstein manifolds with positive scalar curvature.
These assumptions are quite strong, and are mainly made to illustrate the Bochner method for $(p,q)$-forms on CR manifolds.
This result should be compared with the partial resolution by D.-C.\ Chang, S.-C.\ Chang and Tie~\cite{ChangChangTie2014}*{Theorem~1.3} of the CR Frankel Conjecture~\cite{ChangChangTie2014}*{Conjecture~1.2} (cf.\ \cite{HeSun2016}).

\begin{theorem}
 \label{Hpq-vanishing}
 Let $(M^{2n+1},T^{1,0})$ be a closed, locally spherical, strictly pseudoconvex CR manifold and let $p\in\{0,\dotsc,n+1\}$ and $q\in\{1,\dotsc,n-1\}$ be such that $p+q\not\in\{n,n+1\}$.
 If $(M^{2n+1},T^{1,0})$ admits a pseudo-Einstein contact form with nonnegative pseudohermitian scalar curvature $R$, then every $\dbbar$-harmonic $(p,q)$-form is parallel.
 Moreover, if $R\not=0$, then $b^{p,q}=0$.
\end{theorem}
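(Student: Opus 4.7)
The plan is to mimic the proof of \cref{H0q-vanishing}, but with the improved Weitzenb\"ock formula of \cref{bochner-order2-better-sign} replacing \cref{bochner-order2}; the assumptions on the Chern tensor and on the Ricci tensor will be used precisely to rewrite the curvature contributions as a nonnegative multiple of $R\lv\omega\rv^2$. First, by Serre duality (\cref{serre}), it suffices to treat the case $p+q \leq n-1$, so that $n-p-q \geq 1$; the hypothesis $q \in \{1,\dotsc,n-1\}$ is preserved under $(p,q) \mapsto (n+1-p,n-q)$.

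Let $\omega \in \mH^{p,q}$ and apply \cref{bochner-order2-better-sign}. Taking the $L^2$-inner product with $\omega$, using $\Box_b\omega = 0$ and $\dbbar\omega = 0$, one obtains
\begin{align*}
 0 &= A\,\lV\nablab\omega\rV^2 + B\,\lV\nablabbar\omega\rV^2 + C\,\lV\db\omega\rV^2 \\
  &\quad - D\,\llp R\ddbarhash\omega,\omega\rrp - A\,\llp\Ric\hash\omega,\omega\rrp - B\,\llp\Ric\ohash\omega,\omega\rrp,
\end{align*}
where $A,B \geq 0$ and $C,D>0$ are the coefficients appearing in \cref{bochner-order2-better-sign}. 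Pseudo-Einstein gives $R_{\alpha\bar\beta} = \frac{R}{n}h_{\alpha\bar\beta}$, so $\Ric\hash\omega = -\frac{pR}{n}\omega$ and $\Ric\ohash\omega = -\frac{qR}{n}\omega$; combined with the vanishing of the Chern tensor, it yields
\[ R_{\alpha\bar\beta\gamma\bar\sigma} = \tfrac{R}{n(n+1)}\bigl( h_{\alpha\bar\beta}h_{\gamma\bar\sigma} + h_{\alpha\bar\sigma}h_{\gamma\bar\beta}\bigr). \]
Since $\omega$ is primitive, the trace in the first summand vanishes, and a direct multi-index computation gives $R\ddbarhash\omega = \frac{pqR}{n(n+1)}\omega$. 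Substituting and simplifying, the display becomes
\[ 0 = A\,\lV\nablab\omega\rV^2 + B\,\lV\nablabbar\omega\rV^2 + C\,\lV\db\omega\rV^2 + K\,R\,\lV\omega\rV^2, \]
where
\[ K = \frac{(n-p-q)(n+1-q)\bigl(q(n+1)-p\bigr)}{n^2(n+1)(n-p-q+2)}. \]
For $p+q \leq n-1$ and $q \in \{1,\dotsc,n-1\}$, each factor of $K$ is strictly positive.

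Since $R \geq 0$, every summand vanishes. Thus $\nablabbar\omega = 0$, $\db\omega = 0$, $R\lv\omega\rv^2 \equiv 0$, and (whenever $q \geq 2$) also $\nablab\omega = 0$; together with \cref{commutators} applied index-by-index to the components of $\omega$, this forces $\nabla_0\omega = 0$, so $\omega$ is parallel. If $R \not\equiv 0$, then $R>0$ on a nonempty open set on which $\omega$ must vanish, and parallelism propagates this to all of $M$; \cref{hodge-isomorphism} then gives $b^{p,q} = 0$. The main obstacle is the curvature computation $R\ddbarhash\omega = \frac{pqR}{n(n+1)}\omega$, which relies on the primitivity of $\omega$ to eliminate the trace term, and the bookkeeping needed to extract the clean factorization of $K$; an additional subtlety is the borderline case $q=1$, where the Weitzenb\"ock does not directly yield $\nablab\omega = 0$ and the equation $\db\omega = 0$ must be used instead when deducing parallelism.
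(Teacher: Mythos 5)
Your overall strategy --- reduce to $p+q\leq n-1$ by Serre duality, apply a Weitzenb\"ock formula, convert the curvature terms to a positive multiple of $R\lv\omega\rv^2$ via the pseudo-Einstein and locally spherical hypotheses, and integrate --- is exactly the paper's, and your coefficient $K$ is correct: after clearing the overall factor $\tfrac{n-p-q}{n(n-p-q+2)}$ it agrees with the paper's $\tfrac{pq+(n+1)(n-p+(q-1)(n-q))}{n(n+1)}$, which indeed factors as $\tfrac{(n+1-q)(q(n+1)-p)}{n(n+1)}$. The genuine gap is the case $q=1$. Working only with \cref{bochner-order2-better-sign}, the coefficient of $\lV\nablab\omega\rV^2$ is proportional to $q-1$ and vanishes, so integration yields only $\nablabbar\omega=0$, $\db\omega=0$, and $R\lv\omega\rv^2=0$. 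Your proposed patch --- ``use $\db\omega=0$ when deducing parallelism'' --- does not close this: by \cref{bigraded-operators}, $\db\omega$ records only the skew-symmetrization of $\nablab\omega$ over the unbarred indices (modulo a trace term that already vanishes via $\dbbar^\ast\omega=0$), so together with $\dbbar^\ast\omega=0$ you control only the alternating part and one contraction of $\nablab\omega$; for $p\geq1$ the remaining components are unconstrained and $\nablab\omega=0$ does not follow. (For $p=0$, $q=1$ it happens to work, but not in general.)

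The paper closes this by using \emph{both} Weitzenb\"ock formulas. Having obtained $\nablabbar\omega=0$ and $R\omega=0$ from \cref{bochner-order2-better-sign}, note from \cref{divergence-formula} that $\db^\ast\omega$ is a contraction of $\nablabbar\omega$ and hence vanishes; then the original formula \cref{bochner-order2}, whose $\nablab^\ast\nablab$ coefficient is $\tfrac{q}{n}>0$ and whose only wrong-sign term is $-\tfrac{1}{n-p-q+1}\db\db^\ast\omega$, integrates to $\tfrac{q}{n}\lV\nablab\omega\rV^2=0$. (Alternatively, $\db\omega=\db^\ast\omega=\dbbar\omega=\dbbar^\ast\omega=0$ forces $L_b\omega=0$, and \cref{rumin-order2}, whose $\nablab^\ast\nablab$ coefficient is $\tfrac{n-p+q}{n}>0$, gives the same conclusion.) Some such repair is needed: without $\nablab\omega=0$ you have neither parallelism nor the unique-continuation step that propagates the vanishing of $\omega$ from $\{R>0\}$ to all of $M$ when $q=1$ and $p\geq1$.
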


\begin{proof}
 Since the conclusion of the theorem is vacuous if $n=1$, we may assume that $n\geq 2$.
 Moreover, \cref{serre} implies that we may assume that $p+q \leq n-1$.
 
 Let $\theta$ be a pseudo-Einstein~\cite{Lee1988} contact form on $(M^{2n+1},T^{1,0})$;
 i.e.
 \begin{equation*}
  R_{\alpha\bar\beta} = \frac{R}{n}h_{\alpha\bar\beta} .
 \end{equation*}
 Since $(M^{2n+1},T^{1,0})$ is locally spherical, it holds that
 \begin{equation*}
  R_{\alpha\bar\beta\gamma\bar\sigma} = \frac{R}{n(n+1)}\left(h_{\alpha\bar\beta}h_{\gamma\bar\sigma} + h_{\alpha\bar\sigma}h_{\gamma\bar\beta} \right) .
 \end{equation*}
 Let $\omega\in\mH^{p,q}$.
 On the one hand, \cref{bochner-order2} implies that
 \begin{multline}
  \label{eqn:Hpq-vanishing-low1}
  0 = q\nablab^\ast\nablab\omega + (n-q)\nablabbar^\ast\nablabbar\omega \\
   - \frac{n}{n-p-q+1}\db\db^\ast\omega + \frac{pq + (n+1)q(n-q)}{n(n+1)}R\omega .
 \end{multline}
 On the other hand, \cref{bochner-order2-better-sign} implies that
 \begin{multline}
  \label{eqn:Hpq-vanishing-low2}
  0 = (q-1)\nablab^\ast\nablab\omega + (n-q+1)\nablabbar^\ast\nablabbar\omega \\
   + \frac{n}{n-p-q}\db^\ast\db\omega + \frac{pq + (n+1)(n-p+(q-1)(n-q))}{n(n+1)}R\omega .
 \end{multline}
 Integrating Equation~\eqref{eqn:Hpq-vanishing-low2} on $M$ yields $\nablabbar\omega=\db\omega=R\omega=0$.
 Integrating Equation~\eqref{eqn:Hpq-vanishing-low1} then yields $\nablab\omega=0$.
 In particular, $\omega$ is parallel;
 moreover, if $R\not=0$, then $\omega=0$.
 The final conclusion follows from \cref{hodge-isomorphism}.
\end{proof}

\Cref{Hpq-vanishing} allows us to conclude that most of the Kohn--Rossi cohomology groups of the CR Hopf manifold~\citelist{\cite{Dragomir1994} \cite{BurnsShnider1976} \cite{Jacobowitz1990}*{pg.\ 185}} vanish.

\begin{example}
 \label{cr-hopf-manifold}
 Let $(\bH^n,T^{1,0},\theta)$ be the flat Heisenberg manifold as described in \cref{heisenberg-quotient}.
 Let $\phi\in C^\infty(M;\bC)$ be the function $\phi(z,t) := \lv z\rv^2 - 2it$.
 It is straightforward to check that $\phi$ is a CR function and that $Z_\alpha\phi=2\oz^\alpha$ for each $\alpha\in\{1,\dotsc,n\}$.
 Let $\rho:=\lv\phi\rv^{1/2}$ denote the Heisenberg pseudodistance.
 It is straightforward to check that the dilations
 \[ \delta_\lambda(z,t) := (\lambda z, \lambda^2t) \]
 are CR maps and that $\delta_\lambda^\ast(\rho^{-2}\theta)=\rho^{-2}\theta$.
 In particular, $\htheta:=\rho^{-2}\theta$ descends to a well-defined contact form $\vartheta$ on the \defn{CR Hopf manifold} $(\bH^n\setminus\{0\})/\bZ \cong S^1 \times S^{2n}$, where $\bZ$ denotes the group generated by the dilation $\delta_2$.
 A straightforward computation using \cref{transformation} shows that $\htheta$ is pseudo-Einstein and its pseudohermitian scalar curvature is
 \[ \hR = n(n+1)\lv z\rv^2\rho^{-2} . \]
 We conclude from \cref{Hpq-vanishing} that the CR Hopf manifold is such that $b^{p,q}=0$ for all $p\in\{0,\dotsc,n+1\}$ and $q\in\{1,\dotsc,n-1\}$ such that $p+q \not\in \{ n, n+1 \}$.

 Note that the CR Hopf manifold does not admit a pseudo-Einstein contact form with positive scalar curvature~\cite{ChengMarugameMatveevMontgomery2019}.
\end{example}
\section{The CR Fr\"olicher inequalities}
\label{sec:frolicher}

In this section we construct the Garfield spectral sequence as a CR analogue of the Fr\"olicher spectral sequence~\cite{Frolicher1955} and present some results which indicate that the spaces on the \emph{second page} of the Garfield spectral sequence should be regarded as the CR analogues of the Dolbeault cohomology groups.
Our construction --- which is equivalent to Garfield's construction~\cite{Garfield2001} via \cref{explicit-rumin} --- begins with the natural filtration of $\CmR^k$ adapted to the $\dbbar$-operator.

\begin{definition}
 Let $(M^{2n+1},T^{1,0})$ be a CR manifold and let $p,q\in\bZ$.
 Set
 \begin{equation*}
  F^p\CmR^{p+q} := \bigoplus_{j\geq 0} \mR^{p+j,q-j} ,
 \end{equation*}
 with the convention $\mR^{p,q}=0$ if $p\not\in\{0,\dotsc,n+1\}$ or $q\not\in\{0,\dotsc,n\}$.
\end{definition}

Note that $\{ F^p\CmR^k\}_{p=0}^{k+1}$ is a descending filtration of $\CmR^k$ which is preserved by the exterior derivative;
i.e.\ $F^{p+1}\CmR^{p+q}\subset F^p\CmR^{p+q}$ and $d(F^p\CmR^{p+q})\subset F^p\CmR^{p+q+1}$.
The Garfield spectral sequence then arises from a standard construction (cf.\ \cite{Frolicher1955}):

\begin{definition}
 Let $(M^{2n+1},T^{1,0})$ be a CR manifold and let $p,q,r\in\bZ$.
 Set
 \begin{align*}
  Z_r^{p,q} & := \left\{ \omega \in F^p\CmR^{p+q} \suchthatcolon d\omega \in F^{p+r}\CmR^{p+q+1} \right\} , \\
  B_r^{p,q} & := F^p\CmR^{p+q} \cap \im\left( d\colon F^{p-r+1}\CmR^{p+q-1} \to \CmR^{p+q} \right) .
 \end{align*}
 The \defn{Garfield spectral sequence} is determined by the sequence $\{E_r^{p,q}\}_{r=0}^\infty$ of pages and the sequence $\{d_r \colon E_r^{p,q} \to E_r^{p+r,q-r+1} \}_{r=0}^\infty$ of differentials, where
 \begin{align*}
  E_r^{p,q} & := \frac{Z_r^{p,q}}{B_r^{p,q} + Z_{r-1}^{p+1,q-1}} , \\
  d_r([\omega]) & := [d\omega] .
 \end{align*}
\end{definition}

It is straightforward to verify that this is a spectral sequence;
i.e.\ the identity map on $Z_{r+1}^{p,q}$ induces an isomorphism
\begin{equation}
 \label{eqn:Er+1}
 E_{r+1}^{p,q} \cong \frac{\ker\left( d_r\colon E_r^{p,q} \to E_r^{p+r,q-r+1} \right)}{\im\left( d_r\colon E_r^{p-r,q+r-1} \to E_r^{p,q} \right)} .
\end{equation}
Realizing the Kohn--Rossi and de Rham cohomology groups in the Garfield spectral sequence requires two lemmas.

Our first lemma identifies the first page of the Garfield spectral sequence with the Kohn--Rossi cohomology groups and computes the differential $d_1$.

\begin{lemma}
 \label{first-page}
 Let $(M^{2n+1},T^{1,0})$ be a CR manifold.
 Given $p,q\in\bN_0$, the projection $\pi^{p,q}\colon F^p\CmR^{p+q}\to \CmR^{p,q}$ induces an isomorphism
 \begin{equation}
  \label{eqn:E1-isomorphism}
  E_1^{p,q} \cong H_R^{p,q}(M) ,
 \end{equation}
 with respect to which
 \begin{equation*}
  d_1([\omega]) = [d^\prime\omega] ,
 \end{equation*}
 where $d^\prime \colon \mR^{p,q} \to \mR^{p+1,q}$ is the operator
 \begin{equation}
  \label{eqn:defn-dprime}
  d^\prime :=
  \begin{cases}
   \db , & \text{if $p+q \not= n$} , \\
   \dhor , & \text{if $p+q = n$} .
  \end{cases}
 \end{equation}
\end{lemma}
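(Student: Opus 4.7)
The plan is to identify the spaces $Z_1^{p,q}$, $B_1^{p,q}$, and $Z_0^{p+1,q-1}$ explicitly in terms of the bigrading and then show that the projection $\pi^{p,q}$ descends to the desired isomorphism. First I would observe that $Z_0^{p+1,q-1} = F^{p+1}\CmR^{p+q}$ and that $\ker \pi^{p,q} = F^{p+1}\CmR^{p+q}$, so $\pi^{p,q}$ induces an injection $F^p\CmR^{p+q}/F^{p+1}\CmR^{p+q} \hookrightarrow \CmR^{p,q}$ which is manifestly surjective. Thus the content of the lemma is reduced to computing the image of $Z_1^{p,q}$ and of $d(F^p\CmR^{p+q-1})$ under $\pi^{p,q}$.

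Next, writing $\omega \in F^p\CmR^{p+q}$ as $\omega = \omega_{p,q} + \omega_{p+1,q-1} + \dotsm$ with $\omega_{a,b}\in\CmR^{a,b}$, a direct type inspection (using \cref{defn:dbbar}) shows that the component of $d\omega$ in $\CmR^{p,q+1}$ is $\dbbar\omega_{p,q}$, and the component in $\CmR^{p+1,q}$ is $d'\omega_{p,q} + \dbbar\omega_{p+1,q-1}$, where $d'=\db$ if $p+q\neq n$ and $d'=\dhor$ if $p+q=n$. Consequently
\[
 \pi^{p,q}(Z_1^{p,q}) = \ker\bigl(\dbbar\colon\CmR^{p,q}\to\CmR^{p,q+1}\bigr).
\]
Similarly, for any $\tau = \tau_{p,q-1} + \tau_{p+1,q-2}+\dotsm \in F^p\CmR^{p+q-1}$, only $\tau_{p,q-1}$ contributes to $\pi^{p,q}(d\tau)$, giving $\pi^{p,q}(d\tau)=\dbbar\tau_{p,q-1}$; hence
\[
 \pi^{p,q}\bigl(d(F^p\CmR^{p+q-1})\bigr) = \im\bigl(\dbbar\colon\CmR^{p,q-1}\to\CmR^{p,q}\bigr).
\]
Combining these two identifications with the previous observation yields the isomorphism~\eqref{eqn:E1-isomorphism}.

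Finally, to compute $d_1$, I would take a representative $\omega\in Z_1^{p,q}$, note that by definition $d_1([\omega])=[d\omega]\in E_1^{p+1,q}$, and apply the isomorphism on the target, which amounts to projecting to $\CmR^{p+1,q}$. By the type decomposition above,
\[
 \pi^{p+1,q}(d\omega) = d'\omega_{p,q} + \dbbar\omega_{p+1,q-1} ,
\]
with $d'$ given by~\eqref{eqn:defn-dprime}. The second term is $\dbbar$-exact, hence trivial in $H_R^{p+1,q}(M)$, so $d_1([\omega]) = [d'\omega_{p,q}] = [d'\pi^{p,q}\omega]$ as claimed. The one remaining check is that $d'\omega_{p,q}$ is $\dbbar$-closed so that the class makes sense; this follows from the commutator identities of \cref{justification-of-bigraded-complex} applied to $\dbbar\omega_{p,q}=0$, distinguishing the cases $p+q\notin\{n-1,n\}$ (where $\db\dbbar+\dbbar\db=0$), $p+q=n-1$ (where $\dbbar\db+\dhor\dbbar=0$), and $p+q=n$ (where $\dbbar\dhor+\db\dbbar=0$). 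The main step is really just the careful bookkeeping of the type decomposition of $d$; no analytic input is needed.
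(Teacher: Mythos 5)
Your proposal is correct and follows essentially the same route as the paper: the paper first notes that $\pi^{p,q}$ identifies $E_0^{p,q}$ with $\mR^{p,q}$ and $d_0$ with $\dbbar$, then invokes the general identity~\eqref{eqn:Er+1} to obtain~\eqref{eqn:E1-isomorphism}, and computes $d_1$ directly from \cref{defn:dbbar} exactly as you do. The only difference is that you unwind the spectral-sequence bookkeeping (the identifications of $Z_1^{p,q}$, $B_1^{p,q}$, and $Z_0^{p+1,q-1}$) by hand rather than citing the standard fact, and your type decomposition of $d$ and the closedness check for $d'\omega_{p,q}$ via \cref{justification-of-bigraded-complex} are both accurate.
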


\begin{proof}
 It is clear that $\pi^{p,q}$ induces an isometry $E_0^{p,q}\cong\mR^{p,q}$ with respect to which $d_0\omega=\dbbar\omega$.
 Equation~\eqref{eqn:E1-isomorphism} now follows from Equation~\eqref{eqn:Er+1}.
 
 Let $\omega\in\ker\left(\dbbar\colon\mR^{p,q}\to\mR^{p,q+1}\right)$ be a representative of $[\omega]\in H_R^{p,q}(M)$.
 By definition, $d_1\colon H_R^{p,q}(M)\to H_R^{p+1,q}(M)$ maps $[\omega]$ to $[\pi^{p+1,q}d\omega]$.
 The conclusion follows from \cref{defn:dbbar}.
\end{proof}

Our second lemma relates the limiting page of the Garfield spectral sequence with the complexified de Rham cohomology groups.

\begin{lemma}
 \label{spectral-sequence-Einfty}
 Let $(M^{2n+1},T^{1,0})$ be a CR manifold.
 Given $p,q\in\bZ$, set
 \[ F^pH_R^{p+q}(M;\bC) := \left\{ [\omega] \in H_R^{p+q}(M;\bC) \suchthatcolon \omega \in \ker \left( d \colon F^p\CmR^{p+q} \to \CmR^{p+q+1} \right) \right\} . \]
 If $r\geq n+2$, then $d_r=0$ and the identity map induces an isomorphism
 \begin{equation}
  \label{eqn:limiting-isomorphism}
  E_r^{p,q} \cong \frac{F^pH_R^{p+q}(M;\bC)}{F^{p+1}H_R^{p+q}(M;\bC)} .
 \end{equation}
\end{lemma}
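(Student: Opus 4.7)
The plan is to exploit the fact that the filtration $\{F^p\CmR^k\}$ is bounded. Specifically, since $\mR^{a,b}=0$ unless $a\in\{0,\dotsc,n+1\}$ and $b\in\{0,\dotsc,n\}$, we have $F^p\CmR^k=0$ whenever $p\geq n+2$. Hence for $r\geq n+2$ the target space $F^{p+r}\CmR^{p+q+1}$ in the definition of $Z_r^{p,q}$ vanishes, and so
\begin{equation*}
 Z_r^{p,q} = \left\{ \omega\in F^p\CmR^{p+q} \suchthatcolon d\omega = 0 \right\} .
\end{equation*}
The same identity applies to $Z_{r-1}^{p+1,q-1}$ since $r-1\geq n+1$. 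Likewise, $r\geq n+2\geq p+1$ (for $p\leq n+1$, the only values that matter), so $F^{p-r+1}\CmR^{p+q-1}=\CmR^{p+q-1}$ and hence
\begin{equation*}
 B_r^{p,q} = F^p\CmR^{p+q} \cap d\bigl(\CmR^{p+q-1}\bigr) .
\end{equation*}
From the first of these identifications the differential $d_r\colon E_r^{p,q}\to E_r^{p+r,q-r+1}$ vanishes, since its target has zero numerator.

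Next, I would define the map. Any representative $\omega\in Z_r^{p,q}$ is a closed form in $F^p\CmR^{p+q}$, hence determines a class $[\omega]\in F^pH_R^{p+q}(M;\bC)$. This gives a well-defined $\bC$-linear map
\begin{equation*}
 \Phi\colon Z_r^{p,q} \longrightarrow \frac{F^pH_R^{p+q}(M;\bC)}{F^{p+1}H_R^{p+q}(M;\bC)} .
\end{equation*}
To see that $\Phi$ descends to $E_r^{p,q}$, I check its vanishing on both summands of the denominator: if $\omega=d\eta$ with $\eta\in\CmR^{p+q-1}$, then $[\omega]=0$ in $H_R^{p+q}(M;\bC)$; if $\omega\in Z_{r-1}^{p+1,q-1}$, then $\omega\in F^{p+1}\CmR^{p+q}$ is closed and $[\omega]\in F^{p+1}H_R^{p+q}(M;\bC)$.

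Finally, I would verify that the induced map $\overline{\Phi}\colon E_r^{p,q}\to F^pH_R^{p+q}(M;\bC)/F^{p+1}H_R^{p+q}(M;\bC)$ is an isomorphism. Surjectivity is immediate: any class in $F^pH_R^{p+q}(M;\bC)$ has a closed representative $\omega\in F^p\CmR^{p+q}$, which then lies in $Z_r^{p,q}$ and maps to its own class. For injectivity, suppose $\omega\in Z_r^{p,q}$ represents a class lying in $F^{p+1}H_R^{p+q}(M;\bC)$. Then there is a closed $\tau\in F^{p+1}\CmR^{p+q}$ and an $\eta\in\CmR^{p+q-1}$ with $\omega=\tau+d\eta$. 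By the stable identifications above, $\tau\in Z_{r-1}^{p+1,q-1}$ and $d\eta\in B_r^{p,q}$, so $[\omega]=0$ in $E_r^{p,q}$.

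There is no real obstacle here: the argument is essentially bookkeeping once one observes the boundedness of the filtration. The only point requiring slight care is tracking the defining inclusions for $B_r^{p,q}$ (both that $\eta$ lies in $F^{p-r+1}\CmR^{p+q-1}$ and that $d\eta$ lies in $F^p\CmR^{p+q}$), but the latter follows automatically since $\tau\in F^{p+1}\subset F^p$.
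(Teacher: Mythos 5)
Your proposal is correct and follows essentially the same route as the paper: both arguments rest on the two observations that $F^{p+r}\CmR^{p+q+1}=0$ and $F^{p-r+1}\CmR^{p+q-1}=\CmR^{p+q-1}$ for $r\geq n+2$ (in the nontrivial range of $p$), from which the identifications of $Z_r^{p,q}$, $B_r^{p,q}$, and the vanishing of $d_r$ follow. You simply carry out in detail the bookkeeping that the paper compresses into ``we deduce that the identity map induces the isomorphism.''
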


\begin{proof}
 It suffices to consider the case $p\in\{0,\dotsc,n+1\}$ and $q\in\{0,\dotsc,n\}$.
 Clearly $F^{p+1}H_R^{p+q}(M;\bC)\subset F^pH_R^{p+q}(M;\bC)$.
 Let $r\geq n+2$.
 Using the facts $F^{p+r}\CmR^{p+q+1}=0$ and $F^{p-r}\CmR^{p+q-1}=\CmR^{p+q-1}$, we deduce that the identity map induces the isomorphism~\eqref{eqn:limiting-isomorphism}.
 Since $E_r^{p+r,q-r+1}=0$, it holds that $d_r=0$.
\end{proof}

We now present our results which establish an analogy between the spaces on the second page of the Garfield spectral sequence and the Dolbeault cohomology groups of a complex manifold.
To that end, we give an alternative characterization of the spaces $E_2^{p,q}$ (cf.\ \cite{Popovici2016}*{Proposition~3.1}).

\begin{proposition}
 \label{e2-from-forms}
 Let $(M^{2n+1},T^{1,0},\theta)$ be a closed, embeddable, strictly pseudoconvex manifold and let $p\in\{0,\dotsc,n+1\}$ and $q\in\{0,\dotsc,n\}$.
 Let $H\colon\mR^{p,q}\to\mH^{p,q}$ denote the $L^2$-orthogonal projection onto $\ker\Box_b$.
 Define
 \begin{align*}
  \cH^{p,q}(M) & := \frac{\ker \left(H \circ \db\right) \cap \ker \dbbar }{\im \dbbar + \im \db\rv_{\ker\dbbar}} , && \text{if $p+q\not\in\{n,n+1\}$} , \\
  \cH^{p,q}(M) & := \frac{\ker \left(H \circ \dhor\right) \cap \ker \dbbar}{\im \dbbar + \im \db\rv_{\ker\dbbar}} , && \text{if $p+q=n$} , \\
  \cH^{p,q}(M) & := \frac{\ker \left(H \circ \db\right) \cap \ker \dbbar}{\im \dbbar + \im \dhor\rv_{\ker\dbbar}} , && \text{if $p+q=n+1$} ,
 \end{align*}
 where all quotients are of subspaces of $\mR^{p,q}$.
 Then the identity map induces an isomorphism $\cH^{p,q}(M) \cong E_2^{p,q}$.
\end{proposition}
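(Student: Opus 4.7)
The plan is to unwind the definition of $E_2^{p,q}$ via the first-page identification in \cref{first-page}, then use \cref{hodge-decomposition} to replace the condition ``$[d'\omega]=0$ in $H_R^{p+1,q}(M)$'' by the explicit condition ``$H(d'\omega)=0$,'' and finally check that the denominators match. Throughout, write $d'\colon\mR^{p,q}\to\mR^{p+1,q}$ for $\db$ when $p+q\neq n$ and for $\dhor$ when $p+q=n$, as in~\eqref{eqn:defn-dprime}, and similarly for the analogous operator $d''$ landing in $\mR^{p,q}$ from $\mR^{p-1,q}$ (so $d''=\db$ when $p+q\neq n+1$ and $d''=\dhor$ when $p+q=n+1$).

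First I would combine \eqref{eqn:Er+1} with \cref{first-page} to obtain the presentation
\[
 E_2^{p,q} \;\cong\; \frac{\{[\omega]\in H_R^{p,q}(M)\colon [d'\omega]=0\text{ in }H_R^{p+1,q}(M)\}}{\{[d''\eta]\colon[\eta]\in H_R^{p-1,q}(M)\}},
\]
and then lift to $\mR^{p,q}$ to write
\[
 E_2^{p,q} \;\cong\; \frac{\ker\dbbar\cap (d')^{-1}(\im\dbbar)}{\im\dbbar + d''(\ker\dbbar\cap\mR^{p-1,q})} .
\]
The case split in the definition of $d''$ already matches the case split in the definition of $\cH^{p,q}(M)$, so it only remains to rewrite the numerator.

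Next I would verify, via a short case-by-case use of \cref{justification-of-bigraded-complex}, that whenever $\dbbar\omega=0$ the form $d'\omega$ lies in $\ker\dbbar$. The only nontrivial cases are $p+q\in\{n-1,n\}$: when $p+q=n-1$ one uses $\dbbar\db+\dhor\dbbar=0$, and when $p+q=n$ one uses $\dbbar\dhor+\db\dbbar=0$; elsewhere $\db\dbbar+\dbbar\db=0$ does the job. Given this, the Hodge decomposition \eqref{eqn:hodge-decomposition} applied in bidegree $(p+1,q)$ shows that a $\dbbar$-closed form is $\dbbar$-exact if and only if its harmonic projection vanishes; hence
\[
 \ker\dbbar\cap (d')^{-1}(\im\dbbar) \;=\; \ker\dbbar\cap\ker(H\circ d'),
\]
which is exactly the numerator in the definition of $\cH^{p,q}(M)$.

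Finally, to confirm that the quotient is well-defined I would check that both $\im\dbbar$ and $d''(\ker\dbbar\cap\mR^{p-1,q})$ are contained in $\ker\dbbar\cap\ker(H\circ d')$. Containment in $\ker\dbbar$ is immediate for $\im\dbbar$, and for $d''(\ker\dbbar)$ it follows by the same commutator calculation as above applied to $d''$. To see that $H\circ d'$ kills both, observe that on any $\eta\in\mR^{p,q-1}$ the product $d'\dbbar\eta$ differs from $\pm\dbbar d'\eta$ (or $\pm\dbbar\dhor\eta$, resp. $\pm\dbbar\db\eta$, in the middle degrees) by an element of $\im\dbbar$, again by \cref{justification-of-bigraded-complex}; a parallel argument handles $d'd''\eta$ for $\eta\in\ker\dbbar\cap\mR^{p-1,q}$. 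Combining these three steps yields the asserted isomorphism $\cH^{p,q}(M)\cong E_2^{p,q}$. The main obstacle is nothing conceptually deep but rather the careful bookkeeping of which commutation relation from \cref{justification-of-bigraded-complex} applies in each of the boundary cases $p+q\in\{n-1,n,n+1\}$; once this is tabulated the rest is formal.
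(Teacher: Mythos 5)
Your proposal is correct and follows essentially the same route as the paper: identify $E_2^{p,q}$ via \cref{first-page} and \eqref{eqn:Er+1}, use the Hodge decomposition \eqref{eqn:hodge-decomposition} to convert ``$d'\omega$ is $\dbbar$-exact'' into ``$H(d'\omega)=0$'' for $\dbbar$-closed $\omega$, and match denominators using the anticommutation relations of \cref{justification-of-bigraded-complex}. The paper phrases this as constructing the induced map and checking injectivity and surjectivity rather than rewriting the spaces directly, but the mathematical content is the same.
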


\begin{proof}
 We present the proof in the case $p+q=n$.
 The remaining cases are similar.
 
 First note that, by \cref{justification-of-bigraded-complex},
 \begin{equation*}
  \im \dbbar + \im \db\rv_{\ker\dbbar} \subset \ker \left(H \circ \db\right) \cap \ker \dbbar .
 \end{equation*}
 Therefore $\cH^{p,q}(M)$ is meaningful.
 
 Let $\omega \in \ker \left(H\circ\dhor\right) \cap \ker \dbbar$.
 Then $\omega$ determines a class $[\omega]\in H_R^{p,q}(M)$ and
 \begin{equation*}
  d_1([\omega]) = [\dhor\omega] = 0 \in H_R^{p+1,q}(M) .
 \end{equation*}
 We conclude from \cref{first-page} that $\omega$ determines a class $\bigl[[\omega]\bigr]\in E_2^{p,q}$.
 In particular, the identity map induces a homomorphism
 \begin{equation*}
  \ker \left(H\circ\dhor\right) \cap \ker \dbbar \ni \omega \mapsto \bigl[[\omega]\bigr] \in E_2^{p,q} .
 \end{equation*}
 If $\omega \in \im\dbbar$, then $[\omega]=0\in H_R^{p,q}(M)$, and hence $\bigl[[\omega]\bigr]=0$.
 If $\omega \in \im \db\rv_{\ker\dbbar}$, then $[\omega]=d_1([\tau])$ for some $\tau\in\ker\dbbar$.
 Therefore $\bigl[[\omega]\bigr]=0$.
 We conclude that the identity map induces a homomorphism $T\colon\cH^{p,q}(M)\to E_2^{p,q}$.
 
 We conclude by showing that $T$ is a bijection.
 
 Suppose that $\omega \in \ker \left(H\circ\dhor\right) \cap \ker \dbbar$ is such that $\bigl[[\omega]\bigr]=0$.
 Then there is a $\tau \in \ker\dbbar$ such that $[\omega]=d_1([\tau])$ in $H_R^{p,q}(M)$.
 Thus there is a $\xi\in\mR^{p,q-1}$ such that $\omega=\dbbar\xi+\db\tau$.
 We conclude that $T$ is injective.
 
 Conversely, suppose that $\Omega \in E_2^{p,q}$.
 There there is an $\omega \in \ker\dbbar$ such that $\dhor\omega \in \im \dbbar$ and $\Omega=\bigl[[\omega]\bigr]$.
 \Cref{hodge-decomposition} implies that $(H\circ\dhor)\omega=0$.
 Therefore $\Omega\in\im\bigl( T \colon \cH^{p,q}(M) \to E_2^{p,q} \bigr)$.
\end{proof}

Combining \cref{e2-from-forms} with the Hodge decomposition theorem for the Popovici Laplacian has two immediate corollaries.
First, we deduce the Hodge isomorphism $E_2^{p,q}\cong\cmH^{p,q}$.

\begin{corollary}
 \label{popovici-hodge-isomorphism}
 Let $(M^{2n+1},T^{1,0},\theta)$ be a closed, embeddable, strictly pseudoconvex manifold and let $p\in\{0,\dotsc,n+1\}$ and $q\in\{0,\dotsc,n\}$.
 There is a canonical isomorphism
 \[ E_2^{p,q} \cong \cmH^{p,q} . \]
 In particular, $E_2^{p,q}$ is finite-dimensional.
\end{corollary}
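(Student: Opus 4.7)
The plan is to combine the concrete model for $E_2^{p,q}$ provided by \cref{e2-from-forms} with the $L^2$-orthogonal Hodge decomposition of \cref{popovici-hodge-decomposition}. For notational clarity I will describe the argument in the case $p+q=n$; the other two cases are handled by exactly the same template with the roles of the operators $(\db,\db^\ast,\dhor,\dhor^\ast)$ interchanged as prescribed by \cref{e2-from-forms,popovici-hodge-decomposition}.

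First, by \cref{e2-from-forms} it suffices to exhibit a canonical isomorphism $\cmH^{p,q} \cong \cH^{p,q}(M)$. The map I propose is the most obvious one, namely
\[
 \Phi \colon \cmH^{p,q} \longrightarrow \cH^{p,q}(M), \qquad \Phi(\omega_0) := [\omega_0].
\]
This is well-defined: any $\omega_0 \in \cmH^{p,q}$ satisfies $\dbbar\omega_0 = 0$ (since $\Box_b\omega_0 = 0$ and \cref{kernel-kohn-laplacian}) and $H\dhor\omega_0 = 0$ by the characterization of $\cmH^{p,q}$ in \cref{popovici-kernel}, so $\omega_0$ is a legitimate representative.

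Injectivity is then immediate from the orthogonality of the decomposition: if $[\omega_0]=0$, then $\omega_0 \in \im\dbbar + \im\db\rv_{\ker\dbbar}$, but \cref{popovici-hodge-decomposition} guarantees that this subspace is $L^2$-orthogonal to $\cmH^{p,q}$, forcing $\omega_0 = 0$. Surjectivity is where I would do the real work. Given $\omega \in \ker(H\circ\dhor)\cap\ker\dbbar$, decompose
\[
 \omega = \omega_0 + \eta + \xi, \qquad \omega_0 \in \cmH^{p,q}, \quad \eta \in \im\dbbar+\im\db\rv_{\ker\dbbar}, \quad \xi \in \im(\dhor^\ast\circ H)+\im\dbbar^\ast,
\]
using \cref{popovici-hodge-decomposition}. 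I would then write $\xi = \dhor^\ast H\tau_1 + \dbbar^\ast\tau_2$ and compute, using orthogonality,
\[
 \llp \xi,\xi \rrp = \llp \omega,\xi \rrp = \llp \omega, \dhor^\ast H\tau_1\rrp + \llp \omega, \dbbar^\ast\tau_2\rrp = \llp H\dhor\omega,\tau_1\rrp + \llp \dbbar\omega,\tau_2\rrp = 0,
\]
where the first equality uses $\omega_0 + \eta \perp \xi$ and the last uses the two assumptions $\dbbar\omega = 0$ and $H\dhor\omega = 0$ (together with the fact that $H$ is self-adjoint and idempotent, so $\llp \dhor\omega, H\tau_1\rrp = \llp H\dhor\omega,\tau_1\rrp$). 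Hence $\xi=0$ and $\omega \equiv \omega_0 \pmod{\im\dbbar+\im\db\rv_{\ker\dbbar}}$, giving $[\omega] = \Phi(\omega_0)$.

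There is no single hard step here; the main subtlety is simply to notice that the defining conditions of $\cmH^{p,q}$ from \cref{popovici-kernel} are precisely the two identities $\dbbar\omega=0$ and $H\dhor\omega=0$ (resp.\ the analogous identities for the other bidegrees) needed to annihilate the pairings of $\omega$ with $\dbbar^\ast\tau_2$ and with $\dhor^\ast H\tau_1$. This is the raison d'\^etre of the third summand in \cref{defn:popovici-laplacian}: it enforces exactly the conditions that make the $\xi$-component vanish. Finite-dimensionality of $E_2^{p,q}$ then follows at once from the finite-dimensionality of $\cmH^{p,q}$, already asserted in \cref{popovici-hodge-decomposition}.
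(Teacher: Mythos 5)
Your proof is correct and takes essentially the same approach as the paper: both deduce the isomorphism by combining \cref{e2-from-forms} with \cref{popovici-hodge-decomposition}, reducing to the identity $\ker(H\circ\dhor)\cap\ker\dbbar = \cmH^{p,q}\oplus\bigl(\im\dbbar + \im\db\rv_{\ker\dbbar}\bigr)$ in each bidegree. The only difference is that you spell out the orthogonality computation showing the third summand of the decomposition vanishes on $\ker(H\circ\dhor)\cap\ker\dbbar$, which the paper leaves implicit.
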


\begin{proof}
 Suppose that $p+q=n$.
 \Cref{popovici-hodge-decomposition} implies that
 \begin{equation*}
  \mR^{p,q} \cap \ker \left(H\circ\dhor\right) \cap \ker\dbbar = \cmH^{p,q} \oplus \left( \im\dbbar + \im \db\rv_{\ker\dbbar} \right) .
 \end{equation*}
 We conclude from \cref{e2-from-forms} that $E_2^{p,q}\cong\cmH^{p,q}$.
 
 The proof of the remaining cases is similar.
\end{proof}

Second, we deduce Serre duality for the second page of the Garfield spectral sequence.

\begin{corollary}
 \label{popovici-serre-duality}
 Let $(M^{2n+1},T^{1,0},\theta)$ be a closed, embeddable, strictly pseudoconvex manifold and let $p\in\{0,\dotsc,n+1\}$ and $q\in\{0,\dotsc,n\}$.
 Then
 \[ E_2^{p,q} \cong E_2^{n+1-p,n-q} . \]
\end{corollary}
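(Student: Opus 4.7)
The plan is to imitate the proof of Serre duality for the Kohn--Rossi groups (\cref{serre}) by transporting the isomorphism $E_2^{p,q} \cong \cmH^{p,q}$ of \cref{popovici-hodge-isomorphism} through the conjugate Hodge star operator $\ohodge$. In other words, I will show that
\[
\ohodge \colon \cmH^{p,q} \longrightarrow \cmH^{n+1-p,n-q}
\]
is an isomorphism of vector spaces, and then invoke \cref{popovici-hodge-isomorphism} at both degrees $(p,q)$ and $(n+1-p,n-q)$ to conclude.

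First I would verify that $\ohodge$ maps $\mR^{p,q}$ into $\mR^{n+1-p,n-q}$. This is a direct computation from \cref{hodge-mapping-properties,conjugate-rumin-bundles}: in the case $p+q \leq n$, $\hodge$ lands in $\mR^{n+1-q,n-p}$ and conjugation then sends this (since the bidegree now sums to $\geq n+1$) to $\mR^{n+1-p,n-q}$; the case $p+q \geq n+1$ is analogous, with $\hodge$ landing in $\mR^{n-q,n+1-p}$ and conjugation sending it to $\mR^{n+1-p,n-q}$. Moreover, $\ohodge$ is an involution because $\overline{\hodge\omega} = \hodge\overline{\omega}$ by \cref{hodge-mapping-properties}(iv) and $\hodge^2 = 1$ by \cref{hodge-star-squared}.

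Next I would show that $\ohodge$ carries $\cmH^{p,q}$ to $\cmH^{n+1-p,n-q}$. By \cref{popovici-kernel}, these spaces are precisely the kernels of the Popovici Laplacians on the respective degrees. The key fact is \cref{popovici-hodge-star}, which asserts that $\cBoxb \ohodge = \ohodge \cBoxb$. Therefore if $\omega \in \cmH^{p,q}$, then
\[
\cBoxb (\ohodge\omega) = \ohodge (\cBoxb \omega) = 0,
\]
so $\ohodge \omega \in \cmH^{n+1-p,n-q}$. Combined with the involutivity of $\ohodge$ established above, this exhibits $\ohodge$ as the desired isomorphism of harmonic spaces.

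There is no real obstacle: every ingredient has already been set up. The only thing one must be careful about is the bookkeeping of bidegrees under the composition of $\hodge$ with conjugation, since the behavior of $\hodge$ splits according to whether $p+q$ is at most $n$ or at least $n+1$. The conclusion then follows by chaining \cref{popovici-hodge-isomorphism} at both bidegrees with the isomorphism $\ohodge \colon \cmH^{p,q} \xrightarrow{\cong} \cmH^{n+1-p,n-q}$.
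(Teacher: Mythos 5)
Your proposal is correct and follows essentially the same route as the paper: the paper's proof likewise observes that \cref{popovici-hodge-star} makes $\ohodge$ an isomorphism $\cmH^{p,q}\to\cmH^{n+1-p,n-q}$ and then concludes via \cref{popovici-hodge-isomorphism}. Your additional bidegree bookkeeping and the involutivity check are the details the paper leaves implicit, and they are carried out correctly.
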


\begin{proof}
 \Cref{popovici-hodge-star} implies that $\ohodge\colon\cmH^{p,q}\to\cmH^{n+1-p,n-q}$ is an isomorphism.
 The conclusion follows from \cref{popovici-hodge-isomorphism}.
\end{proof}

We emphasize again that \cref{popovici-hodge-isomorphism} implies that the spaces $E_2^{p,q}$ are finite-dimensional.
In particular, this yields a meaningful CR analogue of the Fr\"olicher inequality~\cite{Frolicher1955}.
More can be said for Sasakian manifolds.

\begin{definition}
 A \defn{Sasakian manifold} is a strictly pseudoconvex CR manifold $(M^{2n+1},T^{1,0})$ which admits a torsion-free contact form.
 
 A \defn{Sasaki manifold} $(M^{2n+1},T^{1,0},\theta)$ is a Sasakian manifold together with a choice of torsion-free contact form.
\end{definition}

We make the distinction between Sasakian and Sasaki manifolds to emphasize which results do not depend on the choice of torsion-free contact form.

The final result of this section is that Sasakian manifolds realize equality in the CR analogue of the Fr\"olicher inequality, and moreover, the complexified $k$-th de Rham cohomology group is isometric to the direct sum of the spaces $E_2^{p,q}$ with $p+q=k$ (cf.\ \cite{Huybrechts2005}*{Corollary~3.2.12}).
This provides another instance of Sasakian manifolds as odd-dimensional analogues of K\"ahler manifolds.

\begin{theorem}
 \label{cr-frolicher}
 Let $(M^{2n+1},T^{1,0})$ be a closed, embeddable, strictly pseudoconvex manifold.
 Then $\dim E_2^{p,q}<\infty$ for all $p\in\{0,\dotsc,n+1\}$ and all $q\in\{0,\dotsc,n\}$, and moreover,
 \begin{equation}
  \label{eqn:cr-frolicher}
  \dim H^k(M;\bC) \leq \sum_{p+q=k} \dim E_2^{p,q}
 \end{equation}
 for all $k\in\{0,\dotsc,2n+1\}$.
 Furthermore, if $(M^{2n+1},T^{1,0})$ is Sasakian, then
 \begin{equation}
  \label{eqn:sasakian-hodge-decomposition}
  H^k(M;\bC) \cong \bigoplus_{p+q=k} E_2^{p,q}
 \end{equation}
 and~\eqref{eqn:cr-frolicher} holds with equality for all $k\in\{0,\dotsc,2n+1\}$.
\end{theorem}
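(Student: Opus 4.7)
My plan is to combine the Hodge theorems established above with the identification $\cmH^{p,q}=\mH^{p,q}_\Delta$ on Sasaki manifolds. The finiteness $\dim E_2^{p,q}<\infty$ is immediate from Corollary~\ref{popovici-hodge-isomorphism} ($E_2^{p,q}\cong\cmH^{p,q}$) together with Theorem~\ref{invert-maximally-hypoelliptic} applied to Proposition~\ref{popovici-hypoelliptic}. The CR Fr\"olicher inequality~\eqref{eqn:cr-frolicher} follows from the standard spectral-sequence estimate: by Equation~\eqref{eqn:Er+1}, $E_{r+1}^{p,q}$ is a subquotient of $E_r^{p,q}$, and Lemma~\ref{spectral-sequence-Einfty} identifies $E_\infty^{p,q}:=E_{n+2}^{p,q}$ with the associated graded of a filtration of $H_R^k(M;\bC)\cong H^k(M;\bC)$ (Theorem~\ref{r-resolution} complexified), hence
\begin{equation*}
 \dim H^k(M;\bC)=\sum_{p+q=k}\dim E_\infty^{p,q}\leq\sum_{p+q=k}\dim E_2^{p,q}.
\end{equation*}

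For the Sasakian case, Proposition~\ref{kohn-laplacian-to-rumin-laplacian} gives $\Delta_b=L_b$ exactly on a torsion-free pseudohermitian manifold, and $L_b$ manifestly preserves the bigrading on $\CmR^k$. Complexifying Theorem~\ref{rumin-hodge-decomposition} therefore refines to $\ker(\Delta_b\colon\CmR^k\to\CmR^k)=\bigoplus_{p+q=k}\mH^{p,q}_\Delta$, where $\mH^{p,q}_\Delta:=\ker(\Delta_b|_{\mR^{p,q}})$, and Corollary~\ref{rumin-hodge-isomorphism} (complexified) gives $\dim H^k(M;\bC)=\sum_{p+q=k}\dim\mH^{p,q}_\Delta$. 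The plan is then to establish $\cmH^{p,q}=\mH^{p,q}_\Delta$ on Sasaki manifolds; combining this equality with Corollary~\ref{popovici-hodge-isomorphism} immediately yields both~\eqref{eqn:sasakian-hodge-decomposition} and equality in~\eqref{eqn:cr-frolicher}.

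The inclusion $\mH^{p,q}_\Delta\subset\cmH^{p,q}$ is immediate. For the reverse, given $\omega\in\cmH^{p,q}$ one has $\dbbar\omega=\dbbar^\ast\omega=0$ from $\Box_b\omega=0$. When $p+q\leq n-1$, the torsion-free form of Lemma~\ref{dbdbbarstar} yields $\dbbar^\ast\db\omega=0$ and $\dbbar\db^\ast\omega=0$, so $\db\omega\in\mH^{p+1,q}$ and $\db^\ast\omega\in\mH^{p-1,q}$; the conditions $H(\db\omega)=H(\db^\ast\omega)=0$ in the definition of $\cmH^{p,q}$ then force $\db\omega=\db^\ast\omega=0$, whence $L_b\omega=0$. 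The range $p+q\geq n+2$ follows by applying the conjugate Hodge star.

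The main obstacle is the middle degree $p+q=n$ (and $p+q=n+1$ by Hodge duality), where $\Box_b$ is fourth-order and Lemma~\ref{dbdbbarstar} is unavailable. Here I would use the explicit Sasaki formulas of Proposition~\ref{bigraded-operators}(ii): the coefficient $\Omega^{(1)}$ of $\db\omega$ reduces on Sasaki to $(-1)^{p-1}(p+1)i\nabla_{[\alpha}\nabla^{\bar\nu}\omega_{\Alpha\bar\nu\bar\Beta^\prime]}$, which vanishes since $\dbbar^\ast\omega=0$ forces $\nabla^{\bar\nu}\omega_{\Alpha\bar\nu\bar\Beta^\prime}=0$ by Lemma~\ref{divergence-formula}, giving $\db\omega=0$; similarly the coefficient $\Omega^{(3)}$ of $\dbbar\omega$ reduces on Sasaki to a skew-symmetrized derivative of $\nabla^\mu\omega_{\mu\Alpha^\prime\bar\Beta}$, which is exactly the coefficient of $\dbbar\db^\ast\omega$ up to sign, so $\dbbar\omega=0$ yields $\dbbar\db^\ast\omega=0$; combined with $\dbbar^\ast\db^\ast\omega=-\db^\ast\dbbar^\ast\omega=0$ this gives $\db^\ast\omega\in\mH^{p-1,q}$, so $H(\db^\ast\omega)=0$ forces $\db^\ast\omega=0$. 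Finally, the torsion-free form of Lemma~\ref{dhordbbarstar} applied with $\db\omega=\dbbar^\ast\omega=0$ yields $\dbbar^\ast\dhor\omega=-\dhor^\ast\db\omega=0$, so $\dhor\omega\in\mH^{p+1,q}$ and $H(\dhor\omega)=0$ forces $\dhor\omega=0$, completing the verification that $L_b\omega=0$.
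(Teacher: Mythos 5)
Your proposal is correct and follows essentially the same route as the paper: finiteness and the inequality come from \cref{popovici-hodge-isomorphism} and the standard subquotient/limiting-page argument, and the Sasakian equality comes from showing $\ker\cBoxb\cap\mR^{p,q}=\{\omega : d\omega=d^\ast\omega=0\}$ and invoking $\Delta_b=L_b$ (which preserves the bigrading) together with \cref{rumin-hodge-isomorphism}. Your middle-degree verification via the explicit coefficients $\Omega^{(1)}$ and $\Omega^{(3)}$ of \cref{bigraded-operators} just unpacks the same identities the paper cites as Equations~\eqref{eqn:n-hodge-db} and~\eqref{eqn:n-hodge-dbbar} together with \cref{dhordbbarstar}, so there is no substantive difference.
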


\begin{proof}
 \Cref{popovici-hodge-isomorphism} implies that $\dim E_2^{p,q}<\infty$ for all $p\in\{0,\dotsc,n+1\}$ and all $q\in\{0,\dotsc,n\}$.
 The definition of the Garfield spectral sequence implies that $E_\infty^{p,q}\subset E_2^{p,q}$.
 We conclude from \cref{spectral-sequence-Einfty} that
 \[ H^k(M;\bC) \cong \bigoplus_{p+q}^k E_\infty^{p,q} . \]
 Inequality~\eqref{eqn:cr-frolicher} readily follows.
 
 Suppose now that $\theta$ is a torsion-free contact form on $(M^{2n+1},T^{1,0})$.
 Clearly
 \begin{equation*}
  \ker \bigl( \cBoxb \colon \mR^{p,q} \to \mR^{p,q} \bigr) \supseteq \left\{ \omega \in \mR^{p,q} \suchthatcolon d\omega = d^\ast\omega = 0 \right\} .
 \end{equation*}
 Now let $\omega \in \ker \bigl( \cBoxb \colon \mR^{p,q} \to \mR^{p,q} \bigr)$.
 \Cref{justification-of-bigraded-complex,nablasast-complex} imply that $\dbbar d^\prime\omega = 0$ and $\dbbar^\ast (d^\prime)^\ast \omega = 0$, respectively, where $d^\prime$ is the operator~\eqref{eqn:defn-dprime}.
 
 If $p+q \not\in \{ n, n+1 \}$, then \cref{dbdbbarstar} implies that $\dbbar^\ast d^\prime\omega = 0$ and $\dbbar (d^\prime)^\ast\omega = 0$.
 We conclude from \cref{popovici-kernel} that $d\omega = d^\ast\omega = 0$.
 
 If instead $p+q = n$, then Equations~\eqref{eqn:n-hodge-dbbar} and~\eqref{eqn:n-hodge-db} imply that $\dbbar (d^\prime)^\ast \omega = 0$ and $\db\omega = 0$, respectively.
 We then deduce from \cref{dhordbbarstar} that $\dbbar^\ast d^\prime = 0$.
 We conclude from \cref{popovici-kernel} that $d\omega = d^\ast\omega = 0$.
 
 If instead $p+q = n+1$, applying the previous argument to $\ohodge\omega$ using \cref{popovici-hodge-star} again yields $d\omega = d^\ast\omega = 0$.
 
 We conclude from the previous four paragraphs that
 \begin{equation}
  \label{eqn:sasakian-kernel-popovici}
  \ker \left( \cBoxb \colon \mR^{p,q} \to \mR^{p,q} \right) = \left\{ \omega \in \mR^{p,q} \suchthatcolon d\omega = d^\ast\omega = 0 \right\} .
 \end{equation}
 Hence, by \cref{popovici-hodge-isomorphism}, the identity map induces an isomorphism
 \begin{equation}
  \label{eqn:e2-canonical}
  \left\{ \omega \in \mR^{p,q} \suchthatcolon d\omega = d^\ast\omega = 0 \right\} \cong E_2^{p,q} .
 \end{equation}
 
 Finally, \cref{kohn-laplacian-to-rumin-laplacian} implies that $\Delta_b(\mR^{p,q})\subset\mR^{p,q}$.
 In particular, if $\omega \in \mH^k$, then $\pi^{p,q}\omega \in \ker\Delta_b \cap \mR^{p,q}$ for all integers $p,q$ such that $p+q=k$.
 Combining these two observations with \cref{rumin-hodge-isomorphism} yields
 \begin{equation*}
  H^k(M;\bC) \cong \mH^k \cong \bigoplus_{p+q=k} E_2^{p,q} .
 \end{equation*}
 It immediately follows that~\eqref{eqn:cr-frolicher} holds with equality.
\end{proof}

In particular, we recover a topological obstruction~\cites{BlairGoldberg1967,Fujitani1966} to the existence of a Sasakian structure on a given manifold in terms of its Betti numbers.

\begin{corollary}
 \label{sasakian}
 Let $(M^{2n+1},T^{1,0})$ be a closed Sasakian manifold and let $k\in\{0,\dotsc,2n+1\}$.
 \begin{enumerate}
  \item If $k\leq n$ is odd, then $\dim H^k(M;\bC)$ is even.
  \item If $k\geq n+1$ is even, then $\dim H^k(M;\bC)$ is even.
 \end{enumerate}
\end{corollary}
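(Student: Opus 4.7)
The plan is to deduce both statements from the Hodge-type decomposition $H^k(M;\bC) \cong \bigoplus_{p+q=k} E_2^{p,q}$ available on closed Sasakian manifolds by Theorem~\ref{cr-frolicher}, together with a conjugation symmetry among the spaces $E_2^{p,q}$.

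First, I would reduce (ii) to (i). Complexifying Corollary~\ref{rumin-serre} gives an isomorphism $H^k(M;\bC) \cong H^{2n+1-k}(M;\bC)$ for every $k$. If $k \geq n+1$ is even then $2n+1-k \leq n$ is odd, so (ii) follows at once from (i) applied in degree $2n+1-k$.

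To prove (i), fix a torsion-free contact form $\theta$ on $M$ and apply Theorem~\ref{cr-frolicher} to obtain
\[
 \dim H^k(M;\bC) = \sum_{p+q=k} \dim E_2^{p,q}.
\]
The pivotal claim is that on a Sasaki manifold, for all $p,q$ with $p+q \leq n$, complex conjugation induces an antilinear isomorphism $E_2^{p,q} \cong E_2^{q,p}$; in particular $\dim E_2^{p,q} = \dim E_2^{q,p}$. To establish this, I would invoke Corollary~\ref{popovici-hodge-isomorphism} together with Equation~\eqref{eqn:sasakian-kernel-popovici}, which on Sasaki manifolds identifies
\[
 E_2^{p,q} \cong \ker\bigl(\cBoxb\colon \mR^{p,q} \to \mR^{p,q}\bigr) = \{\omega \in \mR^{p,q} : d\omega = 0 = d^\ast\omega\}.
\]
Conjugation sends $\mR^{p,q}$ to $\mR^{q,p}$ by Lemma~\ref{conjugate-rumin-bundles}(i); since $d$ is a real operator and $d^\ast$ equals $(-1)^k \hodge d \hodge$ with $\hodge$ commuting with conjugation by Lemma~\ref{hodge-mapping-properties}(iv), the joint condition $d\omega = 0 = d^\ast\omega$ is preserved under $\omega \mapsto \overline{\omega}$. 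This yields the desired antilinear isomorphism $E_2^{p,q} \cong E_2^{q,p}$.

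Since $k \leq n$ is odd, the involution $(p,q) \mapsto (q,p)$ on $\{(p,q) \in \bN_0^2 : p+q=k\}$ has no fixed points, and the equality of dimensions allows the terms to be grouped in pairs:
\[
 \dim H^k(M;\bC) = 2\sum_{\substack{p+q=k \\ p<q}} \dim E_2^{p,q},
\]
which is even. The only step requiring any care is verifying that conjugation descends cleanly to $E_2^{p,q}$; the characterization of $\ker \cBoxb$ on Sasaki manifolds via the real operators $d$ and $d^\ast$ makes this the main, but mild, obstacle.
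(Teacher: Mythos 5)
Your proposal is correct and follows essentially the same route as the paper: the decomposition $H^k(M;\bC)\cong\bigoplus_{p+q=k}E_2^{p,q}$ from Theorem~\ref{cr-frolicher} combined with the conjugation symmetry of the spaces $E_2^{p,q}$ coming from Equation~\eqref{eqn:e2-canonical}; the paper dispatches case (ii) directly via the conjugation isomorphism $E_2^{p,q}\cong E_2^{q+1,p-1}$ for $p+q\geq n+1$ (Lemma~\ref{conjugate-rumin-bundles}) rather than reducing to case (i) by Poincar\'e duality, but that is only a cosmetic difference. The one point you should add is that Theorem~\ref{cr-frolicher} and Corollary~\ref{popovici-hodge-isomorphism} assume embeddability, so you need to note, as the paper does, that closed Sasakian manifolds are automatically embeddable.
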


\begin{proof}
 Note that $(M^{2n+1},T^{1,0})$ is embeddable~\citelist{ \cite{Boutet1975}*{Th\'eor\`eme} \cite{MarinescuYeganefar2007}*{Theorem~1.4}}.
 
 By Equation~\eqref{eqn:e2-canonical} and conjugation, if $p+q\leq n$, then $E_2^{p,q}\cong E_2^{q,p}$; and if $p+q\geq n+1$, then $E_2^{p,q}\cong E_2^{q+1,p-1}$.
 The conclusion readily follows from Equation~\eqref{eqn:sasakian-hodge-decomposition}.
\end{proof}
\section{Additional consequences for Sasakian manifolds}
\label{sec:sasakian}

\Cref{cr-frolicher} shows that the bigraded Rumin complex is well-suited to studying topological properties of closed Sasakian manifolds.
In this section we prove a selection of such results which further support this assertion.

It is well-known~\cite{Ballmann2006}*{Corollary~5.65} that if $X$ is a closed K\"ahler manifold with positive Ricci curvature, then $H^{p,0}(X)=0$ for all $p\geq1$.
As a further indication of the analogy between the second page of the Garfield spectral sequence and the Dolbeault cohomology groups, we prove a CR analogue of this fact.
This result is similar in spirit to a vanishing result of Nozawa~\cite{Nozawa2014}*{Theorem~1.2} for the basic Dolbeault cohomology groups of a Sasaki manifold.

\begin{theorem}
 \label{sasakian-Hp0-vanishing}
 Let $(M^{2n+1},T^{1,0},\theta)$ be a closed Sasaki manifold with nonnegative pseudohermitian Ricci curvature $R_{\alpha\bar\beta}$.
 Then
 \begin{align*}
  \dim E_2^{p,0} & \leq \binom{n}{p} , && \text{if $p \leq n$} , \\
  \dim E_2^{n+1,0} & \leq 1 .
 \end{align*}
 Moreover, if there is a point at which $R_{\alpha\bar\beta} > 0$, then $E_2^{p,0} = \{ 0 \}$ for all $p \in \{ 0, \dotsc , n+1 \}$.
\end{theorem}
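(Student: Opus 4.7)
The plan is to reduce the theorem, via the Hodge isomorphism $E_2^{p,q}\cong\cmH^{p,q}$ of \cref{popovici-hodge-isomorphism} together with the Sasakian identification $\cmH^{p,q}=\{\omega\in\mR^{p,q}\suchthatcolon d\omega=0=d^\ast\omega\}$ from Equation~\eqref{eqn:sasakian-kernel-popovici}, to two inputs: \cref{H0q-vanishing} for $(0,q)$-forms with $q\leq n-1$, and a direct Bochner argument on primitive $(0,n)$-forms.

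For $p\in\{0,\dotsc,n-1\}$, the Sasaki conjugation isomorphism $E_2^{p,0}\cong E_2^{0,p}$ (valid because $p+0\leq n$, as in the proof of \cref{sasakian}) reduces the bound to $\cmH^{0,p}$. Since every $d$- and $d^\ast$-closed $(0,p)$-form is $\dbbar$- and $\dbbar^\ast$-closed, we have $\cmH^{0,p}\subset\mH^{0,p}$, and \cref{H0q-vanishing} gives both the bound $\binom{n}{p}$ and the vanishing whenever $R_{\alpha\bar\beta}>0$ somewhere. For $p=n$, the same Sasaki conjugation yields $E_2^{n,0}\cong E_2^{0,n}$; for $p=n+1$, Serre duality \cref{popovici-serre-duality} yields $E_2^{n+1,0}\cong E_2^{0,n}$. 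Both remaining cases thus reduce to bounding $\cmH^{0,n}$.

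The heart of the argument is to show that every $\omega\in\cmH^{0,n}$ is Tanaka--Webster parallel, and vanishes if $R_{\alpha\bar\beta}>0$ at some point. Using the middle-row Hodge formulas of \cref{sR-to-P}, I would first translate the closedness conditions: on $\mR^{0,n}$, $\dbbar\omega=0$ and $\db^\ast\omega=0$ are vacuous (the relevant target spaces vanish), and $\nablas^\ast\omega=0$ automatically (there are no $\alpha$-indices to contract); the hypothesis $\dbbar^\ast\omega=0$ becomes $\onablas^\ast\omega=0$, after which the Sasakian identities ($A\hash=A\ohash=0$) force $\db\omega=0$ automatically and reduce $\dhor\omega=0$ to $\nabla_0\omega=0$. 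Since the coefficients $pq$ of $R\hash\ohash$ and $p$ of $\Ric\hash$ both vanish in bidegree $(0,n)$, the Weitzenb\"ock identity \cref{nablas-weitzenbock} collapses to $\Boxs\omega=\nablab^\ast\nablab\omega$; combined with $\Boxs\omega=\onablas\onablas^\ast\omega=0$, this yields $\nablab\omega=0$. The commutator identity~\eqref{eqn:nabla0-fancy-commutator} then specialises to $\nablabbar^\ast\nablabbar\omega=\Ric\ohash\omega$ on $\mR^{0,n}$, and pairing with $\omega$ and integrating against $\theta\wedge d\theta^n$ gives, using $R_{\alpha\bar\beta}\geq 0$, both $\nablabbar\omega=0$ and the pointwise identity $R^{\bar\nu}{}_{\bar\beta}\omega_{\bar\nu\bar\Beta^\prime}=0$. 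Hence $\omega$ is Tanaka--Webster parallel, so $\dim\cmH^{0,n}\leq\rank\Lambda^{0,n}=1$; and the pointwise Ricci identity forces $\omega$ to vanish at any point where $R_{\alpha\bar\beta}$ is positive, hence (by parallelism on the connected manifold $M$) everywhere.

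The main obstacle is the middle-row case: since the Bochner formulas of \cref{bochner-order2,bochner-order2-better-sign} apply only for $p+q\leq n-1$, no direct Kohn-Laplacian Weitzenb\"ock identity is available on $\mR^{0,n}$, and the argument must instead be assembled from \cref{nablas-weitzenbock}, Equation~\eqref{eqn:nabla0-fancy-commutator}, and the Sasaki Hodge formulas of \cref{sR-to-P}, with careful attention to the sign of the $\Ric\ohash$-term on $(0,n)$-forms to guarantee that the final integration produces $\nablabbar\omega=0$ rather than the wrong-sign inequality.
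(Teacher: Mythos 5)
Your proof is correct, and it reaches the same Bochner-type endgame as the paper by a noticeably different organization. The paper stays in the $(p,0)$ row: it reduces $p=n+1$ to $p=n$ via the Hodge star, handles $p\le n-1$ with the Weitzenb\"ock identity for $L_b$ from \cref{kohn-laplacian-to-rumin-laplacian,rumin-order2} applied directly to $(p,0)$-forms, and treats $p=n$ by a pointwise computation of $\nabla^\mu\nabla_\mu\lv\omega_\Alpha\rv^2$ using \cref{commutators}. You instead conjugate everything into the zeroth column --- $E_2^{p,0}\cong E_2^{0,p}$ for $p\le n$ and $E_2^{n+1,0}\cong E_2^{0,n}$ by \cref{popovici-serre-duality} --- which lets you quote \cref{H0q-vanishing} verbatim for $1\le p\le n-1$ via the inclusion $\cmH^{0,p}\subset\mH^{0,p}$, and replaces the paper's pointwise $p=n$ computation by an operator-level argument assembled from \cref{sR-to-P}, \cref{nablas-weitzenbock}, and Equation~\eqref{eqn:nabla0-fancy-commutator}. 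I checked the sign of the $\Ric\ohash$-term against the integration performed in the proof of \cref{H0q-vanishing}: your identity $\nablabbar^\ast\nablabbar\omega=\Ric\ohash\omega$ does integrate to $\lV\nablabbar\omega\rV^2\le 0$, so the argument closes. The trade-off is that the paper's row-based proof is self-contained in a single Weitzenb\"ock identity, while yours reuses an earlier theorem at the cost of an extra layer of dualities; both establish that the harmonic representatives are parallel and annihilated by the Ricci action, which is all that is needed. One small caveat: \cref{H0q-vanishing} is stated only for $q\in\{1,\dotsc,n-1\}$, so your citation does not literally cover $p=0$; there the bound is trivial (constants), but the ``moreover'' clause cannot hold at $p=0$ since $E_2^{0,0}$ contains the constants --- a defect of the theorem statement that the paper's own proof also silently inherits, as its curvature terms carry a factor of $p$.
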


\begin{proof}
 Combining \cref{popovici-hodge-star,popovici-hodge-isomorphism} with Equation~\eqref{eqn:sasakian-kernel-popovici} yields $E_2^{n,0} \cong E_2^{n+1,0}$.
 Thus it suffices to prove the cases $p \leq n$.
 
 Let $\omega \in \cmH^{p,0}$.
 Since $\theta$ is torsion-free, Equation~\eqref{eqn:sasakian-kernel-popovici} implies that $d\omega = d^\ast\omega = 0$.

 Suppose first that $p \leq n-1$.
 \Cref{kohn-laplacian-to-rumin-laplacian,rumin-order2} imply that
 \begin{equation*}
  0 = \frac{n-p}{n}\nablab^\ast\nablab\omega + \frac{n+p}{n}\nablabbar^\ast\nablabbar\omega - \frac{n-p}{n}\Ric \hash \omega .
 \end{equation*}
 Suppose last that $p=n$.
 Write $\omega = \frac{1}{n!}\omega_\Alpha\,\theta^\Alpha$.
 \Cref{sR-to-P} and Equation~\eqref{eqn:critical-lefschetz-consequence} imply that $\nabla_0\omega_\Alpha = 0$ and $\nabla_{\bar\beta}\omega_\Alpha = 0$, respectively.
 Computing using \cref{commutators} then yields
 \begin{equation*}
  \nabla^{\mu}\nabla_{\mu}\lv\omega_\Alpha\rv^2 = \lv\nabla_\mu\omega_\Alpha\rv^2 + nR_\gamma{}^\mu \omega_{\mu\Alpha^\prime}\oomega^{\gamma\Alpha^\prime} .
 \end{equation*}
 In both cases, integrating over $M$ implies that $\omega$ is parallel and $\Ric \hash \omega = 0$.
 The conclusion readily follows.
\end{proof}

We now turn to special properties of the de Rham cohomology algebra of a closed Sasakian manifold.
These results rely on two lemmas.

First, we show that on closed Sasaki manifolds, if $\omega \in \im d \cap \mR^k$, $k \leq n$, then the $L^2$-orthogonal projection of $\theta \wedge \omega \wedge d\theta^{n-k}$ onto $\ker\Delta_b$ is zero.

\begin{lemma}
 \label{dbc-vanishing}
 Let $(M^{2n+1},T^{1,0},\theta)$ be a closed Sasaki manifold.
 Let $\omega \in \mR^{k-1}$, $k \leq n$.
 Then $\theta \wedge d\omega \wedge d\theta^{n-k}$ is $L^2$-orthogonal to $\ker\Delta_b$.
\end{lemma}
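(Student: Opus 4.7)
My plan is to reduce the claim to an identity involving the operator $d^c := i(\dbbar - \db)$ and then exploit the Sasaki hypothesis to show that $(d^c)^*$ annihilates every Rumin-harmonic form. The first step is a bookkeeping computation: for each bigraded piece $(d\omega)^{p,q} \in \mR^{p,q}$ (with $p+q=k$), \cref{hodge-star-sE} gives
\[ \theta \wedge (d\omega)^{p,q} \wedge d\theta^{n-k} = \frac{(n-k)!}{(-1)^{k(k+1)/2} i^{q-p}} \hodge (d\omega)^{p,q}. \]
Summing over $p+q = k$ and applying $\hodge^2 = \Id$ (\cref{hodge-star-squared}) yields
\[ \hodge\Omega = c_k\, W d\omega, \qquad c_k := (-1)^{k(k+1)/2}(n-k)!, \]
where $W$ denotes the CR invariant bigraded twist $W|_{\mR^{p,q}} := i^{p-q}\cdot\Id$. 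A direct check on bigraded pieces, using that $k-1 \leq n-1$ (so $d|_{\mR^{k-1}} = \db + \dbbar$ involves only first-order operators and no $\dhor$), shows the intertwining relation $Wd = -d^c W$; hence
\[ \hodge\Omega = -c_k\, d^c(W\omega) \qquad \text{with } W\omega \in \mR^{k-1}. \]

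For the second step, I invoke the Sasaki assumption through Equation~\eqref{eqn:sasakian-kernel-popovici} in the proof of \cref{cr-frolicher}: on a closed Sasaki manifold $\ker\cBoxb|_{\mR^{p,q}} = \{ \eta \in \mR^{p,q} : d\eta = 0 = d^\ast\eta\}$, and \cref{cr-frolicher} further decomposes $\mH^k = \bigoplus_{p+q=k} \cmH^{p,q}$ since $\Delta_b = L_b$ preserves the bigrading on a torsion-free pseudohermitian manifold. Because $\db, \dbbar, \dhor$ and their adjoints send $\mR^{p,q}$ into pairwise distinct bigraded subspaces, the vanishings $d\eta^{p,q} = 0 = d^\ast \eta^{p,q}$ force each bigraded piece of a Rumin-harmonic form to be separately $\db$-, $\dbbar$-, $\db^\ast$-, $\dbbar^\ast$-closed (with $\dhor$-related pieces vanishing in the middle). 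Consequently $(d^c)^\ast = i(\db^\ast - \dbbar^\ast)$ annihilates every $\eta \in \mH^k$.

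Combining the two steps: for any $\eta' \in \mH^{2n+1-k}$, set $\eta := \hodge\eta'$, which lies in $\mH^k$ by \cref{rumin-hodge-star}. Then
\[ \llp \hodge\Omega, \eta \rrp = -c_k \llp d^c(W\omega), \eta \rrp = -c_k \llp W\omega, (d^c)^\ast\eta \rrp = 0. \]
The identity $\hodge^2 = \Id$ together with \cref{defn:hodge} yields $\llp \Omega, \eta' \rrp = \llp \hodge\Omega, \hodge\eta' \rrp$ (up to conjugation, which does not affect vanishing), so $\llp \Omega, \eta' \rrp = 0$, as required. The main obstacle I anticipate is the bigraded bookkeeping of the first step --- in particular, carefully verifying $Wd = -d^c W$ with correct signs and checking that the formula $\hodge\Omega = c_k Wd\omega$ remains valid in the degenerate case $k = n$ where $d\theta^{n-k} = 1$. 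A secondary subtlety is the precise formulation of the Hodge star isometry used in the last step, but this reduces to the identity $\llp \hodge\alpha, \hodge\beta \rrp = \overline{\llp \alpha, \beta \rrp}$, which is immediate from \cref{defn:hodge,hodge-star-squared}.
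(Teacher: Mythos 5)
Your proof is correct and follows essentially the same route as the paper's: both arguments hinge on \cref{hodge-star-sE} to identify $\theta \wedge d\omega \wedge d\theta^{n-k}$, bigraded piece by bigraded piece, with a multiple of $\hodge(i\db\omega - i\dbbar\omega)$, and on the fact that on a torsion-free manifold the exact identity $\Delta_b = L_b$ forces every Rumin-harmonic form to lie in $\ker\db^\ast \cap \ker\dbbar^\ast$. The paper simply states the first step as membership in $\im\db^\ast + \im\dbbar^\ast$ and cites \cref{kohn-laplacian-to-rumin-laplacian} directly, whereas you move the Hodge star to the other side of the pairing and package the sign bookkeeping into the twist $W$ and the operator $d^c$; these are the same argument with the adjoint taken on opposite sides.
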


\begin{proof}
 Suppose first that $\omega \in \mR^{p,q}$, $p+q \leq n-1$.
 \Cref{hodge-star-sE} implies that
 \begin{equation*}
  \frac{1}{(n-p-q-1)!}\theta \wedge d\omega \wedge d\theta^{n-p-q-1} = (-1)^{\frac{(p+q+1)(p+q+2)}{2}}i^{p-q} \hodge (i\db\omega - i\dbbar\omega) .
 \end{equation*}
 In particular, $\theta \wedge d\omega \wedge d\theta^{n-p-q-1} \in \im \db^\ast + \im \dbbar^\ast$.
 
 Now let $\omega \in \mR^{k-1}$, $k \leq n$.
 It follows from linearity and the previous paragraph that $\tau := \theta \wedge d\omega \wedge d\theta^{n-k} \in \im \db^\ast + \im \dbbar^\ast$.
 \Cref{kohn-laplacian-to-rumin-laplacian} then implies that $\tau$ is $L^2$-orthogonal to $\ker\Delta_b$.
\end{proof}

Second, we show that if $\omega$ is a $d$-closed element of $\Omega^{p,q}M$, $p+q \geq n+1$, then $\pi\omega$ is $d$-exact.

\begin{lemma}
 \label{basic-vanishing}
 Let $(M^{2n+1},T^{1,0},\theta)$ be a closed Sasaki manifold.
 Let $\omega \in \Omega^{p,q}M$, $p+q \geq n+1$, be such that $d\omega = 0$.
 Then $\pi\omega \in \im \bigl( d \colon \CmR^{p+q-1} \to \CmR^{p+q} \bigr)$.
\end{lemma}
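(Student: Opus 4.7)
My plan is to reduce the claim to an orthogonality statement in the Rumin Hodge decomposition, then compute using the explicit formula for $\pi\omega$. First I would observe that since $T\contr\omega=0$ and $d\omega=0$, Cartan's formula gives $L_T\omega = 0$. Because $p+q>n$, \cref{lefschetz-decomposition} yields a unique $\xi\in\Omega^{n-q,n-p}M$ with $\omega = \xi\wedge d\theta^{p+q-n}$, and \cref{inverse-lefschetz} identifies $\Gamma\omega = \theta\wedge\xi\wedge d\theta^{p+q-n-1}$. Since $\theta$ is torsion-free, $d\omega=0$ further forces $L_T\xi = 0$ (by middle-degree injectivity of $L^{p+q-n}$) and the primitivity of the horizontal derivatives $\nablas\xi\in\Omega^{n-q+1,n-p}M$ and $\onablas\xi\in\Omega^{n-q,n-p+1}M$. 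A direct computation of $\pi\omega = \omega - d\Gamma\omega$ then yields
\[
 \pi\omega = \theta\wedge\nablas\xi\wedge d\theta^{p+q-n-1} + \theta\wedge\onablas\xi\wedge d\theta^{p+q-n-1},
\]
and the two summands are the $\mR^{p+1,q-1}$- and $\mR^{p,q}$-components of $\pi\omega$, respectively.

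Next, by \cref{rumin-hodge-decomposition} the claim is equivalent to $\pi\omega \perp \mH^{p+q}$ in $L^2$. By \cref{cr-frolicher}, on a Sasaki manifold $\mH^{p+q} = \bigoplus_{p'+q'=p+q}\mathcal{H}^{p',q'}$ with $\mathcal{H}^{p',q'} := \ker d\cap\ker d^\ast\cap\mR^{p',q'}$; since distinct bidegrees are $L^2$-orthogonal, only the pairings with $\mathcal{H}^{p+1,q-1}$ and $\mathcal{H}^{p,q}$ need be checked. For the first, given $\eta\in\mathcal{H}^{p+1,q-1}$, I would use \cref{sF-to-sE,hodge-star-sF} to write $\eta = \theta\wedge\tilde\eta\wedge d\theta^{p+q-n-1}$ with $\hodge\eta$ a nonzero scalar multiple of $\tilde\eta\in\mR^{n-q+1,n-p}$; hence $d^\ast\eta=0$ translates to $d\tilde\eta=0$ and $d\eta=0$ translates to $d^\ast\tilde\eta=0$, so in particular $\db^\ast\tilde\eta=0$, which by \cref{sR-to-P} is $\nablas^\ast\tilde\eta_H = 0$ for the horizontal primitive representative $\tilde\eta_H$ furnished by \cref{projection-to-E-expression}. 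The standard symplectic identity $\lp L^r\alpha,L^r\beta\rp = c_r\lp\alpha,\beta\rp$ on primitive forms then reduces the $L^2$-pairing on $\mR^{p+1,q-1}$ to one on horizontal primitive $(n-q+1,n-p)$-forms, and integrating by parts gives
\[
\llp\pi^{p+1,q-1}\pi\omega,\eta\rrp = C\int_M\lp\nablas\xi,\tilde\eta_H\rp\,\theta\wedge d\theta^n = C\int_M\lp\xi,\nablas^\ast\tilde\eta_H\rp\,\theta\wedge d\theta^n = 0.
\]
The case of $\pi^{p,q}\pi\omega$ is handled symmetrically using $\onablas$ and $\dbbar^\ast$ in place of $\nablas$ and $\db^\ast$.

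The main obstacle will be the middle-degree case $p+q=n+1$, where $\tilde\eta\in\mR^{n-q+1,n-p}$ has total degree exactly $n$: the harmonic condition $d^\ast\tilde\eta=0$ then includes a $\dhor^\ast$-contribution (\cref{divergence-formula}), so I would need to check via \cref{sR-to-P} and the type-splitting of $d^\ast$ that this extra term does not spoil the identity $\nablas^\ast\tilde\eta_H = 0$ required for the integration-by-parts step. As an alternative, one could bypass the Hodge-theoretic approach entirely by reducing to $[\omega]_{\mathrm{dR}}=0$ via \cref{r-resolution} and invoking Hard Lefschetz for basic cohomology combined with the Gysin sequence of the Reeb foliation (cf.\ \cite{BoyerGalicki2008}), but staying within the Rumin framework seems preferable.
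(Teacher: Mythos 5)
Your proposal is correct, and its first half coincides with the paper's proof: both use the Lefschetz decomposition to write $\omega = \xi \wedge d\theta^{p+q-n}$, identify $\Gamma\omega = \theta \wedge \xi \wedge d\theta^{p+q-n-1}$, and arrive at $\pi\omega = \theta \wedge d\xi \wedge d\theta^{p+q-n-1}$, which is your formula once $\theta \wedge d\xi$ is split as $\theta \wedge \nablas\xi + \theta \wedge \onablas\xi$. The two arguments diverge only at the orthogonality step. The paper notes $d\xi = d\pi\xi$ with $\pi\xi \in \CmR^{2n-p-q}$ and invokes \cref{dbc-vanishing}, whose proof identifies $\theta \wedge d(\cdot) \wedge d\theta^{n-k}$ with a multiple of $\hodge(i\db - i\dbbar)(\cdot) \in \im\db^\ast + \im\dbbar^\ast$ and concludes orthogonality to $\ker\Delta_b$ from \cref{kohn-laplacian-to-rumin-laplacian}. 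You instead split $\mH^{p+q}$ into its bigraded pieces (legitimate on Sasaki manifolds by \cref{kohn-laplacian-to-rumin-laplacian}, as recorded in the proof of \cref{cr-frolicher}) and integrate by parts against each piece using $\nablas^\ast\tilde\eta_H = \db^\ast\tilde\eta = 0$; this is a hands-on re-derivation of \cref{dbc-vanishing} in the case at hand, so the content is equivalent, though citing that lemma would shorten your write-up considerably. Two small points: you should record that $d\pi\omega = \pi d\omega = 0$ before asserting that membership in $\im d$ is equivalent to orthogonality to $\mH^{p+q}$; and your worry about the middle degree $p+q = n+1$ is unfounded, since $\tilde\eta$ then lies in $\mR^n$ and $d^\ast$ on $\mR^n$ has only the two components $\db^\ast + \dbbar^\ast$ (the $\dhor^\ast$-component occurs only on $\mR^{n+1}$), so $d\eta = 0$ still yields $\db^\ast\tilde\eta = 0$ exactly as in the other cases.
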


\begin{proof}
 Since $d\omega = 0$, we conclude from \cref{projection-and-d-commute} that $d\pi\omega = 0$.
 Combining the assumption $p+q \geq n+1$ with the Lefschetz decomposition~\cite{Huybrechts2005}*{Proposition~1.2.30} yields a $\xi \in \Omega^{n-q,n-p}M$ such that
 \begin{equation}
  \label{eqn:basic-vanishing-xi}
  \omega = \xi \wedge d\theta^{p+q-n} .
 \end{equation}
 The first consequence of Equation~\eqref{eqn:basic-vanishing-xi} is that
 \begin{equation*}
  0 = d\omega = d\xi \wedge d\theta^{p+q-n} .
 \end{equation*}
 Therefore $d\xi \in \CmR^{2n-p-q+1}$.
 \Cref{projection-and-d-commute} then implies that $d\xi = d\pi \xi$.
 The second consequence of Equation~\eqref{eqn:basic-vanishing-xi} is that $\Gamma\omega = \theta \wedge \xi \wedge d\theta^{p+q-n-1}$, where $\Gamma$ is as in \cref{inverse-lefschetz}.
 Therefore
 \begin{equation*}
  \pi\omega = \theta \wedge d\xi \wedge d\theta^{p+q-n-1} .
 \end{equation*}
 \Cref{rumin-hodge-decomposition,dbc-vanishing} now imply that $\pi\omega \in \im d$.
\end{proof}

Our first application of \cref{dbc-vanishing,basic-vanishing} is a new proof that if $(M^{2n+1},T^{1,0})$ is a closed Sasakian manifold and if $J \in \bN^\ell$ is a multi-index such that $\lv J \rv \geq n+1$ and $j_k \leq n$ for all $k \in \{ 1 , \dotsc, \ell\}$, then the cup product
\begin{equation}
 \label{eqn:bungart-cup}
 \cup \, \colon \bigotimes_{j=1}^\ell H^{j_\ell}(M;\bC) \to H^{\lv J\rv}(M;\bC)
\end{equation}
vanishes.
This fact was first observed by Bungart~\cite{Bungart1992} using a Hodge theorem of Tanaka~\cite{Tanaka1975}*{Theorem~13.1}.
Note that this conclusion is not true in the more general setting of closed, strictly pseudconvex manifolds~\cite{Bungart1992}*{Section~3}, though it is true~\cite{Bungart1992}*{Section~4} that the cup product~\eqref{eqn:bungart-cup} vanishes if $\lv J\rv \geq n+2$ and $j_k \leq n-1$ for all $k \in \{ 1, \dotsc, \ell \}$.

\begin{theorem}
 \label{cup-vanishing}
 Let $(M^{2n+1},T^{1,0})$ be a closed Sasakian manifold.
 If $J \in \bN^\ell$ is such that $\lv J\rv \geq n+1$ and $j_k \leq n$ for all $k \in \{ 1 , \dotsc, \ell \}$, then the cup product~\eqref{eqn:bungart-cup} vanishes.
\end{theorem}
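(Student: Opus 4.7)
The plan is to pick a torsion-free contact form $\theta$ on $(M^{2n+1},T^{1,0})$ and combine the Hodge theory of \cref{part:hodge} with \cref{basic-vanishing}. First, by \cref{rumin-hodge-isomorphism} each class $[\omega_k] \in H^{j_k}(M;\bC)$ has a unique $d$-harmonic representative $\omega_k \in \mH^{j_k} \otimes \bC$. Since $\theta$ is torsion-free, \cref{kohn-laplacian-to-rumin-laplacian} gives $\Delta_b = L_b$ exactly, and in particular $\Delta_b$ preserves the bigrading on $\CmR^{j_k}$. Hence $\mH^{j_k} \otimes \bC$ splits into harmonic bidegree components, so by multilinearity of the cup product I may assume each $\omega_k \in \mR^{p_k,q_k}$ is harmonic with $p_k + q_k = j_k \leq n$.

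The next step is to identify each $\omega_k$, viewed as a complex-valued differential form on $M$, with an element of $\Omega^{p_k,q_k}M$ (i.e.\ with no $\theta$-component). From $d^\ast \omega_k = 0$ and the bidegree decomposition one has $\db^\ast \omega_k + \dbbar^\ast \omega_k = 0$, with the two summands lying in distinct bidegree components of $\mR^{j_k - 1}$, so $\db^\ast \omega_k = 0 = \dbbar^\ast \omega_k$ separately. The divergence formulas of \cref{divergence-formula} then force
\begin{equation*}
 \nabla^\mu (\omega_k)_{\mu A^\prime \bar B} = 0 = \nabla^{\bar\nu} (\omega_k)_{A \bar\nu \bar B^\prime} .
\end{equation*}
Substituting into \cref{projection-to-E-expression} kills the $\theta$-correction terms and leaves $\omega_k = \frac{1}{p_k!q_k!}(\omega_k)_{A\bar B}\,\theta^A \wedge \theta^{\bar B} \in \Omega^{p_k,q_k}M$.

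Now form $\Omega := \omega_1 \wedge \cdots \wedge \omega_\ell$, taken as the ordinary wedge of complex differential forms. Since each factor annihilates the Reeb vector field and has bidegree $(p_k,q_k)$, the product lies in $\Omega^{p,q}M$ with $p := \sum p_k$, $q := \sum q_k$ and $p+q = \lv J\rv \geq n+1$; moreover $\Omega$ is $d$-closed as a product of closed forms. \Cref{basic-vanishing} then yields a $\tau \in \CmR^{\lv J\rv - 1}$ with $\pi\Omega = d\tau$, so $[\pi\Omega] = 0$ in $H_R^{\lv J\rv}(M;\bC)$. Because each $\omega_k$ is closed and $\pi$ commutes with $d$ (\cref{projection-and-d-commute}), the Rumin cup product $[\omega_1] \cup \cdots \cup [\omega_\ell]$ is represented by $[\pi\Omega]$ (cf.\ \cref{de-rham-cup} and \cref{rk:easy-recover-derham}), which vanishes.

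The main obstacle is more organizational than technical: one must check that the Rumin cup product of the harmonic representatives truly agrees with $[\pi\Omega]$, since the product $\rwedge$ of \cref{defn:cdot-product} is neither the ordinary wedge nor strictly associative on the cochain level. Both issues are dispatched by the $A_\infty$-structure of \cref{sec:wedge}, which restores associativity in cohomology, together with the identity $\pi\omega \equiv \omega \mod \im d$ for closed $\omega$.
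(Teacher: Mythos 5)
Your proposal is correct and follows essentially the same route as the paper's proof: torsion-free contact form, $d$-harmonic representatives split by bidegree via \cref{kohn-laplacian-to-rumin-laplacian}, the observation that $d^\ast\omega=0$ forces the harmonic representatives of degree $\leq n$ to lie in $\Omega^{p,q}M$, and then \cref{basic-vanishing} applied to the ordinary wedge product. The only cosmetic difference is that the paper first reduces to the case of two factors whereas you treat all $\ell$ factors at once; both are fine.
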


\begin{proof}
 It suffices to show that if $[\omega] \in H_R^k(M;\bC)$ and $[\tau] \in H_R^\ell(M;\bC)$ are such that $k,\ell \leq n$ and $k+\ell \geq n+1$, then $\left[ \omega \right] \cup \left[ \tau \right] = 0$.
 Let $\theta$ be a torsion-free contact form on $(M^{2n+1},T^{1,0})$.
 \Cref{rumin-hodge-decomposition} implies that we may choose $\omega \in [\omega]$ and $\tau \in [\tau]$ to be $d$-harmonic.
 \Cref{kohn-laplacian-to-rumin-laplacian} implies that $\pi^{p,q}\omega$ and $\pi^{r,s}\tau$ are $d$-harmonic for all $p+q=k$ and $r+s=\ell$.
 By linearity, it suffices to consider the case $\omega \in \ker\Delta_b \cap \mR^{p,q}$ and $\tau \in \ker\Delta_b \cap \mR^{r,s}$.
 
 Since $k,\ell \leq n$, \cref{projection-to-E-expression,divergence-formula} imply that $\omega \in \Omega^{p,q}M$ and $\tau \in \Omega^{r,s}M$.
 Therefore $d(\omega \wedge \tau) = 0$ and $\omega \wedge \tau \in \Omega^{p+r,q+s}M$.
 The conclusion follows from \cref{rk:easy-recover-derham,basic-vanishing}.
\end{proof}

Recall that the \defn{cuplength} of a manifold $M$, denoted $\cuplength(M)$, is the largest integer $\ell$ for which there is a set $\{ [\omega_j] \}_{j=1}^\ell$ such that $[\omega_j] \in H^{k_j}(M;\bC)$, $k_j \geq 1$, and $\bigcup_j [\omega_j] \not= 0$.
\Cref{cup-vanishing} immediately gives a sharp upper bound on the cuplength of a closed Sasakian manifold.

\begin{corollary}
 \label{cup-length}
 Let $(M^{2n+1},T^{1,0})$ be a closed Sasakian manifold.
 Then
 \begin{equation}
  \label{eqn:cuplength}
  \cuplength(M) \leq n+1 .
 \end{equation}
 Moreover, the estimate is sharp.
\end{corollary}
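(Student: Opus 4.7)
The plan is to establish the upper bound as an immediate consequence of \cref{cup-vanishing}, and then to prove sharpness by exhibiting $n+1$ classes with nonzero cup product on the Heisenberg quotient $I^{2n+1}$ from \cref{heisenberg-quotient}.

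For the upper bound, I would argue as follows. Suppose $[\omega_j] \in H^{k_j}(M;\bC)$, $1 \leq j \leq \ell$, with each $k_j \geq 1$ and $[\omega_1] \cup \dotsm \cup [\omega_\ell] \neq 0$. Since $\dim M = 2n+1$, at most one of the $k_j$ can exceed $n$: if two did, the partial cup product of those two alone would land in degree $\geq 2n+2$ and hence vanish, killing the full product by associativity (\cref{de-rham-cup}). If every $k_j \leq n$, then \cref{cup-vanishing} applied to the multi-index $J = (k_1,\dotsc,k_\ell)$ forces $\sum_j k_j \leq n$, and since each $k_j \geq 1$ we get $\ell \leq n$. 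If instead exactly one $k_j$, say $k_1$, satisfies $k_1 \geq n+1$, then the total degree constraint $\sum_j k_j \leq 2n+1$ gives $\sum_{j\geq 2} k_j \leq n$, hence $\ell - 1 \leq n$. In either case $\ell \leq n+1$.

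For sharpness, I would work on $I^{2n+1}$ equipped with the flat, torsion-free contact form $\theta$ from \cref{heisenberg-quotient}; in particular $I^{2n+1}$ is a closed Sasakian manifold. Writing $z^\alpha = x^\alpha + iy^\alpha$, a direct computation gives $d\theta = 2\sum_{\alpha=1}^n dx^\alpha \wedge dy^\alpha$. The real one-forms $dx^\alpha$ and $dy^\alpha$ are $\Gamma$-invariant and closed on $\bH^n$, so they descend to closed one-forms on $I^{2n+1}$; moreover, since $d(dx^\alpha) = 0$, \cref{projection-to-R} shows $dx^\alpha \in \mR^1$ and $[dx^\alpha] \in H_R^1(I^{2n+1};\bR)$. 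I would then verify that
\begin{equation*}
 \Omega := \theta \wedge dy^1 \wedge \dotsm \wedge dy^n
\end{equation*}
lies in $\mR^{n+1}$ and is $d$-closed: both membership conditions and closedness reduce to $d\theta \wedge dy^1 \wedge \dotsm \wedge dy^n = 0$, which is immediate because every summand of $d\theta$ repeats a $dy^\alpha$.

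Finally, the cup product of the $n+1$ classes $[dx^1],\dotsc,[dx^n],[\Omega]$ is represented (up to a nonzero sign) by
\begin{equation*}
 \theta \wedge dx^1 \wedge dy^1 \wedge \dotsm \wedge dx^n \wedge dy^n = \frac{1}{2^n n!}\,\theta \wedge d\theta^n ,
\end{equation*}
which has nonzero integral over $I^{2n+1}$ and therefore represents a nonzero class in $H^{2n+1}(I^{2n+1};\bC)$ by \cref{rumin-hodge-isomorphism}. This gives $\cuplength(I^{2n+1}) \geq n+1$, which combines with the upper bound to produce equality. There is no genuine obstacle in the argument; the only care required is confirming that $\Omega \in \mR^{n+1}$, which is a direct wedge-product calculation using the explicit form of $d\theta$.
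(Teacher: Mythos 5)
Your proposal is correct and follows essentially the same route as the paper: the upper bound is deduced from \cref{cup-vanishing} together with the degree constraint $\sum_j k_j \leq 2n+1$ (your explicit two-case split is just a slightly more verbose version of the paper's "rearrange so that $k_j \leq n$ for $j \leq \ell-1$"), and sharpness is witnessed on the Heisenberg quotient $I^{2n+1}$ of \cref{heisenberg-quotient} by $n+1$ classes whose product is a nonzero multiple of $\theta \wedge d\theta^n$. The only cosmetic difference is that you use the real forms $dx^\alpha$ and $\theta\wedge dy^1\wedge\dotsm\wedge dy^n$ where the paper uses $dz^\alpha$ and $\theta\wedge d\oz^1\wedge\dotsm\wedge d\oz^n$; the verification is identical.
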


\begin{remark}
 Rukimbira~\cite{Rukimbira1993} and Itoh~\cite{Itoh1997} previously proved the weaker estimate $\cuplength(M) \leq 2n$ under the assumptions of \cref{cup-length}.
 The upper bound~\eqref{eqn:cuplength} is attained by \cref{heisenberg-quotient}, which can be realized as a principal $S^1$-bundle over a complex torus~\cite{Folland2004}.
 This corrects an erroneous formula for $\cuplength(I^{2n+1})$ in Boyer and Galicki's book~\cite{BoyerGalicki2008}*{Example~8.1.13}.
\end{remark}

\begin{proof}
 Let $\{ [\omega_j] \}_{j=1}^\ell$ be such that $[\omega_j] \in H^{k_j}(M;\bC)$, $k_j \geq 1$, for all $j \in \{ 1, \dotsc, \ell \}$ and $\Omega := \bigcup_j [\omega_j] \not= 0$.
 Set $K := (k_1, \dotsc, k_j)$.
 Since $\Omega \not= 0$, it holds that $\lv K \rv \leq 2n+1$.
 By rearranging $K$ if necessary, we may assume that $k_j \leq n$ for $1 \leq j \leq \ell - 1$.
 Since $\bigcup_{j<\ell} [\omega_j] \not= 0$, we conclude from \cref{cup-vanishing} that
 \begin{equation*}
  \ell - 1 \leq \sum_{j=1}^{\ell-1} k_j \leq n .
 \end{equation*}
 Therefore $\cuplength(M) \leq n+1$.
 
 Now let $(I^{2n+1},T^{1,0})$ be as in \cref{heisenberg-quotient}.
 Then
 \begin{equation*}
  \bigl[ dz^1 \bigr] \cup \dotsm \cup \bigl[ dz^n \bigr] \cup \bigl[ \theta \wedge d\oz^{1} \wedge \dotsm \wedge d\oz^{n} \bigr] \not= 0 .
 \end{equation*}
 In particular, Inequality~\eqref{eqn:cuplength} is sharp.
\end{proof}

Our second application of \cref{dbc-vanishing,basic-vanishing} is a new vanishing result for the real Chern classes of a closed Sasakian manifold.
Given a multi-index $K \in \bN^\ell$, define the real Chern class $c_K(T^{1,0}) \in H^{2\lv K\rv}(M;\bR)$ by
\begin{equation*}
 c_K(T^{1,0}) := \prod_{j=1}^\ell \chern_{k_j}(T^{1,0}) ,
\end{equation*}
where $\chern_{k_j}(T^{1,0}) \in H^{2k_j}(M;\bR)$ is the real Chern character of degree $2k_j$.
It follows from \cref{basic-vanishing} that $c_K(T^{1,0})=0$ if $2\lv K\rv \geq n+1$:

\begin{theorem}
 \label{sasakian-chern-vanishing}
 Let $(M^{2n+1},T^{1,0})$ be a closed Sasakian manifold.
 If $K \in \bN^\ell$ is such that $2\lv K\rv \geq n+1$, then $c_K(T^{1,0})=0$ in $H^{2\lv K\rv}(M;\bR)$.
\end{theorem}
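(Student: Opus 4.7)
The plan is to represent $c_K(T^{1,0})$ by a $d$-closed differential form of bidegree $(|K|,|K|)$ using Chern--Weil theory with respect to the Tanaka--Webster connection, and then invoke \cref{basic-vanishing}.

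First I would fix a torsion-free contact form $\theta$ witnessing the Sasakian structure and examine the Tanaka--Webster curvature forms $\Omega_\alpha{}^\gamma$ of Equation~\eqref{eqn:curvature-forms}. Because the pseudohermitian torsion $A_{\alpha\gamma}$ vanishes (and hence so does $\tau^\alpha$), Equation~\eqref{eqn:curvature-form} collapses to
\[ \Omega_{\alpha\bar\beta} = R_{\alpha\bar\beta\rho\bar\sigma}\,\theta^\rho\wedge\theta^{\bar\sigma} . \]
Raising the barred index using the Levi form, we conclude that $\Omega_\alpha{}^\gamma$ annihilates the Reeb vector field $T$ and has restriction to $\CH$ of pure type $(1,1)$; that is, $\Omega_\alpha{}^\gamma$ takes values in $\Omega^{1,1}M\otimes\End(T^{1,0})$.

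Next I would form the Chern--Weil representatives. By the Bianchi identity~\eqref{eqn:bianchi} together with the cyclicity of the trace, the complex-valued differential forms
\[ \tr(\Omega^k) := \Omega_{\alpha_1}{}^{\alpha_2} \wedge \Omega_{\alpha_2}{}^{\alpha_3}\wedge\dotsm\wedge\Omega_{\alpha_k}{}^{\alpha_1} \]
are $d$-closed, and a fixed universal real multiple represents $\chern_k(T^{1,0})$ in $H^{2k}(M;\bC)$. Since each factor lies in $\Omega^{1,1}M$, the wedge product lies in $\Omega^{k,k}M$. More generally, the exterior product of elements of $\Omega^{p,q}M$ and $\Omega^{r,s}M$ still annihilates $T$ and restricts on $\CH$ to an element of $\Lambda^{p+r,q+s}$, so it lies in $\Omega^{p+r,q+s}M$. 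Therefore $c_K(T^{1,0})$ is represented by a $d$-closed form $\omega\in\Omega^{|K|,|K|}M$.

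Finally, since $|K|+|K|=2|K|\geq n+1$, the hypothesis of \cref{basic-vanishing} is satisfied, yielding $\pi\omega\in\im\bigl(d\colon\CmR^{2|K|-1}\to\CmR^{2|K|}\bigr)$. Under the isomorphism $H^{2|K|}(M;\bC)\cong H_R^{2|K|}(M;\bC)$ of \cref{r-resolution}, which sends $[\omega]\mapsto[\pi\omega]$ by \cref{rk:easy-recover-derham}, this shows $c_K(T^{1,0})=0$ in $H^{2|K|}(M;\bC)$, and hence in $H^{2|K|}(M;\bR)$ by injectivity of complexification. The only real content here is the bidegree observation in the first step: once the vanishing of torsion is seen to force $\Omega_\alpha{}^\gamma$ to be of pure type $(1,1)$, the rest is a formal consequence of Chern--Weil theory and of \cref{basic-vanishing}, so I do not anticipate any serious obstacle.
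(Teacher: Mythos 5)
Your proposal is correct and follows essentially the same route as the paper: fix a torsion-free contact form, observe that the vanishing of torsion reduces Equation~\eqref{eqn:curvature-form} to $\Omega_\alpha{}^\gamma = R_\alpha{}^\gamma{}_{\mu\bar\nu}\,\theta^\mu\wedge\theta^{\bar\nu}$ so the Chern--Weil representative lies in $\Omega^{\lv K\rv,\lv K\rv}M$ and is $d$-closed by the Bianchi identity, then apply \cref{basic-vanishing} and the projection $\pi$ to conclude exactness. There is no substantive difference from the paper's argument.
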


\begin{remark}
 While we cannot find the statement of \cref{sasakian-chern-vanishing} in the literature, it follows readily from results~\cite{BoyerGalicki2008}*{Theorem~7.2.9 and Lemma~7.5.22} in Boyer and Galicki's book.
\end{remark}

\begin{remark}
 The conclusion of \cref{sasakian-chern-vanishing} is false on closed, strictly pseudoconvex CR manifolds~\cite{Takeuchi2018}*{Proposition~4.2}.
 Instead, one only knows~\cite{Takeuchi2018}*{Theorem~1.1} that $c_K(T^{1,0})=0$ in $H^{2\lv K\rv}(M;\bR)$ if $2\lv K\rv \geq n+2$.
\end{remark}

\begin{proof}
 Let $\theta$ be a torsion-free contact form on $(M^{2n+1},T^{1,0})$.
 Given an admissible coframe $\{ \theta^\alpha \}_{\alpha=1}^n$, Equation~\eqref{eqn:curvature-form} gives
 \begin{equation}
  \label{eqn:sasakian-curvature-form}
  \Omega_\alpha{}^\gamma = R_\alpha{}^\gamma{}_{\mu\bar\nu} \, \theta^\mu \wedge \theta^{\bar\nu} .
 \end{equation}
 Equation~\eqref{eqn:bianchi} implies that
 \begin{equation*}
  \chi^{(K)} := \left(\frac{i}{2\pi}\right)^{\lv K\rv} \prod_{j=1}^\ell \Omega_{\mu_1}{}^{\mu_2} \Omega_{\mu_2}{}^{\mu_3} \dotsm \Omega_{\mu_{k_j}}{}^{\mu_1} .
 \end{equation*}
 is in $\ker d$.
 By definition, $c_K(T^{1,0}) = [\chi^{(K)}] \in H^{2\lv K\rv}(M;\bR)$.
 It follows from Equation~\eqref{eqn:sasakian-curvature-form} that $\chi^{(K)} \in \Omega^{\lv K\rv, \lv K \rv}$.
 Since $2\lv K\rv \geq n+1$, we conclude from \cref{basic-vanishing} that $\pi \chi^{(K)} \in \im d$.
 The conclusion follows from Equation~\eqref{eqn:projection-to-R}.
\end{proof}

Our third application of \cref{dbc-vanishing,basic-vanishing} is to a Sasakian analogue of the Hard Lefschetz Theorem.
Recall that in complex geometry, the Hard Lefschetz Theorem~\cite{Ballmann2006}*{Theorem~5.37} states that if $(X^{n},\omega)$ is a K\"ahler $n$-fold, then
\begin{equation}
 \label{eqn:kahler-lefschetz}
 H^{n-k}(X;\bR) \ni [\alpha] \mapsto [ \alpha \wedge \omega^k ] \in H^{n+k}(X;\bR)
\end{equation}
is an isomorphism.
It is clear that~\eqref{eqn:kahler-lefschetz} depends only on the symplectic structure $\omega$ on $X$, and hence the Hard Lefschetz Theorem gives an obstruction to a given symplectic manifold $(X,\omega)$ admitting an integrable complex structure with respect to which $\omega$ is K\"ahler.

On Sasakian $(2n+1)$-manifolds, it is not true that if $\alpha \in \ker d \cap \mR^{n-k}$, then $\theta \wedge \alpha \wedge d\theta^k \in \ker d$.
Instead, one defines an analogue of~\eqref{eqn:kahler-lefschetz} by choosing an appropriate representative of $[\alpha] \in H_R^{n-k}(M;\bR)$.
This is done using \cref{rumin-hodge-decomposition}, yielding a new proof of the Hard Lefschetz Theorem for Sasakian manifolds~\cite{CappellettiMontanoDeNicolaYudin2015}.
The key point is that on Sasakian manifolds, $\theta \wedge d\theta^k$ maps $d$-harmonic $(n-k)$-forms to $d$-harmonic $(n+1+k)$-forms and, moreover, \cref{dbc-vanishing} implies that the de Rham class of the image is independent of the choice of CR structure on $(M^{2n+1},\theta)$ with respect to which $\theta$ is torsion-free.

\begin{theorem}
 \label{hard-lefschetz}
 Let $(M^{2n+1},T^{1,0},\theta)$ be a Sasaki manifold.
 If $\omega \in \mH^k$, $k \leq n$, then $\theta \wedge \omega \wedge d\theta^{n-k} \in \mH^{2n+1-k}$.
 Moreover, the induced map $\Lef \colon H_R^k(M;\bR) \to H_R^{2n+1-k}(M;\bR)$,
 \begin{equation*}
  \Lef([\omega]) := [ \theta \wedge H\omega \wedge d\theta^{n-k} ] ,
 \end{equation*}
 is an isomorphism which depends only on $(M^{2n+1},\theta)$.
\end{theorem}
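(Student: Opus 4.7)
The plan is to establish in turn that (a) $\theta \wedge \omega \wedge d\theta^{n-k} \in \mH^{2n+1-k}$ whenever $\omega \in \mH^k$; (b) the induced map on cohomology is an isomorphism; and (c) this isomorphism is independent of the choice of Sasaki CR structure on $(M,\theta)$.

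For (a) I would invoke the bidegree decomposition of $d$-harmonic forms on a Sasaki manifold. By \cref{kohn-laplacian-to-rumin-laplacian}, $\Delta_b = L_b$ on torsion-free manifolds, so $\Delta_b$ preserves the bidegree decomposition of $\mR^k$. Therefore any $\omega \in \mH^k$ decomposes as $\omega = \sum_{p+q=k} \omega^{p,q}$ with each $\omega^{p,q} \in \mH^k \cap \mR^{p,q}$ (cf.\ the proof of \cref{cr-frolicher}, in particular \eqref{eqn:sasakian-kernel-popovici}). For $p+q = k \leq n$, \cref{hodge-star-sE} exhibits $\theta \wedge \omega^{p,q} \wedge d\theta^{n-k}$ as a nonzero scalar multiple of $\hodge\omega^{p,q}$, and \cref{rumin-hodge-star} places $\hodge\omega^{p,q}$ in $\mH^{2n+1-k}$. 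Summing over $p+q=k$ gives the claim.

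For (b), the same bidegree argument shows that on each summand $\mH^k \cap \mR^{p,q}$ the map $\omega \mapsto \theta \wedge \omega \wedge d\theta^{n-k}$ coincides with $\hodge$ up to a nonzero scalar depending on $(p,q)$. As $(p,q)$ ranges over all bidegrees summing to $k$ in the valid range, the image bidegrees $(n+1-q, n-p)$ exhaust all pairs summing to $2n+1-k$. Serre duality \cref{rumin-serre} therefore gives an isomorphism $\mH^k \to \mH^{2n+1-k}$, and composition with the Hodge isomorphism \cref{rumin-hodge-isomorphism} yields the asserted isomorphism of cohomology groups. Well-definedness of $\Lef([\omega])$ is immediate, since $H\omega$ is the unique $d$-harmonic representative of $[\omega]$.

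The hard part will be (c), the invariance under the choice of Sasaki CR structure. Given another Sasaki CR structure $\hat T^{1,0}$ on $(M,\theta)$, both $H\omega$ and $\hat H\omega$ represent the class $[\omega]$, so $H\omega - \hat H\omega = d\xi$ for some $\xi \in \mR^{k-1}$. Moreover, applying (a) to both structures shows that $\theta \wedge H\omega \wedge d\theta^{n-k}$ and $\theta \wedge \hat H\omega \wedge d\theta^{n-k}$ are $d$-closed, equivalently $H\omega \wedge d\theta^{n-k+1} = \hat H\omega \wedge d\theta^{n-k+1} = 0$, so $d\xi \wedge d\theta^{n-k+1} = 0$. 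One must then show $\theta \wedge d\xi \wedge d\theta^{n-k}$ is $d$-exact in $\mR^{2n+1-k}$. Using the form identity
\begin{equation*}
d(\theta \wedge \xi \wedge d\theta^{n-k}) = \xi \wedge d\theta^{n-k+1} - \theta \wedge d\xi \wedge d\theta^{n-k}
\end{equation*}
together with $\pi d = d\pi$ from \cref{projection-and-d-commute}, the problem reduces to showing that $\pi(\xi \wedge d\theta^{n-k+1})$ is $d$-exact in $\mR$. The natural approach is to decompose $\xi$ via \cref{Lambdapq-embedding}, note that $\theta$-components of $\xi \wedge d\theta^{n-k+1}$ lie in the kernel of $\pi$ via the properties of $\Gamma$ in \cref{inverse-lefschetz}, and reduce the remaining annihilator-of-$T$ pieces into pure-bidegree summands to which \cref{basic-vanishing} applies; executing this reduction cleanly — perhaps via the Sasaki $\partial\overline{\partial}$-lemma developed later in this section — is the main technical obstacle, especially in controlling cross-terms arising from the bidegree decomposition of $d\xi \wedge d\theta^{n-k+1} = 0$.
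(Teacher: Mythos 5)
Your parts (a) and (b) are correct and follow the paper's own argument: on a Sasaki manifold $\Delta_b = L_b$ preserves bidegree, so the components $\pi^{p,q}\omega$ of a harmonic form are harmonic, and \cref{hodge-star-sE} together with \cref{rumin-hodge-star} identifies $\theta \wedge \pi^{p,q}\omega \wedge d\theta^{n-k}$ with a nonzero multiple of $\hodge\pi^{p,q}\omega \in \mH^{2n+1-k}$; injectivity and surjectivity then follow as you say.

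Part (c), however, has a genuine gap, and you have flagged it yourself. You correctly reduce to showing that $\theta \wedge d\xi \wedge d\theta^{n-k}$ is $d$-exact for $\xi \in \mR^{k-1}$, but the completion you sketch — splitting off $\xi \wedge d\theta^{n-k+1}$, decomposing into bidegree pieces, and applying \cref{basic-vanishing} — does not go through as stated: \cref{basic-vanishing} requires a \emph{$d$-closed} form of pure bidegree in $\Omega^{p,q}M$, and the individual bidegree components of $\xi \wedge d\theta^{n-k+1}$ need not be $d$-closed (only the full sum $d\xi \wedge d\theta^{n-k+1}$ vanishes), so you would be left chasing exactly the cross-terms you worry about. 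The missing idea is \cref{dbc-vanishing}, proved immediately before the theorem for precisely this purpose: for any $\xi \in \mR^{k-1}$ with $k \leq n$, the form $\theta \wedge d\xi \wedge d\theta^{n-k}$ lies in $\im \db^\ast + \im \dbbar^\ast$ (its bidegree pieces are, by \cref{hodge-star-sE}, nonzero multiples of $\hodge(i\db\pi^{p,q}\xi - i\dbbar\pi^{p,q}\xi)$), hence is $L^2$-orthogonal to $\ker\Delta_b$ on a Sasaki manifold by \cref{kohn-laplacian-to-rumin-laplacian}. Since you have already shown it is $d$-closed, the Hodge decomposition \cref{rumin-hodge-decomposition} ($\ker d = \mH^{2n+1-k} \oplus \im d$) forces it into $\im d$, and no further bidegree bookkeeping or $\partial\overline{\partial}$-lemma is needed. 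Note also that the paper takes $\xi$ from \cref{rumin-hodge-decomposition} directly (so that $\omega + d\xi \in \hmH^k$) rather than from the difference of the two harmonic projections, but this is cosmetic; the substantive missing step is the appeal to \cref{dbc-vanishing}.
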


\begin{proof}
 Let $\omega \in \mH^k$, $k \leq n$, and let $p,q \in \bN_0$ be such that $p+q = k$.
 \Cref{kohn-laplacian-to-rumin-laplacian} implies that $\pi^{p,q}\omega \in \mH^k$.
 \Cref{hodge-star-sE} implies that $\hodge\pi^{p,q}\omega \in \mR^{n+1-q,n-p}$ is a nonzero multiple of $\theta \wedge \pi^{p,q}\omega \wedge d\theta^{n-k}$.
 We deduce from \cref{rumin-hodge-star} that $\theta \wedge \pi^{p,q}\omega \wedge d\theta^{n-k} \in \mH^{2n+1-k}$ is nonzero.
 Hence, by linearity, $\Lef$ is well-defined and injective.
 A similar argument shows that $\Lef$ is surjective.
 
 Suppose now that $(M^{2n+1},\hT^{1,0},\theta)$ is another Sasaki structure on $M$ with the same contact form $\theta$.
 Note that the spaces $\mR^j$, $j \in \{ 0 , \dotsc , 2n+1 \}$, are independent of the choice of CR structure.
 \Cref{rumin-hodge-decomposition} implies that there is a $\tau \in \mR^{k-1}$ such that $\omega + d\tau \in \hmH^k$, where $\hmH^k$ is the space of $d$-harmonic $k$-forms with respect to $(M^{2n+1},\hT^{1,0},\theta)$.
 It follows from the previous paragraph that $\theta \wedge \omega \wedge d\theta^{n-k}$ and $\theta \wedge (\omega + d\tau) \wedge d\theta^{n-k}$ are both $d$-closed.
 Therefore $\theta \wedge d\tau \wedge d\theta^{n-k}$ is $d$-closed.
 We conclude from \cref{dbc-vanishing} that $\theta \wedge d\tau \wedge d\theta^{n-k}$ is $d$-exact, and hence $\Lef = \widehat{\Lef}$.
\end{proof}

Recall that the $\partial\overline{\partial}$-lemma~\cite{Huybrechts2005}*{Corollary~3.2.10} implies that if $X$ is a closed K\"ahler manifold and if $\omega$ is a $d$-closed $(p,q)$-form, then $\omega \in \im d$ if and only if $\omega \in \im \partial\overline{\partial}$.
Sasakian analogues of the $\partial\overline{\partial}$-lemma for basic forms are known~\cite{Tievsky2008}*{Proposition~3.7}.
We conclude this section by discussing two Sasakian analogues --- chosen for their relevance to the Lee conjecture~\cite{Lee1988} as discussed in \cref{sec:lee-conjecture} --- of the $\partial\overline{\partial}$-lemma in terms of the bigraded Rumin complex.

To motivate our results, we give two reasons why the direct analogue of the $\partial\overline{\partial}$-lemma fails on Sasakian manifolds.
First, the existence of nonconstant CR functions on closed Sasakian manifolds implies that the direct analogue of the $\partial\overline{\partial}$-lemma cannot hold for $(1,0)$-forms.
More generally:

\begin{example}
 \label{ex:no-sasakian-dee-deebar}
 Let $\iota \colon S^{2n+1} \hookrightarrow \bC^{n+1}$ be the unit sphere with its standard CR structure $T^{1,0} := \bC TS^{2n+1} \cap T^{1,0}\bC^{n+1}$.
 On the one hand, $(S^{2n+1},T^{1,0})$ is Sasakian~\cite{BoyerGalicki2008}*{Example~7.1.5}.
 On the other hand, for each $p \in \{ 0, \dotsc, n \}$, it holds that
 \begin{equation*}
  \omega := \iota^\ast \left( dz^0 \wedge dz^1 \wedge \dotsm \wedge dz^p \right) = d \iota^\ast \left( z^0 \, dz^1 \wedge \dotsm \wedge dz^p \right)
 \end{equation*}
 is a nonzero element of $\im d \cap \mR^{p+1,0}$.
\end{example}

Second, it is not always the case that $\db\dbbar$-exact forms are $d$-closed.

\begin{example}
 \label{ex:paneitz}
 Let $(M^3,T^{1,0})$ be a CR manifold.
 On $\mR^{0,0}$, the operator $-d\db\dbbar$ coincides with~\cites{CaseYang2020,GrahamLee1988,Takeuchi2019} the CR Paneitz operator.
 In particular, $d\db\dbbar\not=0$.
\end{example}

\Cref{ex:no-sasakian-dee-deebar,ex:paneitz} imply that any Sasakian analogue of the $\partial\overline{\partial}$-lemma requires additional assumptions.
Here we impose additional assumptions involving the adjoints of the operators in the bigraded Rumin complex.
While the resulting statements are not CR invariant, they are sufficient for our purposes.

The key ingredient in our Sasakian analogues of the $\partial\overline{\partial}$-lemma is the following consequence of the Hodge Decomposition Theorem for the Popovici Laplacian.

\begin{lemma}
 \label{popovici-hodge-application}
 Let $(M^{2n+1},T^{1,0},\theta)$ be a closed Sasaki manifold.
 Let $\omega \in \mR^{p,q}$ be such that $\dbbar\omega = 0$.
 Then there are $\alpha \in \mR^{p,q-1}$ and $\beta \in \ker\dbbar\cap \mR^{p-1,q}$ and $\xi \in \ker\Box_b \cap \mR^{p+1,q}$ such that
 \begin{equation}
  \label{eqn:popovici-hodge-application}
  \omega = \cH\omega + \dbbar\alpha + d^\prime\beta + (d^\prime)^\ast\xi ,
 \end{equation}
 where $\cH$ is the $L^2$-orthogonal projection onto $\ker\cBoxb$ and $d^\prime$ is the operator~\eqref{eqn:defn-dprime}.
 In particular, if $p+q \leq n$, then
 \begin{equation}
  \label{eqn:popovici-hodge-application-d}
  d\omega = d\dbbar\alpha + d^\prime (d^\prime)^\ast \xi .
 \end{equation}
 Finally, if $d^\prime\omega \in \im \dbbar$, then we can take $\xi = 0$.
\end{lemma}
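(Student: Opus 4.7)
The plan is to apply the Popovici Hodge decomposition (\cref{popovici-hodge-decomposition}) to $\omega$ and then exploit the Sasaki hypothesis to eliminate the $\dbbar^\ast$-exact piece. First I would write
\begin{equation*}
 \omega = \cH\omega + \dbbar\alpha + d'\beta + (d')^\ast\xi + \dbbar^\ast\gamma
\end{equation*}
with $\beta \in \ker\dbbar \cap \mR^{p-1,q}$ and $\xi \in \mH^{p+1,q}$, as provided by \cref{popovici-hodge-decomposition} (taking $(d')^\ast\xi$ as a representative of the $\im((d')^\ast \circ H)$ summand). The identity \cref{eqn:popovici-hodge-application} is then equivalent to the vanishing of $\dbbar^\ast\gamma$.

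To establish $\dbbar^\ast\gamma = 0$, I would apply $\dbbar$ to this decomposition. On a Sasaki manifold, \cref{eqn:sasakian-kernel-popovici} gives $\cmH^{p,q} \subset \ker d$, so $\dbbar\cH\omega = 0$; the identities of \cref{justification-of-bigraded-complex} combined with $\dbbar\beta = 0$ yield $\dbbar d'\beta = 0$ in every degree range (one short case analysis per middle-degree regime). The main technical step is to show $\dbbar(d')^\ast\xi = 0$ for $\xi \in \mH^{p+1,q}$: outside the middle degrees this is immediate from the exact form of \cref{dbdbbarstar} together with $\dbbar\xi = 0$, while in the middle degrees $(p+1)+q \in \{n,n+1\}$ it follows from \cref{critical-dbdbbarstar} (which gives $\dbbar\db^\ast = 0$ on $\mR^{n+1}$ when $\theta$ is torsion-free) together with the exact form of \cref{eqn:n-hodge-dbbar} (which converts $\dbbar\db^\ast$ on $\mR^n$ into a multiple of $\hodge\dbbar$). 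Once $\dbbar(d')^\ast\xi = 0$ is established, $\dbbar\omega = 0$ forces $\dbbar\dbbar^\ast\gamma = 0$, and pairing with $\gamma$ gives $\lV\dbbar^\ast\gamma\rV^2 = 0$, hence $\dbbar^\ast\gamma = 0$. The hard part will be this middle-degree bookkeeping, since the identities only hold exactly because $\theta$ is torsion-free and the structure of the adjoints in bidegree $(n,\cdot)$ and $(n+1,\cdot)$ is more intricate.

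To obtain \cref{eqn:popovici-hodge-application-d}, I would apply $d$ to \cref{eqn:popovici-hodge-application}. The hypothesis $p+q \leq n$ forces $(p-1)+q \leq n-1$, so $d' = \db$ on $\mR^{p-1,q}$; using $\db^2 = 0$ and \cref{justification-of-bigraded-complex}, a direct computation shows $dd'\beta = 0$. Since $d\cH\omega = 0$ on Sasaki, it remains to verify $d(d')^\ast\xi = d'(d')^\ast\xi$, which follows by applying the same middle-degree machinery (and \cref{eqn:n-hodge-db} in place of \cref{eqn:n-hodge-dbbar}) to the remaining type components of $d((d')^\ast\xi)$.

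For the final claim, I would apply $d'$ to \cref{eqn:popovici-hodge-application}. A case analysis using \cref{justification-of-bigraded-complex} shows $d'\cH\omega = 0$, $d'\dbbar\alpha \in \im\dbbar$, and $(d')^2\beta \in \im\dbbar$ (either vanishing outright or equaling $-\dbbar\db\beta$). Thus if $d'\omega \in \im\dbbar$, then $d'(d')^\ast\xi = \dbbar\eta$ for some $\eta$, and pairing against $\xi$ yields
\begin{equation*}
 \lV(d')^\ast\xi\rV^2 = \llp d'(d')^\ast\xi, \xi\rrp = \llp \dbbar\eta, \xi\rrp = \llp\eta, \dbbar^\ast\xi\rrp = 0 ,
\end{equation*}
using $\xi \in \mH^{p+1,q} \subset \ker\dbbar^\ast$. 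Hence $(d')^\ast\xi = 0$, so this term drops from the decomposition and we may take $\xi = 0$.
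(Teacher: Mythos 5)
Your proposal is correct and follows essentially the same route as the paper: apply \cref{popovici-hodge-decomposition}, use Equation~\eqref{eqn:sasakian-kernel-popovici} together with the exact (torsion-free) forms of \cref{justification-of-bigraded-complex,dbdbbarstar,dhordbbarstar} and Equations~\eqref{eqn:n-hodge-dbbar}--\eqref{eqn:n-hodge-db} to kill the $\dbbar^\ast$-summand and to reduce $d$ to $d^\prime$ on the $(d^\prime)^\ast\xi$ term. The only substantive deviations are minor: in the middle degree $(p+1)+q=n+1$ you actually need \cref{dhordbbarstar} (for $\dbbar\dhor^\ast$) rather than \cref{critical-dbdbbarstar}, and for the final claim your direct pairing $\lV(d^\prime)^\ast\xi\rV^2=\llp\eta,\dbbar^\ast\xi\rrp=0$ is a clean alternative to the paper's observation that $d^\prime(d^\prime)^\ast\xi\in\ker\Box_b\cap\im\dbbar=\{0\}$.
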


\begin{proof}
 Note that $(M^{2n+1},T^{1,0})$ is embeddable~\citelist{ \cite{Boutet1975}*{Th\'eor\`eme} \cite{MarinescuYeganefar2007}*{Theorem~1.4}}.

 \Cref{popovici-hodge-decomposition} implies that there are $\alpha \in \mR^{p,q-1}$ and $\beta \in \ker\dbbar\cap \mR^{p-1,q}$ and $\rho \in \mR^{p,q+1}$ and $\xi \in \ker\Box_b \cap \mR^{p+1,q}$ such that
 \begin{equation}
  \label{eqn:apply-popovici}
  \omega = \cH\omega + \dbbar\alpha + d^\prime\beta + \dbbar^\ast\rho + (d^\prime)^\ast\xi .
 \end{equation}
 Since $\dbbar\beta = 0$, \cref{justification-of-bigraded-complex} implies that $\dbbar d^\prime\beta = 0$.
 Since $\Box_b\xi = 0$, combining \cref{dbdbbarstar,dhordbbarstar} with Equations~\eqref{eqn:n-hodge-dbbar} and~\eqref{eqn:n-hodge-db} yields $\dbbar(d^\prime)^\ast\xi = 0$.
 We conclude from Equation~\eqref{eqn:sasakian-kernel-popovici} that
 \begin{equation*}
  0 = \dbbar\omega = \dbbar\dbbar^\ast\rho .
 \end{equation*}
 In particular, $\dbbar^\ast\rho=0$.
 Equation~\eqref{eqn:popovici-hodge-application} now follows from Equation~\eqref{eqn:apply-popovici}.
 
 Suppose now that $p+q \leq n$.
 Let $\alpha \in \mR^{p,q-1}$ and $\beta \in \ker\dbbar \cap \mR^{p-1,q}$ and $\xi \in \ker\Box_b \cap \mR^{p+1,q}$ be such that Equation~\eqref{eqn:popovici-hodge-application} holds.
 \Cref{justification-of-bigraded-complex} implies that $dd^\prime\beta = 0$.
 Equation~\eqref{eqn:sasakian-kernel-popovici} implies that $d\cH\omega = 0$.
 
 If $p+q \leq n-2$, then \cref{dbdbbarstar} implies that $\dbbar\db^\ast\xi = 0$.
 Hence $d(d^\prime)^\ast\xi = d^\prime(d^\prime)^\ast\xi$.
 
 If $p+q = n-1$, then Equation~\eqref{eqn:n-hodge-dbbar} implies that $\dbbar\db^\ast\xi=0$.
 Hence $d(d^\prime)^\ast\xi = d^\prime(d^\prime)^\ast\xi$.
 
 If $p+q = n$, then applying Equation~\eqref{eqn:n-hodge-dbbar} to $\hodge\xi$ yields $\db^\ast\xi=0$.
 We deduce from \cref{dual-justification} that $\db^\ast\dhor^\ast\xi = \dbbar^\ast\dhor^\ast\xi = 0$.
 Applying Equations~\eqref{eqn:n-hodge-dbbar} and~\eqref{eqn:n-hodge-db} to $\dhor^\ast\xi$ yield $\dbbar\dhor^\ast\xi = 0$ and $\db\dhor^\ast\xi=0$, respectively.
 Hence $d(d^\prime)^\ast\xi = d^\prime(d^\prime)^\ast\xi$.
 
 Since the previous three cases are exhaustive, we conclude that Equation~\eqref{eqn:popovici-hodge-application-d} holds.
 
 Suppose finally that $d^\prime\omega \in \im\dbbar$.
 \Cref{justification-of-bigraded-complex} implies that $d^\prime\dbbar\alpha \in \im \dbbar$ and, since $\dbbar\beta=0$, that $d^\prime d^\prime \beta \in \im \dbbar$.
 Equation~\eqref{eqn:sasakian-kernel-popovici} implies that $d^\prime\cH\omega = 0$.
 Equation~\eqref{eqn:popovici-hodge-application} then implies that $d^\prime (d^\prime)^\ast\xi \in \im \dbbar$.
 However, combining the assumption $\Box_b\xi = 0$ with \cref{justification-of-bigraded-complex,dhordbbarstar,critical-dbdbbarstar} yields $d^\prime(d^\prime)^\ast\xi \in \ker\Box_b$.
 Therefore $(d^\prime)^\ast\xi = 0$.
\end{proof}

We now present our Sasakian analogues of the $\partial\overline{\partial}$-lemma.
We first consider forms of degree at most $n$, where the conclusion is analogous to its K\"ahler analogue.

\begin{lemma}
 \label{dbdbbar-lemma-low}
 Let $(M^{2n+1},T^{1,0},\theta)$ be a closed Sasaki manifold.
 Let $\omega \in \mR^{p,q}$, $p + q \leq n$, be such that $\omega = d\tau$ for some $\tau \in \CmR^{p+q-1}$.
 If $\db^\ast\omega \in \im \dbbar$ and $\dbbar^\ast\omega \in \im \db$, then
 \begin{equation*}
  \omega \in \im \left( \db\dbbar \colon \mR^{p-1,q-1} \to \mR^{p,q} \right) .
 \end{equation*}
\end{lemma}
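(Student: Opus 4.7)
My plan is to reduce $\omega$ to an element of $\im\db\dbbar$ via successive Hodge decompositions, following roughly the strategy of the classical $\partial\overline{\partial}$-lemma adapted to the Sasakian setting.

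First, since $\omega = d\tau$ and $p+q \leq n$, the identity $d\omega = 0$ splits by bidegree into $\dbbar\omega = 0$ and $d^\prime\omega = 0$, where $d^\prime = \db$ when $p+q \leq n-1$ and $d^\prime = \dhor$ when $p+q = n$. Applying Lemma~\ref{popovici-hodge-application} to $\omega$, the $d$-exactness of $\omega$ combined with the Sasakian characterization $\ker\cBoxb \subset \ker d^\ast$ from Equation~\eqref{eqn:sasakian-kernel-popovici} forces $\cH\omega = 0$, while the hypothesis $d^\prime\omega = 0 \in \im\dbbar$ eliminates the $(d^\prime)^\ast$-summand via the final clause of that lemma. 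Since $\beta$ lies in $\mR^{p-1,q}$ with $p-1+q \leq n-1$, the middle term there is always $\db\beta$, so $\omega = \dbbar\alpha + \db\beta$ with $\dbbar\beta = 0$. A Kohn Hodge adjustment (Theorem~\ref{hodge-decomposition}) absorbs a $\dbbar$-exact correction into $\alpha$ so that $\dbbar^\ast\beta = 0$.

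Next, I apply $\db^\ast$ and $\dbbar^\ast$ to this decomposition. The torsion-free Sasakian identities $\dbbar^\ast\db = -\frac{c}{c+1}\db\dbbar^\ast$ and $\db^\ast\dbbar = -\frac{c}{c+1}\dbbar\db^\ast$ of Lemma~\ref{dbdbbarstar}, together with $\dbbar^\ast\beta = 0$, reduce the hypotheses to
\begin{equation*}
 \dbbar^\ast\dbbar\alpha = \dbbar^\ast\omega \in \im\db, \qquad \db^\ast\db\beta \in \im\dbbar.
\end{equation*}
The commutators $\dbbar(\db\beta) = -\db\dbbar\beta = 0$ and $\dbbar^\ast(\db\beta) = -\frac{c}{c+1}\db\dbbar^\ast\beta = 0$ then place $\db\beta \in \mH^{p,q}$, so pairing with $\beta$ gives $\lVert\db\beta\rVert^2 = \llp\db^\ast\db\beta,\beta\rrp = \llp\zeta,\dbbar^\ast\beta\rrp = 0$, forcing $\db\beta = 0$ and therefore $\omega = \dbbar\alpha$.

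To finish, I extract the $\db\dbbar$-structure. Normalizing $\alpha \in \ker\dbbar^\ast$ (which does not change $\omega$) turns the condition $\dbbar^\ast\dbbar\alpha \in \im\db$ into $\Box_b\alpha \in \im\db$, where $\Box_b$ is the $\dbbar$-Kohn Laplacian on $\mR^{p,q-1}$. Running Steps~1--2 in parallel on the conjugate decomposition --- which exists because $\db\omega = 0$ and the conjugate of Lemma~\ref{popovici-hodge-application} applies --- yields $\omega = \db\alpha'$ with $\db^\ast\alpha' = 0$ and $\oBox_b\alpha' \in \im\dbbar$. Matching $\dbbar\alpha = \db\alpha'$, writing both $\alpha$ and $\alpha'$ in terms of their respective Kohn Green's operators, and using the Sasakian identity $\Box_b + \oBox_b = \Delta_b$ from Lemma~\ref{kohn-laplacian-to-rumin-laplacian} (exact on torsion-free manifolds) produces $\sigma \in \mR^{p-1,q-1}$ with $\omega = -\db\dbbar\sigma$, establishing $\omega \in \im\db\dbbar$.

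The main obstacle I expect is this final inversion: although $\Delta_b$ commutes with $d$, the individual Kohn Laplacians $\Box_b$ and $\oBox_b$ do not commute with $\db$ or $\dbbar$ on Sasaki (as a direct computation using the identities of Lemma~\ref{dbdbbarstar} shows), so the Green's operator cannot simply be pushed past $\db$. The construction of $\sigma$ must therefore combine both Hodge decompositions and both hypotheses symmetrically. The boundary cases $p = 0$ or $q = 0$, where $\im\db\dbbar = \{0\}$, are dispatched separately by a direct $L^2$-pairing: under the hypotheses $\omega$ is a $\dbbar$-closed, $\dbbar^\ast$-closed form which is simultaneously $d$-exact, and Steps~1--2 then yield $\omega = \dbbar\alpha$ or $\omega = \db\gamma$ paired trivially with $\dbbar^\ast\omega = 0$ or $\db^\ast\omega = 0$ to force $\omega = 0$.
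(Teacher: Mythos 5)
Your Steps 1 and 2 are essentially sound and take a genuinely different opening from the paper: you apply \cref{popovici-hodge-application} to $\omega$ itself (killing $\cH\omega$ by $d$-exactness and $\xi$ by the final clause), then use the torsion-free anticommutators of \cref{dbdbbarstar} and the divergence hypotheses to eliminate $\db\beta$, arriving at $\omega=\dbbar\alpha$ and, symmetrically, $\omega=\db\alpha'$. That reduction is correct (and the boundary cases $p=0$ or $q=0$ do follow, since then $\alpha$ or $\alpha'$ lives in a zero space).

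The gap is Step 3, which is where the entire content of a $\partial\overline{\partial}$-type lemma lives. ``Writing $\alpha$ and $\alpha'$ in terms of their Kohn Green's operators and using $\Box_b+\oBox_b=\Delta_b$'' is not a construction of $\sigma$: as you yourself note, the Green's operators do not commute with $\db$ or $\dbbar$, and $\ker\Box_b\neq\ker\oBox_b$ even on Sasaki manifolds (CR functions already witness this on $\mR^{0,0}$), so the K\"ahler mechanism $\Delta_\partial=\Delta_{\overline\partial}$ that lets one push a harmonic projector across the other operator has no analogue here, and you supply no replacement. The paper avoids ever passing through the statement ``$\omega\in\im\dbbar\cap\im\db$'': it first reduces the potential $\tau$ to its two extreme bidegree components and applies \cref{popovici-hodge-application} to those components of $\tau$ rather than to $\omega$, which directly yields $\omega=\db\dbbar\alpha+\db\db^\ast\xi+\dbbar\dbbar^\ast\zeta$ with $\xi\in\ker\Box_b$ and $\zeta\in\ker\oBox_b$; the hypotheses $\db^\ast\omega\in\im\dbbar$ and $\dbbar^\ast\omega\in\im\db$ then annihilate the two error terms via the $L^2$-orthogonality of $\im\dbbar$ with $\ker\Box_b$ and of $\im\db$ with $\ker\oBox_b$. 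Your route could likely be salvaged by applying \cref{hodge-decomposition} to $\alpha'$ and showing the $\mH^{p-1,q}$- and $\im\dbbar^\ast$-components of $\alpha'$ contribute nothing to $\omega=\db\alpha'$ because $\omega\in\im\dbbar$, but this requires an argument you have not given and separate care when $p+q=n$, where \cref{dbdbbarstar} must be replaced by \cref{dhordbbarstar,critical-dbdbbarstar}. As written, the proof is incomplete.
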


\begin{proof}
 Let $\omega \in \mR^{p,q}$, $p+q \leq n$, and let $\tau \in \CmR^{p+q-1}$ be such that $d\tau = \omega$.

 We claim that we may take $\tau \in \mR^{p,q-1} \oplus \mR^{p-1,q}$.
 Denote $\tau_j := \pi^{j-1,p+q-j}\tau$, $j \in \{ 0, \dotsc, p+q+1 \}$.
 Let $j$ be the largest element of $\{ 1, \dotsc , p-1 \}$ such that $\tau_\ell=0$ for all $\ell<j$;
 since $\tau_0=0$, $j$ exists.
 Since $j \leq p - 1$, it holds that
 \begin{align*}
  0 & = \pi^{j-1,p+q-j+1}\omega = \dbbar\tau_j , \\
  0 & = \pi^{j,p+q-j}\omega = \db\tau_j + \dbbar\tau_{j+1} .
 \end{align*}
 Applying \cref{popovici-hodge-application} to $\tau_j$ yields an $\alpha_j \in \mR^{j-1,p+q-j-1}$ such that
 \begin{equation*}
  d\tau_j = d\dbbar\alpha_j = -d\db\alpha_j .
 \end{equation*}
 In particular, $\ctau := \tau - \tau_j - \db\alpha_j$ is such that $d\ctau = \omega$ and $\pi^{j-1,p+q-j}\ctau=0$.
 It follows from induction that, by replacing $\tau$ if necessary, $\tau \in \bigoplus_{j\geq0} \mR^{p+j-1,q-j}$.
 Carrying out the analogous argument using larger values of $j$ and the conjugate of \cref{popovici-hodge-application} yields the claim.

 Now, applying \cref{popovici-hodge-application} to $\pi^{p-1,q}\tau$, the conjugate of \cref{popovici-hodge-application} to $\pi^{p,q-1}\tau$, and \cref{justification-of-bigraded-complex} yields $\alpha \in \mR^{p-1,q-1}$ and $\xi \in \ker\Box_b \cap \mR^{p,q}$ and $\zeta \in \ker\oBox_b \cap \mR^{p,q}$ such that
 \begin{equation}
  \label{eqn:dbdbbar-lemma-low-pre-equation}
  \omega = d\tau = \db\dbbar\alpha + \db\db^\ast\xi + \dbbar\dbbar^\ast\zeta .
 \end{equation}
 \Cref{dbdbbarstar} and the assumption $\dbbar^\ast\omega \in \im \db$ yield $\dbbar^\ast\dbbar\dbbar^\ast\zeta \in \im \db \cap \ker \oBox_b$.
 Therefore $\dbbar^\ast\zeta = 0$.
 Similarly, since $\db^\ast\omega \in \im \dbbar$, it holds that $\db^\ast\xi = 0$.
 The conclusion now follows from Equation~\eqref{eqn:dbdbbar-lemma-low-pre-equation}.
\end{proof}

For the remaining case $p+q = n+1$, set
\begin{equation*}
 \mB^{p,q} := \mR^{p+1,q-1} \oplus \mR^{p,q} .
\end{equation*}
Note that $\overline{\mB^{p,q}} = \mB^{q,p}$ and $d\dbbar(\mR^{p-1,q-1}) \subset \mB^{p,q}$.
Our Sasakian analogue of the $\partial\overline{\partial}$-lemma is analogous to \cref{dbdbbar-lemma-low}, but with $\db\dbbar$ and $\mR^{p,q}$ replaced by $d\dbbar$ and $\mB^{p,q}$, respectively, and with a different assumption on the divergences.
The latter change is motivated by the application to the Lee Conjecture in \cref{sec:lee-conjecture}.

\begin{lemma}
 \label{dbdbbar-lemma-middle}
 Let $(M^{2n+1},T^{1,0},\theta)$ be a closed Sasaki manifold.
 Let $\omega \in \mB^{p,q}$, $p + q = n + 1$, be such that $\omega = d\tau$ for some $\tau \in \CmR^{p+q-1}$.
 If
 \begin{equation*}
  \omega \in \im \bigl( \dbbar^\ast \colon \mR^{p+1,q} \to \mR^{p+1,q-1} \bigr) \oplus \im \bigl( \db^\ast \colon \mR^{p+1,q} \to \mR^{p,q} \bigr) ,
 \end{equation*}
 then
 \begin{equation*}
  \omega \in \im \left( d\dbbar \colon \mR^{p-1,q-1} \to \mB^{p,q} \right) .
 \end{equation*}
\end{lemma}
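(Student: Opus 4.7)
The plan is to adapt the proof of \cref{dbdbbar-lemma-low} to the middle-degree setting. Write $\omega = \omega_1 + \omega_2$ with $\omega_1 \in \mR^{p+1,q-1}$ and $\omega_2 \in \mR^{p,q}$, and let $\xi_1,\xi_2 \in \mR^{p+1,q}$ satisfy $\omega_1 = \dbbar^\ast \xi_1$ and $\omega_2 = \db^\ast \xi_2$. Since $\tau \in \CmR^n$ sits in middle degree, decompose $\tau = \sum_{j=0}^n \tau_j$ with $\tau_j \in \mR^{j,n-j}$; matching bidegrees in $d\tau = \omega \in \mB^{p,q}$ forces
\[ \db\tau_{k-2} + \dhor\tau_{k-1} + \dbbar\tau_k = 0 \]
for every $k \notin \{p, p+1\}$.

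First I would reduce $\tau$ to the form $\tau_{p-1} + \tau_p$ by successively killing components $\tau_j$ with $j \leq p-2$ or $j \geq p+1$. If $j_0 \leq p-2$ is the smallest index with $\tau_{j_0} \neq 0$, the equation above forces $\dbbar\tau_{j_0} = 0$; then \cref{popovici-hodge-application} lets us absorb the $\dbbar$-exact part of $\tau_{j_0}$ into $d\tau$ as a $d\dbbar$-exact modification (exactly as in the first reduction in the proof of \cref{dbdbbar-lemma-low}), eliminating $\tau_{j_0}$. Iteration, combined with the symmetric argument from the largest index via conjugation, reduces $\tau$ to $\tau_{p-1} + \tau_p$ satisfying the additional constraints $\dbbar\tau_{p-1} = 0$ and $\db\tau_p = 0$ read off from the $(p-1,q+1)$- and $(p+2,q-2)$-components of $d\tau$.

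Next, I would apply \cref{popovici-hodge-application} to $\tau_{p-1} \in \mR^{p-1,q}$ and the conjugate of \cref{popovici-hodge-application} to $\tau_p \in \mR^{p,q-1}$ (both in middle degree $n$) to obtain
\begin{align*}
 \tau_{p-1} &= \cH\tau_{p-1} + \dbbar\gamma_1 + \db\delta_1 + \dhor^\ast\eta_1 , \\
 \tau_p &= \overline{\cH\overline{\tau_p}} + \db\gamma_2 + \dbbar\delta_2 + \dhor^\ast\eta_2 ,
\end{align*}
with $\gamma_i \in \mR^{p-1,q-1}$, $\delta_1 \in \ker\dbbar \cap \mR^{p-2,q}$, $\delta_2 \in \ker\db \cap \mR^{p,q-2}$, $\eta_1 \in \ker\Box_b \cap \mR^{p,q}$, and $\eta_2 \in \ker\oBox \cap \mR^{p,q}$. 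Substituting into $\omega = d\tau_{p-1} + d\tau_p$ and using (i) that the $\cH$-terms are $d$-closed by \eqref{eqn:sasakian-kernel-popovici}, (ii) $\db^2 = \dbbar^2 = 0$, and (iii) the middle-degree commutators from \cref{justification-of-bigraded-complex} (notably $\dhor\db + \db\dbbar = 0$ on $\mR^{p-2,q}$, $\dbbar\db + \dhor\dbbar = 0$ on $\mR^{p-1,q-1}$, and $\dhor\dbbar + \dbbar\db = 0$ on $\mR^{p,q-2}$), all mixed terms involving $\delta_i$ vanish and the $\gamma_i$-contributions combine into a single $d\dbbar$-exact term; setting $\alpha := \gamma_1 - \gamma_2 \in \mR^{p-1,q-1}$, this produces
\begin{equation*}
 \omega - d\dbbar\alpha = \bigl(\db\dhor^\ast\eta_1 + \dhor\dhor^\ast\eta_2\bigr) + \bigl(\dhor\dhor^\ast\eta_1 + \dbbar\dhor^\ast\eta_2\bigr),
\end{equation*}
with the two parenthesized error terms in $\mR^{p+1,q-1}$ and $\mR^{p,q}$ respectively.

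The main obstacle will be the last step: eliminating the error terms using the hypothesis $\omega \in \im\dbbar^\ast \oplus \im\db^\ast$. The strategy is to show that each error term lies in $\ker\cBoxb$ by a direct computation using $\Box_b\eta_1 = 0$, $\oBox\eta_2 = 0$, and the identities in \cref{dhordbbarstar,critical-dbdbbarstar,dual-justification}; on the other hand, $\omega - d\dbbar\alpha$ itself lies in $\im\dbbar^\ast \oplus \im\db^\ast$; by the $L^2$-orthogonality of $\ker\cBoxb$ to $\im\dbbar^\ast$ and (modulo projection onto $\ker\Box_b$) to $\im\db^\ast$ coming from \cref{popovici-hodge-decomposition} in the case $p+q=n+1$, both error terms must vanish, giving $\omega = d\dbbar\alpha$. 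The subtlety is that the third summand in the Popovici--Hodge decomposition of $\mR^{p,q}$ (for $p+q=n+1$) contains $\im(\db^\ast\circ H)$ rather than all of $\im\db^\ast$, so one must use \cref{hodge-projection-mapping-properties} together with the Sasaki simplifications of \cref{kohn-laplacian-to-rumin-laplacian} to replace $\db^\ast\xi_2$ by $\db^\ast H\xi_2$ modulo additional $d\dbbar$-exact terms which can be reabsorbed into $\alpha$.
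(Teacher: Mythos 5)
Your overall route is the same as the paper's: reduce $\tau$ to the two adjacent components in $\mR^{p-1,q}\oplus\mR^{p,q-1}$ by the inductive absorption argument, then apply \cref{popovici-hodge-application} and its conjugate to those components, and finally kill the residual harmonic terms using the divergence hypothesis. One bookkeeping slip first: the conjugate of \cref{popovici-hodge-application} applied to $\tau_p\in\mR^{p,q-1}$ places the harmonic residue $\eta_2$ in $\ker\oBox_b\cap\mR^{p+1,q-1}$, not in $\mR^{p,q}$ (with your placement $\dhor^\ast\eta_2$ would land in $\mR^{p-1,q}$ and could not be a summand of $\tau_p$). With the correct bidegree, and using that $\db\dhor^\ast\eta_i=\dbbar\dhor^\ast\eta_i=0$ as recorded in the proof of \cref{popovici-hodge-application}, the identity simplifies to $\omega=d\dbbar\alpha+\dhor\dhor^\ast\eta_1+\dhor\dhor^\ast\eta_2$ with $\dhor\dhor^\ast\eta_1\in\mR^{p,q}$ and $\dhor\dhor^\ast\eta_2\in\mR^{p+1,q-1}$; your displayed error terms do not have the bidegrees you assign them.

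The genuine gap is in your final step. You propose to conclude by $L^2$-orthogonality of $\ker\cBoxb$ to the third summand of \cref{popovici-hodge-decomposition}, and you correctly observe that for $p+q=n+1$ that summand contains only $\im\left(\db^\ast\circ H\right)$ rather than all of $\im\db^\ast$; but your proposed repair --- replacing $\db^\ast\xi_2$ by $\db^\ast H\xi_2$ ``modulo $d\dbbar$-exact terms'' --- is unjustified, since $\db^\ast(\xi_2-H\xi_2)=\db^\ast\Box_b N\xi_2$ has no reason to be $d\dbbar$-exact. The paper never invokes $\cBoxb$ or its Hodge decomposition at this stage. Instead, projecting the identity above onto $\mR^{p,q}$ and $\mR^{p+1,q-1}$ and using \cref{justification-of-bigraded-complex} to rewrite $\dhor\dbbar\alpha$ and $\db\dbbar\alpha$, one finds that each $\dhor\dhor^\ast\eta_i$ lies in an explicit subspace ($\im\dbbar+\im\db^\ast$, respectively $\im\dhor+\im\dbbar^\ast$); the relations $\db\dhor^\ast\eta_i=0$ and $\db^\ast\dhor^\ast\eta_i=\dbbar^\ast\dhor^\ast\eta_i=0$ together with \cref{justification-of-bigraded-complex,dual-justification,dhordbbarstar} (which hold exactly in the torsion-free case) show that $\dhor\dhor^\ast\eta_i$ is also $L^2$-orthogonal to that same subspace, whence $\dhor^\ast\eta_i=0$. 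You should replace your orthogonality-to-$\ker\cBoxb$ argument with this direct pairing; the rest of your outline then goes through.
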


\begin{proof}
 Let $\omega \in \mB^{p,q}$, $p+q = n + 1$, and let $\tau \in \CmR^{p+q-1}$ be such that $d\tau = \omega$.

 We claim that we may take $\tau \in \mR^{p,q-1} \oplus \mR^{p-1,q}$.
 Denote $\tau_j := \pi^{j-1,p+q-j}\tau$, $j \in \{ 0, \dotsc, p+q+1 \}$.
 Let $j$ be the largest element of $\{ 1, \dotsc , p-1 \}$ such that $\tau_\ell=0$ for all $\ell<j$;
 since $\tau_0=0$, $j$ exists.
 Since $j \leq p - 1$, it holds that
 \begin{align*}
  0 & = \pi^{j-1,p+q-j+1}\omega = \dbbar\tau_j , \\
  0 & = \pi^{j,p+q-j}\omega = \dhor\tau_j + \dbbar\tau_{j+1} .
 \end{align*}
 Applying \cref{popovici-hodge-application} to $\tau_j$ yields an $\alpha_j \in \mR^{j-1,p+q-j-1}$ such that
 \begin{equation*}
  d\tau_j = d\dbbar\alpha_j = -d\db\alpha_j .
 \end{equation*}
 In particular, $\ctau := \tau - \tau_j - \db\alpha_j$ is such that $d\ctau = \omega$ and $\pi^{j-1,p+q-j}\ctau=0$.
 It follows from induction that, by replacing $\tau$ if necessary, $\tau \in \bigoplus_{j\geq0} \mR^{p+j-1,q-j}$.
 Carrying out the analogous argument using larger values of $j$ and the conjugate of \cref{popovici-hodge-application} yields the claim.

 Now, applying \cref{popovici-hodge-application} to $\pi^{p-1,q}\tau$ and the conjugate of \cref{popovici-hodge-application} to $\pi^{p,q-1}\tau$ yields $\alpha \in \mR^{p-1,q-1}$ and $\xi \in \ker\Box_b \cap \mR^{p,q}$ and $\zeta \in \ker\oBox_b \cap \mR^{p+1,q-1}$ such that
 \begin{equation}
  \label{eqn:dbdbbar-lemma-middle-pre-equation}
  \omega = d\tau = d\dbbar\alpha + \dhor\dhor^\ast\xi + \dhor\dhor^\ast\zeta .
 \end{equation}
 Combining the assumption $\pi^{p,q}\omega \in \im\db^\ast$ with \cref{justification-of-bigraded-complex} and Equation~\eqref{eqn:dbdbbar-lemma-middle-pre-equation} yields $\dhor\dhor^\ast\xi \in \im \dbbar + \im \db^\ast$.
 As observed in the proof of \cref{popovici-hodge-application}, it holds that $\db\dhor^\ast\xi = 0$.
 We conclude from \cref{justification-of-bigraded-complex,dual-justification,dhordbbarstar} that $\dhor^\ast\xi = 0$.
 Similarly, since $\pi^{p+1,q-1}\omega \in \im\dbbar^\ast$, it holds that $\dhor^\ast\zeta = 0$.
 The conclusion now follows from Equation~\eqref{eqn:dbdbbar-lemma-middle-pre-equation}.
\end{proof}

\section{Pseudo-Einstein contact forms and the Lee Conjecture}
\label{sec:lee-conjecture}

We conclude this article by discussing some cohomological properties of CR manifolds which admit a pseudo-Einstein contact form.
In particular, we extend to dimension three Lee's conjecture~\cite{Lee1988} of the equivalence of the vanishing of the real first Chern class $c_1(T^{1,0})$ and the existence of a pseudo-Einstein contact form.
We also present Lee's partial results~\cite{Lee1988} towards his conjecture and indicate which results generalize to dimension three.

Historically, pseudo-Einstein contact forms were first defined in dimension at least five~\cite{Lee1988}, and then extended to dimension three~\cites{FeffermanHirachi2003,CaseYang2012,Hirachi2013}.
The main reason for this difference is that the pseudo-Einstein condition is a second-order condition on the contact form in dimension at least five, but a third-order condition in dimension three.
These definitions are succinctly unified as follows:

\begin{definition}
 \label{defn:lee-form}
 Let $(M^{2n+1},T^{1,0},\theta)$ be a pseudohermitian manifold.
 The \defn{Lee form} is
 \[ \ell^\theta := -\frac{1}{n+2}\left( i\,\Omega_\mu{}^\mu - \frac{1}{n}d(R\theta) \right) . \]
 We say that $(M^{2n+1},T^{1,0},\theta)$ is \defn{pseudo-Einstein} if its Lee form is zero.
\end{definition}

Here $\Omega_\mu{}^\mu$ is the trace of the globally-defined curvature two-forms~\eqref{eqn:curvature-forms}, and hence is globally defined.

One main point of \cref{defn:lee-form} is that it is valid in all dimensions and signatures.
Another key point is that the Lee form is a closed element of the space $\mS^1$ of \cref{defn:pluriharmonic-cohomology}.
This can be seen by expressing the Lee form in terms of the pseudohermitian Ricci tensor;
this also recovers the usual definition~\cites{CaseYang2012,Lee1988,Hirachi2013} of a pseudo-Einstein contact form.

\begin{lemma}
 \label{lee-form}
 Let $(M^{2n+1},T^{1,0},\theta)$ be a pseudohermitian manifold.
 Then
 \begin{multline}
  \label{eqn:compute-lee-form}
  \ell^\theta = -\frac{1}{n+2} \biggl[ i\left( R_{\alpha\bar\beta} - \frac{R}{n}h_{\alpha\bar\beta}\right)\,\theta^\alpha \wedge \theta^{\bar\beta} \\
   + \left( \frac{1}{n}\nabla_\alpha R - i\nabla^\mu A_{\mu\alpha} \right)\,\theta\wedge\theta^\alpha + \left( \frac{1}{n}\nabla_{\bar\beta} R + i\nabla^{\bar\nu} A_{\bar\nu\bar\beta}\right)\,\theta\wedge\theta^{\bar\beta} \biggr] .
 \end{multline}
 In particular,
 \begin{enumerate}
  \item $\ell^\theta \in \ker \left( d \colon \mS^1 \to \mS^2 \right)$; and
  \item $(M^{2n+1},T^{1,0},\theta)$ is pseudo-Einstein if and only if
  \begin{equation}
   \label{eqn:pseudo-einstein}
   \begin{cases}
    R_{\alpha\bar\beta} = \frac{R}{n}h_{\alpha\bar\beta}, & \text{if $n\geq2$}, \\
    \nabla_\alpha R = i\nabla^\mu A_{\mu\alpha}, & \text{if $n=1$}.
   \end{cases}
  \end{equation}
 \end{enumerate}
\end{lemma}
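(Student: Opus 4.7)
The plan is to derive the explicit display~\eqref{eqn:compute-lee-form} by direct computation from the definition of the Lee form, and then to read off both (i) and (ii) essentially by inspection together with one application of the contracted second Bianchi identity.

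To establish the display, substitute~\eqref{eqn:curvature-form} into $\ell^\theta = -\frac{1}{n+2}\bigl(i\Omega_\mu{}^\mu - \frac{1}{n}d(R\theta)\bigr)$ and trace using $h^{\bar\beta\alpha}$. The symmetry $R_{\alpha\bar\beta\rho\bar\sigma} = R_{\rho\bar\sigma\alpha\bar\beta}$ turns the first summand into $R_{\rho\bar\sigma}\,\theta^\rho\wedge\theta^{\bar\sigma}$, and the two $\nabla A$ terms contract to $-\nabla^\mu A_{\mu\rho}\,\theta\wedge\theta^\rho + \nabla^{\bar\mu}A_{\bar\mu\bar\sigma}\,\theta\wedge\theta^{\bar\sigma}$. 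Crucially, the two torsion-wedge terms $i\theta_\alpha\wedge\tau_{\bar\beta} - i\tau_\alpha\wedge\theta_{\bar\beta}$ reduce, after tracing and using $\tau_\gamma = A_{\mu\gamma}\theta^\mu$, to scalar multiples of $A_{\bar\mu\bar\nu}\theta^{\bar\mu}\wedge\theta^{\bar\nu}$ and $A_{\mu\nu}\theta^\mu\wedge\theta^\nu$, each of which vanishes by the symmetry $A_{\alpha\gamma} = A_{\gamma\alpha}$ against the antisymmetry of $\wedge$. Meanwhile $d(R\theta) = dR\wedge\theta + R\,d\theta$ expands via $d\theta = ih_{\alpha\bar\beta}\theta^\alpha\wedge\theta^{\bar\beta}$, with the $\nabla_0R$ term annihilated by $\theta\wedge\theta = 0$. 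Collecting terms and multiplying by $-\frac{1}{n+2}$ produces~\eqref{eqn:compute-lee-form}.

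Claim (i) has two parts. Closedness $d\ell^\theta = 0$ follows from $d^2(R\theta) = 0$ together with $d\Omega_\mu{}^\mu = 0$, the latter obtained by tracing the Bianchi identity~\eqref{eqn:bianchi}: the two residual wedge terms $\omega_\mu{}^\nu\wedge\Omega_\nu{}^\mu$ and $\Omega_\mu{}^\nu\wedge\omega_\nu{}^\mu$ cancel after reindexing, since a $1$-form commutes through a $2$-form. Membership in $\mS^1$ requires two verifications: reality (manifest from the display, since the two $\theta$-wedge summands are complex conjugates and $-i(R_{\alpha\bar\beta}-\frac{R}{n}h_{\alpha\bar\beta})$ corresponds to a hermitian tensor), and the Rumin conditions. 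For $n \geq 2$, $\mS^1 = \Real\mR^{1,1}$; \cref{lefschetz-decomposition} shows that the traceless $(1,1)$-coefficient is primitive, so $\theta\wedge\ell^\theta\wedge d\theta^{n-1} = 0$, and the companion condition $\theta\wedge d\ell^\theta\wedge d\theta^{n-2} = 0$ is then immediate from closedness. For $n = 1$, $\mS^1 = \mR^2$ only requires $\theta\wedge\omega = 0$ (the second defining condition is automatic on a $3$-manifold), which holds since every summand in~\eqref{eqn:compute-lee-form} is divisible by $\theta$---the $(1,1)$-piece vanishes identically because $R_{\alpha\bar\beta} = Rh_{\alpha\bar\beta}$ in complex dimension one.

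Claim (ii) then follows by inspection of~\eqref{eqn:compute-lee-form}. When $n \geq 2$, setting $\ell^\theta = 0$ and reading off the $(1,1)$-component gives $R_{\alpha\bar\beta} = \frac{R}{n}h_{\alpha\bar\beta}$; the converse requires the contracted second Bianchi identity---obtained by differentiating~\eqref{eqn:curvature-form}, invoking $d\Omega_\mu{}^\mu = 0$, and extracting the $\theta\wedge\theta^\alpha$-coefficient---which relates $\nabla^\mu R_{\mu\alpha}$, $\nabla_\alpha R$, and $\nabla^\mu A_{\mu\alpha}$, and forces $\frac{1}{n}\nabla_\alpha R - i\nabla^\mu A_{\mu\alpha} = 0$ once $R_{\alpha\bar\beta}$ is a constant multiple of $h_{\alpha\bar\beta}$. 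When $n = 1$, the $(1,1)$-piece of $\ell^\theta$ vanishes identically, so $\ell^\theta = 0$ is equivalent to the vanishing of its $\theta\wedge\theta^\alpha$-coefficient, namely $\nabla_\alpha R = i\nabla^\mu A_{\mu\alpha}$. The main obstacle will be pinning down the constants in the contracted Bianchi identity used here: while it is implicit in~\eqref{eqn:curvature-form} and~\eqref{eqn:bianchi}, careful bookkeeping against the nondiagonal Levi form is needed to confirm that the coefficients align so as to kill the torsion-divergence summand precisely when the pseudohermitian Ricci tensor is pure trace.
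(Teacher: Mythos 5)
Your proposal is correct, but for the converse direction of (ii) when $n\geq 2$ it takes a genuinely different route from the paper. The paper never invokes the contracted Bianchi identity: having established in (i) that $\ell^\theta\in\mR^2$, it observes that the pure-trace hypothesis on $R_{\alpha\bar\beta}$ gives $\ell^\theta\equiv 0\mod\theta$, and then the rigidity of the Rumin sheaf --- \cref{projection-to-R} says $\pi$ restricts to the identity on $\sR^2$ while killing everything in the ideal generated by $\theta$ and $d\theta$ --- forces $\ell^\theta=0$ outright, with no need to compute the $\theta\wedge\theta^\alpha$-coefficient at all. Your route, by contrast, derives $\frac{1}{n}\nabla_\alpha R = i\nabla^\mu A_{\mu\alpha}$ explicitly from the identity $\nabla^\mu R_{\alpha\bar\mu} = \nabla_\alpha R - i(n-1)\nabla^\mu A_{\alpha\mu}$ (the coefficient you were worried about does come out right: substituting $R_{\alpha\bar\beta}=\frac{R}{n}h_{\alpha\bar\beta}$ yields $\frac{n-1}{n}\nabla_\alpha R = i(n-1)\nabla^\mu A_{\mu\alpha}$, and dividing by $n-1$ --- legitimate exactly when $n\geq 2$ --- gives the needed cancellation; note also that $R$ need only be a function, not a constant, for this to go through). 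What each approach buys: yours makes the vanishing of the $\theta$-components an explicit consequence of a classical identity and so is self-contained at the tensor-calculus level, while the paper's is softer, avoids all constant-chasing, and illustrates the structural point that an element of $\sR^k$, $k\leq n$, is determined by its restriction to $\CH$ --- which is arguably the raison d'\^etre of the whole differential-form formulation. The remainder of your argument (the computation of \eqref{eqn:compute-lee-form}, the trace of \eqref{eqn:bianchi} giving $d\Omega_\mu{}^\mu=0$, the primitivity and reality checks for membership in $\mS^1$ in both the $n\geq2$ and $n=1$ cases, and the $n=1$ case of (ii)) matches the paper's in substance.
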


\begin{proof}
 A straightforward computation using Equation~\eqref{eqn:curvature-form} yields Equation~\eqref{eqn:compute-lee-form}.
 
 That $d\ell^\theta=0$ follows immediately from Equation~\eqref{eqn:bianchi} and \cref{defn:lee-form}.
 Since $R_{\alpha\bar\beta}-\frac{R}{n}h_{\alpha\bar\beta}$ is trace-free, we deduce that $\theta\wedge\ell^\theta\wedge d\theta^{n-1}=0$.
 Therefore $\ell^\theta\in\mR^2$.
 Equation~\eqref{eqn:compute-lee-form} then implies that $\ell^\theta\in\mS^1$.
 
 It is clear that if $\ell^\theta=0$, then Equation~\eqref{eqn:pseudo-einstein} holds.
 Conversely, suppose that Equation~\eqref{eqn:pseudo-einstein} holds.
 If $n=1$, it trivially holds that $\ell^\theta=0$.
 If $n\geq2$, then $\ell^\theta\equiv0\mod\theta$.
 Since $\ell^\theta\in\mR^{1}$, we conclude from \cref{projection-to-R} that $\ell^\theta=0$.
\end{proof}

\Cref{lee-form} implies that $[\ell^\theta]$ determines an element of $H_R^1(M;\sP)$ for each choice of contact form $\theta$.
In fact, this class is independent of choice of contact form.
This follows by computing how the Lee form transforms under change of contact form.

\begin{lemma}
 \label{lee-form-transformation}
 Let $(M^{2n+1},T^{1,0},\theta)$ be a pseudohermitian manifold.
 Then
 \begin{equation}
  \label{eqn:lee-form-transformation}
  \ell^{e^u\theta} = \ell^\theta + id\dbbar u
 \end{equation}
 for all $u\in C^\infty(M)$.
\end{lemma}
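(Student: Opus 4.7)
The approach is to reduce to a cleaner local formulation of the Lee form and then carry out the transformation calculation in an admissible coframe. Observe first that $\omega_\mu{}^\alpha \wedge \omega_\alpha{}^\mu$ is antisymmetric under the swap $\alpha \leftrightarrow \mu$ of dummy indices (the swap contributes a sign from the wedge but leaves the indexed sum the same), so $\omega_\mu{}^\alpha \wedge \omega_\alpha{}^\mu = 0$ and hence $\Omega_\mu{}^\mu = d\omega_\mu{}^\mu$. Consequently, in any local admissible coframe,
\begin{equation*}
 \ell^\theta = -\frac{1}{n+2}\,d\Bigl(i\omega_\mu{}^\mu - \frac{1}{n}R\,\theta\Bigr).
\end{equation*}
The proof therefore reduces to comparing the local primitives $i\omega_\mu{}^\mu - \frac{1}{n}R\theta$ and $i\homega_\mu{}^\mu - \frac{1}{n}\hR\htheta$ and showing that their difference equals $-(n+2)i\dbbar u$ modulo an exact form.

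The first step is to compute $\homega_\mu{}^\mu - \omega_\mu{}^\mu$. Using the admissible coframe $\{\htheta^\alpha = \theta^\alpha + iu^\alpha\theta\}$ from \cref{coframe-transformation}, differentiating $\htheta^\alpha$ directly and comparing with the structure equation $d\htheta^\alpha = \htheta^\mu \wedge \homega_\mu{}^\alpha + \htheta \wedge \htau^\alpha$ determines $\homega_\mu{}^\alpha - \omega_\mu{}^\alpha$ modulo its $\theta$-component; its non-$\theta$ piece is pinned down by the metric-compatibility relation $d\hh_{\alpha\bar\beta} = \homega_{\alpha\bar\beta} + \homega_{\bar\beta\alpha}$ (together with $\hh_{\alpha\bar\beta} = e^u h_{\alpha\bar\beta}$), and its $\theta$-piece is determined by the $\theta \wedge \theta^\mu$ coefficient, which is fixed by the torsion transformation $\hA_{\alpha\gamma} = A_{\alpha\gamma} + iu_{\alpha\gamma} - iu_\alpha u_\gamma$ from \cref{transformation}. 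Tracing over $\alpha = \mu$ then produces $\homega_\mu{}^\mu - \omega_\mu{}^\mu$ as an explicit combination of $\db u$, $\dbbar u$, the sublaplacian of $u$, $u_0$, and quadratic terms in $du$. Separately, I would use the Schouten transformation from \cref{transformation} together with $R = 2(n+1)P_\mu{}^\mu$ to compute $\hR\htheta - R\theta$. Subtracting the two expressions and applying the commutators of \cref{commutators} (to rewrite mixed second derivatives appearing in the sublaplacian term) produces
\begin{equation*}
 \Bigl(i\homega_\mu{}^\mu - \tfrac{1}{n}\hR\,\htheta\Bigr) - \Bigl(i\omega_\mu{}^\mu - \tfrac{1}{n}R\,\theta\Bigr) = -(n+2)i\dbbar u + d\beta
\end{equation*}
for some scalar function $\beta$. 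Applying $-\frac{1}{n+2}d$ and using $d^2 = 0$ then yields $\ell^{\htheta} = \ell^\theta + id\dbbar u$.

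The main obstacle is the bookkeeping in the middle step: the $\theta$-component of $\homega_\mu{}^\alpha$ is only indirectly constrained, and the interplay between the conformal rescaling of $R$ (which contributes a sublaplacian term via the Schouten transformation) and the quadratic $-iu_\alpha u_\gamma$ correction in the torsion transformation must cancel precisely so that the sum reassembles into $-(n+2)i\dbbar u$. In dimension three ($n = 1$), the trace-free Ricci contribution is automatically zero, so the full content of the identity lives in these $\theta$-wedge pieces and corresponds to the CR invariance of the operator $id\dbbar$ on functions (compare \cref{ex:paneitz}). A useful sanity check is that if $u$ is CR pluriharmonic, then $id\dbbar u = 0$ by \cref{defn:sP}, and the formula correctly recovers the known invariance of the pseudo-Einstein condition under pluriharmonic rescalings.
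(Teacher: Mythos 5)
Your route is genuinely different from the paper's. You exploit $\Omega_\mu{}^\mu=d\omega_\mu{}^\mu$ (correct: $\omega_\mu{}^\alpha\wedge\omega_\alpha{}^\mu=0$ by antisymmetry of the dummy indices) to write $\ell^\theta$ locally as $-\frac{1}{n+2}d\bigl(i\omega_\mu{}^\mu-\frac1nR\theta\bigr)$ and then transform the transgression form. The paper instead never touches the connection one-forms: it uses \cref{lee-form} to identify $\ell^\theta$ as an element of $\mS^1$, observes that for $n\geq2$ such an element is determined by its restriction to $\CH$ (\cref{projection-to-E-expression}), and reduces everything to the already-recorded Schouten transformation of \cref{transformation} compared against the local formula for $\db\dbbar u$ from \cref{bigraded-operators}; the case $n=1$ is handled separately by transforming $W_\alpha=\nabla_\alpha R-ni\nabla^\mu A_{\mu\alpha}$. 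What your approach buys is uniformity in $n$ (the paper must split cases because the trace-free Ricci term vanishes identically when $n=1$); what it costs is that the entire content of the lemma is pushed into the transformation law for $\omega_\mu{}^\mu$ \emph{including its $\theta$-component}, which is not available anywhere in the paper and must be derived from scratch from Webster's structure equations (it is essentially Lemma~2.4 of Lee's 1988 paper in a different normalization).

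Two cautions. First, your plan leaves that central computation unexecuted: the assertion that the difference of primitives equals $-(n+2)i\dbbar u+d\beta$ is exactly the nontrivial claim, and the cancellation between the $\nabla_0u$ and $u_\mu u^\mu$ terms coming from the $\theta$-coefficient of $\homega_\mu{}^\mu$ and from $\hR\htheta-R\theta$ is where all the work lives; as written this is a plausible outline rather than a proof. Second, a bookkeeping slip: in the structure equation $d\htheta^\alpha=\htheta^\mu\wedge\homega_\mu{}^\alpha+\htheta\wedge\htau^\alpha$, the torsion transformation of \cref{transformation} controls the $\theta\wedge\theta^{\bar\beta}$ components (i.e.\ $\htau^\alpha$), whereas the $\theta$-coefficient of $\homega_\mu{}^\alpha$ is read off from the $\theta\wedge\theta^\mu$ components of $d\htheta^\alpha$ computed directly from \cref{coframe-transformation}; these are distinct constraints, and conflating them would send you looking for the $\theta$-part of $\homega_\mu{}^\mu$ in the wrong place.
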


\begin{proof}
 Set $\htheta:=e^u\theta$.
 Denote $E_{\alpha\bar\beta}:=R_{\alpha\bar\beta}-\frac{R}{n}h_{\alpha\bar\beta}$ and $W_\alpha:=\nabla_\alpha R - ni\nabla^\mu A_{\mu\alpha}$.
 
 Suppose first that $n\geq2$.
 Then $\ell^\theta \equiv -\frac{i}{n+2}E_{\alpha\bar\beta}\,\theta^\alpha\wedge\theta^{\bar\beta}\mod\theta$.
 \Cref{commutators,transformation} imply that
 \begin{equation*}
  \hE_{\alpha\bar\beta} = E_{\alpha\bar\beta} - (n+2)U_{\alpha\bar\beta} ,
 \end{equation*}
 where $U_{\alpha\bar\beta} := u_{\bar\beta\alpha} - \frac{1}{n}u_{\bar\nu}{}^{\bar\nu}h_{\alpha\bar\beta}$.
 \Cref{bigraded-operators} now yields Equation~\eqref{eqn:lee-form-transformation}.
 
 Suppose now that $n=1$.
 Then $\ell^\theta = -\frac{1}{3}(W_\alpha\,\theta\wedge\theta^\alpha + W_{\bar\beta}\,\theta\wedge\theta^{\bar\beta})$.
 \Cref{commutators,transformation} imply that
 \begin{equation*}
  e^u\hW_\alpha = W_\alpha - 3u^\gamma{}_{\gamma\alpha} - 3iu^\gamma A_{\gamma\alpha} = W_\alpha - 3u_\gamma{}^\gamma{}_\alpha + 3iu_{\alpha0} .
 \end{equation*}
 We again conclude from \cref{bigraded-operators} that Equation~\eqref{eqn:lee-form-transformation} holds.
\end{proof}

In particular, we recover the fact~\citelist{\cite{Lee1988}*{Proposition~5.1} \cite{CaseYang2012}*{Proposition~3.5}} that the space of pseudo-Einstein contact forms, when nonempty, is parameterized by the space of CR pluriharmonic functions.

\begin{lemma}
 Let $(M^{2n+1},T^{1,0},\theta)$ be a pseudo-Einstein manifold.
 Then $e^u\theta$ is pseudo-Einstein if and only if $u \in \mP$.
\end{lemma}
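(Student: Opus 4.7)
The plan is to combine the transformation formula for the Lee form (the preceding Lemma labeled \texttt{lee-form-transformation}) with the very definition of the sheaf $\sP$ of CR pluriharmonic functions (given in \cref{defn:sP} as $\sP := \ker(D\colon\sS^0\to\sS^1)$, where $D = id\dbbar$ on $\sS^0 = \sR^0$). Thus by definition a smooth function $u$ lies in $\mP$ if and only if $id\dbbar u = 0$, i.e.\ $d\dbbar u = 0$.

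The main computation is already done: since $(M^{2n+1},T^{1,0},\theta)$ is pseudo-Einstein, $\ell^\theta = 0$, and \cref{lee-form-transformation} gives
\begin{equation*}
 \ell^{e^u\theta} = \ell^\theta + id\dbbar u = id\dbbar u .
\end{equation*}
The contact form $e^u\theta$ is pseudo-Einstein if and only if $\ell^{e^u\theta} = 0$, i.e.\ if and only if $id\dbbar u = 0$, which by the preceding paragraph holds if and only if $u \in \mP$.

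There is no real obstacle here; the work has all been packaged into \cref{lee-form-transformation} and \cref{defn:sP}. The only minor care needed is to verify that vanishing of $\ell^{e^u\theta}$, as an element of $\mS^1 \subset \mR^2$ (per \cref{lee-form}(i)), is equivalent to vanishing of $id\dbbar u$ as an element of $\mS^1$, which is immediate since both are identified as the same element of $\mR^2$ via \cref{projection-to-R}.
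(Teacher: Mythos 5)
Your proof is correct and follows exactly the paper's route: the paper's own proof simply cites \cref{defn:lee-form} and \cref{lee-form-transformation}, which is precisely the computation $\ell^{e^u\theta}=\ell^\theta+id\dbbar u=id\dbbar u$ combined with the definition of $\sP$ as $\ker(id\dbbar)$ that you spell out.
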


\begin{proof}
 This follows immediately from \cref{defn:lee-form,lee-form-transformation}.
\end{proof}

\Cref{lee-form-transformation} also implies that $[\ell^\theta]\in H_R^1(M;\sP)$ is independent of the choice of contact form.
This justifies the following definition:

\begin{definition}
 \label{defn:lee-class}
 Let $(M^{2n+1},T^{1,0})$ be an orientable CR manifold.
 The \defn{Lee class} is
 \begin{equation*}
  \mL := [\ell^\theta] \in H_R^1(M;\sP) .
 \end{equation*}
\end{definition}

Note that if $n\geq2$, then $H_R^1(M;\sP)$ can be interpreted as the CR analogue of the Bott--Chern class $H_{\mathrm{BC}}^{1,1}(M)$.
Hence the vanishing of $\mL$ is the CR analogue of the definition of a non-K\"ahler Calabi--Yau manifold~\cite{Tosatti2015}.

If $(M^{2n+1},T^{1,0})$ is a closed, strictly pseudoconvex manifold with $n\geq 2$ --- the setting considered by Lee~\cite{Lee1988} in his study of pseudo-Einstein manifolds --- then \cref{sP-resolution} implies that $\mL$ agrees, up to a dimensional constant, with the corresponding element of $H^1(M;\sP)$ introduced by Lee~\cite{Lee1988}*{Proposition~5.2}.
Importantly, the Lee class retains the property that it vanishes if and only if $(M^{2n+1},T^{1,0})$ admits a pseudo-Einstein contact form.

\begin{lemma}
 \label{characterize-pseudo-einstein}
 An orientable CR manifold $(M^{2n+1},T^{1,0})$ admits a pseudo-Einstein contact form if and only if its Lee class vanishes.
\end{lemma}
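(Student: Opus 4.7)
The plan is to reduce the statement to a direct application of the transformation formula \cref{lee-form-transformation} and the definition of $H_R^1(M;\sP)$, after verifying that the relevant sheaf identifications make the argument work for a real function. Since $(M^{2n+1},T^{1,0})$ is orientable, a global contact form $\theta$ exists, so the Lee form $\ell^\theta$ and the Lee class $\mL = [\ell^\theta] \in H_R^1(M;\sP)$ are well-defined. The forward implication is immediate: if $\theta$ is pseudo-Einstein then $\ell^\theta = 0$ by \cref{defn:lee-form}, so $\mL = 0$.

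For the reverse direction, I would first note that $\sS^0 = \sR^0$ and, since $\theta \wedge \omega \wedge d\theta^{n+1}$ and $\theta \wedge d\omega \wedge d\theta^n$ vanish for degree reasons when $\omega$ is a $0$-form, $\sR^0 = \sA^0$ is precisely the sheaf of smooth real-valued functions. Thus $\mS^0 = C^\infty(M;\bR)$ and $D \colon \mS^0 \to \mS^1$ is $D = id\dbbar$. By \cref{lee-form} the Lee form is a closed element of $\mS^1$, so the hypothesis $\mL = 0$ means, by \cref{defn:pluriharmonic-cohomology}, that
\[
 \ell^\theta = Du = id\dbbar u
\]
for some real $u \in C^\infty(M;\bR)$.

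Next I would invoke the transformation formula \cref{lee-form-transformation}. Replacing $\theta$ by $\htheta := e^{-u}\theta$ (which remains a global contact form since $u$ is real) and substituting $-u$ for $u$ in Equation~\eqref{eqn:lee-form-transformation} yields
\[
 \ell^{\htheta} = \ell^\theta - id\dbbar u = 0,
\]
so $\htheta$ is pseudo-Einstein by \cref{defn:lee-form}, completing the proof.

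There is no real obstacle; the work has already been done in establishing \cref{lee-form,lee-form-transformation,defn:pluriharmonic-cohomology}. The only point requiring a moment of care is checking that the lift of $[\ell^\theta] = 0$ produces a \emph{real} potential $u$, which follows because $\mS^0$ consists of real-valued functions by the degree-based vanishing above; this ensures $e^{-u}\theta$ is a bona fide real contact form and not merely a complex rescaling.
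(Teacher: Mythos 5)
Your proposal is correct and follows essentially the same route as the paper: the forward direction is immediate from the definition, and the reverse direction lifts $\mL=0$ to a real potential $u$ with $\ell^\theta = id\dbbar u$ and applies \cref{lee-form-transformation} to conclude that $e^{-u}\theta$ is pseudo-Einstein. Your extra remark that $\mS^0$ consists of real-valued functions (so the conformal factor is genuinely real) is a point the paper leaves implicit, but it does not change the argument.
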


\begin{proof}
 If $\theta$ is a pseudo-Einstein contact form on $(M^{2n+1},T^{1,0})$, then $\mL=0$.
 
 Conversely, suppose that $\mL=0$.
 Let $\theta$ be a contact form on $(M^{2n+1},T^{1,0})$.
 Since $\mL=0$, there is a $u\in C^\infty(M)$ such that $\ell^\theta=id\dbbar u$.
 We conclude from \cref{lee-form-transformation} that $e^{-u}\theta$ is a pseudo-Einstein contact form on $(M^{2n+1},T^{1,0})$. 
\end{proof}

Recall from \cref{defn:cohomology_maps} that the identity map on $\mS^1$ induces a well-defined morphism $H_R^1(M;\sP)\to H^2(M;\bR)$.
The image of the Lee class under this morphism is a dimensional multiple of the real first Chern class of $T^{1,0}$.

\begin{lemma}
 \label{lee-class-to-H2}
 Let $(M^{2n+1},T^{1,0})$ be a CR manifold.
 Then $\mL\mapsto-\frac{2\pi}{n+2}c_1(T^{1,0})$ under the morphism $H_R^1(M;\sP)\to H^2(M;\bR)$.
\end{lemma}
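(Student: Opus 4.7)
The plan is to chase a representative of $\mL$ through the morphism $H_R^1(M;\sP) \to H^2(M;\bR)$ and match the resulting real de Rham class with the Chern--Weil representative of $c_1(T^{1,0})$ coming from the Tanaka--Webster connection. Fix a contact form $\theta$ and represent $\mL$ by $\ell^\theta \in \mS^1$. By \cref{defn:cohomology_maps}, the morphism $H_R^1(M;\sP) \to H_R^2(M;\bR)$ sends $[\ell^\theta]$ to the class of $\ell^\theta$ regarded as an element of $\mR^2$, and by \cref{r-resolution} this corresponds to the ordinary de Rham class of the closed $2$-form $\ell^\theta \in \sA^2$. Thus it suffices to identify $[\ell^\theta] \in H^2(M;\bR)$ with $-\tfrac{2\pi}{n+2}c_1(T^{1,0})$.

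From \cref{defn:lee-form},
\[ \ell^\theta = -\frac{1}{n+2}\left(i\,\Omega_\mu{}^\mu - \frac{1}{n}d(R\theta)\right), \]
and since the second summand is exact, the de Rham class reduces to
\[ [\ell^\theta] = -\frac{i}{n+2}\left[\Omega_\mu{}^\mu\right] \in H^2(M;\bR). \]
The form $i\,\Omega_\mu{}^\mu$ is real-valued because $\Omega_{\alpha\bar\beta}$ is skew-Hermitian in its indices, and it is globally closed by the Bianchi identity \eqref{eqn:bianchi}.

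It remains to invoke Chern--Weil theory for the Tanaka--Webster connection, which preserves $T^{1,0}$ and is unitary with respect to the Levi form. Its curvature on $T^{1,0}$ is the globally defined $\End(T^{1,0})$-valued two-form $\Omega_\alpha{}^\beta$ from \eqref{eqn:curvature-forms}, so the standard Chern--Weil representative of the first Chern class is
\[ c_1(T^{1,0}) = \left[\frac{i}{2\pi}\tr\Omega_\alpha{}^\beta\right] = \left[\frac{i}{2\pi}\Omega_\mu{}^\mu\right] \in H^2(M;\bR). \]
Substituting then yields $[\ell^\theta] = -\tfrac{2\pi}{n+2}c_1(T^{1,0})$. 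There is no substantive obstacle; the work is purely bookkeeping of constants and signs, and the only genuine check is that the $i/(2\pi)$ factor in Chern--Weil lines up with the $-i/(n+2)$ factor coming from \cref{defn:lee-form} to produce the stated coefficient.
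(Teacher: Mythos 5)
Your proof is correct and follows essentially the same route as the paper: the paper's proof simply recalls that $c_1(T^{1,0}) := \frac{i}{2\pi}[\Omega_\mu{}^\mu]$ and reads the conclusion off from \cref{defn:lee-form,defn:lee-class}, which is exactly your computation (the $d(R\theta)$ term being exact, the morphism being induced by the identity on $\mS^1 \subset \mR^2$). Your explicit appeal to Chern--Weil theory and to \cref{r-resolution} just spells out steps the paper leaves implicit.
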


\begin{proof}
 Recall that $c_1(T^{1,0}):=\frac{i}{2\pi}[\Omega_\mu{}^\mu]\in H^2(M;\bR)$.
 The conclusion follows immediately from \cref{defn:lee-form,defn:lee-class}.
\end{proof}

In particular, the vanishing of the first real Chern class is necessary for the existence of a pseudo-Einstein contact form.
Lee conjectured~\cite{Lee1988} that this condition is also sufficient for closed, strictly pseudoconvex CR manifolds of dimension at least five.
Based on our Hodge decomposition theorem, we propose the following strengthening of Lee's conjecture:

\begin{conjecture}[Lee Conjecture]
 \label{lee-conjecture}
 Let $(M^{2n+1},T^{1,0})$ be a closed, embeddable, strictly pseudoconvex CR manifold.
 Then $(M^{2n+1},T^{1,0})$ admits a pseudo-Einstein contact form if and only if $c_1(T^{1,0})=0$ in $H^2(M;\bR)$.
\end{conjecture}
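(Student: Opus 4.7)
The forward direction is immediate: if $\theta$ is pseudo-Einstein then $\ell^\theta=0$, so $\mL=0$ in $H_R^1(M;\sP)$, and \cref{lee-class-to-H2} gives $c_1(T^{1,0})=0$. For the converse, assume $c_1(T^{1,0})=0$. By \cref{lee-class-to-H2}, the Lee class $\mL$ lies in the kernel of $H_R^1(M;\sP)\to H^2(M;\bR)$, so by exactness of the long exact sequence (\cref{long-exact-sequence}) there exists $[\omega]\in H_R^{0,1}(M)$ with $\mL=[-\Imaginary d\omega]$. The plan is to prove $\mL=0$ in $H_R^1(M;\sP)$: once this is known, $\ell^\theta=id\dbbar u$ for some $u\in\mR^0$ and \cref{characterize-pseudo-einstein} then supplies a pseudo-Einstein contact form. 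Using \cref{hodge-isomorphism} to represent $[\omega]$ by a $\dbbar$-harmonic $(0,1)$-form $\eta$, the goal reduces to showing that $-\Imaginary d\eta$ is $D$-exact in $\mS^1$; equivalently, it suffices to prove that the morphism $H_R^{0,1}(M)\to H_R^1(M;\sP)$ is identically zero.

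Two special cases yield to this reduction and recover Lee's partial results. First, if $(M,T^{1,0})$ is Sasakian, the characterization~\eqref{eqn:sasakian-kernel-popovici} of $\ker\cBoxb$ together with \cref{kohn-laplacian-to-rumin-laplacian} implies that every $\dbbar$-harmonic $(0,1)$-form is $d$-closed, so $-\Imaginary d\eta=0$ and $\mL=0$. An alternate, more direct route argues at the level of $\ell^\theta$ itself: $\ell^\theta$ is $d$-exact and lies in $\mR^{1,1}$ when $n\geq2$ and in $\mB^{1,1}$ when $n=1$, and the contracted Bianchi identity~\eqref{eqn:bianchi} combined with \cref{commutators} furnishes the adjoint divergence conditions required to invoke \cref{dbdbbar-lemma-low,dbdbbar-lemma-middle}, producing $v\in\mR^{0,0}$ with $\ell^\theta=\db\dbbar v$, from which the desired pluriharmonic potential is extracted. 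Second, when $n\geq2$ and some local contact form has nonnegative pseudohermitian Ricci curvature, the Bochner-type estimate \cref{bochner-order2} underlying \cref{H0q-vanishing} yields the injectivity statement \cref{cohomology-map-trivial}: the morphism $H_R^1(M;\sP)\to H^2(M;\bR)$ is injective, so $c_1(T^{1,0})=0$ forces $\mL=0$ immediately.

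The principal obstacle to proving the full conjecture is that, outside of these two regimes, $\dbbar$-harmonic $(0,1)$-forms need not be $d$-closed, and the CR analogue of the $\partial\overline{\partial}$-lemma fails without a torsion-free hypothesis, as already visible in \cref{ex:paneitz} where the third-order operator $d\db\dbbar$ coincides up to sign with the CR Paneitz operator. A proof in the general closed, embeddable, strictly pseudoconvex setting would need either to establish a torsion-insensitive replacement for \cref{dbdbbar-lemma-low,dbdbbar-lemma-middle}, or to deform the contact form within the CR structure, using fine properties of the Szeg\H o projection (as in the construction of the partial inverses in the proof of \cref{hodge-decomposition}) to absorb the pseudohermitian torsion into the pluriharmonic ambiguity of the Lee class.
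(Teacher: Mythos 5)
The statement you are asked about is a \emph{conjecture}, not a theorem: the paper offers no proof of the general case, and neither do you --- correctly, since the problem is open. What you have written is the easy direction plus the two known partial results, and you honestly flag the remaining obstruction; in that sense your writeup matches the paper's actual content (\cref{lee-class-to-H2}, \cref{kr-to-lee}, \cref{lee-from-nonnegative-ricci}, \cref{lee-from-sasakian}) rather than constituting a proof of the conjecture. The forward direction, the reduction of the converse to $\mL=0$ via the long exact sequence, and the nonnegative-Ricci case via \cref{cohomology-map-trivial} are all exactly the paper's arguments.

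One technical slip in your Sasakian discussion: your ``first route'' claims that every $\dbbar$-harmonic $(0,1)$-form is $d$-closed by appealing to Equation~\eqref{eqn:sasakian-kernel-popovici}. That equation describes $\ker\cBoxb$ (the Popovici Laplacian), which is in general a \emph{proper} subspace of $\mH^{0,1}=\ker\Box_b$; a $\dbbar$-harmonic form $\eta$ satisfies $\dbbar\eta=\dbbar^\ast\eta=0$ but need not satisfy $\db\eta=0$, so this route does not close. Your ``alternate, more direct route'' --- working with $\ell^\theta$ itself, verifying the divergence hypotheses from torsion-freeness, and invoking \cref{dbdbbar-lemma-low,dbdbbar-lemma-middle} --- is the paper's actual proof of \cref{lee-from-sasakian} and is the one you should keep. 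Finally, note that the paper's partial result \cref{lee-from-nonnegative-ricci} is stated only for $n\geq2$; your phrasing should retain that restriction, since \cref{cohomology-map-trivial} rests on \cref{H0q-vanishing}, which requires $q\in\{1,\dotsc,n-1\}$.
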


By \cref{long-exact-sequence}, if $c_1(T^{1,0})=0$, then the Lee class is in the image of the morphism $H^{0,1}(M) \to H_R^1(M;\sP)$.
This leads to Lee's approach to the Lee Conjecture:
Prove that $0$ is in the preimage of $\mL$ under this morphism.
In particular, the Lee Conjecture is valid on CR manifolds for which $H^{0,1}(M)=0$.

\begin{proposition}
 \label{kr-to-lee}
 Let $(M^{2n+1},T^{1,0})$ be a CR manifold with $c_1(T^{1,0})=0$ in $H^2(M;\bR)$.
 If $b^{0,1}=0$, then $(M^{2n+1},T^{1,0})$ admits a pseudo-Einstein contact form.
\end{proposition}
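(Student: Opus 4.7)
The plan is to reduce the existence of a pseudo-Einstein contact form to a diagram chase in the long exact sequence of \cref{long-exact-sequence}. By \cref{characterize-pseudo-einstein} it suffices to show that the Lee class $\mathcal{L} \in H_R^1(M;\sP)$ vanishes under the hypotheses. To this end, I would first use \cref{r-resolution} to identify $H_R^2(M;\bR)$ with the classical second de Rham cohomology group, so that the hypothesis $c_1(T^{1,0}) = 0$ in $H^2(M;\bR)$ translates into the vanishing of $c_1(T^{1,0})$ in $H_R^2(M;\bR)$.

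Next, I would apply \cref{lee-class-to-H2}, which identifies the image of $\mathcal{L}$ under the connecting morphism $H_R^1(M;\sP) \to H_R^2(M;\bR)$ of \cref{defn:cohomology_maps} with the nonzero multiple $-\tfrac{2\pi}{n+2}c_1(T^{1,0})$. By the previous step, this image is zero. Exactness of the sequence
\begin{equation*}
 H_R^{0,1}(M) \longrightarrow H_R^1(M;\sP) \longrightarrow H_R^2(M;\bR)
\end{equation*}
from \cref{long-exact-sequence} then places $\mathcal{L}$ in the image of the morphism $H_R^{0,1}(M) \to H_R^1(M;\sP)$.

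Finally, I would invoke \cref{kr-resolution} to identify $H_R^{0,1}(M)$ with the classical Kohn--Rossi group $H^{0,1}(M)$, whose dimension is $b^{0,1}$ by \cref{defn:betti}. The hypothesis $b^{0,1} = 0$ therefore forces $H_R^{0,1}(M) = 0$, which in turn forces $\mathcal{L} = 0$ by the previous step, and \cref{characterize-pseudo-einstein} completes the argument. There is no real obstacle here: once the long exact sequence, the identification of $\mathcal{L}$ with (a multiple of) the first Chern class, and the Hodge-theoretic isomorphisms with the classical de Rham and Kohn--Rossi groups are in place, the proposition is a one-line diagram chase.
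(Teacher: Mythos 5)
Your argument is correct and is essentially the paper's own proof: both reduce to showing $\mL=0$ via \cref{characterize-pseudo-einstein}, use \cref{lee-class-to-H2} together with $c_1(T^{1,0})=0$ to place $\mL$ in the image of $H_R^{0,1}(M)\to H_R^1(M;\sP)$ by exactness of \cref{long-exact-sequence}, and then kill that image using $b^{0,1}=0$. The only difference is that you spell out the identifications from \cref{r-resolution,kr-resolution} explicitly, which the paper leaves implicit.
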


\begin{proof}
 \Cref{long-exact-sequence,lee-class-to-H2} imply that $\mL\in\im\bigl( H^{0,1}(M) \to H_R^1(M;\sP) \bigr)$.
 Therefore $\mL=0$.
 The conclusion follows from \cref{characterize-pseudo-einstein}.
\end{proof}

However, the vanishing of $b^{0,1}$ is not necessary for the existence of a pseudo-Einstein contact form.
This is clear in the case $n=1$, when $H^{0,1}(M)$ is generally infinite-dimensional.
It is also true in higher dimensions:

\begin{example}
 \label{heisenberg-quotient-pseudo-einstein}
 Let $(I^{2n+1},T^{1,0},\theta)$ be as in \cref{heisenberg-quotient}.
 Clearly this is a closed, strictly pseudoconvex, pseudo-Einstein manifold with $b^{0,1}\not=0$.
\end{example}

Lee gave two sufficient conditions for the validity of the Lee Conjecture in terms of pseudohermitian invariants.
His first result~\cite{Lee1988}*{Theorem~E(i)} is that in dimension at least five, the existence of a contact form with nonnegative pseudohermitian Ricci curvature implies the validity of the Lee Conjecture.
The key point is that, under these assumptions, $\dbbar$-harmonic $(0,1)$-forms are parallel.

\begin{proposition}
 \label{lee-from-nonnegative-ricci}
 Let $(M^{2n+1},T^{1,0})$, $n \geq 2$, be a closed, strictly pseudoconvex manifold with $c_1(T^{1,0})=0$ in $H^2(M;\bR)$ which admits a contact form with $R_{\alpha\bar\beta}\geq0$.
 Then it admits a pseudo-Einstein contact form.
\end{proposition}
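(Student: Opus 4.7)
The plan is to combine the long exact sequence of \cref{long-exact-sequence} with the Bochner-type vanishing theorem for $(0,1)$-forms. The starting observation is \cref{lee-class-to-H2}: under the morphism $H_R^1(M;\sP)\to H_R^2(M;\bR)$, the Lee class $\mL$ maps to $-\tfrac{2\pi}{n+2}c_1(T^{1,0})$, where we have used the identification $H_R^2(M;\bR)\cong H^2(M;\bR)$ coming from \cref{r-resolution}. Since $c_1(T^{1,0})=0$ by hypothesis, $\mL$ lies in the kernel of this morphism. By the exactness statement of \cref{long-exact-sequence}, this kernel equals the image of the morphism $H_R^{0,1}(M)\to H_R^1(M;\sP)$, so it suffices to show this latter morphism is the zero map.

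Next I would apply \cref{cohomology-map-trivial} (which is exactly this zero-map statement for $k\in\{1,\dotsc,n-1\}$); the hypothesis $n\geq2$ ensures that $k=1$ falls in the admissible range. For completeness let me indicate why the argument works. By \cref{hodge-isomorphism}, every class in $H_R^{0,1}(M)$ has a $\dbbar$-harmonic representative $\omega$, and by \cref{H0q-vanishing}, the nonnegativity of $R_{\alpha\bar\beta}$ forces such an $\omega$ to be parallel with respect to the Tanaka--Webster connection. Writing out the local formula for $d\omega$ from \cref{formula-for-exterior-derivative} (or equivalently \cref{bigraded-operators}), every term involves a covariant derivative of $\omega$, so $d\omega\equiv0\mod\theta$. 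Since $d\omega\in\CsR^2$ and $\CsR^2\cap\ker\pi=\{0\}$ by \cref{projection-to-R} (as $2\leq n$), we conclude $d\omega=0$. In particular, the class $[-\Imaginary d\omega]\in H_R^1(M;\sP)$ vanishes, confirming that the morphism $H_R^{0,1}(M)\to H_R^1(M;\sP)$ is zero.

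Combining the two steps, $\mL=0$ in $H_R^1(M;\sP)$, and \cref{characterize-pseudo-einstein} then produces a pseudo-Einstein contact form on $(M^{2n+1},T^{1,0})$. There is no substantive obstacle in this argument once the infrastructure of the bigraded Rumin complex, the long exact sequence, and the Bochner vanishing are in place; the proof is essentially a diagram chase powered by \cref{H0q-vanishing}. The only point requiring a little care is verifying that parallelism of a $\dbbar$-harmonic $(0,1)$-form really forces $d\omega=0$ (rather than merely $d\omega\in\theta\wedge\sA^1$), but this is immediate from the uniqueness clause of \cref{projection-to-R}.
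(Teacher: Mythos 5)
Your proposal is correct and follows essentially the same route as the paper: the paper simply cites \cref{cohomology-map-trivial} for the injectivity of $H_R^1(M;\sP)\to H^2(M;\bR)$ and then applies \cref{lee-class-to-H2,characterize-pseudo-einstein}, whereas you unwind that corollary inline (exactness of \cref{long-exact-sequence} plus the parallelism from \cref{H0q-vanishing} forcing the map $H_R^{0,1}(M)\to H_R^1(M;\sP)$ to vanish). Your extra care in checking that a parallel $\dbbar$-harmonic $(0,1)$-form satisfies $d\omega=0$ via \cref{projection-to-R} is a valid filling-in of the step the paper delegates to \cref{cohomology-map-trivial}.
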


\begin{proof}
 \Cref{cohomology-map-trivial} implies that $H_R^1(M;\sP)\to H^2(M;\bR)$ is injective.
 The conclusion follows from \cref{lee-class-to-H2,characterize-pseudo-einstein}.
\end{proof}

Lee's second result~\cite{Lee1988}*{Theorem~E(ii)} is that the Lee Conjecture is true on Sasakian manifolds of dimension at least five.
We extend his result to all dimensions.

\begin{proposition}
 \label{lee-from-sasakian}
 Every closed Sasakian manifold with $c_1(T^{1,0})=0$ in $H^2(M;\bR)$ admits a pseudo-Einstein contact form.
\end{proposition}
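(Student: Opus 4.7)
The plan is to show that the Lee class $\mL \in H_R^1(M;\sP)$ vanishes; \cref{characterize-pseudo-einstein} then yields a pseudo-Einstein contact form. Fix any torsion-free contact form $\theta$ on $(M^{2n+1},T^{1,0})$. By \cref{lee-class-to-H2}, the hypothesis $c_1(T^{1,0}) = 0$ implies that the Lee form $\ell^\theta$ is $d$-exact. I aim to use the Sasakian $\db\dbbar$-lemmas of \cref{dbdbbar-lemma-low,dbdbbar-lemma-middle} to produce a real $u \in C^\infty(M)$ with $Du = i\,d\dbbar u = \ell^\theta$, which gives $\mL = 0$.

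For $n \geq 2$, the Lee form satisfies $\ell^\theta \equiv -\frac{i}{n+2}E_{\alpha\bar\beta}\,\theta^\alpha\wedge\theta^{\bar\beta}\mod\theta$ with $E_{\alpha\bar\beta} = R_{\alpha\bar\beta} - \frac{R}{n}h_{\alpha\bar\beta}$, so $\omega := -i\ell^\theta \in \mR^{1,1}$. To apply \cref{dbdbbar-lemma-low} with $(p,q) = (1,1)$, I must verify that $\omega$ is $d$-exact (done) and that $\db^\ast\omega \in \im\dbbar$ and $\dbbar^\ast\omega \in \im\db$. Using \cref{divergence-formula}, the Sasakian contracted Bianchi identity $\nabla^\mu R_{\mu\bar\beta} = \nabla_{\bar\beta}R$ (valid since $A = 0$), and the canonical isomorphism of \cref{sR-to-P} to promote equivalence modulo $\theta$ to equality in $\mR^{0,1}$, I would compute
\[
 \db^\ast\ell^\theta = \tfrac{i(n-1)}{n(n+2)}\dbbar R \in \im\dbbar ;
\]
the condition $\dbbar^\ast\omega \in \im\db$ then follows by conjugation. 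The lemma produces $v \in \mR^{0,0}$ with $\db\dbbar v = -i\ell^\theta$.

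For $n = 1$, a direct computation shows that $\ell^\theta = \frac{1}{3}\,dR\wedge\theta \in \mR^2 \subset \mB^{1,1} = \mR^{2,0}\oplus\mR^{1,1}$, and I would apply \cref{dbdbbar-lemma-middle} with $(p,q) = (1,1)$ to $\omega := -i\ell^\theta$. The $d$-exactness follows as before; the divergence condition $\omega \in \im\dbbar^\ast \oplus \im\db^\ast$ (with both adjoints coming from $\mR^{2,1}$, which is one-dimensional and spanned by $\theta \wedge d\theta$) is verified via the analogous formulas of \cref{divergence-formula} and the Sasakian Bianchi identity. The lemma produces $v \in \mR^{0,0}$ with $d\dbbar v = -i\ell^\theta$.

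In both cases, $d\dbbar$ sends real functions to pure imaginary forms: using \cref{conjugate-rumin-operators} together with the identities of \cref{justification-of-bigraded-complex}, one checks that $\overline{d\dbbar w} = -d\dbbar w$ for any real $w$. Writing $v = v_1 + iv_2$ with $v_j$ real and matching real and imaginary parts of $d\dbbar v = -i\ell^\theta$ (which is pure imaginary), $u := v_1$ is real and satisfies $d\dbbar u = -i\ell^\theta$, giving $Du = i\,d\dbbar u = \ell^\theta$. The main obstacle I anticipate is the divergence computation, especially for $n = 1$ where $\ell^\theta$ is a third-order expression involving $\nabla R$ and careful bookkeeping of $\theta$-components in the bigrading is required.
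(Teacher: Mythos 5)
Your proposal is correct and follows essentially the same route as the paper: reduce to $\mL=0$ via \cref{characterize-pseudo-einstein}, verify the divergence hypotheses of \cref{dbdbbar-lemma-low} (for $n\geq2$) and \cref{dbdbbar-lemma-middle} (for $n=1$) using \cref{divergence-formula} and the torsion-free Bianchi identity, and conclude. Your explicit check that the potential can be taken real (so that $\ell^\theta\in\im D$ with $D$ acting on $\sS^0$) is a point the paper leaves implicit, and your formula $\db^\ast\ell^\theta\propto\dbbar R$ is the type-correct version of the identity the paper records.
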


\begin{proof}
 Let $\theta$ be a torsion-free contact form on $(M^{2n+1},T^{1,0})$.
 Since $c_1(T^{1,0})=0$, it holds that $\ell^\theta$ is $d$-exact.
 
 Suppose first that $n \geq 2$.
 Let $\omega \in \mS^1$.
 Equation~\eqref{eqn:projection-to-E-expression} and \cref{divergence-formula} imply that
 \begin{equation*}
  \omega - \frac{i}{n-1}\theta \wedge \db^\ast\omega + \frac{i}{n-1}\theta \wedge\dbbar^\ast\omega = \omega_{\alpha\bar\beta} \, \theta^\alpha \wedge \theta^{\bar\beta} .
 \end{equation*}
 Since $\theta$ is torsion-free, applying this to Equation~\eqref{eqn:compute-lee-form} yields
 \begin{align*}
  \db^\ast\ell^\theta & = \frac{n-1}{n(n+2)}i\,\db R , \\
  \dbbar^\ast\ell^\theta & = -\frac{n-1}{n(n+2)}i \, \dbbar R .
 \end{align*}
 We conclude from \cref{dbdbbar-lemma-low} that $\mL=0$.
 
 Suppose next that $n = 1$.
 Since $\theta$ is torsion-free, we deduce from \cref{divergence-formula} and Equation~\eqref{eqn:compute-lee-form} that $\ell^\theta \in \im \bigl(i\db^\ast - i\dbbar^\ast \colon \mR^{2,1} \to \CmR^2 \bigr)$.
 We conclude from \cref{dbdbbar-lemma-middle} that $\mL=0$.
 
 The conclusion now follows from \cref{characterize-pseudo-einstein}.
\end{proof}

\begin{remark}
 \label{sasakian-three-manifold}
 \Cref{sasakian-chern-vanishing,lee-from-sasakian} imply that every closed Sasakian three-manifold admits a pseudo-Einstein contact form.

 Indeed, there is a simple proof:
 Let $\theta$ be torsion-free.
 Let $u \in C^\infty(M)$ solve $\Delta_bu = \frac{2}{3}(R - r)$, where $r$ is the average of $R$ with respect to $\theta \wedge d\theta$.
 Since $\theta$ is torsion-free, we conclude from \cref{commutators} and Equation~\eqref{eqn:compute-lee-form} and the identity $d\ell^\theta=0$ that $[\nabla_0,\Delta_b]=0$ and $\nabla_0R = 0$.
 Therefore $\Delta_b\nabla_0u = 0$, and hence $\nabla_0u=0$.
 It follows that $\dhor\db u = \frac{i}{3}\nabla_\alpha R \, \theta \wedge d\theta$.
 We conclude from \cref{justification-of-bigraded-complex} and Equations~\eqref{eqn:compute-lee-form} and~\eqref{eqn:lee-form-transformation} that $e^u\theta$ is pseudo-Einstein.
\end{remark}

\subsection*{Acknowledgements}
I would like to thank Yuya Takeuchi for helpful discussions, including the argument in the second paragraph of \cref{sasakian-three-manifold};
Peter Garfield for providing me a copy of his Ph.D.\ thesis~\cite{Garfield2001};
Jack Lee for encouragement and discussions about his work~\cite{GarfieldLee1998} and Garfield's thesis;
the anonymous referees for pointing out the likelihood that the Rumin and bigraded Rumin complexes carry $A_\infty$-structures and for pointing out a number of ways to improve the exposition;
and Alex Tao for pointing out many typos in an early version of this paper.
I also would like to thank the University of Washington for providing a productive research environment while I was completing this work.

This work was partially supported by the Simons Foundation (Grant \#524601).

\bibliography{bib}
\end{document}